\newtheorem{theorem}{Theorem}[section]
\newtheorem{lemma}[theorem]{Lemma}
\newtheorem{proposition}[theorem]{Proposition} 
\newtheorem{corollary}[theorem]{Corollary}
\newtheorem{conjecture}[theorem]{Conjecture}
\theoremstyle{plain}
\newtheorem{definition}[theorem]{Definition}
\newtheorem{example}[theorem]{Example}
\newtheorem{remark}[theorem]{Remark}
\newcommand{\qed}{\hfill \mbox{$\Box$}\medskip\newline}
\newenvironment{proof}{\noindent {\bf Proof:}}{\qed}
\newenvironment{proofnoqed}{\noindent {\bf Proof:}}{\par}
\newenvironment{proofIrv}{\noindent {\bf Proof of Proposition \ref{shuf-twist}:}}{\qed \par}
\newenvironment{proofsf}{\noindent {\bf Proof of Theorem \ref{special filtrations}:}}{\qed \par}
\newcommand{\Spec}{\operatorname{Spec}}
\newcommand{\id}{\operatorname{id}}
\newcommand{\codim}{\operatorname{codim}}
\renewcommand{\dim}{\operatorname{dim}}
\newcommand{\Ann}{\operatorname{Ann}}
\newcommand{\Sym}{\operatorname{Sym}}
\newcommand{\rk}{\operatorname{rk}}
\newcommand{\Gr}{\operatorname{Gr}}
\newcommand{\fJ}{\mathfrak{J}}
\newcommand{\fP}{\mathfrak{P}}
\newcommand{\Bv}{\mathbf{v}}
\newcommand{\Bw}{\mathbf{w}}
\newcommand{\coho}{\mathbbm{H}}
\newcommand{\scrM}{\mathscr{M}}
\newcommand{\scrN}{\mathscr{N}}
\newcommand{\HCa}{\mathbf{HC}^{\operatorname{a}}}
\newcommand{\HCg}{\mathbf{HC}^{\operatorname{g}}}
\newcommand{\HCaS}{\HCa_{\!S}}
\newcommand{\HCgS}{\HCg_{\!S}}
\newcommand{\slehat}{\mathfrak{\widehat{sl}}_e}
\newcommand{\HCadS}{\HCa_{\!\partial S}}
\newcommand{\HCgdS}{\HCg_{\!\partial S}}
\newcommand{\cOaS}{\cOa^{S}}
\newcommand{\cOgS}{\cOg^{S}}
\newcommand{\cOadS}{\cOa^{\partial S}}
\newcommand{\cOgdS}{\cOg^{\partial S}}
\newcommand{\LLoc}{\mathbb{L}\!\operatorname{Loc}}
\newcommand{\Rsecs}{\mathbb{R}\Gamma_\bS}
\newcommand{\tCg}{\widetilde{\C^2/\mck }}
\newcommand{\scrD}{\mathscr{D}}
\newcommand{\Hilb}{\mathsf{Hilb}}
\newcommand{\scrB}{\mathscr{B}}
\newcommand{\wt}{\operatorname{wt}}
\newcommand{\Z}{\mathbb{Z}}
\newcommand{\N}{\mathbb{N}}
\newcommand{\R}{\mathbb{R}}
\newcommand{\C}{\mathbb{C}}
\newcommand{\la}{\leftarrow}
\newcommand{\Ra}{\Rightarrow}
\newcommand{\M}{\mathfrak{M}}
\newcommand{\sHom}{{\cal H}\mathit{om}}
\renewcommand{\la}{\lambda}
\newcommand{\cs}{\C^\times}
\renewcommand{\a}{\alpha}
\renewcommand{\b}{\beta}
\newcommand{\Hom}{\operatorname{Hom}}
\newcommand{\A}{ A}
\newcommand{\IH}{I\! H}
\newcommand{\IC}{\operatorname{IC}^{\scriptscriptstyle{\bullet}}}
\newcommand{\hookto}{{\hookrightarrow}}
\newcommand{\quiv}{Q}
\newcommand{\mck}{\Gamma}
\newcommand{\cP}{\mathcal{P}}
\newcommand{\cOg}{\cO_{\!\operatorname{g}}}
\newcommand{\cOa}{\cO_{\!\operatorname{a}}}
\newcommand{\cO}{\mathcal{O}}
\renewcommand{\cL}{\mathcal{L}}
\newcommand{\cT}{\mathcal{T}}
\newcommand{\cM}{\mathcal{M}}
\newcommand{\cC}{\mathcal{C}}
\newcommand{\cI}{\mathcal{I}}
\newcommand{\cJ}{\mathcal{J}}
\newcommand{\cK}{\mathcal{K}}
\newcommand{\dOg}{D^b_{\cOg}(\Dmod)}
\newcommand{\dOa}{D^b_{\cOa}(\Amod)}
\newcommand{\becircled}{\mathaccent "7017}
\newcommand{\Ext}{\operatorname{Ext}}
\newcommand{\fS}{\mathfrak{S}}
\newcommand{\cF}{\mathcal{F}}
\newcommand{\cS}{\mathcal{S}}
\renewcommand{\cR}{\mathcal{R}}
\renewcommand{\cD}{\mathcal{D}}
\renewcommand{\cH}{\mathcal{H}}
\newcommand{\Ctw}{C_{\operatorname{tw}}}
\newcommand{\Csh}{C_{\operatorname{sh}}}
\newcommand{\cHt}{\mathcal{H}_{\operatorname{tw}}}
\newcommand{\cHs}{\mathcal{H}_{\operatorname{sh}}}
\newcommand{\T}{\mathbb{T}}
\newcommand{\gr}{\operatorname{gr}}
\newcommand{\cQ}{\mathcal{Q}}
\newcommand{\Loc}{\operatorname{Loc}}
\newcommand{\cE}{\mathcal{E}}
\newcommand{\End}{\operatorname{End}}
\newcommand{\Mirkovic}{Mirkovi\'c\xspace}
\newcommand{\excise}[1]{}
\newcommand{\bT}{\mathbb{T}}
\newcommand{\zetaw}{\zeta_{w}}
\newcommand{\bt}{\mathbbm{t}}
\newcommand{\bS}{\mathbb{S}}
\newcommand{\mg}{\mathfrak{g}}
\newcommand{\ml}{\mathfrak{l}}
\newcommand{\mm}{\mathfrak{m}}
\newcommand{\mh}{\mathfrak{h}}
\newcommand{\mt}{\mathfrak{t}}
\newcommand{\fp}{\mathfrak{p}}
\newcommand{\cbi}{{_{\la'}\!\cT_{\la}}}
\newcommand{\bi}{{_{\la'}\!T_{\la}}}
\newcommand{\pphpp}{(\!(h)\!)}
\newcommand{\pptpp}{(\!(t)\!)}
\newcommand{\cN}{\mathcal{N}}
\newcommand{\hkr}{\mathcal{H}(k,r)}
\newcommand{\mkr}{\mathcal{M}(k,r)}
\newcommand{\fM}{\mathfrak{M}}
\newcommand{\fN}{\mathfrak{N}}
\newcommand{\fX}{\mathfrak{X}}
\renewcommand{\Loc}{\operatorname{Loc}}
\newcommand{\secs}{\Gamma_\bS}
\newcommand{\Amod}{A\mmod}
\newcommand{\Dmod}{\cD\mmod}
\newcommand{\mmod}{\operatorname{-mod}}
\newcommand{\gmmod}{\operatorname{-gmod}}
\renewcommand{\and}{\qquad\text{and}\qquad}
\newcommand{\mb}{\mathfrak{b}}
\newcommand{\fL}{\mathfrak{L}}
\newcommand{\rWeyl}{%
\mathbb{W}}
\newcommand{\mk}{\mathfrak{k}}
\newcommand{\suppc}{\operatorname{CC}}
\newcommand{\supp}{\operatorname{Supp}}
\newcommand{\fQ}{\mathfrak{Q}}
\newcommand{\gst}{\mho}
\newcommand{\gcst}{\Omega}
\newcommand{\gsi}{\Lambda}
\newcommand{\Lleq}{\overset{L}{\leq}}
\newcommand{\Rleq}{\overset{R}{\leq}}
\newcommand{\Tleq}{\overset{2}{\leq}}
\newcommand{\Htz}{H^2(\fM;\Z)}
\newcommand{\Htr}{H^2(\fM;\R)}
\newcommand{\Ht}{H^2(\fM;\C)}
\newcommand{\aone}{\mathbb{A}^{\! 1}}
\newcommand{\spe}{\mathscr{S}^{\operatorname{sp}}}
\newcommand{\all}{\mathscr{S}}
\newcommand{\hmon}{h^{\nicefrac{-1}{n}}}
\newcommand{\hon}{h^{\nicefrac{1}{n}}}
\newcommand{\Zmod}{Z\!\mmod}
\newcommand{\Aka}{A_{k,\a}}
\newcommand{\Bka}{B_{k,\a}}
\newcommand{\scrU}{\mathscr{U}}
\newcommand{\Lotimes}{\overset{L}\otimes}
\newcommand{\op}{{\operatorname{op}}}
\newcommand{\an}{{\operatorname{an}}}
\newcommand{\fZ}{\mathfrak{Z}}
\newcommand{\CLg}{\mathcal{C}^\fL}
\newcommand{\CMpg}{\mathcal{C}^{\fM^+}}
\newcommand{\CMpa}{C^{\fM^+_0}}
\newcommand{\CLa}{C^{\fL_0}}
\newcommand{\DCLg}{D_{\fL}^b(\Dmod)}
\newcommand{\DCLa}{D_{\fL_0}^b(A\mmod)}
\newcommand{\dimfM}{2d}
\newcommand{\HZZ}{H^{4d}_\fZ(\fM\times \fM; \Z)}
\newcommand{\HZC}{H^{4d}_\fZ(\fM\times \fM; \C)}
\newcommand{\HLZ}{H^{\dimfM}_\fL(\fM; \Z)}
\newcommand{\HLCh}{H^{\dimfM}_\fL\big(\fM; \C\pphpp \big)}
\newcommand{\hcd}{H^{\dimfM}_{\fM_0^+}}
\newcommand{\bm}{H^{\dimfM}_{\fM^+}\big(\fM; \Z\big)}
\newcommand{\bmc}{H^{\dimfM}_{\fM^+}\big(\fM; \C\big)}
\newcommand{\scrv}{\tilde{v}}
\newcommand{\SL}{\operatorname{SL}}
\newcommand{\tC}{\tilde{\cC}}
\newcommand{\LCP}{\operatorname{LCP}}
\newcommand{\LtC}{\LCP(\tC)}
\newcommand{\Ve}[1]{\Delta_{#1}}
\newcommand{\Pro}[1]{P_{#1}}
\newcommand{\Si}[1]{L_{#1}}
\newcommand{\fXeP}{{\fX_{\!P}^e}}
\newcommand{\fXePo}{{\fX_{\!P,0}^e}}
\newcommand{\Htzf}{\Htz_{\operatorname{free}}}
\newcommand{\Ham}{{\operatorname{Ham}}}
\newcommand{\dHCa}{D^b_{\HCa}(A\operatorname{-mod-}A)}
\newcommand{\dHCg}{D^b_{\HCg}(\cD\operatorname{-mod-}\cD)}
\newcommand{\dHCaS}{D^b_{\HCaS}(A\operatorname{-mod-}A)}
\newcommand{\dHCgS}{D^b_{\HCgS}(\cD\operatorname{-mod-}\cD)}
\newcommand{\dHCadS}{D^b_{\HCadS}(A\operatorname{-mod-}A)}
\newcommand{\dHCgdS}{D^b_{\HCgdS}(\cD\operatorname{-mod-}\cD)}
\newcommand{\dOaS}{D^b_{\cOaS}(\Amod)}
\newcommand{\dOadS}{D^b_{\cOadS}(\Amod)}
\begin{document}
\spacing{1.5}

\noindent {\Large \bf 
Quantizations of conical symplectic resolutions 
II:\\ category $\cO$ and symplectic duality
}
\\
\spacing{1.2}
\noindent
{\bf Tom Braden}\footnote{Supported by NSA grants H98230-08-1-0097 and H98230-11-1-0180.}\\
Department of Mathematics and Statistics, University of Massachusetts,
Amherst, MA 01003\smallskip \\
{\bf Anthony Licata}\footnote{Supported by an ARC Discovery Early Career fellowship.}\\
Mathematical Sciences Institute, Australian National University,
Canberra, ACT 0200\smallskip \\
{\bf Nicholas Proudfoot}\footnote{Supported by NSF grant DMS-0950383.}\\
Department of Mathematics, University of Oregon,
Eugene, OR 97403\smallskip \\
{\bf Ben Webster}\footnote{Supported by NSF grant DMS-1151473.}\\
Department of Mathematics, University of Virginia, Charlottesville, VA 22903
\bigskip\\
{\small
\begin{quote}
\noindent {\em Abstract.}
We define and study category $\cO$ for a symplectic resolution, generalizing
the classical BGG category $\cO$, which is associated with the
Springer resolution.  This includes the development of intrinsic
properties paralleling the BGG case, such as a highest weight
structure and analogues of twisting and shuffling functors, along with an extensive discussion of individual examples.  

We observe that category $\cO$ is often Koszul, and its Koszul dual is
often equivalent to category $\cO$ for a different symplectic
resolution.  This leads us to define the notion of a symplectic
duality between symplectic resolutions, which is a collection of
isomorphisms between representation theoretic and geometric
structures, including a Koszul duality between the two
categories.  This duality has various cohomological consequences, including
(conjecturally) an identification of two geometric realizations, due to Nakajima and Ginzburg/\Mirkovic-Vilonen,
of weight spaces of simple representations of simply-laced simple algebraic groups.

An appendix by Ivan Losev establishes a key step in the proof that $\cO$ is highest weight.
\end{quote} } \bigskip

\section{Introduction}
In this paper, we have two main goals:
\begin{itemize}
\item to introduce a version of category $\cO$ attached to a
  symplectic variety with extra structure,
\item to describe a conjectured relationship, which we call {\bf symplectic
duality}, between pairs of symplectic varieties.  The most striking
manifestation of this duality is a Koszul duality between the
associated categories $\cO$.  
\end{itemize}
The motivating example is the classical BGG category $\cO$, and
the remarkable theorem of Beilinson, Ginzburg and Soergel \cite{BGS96} showing that
a regular integral block of category $\cO$ is Koszul self-dual.
In our formulation, this means that the Springer resolution of the nilpotent cone
is self-dual as a symplectic variety.

Our perspective throughout is to study the geometry of symplectic
varieties using deformation quantizations and their representation
theory.   The specific varieties that we want to study are called
{\bf conical symplectic resolutions}. The prequel to this paper \cite{BLPWquant} 
introduced these varieties, their quantizations, and the categories of modules over these quantizations.
Here we will concentrate on a particular subcategory of this module category: category $\cO$.

Versions of category $\cO$ have appeared in many places in the
literature: for representations of $U(\mg)$ in \cite{BGG}, for
rational Cherednik algebras in \cite{GGOR}, for W-algebras in
\cite{BGK,LosO}, and for hypertoric enveloping algebras in
\cite{BLPWtorico}.  Our general definition includes
all of these particular examples as particular cases, and 
we are able to prove many basic facts about these categories in a unified 
way.  We will discuss the details of their structure further below.  

There is a one striking observation about these categories that we
wish to give special prominence: they are often standard Koszul, and
Koszul dual to the category $\cO$ attached to a
 different variety.  This is the heart of our definition of symplectic 
 duality; much of this paper is concerned with fleshing out the structures
 surrounding this observation and explaining how it looks in the 
 various examples where it is known to hold.

 We interpret symplectic duality as evidence of a 
 hidden mirror symmetry-like connection between the two varieties, though 
 at the moment it is difficult to make the nature of this connection mathematically 
 precise. However, the same pairs of examples have arisen in moduli spaces of vacua for
 certain S-dual pairs of field theories in physics, suggesting this is
 not pure coincidence.\bigskip

\noindent\textbf{BGG category $\cO$.}
Let us discuss the content of the paper in more detail. As
mentioned above, our motivating example is the representation theory of
$U(\mg)$, whose geometric avatar is the Springer resolution of the
nilcone by the cotangent bundle $T^*(G/B)$.  
Fix a regular
class $\la\in\mh^*\cong H^2(G/B; \C)$ and let $\cOa$ be
the subcategory of of BGG category $\cO$ consisting of modules over $U(\mg)$ with the same generalized central character as
the simple highest weight module with highest weight $\la-\rho$.  
The subscript stands for {\em algebraic}, since $\cOa$ is defined as a category of modules over an algebra.
 
Let $\cOg$ be the category of finitely generated $(\la-\rho)$-twisted
D-modules\footnote{That is, modules over the sheaf of twisted
differential operators denoted by $\mathscr{D}_\la$ in \cite{BB};
if $\la$ is integral, this is simply the sheaf of differential operators
on the line bundle with Euler class $\la$.} on $G/B$ that are smooth with respect to the
Schubert stratification.  
Here the subscript stands for {\em geometric}, since $\cOg$ is defined as a category of sheaves.
The following list gives some of the known structures and properties of the categories $\cOa$ and $\cOg$.  Our main goal in the paper will be to generalize these statements from $T^*(G/B)$ to arbitrary
conical symplectic resolutions.

\begin{enumerate}
\item There exist localization and section functors relating $\cOa$ and $\cOg$.  These functors are
always inverse derived equivalences, and they are Abelian equivalences if $\la$ is dominant \cite{BB}.
\item The two categories are both highest weight \cite{CPS88} and have graded lifts which are Koszul \cite{BGS96}.
\item If $\la$ is integral, then the center of the Yoneda algebra of $\cOg$ is canonically isomorphic to $H^*(G/B; \C)$ \cite{Soe90}.
\item The Grothendieck group $K(\cOg)$ is isomorphic, via the characteristic cycle map, to the top Borel-Moore
homology group of the union of the conormal varieties to the Schubert strata on $G/B$.  This isomorphism
intertwines the Euler form with a geometrically-defined intersection form.
\item The group $K(\cOg)$ decomposes as a direct sum over all nilpotent orbits by looking at microlocal supports
of D-modules.  The top Borel-Moore homology group of the union of the conormal varieties to the Schubert strata on $G/B$ 
decomposes as a direct sum over all nilpotent orbits via the Beilinson-Bernstein-Deligne (BBD) decomposition \cite{BBD,CG97}.
If $\la$ is integral and  $G\cong\SL_r$, then these two decompositions agree.
\item There are two collections of derived auto-equivalences of $\cOa$, and of its graded lift,
given by twisting and shuffling functors \cite{AS, Irvshuf}.
These functors define two commuting actions of the Artin braid group of $\mg$ \cite{AS, MOS, BBM04},
and they categorify the left and right actions of the Weyl group on its group algebra.
\item The category $\cOa$ is Koszul self-dual \cite{BGS96}.  The induced derived auto-equivalence of the graded lift of $\cOa$ 
exchanges twisting and shuffling functors \cite[6.5]{MOS}.
\end{enumerate}\bigskip

\noindent\textbf{Category $\cO$ in general}.
We now explain how these results generalize.  Let $\fM_0$ be a Poisson cone, and let $\fM\to\fM_0$ be a symplectic resolution
of $\fM_0$, equivariant with respect to the conical scaling action of $\bS:=\cs$.  Let $\cD$ be an $\bS$-equivariant quantization
of $\fM$, 
and let $A$ be the ring of $\bS$-invariant global sections of $\cD$.
Many rings of independent interest arise this way, such as spherical rational Cherednik algebras \cite{EtGi}, central quotients of finite
W-algebras \cite{Sk}, central quotients of hypertoric enveloping algebras \cite{BeKu}, and (conjecturally) quotients
of shifted Yangians \cite{KWWY} (see Section \ref{quant-quantizations} for more details).

Let $\bT:=\cs$ act on $\fM$ by Hamiltonian
symplectomorphisms that commute with $\bS$, and assume that the fixed point set $\fM^\bT$ is finite.
The action of $\bT$ on $\fM$ lifts to $\cD$ and induces a $\Z$-grading on $A$.  Let $A^+\subset A$ be the non-negatively
graded part.  We define $\cOa$ to be the category of finitely generated $A$-modules that are locally finite with respect to $A^+$.
Versions of this category have already been studied for Cherednik algebras \cite{GGOR,RouqSchur,GoLo},
for finite W-algebras \cite{BGK,LosO,WebWO,MR3016303}, and for hypertoric enveloping algebras \cite{GDKD,BLPWtorico}.
The classical case is where $\fM = T^*(G/B)$ and $A$ is a central quotient of the universal enveloping algebra of $\mg$;
if the period of the quantization is a regular element of $\mh^*\cong H^2(\fM; \C)$, 
then $\cOa$ is equivalent to the BGG category $\cOa$ (Remark \ref{BGG-Oa}).\footnote{This statement really requires
regularity of the period, otherwise it fails.}  

Let $$\fM^+ := \{p\in\fM\mid\lim_{\bT\ni t\to 0}t\cdot p\;\text{exists}\}.$$
We define $\cOg$ to be the category of $\cD$-modules that are set-theoretically supported on $\fM^+$ 
and admit a particularly nice lattice for a certain subalgebra $\cD(0)\subset\cD$;
see Sections \ref{gco-sec:loc} and \ref{cOg-definition} for a precise definition.
If $\fM = T^*(G/B)$ and $\bT$ is a generic cocharacter of $G$, then $\fM^+$ is equal to the union of the conormal varieties
to the Schubert strata, and $\cOg$ is equivalent to the category $\cOg$ above.
The aforementioned results generalize as follows.

\begin{enumerate}
\item There exist localization and section functors relating $\cOa$ and $\cOg$ (Corollary \ref{OQ}).
These functors are inverse derived equivalences for most quantizations (Theorem \ref{derived localization}), and they are Abelian equivalences if $\la$ is sufficiently positive (Theorem \ref{localization}).
\item The category $\cOa$ is highest weight for most quantizations (Theorem \ref{highest weight}\footnote{The proof of this theorem relies
heavily on an appendix by Ivan Losev.}), and
$\cOg$ is always highest weight (Proposition \ref{ghw}).
We conjecture that both categories are Koszul (Conjectures \ref{Koszul}
and \ref{geom-Koszul}).  
We can verify this conjecture in many examples, including 
cotangent bundles of partial flag varieties, 
S3-varieties,
hypertoric varieties,
Hilbert schemes on ALE spaces,
and some quiver varieties 
(Section \ref{sec:examples}).
\item There is a natural graded ring homomorphism from $H^*(\fM; \C)$ to the Yoneda algebra of $\cOg$.
We conjecture that, whenever $\cOg$ is indecomposable (this will depend on the choice of quantization),
this homomorphism will be an isomorphism (Conjecture \ref{HH}).  We can prove this conjecture for cotangent bundles of partial flag varieties, 
S3-varieties in type A, 
and hypertoric varieties 
(Section \ref{sec:examples}).
We also formulate a stronger version of Conjecture \ref{HH}, relating the equivariant cohomology of $\fM$
to the center of the universal deformation of the Yoneda algebra (Conjecture \ref{HH-eq}), which we prove in the latter two cases.
\item The Grothendieck group $K(\cOg)$ is isomorphic, via the characteristic cycle map, to the top Borel-Moore
homology group of $\fM^+$.  This isomorphism
intertwines the Euler form on the Grothendieck group with the equivariant intersection form 
defined using the localization formula (Theorem \ref{support isomorphism}).
\item The group $K(\cOg)$ decomposes as a direct sum over all symplectic leaves of $\fM_0$ by looking at supports
of sheaves (Equation \eqref{O-decomposition}).  The top Borel-Moore homology group of $\fM^+$ 
decomposes via as a direct sum over all symplectic leaves of $\fM_0$ 
via the BBD decomposition (Equation \eqref{BBD-decomposition}).
Under special assumptions that are satisfied by hypertoric varieties and S3-varieties in type A,
these two decompositions agree (Corollary \ref{the special case}).  A weakening of this relationship holds
more generally (Theorem \ref{special filtrations}).
\end{enumerate}\bigskip

\noindent\textbf{Twisting and shuffling.} To state the appropriate generalization of item 6, we need some more definitions.
Let $W$ be the Namikawa Weyl group of $\fM_0$; this is a finite group that acts faithfully on $\Htr$.  Namikawa
shows that there is a hyperplane arrangement $\cHt$ in $\Htr$ whose chambers are equal to
the $W$-translates of the ample cones of the collection of symplectic resolutions of $\fM_0$ (Remark \ref{dream}).
Let $E_{\operatorname{tw}}\subset\Ht$ be the complement of the complexification of $\cHt$;
this space may also be interpreted as the locus of points over which the universal deformation of $\fM$ is affine.
In the special case where $\fM$ is the cotangent bundle of $G/B$, $W$ is the Weyl group of $G$,
and $\cHt$ is the Coxeter arrangement.

Next, let $\rWeyl$ be the Weyl group of the group of Hamiltonian symplectomorphisms of $\fM$ that commute with $\bS$, and let 
$T$ be a maximal torus.   
Let $\cHs$ be the arrangement in $\mt_\R$ whose hyperplanes describe the  cocharacters of $T$ with infinite fixed-point sets, 
and let $E_{\operatorname{sh}}\subset\mt$
be the complement of the complexification of $\cHs$.  If $\fM$ is the cotangent bundle of $G/B$, then the group of Hamiltonian
symplectomorphisms commuting with $\bS$ is $G$, and everything is the same as in the previous paragraph.  This example,
however, is misleading; in general, $W$ and $\rWeyl$ are unrelated, as are $\cHt$ and $\cHs$.
For example, if $\fM$ is a crepant resolution of $\C^2/\mck$,
then $W$ is isomorphic to the Weyl group corresponding to $\mck$ under the McKay correspondence, 
but $\rWeyl$ is trivial unless $\mck = \Z/2\Z$.

\begin{enumerate}
\item[6.]  We construct two commuting collections of derived endomorphisms of $\cOa$, 
called twisting and shuffling functors.
We construct an action of $\pi_1(E_{\operatorname{tw}}/W)$ on $D^b(\cOa)$
via twisting functors (Theorem \ref{gco-twisting braid}) and an action of 
$\pi_1(E_{\operatorname{sh}}/\rWeyl)$ on $D^b(\cOa)$ via shuffling functors
(Theorem \ref{gco-shuffling braid}).
\end{enumerate}\bigskip

\noindent\textbf{Symplectic duality.} Item 7 cannot generalize verbatim because, as mentioned above, the groups that act by twisting and shuffling functors
are in general unrelated.  The correct generalization involves two different symplectic resolutions, $\fM\to\fM_0$ and
$\fM^!\to\fM_0^!$.

\begin{enumerate}
\item[7.]  We define a symplectic duality between $\fM$ and $\fM^!$ to be a pair of isomorphisms
$$E_{\operatorname{tw}}/W \cong E^!_{\operatorname{sh}}/\rWeyl^!
\and
E_{\operatorname{sh}}/\rWeyl \cong E^!_{\operatorname{tw}}/W^!$$
and a Koszul duality between $\cOa$ and $\cOa^!$ that exchanges twisting and shuffling functors
(see Definition \ref{def:duality} for a more precise formulation).
We have already seen that $T^*(G/B)$ is self-dual (or, more naturally, dual to its Langlands dual).
Furthermore, we show that every type A S3-variety is dual to a different type A S3-variety (Theorem \ref{S3-duality}),
every hypertoric variety is dual to a different hypertoric variety (Theorem \ref{hypertoric-duality}),
and every affine type A quiver variety is dual to a different affine type A quiver variety (Theorem \ref{hkr-mkr}
and Corollary \ref{the real hkr-mkr}).
We conjecture the existence of dualities between quiver varieties and slices in the affine Grassmannian
(Remark \ref{quiver-Gr}) and between pairs of moduli spaces of instantons on ALE spaces (Remark \ref{instanton}).
\end{enumerate}

The simplest examples of symplectic duality are between $T^*\mathbb{P}^{\ell-1}$ and a crepant resolution
of $\C^2\big{/}(\Z/\ell\Z)$.  These are special cases of every class of examples mentioned above.  Part of the interest in twisting and shuffling functors is that they can be used to construct braid group actions and homological invariants of knots.  The exchange of twisting and shuffling functors under symplectic duality then provides an explanation for different geometric constructions of the same knot homology (see Section \ref{knot}).\\

Symplectic duality appears to be closely related to a mirror duality in physics.  
Seiberg and Intrilligator \cite{IS3D} propose a notion of {\bf mirror duality}\footnote{This duality should not be confused with the homological mirror symmetry of Calabi-Yau manifolds, which is perhaps better known to algebraic and symplectic geometers.} between three dimensional gauge
theories which carry N=4 supersymmetry .  Such a gauge theory has a moduli space attached to it with a number
of different components, including two distinguished components called the {\bf Higgs branch} and the {\bf Coulomb branch}.
Mirror duality exchanges these two components; that is, the Higgs branch of one theory is isomorphic to the Coulomb branch
of the dual theory.

It was pointed out to us by Gukov and Witten that our list of known and conjectural examples of
symplectic duality coincides almost perfectly with the known list of Higgs branches of mirror dual gauge theories
(or, equivalently, with the known list of Higgs/Coulomb pairs for a single gauge theory).
For example:
\begin{itemize}
\item Type A S3-varieties are mirror to other type A S3-varieties \cite[\S 3.3]{dBHOOY2}.
\item Hypertoric varieties are mirror to other hypertoric varieties \cite[\S 4]{dBHOOY2}.
\item Affine type A quiver varieties are mirror to other affine type A quiver varieties \cite[\S 3.3]{dBHOOY2}.
\item  An ALE space is mirror to the instanton moduli space for the corresponding simply-laced Lie group on $\mathbb{R}^2$ \cite{IS3D}.
\end{itemize}

These examples strongly suggest that symplectic duality and mirror duality are two perspectives on the same phenomenon.
Unfortunately, mirror duality and Coulomb branches do not yet have precise
mathematical definitions, so there is not yet a rigorous mathematical
statement for us to propose in an attempt to relate symplectic duality
to mirror duality in full generality.
After the appearance of the first version of this paper, the authors
became aware of work in progress of Nakajima, Braverman, and
Finkelberg \cite{Nak-Coulomb}, as well as simultaneous work by 
Bullimore, Dimofte, and Gaiotto \cite{BDG-Coulomb},
which proposes a construction
of the Coulomb branch of the gauge theory associated to a symplectic representation of a compact Lie group
(for which the Higgs branch would be the hyperk\"ahler quotient).  One may therefore regard this construction
as a conjectural construction of the symplectic dual of any conical symplectic resolution that arises
via a hyperk\"ahler quotient construction. Preliminary
calculations suggest that their approach agrees with ours in the
special cases which we understand well, but a
precise comparison of these two theories will have to be left for future work.
\bigskip

\noindent\textbf{Cohomology.} A symplectic duality between $\fM$ and $\fM^!$ has two interesting cohomological implications.
First, consider the decomposition of $K(\cOg)_\C$ from item 5 into direct summands indexed by symplectic leaves of $\fM_0$.
A consequence of symplectic duality is that the summand indexed by a leaf in $\fM_0$ is canonically
dual to the summand indexed by a corresponding leaf in $\fM_0^!$ (Proposition \ref{cohsd}).
In the case of type A S3-varieties, this duality of vector spaces is known as skew Howe duality.
In the case of affine type A quiver varieties, it is rank-level duality.  For hypertoric varieties, it is a reflection
of the behavior of the Tutte polynomial under Gale duality (Example \ref{dualities}).
When $\fM$ is a finite type ADE quiver variety and $\fM^!$ is a transverse slice in the affine Grassmannian,
this duality relates Nakajima's geometric construction of weight spaces of simple representations
to Ginzburg and Mirkovic-Vilonen's geometric construction of the same weight spaces (Example \ref{awesome example}).

The second cohomological implication comes from the last sentence of
item 3, in which we conjecture that $H^*_T(\fM; \C)$ is isomorphic to
the center of the universal deformation of the Yoneda algebra of
$\cOg$.  If this conjecture holds, then symplectic duality implies a
relationship between the equivariant cohomology rings of $\fM$ and
$\fM^!$ that was previously studied in several examples by Goresky and
MacPherson \cite{GM} (Theorem \ref{gm-thm}).  Thus, symplectic duality
may be regarded as a categorification of many different previously
studied dualities.  Interestingly, neither of these two cohomological
phenomena seems to have been familiar to physicists who study mirror
duality.\bigskip

\noindent\textbf{Summary.} The paper is structured as follows.  Section \ref{sec:quant} is a review of all of the relevant background on conical symplectic
resolutions that do not involve choosing a Hamiltonian action of
$\bT$.  Most of this material is taken from \cite{BLPWquant}.  Section
\ref{sec:gco} is devoted to the definitions and basic properties of
$\cOa$ and $\cOg$, including the localization and section functors
that relate them.  Section \ref{structure} is a review of the
background material on Koszul, highest weight, and standard Koszul
categories, which we apply to $\cOa$ and $\cOg$ in Section \ref{Oa and
  Og structure}.  Sections
\ref{sec:Grothendieck}-\ref{sec:twist-shuffl-funct} deal with items
4-6 on our list.  Section \ref{sec:examples} consists of analyses of
all of the structures that we have defined in the special cases of
cotangent bundles of partial flag varieties, S3-varieties, hypertoric
varieties, Hilbert schemes on ALE spaces, quiver varieties, and slices
in the affine Grassmannian.  Finally, Section \ref{duality} is devoted
to the definition, examples, and consequences of symplectic duality.

\vspace{\baselineskip}
\noindent
{\em Acknowledgments:}
The authors would like to thank Roman Bezrukavnikov, Justin Hilburn, Dmitry Kaledin, and Ivan Losev
for useful conversations.  In addition, the authors are grateful to the
Mathematisches Forschungsinstitut Oberwolfach for its hospitality
and excellent working conditions during the initial stages of work on this paper.

\section{Quantizations of conical symplectic resolutions}\label{sec:quant}
In this section we review the necessary background on conical symplectic resolutions.
Roughly, the section is a summary of all of the definitions and constructions in this paper
that do not involve choosing a Hamiltonian action of $\bT$.\footnote{This is not quite accurate,
as twisting functors, which are not introduced until Section \ref{twisting-gco}, also do not involve
the torus $\bT$.  We wait until Section \ref{sec:twist-shuffl-funct} to introduce twisting functors
in order to emphasize the similarities between twisting functors and shuffling functors,
which {\em do} involve the choice of $\bT$.}
Most of the material that appears here is taken from \cite{BLPWquant};
the main exception is Section \ref{sec:integrality}, which is new.

\subsection{Conical symplectic resolutions}\label{gco-sec:resolutions}
Let $\fM$ be a smooth, complex algebraic variety with an algebraic symplectic form
$\omega$.  Suppose that $\fM$ is equipped with an action of the
multiplicative group $\bS\cong\cs$ such that $s^*\omega = s^n\omega$
for some integer $n\geq 1$.
We will assume that $\bS$ acts on the coordinate ring $\C[\fM]$ with
only non-negative weights and that the trivial weight space
$\C[\fM]^\bS$ is 1-dimensional, consisting only of the constant
functions.  Geometrically, this means that the affinization $\fM_0 :=
\Spec \C[\fM]$ is contracted by the $\bS$-action to a single cone point $o\in \fM_0$.  
We will assume that the minimal symplectic leaf of $\fM_0$ consists only of the point $o$,
thus eliminating the possibility that $\fM_0$ contains a factor of a symplectic vector space.\footnote{We did
not include this condition as part of the definition of a conical symplectic resolution in \cite{BLPWquant},
but it will be useful in the current work.}
Finally, we assume that the canonical
map from $\fM$ to $\fM_0$ is a projective resolution of singularities
(that is, it must be an isomorphism over the smooth locus of $\fM_0$).  
We will refer to this collection of data as a {\bf conical symplectic resolution}.

Examples of conical symplectic resolutions include the following:
\begin{itemize}
\item $\fM$ is a crepant resolution of $\fM_0 = \C^2/\mck$, where $\mck$
is a nontrivial finite subgroup of $\SL_2$.
The action of $\bS$ is induced by the inverse of the diagonal action on $\C^2$, and $n=2$.
\item $\fM$ is the Hilbert scheme of a fixed number of points on the crepant resolution of $\C^2/\mck$,
and $\fM_0$ is the symmetric variety of unordered collections of points on the singular space.
Once again, $\bS$ acts by the inverse diagonal action on $\C^2$, and $n=2$.
\item $\fM = T^*(G/P)$ for a reductive algebraic group $G$ and a parabolic subgroup $P$,
and $\fM_0$ is the affinization of this variety (when $G=\SL_r$, this
always be the closure of a nilpotent orbit in the Lie algebra of $G$).
The action of $\bS$ is the inverse scaling action on the cotangent fibers, and $n=1$.
\item $\fM$ is a hypertoric variety associated to a simple, unimodular,
hyperplane arrangement in a rational vector space \cite{BD,Pr07}, and $\fM_0$ is the hypertoric
variety associated to the centralization of this arrangement.
These varieties admit an action of $\bS$ with $n=1$ if and only if the arrangement has a bounded chamber;
they always admit an action of $\bS$ with $n=2$.
\item $\fM$ and $\fM_0$ are Nakajima quiver varieties \cite{Nak94,Nak98}.  
These varieties admit an action of $\bS$ with $n=1$ if and only if the quiver has no loops;
they always admit an action of $\bS$ with $n=2$.
\item $\fM_0$ is a transverse slice to $\operatorname{Gr}^\mu$ inside of $\operatorname{Gr}^\la$,
where $\operatorname{Gr}^\mu$ and $\operatorname{Gr}^\la$ are Schubert varieties inside of
the affine Grassmannian for a reductive group $G$.
When $\la$ is a sum of minuscule
coweights for $G$, $\fM_0$ has a natural conical symplectic resolution
constructed from a convolution variety; in most other cases, it
seems to possess no such resolution.  This example is discussed in
greater generality in \cite{KWWY}.
\end{itemize}

\begin{remark}
The fifth class of examples overlaps significantly with each of the
others.  The first two examples are special
cases of quiver varieties, where the underlying graph of the quiver is the extended Dynkin diagram corresponding
to $\quiv$.  The third and sixth examples can be realized as quiver varieties if the group $G$ is of type A.  
Finally, a hypertoric variety is a quiver variety if and only if the associated hyperplane arrangement is cographical.
\end{remark}

\begin{remark}
  All of these examples admit complete hyperk\"ahler metrics, and in fact we know of no examples
  that do not admit complete hyperk\"ahler metrics.  (Such spaces do exist if we drop the hypothesis
  that $\fM$ is projective over $\fM_0$; some examples will appear in subsequent work by 
  Arbo and the third author.)
  The unit circle in $\bS$ acts by hyperk\"ahler isometries, but is
  Hamiltonian only with respect to the real symplectic form.  Our
  assumptions about the $\bS$-weights of $\C[\fM]$ translate to the
  statement that the real moment map for the circle action is proper and bounded below.
\end{remark}

\subsection{Deformation theory and birational geometry}\label{sec:nam}
Let $\fM$ be a conical symplectic resolution.  The following result is stated in \cite[2.7]{BLPWquant}; 
it is due in this form to Namikawa \cite{Namiflop}, and is closely related to earlier results of Kaledin and Verbitsky \cite{KV02}.

\begin{theorem}\label{universal family}
The variety $\fM$ has a universal Poisson deformation $\pi\colon\scrM\to H^2(\fM;\C)$ which
is flat.  The variety $\scrM$ admits an action of $\bS$ extending the action on $\fM\cong \pi^{-1}(0)$,
and $\pi$ is $\bS$-equivariant with respect to the weight $-n$ action on $H^2(\fM;\C)$.
This family is trivial in the category of smooth manifolds with circle actions. 
\end{theorem}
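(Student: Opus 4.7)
The plan is to follow Namikawa's strategy \cite{Namiflop}, which builds the universal Poisson deformation in three stages: first for the (typically singular) affine cone $\fM_0$, then for $\fM$ itself, and finally to compare the two bases. The starting point is that the Poisson deformation functor of a conical symplectic variety is pro-representable (by standard deformation theory), and Namikawa shows that for a symplectic resolution the tangent space to this functor is canonically $H^2(\fM;\C)$ and that obstructions vanish, so the formal universal deformation is unobstructed on a smooth formal base of the correct dimension. To pass from the formal to the algebraic setting I would use the $\bS$-action: because $\bS$ acts on $\fM$ with the weights specified in Section \ref{gco-sec:resolutions}, any infinitesimal Poisson deformation admits an equivariant lift, and one can algebraize by extending $\bS$-equivariant sections and using the finite generation of $\C[\fM]$ in non-negative weights.

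Next I would pin down the $\bS$-weight on the base. The symplectic form satisfies $s^*\omega=s^n\omega$, so the Poisson bivector has weight $-n$; a first-order Poisson deformation is parametrized by a class in $H^2(\fM;\C)$, and matching weights on either side of the Kodaira--Spencer map forces $\bS$ to act on $H^2(\fM;\C)$ with weight $-n$. With this equivariance the universal family $\pi\colon \scrM\to H^2(\fM;\C)$ is automatically $\bS$-equivariant, and flatness of $\pi$ follows from the standard fact that a Poisson deformation of a smooth symplectic variety over a smooth base of dimension $\dim H^2(\fM;\C)$ is flat (the fibers all have the same dimension and the total space is smooth).

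For the final clause about smooth triviality with circle actions, I would invoke the hyperk\"ahler/twistor picture of Kaledin--Verbitsky \cite{KV02}: in all the examples (and in Namikawa's abstract setting) the Poisson deformations can be realized by varying the complex structure inside a hyperk\"ahler family, while the underlying Riemannian manifold together with the isometric $S^1$-action stays fixed. A Moser-type argument, carried out equivariantly with respect to the compact form of $\bS$, then produces a diffeomorphism $\scrM\cong \fM\times H^2(\fM;\C)$ intertwining $\pi$ with the projection and preserving the circle action.

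The main obstacle is the last step: the equivariant smooth trivialization. Although Namikawa's and Kaledin--Verbitsky's results together give the trivialization fiberwise, to get a globally smoothly trivial family with circle action one needs to produce a single smooth trivialization over all of $H^2(\fM;\C)$ that is equivariant. I would handle this by exhibiting a smooth $S^1$-equivariant connection on $\pi$ whose parallel transport is complete (using properness of the real moment map guaranteed by the conical hypothesis on $\C[\fM]$) and then integrating it; the conical structure is exactly what is needed to control the behavior at infinity and upgrade pointwise triviality to a global equivariant diffeomorphism.
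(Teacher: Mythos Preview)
The paper does not give its own proof of this theorem: it is stated as a known result, attributed to Namikawa \cite{Namiflop} with input from Kaledin--Verbitsky \cite{KV02}, and referenced back to \cite[2.7]{BLPWquant}. So there is no in-paper argument to compare against; your proposal is really a reconstruction of the cited proofs rather than an alternative to anything the authors do.

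As such a reconstruction, your outline is broadly on target. You correctly identify Namikawa's pro-representability and unobstructedness results as the source of the universal deformation over $H^2(\fM;\C)$, and the weight computation for the $\bS$-action on the base is right. Two places deserve caution. First, your justification of flatness (``the total space is smooth and the fibers have the same dimension'') presupposes smoothness of $\scrM$, which is itself part of what Namikawa establishes; it is cleaner to cite his result directly rather than give a potentially circular argument. Second, the smooth triviality with circle action is indeed the delicate point, and your proposed route via an $S^1$-equivariant connection with complete parallel transport is plausible but not what Kaledin--Verbitsky actually do: their argument goes through the hyperk\"ahler twistor family, where the underlying Riemannian manifold is literally fixed and only the complex structure varies. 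If you want to prove this step rather than cite it, you would need to verify that the abstract Namikawa deformation agrees (as a family of complex manifolds) with the hyperk\"ahler period map, which is additional work.
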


For any $\eta\in\Ht$, we will also be interested in the {\bf twistor deformation} $$\scrM_\eta := \scrM\times_{\Ht}\aone.$$
Let $\scrM_\eta(\infty):= (\scrM_\eta\smallsetminus\fM)/\bS$ be the generic fiber of $\scrM_\eta$.  A fundamental result of Kaledin
\cite[2.5]{KalDEQ} says that, if $\eta$ is the Euler class of an ample line bundle on $\fM$, then $\scrM_\eta(\infty)$ is affine.
More generally, Namikawa \cite{NamikawaNote} shows that there is a finite set $\cHt$ of hyperplanes in $\Htr$ such that the union
$\bigcup_{H\in\cHt} H_\C\subset\Ht$ is equal to the locus over which the fibers of $\pi$ fail to be affine.\footnote{The subscript tw 
stands for ``twisting", and is explained by Theorem \ref{gco-twisting braid}.
There will also be a ``shuffling" arrangement $\cHs$, and an analogous Theorem \ref{gco-shuffling braid}.}

Namikawa constructs a universal Poisson deformation of $\fM_0$ over the base $H\! P^2(\fM_0)$ \cite{Namiflop}.
Since $\Spec\C[\scrM]$ is itself a Poisson deformation of $\fM_0$, we obtain a map from $\Ht$ to $H\! P^2(\fM_0)$.
Namikawa shows that this map is a quotient by a finite subgroup $W$ of the general linear group of $\Ht$ \cite[1.1]{NamiaffII}.
In the case of the Springer resolution, $\Ht$ is isomorphic to the Cartan subalgebra and $W$ is isomorphic to the Weyl group.
For this reason, we refer to $W$ more generally as the {\bf Namikawa Weyl group}.

\begin{remark}\label{dream}
The Namikawa Weyl group in fact acts on $\Htr$, with a fundamental domain equal to the closure of the
movable cone of $\fM$ \cite[2.17]{BLPWquant}.  This movable cone can be further divided into chambers
given by ample cones of various conical symplectic resolutions of $\fM_0$.  (For any conical symplectic resolution
$\fM'$, its second cohomology group and its movable cone are canonically identified with those of $\fM$.)
Namikawa \cite{NamikawaNote} proves that $\fM$ is a relative Mori dream space over $\fM_0$ in the sense of \cite[2.4]{AW},
and that the chambers of $\cHt$ are exactly equal to the $W$-translates of the ample cones of the various
resolutions of $\fM_0$.  If $\fM$ is obtained as a symplectic quotient of a vector space by the action of a group $G$
and the Kirwan map from $\chi(G)_\R$ to $\Htr$ is an isomorphism, these chambers coincide with the maximal cones
of the GIT fan.
\end{remark}

\subsection{Quantizations}\label{quant-quantizations}
Let $\fM$ be a conical symplectic resolution.
A {\bf quantization} of $\fM$ is defined to be 
\begin{itemize}
\item an $\bS$-equivariant sheaf $\cQ$ of flat $\C[[h]]$-algebras on $\fM$, complete in the $h$-adic topology,
where $\bS$ acts on $h$ with weight $n$ (see \cite[\S 3.2]{BLPWquant} for a precise definition of $\bS$-equivariance)
\item an $\bS$-equivariant isomorphism from $\cQ/h\cQ$ to the structure sheaf $\fS_\fM$ of $\fM$
\end{itemize}
satisfying the condition that, if $f$ and $g$ are functions over some
open set and $\tilde f$ and $\tilde g$ are lifts to $\cQ$, the image
in $\fS_\fX \cong \cQ/h\cQ\cong h\cQ/h^2\cQ$ of the element $[\tilde
f, \tilde g]\in h\cQ$ is equal to the Poisson bracket $\{f,g\}$.

Using the work of Bezrukavnikov and Kaledin \cite{BK04a}, who classify quantizations in a (much more general) non-equivariant setting, Losev \cite[2.3.3]{Losq} proves the following classification result (see also \cite[3.5]{BLPWquant}).
 
\begin{theorem}\label{periods}
Quantizations of a conical symplectic resolution $\fM$ are in bijection with $\Ht$ via
the period map of \cite{BK04a}.
\end{theorem}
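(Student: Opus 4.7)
The plan is to combine the Bezrukavnikov--Kaledin classification of non-equivariant deformation quantizations with the additional constraints imposed by $\bS$-equivariance. First I would recall the BK period map: associated to every formal deformation quantization of a smooth symplectic variety $\fM$ is a canonical period class in a torsor over $H^2(\fM;\C[[h]])$, and two quantizations are isomorphic (without reference to any group action) if and only if they have the same period. Modulo $h$ the period recovers the class of the symplectic form, so the leading term is rigid.

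Next I would pin down the image of the $\bS$-equivariant period map. Because $\bS$ is connected and therefore acts trivially on the discrete group $H^2(\fM;\C)$, while scaling $h$ with weight $n$, writing a candidate period as $\sum_{k\geq 0} h^k\lambda_k$ with $\lambda_k \in H^2(\fM;\C)$ shows that the $k$th term carries $\bS$-weight $nk$; invariance of the period then forces $\lambda_k = 0$ for all $k\geq 1$. The equivariant period space thus reduces to the single-parameter family indexed by $\Ht$.

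To upgrade this to a bijection I would argue existence and uniqueness of an equivariant lift for each $\bS$-invariant period. Given a non-equivariant quantization $\cQ$ with prescribed period $\lambda\in\Ht$, the $\bS$-action on $\fM$ should be lifted order-by-order in $h$ to an action on $\cQ$; the obstructions at each step live in suitable cohomology groups of the sheaf of $\C[[h]]$-linear derivations of $\cQ$. Reductivity of $\bS$ permits averaging, and the conical assumption (nonnegative weights on $\C[\fM]$ with only the constants in weight zero) forces the relevant obstruction and ambiguity classes to sit in strictly positive $\bS$-weight, where they are annihilated by the averaging operator. A parallel analysis shows that any two $\bS$-equivariant structures on $\cQ$ are intertwined by a unique equivariant isomorphism reducing to the identity modulo $h$.

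The main obstacle is the deformation-theoretic argument of the previous paragraph, which is the substance of \cite[2.3.3]{Losq}. The delicate point is to keep careful track of $\bS$-weights in the Hochschild-type complex governing infinitesimal deformations of the equivariant structure, and to ensure that the positive-weight hypothesis on $\C[\fM]$ translates into the cohomological vanishing needed to clear obstructions. Once this input is available, the bijection with $\Ht$ follows at once from the weight computation above, combining a surjection onto $\Ht$ (from existence of lifts) with injectivity (from uniqueness of lifts) and the intrinsic character of the period.
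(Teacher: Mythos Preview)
Your overall strategy---combine the Bezrukavnikov--Kaledin non-equivariant classification with Losev's analysis of equivariant lifts---is precisely what the paper invokes (it cites \cite[2.3.3]{Losq} and \cite[3.5]{BLPWquant} rather than giving an argument). However, the weight computation in your second paragraph is internally inconsistent and, followed carefully, yields the wrong conclusion. You correctly note that in the BK normalization the constant term of the period is $[\omega]$, hence rigid; you then argue that $\bS$-invariance of the period kills $\lambda_k$ for all $k\geq 1$ and conclude that what survives is an $\Ht$-family. But if only $\lambda_0$ survives and $\lambda_0=[\omega]$ is fixed, you get a single period, not a family. In fact $[\omega]=0$ for a conical symplectic resolution (connectedness of $\bS$ gives $s^*[\omega]=[\omega]$, while $s^*\omega=s^n\omega$ gives $[\omega]=s^n[\omega]$), so your argument as written produces exactly one equivariant quantization.

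What goes wrong is the assertion that the period is $\bS$-invariant under the naive action. Pulling $\cQ$ back along $s\in\bS$ gives a quantization of $(\fM,s^n\omega)$ rather than of $(\fM,\omega)$, and to compare periods one must simultaneously rescale $h$; tracking this through the BK construction shows that the induced $\bS$-action on the period space carries an extra weight-$n$ twist. The correct fixed-point condition therefore singles out the weight-$n$ part of $\Ht[[h]]$, i.e., periods of the form $h\lambda$ with $\lambda\in\Ht$, which is the desired bijection. Making this precise is exactly the content of Losev's argument that you defer to in your third paragraph.
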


Fix a quantization $\cQ$ of $\fM$.  Let $\cD(0) := \cQ[h^{\nicefrac{1}{n}}]$, and let
$\cD(m) := h^{\nicefrac{-m}{n}}\cD(0)$ for all $m\in\Z$.  Let 
$$\cD := \cQ[h^{\nicefrac{-1}{n}}] = \bigcup_{m=0}^\infty \cD(m);$$
we will often abuse notation by referring to $\cD$ as a quantization of $\fM$.
Let $$A := \Gamma_\bS(\cD)$$
be the ring of $\bS$-invariant sections of $\cD$.  This ring
inherits an $\mathbb{N}$-filtration $$A(0)\subset A(1)\subset \ldots\subset A$$
given by putting 
$$A(m) := \Gamma_\bS(\cD(m)).$$
The associated graded of $A$ may be canonically identified with $\C[\fM]$ as an 
$\N$-graded ring.
Many of our examples of conical symplectic resolutions in the previous section admit
quantizations for which the ring $A$ is of independent interest.

\begin{itemize}
\item If $\fM$ is the Hilbert scheme of $k$ points on a crepant resolution of
  $\C^2/\mck$, then is $A$ is isomorphic to
  a spherical symplectic reflection algebra for the wreath product
  $S_k\wr\mck$ \cite[1.4.4]{EGGO}.
\item If $\fM = T^*(G/B)$ for a reductive algebraic group $G$ and a Borel subgroup $B\subset G$,
then $A$ is a central quotient of the
  universal enveloping algebra $U(\mg)$ \cite[Lemma 3]{BB}.
\item If $\fM$ is the resolution of a Slodowy slice to a nilpotent orbit in $\mg$,
then $A$ is a central quotient of a finite W-algebra \cite[6.4]{Sk}.
\item If $\fM$ is a hypertoric variety, then $A$ is a central quotient
of a hypertoric enveloping algebra \cite[\S 5]{BeKu}, \cite[5.9]{BLPWtorico}.
\item If $\fM_0$ is a slice to one affine Schubert variety inside another, then $A$ is conjecturally
isomorphic to a quotient of a shifted Yangian \cite{KWWY}.
\end{itemize}

Note that $\cD$ and $A$ also carry a grading by the group $\Z/n\Z$, where $\cQ\subset\cD$
lies in degree $\bar 0$ and $h^{\nicefrac{1}{n}}$ has degree $\bar 1$.  
The grading on $A$ is compatible with the filtration and thus descends to a grading on $\gr A$,
which is equal to the grading induced by the natural semigroup homomorphism from $\N$ to $\Z/n\Z$.

\subsection{Integrality}
\label{sec:integrality}

We would like to have some notion of what it means for a quantization
to be {\bf integral}.
Let $\Htzf$ be the quotient of $\Htz$ by its torsion
subgroup.\footnote{In the situation of greatest interest to us, when
  there is a Hamiltonian $\C^*$-action commuting with $\bS$ that has
  isolated fixed points, there is no torsion in this group, as we show
  in Proposition \ref{no-torsion}.}
The naive definition would be that $\cQ$ or $\cD$ is integral if its period lies in the lattice
$\Htzf\subset\Ht$, but this is not suitable for our purposes.
For example, if $\fM = T^*X$ for a projective
variety $X$ and $\cD$ is the quantization with period $\la\in\Ht \cong H^2(X; \C)$, 
then $A$ is isomorphic to ring of differential operators on $X$, twisted by $\la + \frac 1 2 \varpi_X$,
where $\varpi_X$ is the Euler class of the canonical bundle of $X$ \cite[4.4]{BLPWquant}.
In this case, we would like to say that $\cD$ is integral if and only if $\la + \frac 1 2 \varpi_X\in\Htzf$.
More generally, the set of integral periods should be a coset
$\Lambda$ of $\Htzf\subset\Ht$ that
satisfies the following properties.
\begin{itemize}
\item We have $\la\in\Lambda$ if and only if $-\la\in\Lambda$.  Equivalently, $2\Lambda$ is contained in $\Htzf$.
We include this condition because the quantization with period $-\la$ is the opposite ring
of the quantization with period $\la$ \cite[3.2]{BLPWquant}, and the opposite of an integral quantization should be integral.
\item If $X\subset\fM$ is a smooth Lagrangian subvariety, then the restriction of $\Lambda$ to $X$
is equal to $\frac 1 2 \varpi_X + H^2(X; \Z)_{\operatorname{free}}$.  In particular, this uniquely determines
$\Lambda$ if $\fM$ is a cotangent bundle.
\item Suppose that $G$ is a reductive group acting on a symplectic
  vector space $V$, and $\fM$ is a smooth symplectic
quotient of $V$ by $G$.  (For example, all quiver varieties and smooth hypertoric varieties are of this form.)
Given a Lagrangian $G$-subspace $L\subset V$, we may identify the Weyl algebra of $V$
with the ring of differential operators on $L$.  Consider the quantized moment map $\mu_L:U(\mg)\to\operatorname{Diff}(L)$
that takes an element of $\mg$ to the induced vector field on $L$, and consider the induced quantization $\cD_L$ of $\fM$,
as in \cite[2.8(i)]{KR}.  The period of this  quantization can be
calculated from \cite[3.16]{BLPWquant}.  We should choose $\Lambda$ to
be the coset of this period.  Note that if $L$ and $L'$ are two different Lagrangian
$G$-subspaces, then $\cD_L$ and $\cD_{L'}$ need not be equal, but
\cite[3.16]{BLPWquant} shows that their periods will always differ by
an element of $\Htzf$, corresponding to the determinant character of $G$
acting on $L'/(L'\cap L)$.
\end{itemize}

\begin{remark}
By the first property above, the coset $\Lambda$ is uniquely determined by the image $c_\Lambda$ of $2\Lambda$ in 
$\Htzf/H^2(\fM; 2\Z)_{\operatorname{free}} \subset H^2(\fM; \Z/2\Z)$.
The second property above is equivalent to the statement 
that the restriction of $c_\Lambda$ to any smooth
Lagrangian subvariety should equal the second Stiefel-Whitney class of that subvariety.
Unfortunately, this condition may not uniquely determine $c_\Lambda$, as it is possible that $\fM$ has no smooth Lagrangian
subvarieties at all.
\end{remark}

Very little of what we do in this paper depends on the notion of integrality.  In Sections \ref{sec:quant}-\ref{sec:examples},
we will only refer to integral quantizations in the context of cotangent bundles, hypertoric varieties, and quiver varieties,
in which case the meaning is completely determined by the second and third conditions above.  In Section \ref{duality},
the notion of integrality will become important; in that section, we simply assume that every conical symplectic resolution
comes with a choice of $\Lambda$ that is consistent with our three conditions.

\subsection{Sheaves of modules}\label{gco-sec:loc}
Let $\fM$, $\cQ$, and $\cD$ be as in Section \ref{quant-quantizations}.
A $\cD(0)$-module $\cN(0)$ is called {\bf coherent} if it is a quotient of a sheaf which is locally free of
finite rank.  Setting $\cN(m) := h^{\nicefrac{-m}{n}}\cN(0)$, Nakayama's lemma tells us that the
following three conditions are equivalent:
\begin{itemize}
\item $\cN(0)$ is coherent 
\item $\cN(0)/\cN(-1)$ is a coherent sheaf of modules over $\cD(0)/\cD(-1)\cong \fS_{\fM}$
\item $\cN(0)/\cN(-n) = \cN(0)/h\cN(0)$ is a coherent sheaf of modules over $\cQ/h\cQ\cong \fS_{\fM}$.
\end{itemize}
An $\bS$-equivariant $\cD$-module $\cN$ is called {\bf good} if it 
admits a coherent $\bS$-equivariant $\cD(0)$-lattice $\cN(0)$.
We call  a good $\cD$-module {\bf holonomic} if it has Lagrangian support.

Given a choice of lattice $\cN(0)$, we will refer to the coherent sheaf $\cN(0)/\cN(-n)$ as the {\bf
big classical limit} of $\cN$, and to $\cN(0)/\cN(-1)$ as the {\bf small classical limit} of $\cN$.
Note that the big classical limit is an $n$-fold extension of the small classical limit, and this extension need not split.

\subsection{Localization}\label{sec:localization}
Let $\fM$, $\cQ$, $\cD$, and $A$ be as in Section \ref{quant-quantizations}.
Let $\Dmod$ denote the category of good $\bS$-equivariant $\cD$-modules.
Note that the choice of lattice is not part of the data of an object of $\cD\mmod$.

Let $\Amod$ be the category of finitely generated $A$-algebras.  A {\bf good filtration}
of an $A$-module $N$ is defined to be a filtration such that $\gr N$ is finitely generated over $\gr A$.
For any $N$, we can choose a good filtration by picking a finite generating set $Q\subset N$
and putting $N(m) := A(m)\cdot Q$.

We have a functor $$\secs:\Dmod\to\Amod$$ given by taking $\bS$-invariant global sections.
The left adjoint functor $$\Loc:\Amod\to\Dmod$$ is defined by putting
$\Loc(N) := \cD\otimes_AN.$
To see that $\Loc(N)$ is indeed an object of $\Dmod$, choose a good filtration of $N$.  
We define the {\bf Rees algebra} $R(A)$ to be the $h$-adic completion of 
$$A(0)[[\hon]] + \hon A(1)[[\hon]] + h^{\nicefrac{2}{n}}A(2)[[\hon]] + \ldots\subset A[[\hon]]$$
and the {\bf Rees module} $R(N)$ to be the $h$-adic completion of
$$N(0)[[\hon]] +\hon N(1)[[\hon]] + h^{\nicefrac{2}{n}}N(2)[[\hon]] + \ldots\subset N[[\hon]].$$
Note that $R(N)$ is a module over $R(A)\cong \Gamma(\cD(0))$,
and $\cD(0)\otimes_{R(A)}R(N)$ is a coherent lattice in $\Loc(N)$.

\begin{remark}\label{lattice-filtration}
If $N$ is an object of $\Amod$, we have shown that $\Loc(N)$ always admits a coherent lattice,
but the construction of that lattice depends on a choice of filtration of $N$.  Conversely,
any coherent lattice $\cN(0)$ for an object $\cN$ of $\Dmod$ induces a filtration of
$N:= \secs(\cN)$ by putting $N(m) := \secs\big(\cN(m)\big)$.
\end{remark}

If $\secs$ and $\Loc$ are quasi-inverse equivalences of categories, we
will say that {\bf localization holds for \boldmath$\cQ$} 
or {\bf localization holds for \boldmath$\cD$} or {\bf localization holds at \boldmath$\la$},
where $\la$ is the period of $\cQ$.  If their derived functors induce quasi-inverse equivalences
of derived categories, we say that {\bf derived localization holds}.
Localization and/or derived localization is known to hold in many special cases, including quantizations of the Hilbert
scheme of points in the plane \cite[4.9]{KR}, the cotangent bundle of $G/P$ \cite{BB},
resolved Slodowy slices  \cite[3.3.6]{Gin08} \& \cite[7.4]{DK}, and hypertoric varieties \cite[5.8]{BeKu}.
In \cite[A \& B.1]{BLPWquant}, we have shown that localization and derived localization hold for ``many" quantizations.

\begin{theorem}\label{localization} 
If $\eta$ is the Euler class of an ample line bundle on $\fM$, then for any $\la$,
localization holds at $\la + k\eta$ for sufficiently large integers $k$.
\end{theorem}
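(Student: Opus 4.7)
The plan is to verify the two conditions defining abelian localization at the period $\la+k\eta$: exactness of $\Gamma_\bS$, and that the unit of the adjunction $\id\to\Gamma_\bS\circ\Loc$ is an isomorphism. Both statements will be extracted from Serre vanishing applied to the ample line bundle $L$ on $\fM$ whose Euler class is $\eta$.

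First, I would construct a \emph{quantum line bundle} $\cL$ realizing the period shift by $\eta$: an $\bS$-equivariant $(\cD_{\la+\eta}(0),\cD_\la(0))$-bimodule whose small classical limit is $L$. Existence of such a bimodule follows from the classification of Theorem \ref{periods} applied locally and then reassembled using the classifying cocycle of $L$. Tensoring with $\cL^{\otimes k}$ gives an equivalence between $\cD_\la\mmod$ and $\cD_{\la+k\eta}\mmod$ that commutes with $\Gamma_\bS$; consequently, proving localization at $\la+k\eta$ reduces to an $R^i\Gamma_\bS$-vanishing statement for $\cD_\la$-modules tensored with $\cL^{\otimes k}$.

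Second, I would prove the required vanishing by passing to the classical limit. Given any good $\cD_\la$-module $\cN$ with coherent lattice $\cN(0)$, the object $\cN(0)\otimes\cL^{\otimes k}$ carries a filtration whose associated graded, using $\cD(0)/\cD(-1)\cong\fS_\fM$ from Section \ref{gco-sec:loc}, is built out of $\gr\cN(0)\otimes L^{\otimes k}$. Because $\fM\to\fM_0$ is a projective morphism and $L$ is $\fM_0$-ample, classical Serre vanishing gives $H^i(\fM,\gr\cN(0)\otimes L^{\otimes k})=0$ for $i>0$ and $k$ large; an $h$-adic completeness argument then lifts this to the quantum cohomology $R^i\Gamma_\bS(\cN(0)\otimes\cL^{\otimes k})=0$ for $i>0$.

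Third, exactness of $\Gamma_\bS$ on $\cD_{\la+k\eta}\mmod$ follows formally; combined with a Serre-type global generation statement (again for $k\gg0$), this forces the adjunction unit to be an isomorphism, yielding abelian localization. The main obstacle is the uniformity of $k_0$: classical Serre vanishing selects a threshold depending on each individual module, whereas the theorem demands a single threshold that works for \emph{every} object of $\cD_{\la+k\eta}\mmod$. To close this gap, one exploits Noetherianity of the section algebra together with the fact that, once a compact generator of the category admits the vanishing, the entire category inherits it---thereby reducing the infinite problem to a finite check of classical Serre vanishing applied to a bounded family of sheaves on $\fM$.
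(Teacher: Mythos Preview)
The paper does not prove this theorem here; it simply cites \cite[A \& B.1]{BLPWquant}, the prequel paper, for both Theorems~\ref{localization} and~\ref{derived localization}. So there is no in-paper proof to compare against. That said, your outline captures the correct ingredients---quantized line bundles realizing period shifts (constructed in \cite[\S 5]{BLPWquant} and used throughout the present paper, e.g.\ in Lemma~\ref{geometric twist}), together with Serre vanishing lifted through the $h$-adic filtration---and these are indeed what drive the argument in \cite{BLPWquant}.

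Where your proposal is genuinely incomplete is exactly where you flag it: uniformity of the threshold $k_0$. Your suggested resolution, that vanishing on a compact generator propagates to the whole category, does not close the gap as stated. A compact generator controls the \emph{derived} category, but abelian exactness of $\Gamma_\bS$ requires $R^i\Gamma_\bS(\cN)=0$ for every good module $\cN$, not merely for objects built from a generator by finitely many cones. The phrase ``bounded family of sheaves'' is doing work you have not justified: different good modules have classical limits ranging over all coherent sheaves on $\fM$, and Serre vanishing gives no uniform bound over that class. The argument in \cite{BLPWquant} handles this via the $\Z$-algebra formalism (cf.\ the discussion before Proposition~\ref{geometric standard} in the present paper): one assembles the translation bimodules ${}_kZ_m=\Gamma_\bS({}_k\cT_m[h^{-1/n}])$ into a $\Z$-algebra and proves that for $k\gg 0$ this $\Z$-algebra is Morita-type equivalent to $A_{\la+k\eta}$, reducing the question to Serre vanishing and global generation for the \emph{fixed} sheaves ${}_k\cT_m$ rather than for arbitrary modules. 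This is a structurally different reduction from the one you sketch, and it is what makes the uniformity go through.
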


\begin{theorem}\label{derived localization}
If $\eta$ is the Euler class of an ample line bundle on $\fM$, then for any $\la$,
derived localization holds at $\la + k\eta$ for all but finitely many complex numbers $k$.
\end{theorem}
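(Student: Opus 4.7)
The plan is to derive the theorem from Theorem \ref{localization} through a family argument based on shift (translation) bimodules. By Theorem \ref{localization}, abelian---hence derived---localization holds at $\la + k\eta$ for all sufficiently large positive integers $k$; fix one such integer $N$. This supplies an anchor point at which $\LLoc$ and $\Rsecs$ are quasi-inverse equivalences of derived categories, and the goal is to propagate this anchor to generic complex values of $k$ along the affine line $\{\la + k\eta\}$ inside $\Ht$.

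The key device is a family of shift bimodules. For each $k \in \C$, let $B_k$ denote the $(A_{\la + (k+1)\eta}, A_{\la + k\eta})$-bimodule of $\bS$-invariant global sections of the quantized line bundle with Euler class $\eta$, regarded as a bimodule over the two quantized structure sheaves. Geometrically, tensoring with this line bundle is an autoequivalence of the $\cD$-module category that shifts the period by $\eta$, and whenever abelian localization holds at both $\la + k\eta$ and $\la + (k+1)\eta$, the bimodule $B_k$ induces an equivalence of algebraic module categories. To deduce derived localization at $\la + k_0\eta$ for a general $k_0$, it suffices to show that the iterated derived tensor product $B_{k_0 + N - 1} \otimes^L \cdots \otimes^L B_{k_0}$ is a derived equivalence between $A_{\la + k_0\eta}\mmod$ and $A_{\la + (k_0 + N)\eta}\mmod$: combining this with the derived equivalence at $\la + (k_0+N)\eta$ guaranteed by the anchor, and noting that the geometric counterpart---tensoring with the $N$-th power of the line bundle---is trivially an equivalence, yields the desired relation $\Rsecs \circ \LLoc \simeq \id$ and dually.

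Next, the family $\{B_k\}_{k \in \C}$ should be organized into a single bimodule $\mathcal{B}$ over a one-parameter base $\Spec \C[s]$ parameterizing the line $s \mapsto \la + s\eta$. Such a family arises by combining the twistor deformation $\scrM_\eta$ from Section \ref{sec:nam} with the classification of quantizations from Theorem \ref{periods} applied in a relative setting. Flatness of this deformation allows a base-change and upper-semicontinuity argument: the locus of $s \in \mathbb{A}^1$ at which the unit and counit maps associated with $B_s$ fail to be quasi-isomorphisms is Zariski-closed. Since this locus misses every sufficiently large integer by the anchor, it is a proper closed subset of $\mathbb{A}^1$, hence finite.

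The main obstacle is making this last step precise: one must verify that derived invertibility of $B_s$ is genuinely a Zariski-open condition on $s$, and that the family $\mathcal{B}$ has enough flatness and finiteness over $\C[s]$ for semicontinuity to apply. Concretely, one needs to exhibit finite-type presentations over $\C[s]$ of the cones of the unit and counit of the adjunction defined by $B_s$, so that their vanishing loci are closed in $\mathbb{A}^1$, and then invoke generic flatness to control specialization to individual values of $s$. Once such presentations are in place, the finiteness of the bad locus follows immediately from the anchor supplied by Theorem \ref{localization} at sufficiently large integer shifts.
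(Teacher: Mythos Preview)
The paper does not give a self-contained proof of this theorem; it cites the companion paper \cite[A \& B.1]{BLPWquant}. Your overall strategy---organize the quantizations into a one-parameter family over $\mathbb{A}^1$ via the twistor deformation $\scrM_\eta$, show the failure locus is Zariski closed, and exhibit an anchor to force it to be proper---is exactly the right shape. But your implementation has a genuine gap in the choice of anchor.

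You propose to anchor at $\la + N\eta$ for a large integer $N$ coming from Theorem~\ref{localization}, and then reach a general complex $k_0$ by composing integer-shift bimodules $B_{k_0}, B_{k_0+1}, \dots, B_{k_0+N-1}$. The problem is that this chain lands at $\la + (k_0+N)\eta$, not at $\la + N\eta$; and for non-integer $k_0$ the point $k_0+N$ is never an integer, so Theorem~\ref{localization} tells you nothing about it. You write that ``the derived equivalence at $\la + (k_0+N)\eta$ [is] guaranteed by the anchor,'' but it is not. Reapplying Theorem~\ref{localization} with base point $\la + k_0\eta$ gives a threshold $N(k_0)$ that depends on $k_0$, and you then have no control over the finiteness of the exceptional set of $k_0$. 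More structurally: integer shifts by $\eta$ partition the affine line $\{\la + s\eta\}$ into uncountably many $\Z$-orbits, and your anchor lies in only one of them.

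The argument in \cite{BLPWquant} avoids this by anchoring at the \emph{generic} point of the twistor line rather than at large integers. By Kaledin's theorem (cited in Section~\ref{sec:nam}), the generic fiber $\scrM_\eta(\infty)$ is affine, so localization holds there tautologically. One then shows---this is the content of \cite[4.15]{BLPWquant}, invoked elsewhere in the present paper---that the cones of the relevant unit and counit maps, viewed as coherent objects on the twistor family, are annihilated by a polynomial $q(h^{-1}t)$. The bad locus is therefore contained in the finite set of roots of $q$. This is precisely the finiteness and flatness input you flag as ``the main obstacle,'' and it is the real work; but the correct anchor is the generic fiber, not the large integers.
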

Forthcoming work of McGerty and Nevins \cite{MN2} gives a considerable strengthening of Theorem \ref{localization}, 
showing that the locus where localization fails is
contained in countably many translates of hyperplanes from the
discriminant locus.
In earlier work \cite{MN}, they also gave a cohomological
criterion for when derived localization holds: when the section
algebra has finite global dimension.

\subsection{Modules with supports}\label{quant-lag}
Let $\fM$, $\cQ$, $\cD$, and $A$ be as in Section
\ref{quant-quantizations}.  
Let $\fL_0\subset\fM_0$ be the subscheme defined by a graded ideal $J\subset \C[\fM_0]$,
and let $\fL\subset\fM$ be the subscheme defined by a graded ideal sheaf $\cJ\subset\fS_\fM$.
We will often assume that $\fL$ is the scheme-theoretic preimage of $\fL_0$,
which is equivalent to saying that $\cJ = \fS_{\fM}\otimes_{\C[\fM_0]} J$
(see Propositions \ref{C-to-C} and \ref{derived-C-to-C}).
We denote by $J+h\cdot R(A)\subset R(A)$ the
preimage of $J\otimes \C[\hon]/\langle h\rangle$ under the natural map $$R(A)\to
\C[\fM_0]\otimes \C[\hon]/\langle h\rangle .$$
The following definitions appeared in \cite[\S 6.1]{BLPWquant}.

\begin{definition}\label{def:CLa}
  Let $\CLa$ be the full subcategory of $\Amod$ consisting of all
  modules $N$ that admit good filtrations with either of the following two
  equivalent properties:
  \begin{itemize}
\item Let $a\in A(k)$ be homogeneous of degree $\bar k$ for the $\Z/n\Z$ grading, and suppose that 
its symbol $\bar a\in A(k)/A(k-1) \cong \C[\fM_0]_k$ lies in $J$.
Then $a\cdot N(m)\subset N(k+m-n)$.
\item For any $a\in J+h\cdot R(A)$, we have $a\cdot R(N)\subset h\cdot R(N)$.
\end{itemize}
  Let $\DCLa$ be the full subcategory of $D^b(\Amod)$ consisting of
  objects with cohomology in $\CLa$.  
\end{definition}

\begin{remark}
Note that if $N$ is an object of $\CLa$, then the associated graded $\gr N$ will be killed by the ideal
$J$, but the converse is not true unless $n=1$.  
\end{remark}

\begin{definition}\label{def:CLg}
Let $\CLg$ be the full subcategory of $\Dmod$ consisting of modules
with big classical limits that are scheme-theoretically supported on $\fL$.
More precisely, a $\cD$-module $\cN$ is in $\CLg$ if it admits a lattice $\cN(0)$ that is preserved by
$h^{-1}\tilde f$ for any section $\tilde f$ of $\cQ$ whose image in
$\cQ/h\cQ\cong\fS_\fM$ lies in $\cJ$.
Let $\DCLg$ be the full subcategory of $D^b(\Dmod)$ consisting of objects with cohomology in $\CLg$.
\end{definition}

\begin{proposition}\label{C-to-C}
If $\fL$ is the scheme-theoretic preimage of $\fL_0$, then
  $\Loc$ takes $\CLa$ to $\CLg$ and $\secs$ takes $\CLg$ to $\CLa$.  
\end{proposition}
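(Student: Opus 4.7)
The plan is to prove the two inclusions separately, using the alternative Rees-algebra characterization of $\CLa$ from Definition~\ref{def:CLa}. The key sheaf-theoretic reformulation of the defining condition for $\CLg$ is: a good lattice $\cN(0)$ satisfies $h^{-1}\tilde f \cdot \cN(0) \subset \cN(0)$ for every local section $\tilde f$ of $\cQ$ with $\bar f \in \cJ$ if and only if the big classical limit $\cN(0)/h\cN(0)$ is annihilated by $\cJ$, i.e.\ scheme-theoretically supported on $\fL$. I will also use that, just as $R(A) \cong \Gamma(\cD(0))$ as recalled in Section~\ref{sec:localization}, one has a natural identification $R(N) \cong \Gamma(\cN(0))$ obtained by summing the $\bS$-weight pieces of $\Gamma(\cN(0))$.

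For the direction $\secs(\CLg) \subseteq \CLa$, let $\cN \in \CLg$ with lattice $\cN(0)$, set $N := \secs(\cN)$, and give $N$ the filtration $N(m) := \secs(\cN(m))$. The long exact sequence associated to $0 \to \cN(0) \xrightarrow{\cdot h} \cN(0) \to \cN(0)/h\cN(0) \to 0$ produces an injection $R(N)/hR(N) \hookrightarrow \Gamma(\cN(0)/h\cN(0))$. By the scheme-theoretic preimage hypothesis $\cJ = \fS_\fM \otimes_{\C[\fM_0]} J$, the ideal $J \subset \C[\fM_0]$ acts on $\Gamma(\cN(0)/h\cN(0))$ through $\cJ$ and therefore as zero, so every element of $R(A)$ whose image in $\C[\fM_0] \otimes \C[\hon]/\langle h\rangle$ lies in $J \otimes \C[\hon]/\langle h\rangle$ annihilates $R(N)/hR(N)$. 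Combined with the trivial containment $hR(A)\cdot R(N) \subset hR(N)$, this gives $(J + h\cdot R(A))\cdot R(N) \subset hR(N)$, which is the second condition of Definition~\ref{def:CLa}.

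For the direction $\Loc(\CLa) \subseteq \CLg$, let $N \in \CLa$ with a good filtration satisfying the Rees-algebra condition, and take the lattice $\cN(0) := \cD(0)\otimes_{R(A)} R(N)$ for $\cN := \Loc(N)$ from Remark~\ref{lattice-filtration}. A right-exactness argument yields the isomorphism $\cN(0)/h\cN(0) \cong (\cD(0)/h\cD(0))\otimes_{R(A)/hR(A)} R(N)/hR(N)$, so the Rees hypothesis forces the image of $J$ inside $\cD(0)/h\cD(0) \cong \fS_\fM \otimes \C[\hon]/\langle h\rangle$ to annihilate $\cN(0)/h\cN(0)$. The scheme-theoretic preimage assumption $\cJ = \fS_\fM \otimes_{\C[\fM_0]} J$ is then exactly what upgrades this to annihilation by the full ideal sheaf $\cJ$, rather than merely by the image of global generators of $J$, completing the verification that $\cN \in \CLg$.

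I expect the main obstacle to be the careful Rees-algebra and $\bS$-equivariance bookkeeping required to establish the base-change isomorphism for $\cN(0)/h\cN(0)$ in the previous paragraph, and to pin down the precise relationship between the $R(A)$-module structure on $R(N)/hR(N)$ and the $\fS_\fM$-module structure on $\cN(0)/h\cN(0)$. The role of the scheme-theoretic preimage hypothesis also needs to be isolated cleanly in both directions, since it is precisely what permits passage between the algebraic conditions involving $J$ and the sheaf-theoretic conditions involving $\cJ$; without it, the two inclusions would fail in general.
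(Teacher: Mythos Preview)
Your proof is correct and takes a genuinely different route from the paper's argument. The paper works with the first (symbol-based) characterization in Definition~\ref{def:CLa} and proceeds by explicit $\bS$-weight decomposition: given $\tilde f \in \cQ$ with image in $\cJ$, it passes to an $\bS$-invariant element $h^{-k/n}\tilde f \in A(k)$, invokes the containment $a\cdot N(m)\subset N(k+m-n)$, and then checks directly that $\tilde f$ carries $\cN(0)(U)$ into $h\cN(0)(U)$ on small opens, using the commutator estimate $[\tilde f,\cD(0)]\subset h\cD(0)$ to control the failure of $\tilde f$ to commute with local sections of $\cD(0)$. The converse direction is handled by the reverse translation $a\leadsto h^{k/n}a\in\cQ$.

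You instead use the Rees characterization and reduce everything to statements about the big classical limit: annihilation of $R(N)/hR(N)$ or of $\cN(0)/h\cN(0)$ by the image of $J$, upgraded to annihilation by $\cJ$ via the scheme-theoretic preimage hypothesis. This sidesteps the $\bS$-weight bookkeeping and the commutator computation entirely, since modulo $h$ the algebra is commutative. The cost is that you must justify the identification $R(N)\cong\Gamma(\cN(0))$ (or at least enough of it to get the injection $R(N)/hR(N)\hookrightarrow\Gamma(\cN(0)/h\cN(0))$), which you correctly flag as the main technical point. The paper's approach avoids needing this identification globally by working with the explicit local presentation $\cN(0)(U)=\sum_m\cD(0)(U)\otimes h^{m/n}N(m)$ instead. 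Both arguments use the preimage hypothesis in the same places and for the same reason.
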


\begin{proof}
Let $N$ be an object of $\CLa$.  Choose a filtration of $N$ as in Definition \ref{def:CLa},
and let $\cN(0):=\cD(0)\otimes_{R(A)}R(N)$ be the induced lattice in $\cN:=\Loc(N)$.  Let $\tilde f$
be a global section of $\cQ$ whose image $f\in\cQ/h\cQ\cong\fS_\fM$ lies in $\cJ$.
After decomposing $\tilde f$ into eigenvectors for the $\bS$ action, we may assume that there exists
an integer $k$ such that $h^{\nicefrac{-k}{n}}\tilde f$ is $\bS$-invariant.
Thus $h^{\nicefrac{-k}{n}}\tilde f\in\secs(\cD)=A(k)$ is homogeneous of degree $\bar k$
for the $\Z/n\Z$ grading, so 
$h^{\nicefrac{-k}{n}}\tilde f\cdot N(m) \subset N(k+m-n)$. 

On any sufficiently small open subset $U$, we have $\cN(0)(U)\cong \cD(0)(U)
\otimes_{R(A)}R(N)$; 
moreover,
\[\cN(0)(U)=\sum_m
\cD(0)(U)\otimes h^{\nicefrac{m}{n}} N(m),\]
where we write $\cD(0)(U)\otimes h^{\nicefrac{m}{n}} N(m)$ to denote the image of
the tensor product over $\C$ inside of $\cN(0)(U)$.  
Thus
\begin{align*}
\tilde f\cdot \cN(0)(U)& \subset \sum_m \left(
  h^{\nicefrac{m}{n}}[\tilde f,\cD(0)(U)]\otimes N(m)+\cD(0)(U)\otimes
  h^{\nicefrac{m+k}{n}}\cdot N(m+k-n)\right) \\
  & \subset h\cdot \cN(0)(U).
\end{align*}
Since the ideal sheaf $\cJ$ is generated by global sections, this
suffices to show that $\Loc(\cN)\in \CLg$.

For the opposite direction, let $\cN$ be an object of $\CLg$, and let $\cN(0)$ be a lattice preserved by
$h^{-1}\tilde f$ for every section $\tilde f$ of $\cQ$ whose image $f\in \cQ/h\cQ\cong\fS_\fM$ lies in $\cJ$.
Let $N := \secs(\cN)$, and let $N(m) := \secs\big(\cN(m)\big)$ be the induced filtration.
Let $a\in A(k)$ be homogeneous of degree $\bar k$ with symbol in $J$.
Then $h^{\nicefrac{k}{n}}a$ is a section of $\cQ$ whose image lies in $\cJ$, so
$h^{\nicefrac{k}{n}-1}a\cdot \cN(m) \subset \cN(m)$,
and therefore $a\cdot \cN(m)\subset h^{1-\nicefrac{k}{n}}\cN(m) = \cN(k+m-n)$.
Applying $\secs$, we see that $a\cdot N(m)\subset N(k+m-n)$, so $N$ is an object of $\CLa$.
\end{proof}

\begin{proposition}
  \label{derived-C-to-C}
 If $\fL$ is the scheme-theoretic preimage of $\fL_0$ and derived
 localization holds at $\la$, then $\LLoc$ takes $\DCLa$ to $\DCLg$ and $\Rsecs$ takes $\DCLg$ to $\DCLa$.  
\end{proposition}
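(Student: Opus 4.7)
The plan is to reduce the derived statement to Proposition \ref{C-to-C} by exploiting a Serre subcategory structure on $\CLa$ and $\CLg$. First I would verify that both are Serre subcategories of their ambient Abelian categories: given a short exact sequence $0 \to N' \to N \to N'' \to 0$ in $\Amod$ and a good filtration on $N$ of the type required by Definition \ref{def:CLa}, the restricted filtration on $N'$ and the induced filtration on $N''$ both inherit the symbol condition, and the converse (closure under extensions) follows by splicing filtrations. The same argument using lattices handles $\CLg \subset \Dmod$. Consequently $\DCLa$ and $\DCLg$ are thick triangulated subcategories, and every object of $\DCLa$ is built from its cohomology (which lies in $\CLa$) via iterated cones of truncation triangles. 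Because $\DCLg$ is closed under cones and shifts, it therefore suffices to prove $\LLoc(N) \in \DCLg$ for each $N \in \CLa$, and symmetrically $\Rsecs(\cN) \in \DCLa$ for each $\cN \in \CLg$.

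For the first reduction, fix $N \in \CLa$ with a good filtration $\{N(m)\}$ satisfying Definition \ref{def:CLa}, and form the Rees module $R(N)$ over $R(A) \cong \secs(\cD(0))$. The derived localization $\LLoc(N)$ is then computed by $\cD \Lotimes_{R(A)} R(N)$ after inverting $h^{\nicefrac{1}{n}}$. Choosing a flat resolution $P^\bullet \to R(N)$ by graded $R(A)$-modules and applying $\cD(0) \otimes_{R(A)} -$ produces a complex whose cohomology sheaves, after inverting $h$, are the $L^i\Loc(N)$, and which come equipped with natural lattices $\mathrm{Tor}^{R(A)}_{-i}(\cD(0), R(N))$. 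The commutator computation at the end of the proof of Proposition \ref{C-to-C}---namely, that $\tilde f \cdot \cN(0) \subset h \cdot \cN(0)$ for sections $\tilde f$ of $\cQ$ with symbols in $\cJ$---propagates termwise through the resolution to each Tor sheaf, showing that the induced lattices satisfy the condition of Definition \ref{def:CLg}. Hence every $L^i\Loc(N)$ lies in $\CLg$, so $\LLoc(N) \in \DCLg$. The statement for $\Rsecs$ is handled dually: any lattice $\cN(0)$ on $\cN \in \CLg$ satisfying the annihilation condition induces good filtrations on the higher cohomology $R^i\secs(\cN)$, and the symbol condition of Definition \ref{def:CLa} is inherited directly from the lattice condition.

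The main obstacle is making the Rees/Tor argument rigorous in the setting of sheaves of quantizations, in particular checking that the derived tensor product is compatible with the lattice filtration when resolved as indicated, and that the $\Z/n\Z$-graded degree shifts in Definition \ref{def:CLa} are tracked correctly through the flat resolution. The hypothesis of derived localization enters crucially at two points: it ensures that $\LLoc$ has finite cohomological amplitude (so only finitely many $L^i\Loc(N)$ are nonzero and the reduction stays within the bounded derived category), and it underlies the symmetric treatment of $\Rsecs$, since the quasi-inverse equivalence lets us transport the argument between the two sides.
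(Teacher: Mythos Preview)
Your overall strategy matches the paper's: reduce to the abelian case (objects $N\in\CLa$ and $\cN\in\CLg$), then use lattices and Rees modules to control the cohomology of the derived functors. The Serre-subcategory reduction is a reasonable way to organize the d\'evissage, though the paper simply works directly with cohomology objects.

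Where your write-up is less precise than the paper is in the two key technical steps. For $\LLoc$, you say the commutator computation from Proposition~\ref{C-to-C} ``propagates termwise through the resolution to each Tor sheaf''; this is not quite the right mechanism. The paper instead observes that for $a\in J+h\cdot R(A)$ the inclusion $a\cdot R(N)\subset h\cdot R(N)$ means the endomorphism $a$ of $R(N)$ is zero mod $h$, hence on any projective resolution the map induced by $a$ is null-homotopic mod $h$; this immediately forces $a$ to act by zero on every $\coho^p(\LLoc(N))$ mod $h$, i.e.\ the induced lattice satisfies Definition~\ref{def:CLg}. Your ``termwise propagation'' would need exactly this null-homotopy argument to be made rigorous. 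For $\Rsecs$, your claim that the symbol condition is ``inherited directly'' is too quick: the paper invokes the spectral sequence
\[ H^p(\fM;\cN(0)/\cN(-n)) \;\Longrightarrow\; R\big(\coho^p(\Rsecs(\cN))\big)\big/h\,R\big(\coho^p(\Rsecs(\cN))\big), \]
so that $J$ killing the $E_2$ page forces $J$ to kill the abutment, giving the condition of Definition~\ref{def:CLa}. Finally, you correctly identify that derived localization is used to bound cohomological amplitude; the paper cites McGerty--Nevins for finite global dimension of $A_\la$ for the $\LLoc$ direction and projectivity of $\pi$ for $\Rsecs$.
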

\begin{proof}
  Let $\cN$ be an object in $\CLg$, and let $\cN(0)\subset\cN$ be a
  lattice satisfying the required condition.  There is a spectral
  sequence (see \cite[\S6.1]{BLPWquant}, particularly the proof of Theorem 6.5)
  \[ H^p(\fM;\cN(0)/\cN(-n))\Ra
  R\left(\coho^p({\Rsecs(\cN)})\right)/h
  R\left(\coho^p({\Rsecs(\cN)})\right).\]
  Since the left-hand side is killed by the ideal $J$, the same is
  true of the right hand side,
   which implies that $\coho^p({\Rsecs(\cN)})$ is in $\CLa$, and is
   only non-zero in finitely many degrees since the map $\pi$ is projective.

Now let $N$ be an object of $\CLa$ and put $\cN:=\LLoc(N)$.  This only
has cohomology in finitely many degrees since $A_\la$ has finite
global dimension, by a result of McGerty and Nevins \cite[\S7.5]{MN}.
A filtration of $N$ induces a lattice in $\coho^p(\cN)$.
For any $a\in J+h\cdot R(A)$, we have that
  $a\cdot
  R(N)\subset h\cdot R( N);$  thus, on any projective resolution, the
  map induced by  $a$ is
  null-homotopic mod $h$; this
  implies that our lattice in $\coho^p(\cN)$ has the required property.
\end{proof}

\begin{remark}\label{why-bounded} 
If derived localization does not hold, then the functor $\LLoc$ is not bounded.
If we were to replace $D^b$ by the bounded-above category $D^-$, then Proposition \ref{derived-C-to-C}
would hold for arbitrary quantizations.  This is discussed in greater detail
in \cite[\S 4.3]{BLPWquant}.
\end{remark}

\subsection{Harish-Chandra bimodules and characteristic cycles}\label{quant-HC}
We continue with the notation $\fM$, $\cQ$, and $\cD$ from Section \ref{quant-quantizations}.  The product $\fM \times \fM \to \fM_0 \times \fM_0$ is a conical symplectic resolution with quantization $\cD \boxtimes \cD^{\op}$ (the tensor product is taken over $\C(\!(h)\!)$) and section ring $A \otimes A^{\op}$.  Thus we can apply the previous definitions and results to $A$-bimodules and $\cD$-bimodules. 

Consider the diagonal
$\fZ_0 \subset \fM_0\times\fM_0$ (with its reduced scheme structure),
and its preimage $\fZ := \fM\times_{\fM_0}\fM$, the {\bf Steinberg}
scheme (which may not be reduced).

\begin{definition}\label{def:HCa}\label{def:HCg}
A finitely generated $A$-bimodule (resp.  $\cD$-bimodule) is called
{\bf Harish-Chandra} if it lies in $C^{\fZ_0}$ (resp. $\mathcal{C}^{\fZ}$).
We will use the notation
$$\HCa := C^{\fZ_0}\and \HCg := \mathcal{C}^{\fZ}$$
for the abelian categories of algebraic and geometric Harish-Chandra bimodules,
along with $\dHCa$ and $\dHCg$
for the subcategories of the bounded derived categories of all bimodules consisting of objects with Harish-Chandra cohomology.
\end{definition}

The following results appear in \cite[\S 6.1]{BLPWquant}.

\begin{proposition}\label{tensor action}
The category $\HCa$ is a monoidal category under the operation of tensor product, and the category $\CLa$
is a module category over $\HCa$; similarly, when $A$ has finite
global dimension, $\dHCa$ has a monoidal
structure induced by derived tensor product, and an action on $\DCLa$.

 There is a geometric version of this derived tensor product, induced
 by convolution on  $\dHCg$, 
and the category $\DCLg$ is naturally a module category over $\dHCg$.  
These structures are compatible
with the derived $\bS$-invariant section functors.  
\end{proposition}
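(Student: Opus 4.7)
The plan is to verify each assertion separately by exploiting the characterization of objects in $\HCa$ and $\CLa$ via good filtrations (equivalently, Rees lattices) and tracking the support of associated gradeds, which reduces everything to statements about commutative algebra on $\fM_0 \times \fM_0$ and $\fM_0$.

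First I would establish the abelian monoidal structure on $\HCa$. Given $M_1, M_2 \in \HCa$, choose good filtrations so that $\gr M_1$ and $\gr M_2$ are scheme-theoretically supported on $\fZ_0 \subset \fM_0 \times \fM_0$. The tensor product filtration $(M_1 \otimes_A M_2)(k) := \sum_{i+j=k} \im(M_1(i) \otimes M_2(j))$ admits a canonical surjection
\[\gr M_1 \otimes_{\C[\fM_0]} \gr M_2 \twoheadrightarrow \gr(M_1 \otimes_A M_2),\]
where the middle $\C[\fM_0]$-action is via the right action on $\gr M_1$, which agrees with the left action on $\gr M_2$ because both are supported on the diagonal $\fZ_0$. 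Hence the right-hand side is also supported on $\fZ_0$, so $M_1 \otimes_A M_2 \in \HCa$; associativity and unitality descend from those of the ordinary tensor product. The argument for the $\HCa$-action on $\CLa$ is completely parallel, with the two-sided support condition on the second factor replaced by a one-sided support condition over $\fM_0$.

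For the derived versions I would use that finite global dimension of $A$ implies finite global dimension of $A \otimes A^{\op}$, so every object of $\dHCa$ is quasi-isomorphic to a bounded complex of Harish-Chandra bimodules with good lattices. Computing $\Lotimes_A$ via a flat resolution and applying the above argument term by term, the Tor spectral sequence shows that the cohomology of the result again lies in $\HCa$; monoidality is then inherited from the associator of $\Lotimes_A$, and similarly for the $\DCLa$-action. On the geometric side, convolution $\cM_1 \star \cM_2 := Rp_{13*}\bigl(p_{12}^*\cM_1 \Lotimes p_{23}^*\cM_2\bigr)$ on $\fM \times \fM$ preserves the Steinberg support condition because $p_{12}^*\cM_1 \Lotimes p_{23}^*\cM_2$ is supported set-theoretically on $p_{12}^{-1}(\fZ) \cap p_{23}^{-1}(\fZ)$, whose image under $p_{13}$ lies in $\fZ$; the argument for the action on $\DCLg$ is identical with $\fL$ in place of one factor. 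Compatibility of $\Rsecs$ with these products follows from the natural transformation $\Rsecs(\cM) \Lotimes_A \Rsecs(\cN) \to \Rsecs(\cM \Lotimes_\cD \cN)$, which is an isomorphism under derived localization (guaranteed by the hypothesis of finite global dimension together with Theorem \ref{derived localization} after a shift by an ample class).

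The main obstacle will be the bookkeeping around the $\Z/n\Z$-grading in Definition \ref{def:CLa}, where the condition reads $a \cdot N(m) \subset N(k+m-n)$ rather than the naive $N(k+m)$; this makes the filtration-level description somewhat delicate under tensor product. I would avoid this by working throughout with the equivalent Rees module characterization $a \cdot R(N) \subset h \cdot R(N)$ for $a \in J + h \cdot R(A)$, which is manifestly stable under $h$-completed tensor products of Rees lattices and matches the geometric picture of scheme-theoretic support for the big classical limit.
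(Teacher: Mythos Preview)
The paper does not prove this result; it simply cites \cite[\S 6.1]{BLPWquant}. Your outline for the abelian case is essentially correct, and your closing paragraph identifies the right fix: the Rees-module condition $a\cdot R(N)\subset h\cdot R(N)$ for $a\in J+h\cdot R(A)$ is what makes the argument uniform in $n$. The key computation you leave implicit is the Leibniz identity: a generator $\tilde f\otimes 1-1\otimes\tilde f$ of (a lift of) the diagonal ideal acts on $m_1\otimes m_2\in R(M_1)\otimes_{R(A)}R(M_2)$ as $[\tilde f,m_1]\otimes m_2+m_1\otimes[\tilde f,m_2]$, and each summand lies in $h\cdot(-)$ by the Harish-Chandra condition on the individual factors. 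The module action on $\CLa$ follows from the same identity with only a left action on the second factor.

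Your derived argument has a gap. You write ``computing $\Lotimes_A$ via a flat resolution and applying the above argument term by term,'' but the terms of a flat (or projective) $A$-module resolution of $M_2$ are not themselves Harish-Chandra bimodules, so the abelian argument does not apply to them; nor does finite global dimension of $A\otimes A^{\op}$ supply resolutions by Harish-Chandra bimodules. What actually works, in the spirit of the proof of Proposition~\ref{derived-C-to-C}, is to note that the action of any $a\in J+h\cdot R(A\otimes A^{\op})$ on the Rees lift of $M_1\otimes_A P_\bullet$ is null-homotopic modulo $h$ (because it already kills $R(M_1)$ mod $h$ via the Leibniz identity), hence annihilates the induced lattices on the Tor groups; this gives the support condition on cohomology without requiring the intermediate terms to be Harish-Chandra. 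Separately, the implication ``$A$ has finite global dimension $\Rightarrow$ $A\otimes A^{\op}$ does'' is not automatic for filtered algebras with singular associated graded, and is in any case unnecessary: finite global dimension of $A$ alone bounds $\Lotimes_A$. Finally, the compatibility with $\Rsecs$ should be stated as a natural lax monoidal transformation coming from the projection formula for $Rp_{13*}$; invoking derived localization (even after a shift) imports a hypothesis the proposition does not make.
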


Let $\cH$ be an object of $\HCg$ and let $\cN$ be an object of $\CLg$.
Let $d = \frac 1 2 \dim\fM$.
In \cite[\S 6.2]{BLPWquant} we constructed maps\footnote{We will review the definition of these maps in Section \ref{gco-grothendieck}.}
$$\suppc: K(\HCg)\to \HZZ
\and
\suppc:  K(\CLg)\to \HLZ,$$
and we proved the following result \cite[6.15 \& 6.16]{BLPWquant}.
  
\begin{proposition}\label{categorification}
The map $\suppc$ intertwines the monoidal structure on $\dHCg$ with the convolution product
on $\HZZ$, and it also intertwines the action of $\dHCg$ on $\DCLg$ with the convolution action of $\HZZ$ on $\HLZ$.
\end{proposition}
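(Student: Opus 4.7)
The plan is to verify both compatibilities by unwinding the definition of $\suppc$ in terms of classical limits and comparing with the geometric definition of convolution. Given a good lattice on a Harish-Chandra bimodule $\cH$ (resp.\ on an object $\cN$ of $\CLg$), one obtains a coherent sheaf on $\fZ \subset \fM\times\fM$ (resp.\ on $\fL \subset \fM$), and $\suppc$ records the fundamental classes of its top-dimensional components, weighted by generic ranks.

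For two Harish-Chandra bimodules $\cH_1, \cH_2 \in \HCg$, I would choose lattices $\cH_i(0)$ with classical limits $\bar\cH_i$, coherent sheaves supported on $\fZ$, and build via the Rees module formalism a natural lattice on the derived convolution $\cH_1 \otimes^L_\cD \cH_2$ by Rees-ing a flat resolution. Its classical limit is (up to Tor-corrections coming from the failure of the Rees construction to be exact) the geometric derived convolution $Rp_{13*}\bigl(p_{12}^*\bar\cH_1 \otimes^L_{\cO_{\fM^3}} p_{23}^*\bar\cH_2\bigr)$ on $\fM\times\fM$. The core step is then to show that the $\bS$-equivariant Borel--Moore class of this derived convolution, computed as an alternating sum of characteristic cycles of cohomology sheaves, equals $\suppc(\cH_1) \cdot \suppc(\cH_2)$ in $\HZZ$. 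This is a standard property of the cycle map in equivariant coherent sheaf theory: it intertwines derived tensor product with intersection product on Borel--Moore homology and is compatible with proper pushforward.

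The main obstacle is controlling the failure of transversality of the two copies of $\fZ$ in $\fM^3$, which would a priori require an excess intersection formula to relate the derived tensor product of structure sheaves to the fundamental class of the geometric convolution. I would handle this by showing that the higher Tor contributions are set-theoretically supported on the scheme-theoretic intersection $p_{12}^{-1}\fZ \cap p_{23}^{-1}\fZ$; since the geometric convolution has the expected dimension (by properness of the Steinberg maps and Lagrangianness of $\fZ$), these Tor sheaves either project onto top-dimensional components with the correct multiplicities prescribed by the local Koszul computation or collapse to lower-dimensional strata and vanish in $\HZZ$ for dimension reasons. This parallels the Chriss--Ginzburg convolution algebra formalism for the classical Steinberg variety.

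Finally, the compatibility of the monoidal action of $\dHCg$ on $\DCLg$ with the convolution action of $\HZZ$ on $\HLZ$ follows from the same argument, replacing $\cH_2$ by $\cN \in \CLg$ and the correspondence $\fZ \times_{\fM} \fZ$ by $\fZ \times_{\fM} \fL \subset \fM \times \fM$; the Rees-module construction, the identification of the classical limit of the derived tensor product, and the Tor-vanishing argument all transfer verbatim, so no new ideas are required.
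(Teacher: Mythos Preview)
The paper does not give a proof of this proposition here; it simply records that the result was established in \cite[6.15 \& 6.16]{BLPWquant}. So there is no in-paper argument to compare against directly. That said, one can compare your approach to the framework the paper sets up in Section~\ref{gco-grothendieck} and to the Kashiwara--Schapira machinery it invokes.

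Your strategy works entirely through the lattice/classical-limit description of $\suppc$, i.e.\ the formula $\suppc(\cN)=\sum_i \rk_{\fL_i}(\cN(0)/\cN(-1))\cdot[\fL_i]$. The paper, however, takes this formula as a \emph{consequence} (via \cite[7.3.5]{KSdq}) of the primary definition of $\suppc$ as the image of the identity under the trace map $\sHom^\bullet_\cD(\cN,\cN)\to \mathcal{HH}(\cD^{\an})\cong \C_\fM[\dim\fM]\pphpp$. The proof in \cite{BLPWquant} (and in \cite{KSdq}, on which it is modeled) exploits this Hochschild-homology definition directly: compatibility with convolution becomes the functoriality of the Euler class under composition and pushforward, which is a formal consequence of the trace formalism for dualizable objects. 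This sidesteps entirely the transversality and excess-intersection issues you identify as the main obstacle.

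Your approach is not wrong in spirit, but the step you flag as ``the main obstacle'' is a genuine gap as written. The assertion that higher Tor sheaves ``either project onto top-dimensional components with the correct multiplicities prescribed by the local Koszul computation or collapse to lower-dimensional strata'' is exactly the content that needs proof, and the Chriss--Ginzburg argument you allude to is carried out there for constructible sheaves on the classical Steinberg variety, not for coherent sheaves arising as classical limits of $\cD$-modules. Making your argument rigorous would require either redoing a coherent-sheaf version of that analysis or, more efficiently, passing through the Hochschild trace definition---at which point you have essentially reconstructed the Kashiwara--Schapira proof.
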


\begin{remark}
The statements in \cite[\S 6.2]{BLPWquant} are somewhat more technical than what we have stated above,
because there we consider all quantizations at once.  That is, for any pair $\la,\la'\in\Ht$, we define what it means
for an $(A_\la, A_{\la'})$-bimodule or a $(\cD_\la, \cD_{\la'})$-bimodule to be Harish-Chandra, and so on.
We will in fact need this stronger version when we discuss twisting functors in Section \ref{twisting-gco}, but for the purposes
of this summary we have elected to keep things clearer by fixing a particular quantization.
\end{remark}

\section{The categories \texorpdfstring{$\cOa$}{Oa} and \texorpdfstring{$\cOg$}{Og}}\label{sec:gco}
To define the categories $\cOa$ and $\cOg$ we need one more piece of geometric structure,
namely a Hamiltonian action of the multiplicative group $\bT\cong\cs$, commuting with the action of $\bS$, 
such that $\fM^\bT$ is finite.
First, let us make some observations about the integral cohomology of such
a symplectic resolution.
\begin{proposition}\label{no-torsion}
  If $\fM$ is a conical symplectic resolution that admits a
  $\bT$-action as above, then $H^*(\fM;\Z)$ is
  torsion-free and concentrated in even degrees.
\end{proposition}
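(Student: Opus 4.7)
The plan is to exhibit a cell decomposition of $\fM$ by complex affine cells via Bialynicki-Birula (BB) theory applied to a suitable cocharacter of $\bT \times \bS$, then deduce torsion-freeness and even-degree concentration by a standard induction. Concretely, I would choose $\nu \colon \cs \to \bT \times \bS$ whose $\bT$-component is a generic cocharacter of $\bT$ and whose $\bS$-component is a sufficiently large positive multiple of the generator of $\bS$. Genericity forces $\fM^\nu = \fM^{\bT \times \bS} \subseteq \fM^\bT$, which is finite. The largeness of the $\bS$-component guarantees that every point of $\fM$ has a limit under $\nu$ as $t \to 0$: the $\bS$-action contracts $\fM_0$ to the cone point $o$ (since $\C[\fM]$ has non-negative $\bS$-weights with only constants in weight $0$), and properness of $\pi \colon \fM \to \fM_0$ forces $\bS$-limits to exist in the projective fiber $\pi^{-1}(o)$; a bounded $\bT$-perturbation preserves this.

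Since $\fM$ is smooth and $\nu$ has isolated fixed points, the attracting sets $X_p^+ := \{x : \lim_{t \to 0} \nu(t) \cdot x = p\}$ are complex affine spaces of dimension equal to the number of positive $\nu$-weights on $T_p\fM$, and they partition $\fM$. Filtrability of such ``plus'' BB decompositions (available because every $\nu$-orbit has a limit at $0$) lets me order the fixed points $p_1, \ldots, p_N$ so that $F_k := X_{p_1}^+ \sqcup \cdots \sqcup X_{p_k}^+$ is closed in $\fM$ and $F_k \smallsetminus F_{k-1} = X_{p_k}^+$. By excision plus the Thom isomorphism, $H^i(F_k, F_{k-1}; \Z) \cong H_c^i(X_{p_k}^+; \Z)$ is a single copy of $\Z$ in a single even degree. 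Induction on $k$ using the long exact sequence of the pair $(F_k, F_{k-1})$ then gives the result: at each stage the connecting maps vanish for parity reasons, so $H^*(F_k; \Z)$ remains torsion-free and concentrated in even degrees, with $F_N = \fM$.

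The main obstacle is verifying filtrability of the BB decomposition in this quasi-projective setting, since the classical filtrability statements are usually phrased for projective varieties. The conical structure supplies what is needed: $\fM$ admits a proper $\bS$-equivariant contraction to an affine cone collapsing to a single point, which is enough to emulate the projective argument by, for instance, passing to an equivariant compactification and cutting down.
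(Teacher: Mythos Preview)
Your argument is correct, and the filtrability concern is genuinely addressable: in this ``semiprojective'' situation (smooth, proper over an affine cone that the chosen $\cs$ contracts to a point), the plus-decomposition is filtrable. Indeed, the paper itself uses exactly this kind of filtrability later (see the proof of the lemma preceding Theorem~\ref{support isomorphism}, where an ordering of the cells $X^\circ_\alpha$ refining $\leftharpoonup$ is invoked so that each partial union is closed). One small cosmetic point: the cleanest way to run your induction is in Borel-Moore homology, using the long exact sequence for the closed-open pair $(F_{k-1}, X_{p_k}^+)$ inside $F_k$, and then invoking Poincar\'e duality at the end; your phrasing via $H^i(F_k,F_{k-1})$ and $H^i_c$ is fine but hides this.

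The paper's route is different in organization, though the underlying geometry is the same. Rather than combining $\bT$ and $\bS$ into a single cocharacter, it first passes to Borel-Moore homology and invokes a result of Nakajima \cite[7.1.5]{Na} to reduce the statement for $\fM$ to the analogous statement for the $\bS$-fixed locus $\fM^{\bS}$. The point is that $\fM^{\bS}$ is smooth and \emph{projective} (it lies in the fiber over the cone point), and $\bT$ still acts on it with isolated fixed points; so the classical projective Bia{\l}ynicki-Birula theorem applies without any quasi-projective caveats. In effect the paper splits your single BB step into two: one for $\bS$ (packaged as Nakajima's lemma) and one for $\bT$ on a projective variety. Your approach is more self-contained and avoids the external citation; the paper's approach avoids having to discuss filtrability in the non-projective setting.
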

\begin{proof}
By Poincar\'e duality, we can instead consider the Borel-Moore
homology of the same variety.
By \cite[7.1.5]{Na}, it suffices to show that the same is true of the smooth
projective variety
$\fM^{\bS}$.  The action of $\bT$ preserves $ \fM^{\bS}$; thus, this
projective variety has a torus action with isolated fixed points.  The
Bia{\l}ynicki-Birula decomposition of $\fM^{\bS}$ shows that it has
even torsion-free cohomology.
\end{proof}

\vspace{-\baselineskip}
\begin{remark}
The analogous result is shown for quiver varieties even when they
don't have a $\bT$-action with finite fixed-point set in \cite[7.3.5]{Na}.
\end{remark}

The action of $\bT$ lifts canonically to an action on $\cQ$, where $\bT$ fixes $h$.
By \cite[3.11]{BLPWquant}, there exists an element $\xi$ of $A(n)\subset A$, unique up to translation by $A(0)\cong \C$, 
such that the endomorphism of $\cD$ induced by the generator
of the Lie algebra $\bt := \operatorname{Lie}(\bT)$ is given by conjugation with $\xi$.

\begin{remark}
For any choice of $\xi$, the image $\bar\xi$ of $\xi$ in $A(n)/A(n-1)\subset\gr A\cong \C[\fM]$ is 
the unique $\bS$-equivariant moment map for the action of $\bT$ on $\fM$.  
Another way to say this is to note that $\xi$ induces a homomorphism
from $U(\bt)$ to $A$, and the associated graded of this homomorphism is the co-moment map.
\end{remark}

\subsection{The relative core}\label{sec:rel-core}
Choose an indexing set $\cI$ for the $\bT$-fixed points of $\fM$, so that
$\fM^\bT = \{p_\a\mid \a\in\cI\}$.
For each $\a\in\cI$, let $X_\a\subset \fM$ be the closure of the set 
$$X_\a^\circ := \big\{p\in \fM\mid \lim_{\bT\ni t\to 0}t\cdot p = p_\a\big\},$$ and let $\fM^+ := \bigcup X_\a$.
The set $\fM^+$ is called the {\bf relative core} of $\fM$.

The fact that the action of $\bT$ preserves the symplectic form 
implies that each $X_\a$ is Lagrangian (though possibly singular), and the open subvariety $X^\circ_\a$ is isomorphic to $d$-dimensional affine space.

In the affine variety $\fM_0$, let
 \[\fM^+_0 := \big\{p\in \fM_0\mid \lim_{\bT\ni t\to 0}t\cdot p = o\big\}\] be the locus of points that limit to the unique $\bT$-fixed point $o\in\fM_0$.  Since $\fM$ is projective over $\fM_0$, a point in $\fM$ has a limit if and only if its image in $\fM_0$ does; it follows that $\fM^+ = \bigcup X^\circ_\a$
is the preimage of $\fM^+_0$.

Let $X_{\a,0}$ be the image of $X_\a$ in $\fM_0$.

\begin{example}
If $\fM$ is a crepant resolution of $\C^2/\mck$ with $\mck\cong \Z/k\Z$, 
the relative core components $\{X_\a\mid \a\in\cI\}$ 
will consist of a chain of $k-1$ projective lines, along with a copy of $\C$ at one end of the chain.  
If $\fM$ is $T^*(G/P)$, they will
be the conormal varieties to the Schubert strata of $G/P$.  If $\fM$ is a hypertoric variety, 
they will all be toric varieties.  The cotangent bundle of $\mathbb{P}^1$ is a special case of
all three of these examples; in this case we have two subvarieties: the zero section and one of the fibers.
\end{example}

\begin{remark}\label{core}
The preimage of $o\in \fM_0$ in $\fM$ is called the {\bf core}, and is a subset of $\fM^+$ consisting of
the union of all of the projective components.  Our requirement that $\{o\}$ is a symplectic leaf of $\fM_0$ 
guarantees that the core is a Lagrangian subvariety of $\fM$.
Note that the core is independent of the choice of $\bT$-action,
while the relative core depends on this choice.
\end{remark}

Let $J\subset\C[\fM]$ be the ideal in the coordinate ring of $\fM$ generated by functions of non-negative $\T$-weight
and $\bS$-weight greater than or equal to $n$.

\begin{lemma}\label{vanishing}
The relative core $\fM^+\subset\fM$ is the vanishing locus of $J$.
\end{lemma}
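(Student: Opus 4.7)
The plan is to reduce to the assertion that $\fM_0^+ = V(J)$ in $\fM_0$, since the excerpt already observes that $\fM^+$ is the preimage of $\fM_0^+$ under the projective map $\pi \colon \fM\to \fM_0$, and $J$ may be regarded as an ideal in $\C[\fM_0] = \C[\fM]$ with vanishing locus in $\fM$ equal to the preimage of its vanishing locus in $\fM_0$. Throughout I would exploit the decomposition of $\C[\fM_0]$ into $(\bT,\bS)$-bi-weight spaces (since $\bT\times\bS$ is a torus), writing $f_{w,k}$ for the component of $\bT$-weight $w$ and $\bS$-weight $k$. Note $k\geq 0$ always, and $f_{w,0}$ is constant, hence zero unless $w=0$, by the assumption $\C[\fM_0]^\bS = \C$.

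For the inclusion $\fM_0^+ \subseteq V(J)$, I would take $p\in \fM_0^+$ and a bi-homogeneous generator $f\in J$ of weights $w\geq 0$ and $k\geq n$. Continuity of $f$ together with $\lim_{t\to 0} t\cdot p = o$ forces $\lim_{t\to 0} f(t\cdot p) = f(o) = 0$, using that $o$ is $\bS$-fixed and $f$ has strictly positive $\bS$-weight. Since $f(t\cdot p) = t^{-w}f(p)$, this yields $f(p) = 0$ whether $w>0$ (to prevent divergence) or $w = 0$ (directly from the limit).

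For the reverse inclusion $V(J) \subseteq \fM_0^+$, I would start with $p\in V(J)$ and aim to show that $g(t\cdot p)$ has a limit as $t\to 0$ for every $g\in \C[\fM_0]$; the assignment $g \mapsto \lim_{t\to 0} g(t\cdot p)$ is then a ring homomorphism $\C[\fM_0]\to \C$, i.e.\ a point, and this point is automatically $\bT$-fixed, hence equal to $o$ since $o$ is the unique $\bT$-fixed point in $\fM_0$. Expanding $g(t\cdot p) = \sum_{w,k} t^{-w}g_{w,k}(p)$, existence of the limit amounts to showing $g_{w,k}(p) = 0$ whenever $w>0$. The case $k=0$ is automatic as noted, so assume $k\geq 1$. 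Here the key trick is that $g_{w,k}^n$ is bi-homogeneous of weights $(nw, nk)$ with $nw \geq 0$ and $nk \geq n$, so $g_{w,k}^n\in J$, and reducedness of $\C[\fM_0]$ then gives $g_{w,k}(p)=0$ from $g_{w,k}^n(p)=0$.

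The step I expect to be the crux is the $n$-th power trick in the reverse inclusion: the generators of $J$ directly control only bi-homogeneous functions of $\bS$-weight at least $n$, whereas the existence of the $\bT$-limit requires killing every positive-$\bT$-weight function of arbitrary positive $\bS$-weight at $p$. Raising to the $n$-th power and invoking reducedness of $\C[\fM_0]$ is what closes this gap cleanly.
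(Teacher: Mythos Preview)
Your proof is correct and follows essentially the same route as the paper: both directions match the paper's argument (vanishing via the $\bT$-limit to the cone point for $\fM^+\subseteq V(J)$, and the power trick to boost $\bS$-weight past $n$ for the reverse inclusion), the only cosmetic difference being that you reduce everything to $\fM_0$ at the outset while the paper argues the first inclusion in $\fM$ using the core.
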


\begin{proof}
Let $f\in\C[\fM]$ be a function of non-negative $\T$-weight and
$\bS$-weight greater than or equal to $n$.
Then $f$ vanishes on $\bS$-fixed points, and the core (being projective) contains at least one such
point.  Thus $f$ vanishes on the entire core.  For any $p\in\fM^+$, 
$$f(p) = \lim_{t\to 0}(t\cdot f)(t\cdot p) = 0,$$
since $t\cdot f$ is approaching either $f$ or $0$, and $t\cdot p$ is approaching an element
of the core.  Thus $f$ vanishes on all of $\fM^+$, so
$\fM^+$ is contained in the vanishing locus of $J$.
 
Now suppose that $p\in\fM\smallsetminus\fM^+$;
we must produce an element of $J$ that does not vanish at $p$.
Let $p_0 \in \fM_0\smallsetminus\fM^+_0$ be the image of $p$.
Since the limit as $t$ goes to zero of $t\cdot p_0$ does not exist, there must exist
a function $f\in\C[\fM_0]\cong\C[\fM]$ such that
$$\lim_{\bT\ni t\to 0}f(t\cdot p_0) = \lim_{\bT\ni t\to 0}(t^{-1}\cdot f)(p_0)$$ does not exist;
if we require $f$ to be a $\bS\times\bT$-weight function, this means that $f$ has positive $\bT$-weight and
does not vanish at $p_0$.  Since it has positive $\bT$-weight, it is non-constant, 
and therefore has positive $\bS$-weight; taking a power, we
may assume its $\bS$-weight is at least $n$.
\end{proof}

\begin{remark}\label{scheme structure}
Until now we have only defined $\fM^+\subset\fM$ and
$\fM^+_0\subset\fM_0$ as subsets; we will now endow them with
subscheme structures given by the ideal $J$, as suggested by
Lemma \ref{vanishing}.
\end{remark}

Recall that $$\bar\xi\in A(n)/A(n-1)\subset \gr A\cong \C[\fM]$$ is defined as the symbol of $\xi\in A(n)$.
We define a {\bf $\bar\xi$-equivariant coherent sheaf} on $\fM$ to be a coherent sheaf $F$ along 
with an endomorphism  $d:F\to F$ such that, for all locally defined sections $v$ and
functions $f$, we have $d(fv)=\{\bar\xi,f\}v+fd(v)$, where $\{,\!\}$ is the Poisson bracket on $\C[\fM]$.
This definition is motivated by the following lemma.

\begin{lemma}\label{classical limit}
Let $\cN$ be a good $\cD$-module with a coherent lattice $\cN(0)\subset\cN$ that is preserved by $\xi$.\footnote{Note
that $\xi\in A(n)$, so {\em a priori} we only know that $\xi\cdot\cN(0)\subset\cN(n)$; here we are assuming
that $\xi\cdot\cN(0)\subset\cN(0)$.}
Let $\bar \cN := \cN(0)/\cN(-1)$ be the small classical limit.  
Then the action of $\xi$ defines a $\bar\xi$-equivariant structure
on $\bar\cN$.  Furthermore, the $\C[\fM]$-module $\Gamma(\bar\cN)$ is isomorphic to the associated graded of 
the filtered $A$-module $\secs(\cN)$ as a module-with-endomorphism.
\end{lemma}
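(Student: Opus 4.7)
The plan is to prove the two statements in sequence.

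For the first statement, write $\xi = h^{-1}\tilde\xi$, where $\tilde\xi\in\Gamma(\cQ)$ is $\bS$-invariant of $\bS$-weight $n$ and has image $\bar\xi \in \fS_{\fM}\cong\cQ/h\cQ$ equal to the moment map. Since $h^{\nicefrac{1}{n}}$ is central in $\cD$ and $\xi$ preserves $\cN(0)$ by hypothesis, $\xi$ automatically preserves $\cN(-1) = h^{\nicefrac{1}{n}}\cN(0)$ and so descends to a $\C$-linear endomorphism $d$ of $\bar\cN$. To verify the Leibniz rule $d(fv) = \{\bar\xi,f\}v + f\,d(v)$, lift $v\in\bar\cN$ to $\tilde v\in\cN(0)$ and $f\in\fS_{\fM}$ to $\tilde f\in\cQ$; then
\[
\xi(\tilde f\,\tilde v) - \tilde f\,\xi(\tilde v) = [\xi,\tilde f]\,\tilde v = h^{-1}[\tilde\xi,\tilde f]\,\tilde v.
\]
The defining axiom of a quantization asserts $h^{-1}[\tilde\xi,\tilde f]\equiv\{\bar\xi,f\}\pmod{h\cQ}$, and since $h\cQ\subset\cD(-1)$, reducing modulo $\cN(-1)$ yields the desired formula.

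For the second statement, note that $\cN(m)=h^{-\nicefrac{m}{n}}\cN(0)$ and $\bS$ acts on $h^{\nicefrac{1}{n}}$ with weight $1$, so an $\bS$-invariant section of $\cN(m)$ is the same data as a weight-$m$ section of $\cN(0)$; thus $N(m):=\secs(\cN(m))\cong \Gamma(\cN(0))_m$, with the inclusion $N(m-1)\hookrightarrow N(m)$ identified with multiplication by $h^{\nicefrac{1}{n}}\colon\Gamma(\cN(0))_{m-1}\to\Gamma(\cN(0))_m$. Consequently
\[
\gr_m N \;\cong\; \Gamma(\cN(0))_m\,\big/\,h^{\nicefrac{1}{n}}\Gamma(\cN(0))_{m-1},
\]
which is the weight-$m$ piece of $\Gamma(\bar\cN)$ once one knows that $\Gamma$ commutes with the quotient $\cN(0)\twoheadrightarrow\bar\cN$. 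In Rees-module language from Section~\ref{sec:localization}, this is simply the mod-$h^{\nicefrac{1}{n}}$ reduction of the identification $R(N)\cong\Gamma(\cN(0))$; summing over $m$ yields a $\C[\fM]$-linear isomorphism $\gr N \cong \Gamma(\bar\cN)$.

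Finally, for the endomorphism comparison, the hypothesis that $\xi$ preserves $\cN(0)$, together with the $\bS$-invariance of $\xi$, forces $\xi$ to preserve each weight space $\Gamma(\cN(0))_m$. Thus $\xi\cdot N(m)\subset N(m)$ (not merely $N(m+n)$), so the induced action of $\xi$ on $\gr_m N$ is weight-preserving, and under our identification with $\Gamma(\bar\cN)_m$ it coincides with $d$. The main obstacle is the commutator identity and its reduction modulo $\cN(-1)$, which requires careful tracking of the $h^{\nicefrac{1}{n}}$-filtration and the $\bS$-weights; a secondary bookkeeping point, handled by the Rees-module formalism and the coherence of $\cN(0)$, is the compatibility of $\Gamma$ with the quotient $\cN(0)\twoheadrightarrow\bar\cN$.
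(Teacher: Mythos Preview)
Your proof is correct and takes essentially the same approach as the paper. For the first statement, the paper performs the same commutator computation but keeps everything in $\cD(0)$, noting that $[\xi,\tilde f]$ lands in $\cD(0)$ because commutators drop the filtration by $n$; your version, writing $\xi=h^{-1}\tilde\xi$ with $\tilde\xi\in\Gamma(\cQ)$ and invoking the quantization axiom directly, is equivalent (one wording slip: ``$\bS$-invariant of $\bS$-weight $n$'' should simply read ``of $\bS$-weight $n$''). For the second statement the paper says only that it ``follows formally,'' so your Rees-module/weight-space unpacking actually supplies more detail than the paper gives.
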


\begin{proof}
The action of $\xi$ clearly descends to an endomorphism $d:\bar\cN\to\bar\cN$.
Let $f$ be a function on $\fM$, and lift it to a section $\tilde f$ of $\cD(0)$.
Let $v$ be a section of $\bar\cN$, and lift it to a section $\tilde v$ of $\cN(0)$.
Then $d(fv)$ is the image in $\bar\cN$ of the section
$\xi\tilde f\tilde v = [\xi,\tilde f]\tilde v + \tilde f \xi \tilde v$ of $\cN(0)$.
{\em A priori}, $[\xi,\tilde f]$ is a section of $\cD(n) = h^{-1}\cD(0)$, but since it is a commutator,
it in fact lies in $\cD(0)$, and it descends to the function $\{\bar\xi,f\}$; this proves that $\bar\cN$ is $\bar\xi$-equivariant.
The statement about the associated graded follows formally. 
\end{proof}

\begin{lemma}\label{coherent-fd}
For any $\bar\xi$-equivariant coherent sheaf $F$ on $\fM$ which is
set-theoretically supported on $\fM^+$, the generalized $\bar\xi$-eigenspaces 
of $\Gamma(\fM;F)$ are finite dimensional and the real parts of the eigenvalues are bounded above.
\end{lemma}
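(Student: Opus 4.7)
The plan is to reduce the statement to a purely algebraic problem about a finitely generated module over the $\bS \times \bT$-bigraded ring $\C[\fM^+_0] := \C[\fM_0]/J$ equipped with a derivation along the Hamiltonian vector field $V_{\bar\xi}$, and then to read off the eigenspace structure from that bigrading.

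First, I would pass to the $J$-adic filtration $F \supset JF \supset J^2F \supset \cdots$, where $J$ is the ideal of $\fM^+$ from Lemma \ref{vanishing}. Each successive quotient remains $\bar\xi$-equivariant because $V_{\bar\xi}$ preserves $J$: the ideal $J$ is generated by $\bT$-weight vectors, and $V_{\bar\xi}$ acts on such a weight-$k$ vector by multiplication by $k$. Since $F$ is coherent and annihilated by some power of $J$, this reduces the problem to the case $JF = 0$, where $F$ is scheme-theoretically supported on $\fM^+$. Projectivity of $\pi\colon \fM^+ \to \fM^+_0$ then makes $M := \Gamma(\fM; F)$ a finitely generated $\C[\fM^+_0]$-module. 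The Leibniz rule yields the shift identity $(d-(\mu+k))(fm) = f(d-\mu)m$, so that a weight-$k$ element of $\C[\fM^+_0]$ carries a generalized $\mu$-eigenvector of $d$ into the generalized $(\mu+k)$-eigenspace. In particular, for any $\mu \in \C$, the subspace $M_{\bar\mu} := \bigoplus_{\mu' \in \mu + \Z} M_{\mu'}$ is a $\C[\fM^+_0]$-submodule of $M$, hence finitely generated by Noetherianity.

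The main step, which I expect to be the heart of the argument, is to establish the following structural fact about $\C[\fM^+_0]$: its $\bT$-weight spaces are finite-dimensional, and its $\bT$-weights are bounded above. Boundedness is an immediate consequence of the definition of $J$: the bigraded piece $\C[\fM^+_0]_{m,k}$ vanishes whenever $m \geq n$ and $k \geq 0$, so the entire nonnegative-$\bT$-weight part of $\C[\fM^+_0]$ sits inside the finite-dimensional subspace $\bigoplus_{m < n} \C[\fM_0]_m$. Finite-dimensionality exploits the fact that $\bT$ contracts $\fM^+_{0,\mathrm{red}}$ onto the point $o$: a function of strictly positive $\bT$-weight on $\fM^+_{0,\mathrm{red}}$ would blow up along any orbit approaching $o$ and must therefore vanish, and similarly the $\bT$-invariants coincide with constants. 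Hence $\C[\fM^+_{0,\mathrm{red}}]$ is a finitely generated $\N$-graded $\C$-algebra (via $-\bT$-weight) with $\C$ in degree zero, which automatically has finite-dimensional graded pieces. The nilradical is a finitely generated module over $\C[\fM^+_{0,\mathrm{red}}]$, so it has finite-dimensional $\bT$-weight spaces as well, and these combine to give the claim for $\C[\fM^+_0]$.

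The conclusion is then formal. Choose finitely many generators of $M_{\bar\mu}$ over $\C[\fM^+_0]$ and, using that $M_{\bar\mu} = \bigoplus_{\mu' \in \mu + \Z} M_{\mu'}$ is a direct sum, decompose each generator into its finitely many generalized-eigenvector components; this produces weight-vector generators $m_1, \ldots, m_t$ with $d$-eigenvalues $\mu_1, \ldots, \mu_t$. Then $M_{\mu'} = \sum_i \C[\fM^+_0]_{\mu' - \mu_i} \cdot m_i$, which is finite-dimensional by the previous paragraph and vanishes once $\mu' - \mu_i$ exceeds the maximum positive $\bT$-weight appearing in $\C[\fM^+_0]$ for every $i$. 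This yields both the finite-dimensionality of each generalized eigenspace and the bounded-above-ness of the real parts of the eigenvalues.
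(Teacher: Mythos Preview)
Your proof is correct in strategy and very close to the paper's: both reduce via a filtration to the case of scheme-theoretic support, push forward along the projective map to an affine base that is contracted by $\bT$ to the cone point $o$, and then read off the conclusion from the $\bT$-grading on the coordinate ring of that base. The paper filters $F$ so as to land on a single relative core component $X_\a$ and then works over $\C[X_{\a,0}]$; you instead filter by powers of $J$ and work over $\C[\fM^+_0]$ all at once. Your route is slightly more direct (no need to separate the components), and your explicit argument via the submodules $M_{\bar\mu}$ is a bit more careful than the paper's one-line claim that $\pi_*F$ is a quotient of $\fS_{X_{\a,0}}\otimes W$ for a finite-dimensional $\bar\xi$-stable $W$.

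There is one small gap in your final paragraph. You establish that each $M_{\mu'}$ is finite-dimensional and that, within a fixed coset $\mu+\Z$, the eigenvalues are bounded above by $\max_i \Re(\mu_i)+K$; but the $\mu_i$ come from generators of $M_{\bar\mu}$ and hence depend on the coset, so this does not yet give a uniform bound. The fix is immediate: the $M_{\bar\mu}$ for distinct cosets are independent $\C[\fM^+_0]$-submodules of the Noetherian module $M$, so an ascending chain argument shows only finitely many cosets have $M_{\bar\mu}\neq 0$, and the uniform bound follows.
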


\begin{proof}
The condition holds for a sheaf if it holds for the
successive quotients of a filtration of the sheaf, thus we may
assume that $F$ is scheme-theoretically supported on a single relative core component $X_\a$.
Since $X_{\a,0}$ is affine, the pushforward
of $F$ to $X_{\a,0}$ is a quotient of the tensor product of the structure sheaf
of $X_{\a,0}$ with some finite-dimensional $\bar\xi$-module $W$.  
Thus we only need
to prove that $\bar\xi$ acts on the coordinate ring of $X_{\a,0}$ with finite dimensional generalized
eigenspaces, and that the eigenvalues that appear (all of which are integers, since the $\bar\xi$-action 
comes from the $\bT$-action) are bounded above.
This follows from the fact that every element of $X_{\a,0}$ limits to $o$ under the $\bT$-action.
\end{proof}

\subsection{The category \texorpdfstring{$\cOa$}{Oa}}\label{sec:Oa}
The action of $\bT$ on $\cQ$ induces an integer grading of $A$, where the $k^\text{th}$ graded piece is 
$A^k := \{a\in A\mid [\xi,a] = ka\}$.
Let $$A^+ := \bigoplus_{k\geq 0}A^k.$$
\begin{definition}
  We define {\bf algebraic category $\cO$} to be the full subcategory
  $\cOa$ of finitely generated $A$-modules for which the subalgebra
  $A^+$ acts locally finitely.  We define $\dOa$ to be the full subcategory
  of objects of $D^b(\Amod)$ with cohomology in $\cOa$.
\end{definition}

\begin{remark}\label{BGG-Oa}
Let $\mg$ be a simple Lie algebra with Borel subalgebra $\mb$ and Cartan subalgebra $\mh$.
An infinitesimal block of the classical BGG category $\cO$ is by definition the full subcategory of finitely generated $U(\mg)$
modules for which $U(\mb)$ acts locally finitely, $U(\mh)$ acts semisimply, and the center
of $U(\mg)$ acts with a fixed generalized character.  It is a theorem
of Soergel \cite[Theorem 1]{Soe86} that, in the case
of a regular character, this is equivalent to the category obtained by dropping the condition that $U(\mh)$
acts semisimply but adding the condition that the center of $U(\mg)$ acts with a fixed honest character.

In our setup, $A$ is the analogue of a central quotient of $U(\mg)$, 
$A^+$ is the analogue of $U(\mb)$, and we have no analogue of $U(\mh)$.
When $\fM = T^*(G/B)$ and the period of the quantization is regular (that is, it has trivial stabilizer in the Namikawa Weyl group,
which in this case is the same as the usual Weyl group), 
our category $\cOa$ is equivalent to the corresponding infinitesimal block of BGG category $\cO$ by Soergel's theorem.
If the period is not regular, then our category will be genuinely different from the corresponding
infinitesimal block of BGG category $\cO$.
\end{remark}


\begin{lemma}\label{fg}
For all integers $k$, $A^k$ is finitely generated over $A^0$.
\end{lemma}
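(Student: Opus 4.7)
The plan is to reduce the statement to the analogous fact about the associated graded ring $\gr A \cong \C[\fM]$ and then lift through the $\bS$-filtration. The key input is that $\C[\fM]$ is a finitely generated $\C$-algebra --- a standard fact for conical symplectic resolutions --- carrying a $\Z$-grading by $\bT$-weight whose degree-zero part is $(\gr A)^0 = \C[\fM]^\bT$.

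The first step is to show that each weight space $(\gr A)^k = \C[\fM]^k$ is finitely generated over $(\gr A)^0$. This is a classical consequence of Hilbert finiteness for the reductive torus $\bT$: one identifies $\C[\fM]^k$ with the $\bT$-invariants in the twisted finitely generated $\C[\fM]$-module $\C[\fM] \otimes \C_{-k}$, and invariants of a finitely generated module under a reductive group form a finitely generated module over the invariant ring.

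Next, since $\bT$ commutes with $\bS$ and fixes $h$, the $\bT$-action preserves each $\cD(m)$, so $A(m) = \Gamma_\bS(\cD(m))$ decomposes into $\bT$-weight spaces, and this decomposition is compatible with the $\bS$-filtration (so the symbol map intertwines the $\bT$-weight decompositions of $A$ and $\gr A$). I would then choose $\bS$-homogeneous generators $\bar a_1, \ldots, \bar a_r$ of $(\gr A)^k$ over $(\gr A)^0$, of $\bS$-degrees $d_1, \ldots, d_r$, and lift each to an element $a_i \in A(d_i)^k$.

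The concluding step is induction on $\bS$-filtration degree: for $a \in A(m)^k$, write the symbol as $\bar a = \sum_i \bar b_i \bar a_i$ with $\bar b_i \in (\gr A)^0$ of $\bS$-degree $m - d_i$, lift each $\bar b_i$ to $b_i \in A(m - d_i)^0$, and observe that $a - \sum_i b_i a_i \in A(m-1)^k$. Since the filtration is non-negative with $A(0) = \C$, the induction terminates at the base case: for $k \neq 0$ one has $A(0)^k = 0$, while for $k = 0$ the ring $A^0$ trivially generates itself. The main obstacle is the classical finite-generation result for semi-invariants invoked in the first step; the filtered-to-graded lifting that follows is a routine argument.
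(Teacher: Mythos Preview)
Your proposal is correct and follows essentially the same approach as the paper: reduce to the associated graded $\gr A \cong \C[\fM]$, use finite generation of $\C[\fM]$ together with Hilbert finiteness for the reductive torus $\bT$ to conclude that each $\bT$-weight space is a finitely generated $\C[\fM]^\bT$-module, and then lift through the $\bS$-filtration. The paper states this in one sentence and leaves the filtered-to-graded lifting implicit, whereas you spell out the induction on filtration degree; both arguments are the same in substance.
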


\begin{proof}
This follows from the corresponding statement for $\gr A \cong \C[\fM]$, which is a consequence
of the fact that $\C[\fM]$ is finitely generated as a commutative algebra, and $\C[\fM]^\bT$ is finitely generated since $\bT$ is reductive.
\end{proof}

We call an $A$-module $N$ a {\bf weight module} if it decomposes into generalized weight spaces
for the action of $\xi\in A$.  More precisely, for any $\ell\in\C$, let 
$$N^\ell := \{x\in N\mid\,\,\text{there exists $q\in\Z$ such that $(\xi-\ell)^q\cdot x = 0$}\}.$$
Then $N$ is a weight module if and only if $N = \displaystyle\bigoplus_{\ell\in\C} N^\ell$.  Note that for all $k\in\Z$ and $\ell\in\C$, $A^k\cdot N^\ell \subset N^{k+\ell}$.

\begin{lemma}\label{fdba}
A finitely generated $A$-module $N$ lies in $\cOa$ if and only if $N$ is a weight module,
$N^\ell$ is finite dimensional for
all $\ell$, and $N^\ell=0$ for all $\ell$ with sufficiently large real part.
\end{lemma}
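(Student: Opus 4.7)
The plan is to prove both directions of the equivalence separately. The backward direction is a short direct verification, while the forward direction requires combining the finite generation of $N$ with the previous graded-structure lemma (Lemma \ref{fg}).

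For the backward direction, I would assume $N = \bigoplus_\ell N^\ell$ with $\dim N^\ell < \infty$ and $N^\ell = 0$ for $\Re(\ell)$ large. Any $v \in N$ decomposes into finitely many weight vectors, so I may reduce to $v \in N^\ell$. Then $A^+ \cdot v \subseteq \bigoplus_{k \geq 0} N^{k+\ell}$, which is a sum of finite-dimensional weight spaces, of which only finitely many are nonzero since $\Re(k+\ell) \to \infty$. This bounds the $A^+$-orbit and gives local finiteness.

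For the forward direction, I would fix a finite generating set $n_1, \ldots, n_s$ of $N$ and set $V_i := A^+ \cdot n_i$, which is finite-dimensional by the local finiteness hypothesis. Since $\xi \in A^0 \subseteq A^+$ preserves each $V_i$, the generalized eigenspace decomposition of $\xi$ on $V_i$ splits each $V_i$ into finitely many weight pieces $V_i^\mu$; in particular, $n_i$ itself lies in a sum of weight spaces, so $N$ is a weight module. Since $V_i$ is already $A^+$-stable, the identity $A = A^+ \oplus \bigoplus_{k<0} A^k$ gives $N = \sum_i A \cdot V_i = \sum_i \big(\bigoplus_{k\le 0} A^k\big)\cdot V_i$, so every weight of $N$ has real part at most $\max_i \max_\mu \Re(\mu)$, establishing the vanishing condition.

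The main obstacle is verifying that $N^\ell$ is finite-dimensional for each $\ell$, since a priori each $A^k$ is an infinite-dimensional space. To handle it, I would fix $\ell$ and observe that $N^\ell$ is spanned by the finitely many subspaces $A^{\ell - \mu} \cdot V_i^\mu$ as $i$ and $\mu$ range over the finite index set above. By Lemma \ref{fg}, each $A^{\ell - \mu}$ is finitely generated as a left $A^0$-module; choose generators $a_1, \ldots, a_m$, so that
\[ A^{\ell - \mu}\cdot V_i^\mu \;=\; \sum_{j=1}^m A^0 \cdot \bigl(a_j \cdot V_i^\mu\bigr). \]
Each $a_j \cdot V_i^\mu$ is finite-dimensional as the image of a finite-dimensional space, and for any $w$ in this image the subspace $A^0 \cdot w \subseteq A^+ \cdot w$ is finite-dimensional by the local finiteness of $A^+$. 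Summing the finitely many contributions gives $\dim N^\ell < \infty$ and completes the proof.
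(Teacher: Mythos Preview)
Your proof is correct and follows essentially the same approach as the paper's: both directions match, using in the forward direction that $\xi\in A^+$ acts locally finitely to get the weight decomposition, finite generation plus $A^+$-local finiteness to bound the weights above, and Lemma~\ref{fg} together with local finiteness for $A^0\subset A^+$ to get finite-dimensionality of each weight space. You have simply written out in detail what the paper compresses into three sentences.
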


\begin{proof}
First suppose that the three conditions are satisfied.  For any $x\in N^\ell$, 
$$A^+\cdot x\subset\bigoplus_{k\geq 0} N^{k+\ell},$$ which is finite dimensional.
Thus $A^+$ acts locally finitely, and $N\in\cOa$.

Conversely, suppose that $N\in\cOa$.  The fact that $N$ is a weight module follows
from the fact that $\xi\in A^+$ acts locally finitely.
The fact that $N^\ell = 0$ for all $\ell$ with sufficiently large real part follows from the fact
that $N$ is finitely generated, thus the vector space obtained by applying $A^+$
to a generating set is finite dimensional.
Finally, the fact that each generalized weight space of $N$ is finite dimensional
follows from Lemma \ref{fg} and the fact that $N$ is locally finite for the action of $A^0\subset A^+$.
\end{proof}

\begin{proposition}\label{Oa vs CMpa}
The category $\cOa$ is equal to the category $\CMpa$ of Definition \ref{def:CLa},
where we use the scheme structure on $\fM^+$ coming from Lemma \ref{vanishing}.
\end{proposition}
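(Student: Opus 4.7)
My plan is to prove the two inclusions separately, both leveraging the $\xi$-action and the moment-map analysis from Section~\ref{sec:rel-core}.

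For $\CMpa\subset\cOa$: the key observation is that $\bar\xi$ lies in $J$, since it has $\bT$-weight $0$ and $\bS$-weight $n$. Applying the defining condition of $\CMpa$ with $a=\xi$ and $k=n$ gives $\xi\cdot N(m)\subset N(m)$, so $\xi$ preserves each $N(m)$. Since $\gr N$ is finitely generated over $\gr A\cong\C[\fM]$ whose graded pieces are finite-dimensional, each $N(m)$ is finite-dimensional; hence $\xi$ acts locally finitely and we obtain a weight decomposition $N=\bigoplus_\ell N^\ell$. Moreover, $\gr N$ is a coherent $\C[\fM]$-module killed by $J$ (hence set-theoretically supported on $\fM^+$), and the $\bT$-action on $A$ induces a $\bar\xi$-derivation on $\gr N$ by a calculation analogous to that of Lemma~\ref{classical limit}. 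The argument of Lemma~\ref{coherent-fd}, applied to $\gr N$ viewed as a coherent sheaf on $\fM_0$ filtered by the images $X_{\a,0}$ of the relative core components, then shows that the generalized $\bar\xi$-eigenspaces of $\gr N$ are finite-dimensional with real parts bounded above. Since $\dim N^\ell=\dim(\gr N)^\ell$, the same bounds transfer to $N$, and Lemma~\ref{fdba} yields $N\in\cOa$.

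For $\cOa\subset\CMpa$: given $N\in\cOa$, Lemma~\ref{fdba} produces a weight decomposition with finite-dimensional weight spaces, with real parts of weights bounded above by some $\ell_{\max}$, and the set of weights is discrete (contained in finitely many $\Z$-cosets determined by the generators). Let $V$ be the finite-dimensional $A^+$-stable subspace generated by a finite generating set of $N$; we may take $V$ to be a direct sum of finitely many weight spaces. Define the good filtration $N(m):=A(m)\cdot V$. To verify the $\CMpa$ condition for $a\in A(k)$ with $\bar a\in J$, decompose $a=\sum_w a_w$ by $\bT$-weight (each $\bar{a}_w$ lies in $J$ since $J$ is $\bT$-stable). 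The description of $J$ as the ideal generated by functions of non-negative $\bT$-weight and $\bS$-weight at least $n$, combined with the elementary bound that $\bT$-weights of elements of $\C[\fM]_j$ are at least $-Cj$ for a constant $C$ depending only on $\fM$, yields $w\geq C(n-k)$ whenever $\bar{a}_w\neq 0$.

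For $w\geq 0$, $a_w\in A^+$ and so $a_w\cdot V\subset V\subset A(k-n)\cdot V=N(k-n)$, using $k\geq n$. For $w<0$, lift a factorization $\bar{a}_w=\sum\bar{f}_i\bar{g}_i$ in which each $\bar{g}_i$ is a generator of $J$ of $\bT$-weight $\geq 0$ and $\bS$-weight $s_i\geq n$: choose $\tilde{g}_i\in A^+$ with symbol $\bar{g}_i$ and $\tilde{f}_i\in A(k-s_i)$ with symbol $\bar{f}_i$; then $\tilde{g}_i\cdot V\subset V$ gives $\tilde{f}_i\tilde{g}_i\cdot V\subset A(k-s_i)\cdot V\subset N(k-n)$. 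The main obstacle is the lower-order error $a_w-\sum\tilde{f}_i\tilde{g}_i\in A(k-1)^w$, which a priori shifts $V$ only into $N(k-1)$ rather than the required $N(k-n)$. I would close this gap by induction on $k$, using at each stage the constraint $w\geq C(n-k)$ and the graded structure of $J$ to obtain compatible bounds. An alternative approach is to replace $N(m)$ with the weight-based filtration $\bigoplus_{\Re\ell\geq\ell_{\max}-Cm}N^\ell$, which satisfies the $\CMpa$ condition directly by a weight-shift calculation using the bound $w\geq C(n-k)$; the challenge then transfers to proving this filtration is good, which I would address by a cofinality comparison with the standard filtration together with Noetherianity of $N$.
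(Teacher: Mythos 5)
Your treatment of $\CMpa\subset\cOa$ is essentially the paper's argument: extract a $\xi$-stable filtration from the $\CMpa$ condition (using $\bar\xi\in J$), observe that $\gr N$ is killed by $J$ and hence set-theoretically supported on $\fM_0^+$, then apply Lemma \ref{coherent-fd} and Lemma \ref{fdba}. Applying Lemma \ref{coherent-fd} directly to $\gr N$ on $\fM_0$, rather than to its pullback to $\fM$, is a cosmetic variation, since the argument in that lemma already lives on the affine pieces $X_{\a,0}$.

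For $\cOa\subset\CMpa$, you set up the filtration $N(m)=A(m)\cdot V$ exactly as the paper does, but then attempt a head-on verification of the first bullet of Definition \ref{def:CLa} for \emph{every} homogeneous $a\in A(k)$ with symbol in $J$: you decompose by $\bT$-weight, lift a factorization $\bar a_w=\sum\bar f_i\bar g_i$, and are then stuck with the lower-order error $a_w-\sum\tilde f_i\tilde g_i\in A(k-1)$. This is a genuine gap, and the proposed induction on $k$ cannot close it: nothing forces the symbol of $a_w-\sum\tilde f_i\tilde g_i$ to lie in $J_{k-1}$, so the inductive hypothesis does not apply, and the bound $w\geq C(n-k)$ offers no way around this. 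The weight-based alternative filtration hides the same difficulty inside the claim that it is a good filtration.

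The missing idea is that the $\CMpa$ condition need only be checked on a generating set, which is why the Rees-algebra form of Definition \ref{def:CLa} is the one to use. The left ideal $J+h\cdot R(A)\subset R(A)$ is generated by $h$ together with lifts $g_i\in A^+(s_i)$ (with $s_i\geq n$) of finitely many generators $\bar g_i$ of $J$; such lifts exist inside $A^+$ precisely because the generators have non-negative $\bT$-weight. Once reduced to these, the verification is the commutator estimate
\begin{align*}
g_i\cdot N(m) &\subset A(m)g_i\cdot V+[g_i,A(m)]\cdot V\\
&\subset A(m)\cdot V+A(m+s_i-n)\cdot V=N(m+s_i-n),
\end{align*}
using that $V$ is $A^+$-stable and $s_i\geq n$. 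This is exactly what the paper does, phrased in the slightly stronger form $A^+(k)\cdot N(m)\subset N(k+m-n)$ for all $k\geq n$. No $\bT$-weight decomposition of $a$, no constant $C$, and no inductive error-correction is required.
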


\begin{proof}
First suppose that $N\in\cOa$.  
To show that $N\in\CMpa$, we must find a good filtration of $N$
such for $k\geq n$, we have $A^+(k)\cdot N(m) \subset N(k+m-n)$.
Choose a finite dimensional subspace
$S\subset N$ which is closed under $A^+$ and generates $N$, and 
define a filtration on $N$ by putting $N(m) := A(m)\cdot S$.
If $k\geq n$, then we have 
\begin{equation*}
  A^+(k)\cdot N(m)\subset A(m)A^+(k)\cdot
  S+[A(m),A^+(k)]\cdot S
 \subset A(m)\cdot S+A(k+m-n)\cdot S = N(k+m-n).
\end{equation*}

Next, suppose that $N\in\CMpa$.  By Definition \ref{def:CLa}, this means that we may choose a filtration of $N$
such such that, for all $k\geq n$, $A^+_{\bar k} (k) \cdot N(m) \subset N(k+m-n)$.
In particular, $\xi \cdot N(m)\subset N(m)$, and $\gr N$ is set-theoretically supported
on $\fM^+_0$.
Let $F$ be the pullback of $\gr N$ from $\fM_0$ to $\fM$.  By Lemma \ref{coherent-fd} applied to $F$,
$\gr N$ has finite-dimensional $\bar\xi$ weight spaces, and the real parts of the eigenvalues are bounded above.
Thus, the same holds for $N$, and Lemma \ref{fdba} tells us that $N$ is in $\cOa$.
\end{proof}

\subsection{The category \texorpdfstring{$\cOg$}{Og}}
\begin{definition}\label{cOg-definition}
  We define {\bf geometric category $\cO$} to be the full subcategory
  $\cOg$ of $\Dmod$ consisting of modules $\cN$ such that
  \begin{itemize}
  \item  the sheaf $\cN$ is set-theoretically supported on the relative core
    $\fM^+$
  \item there exists a $\cD(0)$-lattice $\cN(0)\subset \cN$ such that
    $\xi\cdot \cN(0)\subset \cN(0)$.
\end{itemize}
We define $\dOg$ to be the full subcategory of objects of $D^b(\Dmod)$ with cohomology in $\cOg$.
\end{definition}

\begin{remark}\label{BGG-Og}
Suppose that $\fM = T^*(G/P)$.  By \cite[4.5]{BLPWquant},
$\Dmod$ is equivalent to the category of finitely generated twisted D-modules on $G/P$, where the twist
is determined by the period of the quantization.  Then $\cOg$ consists of regular twisted D-modules
with microlocal supports in $\fM^+$.
\end{remark}

Our first result is that, unlike $\cOa$, the category $\cOg$ depends only on the image of the period in the quotient
$\Ht/\Htz$.

\begin{lemma}\label{geometric twist}
Let $\la,\la'\in\Ht$ be two classes that differ by an element of $\Htz$.
Let $\cD$ and $\cD'$ be the quantizations with periods $\la$ and $\la'$, and 
let $\cOg$ and $\cOg'$ be the associated categories.
Then $\cOg$ and $\cOg'$ are canonically equivalent.
\end{lemma}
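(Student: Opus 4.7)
The plan is to realize the equivalence by tensoring with an equivariant quantized line bundle. Since $\la'-\la \in \Htz$ and $H^2(\fM;\Z)$ is torsion-free by Proposition \ref{no-torsion}, I would first choose a line bundle $L$ on $\fM$ with $c_1(L) = \la'-\la$ and lift it to a $\bS\times\bT$-equivariant line bundle. By the general theory of line-bundle twists of conical quantizations (as developed in \cite[\S3]{BLPWquant}), there is then an associated $\bS\times\bT$-equivariant $(\cD',\cD)$-bimodule $\cL$, locally free of rank one on either side, whose small classical limit is $L$. By the period formula of \cite{BK04a} and Theorem \ref{periods}, the period of $\cD$ shifted by $c_1(L)$ is precisely $\la'$, so the twisted quantization matches $\cD'$ on the nose.

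Next, I would observe that $\cL\otimes_{\cD}(-):\Dmod \to \cD'\mmod$ is an exact equivalence with quasi-inverse $\cL^{-1}\otimes_{\cD'}(-)$, and check that it restricts to an equivalence $\cOg \to \cOg'$. The set-theoretic support in $\fM^+$ is preserved because $\cL$ is locally free of rank one. For the lattice condition I would choose a $\bS\times\bT$-equivariant $\cD(0)$-lattice $\cL(0)\subset \cL$; then, given $\cN\in\cOg$ with a $\xi$-invariant lattice $\cN(0)$, the tensor product $\cL(0)\otimes_{\cD(0)}\cN(0)$ is a $\bT$-equivariant lattice in $\cL\otimes_{\cD}\cN$, hence is preserved by $\xi'$, which generates the infinitesimal $\bT$-action on $\cD'$. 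Applying the same argument with $\cL^{-1}$ in place of $\cL$ shows the restricted functor is an equivalence.

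The main obstacle is making precise the compatibility between the $\bT$-linearization of $\cL$ and the two Hamiltonian generators $\xi$ and $\xi'$: one needs that a lattice in a $\bT$-equivariant $\cD$-module is $\xi$-stable if and only if it is $\bT$-stable, up to the uniform scalar shift by the $\bT$-weight of $\cL$ at a $\bT$-fixed point of $\fM$, and similarly on the $\cD'$-side. Once this compatibility is in hand, canonicity of the equivalence follows from Proposition \ref{no-torsion}: the line bundle $L$ is determined by $\la'-\la$ up to isomorphism, and any two $\bS\times\bT$-equivariant structures on $L$ differ by a character of $\bS\times\bT$, which merely shifts lattices uniformly and does not affect membership in $\cOg'$.
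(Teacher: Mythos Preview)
Your approach is essentially the same as the paper's: tensor with the quantized line bundle ${}_{\la'}\cT_{\la}$ (constructed in \cite[\S 5.1]{BLPWquant}, not \S 3), check that this preserves both defining conditions of $\cOg$, and verify it is an equivalence.

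Two small points deserve attention. First, your argument for $\xi'$-stability of the tensored lattice appeals to $\bT$-equivariance of $\cL(0)\otimes_{\cD(0)}\cN(0)$, but $\cN$ carries no $\bT$-equivariant structure in general---the definition of $\cOg$ only asks for a $\xi$-stable lattice. What actually works is the one-line computation you gesture at in your ``main obstacle'': using that $\cL(0)$ is $\bT$-stable, one has $\xi'\cdot t - t\cdot\xi \in \cL(0)$ for $t\in\cL(0)$, whence $\xi'\cdot(t\otimes n) = t\otimes(\xi\cdot n) + (\text{something in }\cL(0))\otimes n$ stays in the tensored lattice. The paper asserts this step without elaboration.

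Second, for canonicity the paper does not argue via uniqueness of $L$ and its equivariant structure; instead it observes directly that ${}_{\la}\cT_{\la'}[\hmon]\otimes_{\cD'}{}_{\la'}\cT_{\la}[\hmon]\cong\cD$ by uniqueness of quantizations of line bundles \cite[5.2]{BLPWquant}. This is cleaner, since it avoids any discussion of $\operatorname{Pic}(\fM)$ or of how different equivariant structures compare.
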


\begin{proof}
Let $\cL$ be a line bundle with first Chern class $\la-\la'\in\Htz$.
In \cite[\S 5.1]{BLPWquant} we construct a $\cQ-\cQ'$ bimodule ${}_{\la'}\cT_{\la}$ which as, a left $\cQ$-module, 
is a quantization of $\cL$.  Consider the functor ${}_{\la'}\cT_{\la}[\hmon]\otimes_{\cD}-$ from $\Dmod$ to $\cD'\mmod$.
Since ${}_{\la'}\cT_{\la}[\hmon]$ is a quantization of a line bundle, this functor does not affect the support
of an object.  Furthermore, if $\cN(0)\subset\cN$ is a lattice satisfying the second condition of the definition of $\cOg$,
then ${}_{\la'}\cT_{\la}(0)\otimes_{\cD(0)}\cN(0)$ will be such a lattice, as well.  Thus our functor takes $\cOg$ to $\cOg'$.
To show that it is a canonical equivalence, it is sufficient to show that 
${}_{\la}\cT_{\la'}[\hmon]\otimes_{\cD'}{}_{\la'}\cT_{\la}[\hmon]\cong\cD$, where ${}_{\la'}\cT_{\la}$ is a quantization of $\cL^{-1}$.
This follows from uniqueness of quantizations of line bundles \cite[5.2]{BLPWquant}.
\end{proof}

\begin{proposition}\label{Og vs CMpg}
The category $\cOg$ is equal to the category $\CMpg$ of Definition \ref{def:CLg}.
\end{proposition}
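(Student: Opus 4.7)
The proof splits into two containments.

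For $\CMpg \subseteq \cOg$: Given $\cN$ with a lattice $\cN(0)$ preserved by $h^{-1}\tilde f$ for every section $\tilde f$ of $\cQ$ with symbol $f\in\cJ$, I would first observe that $\bar\xi \in J$: it is $\bT$-invariant (being a moment map for the abelian group $\bT$, so of $\bT$-weight $0$) and has $\bS$-weight $n$, so it satisfies the two conditions defining $J$. Picking an $\bS$-equivariant lift $\widetilde{\bar\xi} \in \cQ$ of $\bS$-weight $n$, the element $h^{-1}\widetilde{\bar\xi}$ is then $\bS$-invariant and agrees with $\xi$ modulo $A(0) \cong \C$, so applying the hypothesis to $\widetilde{\bar\xi}$ yields $\xi \cdot \cN(0) \subset \cN(0)$. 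For the support, scheme-theoretic support of the big classical limit on $V(\cJ) = \fM^+$ (via Lemma \ref{vanishing}) forces the set-theoretic support of the small classical limit, and hence of $\cN$, to lie in $\fM^+$.

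For $\cOg \subseteq \CMpg$: Given $\cN \in \cOg$ with a $\xi$-invariant lattice $\cN(0)$, I would construct a (possibly different) lattice $\cN'(0)$ witnessing the $\CMpg$ condition. My plan is to mimic the proof of Proposition \ref{Oa vs CMpa}, where from an $\cOa$-module $N$ one produces a $\CMpa$-filtration as $N(m) = A(m)\cdot S$ for a finite-dimensional $A^+$-stable generating set $S$. Geometrically, I would cover $\fM$ by $\bS\times\bT$-stable affine opens $U_i$, and on each pass to the sections of $\cN$ over $U_i$ as a module over the local section algebra. The $\xi$-invariance combined with the set-theoretic support condition, via local analogues of Lemmas \ref{coherent-fd} and \ref{fdba}, should place this local module in a local version of $\cOa$; applying Proposition \ref{Oa vs CMpa} on each $U_i$ and converting the resulting $\CMpa$-filtration back to a $\cD(0)$-lattice through the Rees correspondence of Remark \ref{lattice-filtration} produces a local lattice satisfying the $\CMpg$ condition. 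The local lattices are then glued into a global coherent lattice using the $\bS$-equivariance and naturality of the constructions.

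The principal obstacle is the gluing step, which depends on the existence of a sufficiently fine $\bS\times\bT$-stable affine cover of $\fM$ and on coherent behavior of the local constructions. A possible alternative that avoids gluing is to use that set-theoretic support combined with noetherianity of $\fM$ yields a uniform $N$ with $\tilde\cJ^N \cdot \cN(0) \subset h\cN(0)$ (where $\tilde\cJ \subset \cQ$ denotes the preimage of $\cJ$), and then to attempt to define $\cN'(0)$ as the smallest $\cD(0)$-submodule of $\cN$ containing $\cN(0)$ and stable under $h^{-1}\tilde\cJ$. The difficulty here is verifying coherence: one must show that the successive sums $\sum_{k=0}^{m} h^{-k}\tilde\cJ^k\cN(0)$ stabilize as $m$ grows, which in turn requires a careful analysis of the interplay between the $h$-adic filtration on $\cN$ and the filtration by powers of $\tilde\cJ$, taking account of the fact that the big classical limit is an $n$-fold extension of the small one.
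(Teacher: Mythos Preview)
Your argument for $\CMpg\subseteq\cOg$ is essentially the same as the paper's: one simply observes that $h\xi$ is itself a section of $\cQ$ whose symbol $\bar\xi$ lies in $J$ (non-negative $\bT$-weight and $\bS$-weight $n$), so the lattice is $\xi$-stable, and the support condition follows from Lemma~\ref{vanishing}. Your version is slightly less direct in that you pick an arbitrary lift and then compare it to $\xi$, but the content is the same.

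For the reverse inclusion, however, you are missing the paper's key idea, and the gaps you yourself flag are real. The paper does \emph{not} attempt any local-to-global construction of a better lattice. Instead it observes (Lemma~\ref{geometric twist}, plus the obvious analogue for $\CMpg$) that both categories $\cOg$ and $\CMpg$ depend only on the period modulo $\Htz$. One may therefore twist by a large ample integral class so that localization holds at the new period (Theorem~\ref{localization}). Once localization holds, the argument is immediate: for $\cN\in\cOg$, the module $N:=\secs(\cN)$ lies in $\cOa$ by Lemmas~\ref{classical limit}, \ref{coherent-fd}, and \ref{fdba}; by Proposition~\ref{Oa vs CMpa} this equals $\CMpa$; and then Proposition~\ref{C-to-C} gives $\cN=\Loc(N)\in\CMpg$.

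Your direct approaches both run into genuine obstacles. The gluing strategy requires an $\bS\times\bT$-stable affine open cover, but $\bS$-orbits are not closed (the action is contracting), so such covers need not be fine enough, and the ``local $\cOa$'' you invoke is not an object defined in the paper. The alternative of enlarging the lattice by iterated application of $h^{-1}\tilde\cJ$ faces exactly the coherence problem you identify: there is no evident reason for $\sum_{k\le m} h^{-k}\tilde\cJ^k\cN(0)$ to stabilize, since the set-theoretic support hypothesis only controls powers of $\cJ$ modulo $h$, not the interaction with the $h$-adic filtration. The paper's twist-then-localize trick neatly sidesteps all of this.
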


\begin{proof}
By Lemma \ref{vanishing}, a $\cD$-module $\cN$ is in $\CMpg$ if and only if it admits a lattice $\cN(0)$ that is preserved by
$h^{-1}\tilde f$ for any section $\tilde f$ of $\cQ$ whose image $f\in \cQ/h\cQ\cong\fS_\fM$
has non-negative $\bT$-weight and $\bS$-weight at least $n$.
In particular, $\xi\in \Gamma(h^{-1}\cQ)$ is of this form, and thus preserves
this lattice.  In addition, the big classical limit $\cN(0)/\cN(-n)$ is
killed by an ideal whose vanishing set is $\fM^+$, and thus is
set-theoretically supported on this locus.  The same follows for
$\cN$, so we can conclude that $\cN\in \cOg$.  Thus $\CMpg\subset\cOg$, and it remains to show the reverse inclusion.

Using Lemma \ref{geometric twist} and the analogous statement for $\CMpg$ (which can be proved in the same way),
we may add a large multiple of an ample class to the period of our quantization.
Then by Theorem \ref{localization}, we may assume that localization holds.
Let $\cN$ be an object of $\cOg$, and let $N := \secs(\cN)$. 
By Lemmas \ref{classical limit}, \ref{coherent-fd}, and \ref{fdba},
$N$ is an object of $\cOa$, which is equal to $\CMpa$ by Proposition \ref{Oa vs CMpa}.
Then by Proposition \ref{C-to-C}, $\cN = \Loc(N)$ is in $\CMpg$, and we are done.
\end{proof}



The following corollary follows directly from Propositions \ref{C-to-C}, \ref{derived-C-to-C}, \ref{Oa vs CMpa}, and \ref{Og vs CMpg}.

\begin{corollary}\label{alg-geom}\label{OQ}
$\Loc$ takes $\cOa$ to $\cOg$ and $\secs$ takes $\cOg$ to $\cOa$.  
In particular, if localization holds for $\cD$, then the geometric category $\cOg$ is equivalent to the algebraic category $\cOa$. Similarly, if
derived localization holds, then $\LLoc$ and $\Rsecs$ induce
an equivalence between $\dOa$ and $\dOg$.
\end{corollary}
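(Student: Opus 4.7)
The plan is to package together the four propositions cited in the sentence immediately preceding the corollary. First, I would verify the hypothesis needed to apply Propositions \ref{C-to-C} and \ref{derived-C-to-C}: namely, that $\fM^+$ is the scheme-theoretic preimage of $\fM^+_0$. This is built into the setup in Section \ref{sec:rel-core}. By Lemma \ref{vanishing}, the subset $\fM^+ \subset \fM$ is the vanishing locus of the ideal $J\subset \C[\fM] = \C[\fM_0]$ generated by functions of non-negative $\bT$-weight and $\bS$-weight at least $n$; Remark \ref{scheme structure} then endows $\fM^+$ with the induced subscheme structure and $\fM^+_0$ with the corresponding structure, so the compatibility $\fL = \fM^+$, $\fL_0 = \fM^+_0$ is immediate by construction.

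Second, I would invoke Propositions \ref{Oa vs CMpa} and \ref{Og vs CMpg}, which translate the problem from category $\cO$ into the support formalism of Section \ref{quant-lag}, i.e.\ $\cOa = \CMpa$ and $\cOg = \CMpg$. Given these identifications, the first sentence of the corollary — that $\Loc$ sends $\cOa$ to $\cOg$ and $\secs$ sends $\cOg$ to $\cOa$ — is exactly Proposition \ref{C-to-C} specialized to $\fL = \fM^+$. The corresponding statement for $\LLoc$ and $\Rsecs$ between $\DCLa = \dOa$ and $\DCLg = \dOg$ (under the assumption of derived localization) is Proposition \ref{derived-C-to-C} applied to the same pair.

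Finally, for the equivalence statements themselves, I would argue as follows. When localization holds at the period of $\cD$, the adjoint pair $(\Loc,\secs)$ is by definition a quasi-inverse equivalence between $\Amod$ and $\Dmod$; restricting along the full subcategory inclusions $\cOa\hookrightarrow \Amod$ and $\cOg\hookrightarrow \Dmod$, which we have just seen are preserved by both functors, yields the claimed equivalence between $\cOa$ and $\cOg$. The derived version is formally identical, using that derived localization supplies a quasi-inverse equivalence $(\LLoc,\Rsecs)$ between $D^b(\Amod)$ and $D^b(\Dmod)$ that restricts to the full subcategories $\dOa$ and $\dOg$ of objects with cohomology in $\cOa$ and $\cOg$.

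There is no real obstacle here beyond correct bookkeeping: all substantive content has already been established in the preceding propositions, and the corollary is an assembly statement. The only subtlety worth checking is that the scheme structure on $\fM^+$ coming from Lemma \ref{vanishing} genuinely matches the pullback structure required by Propositions \ref{C-to-C} and \ref{derived-C-to-C} — which is precisely why Remark \ref{scheme structure} was recorded.
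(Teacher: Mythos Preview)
Your proposal is correct and follows exactly the paper's approach: the paper simply states that the corollary ``follows directly from Propositions \ref{C-to-C}, \ref{derived-C-to-C}, \ref{Oa vs CMpa}, and \ref{Og vs CMpg},'' and you have spelled out precisely how those four propositions assemble, including the verification via Remark \ref{scheme structure} that the scheme-theoretic preimage hypothesis is satisfied.
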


\begin{remark}
As discussed in Remark \ref{why-bounded}, a version of Corollary \ref{alg-geom}
holds in bounded above derived categories even if derived location fails.
\end{remark}

\begin{example}
Combining Remarks \ref{BGG-Oa} and \ref{BGG-Og} with Corollary \ref{OQ},
we obtain Beilinson and Bernstein's equivalence between an infinitesimal block of BGG category $\cO$
and the category of finitely generated twisted D-modules on $G/B$, smooth with respect to the Schubert
stratification.
\end{example}

\begin{remark}
It seems slightly dissatisfying to use the unreduced scheme
structure of Remark \ref{scheme structure} on $\fM^+$.  One could
also consider the category
$\mathcal{C}^{\fM^+_{\operatorname{red}}}$ attached to the reduced
scheme structure on the subset $\fM^+$; that is to say, the category
of regular $\cD$-modules that are set-theoretically supported on
$\fM^+$.  It is clear that
$\mathcal{C}^{\fM^+_{\operatorname{red}}}\subset\CMpg = \cOg$, but
it is not clear whether or not this containment is an equality.
Since $\fM^+_{\operatorname{red}}$ is not the scheme-theoretic preimage of any subvariety of $\fM_0$,
there is no obvious choice of a corresponding subcategory of $\cOa$.
\end{remark}

The following lemma provides an alternative formulation of the second condition of Definition \ref{cOg-definition};
it will be used to prove Proposition \ref{Serre subcategory}.

\begin{lemma}\label{stable-submodule}
  A good $\cD$-module $\cN$ possesses a $\xi$-stable coherent lattice if and only if, for
  every finitely generated $\cD(0)$-submodule $\cP$, the
  sum \[\cP_j := \cP+\xi\cdot \cP+\cdots +\xi^j\cdot \cP\]
  stabilizes for $j\gg 0$.  
\end{lemma}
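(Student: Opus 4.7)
The plan is to prove both implications by trapping the ascending chain $\cP_0\subset\cP_1\subset\cdots$ inside a fixed coherent $\cD(0)$-module, where the Noetherian property forces stabilization. The forward direction then reduces to finding such a trap, and the backward direction consists of applying the hypothesis to any chosen coherent lattice and verifying that the stable value has the required properties.

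For the forward direction, suppose $\cN(0)\subset\cN$ is a $\xi$-stable coherent lattice, and let $\cP\subset\cN$ be an arbitrary finitely generated $\cD(0)$-submodule. Since $\cN=\bigcup_m\cN(m)$, quasi-compactness of $\fM$ together with the local finite generation of $\cP$ yields a single integer $k$ with $\cP\subset\cN(k)$. Because $h$ is central and $\xi\cdot\cN(0)\subset\cN(0)$, the translated lattice $\cN(k)=h^{\nicefrac{-k}{n}}\cN(0)$ is also $\xi$-stable, so every $\cP_j$ lies in $\cN(k)$. Each $\cP_j$ is a finite sum of coherent subsheaves $\xi^i\cdot\cP$, hence coherent, so $\{\cP_j\}$ is an ascending chain of coherent $\cD(0)$-submodules of the coherent $\cD(0)$-module $\cN(k)$ and must stabilize.

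For the backward direction, since $\cN$ is good it admits some coherent $\cD(0)$-lattice $\cN(0)$. Apply the hypothesis to $\cP:=\cN(0)$: the chain stabilizes at some $\cP_J$, which is coherent as a finite sum of coherent submodules, is $\xi$-stable because $\xi\cdot\cP_J\subset\cP_{J+1}=\cP_J$, and contains $\cN(0)$, so that $\cP_J[\hmon]=\cN$. Thus $\cP_J$ is the required $\xi$-stable coherent lattice.

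The only non-routine input is the Noetherian property of coherent $\cD(0)$-modules, which is standard for deformation quantizations: it follows from Noetherianity of $\cD(0)$ on affine opens, which reduces via the $h$-adic filtration to Noetherianity of $\fS_\fM[\hon]$. The main technical care needed in the forward direction is the global bound $\cP\subset\cN(k)$, which is handled by quasi-compactness of $\fM$; the backward direction is then essentially a one-line application of the hypothesis.
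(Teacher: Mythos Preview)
Your proof is correct and follows essentially the same route as the paper's: trap the ascending chain $\cP_j$ inside a shifted $\xi$-stable lattice $\cN(k)$ and invoke Noetherianity for the forward direction, and apply the stabilization hypothesis to a chosen coherent lattice for the backward direction. The only minor inaccuracy is the phrase ``finite sum of coherent subsheaves $\xi^i\cdot\cP$'': the individual $\xi^i\cdot\cP$ need not be $\cD(0)$-submodules, but the sums $\cP_j$ are (inductively, using $[a,\xi]\in\cD(0)$), and they are finitely generated over $\cD(0)$, which is what you need.
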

\begin{proof}
First suppose that the sum stabilizes for every finitely generated $\cP$.
If we take $\cP$ to be a coherent lattice and take $j$ in the stable range, then $\cP_j$
is a $\xi$-stable lattice.

Now assume that $\cN$ admits a $\xi$-stable coherent lattice $\cN(0)$, and let $\cP\subset\cN$
be any finitely generated $\cD(0)$-submodule.
Then for some $m$, $\cP\subset \cN(m)$ and therefore $\cP_j\subset \cN(m)$ for all $j\geq 0$.  
The stabilization of $\cP_j$ then follows from the fact that $\cN(m)$ is a finitely generated module over a Noetherian ring.
\end{proof}

\begin{proposition}\label{Serre subcategory}
  The category $\cOg$ is an Abelian Serre subcategory of $\Dmod$.
\end{proposition}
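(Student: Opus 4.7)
The plan is to verify the three closure properties defining a Serre subcategory: closure under subobjects, quotients, and extensions. The definition of $\cOg$ has two parts, the set-theoretic support condition on $\fM^+$ and the existence of a $\xi$-stable coherent $\cD(0)$-lattice, so I will check these separately. The support condition is immediate: if $0\to \cN'\to \cN\to \cN''\to 0$ is exact in $\Dmod$, then the support of $\cN$ is the union of the supports of $\cN'$ and $\cN''$, so it lies in $\fM^+$ precisely when both outer terms do. This reduces the problem to the lattice condition, where I will use the equivalent characterization from Lemma \ref{stable-submodule}: $\cN\in\cOg$ (as far as condition (ii) is concerned) if and only if, for every finitely generated $\cD(0)$-submodule $\cP\subset\cN$, the increasing sequence $\cP_j := \cP+\xi\cdot\cP+\cdots+\xi^j\cdot\cP$ stabilizes.

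The cases of subobjects and quotients are then straightforward. For a submodule $\cN'\subset\cN$ with $\cN\in\cOg$, any finitely generated $\cD(0)$-submodule $\cP\subset\cN'$ is also finitely generated as a submodule of $\cN$, so the sequence $\cP_j$ computed in $\cN$ (equivalently in $\cN'$) stabilizes by hypothesis. For a quotient $\pi\colon\cN\twoheadrightarrow\cN''$ with $\cN\in\cOg$, a finitely generated $\cP''\subset\cN''$ may be lifted to a finitely generated $\cP\subset\cN$ (choose preimages of a finite set of generators); then $\pi(\cP_j)=\cP''_j$ stabilizes because $\cP_j$ does.

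The main case, and the main obstacle, is extensions. Given $0\to\cN'\to\cN\to\cN''\to 0$ with $\cN',\cN''\in\cOg$, I would take a finitely generated $\cP\subset\cN$ and apply the hypothesis to $\pi(\cP)\subset\cN''$, obtaining some index $j_0$ past which $\pi(\cP)_j$ is constant. A short diagram chase then gives $\xi^{j+1}\cdot\cP\subset \cP_{j_0}+\cN'$ for every $j\geq j_0$, which in turn yields $\cP_{j+1}=\cP_j+\cR_{j+1}$ where $\cR_j:=\cP_j\cap\cN'$. Thus the stabilization of $\cP_j$ is reduced to the stabilization of the increasing chain $\cR_j$ inside $\cN'$. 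Since each $\cR_j$ is contained in the finitely generated $\cD(0)$-module $\cP_{j_0}+\xi\cdot\cP_{j_0}+\ldots$, and the ambient ring $\cD(0)$ behaves coherently on good modules, the chain $\cR_j$ is generated by some finitely generated $\cD(0)$-submodule of $\cN'$ (stable under $\xi$ up to finite adjustment), and applying Lemma \ref{stable-submodule} inside $\cN'\in\cOg$ forces stabilization.

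An equivalent and perhaps cleaner way to package the extension step, which I would present in parallel, is to construct the lattice directly: take $\xi$-stable coherent lattices $\cN'(0)\subset\cN'$ and $\cN''(0)\subset\cN''$, lift $\cN''(0)$ to a coherent $\cD(0)$-submodule $\cL\subset\cN$ with $\pi(\cL)=\cN''(0)$, and observe that $\xi\cdot\cL\subset\cL+\cN'$ so that the ``leakage'' $(\cL+\xi\cdot\cL)/\cL$ is a finitely generated submodule of $\cN'/(\cL\cap\cN')$, hence represented by some finitely generated $\cK'\subset\cN'$. Enlarging $\cN'(0)$ to contain $\cK'$ and closing under $\xi$ (possible after finitely many steps by Lemma \ref{stable-submodule} applied to $\cN'$) gives a $\xi$-stable coherent submodule $\cK''\subset\cN'$, and the sum $\cL+\cK''$ is then the desired $\xi$-stable coherent lattice in $\cN$. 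The delicate point throughout is that the coherence and Noetherian behavior of $\cD(0)$ on good $\bS$-equivariant modules must be invoked carefully so that intersections and images of f.g. submodules remain f.g.; this is where the argument could go wrong and where I would spend most of the care.
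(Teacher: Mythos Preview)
Your proposal is correct and follows essentially the same route as the paper. Two small points of comparison. First, for quotients the paper is more direct: the image of a $\xi$-stable coherent lattice is again one, so there is no need to lift from the quotient and invoke Lemma~\ref{stable-submodule}. Second, and more substantively, your first extension argument does not close as written: the submodules $\cR_j:=\cP_j\cap\cN'$ are not of the form $\cR+\xi\cdot\cR+\cdots$ for a single finitely generated $\cR$, so Lemma~\ref{stable-submodule} does not apply to them directly, and the sentence ``each $\cR_j$ is contained in the finitely generated $\cD(0)$-module $\cP_{j_0}+\xi\cdot\cP_{j_0}+\ldots$'' is circular (that sum is only known to be finitely generated once stabilization is established). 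Your second, lattice-based approach fixes this and is exactly what the paper does, rephrased: the paper replaces $\cP$ by some $\cP_{j_0}$ with $\xi$-stable image in the quotient, chooses generators $m_i$, writes $\xi m_i=\sum_j a^j_i m_j+\chi_i$ with $\chi_i\in\cN'$, and lets $\cR$ be the $\cD(0)$-submodule generated by the $\chi_i$ (your leakage module $\cK'$). One then has $\xi\cdot\cP\subset\cP+\cR$, hence $\cP_j\subset\cP+\cR_{j-1}$, and stabilization of $\cR_j$ inside $\cN'$ (now a legitimate application of Lemma~\ref{stable-submodule}) yields stabilization of $\cP_j$. The explicit construction of $\cR$ from chosen generators is precisely what sidesteps the intersection-coherence issues you flagged.
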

\begin{proof}
In order to check that a full subcategory of an Abelian category is
Serre and Abelian, we need only check that it is closed under
quotients, submodules, and extensions.  Obviously, all these are
compatible with the support condition, so we need only consider the
existence of a $\xi$-invariant lattice.  

Let $\cN\subset\cM$ be objects of $\Dmod$.
The image of a $\xi$-invariant lattice in $\cM$ is such a lattice in
$\cM/\cN$, so $\cM\in \cOg\Rightarrow \cM/\cN\in \cOg$.  Since any
finitely generated $\cD(0)$-module $\cN$ is also a finitely
generated submodule of $\cM$, Lemma
\ref{stable-submodule} shows that $\cM\in \cOg\Rightarrow \cN\in \cOg$.

Finally, assume that $\cN\in \cOg$ and $\cM/\cN\in \cOg$.   Let $\cP$ be any finitely generated
$\cD(0)$-submodule of $\cM$; by Lemma \ref{stable-submodule}, it is sufficient to show that the sequence
$\{\cP_j\}$ of $\cD(0)$-submodules stabilizes.  Replacing $\cP$ by some $\cP_j$, we
may assume that the image of $\cP$ in $\cM/\cN$ is $\xi$-stable.  Choose
a finite generating set $\{m_i\in \Gamma(U_i;\cP)\}$ for $\cP$, 
along with $\{a^j_i\in \Gamma(U_i;\cD(0))\}$ such that for all $i$,
$\chi_i :=\xi m_i-\sum_j a^j_im_j$ lies in $\Gamma(U_i;\cN)$.
Let $\cR\subset\cN$ be the $\cD(0)$-submodule generated by $\{\chi_i\}$.

For any section $m$ of $\cP$ on an open subset $U$, we may choose
sections $b_i$ of $\cD(0)$ such that $m=\sum b_im_i$ (perhaps after
shrinking $U$),
and therefore
\[\xi  m=\sum \xi b_im_i=\sum_i [\xi,b_i]m_i+\sum_i b_i\xi m_i=\sum_i [\xi,b_i]m_i+\sum_i b_i\chi_i+\sum_{i,j}
b_ia^j_im_j.\]
Thus, $\xi\cdot \cP\subset \cP+\cR$; by induction, this implies that $\cP_j\subset \cP+\cR_{j-1}$ for all $j$.
Since $\cR$ is a submodule of $\cN$, the submodules $\{\cR_j\}$ stabilize, and thus so do the
submodules $\{\cP_j\}$.
\end{proof}

\section{Categorical preliminaries}\label{structure}
In this section we will collect various definitions and basic results about Koszul, highest weight,
and standard Koszul categories, which we will apply to $\cOa$ and $\cOg$ in the next section.

\subsection{Koszul categories}\label{hwksk}
Much of the
material in this section comes from the seminal work \cite{BGS96},
though our presentation follows more closely that of \cite{MOS}, to
which we refer the reader for further details. 

Let $\tilde {\cal C}$ be a $\C$-linear Abelian category, which we will
assume throughout is Noetherian and Artinian with enough projectives
and finite projective dimension.  Consider a choice of weight
$\wt(L)\in \Z$ for each simple in $\tilde {\cal C}$, and assume
further that there are finitely many simples of any given weight.  The category
$\tilde {\cal C}$ is said to be {\bf mixed} if, whenever
$\Ext^1(L,L')\neq 0$, we have $\wt(L')< \wt(L)$.  A {\bf Tate
  twist} on a mixed category is an autoequivalence $M\mapsto M(1)$
such that $\wt(L(1))=\wt(L)-1$.

Let $\tilde {\cal C}$ be a mixed category, and let $\tilde {\cal C}/\Z$ be the category 
whose objects are the same as those of $\tilde {\cal C}$,
but whose morphism spaces are the graded vector spaces $$\Hom_{\tilde {\cal
    C}/\Z}(M,M'):=\bigoplus_{d\in\Z} \Hom_{\tilde {\cal C}}(M,M'(-d)).$$ Note that every object in $\tilde {\cal
  C}/\Z$ is isomorphic to all of its Tate twists.  

If $P$ is an indecomposable projective in $\tilde{\cal C}$,
or more generally a projective whose head is concentrated in a single
weight, then $\End_{\tilde {\cal
    C}/\Z}(P)$ is a mixed algebra in the sense of \cite[4.1.5]{BGS96},
and thus positively graded.  In fact,
any mixed category with Tate twist is equivalent to the category of
finite dimensional graded modules over a projective limit of finite
dimensional positively graded algebras
with semi-simple degree zero part \cite[4.1.6]{BGS96}; if we assume in addition that the are
finitely many simples of weight 0, then the condition of having enough projectives
guarantees that this algebra can be taken to be finite dimensional.  

Let $\operatorname{Vect}_\C$ be the category of finite-dimensional complex vector spaces.
Define the {\bf degrading} $\cal C$ of $\tilde{\cal C}$ to 
be the category of additive functors
$(\tilde {\cal C}/\Z)^{\operatorname{op}}\to \operatorname{Vect}_\C$ for which the 
composition with the natural functor $\tilde{\cal C}^\op\to (\tilde{\cal C}/\Z)^\op$ is  
left-exact\footnote{This is a degrading in
    the sense of \cite[\S 4.3]{BGS96}, but it is somewhat stronger, since the
    condition $(*)^i_{M,N}$ is automatic from the Yoneda lemma.}.  This
  is the same as the category of additive functors from the opposite
  category of projectives in $\tilde{\cal C}$ to $\operatorname{Vect}_\C$, since every object in $\tilde{\cal C}$ 
  can be presented as the cokernel of a map between projectives.  In
  more concrete language, $\cal C$ is the category of finite dimensional right
  modules over the endomorphism ring in $\tilde{\cal C}/\Z$ of the sum of
  the projective covers of all simples of weight 0.  

We
  say that $\tilde{\cal C}$ is a {\bf graded lift} of $\cal C$.
  Similarly, we can define the degrading of a functor between mixed
  categories, and speak of graded lifts of functors.
 We call an object of $\cal C$ {\bf gradeable} if it is representable, that is, if it is of
  the form $\Hom(M,-)$ for $M$ in $\tilde {\cal
    C}/\Z$.  If $\tilde{\cal C}$ is the category of finite dimensional graded modules over a
  positively graded finite dimensional algebra, then $\cal C$ is the
  category of finite dimensional ungraded modules.
  
We now define the category of {\bf linear complexes} of projectives, which plays a central role in our discussion of Koszul duality below.

\begin{definition}
  Let $\LtC$ denote the category whose objects are complexes
  $X_\bullet$ of projective objects in $\tC$ such that all summands of
  the head of $X_j$ have weight $j$.  This is ``linear'' in the sense
  that if we shifted every term to have head which is weight 0, then
  every differential would have ``degree 1.''
  The morphisms in $\LtC$ are chain maps.  
\end{definition}

Remarkably, $\LtC$ is an Abelian subcategory of the category
of complexes in $\tilde{\cal C}$.  The simple modules of $\LtC$
are the complexes given by a single indecomposable projective in a
single degree $j$; we can weight the category $\LtC$ by
endowing a complex concentrated in degree $j$ with weight $j$. The
Tate twist on this category is given by $[-1](-1)$.   An indecomposable injective
in this category arises as a quotient of a minimal projective
resolution of a simple by the subcomplex consisting of objects
with head in weight less than $j$ in the $j^\text{th}$ term.

\begin{remark}
The category $\LtC$ is canonically equivalent to the quadratic dual of the category $\tilde{\cal C}$ 
\cite[Theorem 12]{MOS}, even if $\tilde{\cal C}$ is not itself quadratic.
\end{remark}

\begin{definition}\label{def:Koszul}
The category $\cal C$ is said to be {\bf Koszul} if it admits a graded lift $\tC$
with the property that the minimal projective resolution of every simple object in $\tC$ is linear.
If $\cal C$ is Koszul, then any two Koszul graded lifts are canonically equivalent as mixed categories \cite[2.5.2]{BGS96}.
\end{definition}

Mazorchuk, Ovsienko, and Stroppel \cite[\S 5.1]{MOS} define a pair of
adjoint functors\footnote{In \cite{MOS}, a 
different finiteness condition is used on the derived 
  category, but the Artinian and finite global dimension hypotheses
  guarantee that the functors are bounded.}
$$
	K_{\tilde{\cal C}} : D^b(\tilde{\cal C}) \longrightarrow D^b(\LtC)
\and	K'_{\tilde{\cal C}} :  D^b(\LtC) \longrightarrow D^b(\tilde{\cal C}).
$$
We refer there for the complete definition; 
the facts we will need about these functors are summarized in the following result \cite[Theorem 30]{MOS}.

\begin{theorem} \label{prop:koszulformula}
The following are equivalent for a non-negatively graded category $\tilde{\cal C}$:
\begin{enumerate}
\item The degrading $\cC$ is Koszul.
\item The functors $K_{\tilde{\cal C}}$ and $K'_{\tilde{\cal C}}$ above are mutually inverse equivalences of categories.
\item The functor $K_{\tilde{\cal C}}$ takes each indecomposable projective to the corresponding simple.
\item The functor $K'_{\tilde{\cal C}}$ takes each indecomposable injective to the corresponding simple.
\end{enumerate}
\end{theorem}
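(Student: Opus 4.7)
The plan is to establish the implications cyclically, $(1) \Rightarrow (3) \Rightarrow (2) \Rightarrow (1)$, and then deduce $(1) \Leftrightarrow (4)$ by a duality argument. The underlying strategy is to compute $K_{\tilde{\cC}}$ and $K'_{\tilde{\cC}}$ explicitly on the two distinguished classes of objects that matter: indecomposable projectives of $\tilde{\cC}$ (which correspond to simples of $\LtC$), and simples of $\tilde{\cC}$ (whose images under $K$ are controlled by their minimal projective resolutions). All constructions live in $D^b$ because of the finite projective dimension and Noetherian/Artinian hypotheses, which ensure $K_{\tilde{\cC}}$ and $K'_{\tilde{\cC}}$ are bounded.

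For $(1) \Rightarrow (3)$, I would unpack the definition of $K_{\tilde{\cC}}$ from \cite[\S 5.1]{MOS}: applied to $M\in\tilde{\cC}$, it records the linear strand of a minimal projective resolution of $M$, reorganized as a complex of projectives whose $j$-th term has head of weight $j$. When $M=P$ is an indecomposable projective with head in weight $0$, the higher syzygies vanish and $K(P)$ reduces to $P$ placed in homological degree $0$, which is precisely the corresponding simple of $\LtC$. In the Koszul setting one also has to verify that this construction is well-defined and functorial, which is where the existence of linear resolutions of all simples is used.

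For $(3) \Rightarrow (2)$, I would appeal to the standard fact that $D^b(\tilde{\cC})$ is generated (as a triangulated category) by the indecomposable projectives and $D^b(\LtC)$ is generated by its simples. Since $K_{\tilde{\cC}}$ and $K'_{\tilde{\cC}}$ form an adjoint pair, and $K_{\tilde{\cC}}$ sends the generators of the source bijectively to the generators of the target (as guaranteed by $(3)$), a routine argument with adjunction units and cones shows that both adjunction morphisms are isomorphisms, giving the desired equivalence. For $(2) \Rightarrow (1)$, I would apply the equivalence to a simple object $L\in\tilde{\cC}$. The construction of $K_{\tilde{\cC}}$ presents $K(L)$ as a complex built from the minimal projective resolution of $L$; the hypothesis that this lands in $\LtC$ forces the $j$-th term to have head of weight $j$, which is exactly the linearity condition of Definition \ref{def:Koszul}. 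Finally, $(1) \Leftrightarrow (4)$ follows by applying the same reasoning either to $\tilde{\cC}^{\op}$ or directly to $K'_{\tilde{\cC}}$, noting that indecomposable injectives of $\LtC$ are described in the excerpt as quotients of minimal projective resolutions and are mapped by $K'$ to the corresponding simples of $\tilde{\cC}$ precisely when these resolutions are linear.

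The main obstacle is keeping the weight and homological-degree conventions straight at every step, especially the interaction between the Tate twist $[-1](-1)$ on $\LtC$ and the grading on $\tilde{\cC}$. This is almost entirely bookkeeping, but it has to be executed with care, because the whole content of the theorem lies in matching \emph{linearity} of projective resolutions with the condition that $K_{\tilde{\cC}}(L)$ literally lies in the heart $\LtC\subset D^b(\LtC)$ rather than merely in some shift of it.
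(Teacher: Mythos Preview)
The paper does not prove this theorem; it is quoted as \cite[Theorem~30]{MOS}, with the functors $K_{\tilde{\cC}}$ and $K'_{\tilde{\cC}}$ constructed in \cite[\S5.1]{MOS} and only referenced here. So there is no in-paper argument to compare your proposal against beyond that citation.

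Your sketch is a plausible reconstruction of the MOS argument, but two points deserve tightening. First, in $(3)\Rightarrow(2)$ you assert that because $K_{\tilde{\cC}}$ carries a triangulated generating set bijectively to a triangulated generating set, the adjunction is an equivalence. That is not enough: you need $K_{\tilde{\cC}}$ to be \emph{fully faithful} on projectives, i.e.\ to induce isomorphisms $\Hom_{D^b(\tilde{\cC})}(P,P'[i])\cong\Hom_{D^b(\LtC)}(K(P),K(P')[i])$ for all $i$. Checking this is where the explicit MOS construction of $K$ (and the positive grading hypothesis) is actually used, and it is the real content of the implication. Second, your account of $(1)\Rightarrow(3)$ places the Koszul hypothesis in the wrong spot: the image of an indecomposable projective under $K_{\tilde{\cC}}$ is automatically concentrated in a single degree regardless of Koszulity (a projective is its own minimal resolution), so the linearity of resolutions of simples is not needed for well-definedness there. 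The Koszul hypothesis genuinely enters in $(2)\Rightarrow(1)$, where applying the equivalence to a simple forces its minimal resolution to coincide with its linear part, and symmetrically in $(4)$ via the description of injectives of $\LtC$ as linear truncations of minimal projective resolutions.
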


\begin{proposition}
  If $\cC$ is Koszul, then so is $\LtC$, and there is a canonical
  equivalence of categories between $\tilde{\cal C}$ and the double
  dual $\LCP\!\big(\LtC\big)$.
\end{proposition}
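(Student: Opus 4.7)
The plan is to reduce the statement to the classical Koszul duality of algebras. By the structure theorem cited after \cite[4.1.6]{BGS96}, together with our standing assumptions of finitely many simples per weight, enough projectives, and finite projective dimension, the mixed category $\tC$ with its Tate twist is equivalent to the category of finite dimensional graded modules over a finite dimensional, non-negatively graded $\C$-algebra $A$ with semisimple degree-zero part. Under this equivalence, Koszulness of $\cC$ in the sense of Definition \ref{def:Koszul} translates precisely into Koszulness of the algebra $A$ in the classical sense of \cite{BGS96}.

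Next, I would invoke the remark following Definition \ref{def:Koszul}, which via \cite[Theorem 12]{MOS} identifies $\LtC$ with the category of finite dimensional graded modules over the quadratic dual algebra $A^!$, compatibly with the Tate twist and the weight grading. In particular, the simples of $\LtC$ (indecomposable projectives of $\tC$ placed in the cohomological degree equal to the weight of their head) correspond under this identification to the graded simples of $A^!$, which are indexed by the same set as the graded simples of $A$.

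Both conclusions then follow from the classical Koszul duality theorem for algebras, stated for instance as \cite[2.10.1]{BGS96}: if $A$ is Koszul then so is its quadratic dual $A^!$, and the canonical algebra homomorphism $A \to (A^!)^!$ is an isomorphism. The first fact gives the Koszulness of $\LtC$. Applying the same categorical translation one more time, now with $A^!$ in place of $A$, identifies $\LCP(\LtC)$ with the category of graded modules over $(A^!)^!$; composing with the canonical isomorphism $(A^!)^! \cong A$ then produces the canonical equivalence $\LCP(\LtC) \simeq \tC$.

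The main technical obstacle is not in either of the cited results individually but in the bookkeeping that passes between linear complexes of projectives on one side and the quadratic dual construction on algebras on the other. In particular, one must verify that the identification $\LtC \simeq A^!\gmmod$ respects the mixed structure, the Tate twist (which on $\LtC$ was defined as $[-1](-1)$), and the operation of again forming $\LCP(-)$. All of this is essentially carried out in \cite{MOS}, so once these compatibilities are established the proposition amounts to a translation of classical Koszul duality through the dictionary between mixed categories and graded algebras.
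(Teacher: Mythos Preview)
Your proof is correct, and for the first claim (Koszulity of $\LtC$) it is essentially the same as the paper's: both invoke \cite[Theorem~12]{MOS} to identify $\LtC$ with graded modules over the quadratic dual, and then appeal to \cite{BGS96} for the fact that the quadratic dual of a Koszul algebra is Koszul.

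For the second claim, however, the paper takes a different route. Rather than passing to algebras and invoking the double-dual isomorphism $(A^!)^! \cong A$, the paper stays on the categorical side and constructs the equivalence directly from the Koszul duality functors introduced just before the proposition. Concretely, since both $\cC$ and $\LtC$ are now known to be Koszul, Theorem~\ref{prop:koszulformula} gives equivalences $K_{\tC}\colon D^b(\tC)\to D^b(\LtC)$ and $K_{\LtC}\colon D^b(\LtC)\to D^b(\LCP(\LtC))$; their composite sends projectives to injectives (projectives go to simples under the first, and simples go to injectives under the second, by parts (3) and (4) of that theorem). Post-composing with the inverse of the derived Nakayama functor yields a derived equivalence sending projectives to projectives, hence an equivalence of the underlying abelian categories. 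Your approach is more elementary and reduces everything to a single classical algebra fact, but it requires tracking the ``canonical'' nature of the equivalence through the dictionary between mixed categories and graded modules (as you acknowledge). The paper's approach avoids that bookkeeping by building the equivalence out of the functors $K_{\tC}$, $K_{\LtC}$, and the Nakayama functor, which are canonical by construction; this also sets up the formulation of Koszul duality used later in Proposition~\ref{Koszul-symmetric} and Remark~\ref{Koszul-symmetric-derived}.
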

\begin{proof}
By \cite[Theorem 12]{MOS}, Koszulity of $\LtC$ is equivalent to the quadratic dual of $\cC$ being Koszul, which follows
from \cite[2.9.1]{BGS96}.

Consider the composite equivalence
\[K_{\LtC }\circ K_{\tilde{\cal C} }\colon D^b(\tilde{\cal C})\to
D^b(\LCP\!\big(\LtC\big) );\] this is an equivalence of derived
categories sending projectives to injectives.  Composing
with the inverse of the derived Nakayama functor,  we
obtain an equivalence of derived categories sending projectives to
projectives, and thus inducing an equivalence of Abelian categories
$\tilde{\cal C}\cong \LCP\!\big(\LtC\big) $.  
\end{proof}

\begin{remark}\label{rem:Yoneda}
Though we have emphasized categories rather than algebras in the above
definition of Koszul duality, it is sometimes convenient to reconsider
the Koszul duality statements above from the point of view of
algebras.   Let $P$ be the sum of the indecomposable
projectives in $\cC$; then $\cC$ is equivalent to the category of finite-dimensional $\End_\cC(P)^{\op}$-modules.  
Let $L$ be the sum of the simples in $\cC$, and choose a projective resolution $\Pi_\bullet$
of $L$.  The algebra \[\cE := \bigoplus_{m\in\Z}\Hom(\Pi_\bullet,\Pi_\bullet[m])\] is naturally a dg-algebra, which we call
the {\bf dg-Yoneda algebra} of $\cC$; this algebra depends on the choice of $\Pi_\bullet$ only up to quasi-isomorphism.  
The cohomology ring 
$$E := H^\bullet(\cE) \cong \Ext_\cC(L,L)$$ is the ordinary {\bf Yoneda algebra} of $\cC$.  
The algebra $E$ carries a natural (cohomological) grading, and thus has a category of graded modules $E \gmmod$.

Now assume that $\cC$ is Koszul, and
let $\tilde\Pi_\bullet$ be a linear projective resolution of the weight 0 graded lift $\tilde L$ of $L$.
Then $\tilde\Pi_\bullet$ is an injective generator in $\LtC$, and
the algebra of endomorphisms (of arbitrary degree) of $\tilde\Pi_\bullet$ in $\LtC/\Z$ is a
quotient dg-algebra of $\cE$ with trivial differential, killing all elements of $
\Hom(\tilde\Pi_\bullet,\tilde\Pi_\bullet[m](m))$ of positive degree.  The quotient 
map is a quasi-isomorphism, so $$E\cong \End_{\LtC/\Z} (\tilde\Pi_\bullet,\tilde\Pi_\bullet)\cong \bigoplus_m\End_{\LtC} \left(\tilde\Pi_\bullet,\tilde\Pi_\bullet[m](m)\right).$$
Thus $E\gmmod\cong \LtC$, and 
the algebra $\End_\cC(P) = \End_{\tilde{\cal C}/\Z}(\tilde P)$ is isomorphic as a graded algebra to the Yoneda algebra of $E\gmmod$.
This demonstrates explicitly that, if $\cC$ is Koszul, it has a unique graded lift $\tC$ (see Definition \ref{def:Koszul}).
\end{remark}

\begin{remark}
The Yoneda algebra of the category $\cOg$ will be studied in Section \ref{sec:hochsch-cohom-cent}.
\end{remark}

For the purposes of Section \ref{duality}, it will be convenient to introduce the following definition.

\begin{definition}\label{def:dual}
Let $\cC$ and $\cC^!$ be two Koszul categories, and let $\tC$ and $\tC^!$ be their graded lifts.
A {\bf Koszul duality} from $\cC$ to $\cC^!$ is an equivalence
of mixed categories $$\psi:\LtC\to\tC^!.$$  
Taking derived functors and precomposing with $K_{\tC}$, we also obtain
  an equivalence of  triangulated categories
$$\Psi: D^b(\tC) \to D^b({\tC^!})$$
sending projective objects of $\tC$ to simple objects of $\tC^!$ and sending the Tate twist $(1)$ to the functor $(-1)[-1]$.
Conversely, any such equivalence of triangulated categories must
induce an equivalence $\LtC\cong\tC^!$, as these are the hearts of $t$-structures for
which the functor is exact.  Thus, we will also refer to $\Psi$ as a Koszul duality from $\cC$ to $\cC^!$.
We say that $\cC^!$ is {\bf Koszul dual} to $\cC$ if there is a Koszul duality between them.
\end{definition}

While we have not defined Koszul duality in a way which is obviously
symmetric, the following observation demonstrates that it is a true duality.

\begin{proposition}\label{Koszul-symmetric}
If $\psi:\LtC\to\tC^!$ is a Koszul duality from $\cC$ to $\cC^!$, then the inverse of
$$\LCP(\psi):\tC\cong\LCP\!\big(\LtC\big)\to\LCP(\tC^!)$$ is a Koszul duality from $\cC^!$ to $\cC$.
\end{proposition}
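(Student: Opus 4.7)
The plan is to produce a Koszul duality from $\cC^!$ to $\cC$ by exhibiting an equivalence of mixed categories $\LCP(\tC^!)\to\tC$. First, I would show that the mixed equivalence $\psi\colon \LtC\to\tC^!$ induces an equivalence of mixed categories $\LCP(\psi)\colon\LCP(\LtC)\to\LCP(\tC^!)$ by termwise application to complexes. Composing with the canonical identification $\tC\cong\LCP(\LtC)$ from the preceding proposition then yields an equivalence $\tC\to\LCP(\tC^!)$, and taking its inverse produces the claimed equivalence $\LCP(\tC^!)\to\tC$.

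The heart of the verification is showing that $\LCP(\psi)$ is well-defined on objects, i.e., that for a linear complex of projectives $(X_\bullet,d)$ in $\LtC$, the termwise image $(\psi(X_\bullet),\psi(d))$ is again linear. Two ingredients suffice. First, because $\psi$ is an equivalence of Abelian categories, it sends projectives of $\LtC$ to projectives of $\tC^!$. Second, because $\psi$ is mixed, it preserves weight, so if the head of $X_j$ lies in weight $j$ (the linearity condition in $\LCP(\LtC)$), the head of $\psi(X_j)$ also lies in weight $j$. The same argument applied to a quasi-inverse of $\psi$ produces a quasi-inverse for $\LCP(\psi)$, so $\LCP(\psi)$ is an equivalence of categories. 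Functoriality on chain maps is immediate.

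Finally, I would verify that $\LCP(\psi)$ is compatible with the mixed structures on $\LCP(\LtC)$ and $\LCP(\tC^!)$, whose weights are given by cohomological degree and whose Tate twist is $[-1](-1)$. Since $\psi$ commutes with the Tate twist of the underlying categories up to natural isomorphism and termwise application commutes with cohomological shifts, $\LCP(\psi)$ is mixed. Its inverse, composed with $\tC\cong\LCP(\LtC)$, is then a mixed equivalence $\LCP(\tC^!)\to\tC$, i.e., a Koszul duality from $\cC^!$ to $\cC$ in the sense of Definition \ref{def:dual}. The only nontrivial point is confirming that $\LCP$ behaves as a $2$-functor on mixed equivalences, and this reduces entirely to the weight- and projective-preservation properties of $\psi$ already established; no further analysis is required.
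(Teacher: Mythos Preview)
The paper states this proposition without proof, treating it as an immediate formal consequence of the definitions together with the preceding proposition establishing the canonical equivalence $\tC\cong\LCP(\LtC)$. Your argument correctly supplies the omitted details: you verify that $\LCP$ is functorial on mixed equivalences by applying $\psi$ termwise to linear complexes, using that an equivalence of abelian categories preserves projectives and that a mixed equivalence preserves weights. This is exactly the unpacking the paper leaves implicit, so your approach matches the intended one.
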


\begin{remark}\label{Koszul-symmetric-derived}
We could also phrase Proposition \ref{Koszul-symmetric} in terms
of the derived equivalence $\Psi: D^b(\tC) \to D^b({\tC^!})$.
The induced Koszul duality in the reverse direction is the precomposition of 
$\Psi^{-1}$ with the right derived Nakayama functor. 
\end{remark}

\subsection{Highest weight and standard Koszul categories}
Let $\mathcal C$ be a $\C$-linear Abelian, Noetherian category with simple objects $\{L_\a\mid\a\in\cI\}$,
projective covers $\{P_\a\mid\a \in \cI\}$, 
and injective hulls $\{I_\a\mid\a\in\cI\}$.
Let $\leq$ be a partial order on $\cI$.

\begin{definition}\label{high-weight-def}
  We call $\mathcal C$ {\bf highest weight} with respect to this partial order if there is a collection of objects
  $\{\Ve\a\mid\a \in \cI\}$ and epimorphisms 
  $\Pro \a\overset{\Pi_\a}\to\Ve \a\overset{\pi_\a}\to\Si \a$
  such that  for each $\a\in \cI$, the following conditions hold:
  \begin{enumerate}
  \item The object $\ker\pi_\a$ has a filtration such that each sub-quotient is isomorphic to $\Si \b$ for some $\b< \a$.
  \item The object $\ker \Pi_\a$ has a filtration such that each sub-quotient is isomorphic to $\Ve \gamma$ for some $\gamma> \a$.
  \end{enumerate}
The objects $\Ve\a$ are called {\bf standard objects}.
Classic examples of highest weight categories in representation theory include integral
blocks of parabolic BGG category $\cO$ \cite[5.1]{FM}.
\end{definition}

In any highest weight category, we also have a notion of {\bf costandard} objects.

\begin{definition}
  Let $\nabla_\a$ be the largest subobject of $I_\a$ whose composition
  factors are all isomorphic to $L_\b$ with $\b\leq \a$.  By
  \cite[Theorem 1]{DlabRingel}, the category  $\mathcal C$ is highest
  weight if and only if $I_\a$ has a filtration by costandards $\nabla_\gamma$
  analogous to the standard filtration on projectives.
\end{definition}

If $\cC$ admits a graded lift $\tC$, then every standard object of $\cC$ is gradeable.
More precisely, if $\tilde P_\a$ is a graded lift of $P_\a$, then we may define $\tilde\Delta_\a$
to be the largest quotient of $\tilde P_\a$ with no composition factors of the form $\tilde L_\b$ for $\b>\a$,
and $\tilde \Delta_\a$ will be a graded lift of $\Delta_\a$.  We refer to the graded lifts of standard objects of $\cC$
as standard objects of $\tC$.

\begin{definition}
A highest-weight category $\cC$ is {\bf standard Koszul} if it admits a graded lift $\tC$
with the property that the minimal projective resolution of every standard object in $\tC$
is linear.  (Compare this definition to Definition \ref{def:Koszul}, in which ordinary Koszulity is defined.)
\end{definition}

The following result is the main theorem of \cite{ADL03}.

\begin{theorem}\label{standard Koszul implies Koszul}
A highest-weight category $\cC$ is standard Koszul with respect to a given partial order if and only if it is Koszul and
its Koszul dual $\cC^!$ is highest weight with respect to the opposite partial order.
\end{theorem}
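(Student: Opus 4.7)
The plan is to address each direction separately, with the forward implication (standard Koszul $\Rightarrow$ Koszul plus dual is highest weight with reversed order) doing most of the work, and the reverse following by symmetry of Koszul duality.

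First I would show standard Koszul implies Koszul by induction on the partial order on $\cI$. If $\a$ is minimal, then the filtration condition on $\ker \pi_\a$ forces $\ker \pi_\a = 0$, so $\Delta_\a = L_\a$ and the hypothesized linear projective resolution of $\Delta_\a$ is a linear projective resolution of $L_\a$. For general $\a$, use the short exact sequence $0 \to K_\a \to \Delta_\a \to L_\a \to 0$ where $K_\a$ is filtered by simples $L_\b$ (up to grading shift by $(-1)$) with $\b < \a$. By the inductive hypothesis each $L_\b$ has a linear projective resolution, and the short exact sequences splicing $K_\a$ from the $L_\b$'s together with the standard-to-simple sequence can be assembled via the horseshoe lemma into a linear projective resolution of $L_\a$. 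The Tate-twist accounting is forced by the structure of graded multiplicities: the $L_\b$ in the radical of $\Delta_\a$ must lie in strictly positive internal degree, which aligns with the shift required for linearity.

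Next I would construct the highest weight structure on $\cC^!$ with the opposite partial order using the Koszul duality functor $K_{\tilde\cC}: D^b(\tilde\cC) \to D^b(\LtC) = D^b(\tilde\cC^!)$. Define $\Delta_\a^! := K_{\tilde\cC}(\nabla_\a)$, where $\nabla_\a$ is the costandard in $\cC$; dually, a linear injective coresolution of $\nabla_\a$ (obtained by applying the standard-Koszul hypothesis to the opposite category, using that costandard filtrations of injectives mirror standard filtrations of projectives) shows that $K_{\tilde\cC}(\nabla_\a)$ lies in the abelian heart $\tilde\cC^!$ rather than merely in its derived category. One then checks that the canonical surjections $P_\a^! \to \Delta_\a^!$ (here $P_\a^! = K_{\tilde\cC}(L_\a)$ since Koszul duality interchanges indecomposable injectives and simples in the appropriate sense via the Nakayama twist) have kernels filtered by $\Delta_\gamma^!$ with $\gamma < \a$ in the opposite order (that is, $\gamma > \a$ in the original order), and similarly that the kernel of $\Delta_\a^! \to L_\a^!$ is filtered by simples $L_\b^!$ with $\b <^{\op} \a$. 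Both filtration statements translate, under the Koszul duality equivalence, into the filtration conditions on $\nabla_\a$ and $I_\a$ in the original highest weight structure on $\cC$.

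For the reverse direction I would invoke Proposition~\ref{Koszul-symmetric}: if $\cC^!$ is highest weight with the opposite order and $\cC$ is Koszul, then $\cC^!$ is Koszul (by hypothesis plus the Koszul equivalence), and we may apply the now-established forward direction with the roles of $\cC$ and $\cC^!$ exchanged. To conclude that $\cC$ is standard Koszul we use that the Koszul dual of $\cC^!$ is canonically $\cC$ itself (double-dual identification), and the resulting linear projective resolutions of the objects $\psi^{-1}(\Delta_\a^!)$ in $\cC$ are exactly the linear projective resolutions of the $\nabla_\a$; transporting via a Ringel duality on the highest weight category $\cC$ converts these into the required linear projective resolutions of $\Delta_\a$.

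The main obstacle is the second step: verifying that $K_{\tilde\cC}(\nabla_\a)$ lives in the abelian heart $\LtC$ rather than in a nontrivial part of $D^b(\LtC)$, and checking that the resulting objects satisfy the filtration conditions of Definition~\ref{high-weight-def}. This requires a careful analysis of how the Koszul duality functor acts on objects with "standard" or "costandard" structure, exploiting the fact that linearity of resolutions translates into concentration in a single cohomological degree after Koszul duality — and the grading-shift bookkeeping required to identify the filtration subquotients correctly with the claimed standards in the opposite order is the most delicate part of the argument.
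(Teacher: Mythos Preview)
The paper does not prove this theorem; it simply attributes it as ``the main theorem of \cite{ADL03}'' and moves on. So there is no proof in the paper to compare your proposal against.

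That said, a few comments on your sketch. Your first step (standard Koszul $\Rightarrow$ Koszul by induction on the order) is the right idea and matches the strategy in \cite{ADL03}, but the horseshoe argument as written is too loose: linearity is not automatically preserved under horseshoe splicing, and one must instead argue via Ext-vanishing conditions ($\Ext^i(\Delta_\a, L_\b\langle j\rangle)=0$ unless $i=j$, then deduce the same for $L_\a$ in place of $\Delta_\a$). Your reverse direction has a genuine gap: to apply the forward implication with the roles swapped you would need $\cC^!$ to be \emph{standard} Koszul, but you have only assumed it is highest weight and Koszul. The actual argument in \cite{ADL03} goes the other way---one translates the highest-weight filtration conditions on $\cC^!$ directly into Ext-vanishing statements for the standards of $\cC$, which is the linearity condition. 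The invocation of Ringel duality at the end is not needed and obscures what is really a direct translation under Koszul duality.
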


\section{The structure of \texorpdfstring{$\cOa$}{Oa} and \texorpdfstring{$\cOg$}{Og}}\label{Oa and Og structure}
In this section we fix a conical symplectic resolution and a Hamiltonian
$\bT$-action with isolated fixed points and we investigate the structure of the categories 
$\cOa$ and $\cOg$ for various different quantizations.
We prove that $\cOa$ is highest weight for most quantizations
(Proposition \ref{B} and Theorem \ref{highest weight}), and that $\cOg$ is highest weight
for all quantizations (Proposition \ref{ghw}).  Furthermore, we conjecture that 
both categories are standard Koszul (Conjectures \ref{Koszul} and \ref{geom-Koszul}), based on the evidence from the 
theory of hypertoric varieties (Example \ref{htkoszul}) and from classical Lie theory (Example \ref{spkoszul}).

We also include a brief discussion of the Yoneda algebra of $\cOg$ (Section \ref{sec:hochsch-cohom-cent}).
In particular, we define a natural map from $H^*(\fM; \C)$ to the center of the Yoneda algebra, and conjecture
that this map is often an isomorphism (Conjecture \ref{HH}).

\subsection{The B algebra}\label{sec:B}
For any $\Z$-graded ring $A$, let \[B(A):= A^0\Big{/}\sum_{k>0}A^{-k}A^{k}.\] 

\begin{proposition}
If $A$ is the $\bS$-invariant
section ring of a quantized conical symplectic resolution $\fM$, with grading induced by the action of $\bT$,
then $B := B(A)$ is finite dimensional as a vector space.
\end{proposition}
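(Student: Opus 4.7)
My plan is to reduce the statement to one about the commutative ring $\gr A \cong \C[\fM]$, using that $B(A)$ is filtered with associated graded a quotient of $B(\gr A)$, and then prove finite-dimensionality of $B(\gr A)$ via the $\bT$-geometry of $\fM$.

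First, I will show that the $\bS$-filtration on $A$ descends to a filtration on $B = B(A)$ whose associated graded is a quotient of $B(\gr A)$. Since the $\bT$-grading commutes with the $\bS$-filtration, $\gr A^0 = (\gr A)^0 = \C[\fM]^{\bT}$. For $a \in A^{-k}$ and $b \in A^k$ with $k > 0$, the symbol of $ab$ equals $\gr(a)\gr(b)$, which lies in the ideal $J := \sum_{k>0} \C[\fM]^{-k}\C[\fM]^k \subseteq \C[\fM]^{\bT}$. Hence the ideal $\gr I$ of symbols of $I := \sum_{k>0} A^{-k}A^k$ contains $J$, giving a surjection $B(\gr A) = \C[\fM]^{\bT}/J \twoheadrightarrow \gr B(A)$. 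Because the filtration on $B(A)$ is exhaustive, it suffices to show that $B(\gr A)$ is finite-dimensional.

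To prove the latter, I note that $\C[\fM] = \C[\fM_0]$ is a finitely generated $\C$-algebra, so its $\bT$-invariants $\C[\fM]^{\bT}$ are as well (as $\bT$ is reductive). It is therefore enough to show that the vanishing locus $V(J) \subseteq \Spec \C[\fM]^{\bT}$ is a finite set of points, for then the quotient is an Artinian finitely generated $\C$-algebra, hence finite-dimensional. A closed point of $\Spec \C[\fM]^{\bT}$ corresponds to a closed $\bT$-orbit $O \subseteq \fM_0$. If $O$ is positive-dimensional, then since $\bT \cong \cs$, the orbit $O$ is isomorphic to $\cs/H$ for some finite subgroup $H$, and $\C[O]$ is a domain with nonzero weight $\pm k$ subspaces for some $k > 0$. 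Since $O$ is closed in the affine variety $\fM_0$, restriction gives a $\bT$-equivariant surjection $\C[\fM_0] \twoheadrightarrow \C[O]$, so I can find $f \in \C[\fM]^{-k}$ and $g \in \C[\fM]^k$ with $fg|_O \neq 0$, contradicting the containment of $J$ in the maximal ideal of $O$. Thus the only contributions to $V(J)$ are $\bT$-fixed orbits in $\fM_0$; applying Borel's fixed point theorem to the projective $\bT$-equivariant fibers of $\fM \to \fM_0$, these are all images of points of $\fM^{\bT}$, and hence finite in number.

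The one subtlety worth flagging is the inclusion $\gr I \supseteq J$. Because $\gr A$ is commutative, the symbol of a product $ab$ is exactly $\gr(a)\gr(b)$ with no room for unexpected cancellation, so every generator of $J$ arises as the symbol of a corresponding generator of $I$. Beyond this, the only geometric input needed is the elementary fact that a positive-dimensional closed $\bT$-orbit in an affine variety supports nonzero functions of both positive and negative $\bT$-weight.
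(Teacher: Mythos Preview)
Your proof is correct, and the overall architecture matches the paper's: both reduce to showing that $B(\gr A)=B(\C[\fM])$ is finite-dimensional via the surjection $B(\gr A)\twoheadrightarrow \gr B(A)$, and then argue geometrically using the $\bT$-action on $\fM_0$.

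The geometric half differs in packaging. The paper works inside $\fM_0$ itself: it considers the ideal $I\subset\C[\fM]$ generated by all nonzero-$\bT$-weight functions, notes that $\C[\fM]/I=B(\C[\fM])$, and shows $V(I)=\fM_0^+\cap\fM_0^-=\{o\}$ using the attracting/repelling loci (invoking the argument of Lemma~\ref{vanishing}). You instead work in the GIT quotient $\fM_0/\!\!/\bT=\Spec\C[\fM]^{\bT}$, analyze closed $\bT$-orbits directly, and bound $|V(J)|$ by $|\fM^{\bT}|$ via Borel's fixed-point theorem on the projective fibers of $\fM\to\fM_0$. The paper's route is shorter because it plugs into machinery already set up in Section~\ref{sec:rel-core}; yours is more self-contained and would work verbatim in settings where the relative-core discussion is unavailable.

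One expository nit: in your last paragraph, the inclusion $J\subseteq\gr I$ has nothing to do with commutativity of $\gr A$. For any lifts $a\in A^{-k}(p)$, $b\in A^k(q)$ of $\bar a,\bar b$, the product $ab$ lies in $I\cap A^0(p+q)$, and its image in $(\gr A)_{p+q}$ is $\bar a\bar b$ \emph{by definition} of the product on $\gr A$; if this happens to vanish there is nothing to prove. Commutativity plays no role here.
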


\begin{proof}
Consider the associated graded ring $\gr B$; 
this ring admits a surjection from
$B(\C[\fM])$, where $\C[\fM]$ is also graded by the action of $\bT$.
It therefore suffices to show that $B(\C[\fM])$ is finite dimensional.

Let $p_0$ be an element of $\fM_0$.
In the proof of Lemma \ref{vanishing}, we showed that if $p_0 \notin \fM_0^+$, then
there exists a function of positive $\bT$-weight that does not vanish at $p_0$.
Let $\fM^-$ be the relative core for the opposite $\bT$-action, and let $\fM^-_0$ be its image in $\fM_0$. By the same reasoning,
if $p_0 \notin \fM_0^-$, then there exists a function of negative $\bT$-weight that does not vanish at $p_0$.
It follows that the set-theoretic vanishing locus in $\fM_0$ 
of the ideal $I$ generated by all homogeneous elements of nonzero $\bT$-weight
is equal to $\fM_0^+\cap \fM_0^- = \{o\}$, and therefore that this ideal has finite codimension
in $\C[\fM_0] = \C[\fM]$.  Thus $B(\C[\fM]) = (\C[\fM]/I)_0$ is finite dimensional.
\end{proof}

For each $\a\in\cI$, let $U_\a$ be a formal neighborhood of $p_\a$.  
Although the groups $\bS$ and $\bT$ do not act on $U_\a$ or on $\cD|_{U_\a}$, their Lie algebras do,
so we can make sense of the ring $\tilde A_\a := \secs(\cD|_{U_\a})$.
Let $A_\a$ be the subring of $\tilde A_\a$ that is additively spanned by $\bT$-weight vectors (this means passing from
power series to polynomials), and let
$B_\a := B(A_\a)$.  Then $A_\a$ and $B_\a$
admit natural maps from $A$ and $B$, respectively.

\begin{lemma}\label{Aa-weyl}
Let $d = \frac{1}{2}\dim\fM$.
The algebra $A_\a$ is isomorphic to the ring of global differential operators on $\C^d$, 
and $B_\a$ is isomorphic to $\C$.
\end{lemma}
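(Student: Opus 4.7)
The plan is to reduce to a local computation on the formal neighborhood $U_\a$, where the quantization must be the standard Weyl-Moyal algebra.

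First I would use that $p_\a$ is an isolated $\bT$-fixed point: since $\bT$ preserves the symplectic form $\omega$, the tangent space $T_{p_\a}\fM$ decomposes as a direct sum of $\bT$-weight subspaces $T^+ \oplus T^-$ with no zero weight, and these two pieces are dually paired Lagrangian subspaces each of dimension $d$. Choose a weight basis $x_1,\dots,x_d$ of $(T^+)^*$ with $\bT$-weights $a_1,\dots,a_d > 0$ and dual basis $y_1,\dots,y_d$ of $(T^-)^*$ with $\bT$-weights $-a_1,\dots,-a_d$. An equivariant formal Darboux theorem (applied to the $\bS\times\bT$-action) identifies $U_\a$ as a $\bT$-equivariant symplectic formal scheme with the formal neighborhood of $0$ in $T_{p_\a}\fM$.

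Next I would apply the classification in Theorem~\ref{periods}, locally on $U_\a$: since $H^2(U_\a;\C) = 0$, the $\bS$-equivariant quantization is unique up to isomorphism, so $\cD|_{U_\a}$ is equivalent to the standard Moyal-Weyl completion $\C[[x,y]][[h]]$ with $[\hat x_i,\hat y_j] = h\delta_{ij}$, where $\hat x_i,\hat y_i$ carry $\bS$-weights $b_i,\, n-b_i$. Setting $\tilde x_i := h^{-b_i/n}\hat x_i$ and $\tilde \partial_i := h^{-(n-b_i)/n}\hat y_i$ produces $\bS$-invariant elements with $[\tilde x_i,\tilde \partial_j] = \delta_{ij}$. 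The $\bS$-invariant sections that additionally decompose into $\bT$-weight vectors (i.e., are polynomial rather than formal) are exactly the $\C$-span of the monomials $\tilde x^\alpha \tilde \partial^\beta$; hence $A_\a$ is isomorphic to the polynomial Weyl algebra $\mathcal D(\C^d)$, with $\tilde x_i$ of $\bT$-weight $a_i>0$ and $\tilde \partial_i$ of $\bT$-weight $-a_i<0$.

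Finally I would compute $B_\a$ by induction on total degree. Every monomial $\tilde x^\alpha \tilde \partial^\beta$ in $A_\a^0$ must satisfy $\sum_i (\alpha_i - \beta_i)a_i = 0$, so either $\alpha = \beta = 0$ or $\beta \neq 0$. In the latter case, pick $i$ with $\beta_i > 0$ and write
\[\tilde x^\alpha \tilde \partial^\beta \;=\; \tilde \partial_i \cdot \bigl(\tilde x^\alpha \tilde \partial^{\beta-e_i}\bigr) \;+\; \bigl[\tilde x^\alpha, \tilde \partial_i\bigr] \tilde \partial^{\beta-e_i};\]
the first term lies in $A_\a^{-a_i}\cdot A_\a^{a_i} \subset \sum_{k>0} A_\a^{-k}A_\a^k$, while the second has strictly smaller total degree in the generators and still lies in $A_\a^0$. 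Iterating, every weight-zero monomial is congruent to a scalar modulo $\sum_{k>0} A_\a^{-k} A_\a^k$. The class of $1$ is nonzero in the quotient, since its symbol in $\gr A_\a \cong \C[T_{p_\a}\fM]$ is nonzero modulo the analogous ideal (whose vanishing locus is $\{0\}$). Hence $B_\a \cong \C$.

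The main obstacle is the uniqueness step in the second paragraph: one must verify that $\bT$-equivariance does not produce additional moduli for the $\bS$-equivariant quantization on the formal polydisk. This is where one most directly relies on the structural results of Section~\ref{quant-quantizations}; the commutator manipulations for $B_\a$ are routine once the Weyl-algebra description of $A_\a$ is in hand.
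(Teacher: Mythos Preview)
Your proof is correct and follows essentially the same route as the paper: invoke uniqueness of the quantization on the formal polydisk (the paper cites \cite[1.5]{BK04a} directly rather than the period map), rescale by powers of $h^{1/n}$ to obtain $\bS$-invariant generators satisfying the Weyl relations, and then analyze $\bT$-weights to pin down $B_\a$.

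The only differences are cosmetic, in the final computation of $B_\a$. For $\dim B_\a \leq 1$, the paper orders its PBW basis with the negative-weight generators $z_i$ first, so that any weight-zero monomial $z^a w^b$ with $a\neq 0$ is \emph{already} written as $z_i\cdot(z^{a-e_i}w^b)\in A_\a^{-k}A_\a^k$; your commutator induction accomplishes the same thing but is needed only because you put the positive-weight generators first. For $B_\a\neq 0$, the paper exhibits an explicit $B_\a$-module (the $\bT$-invariants of the Verma-type quotient $A_\a/A_\a\langle w_1,\dots,w_d\rangle$), whereas your degree/filtration argument is equally valid---indeed slightly cleaner, since every element of $A_\a^{-k}A_\a^k$ for $k>0$ lies in filtration degree $\geq 2$, so $1$ cannot be in the ideal.

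Your worry about $\bT$-equivariance producing extra moduli is unfounded and the paper does not address it: the uniqueness statement is for $\bS$-equivariant quantizations, and once you have the unique one, the $\bT$-action lifts canonically (via the moment map $\xi$) and simply lets you choose the generators to be weight vectors.
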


\begin{proof}
There is only one quantization
of the formal polydisk \cite[1.5]{BK04a}, thus the ring of sections $\Gamma(\cD|_{U_\a})$
must be isomorphic to the Weyl algebra
$$\C[[x_1, y_1, \ldots, x_d, y_d, \hon]](\hmon)\Big{/}\Big\langle [h,x_i], [h,y_i],[x_i,x_j], [y_i,y_j], [x_i,y_j]- h\delta_{ij}\Big\rangle.$$ 
We may choose $x_1, y_1, \ldots, x_d, y_d, h$ to be simultaneous
weight vectors for $\bS$ and $\bT$, with each $x_iy_i$ and $h$ having $\bS$ weight $n$ and $\bT$ weight 0.
If $\chi_i$ is the $\bS$ weight of $x_i$, let 
$$z_i := h^{\nicefrac{-\chi_i}{n}}x_i\and w_i:=  h^{\nicefrac{\chi_i}{n}-1}y_i.$$
Then $A_\a$ is generated by $$\{z_1,w_1,\ldots,z_d,w_d\},$$
subject to the relations $$[z_i, z_j] = 0, \qquad [w_i,w_j] =0, \and [z_i,w_j] = \delta_{ij}.$$
The $\C$-vector space spanned by the $z_i$ and $w_i$ is isomorphic as a $\bT$-space to the tangent space $T_{p_\a}\M$.  Since $p_\a$ is an isolated fixed point, none of the $z_i$ or $w_i$ can have zero weight.  Without loss of generality suppose that the $\bT$-weight of $z_i$ is negative.  The ring $A_\a$ has a PBW basis given by monomials of the form $z^aw^b$ for $a, b \in \N^d$.  All such monomials with $a \ne 0$ are clearly in $A_\a^{-k}A_\a^k$ for some $k > 0$, so $B_\a$ is at most one-dimensional.  On the other hand, the action of $A_\a^0$ on the $\bT$-invariant part of $A_\a/A_\a\langle w_1,\dots, w_d\rangle$ descends to a nontrivial action of $B_\a$, so $B_\a$ cannot be zero. 
\end{proof}

In what follows, fix an $\bS$-equivariant line
bundle $\cL$, very ample over $\fM_0$, and let $\eta\in\Ht$ be its
Euler class.  Fix another class $\la\in\Ht$, and for all $k\in\C$, let
$\cQ_k$ be the quantization of $\fM$ with period $\la + k\eta$.
Let $$A_k := \secs(\cD_k) \and B_k := B(A_k).$$ For each $\a\in\cI$,
define $\Aka$ and $\Bka$ as above.

\begin{proposition}\label{B}
The natural map $\varphi_k :B_k\to\displaystyle\bigoplus_{\a\in\cI} \Bka$
is an isomorphism for all but finitely many values of $k\in \C$.
\end{proposition}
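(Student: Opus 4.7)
The map $\varphi_k$ is induced by restriction of $\bS$-invariant global sections to the formal neighborhoods $U_\a$; since $\bT$-weights on $A_k$ are bounded, these restrictions land in the polynomial Weyl subalgebra $A_{k,\a}$ and descend to a well-defined homomorphism $B_k \to \bigoplus_\a B_{k,\a}$. By Lemma \ref{Aa-weyl}, the codomain has dimension exactly $|\cI|$, independent of $k$, so the task splits into proving surjectivity and matching the dimensions.

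For surjectivity of $\varphi_k$ at generic $k$: by Theorem \ref{derived localization}, derived localization holds for all but finitely many $k$ along the ray $\la + k\eta$. In this cofinite range, the simples of $\cOa$ are in bijection with those of $\cOg$, which are naturally indexed by the fixed points via the relative core components $X_\a$. For each $\a$, the formal completion at $p_\a$ of the simple corresponding to $X_\a$ is supported on $X_\a \cap U_\a$ and realizes the unique simple $B_{k,\a}$-module under the local description from Lemma \ref{Aa-weyl}. Lifting the resulting primitive idempotents back to $A_k^0$ produces elements whose images generate each summand of the codomain, giving surjectivity of $\varphi_k$.

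For the matching upper bound $\dim B_k \leq |\cI|$, which together with surjectivity implies injectivity: this is the main obstacle. The naive comparison $\gr B_k \twoheadrightarrow B(\C[\fM])$ gives only a classical bound whose quotient may be nonreduced or strictly larger than $|\cI|$, and hence does not suffice. The strategy I would adopt is to realize $\{B_k\}$ as the fibers of a coherent sheaf on the affine line $\la + \C\eta$ via a Rees-type construction, making $k \mapsto \dim B_k$ upper semi-continuous. Combining the surjectivity argument with Theorem \ref{localization} (abelian localization at sufficiently positive $k_0$), one checks $\dim B_{k_0} = |\cI|$ for $k_0 \gg 0$, so the locus $\{k : \dim B_k > |\cI|\}$ is a proper closed subset of the line and therefore finite.

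The most delicate step is the construction of this coherent family: one must ensure that the non-commutative filtration used in forming the Rees algebra of $A_k$ is compatible across varying $k$, so that the defining ideal $\sum A_k^{-j} A_k^j$ propagates flatly. An alternative path, which I expect to work but to require more bookkeeping, is to exploit the Poisson deformation $\scrM_\eta$ of Section \ref{sec:nam}: over its generic fiber the variety is affine and $\bT$ acts on $|\cI|$ isolated fixed points with reduced scheme structure, so the Poisson version of $B$ is forced to have dimension $|\cI|$, and a quantum lifting of this statement then transfers the bound to all but finitely many $k$.
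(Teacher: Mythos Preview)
Your surjectivity argument is circular. You invoke a bijection between simples of $\cOa$ (or $\cOg$) and the fixed points $\{p_\a\}$, but in this paper that bijection is Lemma \ref{simples}, whose proof sits in Section \ref{objects} under the standing hypothesis that $\varphi$ is already an isomorphism. Likewise Proposition \ref{ghw} (the highest weight structure of $\cOg$) is deduced from Theorem \ref{highest weight}, which in turn rests on Proposition \ref{B}. So you cannot use the count of simple objects, nor any idempotent-lifting from them, as input here. The same circularity infects your dimension check $\dim B_{k_0}=|\cI|$: abelian localization only transports the question to $\cOg$, and you have no independent count of simples there; nor does knowing the number of simples directly give the dimension of $B$ without already knowing $B$ is semisimple.

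Your ``alternative path'' is in fact the paper's argument, and is the correct one. The paper quantizes the twistor family $\scrM_\eta$ to obtain sheaves of algebras $\scrB$ and $\oplus_\a\scrB_\a$ over $\aone$ with a map $\varphi$ whose fiber at $k$ is $\varphi_k$, and then checks that $\varphi$ is an isomorphism at the generic point. The decisive input is Kaledin's theorem that the generic fiber $\scrM_\eta(\infty)$ is affine: once the ambient variety is affine, the attracting sets of the $\bT$-fixed points are closed affine spaces, so the associated graded of both $B(\infty)$ and $\oplus_\a B_\a(\infty)$ are identified with the coordinate ring of $\scrM_\eta(\infty)^\bT$, and $\gr\varphi(\infty)$ is visibly an isomorphism. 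This argument never touches the structure of category $\cO$, which is essential since Proposition \ref{B} is logically upstream of all of that structure. Your instinct that this route ``requires more bookkeeping'' is backwards: it is the clean argument, and your primary route cannot be made to work.
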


\begin{proof}
Our plan is to construct a family of maps, parametrized by the affine line, such that the fiber over $k$
is the map $\varphi_k$, and to show that the generic map $\varphi_\infty$ is an isomorphism.

To accomplish this, we work with the twistor deformation $\scrM_\eta$
of $\fM$ over $\aone$, introduced in \ref{sec:nam}.  Let $\Delta :=
\Spec\C[[h]]$ be the formal disk, and let
$\sigma_k:\Delta\to\Delta\times\aone$ be the map that is the identity
on the first coordinate and pulls back the coordinate on $\aone$ to
$kh$.  Following the argument in \cite[4.17]{BLPWquant}, there exists
a quantization $\scrD$ of $\scrM_\eta$ such that $\cD_k$ is isomorphic
to the pull-back of $\scrD$ along the map $\sigma_k$ via the pull-back
construction described in \cite[\S 3.1]{BLPWquant}.  The action of
$\bT$ extends to this situation by \cite[1.5]{KalPois}.

Let $\scrU_\a\subset\scrM_\eta$ be a formal neighborhood of the
component of $\scrM_\eta^\bT$ corresponding to $\a$, so that
$\scrU_\a$ is a deformation of $U_\a$ over $\aone$.  Let $\pi$ be the
projection from $\scrM_\eta$ to $\aone$.  Let $$\scrB := B(\pi_*\scrD)
\and \scrB_\a := B(\pi_*\scrD|_{\scrU_\a});$$ both are sheaves of
algebras over $\aone$, and we have a natural map
$\varphi:\scrB\to\scrB_\a$ whose fiber over $k$ is $\varphi_k$.

By a result of Kaledin \cite[2.5]{KalDEQ}, the generic fiber $\scrM_{\eta}(\infty)$ is affine.
This tells us that the attracting sets to the fixed points are all closed affine spaces, so 
the associated graded algebras of $B(\infty)$ and $\oplus B_{\a}(\infty)$ are both isomorphic to the coordinate ring
of $\scrM_{\eta}(\infty)^\bT$.  It follows that the map $\gr\varphi(\infty)$ is an isomorphism, and thus so is $\varphi(\infty)$.
\end{proof}

\vspace{-\baselineskip}
\begin{remark}\label{nonisolated B}
  If we use an action of $\bT$ which does not have isolated fixed
  points, then these results proceed through in almost the same way,
  but with one important change: the algebras $B_\a$ should now be
  indexed by components of $\fM^{\bT}$, and each one will be given by global
  sections of an induced quantization on the corresponding component. 
\end{remark}

\subsection{The category  \texorpdfstring{$\cOa$}{Oa} is highest weight (for most quantizations)}\label{objects}
Throughout this section we will assume that the map $\varphi:B\to\oplus B_\a\cong\C^\cI$ is an isomorphism.
By Proposition \ref{B}, this is the case for ``most" quantizations.

For each $\a\in\cI$, let $$\Delta_\a := A \otimes_{A^+} B_\a
\and \nabla_{\!\a} := \Big(B_\a^*\otimes_{A^+} A\Big)^\star,$$
where $B_\a$ is regarded as a quotient of $B$ (and therefore also of $A^+$).
Here $*$ denotes ordinary vector space duality and $\star$ denotes restricted duality:
if $N$ is a finitely generated right weight module (as defined in Section \ref{sec:Oa}), 
then $N^\star := \oplus_{\ell\in\C} (N^\ell)^*$.
We will refer to $\Delta_\a$ and $\nabla_{\!\a}$ as the {\bf
standard} and {\bf costandard} modules indexed by $\a$.

\begin{lemma}\label{standards}
The modules $\Delta_\a$ and $\nabla_{\!\a}$ lie in $\cOa$.
\end{lemma}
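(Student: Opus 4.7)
The plan is to verify the criterion of Lemma \ref{fdba} for both modules: finite generation, weight module structure for $\xi$ with finite-dimensional generalized weight spaces, and weights whose real parts are uniformly bounded above. Let $\ell_\a \in \C$ denote the eigenvalue of $\xi$ on the one-dimensional space $B_\a$ (Lemma \ref{Aa-weyl}).

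For $\Delta_\a = A \otimes_{A^+} B_\a$, the module is cyclic, generated by $1 \otimes 1_\a$ where $1_\a$ spans $B_\a$. Using the decomposition $A = \bigoplus_k A^k$, the fact that $A^{k}$ for $k>0$ annihilates $1_\a$, and that $A^0$ acts via the character $A^0 \twoheadrightarrow B \twoheadrightarrow B_\a$, one sees that the weight space $\Delta_\a^{\ell_\a - k}$ for $k \geq 0$ is a quotient of $A^{-k} \otimes_{A^0} B_\a$. By Lemma \ref{fg}, $A^{-k}$ is finitely generated as an $A^0$-module, and since $B_\a$ is one-dimensional this quotient is finite-dimensional. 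All weights lie in $\{\ell_\a - k : k \geq 0\}$, giving the required upper bound.

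For $\nabla_\a$, I would apply the symmetric argument to the right $A$-module $M := B_\a^* \otimes_{A^+} A$, showing that it is cyclic as a right $A$-module with finite-dimensional $\xi$-weight spaces and an analogous uniform bound on weights. The restricted dual $\nabla_\a = M^\star$ then inherits both finite-dimensionality of weight spaces and the upper weight bound automatically, since $\nabla_\a^\ell \cong (M^\ell)^*$ as vector spaces.

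The main obstacle is establishing finite generation of $\nabla_\a$ as a left $A$-module. I would show that $\nabla_\a$ is cyclic, generated by any nonzero vector of its one-dimensional top weight space $\nabla_\a^{\ell_\a}$. Fixing such a $v^\vee$ dual to the cyclic generator $v^* \otimes 1$ of $M$, the defining identity $(a \cdot v^\vee)(x) = v^\vee(x \cdot a)$ combined with the tensor product relations in $M$ yields, for each $k \geq 0$, a linear map $A^{-k} \to \nabla_\a^{\ell_\a - k}$ sending $a$ to $a \cdot v^\vee$. Surjectivity of this map reduces to the non-degeneracy of the natural pairing between $\Delta_\a^{\ell_\a - k}$ and $\nabla_\a^{\ell_\a - k}$ induced by the tensor product descriptions, and this non-degeneracy in turn follows from the explicit presentation of both spaces as quotients involving $A^{-k}$ and $B_\a \cong \C$.
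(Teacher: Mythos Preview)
Your treatment of $\Delta_\a$ is correct and is precisely the content of the paper's one-line appeal to Lemmas~\ref{fg} and~\ref{fdba}. The paper then says the proof for $\nabla_\a$ is ``identical,'' meaning one runs the same weight-space analysis on the cyclic right module $M = B_\a^*\otimes_{A^+}A$ and transfers the conclusions to $\nabla_\a = M^\star$; your handling of the finite-dimensionality and boundedness of weight spaces follows these lines and is fine.

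There is, however, a genuine error in your argument for finite generation. The costandard $\nabla_\a$ is in general \emph{not} generated by its top weight vector, and the pairing you invoke is in general \emph{degenerate}. Take $\fM = T^*(\SL_2/B)$ at a regular integral period: whenever $\Delta_\a$ is reducible, the dual Verma $\nabla_\a$ has simple socle $L_\a$, and the one-dimensional top weight space $\nabla_\a^{\ell_\a}$ lies entirely inside this socle. Hence the submodule generated by your $v^\vee$ is $L_\a \subsetneq \nabla_\a$, so the map $A^{-k}\to\nabla_\a^{\ell_\a-k}$, $a\mapsto a\cdot v^\vee$, is not surjective for large $k$. Equivalently, the Shapovalov-type form you are implicitly using has radical equal to the maximal proper submodule of $\Delta_\a$, so non-degeneracy fails exactly in the interesting cases.

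The module $\nabla_\a$ is still finitely generated, but for a different reason: once you have established that $\nabla_\a$ is a weight module with finite-dimensional weight spaces bounded above, you may invoke the classification of simples (Lemma~\ref{simples}, whose proof uses only the $\Delta_\a$ half of the present lemma, hence is not circular) to see that every simple subquotient of $\nabla_\a$ is some $L_\b$ and occurs with finite multiplicity. Thus $\nabla_\a$ has finite length, and finite length over a Noetherian ring forces finite generation. The paper's terse ``identical'' glosses over this step.
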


\begin{proof}
The fact that $\Delta_\a$ lies in $\cOa$ follows from Lemmas \ref{fg} and \ref{fdba}, and the proof
that $\nabla_{\!\a}$ lies in $\cOa$ is identical.
\end{proof}

\vspace{-\baselineskip}
\begin{lemma}\label{simples}
Each standard object $\Delta_\a$ has a unique simple quotient $L_\a$.
Furthermore, every simple object of $\cOa$ is isomorphic to a unique
element of the set $\{L_\a\mid \a\in\cI\}$.
\end{lemma}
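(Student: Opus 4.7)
The plan is to show that each standard module $\Delta_\a$ has a one-dimensional top weight space spanned by the canonical vector $v_\a := 1 \otimes 1_\a$, and then to use this to identify both the unique simple quotient of $\Delta_\a$ and every simple object of $\cOa$.

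First I would pin down the weight decomposition of $\Delta_\a$. Let $\ell_\a \in \C$ be the scalar by which $\xi \in A^0$ acts on the one-dimensional $B$-module $B_\a$. Any element of $\Delta_\a$ can be written as $\sum_i a_i \otimes v_\a$ with $a_i \in A$; decomposing each $a_i$ into its $\bT$-weight components and using both the defining relation $a \cdot x \otimes b = a \otimes x b$ for $x \in A^+$ and the fact that $\bigoplus_{k>0} A^k$ acts as zero on $B_\a$, one finds that $\Delta_\a$ is spanned over $\C$ by $v_\a$ together with the vectors $a \otimes v_\a$ for $a \in \bigoplus_{k<0} A^k$. Since $a \otimes v_\a$ has weight $\ell_\a + k < \ell_\a$ for $a \in A^k$ with $k < 0$, this shows $\Delta_\a^{\ell_\a} = \C v_\a$ is one-dimensional. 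The nonvanishing of $v_\a$ itself can be verified by a PBW-type argument presenting $A$ as a free right $A^+$-module.

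Granted the one-dimensionality of $\Delta_\a^{\ell_\a}$, uniqueness of the simple quotient is automatic: any proper submodule $N \subset \Delta_\a$ satisfies $N^{\ell_\a} = 0$ (otherwise $v_\a \in N$ and $N = \Delta_\a$), and this property is preserved under arbitrary sums, so there is a unique maximal proper submodule and we let $L_\a$ be the corresponding simple quotient.

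Next I would classify simple objects. Let $L \in \cOa$ be simple. By Lemma \ref{fdba}, $L$ is a weight module with weights bounded above in real part, so there is some $\ell$ with $L^\ell \neq 0$ and $L^{\ell + k} = 0$ for all $k > 0$. For weight reasons, $\bigoplus_{k>0} A^k$ annihilates $L^\ell$, and moreover $\sum_{k>0} A^{-k} A^k$ acts as zero since $A^k \cdot L^\ell \subset L^{\ell+k} = 0$; hence $L^\ell$ is canonically a module over $B = B(A)$. Via the isomorphism $B \cong \bigoplus_\a B_\a$, the space $L^\ell$ decomposes into isotypic pieces indexed by $\cI$. Choosing any $\a$ with a nonzero $B_\a$-summand and any nonzero vector $w$ therein, the adjunction $\Hom_A(\Delta_\a, L) \cong \Hom_{A^+}(B_\a, L)$ produces a nonzero map $\Delta_\a \to L$ with $v_\a \mapsto w$, which by simplicity of $L$ must be surjective. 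Thus $L \cong L_\a$, and pairwise non-isomorphism of the $L_\a$ follows because distinct $\a$ give distinct $B$-module structures on the one-dimensional top weight space. The main technical point on which everything hinges is the first step, verifying that $\Delta_\a^{\ell_\a}$ is genuinely one-dimensional and nonzero; the rest is essentially formal.
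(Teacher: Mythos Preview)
Your argument is correct and follows essentially the same route as the paper: identify the top weight space of $\Delta_\a$, deduce a unique maximal proper submodule, and then use the adjunction $\Hom_A(\Delta_\a, L) \cong \Hom_{A^+}(B_\a, L)$ to classify simples. The paper phrases the first step as ``$\Delta_\a^{\ell_\a}$ is isomorphic as a $B$-module to $B_\a$'' rather than ``one-dimensional,'' but this is the same thing.

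One remark: your aside that nonvanishing of $v_\a$ ``can be verified by a PBW-type argument presenting $A$ as a free right $A^+$-module'' is both unnecessary and possibly false as stated (there is no reason to expect $A$ to be free over $A^+$). In fact, the weight analysis you already did gives the nonvanishing directly. Writing $\Delta_\a = A/(A \cdot I_\a)$ with $I_\a = \ker(A^+ \to B_\a)$, the degree-zero part of $A \cdot I_\a$ is $\sum_{j>0} A^{-j}A^j + \ker(A^0 \to B_\a) = \ker(A^0 \to B_\a)$, since $\sum_{j>0} A^{-j}A^j$ is already contained in $\ker(A^0 \to B) \subset \ker(A^0 \to B_\a)$. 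Hence $\Delta_\a^{\ell_\a} = A^0/\ker(A^0 \to B_\a) = B_\a \neq 0$. So the step you flagged as ``the main technical point'' is in fact immediate from the definition of $B$; no freeness is needed.
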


\begin{proof}
Let $\ell_\a$ be the highest weight (measured by its real part) that appears in $\Delta_\a$.
Then $\Delta_\a^{\ell_\a}$ is annihilated by $A^k$ for all positive $k$, and is therefore naturally a $B$-module;
it is isomorphic as a $B$-module to $B_\a$.
Let $N_\a$ be the sum of all submodules of $\Delta_\a$ that do not contain $\Delta_\a^{\ell_\a}$.  Then 
$L_\a:=\Delta_\a/N_\a$ is evidently nonzero and simple.  Furthermore, it is the only simple quotient of $\Delta_\a$,
since $\Delta_\a$ is generated by its highest weight space.

If $\a\neq\a'$, then the highest weight spaces of $L_\a$ and $L_{\a'}$
are not isomorphic as $B$-modules, therefore $L_\a$ and $L_{\a'}$ cannot be isomorphic as $A$-modules.
Now suppose that $L$ is an arbitrary simple object of $\cOa$.  The highest weight space of $L$
must be isomorphic as a $B$-module to $B_\a$ for some $\a\in\cI$.  We get a natural $A^+$-module homomorphism $B_\a \to L$, which induces an $A$-module homomorphism $\Delta_\a \to L$, which is a surjection since $L$ is simple.  Thus $L$ is a quotient of $\Delta_\a$, so it is isomorphic to $L_\a$.
\end{proof}

\vspace{-\baselineskip}
\begin{lemma}\label{finite length}
All objects of $\cOa$ have finite length.
\end{lemma}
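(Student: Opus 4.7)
The plan is to show that every $N \in \cOa$ has finite composition length by producing an explicit bound on the length of any chain of submodules in terms of a finite sum of weight-space dimensions. Since $A$ is Noetherian (its associated graded $\C[\fM]$ is the coordinate ring of an affine variety and hence Noetherian), every finitely generated $A$-module, and in particular every object of $\cOa$, is Noetherian; submodules of $N$ are therefore again finitely generated and lie in $\cOa$ (which is readily checked to be a Serre subcategory of $A\mmod$ using Lemma \ref{fdba}). The real content of the lemma is thus that $N$ is Artinian, and that moreover all composition series have uniformly bounded length.

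To establish this, for each $\a \in \cI$ fix a weight $\ell_\a \in \C$ where $L_\a^{\ell_\a} \neq 0$; the proof of Lemma \ref{simples} exhibits the highest weight of $L_\a$ as such a choice, at which $L_\a^{\ell_\a}\cong B_\a$ is one-dimensional under our running assumption that $\varphi$ is an isomorphism. Given any strictly descending chain $N = N_0 \supsetneq N_1 \supsetneq \cdots \supsetneq N_k$, each subquotient $N_{i-1}/N_i$ is a nonzero, finitely generated object of $\cOa$, and therefore admits a maximal submodule by Zorn's lemma; the resulting simple quotient is isomorphic to some $L_{\a_i}$ with $\a_i \in \cI$ by Lemma \ref{simples}.

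Passing to dimensions of weight spaces, additivity across the filtration gives
$$\dim N^{\ell} \;=\; \sum_{i=1}^k \dim (N_{i-1}/N_i)^{\ell} \;\geq\; \sum_{i=1}^k \dim L_{\a_i}^{\ell}$$
for every weight $\ell \in \C$. Specializing to $\ell = \ell_\a$, we obtain $\dim N^{\ell_\a} \geq \#\{i : \a_i = \a\}$ for each $\a \in \cI$, and summing over the finite index set $\cI$ yields the uniform bound
$$k \;\leq\; \sum_{\a \in \cI} \dim N^{\ell_\a},$$
which is finite by Lemma \ref{fdba}. Thus $N$ is Artinian, and together with Noetherianity this gives finite length.

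No serious obstacle is anticipated: the argument rests only on the classification of simples in Lemma \ref{simples}, the additivity of weight-space dimensions under short exact sequences, and the finite-dimensionality and upper-bounded support of weight spaces guaranteed by Lemma \ref{fdba}. The only mildly delicate point is the existence of a simple quotient of each $N_{i-1}/N_i$, which relies on finite generation (hence on the Noetherianity of $A$) to apply Zorn's lemma; the finiteness of $\cI$, which comes from our standing assumption that $\fM^\bT$ is finite, is what makes the final summation finite.
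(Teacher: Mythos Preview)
Your proof is correct and follows essentially the same approach as the paper: both argue that the finite-dimensionality of generalized weight spaces (Lemma~\ref{fdba}) together with the finiteness of the set of simples (Lemma~\ref{simples}) bounds the multiplicity of each simple, hence the length of any chain. Your version is more explicit about Noetherianity and the bookkeeping; note only that the displayed equality $\dim N^{\ell} = \sum_{i=1}^k \dim (N_{i-1}/N_i)^{\ell}$ should be an inequality $\geq$ unless $N_k = 0$, though this does not affect the argument.
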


\begin{proof}
Lemma \ref{simples} tells us that there are finitely many simple objects,
so it is enough to prove that each simple object appears finitely many times
in the composition series of any object of $\cOa$.
This follows from Lemma \ref{fdba}, which says that each generalized weight
space of an object of $\cOa$ is finite dimensional.
\end{proof}

\vspace{-\baselineskip}
\begin{lemma}\label{basic}
For all $\a\in\cI$, $\End_{\cOa}(L_\a) = \C$.
\end{lemma}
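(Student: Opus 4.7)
My proof plan rests on the fact that each $L_\a$ is generated as an $A$-module by a canonical one-dimensional highest weight space, so that a version of Schur's lemma for highest weight modules applies.

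First I would recall from the proof of Lemma \ref{simples} that the highest weight space $\Delta_\a^{\ell_\a}$ is isomorphic as a $B$-module to $B_\a$, which by our standing assumption together with Lemma \ref{Aa-weyl} is one-dimensional. Moreover, the unique maximal submodule $N_\a\subset\Delta_\a$ was constructed precisely as the sum of all submodules not containing $\Delta_\a^{\ell_\a}$, so the composition $\Delta_\a^{\ell_\a}\hookrightarrow\Delta_\a\twoheadrightarrow L_\a$ is injective, and $L_\a^{\ell_\a}$ is therefore also one-dimensional.

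Now let $\varphi\in\End_{\cOa}(L_\a)$. Since $\varphi$ is $A$-linear it commutes with the action of $\xi\in A^0$, so $\varphi$ preserves each generalized $\xi$-eigenspace $L_\a^\ell$. In particular $\varphi$ restricts to an endomorphism of the one-dimensional space $L_\a^{\ell_\a}$, so acts there as a scalar $c\in\C$. Since $L_\a$ is generated as an $A$-module by $\Delta_\a^{\ell_\a}$, and hence by $L_\a^{\ell_\a}$, the endomorphism $\varphi-c\cdot\id$ vanishes on a generating set and is therefore identically zero. This identifies $\End_{\cOa}(L_\a)$ with $\C$.

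The only step requiring real care is the verification that $L_\a^{\ell_\a}$ is one-dimensional, and this is what forces the hypothesis that $\varphi:B\to\bigoplus_\a B_\a$ is an isomorphism: without it, the highest weight space of $\Delta_\a$ could carry a larger local algebra, the simple $L_\a$ could pick up endomorphisms from that algebra, and the argument above would only yield an embedding $\End_{\cOa}(L_\a)\hookrightarrow B_\a$. Under our assumption this obstruction disappears and the lemma follows.
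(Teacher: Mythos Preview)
Your argument is correct and is essentially the same as the paper's proof, just with the details spelled out. The paper compresses your reasoning into the single line ``The natural maps $\C = \End_B(B_\a)\to\End_{\cOa}(\Delta_\a)\to\End_{\cOa}(L_\a)$ are isomorphisms,'' which unwinds to exactly the observation that the highest weight space is one-dimensional and generates the module, so any endomorphism is determined by a scalar there.
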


\begin{proof}
The natural maps $\C = \End_B(B_\a)\to\End_{\cOa}(\Delta_\a)\to\End_{\cOa}(L_\a)$ are isomorphisms.
\end{proof}

For any subset $\cK\subset\cI$, let $\cOa(\cK)$ be the full subcategory of $\cOa$ consisting
of objects whose simple subquotients all lie in the set $\{L_\a\mid \a\in\cK\}$.
Consider the partial order on $\cI$ generated by putting $\a\leq \a'$ if $L_\a$ is isomorphic to a subquotient of $\Delta_{\a'}$ or of $\nabla_{\a'}$. 

\begin{remark}  We will show in Corollary \ref{costandard equals standard in K} below that $\Delta_{\a}$ and $\nabla_{\a}$ have the same composition series multiplicities for most quantizations, so in fact this partial order can also be defined using only one of these classes of objects.
\end{remark}

\begin{lemma}\label{cover and hull}
Let $\cK\subset\cI$ be closed in the order topology (that is, $\a\leq\a'\in\cK \Rightarrow \a\in\cK$)
and let $\a\in\cK$ be a maximal element.
Then the natural surjection $\Delta_\a\to L_\a$ is a projective cover in $\cOa(\cK)$ and 
the natural injection $L_\a\to\nabla_{\!\a}$ is an injective hull in $\cOa(\cK)$.
\end{lemma}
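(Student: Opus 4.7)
The plan is to establish projectivity of $\Delta_\a$ in $\cOa(\cK)$, essentiality of the surjection $\Delta_\a\twoheadrightarrow L_\a$, and then obtain the injective-hull statement by applying restricted vector-space duality $N\mapsto N^\star$, which exchanges standards with costandards and converts projective covers into injective hulls. First, $\Delta_\a$ lies in $\cOa(\cK)$ because its composition factors are $\{L_\b : \b\leq\a\}$ by the definition of the partial order, and downward closure of $\cK$ places these in $\cK$. Essentiality is immediate from Lemma~\ref{simples}: since $L_\a$ is the unique simple quotient of $\Delta_\a$, the kernel of $\Delta_\a\twoheadrightarrow L_\a$ is the unique maximal proper submodule, so no proper submodule of $\Delta_\a$ can surject onto $L_\a$.

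For projectivity, the tensor--hom adjunction identifies $\Hom_A(\Delta_\a,M)\cong \Hom_{A^+}(B_\a,M)$, the latter being the space of vectors in $M$ annihilated by $A^{>0}$ on which $A^0$ acts through the one-dimensional quotient $B_\a$. It therefore suffices to verify that $\Ext^1_{\cOa(\cK)}(\Delta_\a,L_\b)=0$ for every $\b\in\cK$. Given a short exact sequence $0\to L_\b\to M\to \Delta_\a\to 0$, I lift the canonical generator of $\Delta_\a^{\ell_\a}$ to a vector $w$ in the generalized $\xi$-eigenspace $M^{\ell_\a}$. The crucial observation is that each nontrivial step in a chain defining the partial order strictly decreases the top weight, so $\b\leq\a$ with $\b\neq\a$ forces $\ell_\b<\ell_\a$. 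Consequently $L_\b^{\ell_\a+k}$ vanishes for all $k>0$ when $\b\neq\a$, so $A^{>0}w\subset L_\b^{>\ell_\a}=0$; the required $B_\a$-type condition on $w$ then holds automatically, since the only stratum in which weight-$\ell_\a$ vectors survive in $L_\b$ is $\b=\a$, where $L_\a^{\ell_\a}$ is the one-dimensional $B_\a$-module on which the kernel of the character defining $B_\a$ acts trivially by construction. Hence $w$ itself provides the splitting.

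The main obstacle is the case of $\b\in\cK$ incomparable to $\a$ in the partial order. Using the decomposition of $\cOa$ as a product over cosets of $\Z$ in $\C$ for the $\xi$-weights (so that $\Ext^1$ between distinct cosets vanishes), I may reduce to $\ell_\b\in\ell_\a+\Z$. There the lift $w$ must be corrected by an element of $\ker(M\to\Delta_\a)\cap M^{\ell_\a}$ so as to annihilate the obstructions $(a-\chi_\a(a))\,w\in L_\b^{\ell_\a}$ for $a\in A^0$, where $\chi_\a,\chi_\b:A^0\to\C$ are the distinct characters defining $B_\a,B_\b$. These obstructions satisfy a cocycle identity making them a derivation of $A^0$ into a one-dimensional bimodule with distinct left and right characters, and are therefore coboundaries, producing the required correction. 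Combined with the essentiality observation, this completes the proof that $\Delta_\a\twoheadrightarrow L_\a$ is a projective cover in $\cOa(\cK)$; applying $(-)^\star$ then yields the dual statement $L_\a\hookrightarrow\nabla_{\!\a}$ as an injective hull.
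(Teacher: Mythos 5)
Your reduction to showing $\Ext^1_{\cOa(\cK)}(\Delta_\a, L_\b) = 0$ for all $\b\in\cK$ is a valid strategy, and the cases $\b\leq\a$ are essentially fine once one notes that $M^{\ell_\a}$ is killed by $A^{>0}$ and by $\sum_{k>0}A^{-k}A^k$ (because $\ell_\a$ is the top weight of $M$ there), hence is a module over the semisimple algebra $B\cong\C^{\cI}$; this is what makes the $B_\a$-eigenvector condition on the lift $w$ automatic, a point your write-up elides. But the case you flag as ``the main obstacle'' --- $\b$ incomparable to $\a$ --- contains a genuine gap. The assertion that $L_\b^{\ell_\a+k}$ vanishes for all $k>0$ whenever $\b\neq\a$ is not justified: your observation about the partial order only gives $\Re(\ell_\b)<\Re(\ell_\a)$ when $\b\leq\a$, and an incomparable $\b$ can perfectly well have $\Re(\ell_\b)>\Re(\ell_\a)$ (for instance, take $\a$ and $\b$ both minimal in $\cI$ with $\Re(\ell_\b)>\Re(\ell_\a)$; any down-closed $\cK$ containing them has $\a$ maximal). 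In that regime $L_\b^{>\ell_\a}\neq 0$, so the lift $w$ need not satisfy $A^{>0}w=0$, and the obstruction does not land in $L_\b^{\ell_\a}$ as you claim; your correction step addresses only the $A^0$-obstruction $(a-\chi_\a(a))w$ and never the $A^{>0}$-obstruction. The further claim that derivations of the non-commutative algebra $A^0$ into a bimodule with distinct left and right characters are automatically coboundaries is also left unsupported, and $L_\b^{\ell_\a}$ is neither obviously one-dimensional nor, in this regime, a $B$-module, since $A^{>0}$ need not annihilate it.

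The paper circumvents all of this with a different mechanism. Rather than splitting a fixed extension, one assumes a surjection $N\to L_\a$ in $\cOa(\cK)$ admits no lift of $\Delta_\a\to L_\a$, shrinks $N$ to the cyclic module $A\cdot v$ generated by a lift $v\in N^{\ell_\a}$, and examines the \emph{top} $\T$-weight $\ell_\b$ of $N$. If $\ell_\b=\ell_\a$ the $B$-semisimplicity argument produces a lift, so $\Re(\ell_\b)>\Re(\ell_\a)$; adjunction with the costandard $\nabla_\b$ then yields a nonzero $\psi\colon N\to\nabla_\b$, from which one concludes both that $L_\a$ occurs in $\nabla_\b$ --- forcing $\a<\b$ --- and that $L_\b$ occurs in $N$. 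This contradicts $N\in\cOa(\cK)$, since $\cK$ is downward closed with $\a$ maximal. The crucial ingredient your argument does not exploit is this downward-closedness of $\cK$, which is exactly what rules out the problematic high weight; absent it, there is no reason the obstruction should vanish. One further point: $(-)^\star$ lands in a category of \emph{right} $A$-modules, not in $\cOa$ itself, so the injective-hull statement does not follow merely by dualizing the first conclusion; the paper instead observes that $\nabla_\a$ corepresents the vector-space dual of the functor $\Hom_{A^+}(B_\a,-)$ and runs the argument again.
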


\begin{proof} Consider the functor from $\cOa(\cK)$
to the category of vector spaces taking $N$ to $\Hom(\Delta_\a, N)\cong\Hom_{A^+}(B_\a, N)$.
We wish to show that this functor is exact; it is obviously
left-exact, so we need only show that it induces a surjection when
applied to a surjection.

Assume not, and let $\phi\colon \Delta_\a \to N/N'$ be a homomorphism in $\cOa(\cK)$ which cannot be lifted to a map $\Delta_\a \to N$.  Without loss of generality we can assume that $\phi$ is surjective, and that $N$ is generated as an $A$-module by a vector $v\in N^{\ell_\a}$ which lifts a nonzero vector in $\phi(B_\a)$.  
We can further assume that $N/N'$ is isomorphic to $L_\a$.

Let $\ell_\b$ be the highest weight appearing in $N$.  By adjunction we get a homomorphism $\psi\colon N \to \nabla_\b$ which is an isomorphism on the $\ell_\b$-weight space.  Since $N$ is generated by $v$, it follows that $\psi(v)\ne 0$, and so $L_\a$ appears in a composition series of $\nabla_\b$.  Thus $\a < \b$, and $L_\b$ appears in a composition series of $N$, contradicting the fact that $N$ lies in $\cOa(\cK)$.

Thus $\Delta_\a$ is projective.  That it is the projective cover
of $L_\a$ follows from the fact that $\Delta_\a^{\ell_\a}$ is 1-dimensional.
The second statement follows similarly from the fact that $\nabla_{\!\a}$ corepresents the vector space dual
of the same functor.
\end{proof}

For all $\a\in\cI$, let $\cK_{\a} := \{\a'\in \cI\mid \a'<\a\} = \overline{\{\a\}}\smallsetminus\{\a\}$.

\begin{lemma}\label{ker and coker}
For any $\a\in\cI$, the kernel of $\Delta_\a\to L_\a$ and the cokernel of $L_\a\to\nabla_{\!\a}$
both lie in the subcategory $\cOa(\cK_\a)$.
\end{lemma}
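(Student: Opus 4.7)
The plan is to reduce both claims to a single computation of the highest-weight space at $\ell_\a$. Since every object of $\cOa$ has finite length by Lemma \ref{finite length}, the subcategory $\cOa(\cK_\a)$ is precisely the Serre subcategory generated by the simples $L_{\a'}$ with $\a'<\a$. Setting $K := \ker(\Delta_\a\to L_\a)$ and $C := \coker(L_\a\to\nabla_\a)$, it therefore suffices to show that every simple composition factor of $K$ or $C$ is isomorphic to some $L_{\a'}$ with both $\a'\leq\a$ and $\a'\neq\a$.

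The inequality $\a'\leq\a$ is immediate from the definition of the partial order on $\cI$: $K \subseteq \Delta_\a$ and $C$ is a quotient of $\nabla_\a$, so their composition factors are subquotients of $\Delta_\a$ and $\nabla_\a$, respectively. For the strict inequality $\a'\neq\a$ I would exploit the top weight space. The proof of Lemma \ref{simples} already established that $\Delta_\a^{\ell_\a}\cong B_\a$ is one-dimensional and that the surjection $\Delta_\a\to L_\a$ restricts to an isomorphism on this weight space, so $K^{\ell_\a}=0$. The dual computation on $\nabla_\a = (B_\a^*\otimes_{A^+}A)^\star$ yields that $\nabla_\a^{\ell_\a}$ is one-dimensional and is hit isomorphically by the canonical injection $L_\a\hookrightarrow\nabla_\a$ invoked in Lemma \ref{cover and hull}, giving $C^{\ell_\a}=0$. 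Now if $L_\a$ appeared as a composition factor of $K$, there would exist submodules $M''\subsetneq M'\subseteq K$ with $M'/M''\cong L_\a$; but the functor $(-)^{\ell_\a}$ is exact on weight modules, so ${M'}^{\ell_\a}/{M''}^{\ell_\a}\cong L_\a^{\ell_\a}= B_\a \neq 0$, which would be a subquotient of $K^{\ell_\a}=0$, a contradiction. The identical argument applied to $C$ in place of $K$ and $\nabla_\a$ in place of $\Delta_\a$ completes the proof.

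The only step requiring fresh verification is the weight-space statement for $\nabla_\a$. This is a mechanical unwinding of the restricted dual $\star$ applied to the presentation $B_\a^*\otimes_{A^+}A$, entirely parallel to the analysis of $\Delta_\a$ in the proof of Lemma \ref{simples}, so I do not expect it to present a real obstacle.
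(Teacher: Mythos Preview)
Your proof is correct and follows essentially the same approach as the paper: both arguments reduce to the fact that $\dim\Delta_\a^{\ell_\a} = \dim\nabla_\a^{\ell_\a} = 1$, which forces $L_\a$ to appear with multiplicity exactly one in each, so that the kernel and cokernel have only composition factors $L_{\a'}$ with $\a'<\a$. The paper's version is simply terser, stating the multiplicity-one consequence directly rather than arguing via $K^{\ell_\a}=C^{\ell_\a}=0$.
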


\begin{proof}
It suffices to show that $L_\a$ appears in the composition series of both $\Delta_\a$
and $\nabla_{\!\a}$ with multiplicity exactly 1.  This follows from the fact
that $\dim\Delta_\a^{\ell_\a} = \dim\nabla_{\!\a}^{\ell_\a} = 1$.
\end{proof}

Consider the set  $\mathfrak{U}\subset\Ht$ consisting of periods of
quantizations such that for all $\a, \a'\in\cI$, $\Ext^k_{A}\!\big(\Delta_\a,
\nabla_{\!\a'}\big) = 0$ for $k>0$.  Note that by \cite[3.2.3]{BGS96}, this
  further implies that for all $\a, \a'\in\cI$, $\Ext^2_{\cOa}\!\big(\Delta_\a,
  \nabla_{\!\a'}\big) = 0$.
By Losev's Theorem \ref{Thm:ext_coinc1}, $\mathfrak{U}$ contains a non-empty Zariski open subset.
and for every $\eta\in\Ht$ with $\scrM_\eta$ affine,
we have $\kappa\eta+\la\in\mathfrak{U}$ for all but finitely many $\kappa\in \C$.
The following theorem can be deduced from Theorem \ref{Thm:ext_coinc1} and Lemmas \ref{standards}-\ref{ker and coker}
via \cite[3.2.1]{BGS96}.

\begin{theorem}\label{highest weight}
Assuming that the quantization $\cD$ is chosen such that $\varphi$ is an isomorphism and 
the period of $\cD$ lies in $\mathfrak{U}$ (both generic conditions),
the category $\cOa$ has enough projectives and is highest weight with respect to our partial order.
\end{theorem}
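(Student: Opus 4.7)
The plan is to apply the Beilinson--Ginzburg--Soergel criterion \cite[3.2.1]{BGS96}, which builds a highest-weight structure from the following data: simples $\{L_\a\}$, standards $\{\Delta_\a\}$ whose surjections $\Delta_\a\twoheadrightarrow L_\a$ have kernels in $\cOa(\cK_\a)$, costandards $\{\nabla_{\!\a}\}$ with the dual property, enough projectives, and the vanishing $\Ext^1_{\cOa}(\Delta_\a,\nabla_{\!\a'})=0$ for all $\a,\a'$. Lemmas \ref{standards}--\ref{ker and coker} supply all of this input except the existence of projective covers, while the assumption that the period of $\cD$ lies in $\mathfrak{U}$ provides the $\Ext^1$-vanishing (which is generic by Losev's Theorem \ref{Thm:ext_coinc1}). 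The biorthogonality $\Hom_{\cOa}(\Delta_\a,\nabla_{\!\a'})=\C\,\delta_{\a\a'}$ that is also needed follows directly from Lemmas \ref{basic} and \ref{ker and coker}.

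The core task is therefore to construct, for each $\a\in\cI$, a projective cover $P_\a$ of $L_\a$ in $\cOa$. My approach is inductive over weight truncations. For any real number $r$, let $\cOa_{\leq r}\subset\cOa$ denote the full subcategory of modules $N$ whose generalized weights $\ell$ all satisfy $\Re\ell\leq r$; by Lemma \ref{fdba} and Lemma \ref{simples} this is a finite-length Serre subcategory with finitely many simple objects, and these subcategories exhaust $\cOa$ as $r\to\infty$. Starting from $\Delta_\a$, which already covers $L_\a$ in $\cOa(\cK_\a\cup\{\a\})$ by Lemma \ref{cover and hull}, I would build a projective cover $P_\a^{\leq r}$ of $L_\a$ in $\cOa_{\leq r}$ by successively killing nonzero classes in $\Ext^1(-,L_\gamma)$ via universal extension by $\Delta_\gamma$. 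The Ext-vanishing hypothesis, combined with the costandard filtrations supplied by Lemma \ref{ker and coker}, forces each such $\gamma$ to satisfy $\gamma>\a$, and the finite-length property of $\cOa_{\leq r}$ guarantees termination. For $r'>r$, the quotient construction gives surjections $P_\a^{\leq r'}\twoheadrightarrow P_\a^{\leq r}$, and the weight spaces in real parts at most $r$ stabilize once $r'$ is sufficiently large; the inverse limit then produces a well-defined $P_\a\in\cOa$ that is projective and has simple head $L_\a$. By construction $\ker(P_\a\to\Delta_\a)$ carries a filtration whose subquotients are standards $\Delta_\gamma$ with $\gamma>\a$, completing the highest-weight axioms.

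The main obstacle is precisely the convergence and coherence of this truncation procedure: one must show that the construction in $\cOa_{\leq r}$ does not introduce arbitrarily large multiplicities of standards $\Delta_\gamma$ (which would prevent the inverse limit from having finite-dimensional weight spaces) and that the resulting $P_\a$ is actually finitely generated as an $A$-module. This is where the Ext-vanishing on $\mathfrak{U}$ plays its essential role: it simultaneously controls the direction of the extensions (they must move strictly upward in the partial order, hence to standards whose highest weights have smaller real part) and bounds the complexity of the minimal projective resolution, which together yield the required stabilization. Once this is established, the conditions of \cite[3.2.1]{BGS96} are met and $\cOa$ is highest weight with respect to the stated partial order.
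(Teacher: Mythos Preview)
Your approach is the same as the paper's: both reduce to the Beilinson--Ginzburg--Soergel criterion \cite[3.2.1]{BGS96}, with the Ext-vanishing supplied by Losev's Theorem~\ref{Thm:ext_coinc1} and the remaining hypotheses by Lemmas~\ref{standards}--\ref{ker and coker}. The difference is that \cite[3.2.1]{BGS96} already \emph{constructs} the projective covers (with their standard filtrations) from those hypotheses; you do not need to build them yourself. The paper's proof is therefore a one-line citation, and your detailed construction of $P_\a$ is effectively a partial reproof of that theorem. Note also that the hypothesis BGS actually uses is $\Ext^2_{\cOa}(\Delta_\a,\nabla_{\a'})=0$, which is why the paper, in defining $\mathfrak{U}$, invokes \cite[3.2.3]{BGS96} to pass from $\Ext^k_A$-vanishing to $\Ext^2$-vanishing inside the Serre subcategory $\cOa$; your argument, run directly, only needs $\Ext^1$, which does agree in $\cOa$ and $A\mmod$.

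Your reconstruction is correct in outline but over-engineered. The weight-truncated categories $\cOa_{\leq r}$ and the inverse-limit step are unnecessary: since $\cI$ is finite and each universal-extension step adjoins a $\Delta_\gamma$ with $\gamma$ strictly above everything already present, the process halts after at most $|\cI|$ rounds and produces $P_\a$ directly as a finite iterated extension. The ``convergence obstacle'' you flag therefore never arises. One slip to correct: you write that the extensions move ``to standards whose highest weights have smaller real part,'' but in fact $\gamma>\a$ forces $\Re\ell_\gamma>\Re\ell_\a$, since $L_\a$ appears in the radical of $\Delta_\gamma$ and $\Delta_\gamma^{\ell_\gamma}$ is one-dimensional. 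This does not damage the termination argument, which rests only on the finiteness of $\cI$, but it makes your truncation bookkeeping inconsistent as written. (Also, Lemma~\ref{ker and coker} gives composition series with simple factors $L_\delta$ for $\delta<\a$, not ``costandard filtrations''; that is what you actually use.)
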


The following corollary follows by an argument identical to that in \cite[3.3.2]{BGS96}.

\begin{corollary}\label{FF}
  For any $\la \in \mathfrak{U}$, the inclusion $D^b(\cOa)\to
  \dOa$ is an equivalence of categories.
\end{corollary}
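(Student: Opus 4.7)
The plan is to follow the standard BGS-style dévissage argument. The category $\cOa$ is a Serre subcategory of $\Amod$ (subs, quotients, and extensions preserve both finite generation over $A$ and local finiteness of $A^+$), so it suffices, by the standard truncation argument on $D^b_{\cOa}(\Amod)$, to prove that the natural map
\[
\Ext^i_{\cOa}(M,N)\;\longrightarrow\;\Ext^i_{\Amod}(M,N)
\]
is an isomorphism for all $M,N\in\cOa$ and all $i\geq 0$. Once this is known, fully faithfulness of $D^b(\cOa)\to \dOa$ follows at once, and essential surjectivity follows by induction on the amplitude of the cohomology, writing any $C\in \dOa$ as an iterated cone of shifts of its cohomology objects.

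First I would verify the isomorphism on the classes $(M,N)=(\Delta_\a,\nabla_\b)$. Both sides vanish for $i>0$: the $\Amod$ side is the defining property of $\mathfrak{U}$, and the $\cOa$ side is the standard Ext-vanishing between standards and costandards in any highest weight category (which is available by Theorem \ref{highest weight}). For $i=0$, a direct computation (or the general highest weight fact $\Hom(\Delta_\a,\nabla_\b)=\delta_{\a\b}\C$, which can be read off from the fact that $\Delta_\a^{\ell_\a}\cong B_\a$ and $\nabla_\b^{\ell_\b}\cong B_\b^*$) gives agreement in $\Amod$ as well.

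Next I would propagate this by dévissage using the highest weight structure. Since each projective cover $\Pro{\a}$ in $\cOa$ admits a finite filtration by standards, the long exact sequence in $\Ext_{\Amod}^\bullet(-,\nabla_\b)$ gives $\Ext^i_{\Amod}(\Pro{\a},\nabla_\b)=0$ for $i>0$. Now any $M\in\cOa$ has a finite projective resolution $\Pi_\bullet\to M$ in $\cOa$ (Theorem \ref{highest weight} together with the finiteness of $\cI$ and Lemma \ref{finite length} imply $\cOa$ has finite global dimension), and the previous vanishing shows $\Pi_\bullet$ is also $\Hom_{\Amod}(-,\nabla_\b)$-acyclic. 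Hence
\[
\Ext^i_{\Amod}(M,\nabla_\b)\;=\;\Ext^i_{\cOa}(M,\nabla_\b).
\]
Dually, every $N\in\cOa$ embeds into a finite complex of costandards (from the costandard filtration of its injective hull in $\cOa$, which exists by the Dlab–Ringel characterization recalled in Section \ref{hwksk}), so one more long exact sequence argument, now in the second argument, yields the desired equality for all $M,N\in\cOa$ and all $i$.

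The step I expect to be the main obstacle is the dévissage in the second variable: one must check that the costandard resolution of $N$ constructed inside $\cOa$ is still acyclic for $\Ext^\bullet_{\Amod}(M,-)$ applied term-by-term, which requires knowing that $M$-indexed $\Ext$'s in $\Amod$ into costandards vanish — precisely the content of the previous paragraph. Once that is in place, the closure of the argument (fully faithfulness plus truncation) is formal, and Corollary \ref{FF} follows exactly as in \cite[3.3.2]{BGS96}.
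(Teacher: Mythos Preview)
Your argument is correct and follows essentially the same route as the paper's proof (both explicitly invoke \cite[3.3.2]{BGS96}): the key input on both sides is the Ext-vanishing $\Ext^i(\Delta_\a,\nabla_\b)=0$ for $i>0$, in $\Amod$ by definition of $\mathfrak{U}$ and in $\cOa$ by the highest weight structure. The paper's version is slightly more economical in that it checks the Ext-comparison only between a projective generator $P$ and an injective generator $I$ (both sides vanish for $i>0$ since $P$ is standard-filtered and $I$ is costandard-filtered), rather than carrying out your full d\'evissage to arbitrary $M,N$; but this is a cosmetic difference, not a substantive one.
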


\begin{proof}
These categories have a common generator as triangulated
categories, given by a projective generator $P$ or injective generator
$I$.  Thus,
  it suffices to check that the map $\Ext^i_{\cOa}(P,I)\to
  \Ext^i_{A_\la}(P,I)$ is an isomorphism for all $i$, and all projectives
  $P$ and injectives $I$ in $\cOa$.  For $i=0$, this
  is just the fact that $\cOa$ is a full subcategory.  For $i>0$, both
  sides are 0, since $P$ is standard filtered and $I$ costandard filtered.
\end{proof}

\vspace{-\baselineskip}
\begin{conjecture}\label{Koszul}
Whenever $\cOa$ is highest weight, it is also standard Koszul.
\end{conjecture}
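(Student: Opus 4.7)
The plan is to establish standard Koszulity by first constructing a graded lift $\tcOa$ of $\cOa$, and then verifying that the minimal projective resolution of each standard object is linear in this grading. For the grading, I would exploit the fact that the $\bS$-action endows $A$ with a non-negative filtration whose Rees construction gives a natural graded enrichment: working with modules over $R(A)$ equipped with compatible good filtrations, one should obtain a mixed category with Tate twist whose degrading in the sense of Section \ref{hwksk} recovers $\cOa$. The weight function would assign to $L_\a$ the value determined by the length of the minimal projective resolution needed to reach $L_\a$ from $\Delta_\a$; the main check is that $\End(P)$ for a projective generator $P$ is positively graded with semisimple degree zero part, which should follow from the positivity of $\bS$-weights on $\C[\fM]$ together with Proposition \ref{B}.

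The next step is to verify Koszulity itself, which by Theorem \ref{prop:koszulformula} reduces to showing that the minimal projective resolution of each simple $L_\a$ is linear. Here the essential input is the Ext-vanishing from Losev's theorem (Theorem \ref{Thm:ext_coinc1}), which gives $\Ext^i_{\cOa}(\Delta_\a, \nabla_{\a'})=0$ for $i>0$ whenever the period lies in $\mathfrak{U}$. This vanishing, together with the highest weight structure from Theorem \ref{highest weight}, allows one to compute $\Ext^i(L_\a, L_\b)$ inductively via BGG-type resolutions built out of standards, and one hopes to show by induction on the partial order that these Ext-groups live in the expected single weight.

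Standard Koszulity itself would then follow either by direct verification that the standard filtrations on projectives lift to give linear resolutions of the $\Delta_\a$, or indirectly via Theorem \ref{standard Koszul implies Koszul} by exhibiting a highest-weight structure on the Koszul dual $\cC^{!}$ with the opposite partial order. The second route is particularly attractive, since conjecturally the Koszul dual should be $\cOa^{!}$ for the symplectically dual resolution $\fM^{!}$, and $\cOa^{!}$ is highest weight by Theorem \ref{highest weight} applied to $\fM^{!}$; one would need to check that the partial order coming from weights for the dual torus is the reverse of the original partial order.

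The main obstacle, and the reason this remains conjectural, is establishing Koszulity in sufficient generality. In the classical BGG setting this relied on the deep machinery of mixed perverse sheaves and the Weil conjectures via the geometry of $G/B$, and in our general setting there is no axiomatic replacement for this input: each verified case (hypertoric via Gale duality, Springer via \cite{BGS96}, affine type A quiver varieties via rank-level duality) relies on extrinsic geometric input, often of a symplectic-dual flavor. A uniform proof would likely require either an intrinsic construction of mixed structures on $\cOg$ via equivariant cohomology or nearby cycles for the universal deformation $\scrM\to\Ht$, or a general proof of symplectic duality that transports Koszulity from the dual side. In lieu of such a framework, the realistic short-term goal is to prove the conjecture in every new class of examples by constructing explicit linear projective resolutions of standards.
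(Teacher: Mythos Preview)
The statement is labeled a \emph{Conjecture} in the paper, and the paper offers no proof; it only records the verified instances (hypertoric varieties via \cite{BLPWtorico}, regular integral blocks of BGG category $\cO$ via \cite{BGS96,RC,ADL03}). So there is nothing to compare your proposal against, and what you have written is not a proof but a programmatic sketch---which you acknowledge in your final paragraph. That said, several of the concrete steps you propose do not work even at the heuristic level, and it is worth naming them.

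First, your proposed graded lift via the $\bS$-filtration and the Rees construction does not produce the Koszul grading. The $\bS$-weights give a filtration whose associated graded is $\C[\fM]$, but this grading has nothing to do with the internal homological grading on $\End(P)$ that Koszulity requires; in the BGG case the Koszul grading comes from Frobenius weights on mixed perverse sheaves, not from the dilation $\bS$. The positivity of $\bS$-weights on $\C[\fM]$ and Proposition~\ref{B} are already fully exploited in proving that $\cOa$ is highest weight; they do not yield any further mixed structure.

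Second, the Ext-vanishing $\Ext^i(\Delta_\a,\nabla_{\a'})=0$ for $i>0$ is exactly the input used to show $\cOa$ is highest weight (this is how Theorem~\ref{highest weight} is deduced from Theorem~\ref{Thm:ext_coinc1}). It does not control the degrees of $\Ext^i(L_\a,L_\b)$, which is what Koszulity is about; highest weight categories are very far from automatically Koszul.

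Third, your proposed route via Theorem~\ref{standard Koszul implies Koszul} and the symplectic dual is circular: in Definition~\ref{def:duality} a symplectic duality \emph{includes} a Koszul duality between $\tilde\cO$ and $\tilde\cO^!$ as part of the data, so you cannot invoke the existence of a symplectic dual to establish Koszulity. In every case where the conjecture is known, Koszulity is established first by methods specific to that example, and only then can one speak of symplectic duality.
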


\begin{example}\label{htkoszul}
If $\fM$ is a hypertoric variety and $\cD$ is chosen correctly, 
then $\cOa$ is standard Koszul by \cite[4.10]{BLPWtorico}.  The rings $R$ and $E$ are
isomorphic to the rings $A$ and $B$ introduced in \cite{GDKD}.
\end{example}

\begin{example}\label{spkoszul}
If $\cOa$ is a regular infinitesimal block of BGG category $\cO$ (see Remark \ref{BGG-Oa}),
then it is known to be standard Koszul by \cite{BGS96, RC, ADL03}.  (See \cite[9.2]{kosdef}
for more details.)
\end{example}

\subsection{The category \texorpdfstring{$\cOg$}{Og} is highest weight}
We begin by using Theorem \ref{highest weight} to prove that $\cOg$ is always highest weight.

\begin{proposition}\label{ghw}
For any choice of quantization, the category $\cOg$ is highest weight
and the inclusion $D^b(\cOg)\to \dOg$ is an equivalence of categories.
\end{proposition}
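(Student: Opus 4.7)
The plan is to reduce the statement for an arbitrary quantization to the generic case already handled by Theorem \ref{highest weight}, using Lemma \ref{geometric twist} to move within an integral coset without changing $\cOg$. Fix a quantization $\cD$ with period $\la$, and let $\eta\in\Htz$ be the Euler class of an ample line bundle on $\fM$. I want to choose an integer $m$ such that $\la':=\la+m\eta$ simultaneously satisfies three conditions: the map $\varphi$ of Proposition \ref{B} is an isomorphism at $\la'$, the class $\la'$ lies in the Zariski-open locus $\mathfrak{U}$ of periods for which Theorem \ref{highest weight} applies, and (abelian) localization holds at $\la'$.

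Each of these three conditions excludes only finitely many integers $m$: Proposition \ref{B} gives the first, the discussion following the definition of $\mathfrak U$ (invoking Losev's Theorem \ref{Thm:ext_coinc1}) gives the second, and Theorem \ref{localization} gives the third for $m$ sufficiently large. Hence some valid $m$ exists. Set $\cD':=\cQ_{\la'}$. Since abelian localization implies derived localization, Corollary \ref{OQ} provides equivalences $\cOa'\simeq\cOg'$ and $\dOa'\simeq\dOg'$. Theorem \ref{highest weight} tells us that $\cOa'$ is highest weight, and Corollary \ref{FF} tells us that $D^b(\cOa')\to\dOa'$ is an equivalence.

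Finally, since $\la'-\la=m\eta\in\Htz$, Lemma \ref{geometric twist} provides a canonical equivalence $\cOg\simeq\cOg'$. The equivalence is given by tensoring with a (derived) invertible $\cD$-$\cD'$-bimodule, so it upgrades to an equivalence of bounded derived categories and manifestly preserves the cohomological support and lattice conditions defining $\cOg$ and $\dOg$; thus it also induces $\dOg\simeq\dOg'$, compatibly with the natural inclusions $D^b(\cOg)\hookrightarrow\dOg$ and $D^b(\cOg')\hookrightarrow\dOg'$. Chasing the equivalences
\[
\cOg\simeq\cOg'\simeq\cOa',\qquad \dOg\simeq\dOg'\simeq\dOa'\simeq D^b(\cOa')\simeq D^b(\cOg'),
\]
transports the highest weight structure and the derived full faithfulness back to $\cD$.

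The only potentially delicate step is verifying that the tensoring functor of Lemma \ref{geometric twist} sends $\dOg$ to $\dOg'$ (not merely $\cOg$ to $\cOg'$); this is immediate because the bimodule is a quantization of a line bundle, so tensoring with it is an exact functor that preserves both the support condition and, via the obvious transport of $\xi$-stable lattices, the second defining condition of Definition \ref{cOg-definition}. With this observation in hand the argument is routine, and no further ingredients beyond those already established in the paper are required.
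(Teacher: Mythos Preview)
Your proof is correct and follows essentially the same approach as the paper's own proof: shift the period by an integral multiple of an ample class using Lemma \ref{geometric twist}, choose the shift so that both localization holds (Theorem \ref{localization}) and the hypotheses of Theorem \ref{highest weight} are satisfied, then transport the highest weight structure and Corollary \ref{FF} back through Corollary \ref{OQ}. You are somewhat more explicit than the paper in separating out the $\varphi$-isomorphism condition and in justifying that the geometric twist functor also identifies $\dOg$ with $\dOg'$, but the underlying argument is the same.
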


\begin{proof}
Let $\la\in\Ht$ be the period of the quantization, and let $\eta\in\Htz$ be an ample class.
By Lemma \ref{geometric twist}, we may replace $\la$ by $\la + k\eta$ for any $k\in\Z$.
By Proposition \ref{localization}, we may choose $k$ large enough so that localization holds.
By Theorem \ref{highest weight}, we may also choose $k$
large enough so that $\cOa$ is highest weight.
By Corollary \ref{OQ}, this implies that $\cOg$ is highest weight, and
by Corollary \ref{FF}, the full faithfulness follows as well.
\end{proof}

By the same argument, the following conjecture would be implied by Conjecture \ref{Koszul}.

\begin{conjecture}\label{geom-Koszul}
For any choice of quantization, the category $\cOg$ is standard Koszul.
\end{conjecture}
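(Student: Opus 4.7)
The plan is to mimic the strategy already used to prove Proposition \ref{ghw}, namely, to reduce the statement for arbitrary quantizations to the statement for quantizations whose period is both ample enough and generic enough that we may apply results from the algebraic side.

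More precisely, let $\la\in\Ht$ be the period of the given quantization, and let $\eta\in\Htz$ be the Euler class of an ample line bundle.  By Lemma \ref{geometric twist}, the category $\cOg$ depends only on the image of $\la$ in $\Ht/\Htz$, so I may replace $\la$ by $\la+k\eta$ for any integer $k$.  First I would invoke Theorem \ref{localization} to choose $k$ sufficiently large so that localization holds at $\la+k\eta$; this yields an equivalence $\cOg\simeq\cOa$ via $\secs$ and $\Loc$, by Corollary \ref{OQ}.  Next I would invoke Proposition \ref{B} and Theorem \ref{highest weight} to further increase $k$ (only finitely many values need to be avoided) so that the map $\varphi$ is an isomorphism and the period lies in Losev's open set $\mathfrak{U}$, so that $\cOa$ is highest weight.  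Assuming Conjecture \ref{Koszul}, $\cOa$ is then standard Koszul.

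It remains to transport the standard Koszul property through the equivalence $\secs:\cOg\to\cOa$.  Being standard Koszul is a property that depends on the abstract structure of the highest weight category together with a graded lift, so any equivalence of highest weight categories (one that matches standard objects) transports standard Koszulity.  Under the identification $\cOg\simeq\cOa$ of Corollary \ref{OQ}, the simple, standard, costandard, and projective objects on the two sides correspond (this is built into the definitions used in Section \ref{objects}, since the standards $\Delta_\a$ and costandards $\nabla_\a$ on the algebraic side become their images under $\Loc$ on the geometric side, and localization is $t$-exact under our hypothesis).  Hence a graded lift of $\cOa$ whose standard objects have linear projective resolutions pulls back to such a graded lift of $\cOg$, and we conclude that $\cOg$ is standard Koszul.

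The main obstacle, of course, is Conjecture \ref{Koszul} itself, which is not proved in this paper; modulo that conjecture, the argument above is essentially formal.  A secondary subtlety worth checking carefully is that the equivalence between $\cOa$ and $\cOg$ really does identify the two highest weight structures in a way that respects the partial order, so that the notion of standard filtration is preserved; this is straightforward from the construction of standards on the algebraic side as $A\otimes_{A^+} B_\a$ together with the compatibility between $\Loc$ and the $A^+$-locally finite condition.
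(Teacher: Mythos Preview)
Your proposal is correct and matches the paper's approach exactly. The paper does not prove this statement---it is a conjecture---but immediately before stating it the paper remarks that ``by the same argument'' (i.e., the argument of Proposition~\ref{ghw}) Conjecture~\ref{geom-Koszul} would follow from Conjecture~\ref{Koszul}; you have simply spelled out that reduction in detail.
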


\begin{remark}
Conjectures \ref{Koszul} and \ref{geom-Koszul} will not come up again until Section \ref{duality},
where they will play a central role in the definition of symplectic duality of conical symplectic resolutions.
\end{remark}

In the remainder of this section we give an explicit construction for the costandard modules in $\cOg$, which will be useful
for our study of the Grothendieck group of this category in Section \ref{gco-intform}.
Let $\Theta_\a := A_\a\otimes_{A_\a^+}B_\a$, regarded as a module over $A$.

\begin{proposition}\label{Theta-character}
Let $d = \frac{1}{2}\dim\fM$.
The $\bT$-character of $\Theta_\a$ is $e^{w_\a}\prod_{i=1}^{d} (1-e^{-\chi_i})^{-1}$, where $\chi_1,\ldots,\chi_d$ 
are the positive weights (with multiplicity) for the action of $\bT$ on $T_{p_\a}\fM$ and $w_\a$ is the $\bT$-weight of $B_\a$.
\end{proposition}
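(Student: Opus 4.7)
The plan is to make $A_\a$ and $\Theta_\a$ completely explicit using the Weyl algebra description from Lemma \ref{Aa-weyl} and then read off the character from a PBW basis. By Lemma \ref{Aa-weyl}, $A_\a$ is isomorphic to a Weyl algebra on $2d$ generators, so I may choose generators $x_1,y_1,\dots,x_d,y_d$ with $[x_i,y_j]=\delta_{ij}$ that are also $\bT$-weight vectors. Since $p_\a$ is an isolated $\bT$-fixed point and $[x_i,y_i]=1$ has $\bT$-weight zero, the $\bT$-weights of $x_i$ and $y_i$ are negatives of each other and both nonzero. After possibly swapping names within each pair, I may assume that $y_i$ has $\bT$-weight $\chi_i>0$, so that $\chi_1,\dots,\chi_d$ runs over the positive $\bT$-weights on $T_{p_\a}\fM$ with multiplicity. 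Lemma \ref{Aa-weyl} also gives $B_\a\cong\C$, which sits in $\bT$-weight $w_\a$ by definition.

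The main step will be to identify $\Theta_\a=A_\a\otimes_{A_\a^+}B_\a$ with $A_\a\big/\sum_{i=1}^d A_\a\cdot y_i$. Let $K\subset A_\a^+$ be the kernel of the surjection $A_\a^+\twoheadrightarrow B_\a$, so that $\Theta_\a=A_\a/A_\a\cdot K$. By construction $K$ is the sum of the strictly positive $\bT$-weight part $A_\a^{>0}$ and the ideal $\sum_{k>0}A_\a^{-k}A_\a^k$ inside $A_\a^0$; every element of the latter already lies in $A_\a\cdot A_\a^{>0}$, so $A_\a\cdot K=A_\a\cdot A_\a^{>0}$. Using PBW, a monomial $x^ay^b$ has $\bT$-weight $\sum_j(b_j-a_j)\chi_j$, which is strictly positive only when $b\neq 0$; in that case $x^ay^b=(x^ay^{b-\epsilon_j})\cdot y_j$ for any $j$ with $b_j\geq 1$. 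Therefore $A_\a\cdot A_\a^{>0}=\sum_i A_\a\cdot y_i$, as desired.

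Once this is done, PBW produces a $\C$-linear isomorphism $\Theta_\a\cong\C[x_1,\dots,x_d]$ in which the class of $1$ is a cyclic generator of $\bT$-weight $w_\a$. Since each $x_i$ has $\bT$-weight $-\chi_i$, summing over the PBW basis gives the $\bT$-character
$$\sum_{a\in\N^d}e^{w_\a-\sum_i a_i\chi_i}=e^{w_\a}\prod_{i=1}^d\frac{1}{1-e^{-\chi_i}},$$
which is the desired formula. The only slightly subtle point is the equality $A_\a\cdot K=\sum_i A_\a\cdot y_i$; this reduces, via PBW, to the observation that every strictly positive $\bT$-weight monomial $x^ay^b$ must satisfy $b\neq 0$, so that it ends on the right in some $y_j$.
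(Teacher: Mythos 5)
Your argument is correct and follows the same route as the paper: invoke Lemma \ref{Aa-weyl} to realize $A_\a$ as a Weyl algebra with weight-vector generators, identify $\Theta_\a$ with the polynomial ring on the negative-weight generators (shifted by $w_\a$), and read off the character from the monomial basis. The paper states the isomorphism $\Theta_\a\cong\C[w_1,\ldots,w_d]\otimes B_\a$ without justification, whereas you fill in the one genuinely subtle point — that $A_\a\cdot\ker(A_\a^+\twoheadrightarrow B_\a)$ equals the left ideal generated by the positive-weight generators — via the PBW weight count, which is exactly the verification the paper's terse proof is implicitly relying on.
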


\begin{proof}
Lemma \ref{Aa-weyl} tells us that $A_\a$ is isomorphic to the Weyl algebra for $\C^d$
with generators $z_1,w_1,\ldots,z_d,w_d$, where the $z_i$ have positive $\bT$-weight.  
Then, as a $\bT$-vector space, $\Theta_\a$ is isomorphic to $\C[w_1,\ldots,w_d]\otimes B_\a$.
The result follows.
\end{proof}

Note this shows that $\Theta_{\a}$ lies in the subcategory $\cOa(\leq \a)$ of modules whose composition factors are either $L_{\a}$ or have maximal $\xi$-eigenvalue with  real part strictly less than that of $L_\a$. Since ${\nabla}_{\!\a}$ is injective in this subcategory, we have an induced map of $A$-modules $\Theta_\a\to {\nabla}_{\!\a}$.  
The module $\Theta_{\a}$ also has a universal property; by the adjunction of $\Hom$ and $\otimes$, we have for all $A$-modules $M$ that:
\[\Hom_{A}(M,\Theta_{\a})=\Hom_{A_{\a}}(A_{\a}\otimes_{A} M, \Theta_{\a}).  \]

%
The modules $\Theta_\alpha$ and $\Delta_\alpha$ 
have analogues which are families over the twistor
deformation $\mathscr{M}_\eta$.  Let  $\tilde{\Delta}_{\eta,\a}:= \mathscr{A}_\eta
\otimes_{\mathscr{A}_\eta^+}\mathscr{B}_{\eta,\a}$ be the deformed
standard module attached to $\eta$ and $\a$, and let $\tilde\Theta_\a$ be the restriction of
$\mathscr{A}_{\eta,\a}\otimes_{\mathscr{A}_{\eta,\a}^+}\mathscr{B}_{\eta,\a}$ to
$\mathscr{A}$.  While $\tilde{\Delta}_{\eta,\a}$ is a more natural algebraic
object, the family $\tilde\Theta_{\eta,\a}$ has a more regular structure, and
in particular is flat over $\aone$ by Proposition \ref{Theta-character}.  On the other hand, it is not clear (in fact, seems to not be true) that $\tilde\Theta_{\eta,\a}$ is finitely generated.  We can fix this defect by considering its restriction $\hat\Theta_{\eta,\a}$ to the formal neighborhood $\widehat{\aone}$ of the origin in $\aone$, which is finitely generated by Nakayama's lemma applied to the individual $\xi$-weight spaces.

As usual, we use the subscript $k$ below to denote the period $\la + k\eta$.
The following lemma says that the two modules are isomorphic if the period is sufficiently large.

\begin{lemma}\label{Delta-Theta}
  The natural homomorphism of $A_k$-modules $\Theta_{k,\a}\to \nabla_{\!k,\a}$ is an isomorphism for $k$ sufficiently large. 
\end{lemma}

\begin{proof}
First, note that  $A_{\a}\otimes_{A}\nabla_{\!k,\a}$ only depends on the restriction of $\Loc(\nabla_{\!k,\a})$ to a formal neighborhood of the fixed point $\a$. In particular, if localization holds, then the functor $A_{\a}\otimes_{A}-$ is isomorphic to the completion of localization at the fixed point $\a$ and thus is exact for all $\a$.  

	Recall that the deformed bimodule ${_k\cT_m}$ has an inner action of $\xi$, which induces a $\bT$-equivariant structure on the line bundle $\cL$. For each fixed point $\a$, we have a weight $n_\a$; as we change $k$, the $\xi$-weight $\xi_{k,\a}$ of $B_{k,\a}$ varies as $\xi_{k,\a}=kn_{\a}+\xi_{0,\a}$.  In particular, if $n_{\a}>n_{\b}$, then $ \xi_{k,\a}>\xi_{k,\b}$ for $k\gg 0$, and similarly with inequalities reversed. We can choose $k$ sufficiently large, so this holds for each pair of fixed points, and localization holds as well.  
	
	Fix  $M\in \cOa$ and choose  a fixed point $\b$ in the support of $\Loc(M)$ has $n_{\b}$ maximal.  The paragraph above implies that $\supp\Loc(M)$ contains no $X_{\gamma}$ with $\b\in X_{\gamma}$ and $\b\neq \gamma$.  Since localization holds, the sheaf $\Loc(M)$ has a section which does not vanish in the classical limit at the fixed point $\b$.  This section must have non-zero component in the weight $\xi_{k,\b}$ eigenspace of the operator $\xi$.  
	Thus, if $M$ is an $A_k$-module in category $\cO$ such that all $\xi$-weights in $M$ are  $ <\xi_{k,\b}$, then $\b$ does not lie in the support of $\Loc(M)$.  
	
	Consequently, if $M$ is a module in $\cOa(< \a)$, then $\a$ does not lie in the support of $\Loc(M)$, and by adjunction, we have that $\Hom(M,\Theta_{k,\a})=0$.  If $M\in \cOa(\leq \a)$, then as argued above, the module $A_{\a}\otimes_{A}M$ is supported on the completion of $T_{\a}X_{\a}\subset T_{\a}\fM$; in particular, its $\xi$-weights are bounded above.  Any such module over the Weyl algebra is the sum of some number of copies of $\Theta_{\a}$.  Thus, on the subcategory $\cOa(\leq \a)$, the functor $\Hom_{A}(M,\Theta_{k,\a})$ is exact, implying that $\Theta_{k,\a}$ is an injective object in this category.  Since we have already shown that $\Hom(L_\b,\Theta_{k,\a})=0$ for $\b< \a$, it follows that $\Theta_{k,\a}$ is the sum of some number of copies of ${\nabla}_{\!\a}$, and since $\Theta$ has a 1-dimensional space of vectors that transform as $B_{\a}$, we have the desired isomorphism.  
	\end{proof}

In order to describe the sheaves in $\cOg$ which give standard objects we recall a construction from \cite{BLPWquant}. 
Proposition 5.2 from that paper shows that for every pair of integers $k$ and $m$ there is a good, $\bS$-equivariant $(\cQ_k, \cQ_m)$-bimodule ${_k\cT_m}$ (unique up to canonical equivalence) with ${_k\cT_m}/h\,{_k\cT_m}\cong\cL^{k-m}$.
Let $${_kZ_m} := \secs(_k\cT_m[\hon])\and
Z:= \bigoplus_{k\geq m\geq 0}{_kZ_m}.$$
This is a $\Z$-algebra in the sense of Gordon and Stafford \cite[\S 5]{GS}, with multiplication given by tensor products of sections.  
We have a localization functor $\Loc^\Z:\Zmod\to\Dmod$ given 
by $\Loc^\Z(N) := \left(\bigoplus_{k\geq 0}{}_0\cT_k[\hmon]\right)\otimes_Z N$; it
becomes an isomorphism after modding out by $Z$-modules which are bounded above.

Let ${}_k(Z_\a)_m$ be the space of $\bS$-invariant and $\bT$-finite vectors in the
completion of ${}_k\cT_m[\hmon]$ at the point $p_\a$, and let 
$${}_k(\Theta_\a^\Z)_{m}:= {}_k(Z_\a)_m\otimes_{A_{m,\a}^+}B_{m,\a}
\and
\Theta_\a^\Z := \bigoplus_{k\geq 0}{}_k(\Theta_\a^\Z)_{0};$$
then $\Theta_\a^\Z$ is a module over $Z$.  Consider the localization $\gcst_\a=\Loc^\Z(\Theta_\a^\Z)$

\begin{proposition}\label{geometric standard}
The sheaf $\gcst_\a$ is the costandard object of $\cOg$ corresponding to $\a$.  In particular, if localization
holds at $\la$, then $\gcst_\a= \Loc(\nabla_{\!\a})$.
\end{proposition}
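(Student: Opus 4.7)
The plan is to prove the ``in particular'' statement first, and then to use Lemma \ref{geometric twist} together with Theorem \ref{localization} to bootstrap it into the general claim. Since $\cOg$ is highest weight (Proposition \ref{ghw}), the standard object indexed by $\a$ is characterized by its position in this structure; when localization holds, the equivalence $\Loc\colon\cOa\to\cOg$ identifies it with $\Loc(\Delta_\a)$, using Theorem \ref{highest weight}. Before anything else, I would verify \emph{a priori} that $\gst_\a\in\cOg$ by producing an explicit $\cD(0)$-lattice, namely the image under $\Loc^\Z$ of the natural lattice on $\Theta_\a^\Z$ generated by $B_{0,\a}$. Since $B_{0,\a}$ sits in a single $\bT$-weight, this lattice is $\xi$-stable, and the character formula of Proposition \ref{Theta-character} shows that its classical limit is scheme-theoretically supported on the attracting variety $X_\a\subset\fM^+$; tensoring with the translation bimodules ${}_0\cT_k[\hmon]$, which reduce modulo $h$ to line bundles, preserves both of these properties.

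Now assume that localization holds at $\la$. Form the $Z$-module $N := \bigoplus_{k\geq 0}{}_kZ_0\otimes_A\Delta_\a$, and note that $\Loc^\Z(N)$ is canonically isomorphic to $\Loc(\Delta_\a)$, because ${}_0\cT_k[\hmon]\otimes_{A_k}({}_kZ_0\otimes_A\Delta_\a)\cong\Loc(\Delta_\a)$ for each $k$. There is an evident morphism of $Z$-modules $N\to\Theta_\a^\Z$ induced by the composition
\[\Delta_\a = A\otimes_{A^+}B_\a\longrightarrow A_{0,\a}\otimes_{A_{0,\a}^+}B_{0,\a} = \Theta_{0,\a},\]
using $A\to A_{0,\a}$ together with the identification $B_\a\cong B_{0,\a}$ coming from Proposition \ref{B}. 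On the $k$-th graded piece this becomes the canonical map $\Delta_{k,\a}\to\Theta_{k,\a}$, which by Lemma \ref{Delta-Theta} is an isomorphism for all $k$ sufficiently large. The kernel and cokernel of $N\to\Theta_\a^\Z$ are therefore supported in only finitely many grades, hence are killed by $\Loc^\Z$, and we conclude that $\gst_\a = \Loc^\Z(\Theta_\a^\Z) \cong \Loc(\Delta_\a)$.

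For the general case, choose an ample $\eta\in\Htz$ and an integer $k$ large enough that localization holds at $\la+k\eta$ (Theorem \ref{localization}). Applied to the shifted quantization, the preceding paragraph identifies the analogously-constructed sheaf at period $\la+k\eta$ with $\Loc(\Delta_{k,\a})$, which is the standard object in $\cOg$ for that shifted quantization. By Lemma \ref{geometric twist}, the categories $\cOg$ at $\la$ and at $\la+k\eta$ are canonically equivalent via tensoring with ${}_0\cT_k[\hmon]$; since the $Z$-algebra construction of $\gst_\a$ is manifestly compatible with this tensoring (the bimodules ${}_k\cT_m$ being precisely what relates quantizations at different periods), our $\gst_\a$ corresponds to its shifted counterpart under the equivalence and is therefore the standard object of $\cOg$ indexed by $\a$.

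The main obstacle is the bookkeeping around the $Z$-algebra: one must verify carefully that $\Loc^\Z$ kills $Z$-modules with only finitely many nonzero grades (in the appropriate completed sense), and that the formation of $\Theta_\a^\Z$ genuinely commutes with tensoring by the translation bimodules in a manner compatible with Lemma \ref{geometric twist}. A secondary concern is that the identification $B_\a\cong B_{0,\a}$ from Proposition \ref{B} specifies the generator of $\gst_\a$ only up to a scalar on each factor, so one must check that the isomorphism class of the resulting sheaf is independent of this choice; this follows because any such rescaling lifts to an automorphism of $\Theta_\a^\Z$.
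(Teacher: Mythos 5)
Your proposal is essentially the same as the paper's proof in all its load-bearing steps: both reduce to Lemma \ref{Delta-Theta} (which gives $\Delta_{k,\a}\cong\Theta_{k,\a}$ for $k\gg 0$) and both transport the conclusion across periods via the translation bimodules ${}_k\cT_m$ and Lemma \ref{geometric twist}. The organizational difference is that you construct an explicit morphism of $Z$-modules $N\to\Theta_\a^\Z$ and argue that it has bounded kernel and cokernel (hence is killed by $\Loc^\Z$), whereas the paper works through a commuting square of equivalences $\cOa^\la\to\cOa^{\la+k\eta}\to\cOg^{\la+k\eta}\to\cOg^\la$. These are two renderings of the same idea, and both rest on the same implicit check: that the positive twist ${}_kZ_0\otimes_A(-)$ carries $\Delta_{0,\a}$ to $\Delta_{k,\a}$ — your ``on the $k$-th graded piece this becomes the canonical map $\Delta_{k,\a}\to\Theta_{k,\a}$'' is exactly the paper's ``it is easy to check that the two horizontal arrows take $\Delta_\a$ to $\Delta_\a$.'' So there is no additional gap relative to the paper. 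Your preliminary step of exhibiting a $\xi$-stable $\cD(0)$-lattice to verify $\gst_\a\in\cOg$ is a worthwhile addition that the paper omits (it is implicit in the paper via the equivalence of $\cOg$ at different integral translates).
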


Note that this result, together with Corollary \ref{alg-geom} shows that
$\gcst_\a$ has a unique simple submodule, which we denote by 
$\gsi_\a$.\medskip

\begin{proof}
By Lemma \ref{Delta-Theta}, we have $\gcst_\a = \Loc(\nabla_{\!\a})$ for the quantization with period $\la + k\eta$
when $k$ is sufficiently large.  If localization holds at $\la$, then consider the following commuting square of
equivalences.
\[\tikz[->,very thick]{
\matrix[row sep=10mm,column sep=10mm,ampersand replacement=\&]{
\node (a) {$\cOg^\la$}; \& \node (c) {$\cOg^{\la+k\eta}$};\\
\node (b) {$\cOa^\la$}; \& \node (d) {$\cOa^{\la+k\eta}$};\\
};
\draw (b) -- (a);
\draw (a) -- (c);
\draw (b) --(d);
\draw (d)--(c);
}\]
The vertical arrows are given by localization, the top horizontal arrow is given by Lemma \ref{geometric twist},
and the bottom horizontal arrow is given by tensor product with ${}_{\la+k\eta}T_\la$. 
We know that the vertical arrow on the right takes $\nabla_{\!\a}$ to $\gcst_\a$, and it is easy to check
that the two horizontal arrows take $\gcst_\a$ to $\gcst_\a$ and $\nabla_{\!\a}$ to $\nabla_{\!\a}$.  The proposition follows.
\end{proof}

We can also construct deformed versions $\tilde Z_{\eta,\a}$ and $\tilde\Theta_{\eta,\a}^\Z$; since we will want to have finitely generated modules, we consider the restrictions $\hat Z_{\eta,\a}$ and $\hat\Theta_{\eta,\a}^\Z$ of these to $\widehat{\aone}$. 
This allows us
to construct a deformed costandard object
$\hat\gcst_{\eta,\a} := \Loc^\Z(\hat\Theta_{\eta,\a}^\Z)$ on $\scrM_\eta\times_{\aone}\widehat{\aone}$, which is flat over $\widehat{\aone}$ (by Proposition \ref{Theta-character}) so that $\hat\gcst_{\eta,\a}|_\fM \cong \gcst_\a$.   We also have standard objects $\gst_{\a}$ and their deformations $\hat \gst_{\a}$, dual to $\gcst_{\a}$ and $\hat\gcst_{\a}$.
This construction will be used in the proof of Theorem \ref{support
  isomorphism} below.  

\subsection{The center of the Yoneda algebra of \texorpdfstring{$\cOg$}{Og}}
\label{sec:hochsch-cohom-cent}

Consider the Hochschild cohomology ring
\[H\! H^*(\cD) := \Ext^\bullet_{\cD\otimes\cD^{\op}}(\cD,\cD).\]
Here the Ext algebra is computed in the bounded below derived category of
sheaves of modules over the sheaf $ \cD\otimes\cD^{\op}$; that is, by taking an injective resolution of the
left-hand term.  (The existence of such an injective resolution follows by
mimicking the argument of \cite[2.2]{Hartshorne} with $\cD(0)_x$ in place of $\fS_{\fM,x}$.)
By the usual formalism, there is a spectral sequence 
\[
H^{i}(\fM;\mathscr{E}\!\mathit{xt}^j_{\cD\otimes
  \cD^{\op}}(\cD,\cD))\Rightarrow H\! H^{i+j}(\cD).
\]
By \cite[3.1]{WX}, the Hochschild cohomology of the Weyl algebra
vanishes in all higher degrees, so the spectral sequence collapses at the $E_2$ page 
and we have $$H\! H^*(\cD)\cong H^*(\fM;Z(\cD)) \cong H^*(\fM;\C\pphpp).$$
Thus, for any object in $D^+(\cD\mmod)$, we obtain a map from
$H^*(\fM;\C)=H^*(\fM;\C\pphpp)^\bS$ to the center of its Ext-algebra.  In particular, $H^*(\fM;\C)$ maps to the Yoneda algebra $E$ of $\cOg$,
as defined in Section \ref{hwksk}.

This map need not be an isomorphism; for example, if the period of $\cD$ is as generic as possible,
then $\cOg$ will be semisimple, and its Yoneda algebra will be concentrated in degree zero.
However, we make the following conjecture, which essentially says that this is the only thing that can go wrong.

\begin{conjecture}\label{HH}
  If the category $\cOg$ is indecomposable (that is, if it has no proper block
  decomposition), then the map $H^*(\fM;\C)\to Z(E)$ is an isomorphism.
\end{conjecture}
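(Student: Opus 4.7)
The plan is to deduce Conjecture \ref{HH} from the stronger equivariant Conjecture \ref{HH-eq}, using deformation theory and equivariant localization. The map itself arises from the spectral sequence already constructed in the text, together with the observation that Hochschild cohomology always acts on Ext groups in a category of $\cD$-modules by natural transformations, and hence lands in the (graded) center of $E = \Ext^*_{\cOg}(L,L)$.

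My first step is to upgrade the entire setup $\bT$-equivariantly: working with the $\bT$-equivariant quantization $\cD$, I would construct an equivariant Hochschild cohomology $H\!H^*_\bT(\cD)$, which is a module over $H^*_\bT(\mathrm{pt}) = \C[t]$, and prove an equivariant HKR-type collapse giving $H\!H^*_\bT(\cD) \cong H^*_\bT(\fM;\C\pphpp)$. Next, I would construct a universal graded deformation $\widetilde E$ of the Yoneda algebra over a base containing both the equivariant parameter $\C[t]$ and the Namikawa deformation $\Ht$. Standard deformation theory for highest weight categories should show that $\widetilde E$ is flat over this base, since $\Ext^{>0}$ between standards and costandards vanishes on a Zariski-open locus by Theorem \ref{Thm:ext_coinc1}.

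The core of the argument would be to verify the equivariant isomorphism at a generic point. For generic twistor and equivariant parameters, $\scrM_\eta$ is affine, the deformed category $\cOg$ becomes semisimple, and $\widetilde E$ becomes $\C^\cI$. On the cohomology side, Proposition \ref{no-torsion} yields equivariant formality, so $H^*_\bT(\fM)$ is a free $\C[t]$-module whose localization to the fraction field is identified with $\C^\cI$ by the fixed-point localization theorem. Both the cohomological map and the Hochschild map should agree with the ``evaluation at fixed points'' isomorphism in this generic regime, establishing the equivariant statement over an open locus. The role of indecomposability would be to forbid jumps in the center along the special fiber $\eta=0$, $t=0$: in a decomposable category the block idempotents would provide extra sections of $Z(\widetilde E)$ with no counterpart on the cohomology side.

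The main obstacle is precisely this last step, namely establishing that $Z(\widetilde E)$ itself is flat over the deformation base, so that an isomorphism on a generic fiber descends to the central one. The algebra $\widetilde E$ is flat, but taking centers is not exact, so flatness of $Z(\widetilde E)$ is a genuinely subtle claim. One natural route would be to combine Conjecture \ref{geom-Koszul} with symplectic duality (Section \ref{duality}), reinterpreting $Z(E)$ as the center of the basic algebra of the Koszul dual $\cOg^!$; the projective generator of $\cOg^!$ admits a deformation over $\Ht$ whose center can be directly compared with equivariant cohomology. Verifying this line of argument first in the hypertoric and type A S3 cases, where Conjecture \ref{HH-eq} is claimed to hold, should illuminate the form of the general proof.
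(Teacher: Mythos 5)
This statement is a \emph{conjecture}, and the paper does not prove it in general; it establishes it only in three families of examples (Proposition \ref{prop:hochG/P} for cotangent bundles of partial flag varieties, \cite[3.5]{S3} for certain quiver varieties, and part (vi) of Section \ref{sec:hypertoric-varieties} for hypertoric varieties). In each of those cases the paper's argument is direct and case-specific: one exhibits a concrete object $N$ in $\Dmod$ or $\cOg$ (a line bundle on $T^*(G/P)$, or the simple $\Lambda_\a$ in the hypertoric case, which is a quantization of $\fS_{X_\a}$) for which $\Ext^\bullet(N,N)$ is computable and the composite $H^*(\fM;\C)\to Z(E)\to\Ext^\bullet(N,N)$ has controllable kernel (trivial, or cut down by Konno's theorem \cite[(34)]{HS02}); one then invokes an external dimension count for $Z(E)$ (e.g.\ \cite[5.11]{Bru08}). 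Your proposal attempts something structurally very different — a general deformation-theoretic reduction to Conjecture \ref{HH-eq} — which is not what the paper does and, as written, has genuine gaps.

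The most serious issue is precisely the one you flag yourself: taking centers is not exact, so flatness of $\widetilde E$ over the deformation base does not give flatness of $Z(\widetilde E)$, and an isomorphism over a generic locus does not automatically specialize. The paper offers no mechanism for this (indeed it presents Conjecture \ref{HH-eq} as a \emph{strengthening} of Conjecture \ref{HH}, stated afterwards, not as a lemma towards it — your logical dependency runs backwards relative to the paper). Two further problems: first, your generic-fiber analysis conflates two independent deformation directions — the base of $\widetilde E$ from Section \ref{gm} is $Z(E^!)_2^*\cong\mt$, whereas the affineness of $\scrM_\eta$ concerns a generic twistor parameter in $\Ht$, so ``generic $\eta$ makes $\cOg$ semisimple'' is not the right specialization for comparing $Z(\widetilde E)$ with $H^*_T(\fM)$. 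Second, the proposed equivariant HKR collapse for $H\!H^*_\bT(\cD)$ and the invocation of Conjectures \ref{HH-eq} and \ref{geom-Koszul} import statements that the paper itself leaves open, so the argument would be circular as a proof of the target conjecture. Your observation about block idempotents providing extra central sections in the decomposable case is a reasonable heuristic for the indecomposability hypothesis, but it is not developed into a proof.
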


\begin{remark}
Conjecture \ref{HH} holds for cotangent bundles of partial flag varieties (Proposition \ref{prop:hochG/P}),
quiver varieties in finite type ADE and affine type A \cite[3.5]{S3}, and
for hypertoric varieties (part (vi) of Section \ref{sec:hypertoric-varieties}).
\end{remark}

\begin{remark} In Section \ref{gm}
we formulate a stronger version of Conjecture \ref{HH} 
which relates the equivariant cohomology
of $\fM$ to the center of the universal deformation of $E$.
\end{remark}

\section{The Grothendieck group of \texorpdfstring{$\cOg$}{Og}}\label{sec:Grothendieck}
We continue to let $d = \frac 1 2 \dim\fM$. In this section, we show that the Grothendieck group $K(\cOg)$ is canonically
isomorphic by the characteristic cycle map to the cohomology $\bm$ with support in $\fM^+$ as lattices with inner products (Theorem \ref{support isomorphism}).
This is accomplished by studying the characteristic cycles of standard objects, 
but we also give some partial results about the images of simple objects under this isomorphism (Section \ref{sec:simples}).

\subsection{Characteristic cycles revisited}\label{gco-grothendieck}
In Section \ref{quant-HC} we alluded to a characteristic cycle map $\suppc:K(\CLg)\to\HLZ$
that was studied in \cite[\S 6.2]{BLPWquant}, following ideas of Kashiwara and Schapira \cite{KSdq}.  In this section we review this construction and study
it in greater detail.

Let $\cN$ be an object of $D^b(\Dmod)$.  We have isomorphisms
 $$ \sHom^\bullet_{\cD}(\cN,\cN)\cong
  \sHom^\bullet_{\cD}(\cN,\cD)\Lotimes_{\cD}\cN \cong
  \cD_{\Delta}\Lotimes_{\cD\boxtimes\cD^{\op}}\big(\cN\boxtimes
  \sHom^\bullet_{\cD}(\cN,\cD)\big),$$
  and evaluation defines a canonical map to the Hochschild homology 
  $$\mathcal{HH}(\cD) := \cD_{\Delta}\Lotimes_{\cD\boxtimes\cD^{\op}}\cD_{\Delta}.$$
All this is completely general, and holds in both the Zariski and the
classical topology.  In the classical topology, we also have an isomorphism  $\mathcal{HH}(\cD^{\an}) \cong
\C_{\fM_\Delta}\pphpp[\dimfM]$ by \cite[6.3.1]{KSdq}.
 
We define the {\bf characteristic cycle} 
$$\suppc(\cN)\in H^0(\mathcal{HH}(\cD^{\an}))\cong H^{\dimfM}\big(\fM; \C\pphpp \big)$$
to be the image of $\id\in H^0(\sHom^\bullet_{\cD}(\cN^{\an},\cN^{\an}))$ along this map.
More generally, if $\cN$ is supported on a subvariety $j\colon\fL\hookrightarrow\fM$,
then we may consider the identity map of $\cN^{\an}$ to be a section of 
$j^!\sHom^\bullet_{\cD}(\cN^{\an},\cN^{\an})$.
Applying our map then gives us a class
$$\suppc(\cN)\in H^0(j^!\mathcal{HH}(\cD^{\an}))\cong \HLCh.$$
(Our abuse of the notation $\suppc(\cN)$ is justified by the fact that this class is functorial
for inclusions of subvarieties.)  By Poincar\'e-Verdier duality, this can also be considered as a Borel-Moore homology class on $\fL$.

If $\fL$ is Lagrangian, then
Kashiwara and Schapira \cite[7.3.5]{KSdq} show that $\suppc(\cN)$ actually lies in $\HLZ$;
more precisely, if $\cN(0)$ is a good lattice, then
\[\suppc(\cN)=\sum_{i=1}^r\rk_{\fL_i}\!\!\big(\cN(0)/\cN(-1)\big)\cdot
[\fL_i]\in \HLZ\subset  \HLCh,\]
where $\{\fL_1,\ldots,\fL_r\}$ are the components of $\fL$ and $\rk_{\fL_i}$ is the rank
at the generic point of $\fL_i$.

We can also take characteristic cycles in families for modules on twistor
deformations $\mathscr{M}_\eta\to \aone$ after base change to $\widehat{\aone}$.  Let $\scrN$ be such a module, and consider the
image of the identity via the natural morphisms
\begin{multline}\label{relative-HH}
  \sHom^\bullet_{\scrD}(\scrN,\scrN)\cong
  \sHom^\bullet_{\scrD}(\scrN,\scrD)\Lotimes_{\scrD}\scrN \cong
  \scrD_{\Delta}\Lotimes_{\scrD\boxtimes_{\aone}\scrD^{\op}}\big(\scrN\boxtimes_{\aone}
  \sHom^\bullet_{\scrD}(\scrN,\scrD)\big)\\ \to
  \scrD_{\Delta}^{\an}\otimes_{\scrD ^{\an}\boxtimes_{\aone}\scrD^{\an,\op}}\scrD_{\Delta}^{\an}\cong
  \pi^{-1}\fS_{\aone}[\dimfM]\pphpp.
\end{multline}
This defines a class in relative 
cohomology $\suppc(\scrN)\in H^{\dimfM}_{\mathscr{L}}(\mathscr{M}_\eta/\aone;\C\pphpp)$ for any 
Lagrangian $\mathscr{L}\supset
\supp(\scrN)$.  
If we let $\fL := \fM\cap \mathscr{L}$, then we have a
natural restriction map $$H^{\dimfM}_{\mathscr{L}}(\mathscr{M}_\eta/\aone;\C\pphpp)\to
\HLCh$$ given by dividing by the coordinate $t$ on $\aone$. We
also have a natural functor of restriction from
$\scrD\mmod\to \cD\mmod$ given by $\scrN|_{\fM} :=
\scrN\overset{L}\otimes_{\C[t]}\C$. 
The following lemma says that these operations are compatible.

\begin{lemma}\label{restriction-commutes}
  If $\scrN$ is a good $\scrD$-module,
  then $\suppc(\scrN|_{\fM})=\suppc(\scrN)|_{\fM}$. 
\end{lemma}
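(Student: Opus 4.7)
The plan is to deduce this from the naturality of the Hochschild-homology construction under base change along the inclusion of the origin $\Spec\C \hookrightarrow \aone$, combined with the Kashiwara--Schapira formula expressing the characteristic cycle in terms of multiplicities of the small classical limit. The guiding principle is that every arrow in the sequence \eqref{relative-HH} defining $\suppc(\scrN)$ is functorial for the derived base change $\Lotimes_{\C[t]}\C$, and that the target isomorphism $\scrD_\Delta^{\an}\otimes^L_{\scrD^{\an}\boxtimes_{\aone}\scrD^{\an,\op}}\scrD_\Delta^{\an}\cong \pi^{-1}\fS_{\aone}[\dimfM]\pphpp$ restricts at $t=0$ to the isomorphism $\cD_\Delta^{\an}\otimes^L_{\cD^{\an}\boxtimes\cD^{\an,\op}}\cD_\Delta^{\an}\cong \C_{\fM_\Delta}\pphpp[\dimfM]$ used in the definition of $\suppc(\cN|_\fM)$.

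First I would choose a good $\scrD(0)$-lattice $\scrN(0)\subset\scrN$ which is flat over $\C[t]$; this is where the goodness hypothesis on $\scrN$ is used. Its reduction $\scrN(0)/\scrN(-n)$ is then a coherent sheaf on $\scrM_\eta$ with Lagrangian (relative) support $\mathscr{L}$, and by $\C[t]$-flatness, $\scrN(0)|_\fM := \scrN(0)\otimes_{\C[t]}\C$ is a good $\cD(0)$-lattice for the restricted module $\scrN|_\fM$. In particular the small classical limit is compatible with restriction: $(\scrN|_\fM)(0)/(\scrN|_\fM)(-1)\cong \big(\scrN(0)/\scrN(-1)\big)\big|_\fM$.

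Second, I would apply the Kashiwara--Schapira formula in both contexts. In the relative setting, $\suppc(\scrN)=\sum_i \rk_{\mathscr{L}_i}\!\big(\scrN(0)/\scrN(-1)\big)\cdot[\mathscr{L}_i]$ in the relative cohomology $H^{\dimfM}_{\mathscr{L}}(\scrM_\eta/\aone;\C\pphpp)$, where the sum runs over the relative components of $\mathscr{L}$ over $\aone$. Because each such component is flat over $\aone$, its restriction to $\fM$ decomposes as a union of components $\fL_j$ of $\fL = \mathscr{L}\cap \fM$, each entering the cap with $[\fM]$ with multiplicity one; moreover the generic rank of the flat coherent sheaf $\scrN(0)/\scrN(-1)$ along $\mathscr{L}_i$ equals the generic rank of its restriction to $\fM$ along any such $\fL_j$. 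Summing these contributions produces precisely the Kashiwara--Schapira expression for $\suppc(\scrN|_\fM)$, giving the claimed equality.

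The main obstacle is justifying the compatibility of the isomorphism with the relative dualizing complex appearing at the end of \eqref{relative-HH} under base change at $t=0$, and checking that good lattices can indeed be chosen $\C[t]$-flat (so that the classical-limit calculation above is legitimate). Once these two analytic/homological points are in place, the rest is bookkeeping: the image of $\id_{\scrN}\in H^0(\sHom^\bullet_\scrD(\scrN,\scrN))$ under \eqref{relative-HH}, base-changed via $\Lotimes_{\C[t]}\C$, is the image of $\id_{\scrN|_\fM}$ under the corresponding sequence of canonical maps for $\cN$, and the Kashiwara--Schapira description converts this equality of Hochschild classes into the stated equality of Borel--Moore cycles.
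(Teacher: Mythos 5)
Your opening and closing paragraphs correctly identify the paper's strategy: track the image of the identity through the complex \eqref{relative-HH}, observe that every arrow is compatible with the derived base change $\Lotimes_{\C[t]}\C$, and match the result against \eqref{absolute-HH}. The paper resolves what you call the ``main obstacle'' — compatibility of the terminal isomorphism under base change — with a clean reduction: every arrow in \eqref{relative-HH} is functorial in $\scrN$, so one may replace $\scrN$ by a locally free resolution; for locally free $\scrN$, the sheaf $\sHom^\bullet(\scrN,\scrD)$ is concentrated in degree zero and itself locally free, so the base-change compatibility is immediate. If you had pursued that reduction rather than going through the Kashiwara–Schapira multiplicity formula, you would have the paper's proof.

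The middle of your argument, routed through the multiplicity formula, is both unnecessary and has gaps. First, the lemma is stated for \emph{good} $\scrD$-modules, not holonomic ones; the Kashiwara–Schapira expression $\suppc(\cN)=\sum_i \rk_{\fL_i}(\cN(0)/\cN(-1))\cdot[\fL_i]$ is quoted in the paper only under the hypothesis that the support is Lagrangian, so you have silently narrowed the statement. (And even in the holonomic case you would need a \emph{relative} version of that formula over $\aone$, which is not stated anywhere.) Second, the existence of a $\C[t]$-flat coherent lattice is not a consequence of goodness: a lattice $\scrN(0)$ can have $t$-torsion, and killing it changes the module; you recognize this as an obstacle but do not resolve it. Third, the step ``each component of $\mathscr{L}_i\cap\fM$ enters with multiplicity one, and generic ranks are preserved under specialization'' is asserted without justification and is not obviously true: the cycle $[\mathscr{L}_i]\cdot[\{t=0\}]$ can assign nontrivial multiplicities to components of $\fL$, and comparing the length of a stalk at the generic point of $\mathscr{L}_i$ with that at the generic point of a component of its special fiber requires a flatness-plus-irreducibility argument you have not supplied. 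The paper's proof sidesteps all of this because it never unpacks $\suppc$ into a Lagrangian cycle — it works entirely at the level of the Hochschild class.
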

\begin{proof}
  Consider the complex \eqref{relative-HH} of $
  \pi^{-1}\fS_{\aone}$ modules, and
take the derived tensor product with $\C$ over $\C[t]$.
We claim that we obtain a corresponding complex for  $\scrN|_{\fM}$.
That is, we obtain \begin{multline}\label{absolute-HH}
  \sHom^\bullet_{\scrD}(\scrN|_{\fM},\scrN|_{\fM})\cong
  \sHom^\bullet_{\scrD}(\scrN|_{\fM},\cD)\Lotimes_{\cD}\scrN|_{\fM} \cong
  \cD_{\Delta}\Lotimes_{\cD\boxtimes\cD^{\op}}\big(\scrN|_{\fM}\boxtimes
  \sHom^\bullet_{\cD}(\scrN|_{\fM},\cD)\big)\\ \to
  \cD_{\Delta}^{\an}\otimes_{\cD ^{\an}\boxtimes\cD^{\an,\op}}\cD_{\Delta}^{\an}\cong
  \C_\fM[\dimfM]\pphpp.
\end{multline} It
suffices to prove this for $\scrN$ locally free.  In this case,
$\sHom^\bullet (\scrN,\scrD)$ is concentrated in degree 0 and is itself
locally free, so the statement is clear.

Thus $\suppc(\scrN)|_{\fM}$ can be obtained as the image of the identity
under the map \eqref{absolute-HH}.
By definition $\suppc(\scrN|_{\fM})$ is the image of the identity under \eqref{absolute-HH}, so we are done.
\end{proof}

\subsection{Intersection forms for category $\cO$}\label{gco-intform}
We now turn our attention to the subcategory $\cOg = \CMpg \subset \Dmod$, so that
the characteristic cycle map goes from $K(\cOg)$ to $\bm$.  We first need to reinterpret the group
$\bm = H^{\dimfM}(\fM, \fM \setminus \fM^+; \Z)$ equivariantly, so that we can apply localization.  

\begin{lemma} The forgetful homomorphism
\[H^{\dimfM}_\T(\fM, \fM\setminus \fM^+; \Z) \to H^{\dimfM}(\fM, \fM\setminus\fM^+; \Z)\]
is an isomorphism.  The localization map
\[H^{\dimfM}_\T(\fM, \fM\setminus \fM^+; \Z) \to H^{\dimfM}_\bT(\fM^\bT; \Z)\] is an injection; a class lies in the image if and only if its restriction to $p_\a$ is a $\Z$-multiple of the equivariant Euler class $e_{\bT}(T_{p_\a}\fM)$, which is the 
product of the negative weights of the action of $\bT$ on $T_{p_\a}\fM$.
\end{lemma}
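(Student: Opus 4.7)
My plan is to use the Bia{\l}ynicki-Birula paving of $\fM^+$ by the locally closed cells $X_\a^\circ \cong \C^d$, one per fixed point $p_\a$, where each cell carries a linear $\bT$-action whose weights are the positive weights of $T_{p_\a}\fM$. Filter $\fM^+$ by closed subvarieties via a height function on $\cI$ compatible with the closure order, so that each successive quotient is a disjoint union of such equivariant affine cells of complex dimension $d$. The associated spectral sequence for equivariant Borel--Moore homology (identified with $H^{*}_\bT(\fM, \fM\setminus\fM^+;-)$ via Poincar\'e--Lefschetz duality on the smooth ambient space $\fM$) collapses for parity reasons, exhibiting $H^*_\bT(\fM, \fM\setminus\fM^+;\Z)$ as a free $\Z[u]$-module with basis $\{[X_\a]\}_{\a \in \cI}$ concentrated in cohomological degree $2d$. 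Running the same argument without the equivariant parameter $u$ produces the analogous basis for $H^{2d}(\fM,\fM\setminus\fM^+;\Z)$, and the forgetful map is the identity on this basis, yielding the first claim.

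For injectivity in the second claim, I examine the restriction of this basis to fixed points. Since $[X_\a]$ is supported on its closure, the restriction $[X_\a]|_{p_\b}$ vanishes whenever $p_\b \notin X_\a$; and since $X_\a^\circ$ is smooth at $p_\a$ with normal weights equal to the negative weights of $T_{p_\a}\fM$, the restriction $[X_\a]|_{p_\a}$ equals the prescribed Euler class $e_\bT(T_{p_\a}\fM)$, which is nonzero because no weight at an isolated fixed point vanishes. With respect to the closure partial order $\a \preceq \b \iff p_\a \in X_\b$, the matrix of the restriction map is therefore upper triangular with invertible diagonal, proving injectivity.

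For the image characterization, the remaining content is that each off-diagonal entry $[X_\a]|_{p_\b}$ with $p_\b \in X_\a \setminus \{p_\a\}$ is still an integer multiple of $e_\bT(T_{p_\b}\fM)$. The approach is to expand $[X_\a]$ near $p_\b$ in terms of the irreducible components of the tangent cone $T_{p_\b} X_\a$, as in Rossmann's equivariant multiplicity formula: $[X_\a]|_{p_\b}$ is an integer combination of Euler classes of the normal representations of these components. Since $X_\a$ is $\bT$-invariant and Lagrangian and the symplectic form on $T_{p_\b}\fM$ has $\bT$-weight zero, the tangent weights come in dual pairs $(\chi,-\chi)$, and every $\bT$-invariant irreducible Lagrangian subspace of $T_{p_\b}\fM$ is obtained from the negative weight subspace by swapping some of these pairs, altering the Euler class of the normal subspace only by a sign; so each contribution is an integer multiple of $e_\bT(T_{p_\b}\fM)$. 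The main obstacle is justifying that every $\bT$-invariant irreducible component of the tangent cone to a Lagrangian subvariety is linear. For distinct weights this is automatic, since any $\bT$-invariant irreducible subvariety of a faithful $\cs$-representation is a coordinate subspace; in general one can reduce to this case by a generic one-parameter subgroup, or alternatively deform to the generic fiber of the twistor family $\scrM_\eta$ from Section \ref{sec:nam}, which is affine so that the relative core becomes a disjoint union of affine Lagrangian subspaces and the integrality is manifest.
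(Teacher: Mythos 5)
Your first two paragraphs are essentially the paper's argument: the Bia{\l}ynicki--Birula filtration $\fM^+_k = \bigcup_{i\le k}X^\circ_i$, the even/free structure of $H^*_\T(\fM,\fM\setminus\fM^+;\Z)$ over $\Z[u]$ deduced from the collapse of the associated (long exact sequence or spectral sequence) computation, the forgetful isomorphism, and upper triangularity for injectivity. (One small slip: the diagonal entries $e_\bT(T_{p_\a}\fM)$ are nonzero but not \emph{invertible} in $\Z[u]$; nonzeroness is all you need for injectivity, since a free $\Z$-module mapping with triangular matrix and nonzero diagonal into a free module is injective.)

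For the image characterization, however, you take a genuinely different and substantially harder route than the paper, and it has a real gap. You try to compute $[X_\a]|_{p_\b}$ directly via Rossmann's equivariant multiplicity formula, which forces you to control the tangent cone of $X_\a$ (the \emph{closure} of a cell, hence typically singular) at the fixed points $p_\b$ in its boundary. You correctly flag that you need the irreducible components of that tangent cone to be linear Lagrangians, but the justification offered is wrong: it is false that every $\bT$-invariant irreducible subvariety of a $\cs$-representation with distinct weights is a coordinate subspace (the curve $y = x^2$ in $\C^2$ with weights $1,2$ is an immediate counterexample), and even adding conicality and a larger torus does not force linearity of components, so this step is not salvaged by those remarks. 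The ``generic one-parameter subgroup'' suggestion does not make sense here because $\bT$ is already one-dimensional and changing it changes $\fM^+$. The paper avoids tangent cones entirely: since $p_k\in U_{k-1}$ and $X_k^\circ$ is a \emph{closed smooth} submanifold of $U_{k-1}$ with $U_{k-1}\setminus U_k = X_k^\circ$, the restriction of any class in $H^{2d}_\T(\fM,\fM\setminus\fM^+;\Z)$ to $p_k$ factors through $H^{2d}_\T(U_0,U_k)\to H^{2d}_\T(U_{k-1},U_k)\cong \Z$, the Thom class of $X_k^\circ$, whose restriction to $p_k$ is by construction the Euler class of the normal space (the negative weight space). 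Divisibility by $e_\bT$ is then automatic, with no analysis of how $X_j$ for $j>k$ degenerates near $p_k$; the ``if'' direction then follows by the triangularity you already established. If you want to rescue the Rossmann route, you would need a substantive lemma about tangent cones of BB-cell closures, but the Thom-class factorization is both shorter and unconditional.
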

\begin{proof}
Choose an ordering $\a_1,\dots,\a_r$ of the index set $\cI$ refining the closure order $\leftharpoonup$, so that $\fM^+_k := \bigcup_{i=1}^k X^\circ_i$ is closed for all $k$.  Let $U_k = \fM \setminus \fM^+_k$ and $U_0 = \fM$.  Then for $1 \le k \le r$, the cohomology
$H^*(U_{k-1},U_k;\Z)$ is isomorphic to the Borel-Moore homology $H^{BM}_{4d-*}(X_k^\circ)$, so it is isomorphic to $\Z$ in degree $2d$ and $0$ in all other degrees.  It follows that $H^*_\T(U_{k-1},U_k; \Z)$ is a free $H^*_{\T}(pt)$-module generated by 
$H^{\dimfM}_\T(U_{k-1},U_k; \Z) \cong H^{\dimfM}(U_{k-1},U_k; \Z)$.  In addition, the restriction of a generator of $H^{\dimfM}_\T(U_{k-1},U_k; \Z)$ to $p_k = p_{\a_k}$ is the equivariant Euler class of $T_{p_k}X^\circ_{p_k}$.

The result now follows by an easy induction using the exact sequence 
\[H^*_\T(U_0,U_{k-1};\Z)\to H^*_\T(U_0,U_{k};\Z) \to H^*_\T(U_{k-1},U_{k};\Z)\]
which is short exact since the left and right terms vanish in odd degrees.
\end{proof}

From the first part of the lemma, we have a canonical map
$$\bm = H^{\dimfM}(\fM^+; j^!\Z_\fM) \to H^{\dimfM}_\bT(\fM^\bT; \Z).$$
For all $\gamma\in\bm$ and $\a\in\cI$, we will write $\gamma|_\a$ to denote the image of $\gamma$ in $H^{\dimfM}_\bT(p_\a; \Z)$.
The second part of the lemma implies that 
The lattice $\bm$ is freely generated by the classes $\{v_\a\mid\a\in\cI\}$,
where $v_\a|_{\a}$ is the product of the negative weights of the action of $\bT$ on $T_{p_\a}\fM$ and 
$v_\a|_{\b} = 0$ for $\b\neq\a$.
 
The classes $v_\a$ form an orthonormal basis
for the {\bf equivariant intersection form}
$$\langle \beta, \gamma\rangle := (-1)^{d}\sum_{\a\in\cI}\frac{\beta|_\a\cdot\gamma|_\a}{e(\a)},$$
where $e(\a)\in H_\bT^{4d}(p_\a; \Z)$ is the product of all of the weights of the action of $\bT$ on $T_{p_\a}\fM$.

On $K(\cOg)$ we have the {\bf Euler form} given by the formula
$$\Big\langle [\cM], [\cN]\Big\rangle := \sum_{i=0}^\infty
(-1)^i\dim\Ext_{\Dmod}(\cM,\cN).$$

\begin{proposition}
  The classes $\{[\gst_\a]\mid \a\in\cI\}$ form an orthonormal basis
  for $K(\cOg)$.  In particular, the Euler form on $K(\cOg)$ is symmetric.
\end{proposition}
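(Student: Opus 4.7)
The plan has two parts. The spanning property follows formally from Proposition \ref{ghw}: since $\cOg$ is highest weight, each simple class satisfies $[\gsi_\a] = [\gst_\a] - \sum_{\gamma < \a}c_{\gamma\a}[\gst_\gamma]$ with $c_{\gamma\a}\in\Z_{\geq 0}$, so the transition matrix between $\{[\gsi_\a]\}$ and $\{[\gst_\a]\}$ is unitriangular with respect to any linear refinement of the highest weight order, and both are $\Z$-bases of $K(\cOg)$. The symmetry of the Euler form will then be immediate from orthonormality.

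For orthonormality, I would argue by deformation, using the flat family $\tilde\gst_{\eta,\a}\to\aone$ constructed just before Theorem \ref{support isomorphism}, whose fiber at $t$ is the standard object in the geometric category $\cOg$ attached to the quantization $\cD_t$ with period $\la+t \hmon\eta$. Since $\Ext$ between flat families of coherent modules over a flat family of rings over $\aone$ is computed by a bounded complex of finitely generated projective modules, the Euler pairing
\[
\chi_t(\a,\b) := \sum_i (-1)^i \dim \Ext^i_{\cD_t}\bigl(\tilde\gst_{\eta,\a}|_t, \tilde\gst_{\eta,\b}|_t\bigr)
\]
is constant in $t\in\aone$, reducing the problem to its evaluation at a single generic value of $t$. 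I would choose $t$ such that $\scrM_\eta(t)$ is affine (Kaledin's theorem, applied in the proof of Proposition \ref{B}), derived localization holds (Theorem \ref{derived localization}), and the period lies in Losev's open set $\mathfrak{U}$ so that Theorem \ref{highest weight} applies; by Lemma \ref{Delta-Theta} the geometric standard $\tilde\gst_{\eta,\a}|_t$ then corresponds under localization to the algebraic standard $\tilde\Delta_{\eta,\a}|_t$.

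For such a $t$, the identity $\chi(\Delta_\a,\nabla_\b)=\delta_{\a\b}$ holds as a standard consequence of the highest weight structure (the Ext vanishing from \cite[3.2.3]{BGS96} cited in Theorem \ref{highest weight}, combined with Lemma \ref{basic}). It therefore suffices to show $[\Delta_\a]=[\nabla_\a]$ in $K(\cOa)$ at such generic $t$, which I would do via a character argument: by Proposition \ref{Theta-character} (and its evident costandard analogue with lowest weight replaced by highest) both standards and costandards have $\bT$-character $e^{w_\a}\prod_{i=1}^d(1-e^{-\chi_i})^{-1}$, and at generic period the map sending a module to its $\bT$-character is injective on $K(\cOa)$, since the highest $\xi$-weights $\ell_\a$ of distinct simples depend linearly on $t$ with distinct derivatives and hence separate the $\a\in\cI$ for $t$ in a dense open subset of $\aone$. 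The main obstacle is making this separation of leading weights rigorous as $t$ varies; once it is in hand, one obtains $\chi_t(\a,\b) = \delta_{\a\b}$ at generic $t$, and hence for all $t\in\aone$ by the constancy of $\chi_t$, completing the proof.
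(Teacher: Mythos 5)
Your deformation strategy is the same as the paper's: both prove orthonormality by placing the standards $\gst_\a$ in the flat family $\tilde\gst_{\eta,\a}$ over $\aone$ and specializing the Euler pairing at a generic fiber. The paper phrases this with the universal coefficient theorem applied to $\Ext^*(\tilde\gst_{\eta,\a},\tilde\gst_{\eta,\b})$, which is equivalent to your constancy-of-Euler-characteristic argument. Your spanning remark (unitriangularity against the simple basis) is correct and a useful addition, since the paper does not spell it out.

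However, you take a much longer route than necessary to evaluate the pairing at generic $t$, and it leaves a real gap. You already observe that one can pick $t$ with $\scrM_\eta(t)$ affine. On such a fiber the attracting sets $X^\circ_\a$ are disjoint \emph{closed} affine spaces, so the supports of $\gst_\a|_t$ and $\gst_\b|_t$ are literally disjoint for $\a\neq\b$, and hence $\Ext^i(\gst_\a|_t,\gst_\b|_t)=0$ for all $i$ -- no localization, highest weight structure, or character theory required. Combined with $\Ext^\bullet(\gst_\a,\gst_\a)=\C$ concentrated in degree $0$, this already gives $\chi_t(\a,\b)=\delta_{\a\b}$, and constancy in $t$ finishes the proof. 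This is precisely the geometric content of the paper's observation that $\Ext^*(\tilde\gst_{\eta,\a},\tilde\gst_{\eta,\b})$ is a torsion $\C[t]$-module.

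The detour you take instead -- reducing to $\chi(\Delta_\a,\nabla_\b)=\delta_{\a\b}$ and then trying to show $[\Delta_\a]=[\nabla_\a]$ via $\bT$-characters -- runs into the obstacle you yourself flag, and it is genuine. The assertion that the leading $\xi$-weights $\ell_\a(t)$ have pairwise distinct derivatives in $t$ is not automatic: that derivative is controlled by the value of $\eta$ at the fixed point $p_\a$, and distinct fixed points can have equal such values (product-type or otherwise symmetric $\fM$ provide examples). Moreover, $[\Delta_\a]=[\nabla_\a]$ is exactly Corollary \ref{costandard equals standard in K}, which the paper deduces \emph{from} this proposition; to avoid circularity you would need an independent proof of it, which the character argument as sketched does not provide. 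The disjoint-support observation sidesteps the entire issue.
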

\begin{proof}
 This follows from the universal coefficient theorem applied to
 $\Ext^*(\hat{\gcst}_{\a,\eta},\hat{\gcst}_{\b,\eta})$. Generically on $\widehat{\aone}$, the supports of $
\hat{\gcst}_{\a,\eta}$ and $\hat{\gcst}_{\b,\eta}$ are distinct if
$\a\neq \b$, so
$\Ext^*(\hat{\gcst}_{\a,\eta},\hat{\gcst}_{\b,\eta})$ is supported
on $\{0\}\subset \widehat{\aone}$.  The universal coefficient theorem shows
that \[ \Ext^i({\gst}_{\a},{\gst}_{\b})\cong
(\Ext^{i+1}(\hat{\gcst}_{\a,\eta},\hat{\gcst}_{\b,\eta})\oplus
\Ext^i(\hat{\gcst}_{\a,\eta},\hat{\gcst}_{\b,\eta}))\otimes_{\C[[t]]}\C.\]  Thus,
obviously, the Euler characteristic of this complex is 0, and the
classes  $[\gcst_\a]$ and $[\gcst_\b]$ are orthogonal.

On the other hand, we know from the costandard property that 
\[\Ext^i({\gcst}_{\a},{\gcst}_{\a})=
\begin{cases}
  \C & i=0\\
0 & i\neq 0\\
\end{cases}\]
so this establishes orthonormality.
\end{proof}

\begin{corollary} \label{costandard equals standard in K} If the period of a quantization is chosen so that localization holds and the hypotheses of Theorem \ref{highest weight} are satisfied, then for each $\a$ we have $[\nabla_\a] = [\Delta_\a]$ in $K(\cO_a)$; in particular, the multiplicities of any simple in $\nabla_\a$ and $\Delta_\a$ are the same.
\end{corollary}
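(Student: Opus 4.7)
The plan is to transport the statement across the localization equivalence to $\cOg$, where we have the orthonormal basis $\{[\gst_\a]\}$ at our disposal, and then use the standard highest-weight Ext-vanishing to pin down the class of a costandard.

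First, since localization holds and the hypotheses of Theorem \ref{highest weight} are satisfied, Corollary \ref{OQ} together with Corollary \ref{FF} gives an equivalence of triangulated categories $D^b(\cOa) \simeq D^b(\cOg)$; in particular, this equivalence preserves Ext-groups and hence induces an isometry between the Euler forms on $K(\cOa)$ and $K(\cOg)$. By Proposition \ref{geometric standard} it sends $[\Delta_\a]$ to $[\gst_\a]$, and we let $\csL_\a := \Loc(\nabla_\a)$ denote the image of the costandard.

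Next, because $\cOa$ is highest weight, standard BGG reciprocity (see \cite[3.2.3]{BGS96}, which combined with Theorem \ref{Thm:ext_coinc1} shows Ext-vanishing in all positive degrees for our choice of $\la$) gives
\[
\Ext^i_{\cOa}(\Delta_\a,\nabla_\b) \;=\; \delta_{i,0}\,\delta_{\a,\b}\,\C.
\]
Transporting under the equivalence yields
\[
\big\langle [\gst_\a],\,[\csL_\b]\big\rangle \;=\; \delta_{\a,\b}
\]
for the Euler form on $K(\cOg)$.

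Now invoke the proposition just proved, which says that $\{[\gst_\a]\mid\a\in\cI\}$ is an orthonormal basis of $K(\cOg)$ for the Euler form. Expanding $[\csL_\b] = \sum_\a c_\a\,[\gst_\a]$ and pairing with $[\gst_\a]$, we get $c_\a = \delta_{\a,\b}$, so $[\csL_\b] = [\gst_\b]$ in $K(\cOg)$. Applying the inverse equivalence gives $[\nabla_\b] = [\Delta_\b]$ in $K(\cOa)$; since both $\Delta_\b$ and $\nabla_\b$ have finite length by Lemma \ref{finite length}, the equality of classes in the Grothendieck group is equivalent to equality of composition multiplicities of each simple. The only delicate point is the Ext-vanishing input, which is precisely where the hypothesis $\la \in \mathfrak{U}$ from Theorem \ref{highest weight} enters; everything else is formal from the orthonormality.
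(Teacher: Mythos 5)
Your proof is correct and follows essentially the same route as the paper's: both use localization to transport the orthonormality of $\{[\gst_\a]\}$ from the preceding proposition, and both then apply the standard Ext-orthogonality of standards against costandards in a highest weight category (the paper phrases it directly as "standards are left orthogonal to costandards," you derive it from the full Ext-vanishing). The only cosmetic difference is that you do the pairing computation in $K(\cOg)$ while the paper pulls the orthonormal basis back to $K(\cOa)$ first; this is an equivalent bookkeeping choice. One small terminological note: what you are using is not BGG reciprocity (that concerns multiplicities of standards in projective covers) but rather the Ext-vanishing $\Ext^i(\Delta_\a,\nabla_\b)=\delta_{i,0}\delta_{\a\b}\C$ that characterizes highest weight categories; the substance of your argument is unaffected.
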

\begin{proof}
Since $\nabla_{\!\a}$ is sent to $\gcst_\a$ under localization, the proposition implies that 
the costandards $\nabla_{\!\a}$ give an orthonormal basis of $K(\cO_\a)$ under the Euler form.  But in any highest weight category the classes of standards are (left) orthogonal to the classes of costandards, so we must have $[\Delta_\a] = [\nabla_\a]$.
\end{proof}

\begin{theorem}\label{support isomorphism}
The map $\suppc:K(\cOg)\to \bm$ is an isomorphism 
that intertwines the Euler form with the equivariant intersection form.
\end{theorem}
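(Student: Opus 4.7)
Our plan is to prove the stronger claim that $\suppc(\gst_\a) = v_\a$ for every $\a \in \cI$. Combined with the preceding proposition (the classes $[\gst_\a]$ form an orthonormal basis for the Euler form) and the orthonormality of the basis $\{v_\a\}$ for the equivariant intersection form, this identification immediately yields the theorem: the map $\suppc$ sends one orthonormal basis to another, and so it is simultaneously a lattice isomorphism and an isometry.

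To prove the identification, we work with the deformed standard $\tilde\gst_{\eta,\a}$ on the twistor deformation $\scrM_\eta \to \aone$, which is flat over $\aone$ with central fiber $\gst_\a$, and apply a relative version of the characteristic cycle construction from Section~\ref{gco-grothendieck}. Two ingredients are needed. First, the $\bT$-fixed-point scheme $\scrM_\eta^\bT \to \aone$ is canonically identified with $\aone \times \fM^\bT$: each isolated fixed point of $\bT$ in $\fM$ deforms to a unique smooth section over $\aone$ via the implicit function theorem applied to the generating vector field of $\bT$, and no new fixed points appear in the generic fibers (by the relative-core count). Second, the relative analogues of Lemma~\ref{restriction-commutes} and of the earlier lemma characterizing $\bm$ as a sublattice of $H^{\dimfM}_\bT(\fM^\bT; \Z)$ hold with essentially the same proofs, producing a relative class $\suppc(\tilde\gst_{\eta,\a})$ whose image in $H^{\dimfM}_\bT(\fM^\bT; \Z)$ agrees with $\suppc(\tilde\gst_{\eta,\a}|_t)$ for every $t \in \aone$. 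In particular, the central-fiber restriction $\suppc(\gst_\a)$ equals the generic-fiber restriction for any $t \ne 0$.

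It remains to compute this generic-fiber characteristic cycle. For $t \ne 0$, Kaledin's theorem tells us that $\scrM_\eta|_t$ is affine, and hence (by Bia{\l}ynicki-Birula) the attracting set of each $\bT$-fixed point is a closed affine space, and these attracting sets are pairwise disjoint and constitute the entire relative core. The deformed version of Proposition~\ref{Theta-character} shows that $\tilde\gst_{\eta,\a}|_t$ is scheme-theoretically supported with generic rank one on the single attracting space corresponding to $\a$; so its characteristic cycle is the fundamental class of that space. This class restricts to the Euler class of the (entirely negatively weighted) normal bundle, namely $e^-(\a)$, at the fixed point $p_\a$, and to zero at every other fixed point -- in other words, it is exactly $v_\a$, completing the argument. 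The chief technical obstacle is formulating and verifying the relative versions of Lemma~\ref{restriction-commutes} and of the integral-cohomology lemma above; both are direct adaptations of the absolute statements already in the text, but require some care to carry out uniformly over $\aone$.
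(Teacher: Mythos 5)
Your proposal follows essentially the same approach as the paper's proof: reduce to showing $\suppc(\gst_\a) = v_\a$ via orthonormality of the two bases, pass to the deformed standard $\tilde\gst_{\eta,\a}$ over the twistor family, use the specialization compatibility of Lemma~\ref{restriction-commutes}, and exploit Kaledin's affineness of the generic fiber to identify the generic-fiber standard with a structure sheaf on a single closed affine attracting space. The only cosmetic difference is in bookkeeping: the paper works directly in the relative cohomology group $H^{\dimfM}_{\mathscr{M}_\eta^+}(\mathscr{M}_\eta/\aone;\C\pphpp)$, constructing an explicit basis $\{\scrv_\a\}$ there, whereas you compare fiberwise restrictions in $H^{\dimfM}_\bT(\fM^\bT;\Z)$; both routes lead through the same identifications and the same generic-fiber computation of the Euler class as the product of negative weights.
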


\begin{proof}
Since the costandard modules $\{\gcst_\a\mid \a\in\cI\}$ form an orthonormal basis for $K(\cOg)$, it suffices
to show that $\suppc(\gcst_\a)=v_\a$ for all $\a\in\cI$.
Consider the sheaf $\tilde{\gcst}_{\eta,\a} = \Loc^\Z(\tilde{\Theta}_\a^\Z)$ on $\mathscr{M}_\eta$, which we introduced
at the end of the previous section, along with its Euler class 
$\suppc(\tilde{\gcst}_{\eta,\a})\in H^{\dimfM}_{\mathscr{M}_\eta^+}(\mathscr{M}_\eta/\aone; \C\pphpp)$.
Since $\mathscr{M}_\eta^{\bT}$ is isomorphic
to a disjoint union of $|\cI|$ copies of $\aone$ and the space
$\mathscr{M}_\eta^+$ is an $\mathbb{A}^d$-bundle over this space, the group 
$H^{\dimfM}_{\mathscr{M}_\eta^+}(\mathscr{M}_\eta/\aone; \C\pphpp) $
is a $\C\pphpp$-vector space of dimension $|\cI|$.  Let 
$$\{\scrv_\a\mid\a\in\cI\}\subset H^{\dimfM}_{\mathscr{M}_\eta^+}(\mathscr{M}_\eta/\aone; \C\pphpp)$$ be the $\C\pphpp$-basis
indexed by the components of the fixed point set, so that $\scrv_\a|_\fM = v_\a$.

Over a generic element of $\aone$, the restriction of $\tilde{\gcst}_{\eta,\a}$ is simply the structure sheaf
of the locus of points whose $\bT$-limit is equal to the fixed point labeled by $\a$.  This implies
that $\suppc(\tilde{\gcst}_{\eta,\a}) = \scrv_\a$, and therefore that
$$\suppc(\gcst_\a) = \suppc(\tilde{\gcst}_{\eta,\a}|_\fM) =
\suppc(\tilde{\gcst}_{\eta,\a})|_\fM = \scrv_\a|_\fM = v_\a.$$
This completes the proof.
\end{proof}

\begin{remark}
  While the hypothesis that $\bT$ has isolated
  fixed points was used in an essential way 
  here, it should be possible to generalize this result to more general
  $\bT$-actions, at the cost of downgrading from an isomorphism to an
  injection.  A forthcoming result of Baranovsky and Ginzburg \cite{BaGi} shows
  that the map $\suppc$ is injective in the case where $\bT$ is
  trivial.  In this case, $\suppc$ takes values in the top degree homology group of the core
  (the preimage of $o\in\fM_0$), and it intertwines the Euler form with the ordinary intersection form on $\fM$
  by \cite[6.5.4]{KSdq}.  This map can be extremely far from surjective,
  though; for generic periods, category $\cOg$ for a trivial action
  has no non-zero objects.  A recent preprint of Bezrukavnikov and Losev
  shows how complicated this dependence can be in the case of certain
  quiver varieties \cite{BLet}.  We expect that this result should extend to
  arbitrary $\bT$ as a mix of these two situations.  See Remark \ref{nonisolated B} for a related discussion.
\end{remark}

We conclude this section by noting that we can specify a geometrically-defined partial order with
respect to which the category $\cOg$ is highest weight.  (We already know that it is highest weight
by Proposition \ref{ghw}, but the relation between the partial order $\le$ we used there and the geometry of $\fM$ is not clear.)
Define a partial order $\leftharpoonup $ on $\cI$
by putting $\a\leftharpoonup \b$ if $p_{\a}\in X_{\b}$ (or equivalently $\overline{X^\circ_\a} \cap X_\b^\circ \ne\emptyset$) and then taking the transitive closure.

\begin{proposition}\label{geometric order}
  The support of $\gst_\a$ is contained in $\bigsqcup_{\b\leftharpoonup\a}
  X_{\b}$.  In particular, $\cOg$ is highest-weight with respect to
  the partial order $\leftharpoonup $.
\end{proposition}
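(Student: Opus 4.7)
The strategy is to compute $\supp(\gst_\a)$ using the deformed standard $\tilde\gst_{\eta,\a}$ on the twistor family $\scrM_\eta\to\aone$ (constructed at the end of Section \ref{objects}). By Proposition \ref{Theta-character}, this sheaf is flat over $\aone$ and restricts to $\gst_\a$ on the central fiber $\fM$. Over a generic $t\in\aone$, Kaledin's theorem makes $\scrM_\eta(t)$ affine, so localization holds there and $\tilde\gst_{\eta,\a}(t)$ is the standard object in the corresponding generic category $\cO$. The local Weyl-algebra identification of Lemma \ref{Aa-weyl}, applied in a formal neighborhood of $\tilde p_\a(t)$, presents this standard as the polynomial module $\C[w_1,\ldots,w_d]$ over the Weyl algebra, whose coherent support is the attracting affine space $\{z_i=0\}=\tilde X_\a^\circ(t)$. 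Taking Zariski closures globally, $\supp(\tilde\gst_{\eta,\a}(t))=\overline{\tilde X_\a^\circ(t)}\subset\scrM_\eta(t)$.

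Next, I would invoke Bia{\l}ynicki-Birula theory on the affine variety $\scrM_\eta(t)$: the closure $\overline{\tilde X_\a^\circ(t)}$ decomposes as the union of attracting cells $\tilde X_\b^\circ(t)$ with $\tilde p_\b(t)\in\overline{\tilde X_\a^\circ(t)}$. Theorem \ref{universal family} gives a $\bT$-equivariant smooth trivialization of $\scrM_\eta$ over $\aone$, so the closure order on attracting cells is independent of $t$ and agrees with $\leftharpoonup$ on $\fM$. Consequently, $\supp(\tilde\gst_{\eta,\a})=\bigcup_{\b\leftharpoonup\a}\tilde X_\b$ as a closed subset of $\scrM_\eta$. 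Restricting to $\fM$ via flatness, together with $\tilde X_\b\cap\fM=X_\b$ (by dimension count in an irreducible family whose generic fiber has the correct dimension), yields $\supp(\gst_\a)\subset\bigcup_{\b\leftharpoonup\a}X_\b$.

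For the highest-weight conclusion, Proposition \ref{ghw} already gives a highest-weight structure on $\cOg$ with respect to some partial order. If $\gsi_\b$ occurs as a composition factor of $\gst_\a$ with $\b\neq\a$, then $p_\b\in\supp(\gsi_\b)$: the top weight space $B_\b$ of $\gst_\b$ lies outside any proper submodule of $\gst_\b$, and hence survives to the simple quotient $\gsi_\b$, producing a nonzero local section at $p_\b$. Since $\supp(\gsi_\b)\subset\supp(\gst_\a)\subset\bigcup_{\gamma\leftharpoonup\a}X_\gamma$, we have $p_\b\in X_\gamma$ for some $\gamma\leftharpoonup\a$, so $\b\leftharpoonup\gamma\leftharpoonup\a$, and $\b\leftharpoonup\a$ by transitivity. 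Thus $\leftharpoonup$ is at least as fine as the minimal partial order making $\cOg$ highest-weight, and so it too makes $\cOg$ highest-weight.

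The main obstacle is the second step, namely controlling the behavior of the Zariski closure of the attracting cells under specialization to $t=0$ and identifying $\tilde X_\b\cap\fM$ with $X_\b$. The smooth trivialization of Theorem \ref{universal family} guarantees that the underlying stratified topology is constant across fibers, but the algebraic Zariski closure can a priori jump, so a careful dimension argument (using irreducibility and upper semicontinuity of fiber dimension) is needed to rule this out.
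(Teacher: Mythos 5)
Your approach differs from the paper's: the paper deduces the containment by combining Theorem~\ref{support isomorphism}, which gives $\suppc(\gst_\a)=v_\a$, with the fact (an elementary equivariant-localization computation) that the change-of-basis matrix between $\{v_\a\}$ and $\{[X_\a]\}$ is unitriangular for $\leftharpoonup$. The positivity of characteristic cycle coefficients then forces $\supp(\gst_\a)\subset\bigcup_{\b\leftharpoonup\a}X_\b$. You instead try to read off the set-theoretic support of $\gst_\a$ by taking a flat limit of the support of $\tilde\gst_{\eta,\a}$ over the twistor line, bypassing characteristic cycles entirely. The limit idea is sound in spirit (and is already used in the paper to prove Theorem~\ref{support isomorphism}), but the way you implement the specialization step is incorrect.

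The problem is in the sentence ``Theorem~\ref{universal family} gives a $\bT$-equivariant smooth trivialization of $\scrM_\eta$ over $\aone$, so the closure order on attracting cells is independent of $t$.'' This is false, and for a structural reason: over generic $t\neq 0$, the fiber $\scrM_\eta(t)$ is affine (Kaledin), so the attracting cells $\tilde X^\circ_\b(t)\cong\mathbb{A}^d$ are \emph{closed} and pairwise disjoint; the closure poset is discrete. Over $t=0$ the closure poset is $\leftharpoonup$, which is genuinely nontrivial. So the closure order jumps at $t=0$. The trivialization in Theorem~\ref{universal family} is as a smooth manifold with $\bS$-action, and carries no information about Zariski closure; Zariski closure (and hence the flat limit of a $d$-cycle) is exactly the thing that is not locally constant in this family. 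Consequently ``$\supp(\tilde\gst_{\eta,\a})=\bigcup_{\b\leftharpoonup\a}\tilde X_\b$'' does not follow from what you have said; one knows only that the flat limit of $\overline{\tilde X_\a^\circ(t)}$ is some effective $d$-cycle supported on $\fM^+$, and the content of the proposition is precisely that this cycle only involves $X_\b$ with $\b\leftharpoonup\a$.

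You flag this difficulty in your closing paragraph, but the proposed repair (``irreducibility and upper semicontinuity of fiber dimension'') cannot resolve it, because upper semicontinuity controls only the dimension of the limit, not which $d$-dimensional components occur. What actually pins down the limit cycle is $\bT$-equivariant localization: the family $\overline{\tilde X_\a^\circ}$ contains the section through $\tilde p_\a$ and no other fixed section, so its equivariant class restricted to the fixed locus is supported on the $\a$-component; this says the limit cycle is $v_\a$ in $H^{2d}_{\fM^+}(\fM;\Z)$, and then the unitriangularity of $v_\a=[X_\a]+\sum_{\b\leftharpoonup\a,\b\neq\a}c_\b[X_\b]$ (itself proved by localizing $[X_\b]$ at the fixed points) gives the containment. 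That is exactly the chain of reasoning in the paper's proofs of Theorem~\ref{support isomorphism} (via Lemma~\ref{restriction-commutes}, which is the precise statement that $\suppc$ commutes with specialization in the twistor family) and of Proposition~\ref{geometric order}. Without passing through the characteristic cycle class or an equivalent localization argument, the specialization step in your proof remains a gap. Your argument for the highest-weight conclusion, given the containment, is fine and is essentially the same as the paper's.
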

\begin{proof}
This follows immediately from the structure on the fixed point
classes, since the change of basis matrix between the bases $\{v_\a\mid \a\in\cI\}$ and
$\{[X_\a]\mid\a\in\cI\}$ is triangular with ones on the diagonal with respect to this
partial order.  Since the simple $\gsi_\a$ defined after Proposition
\ref{geometric standard} has non-trivial support on
$X_\a$, if it occurs in $\gst_\b$, the standard $\gst_\b$ must have
$X_\a$ in its support, so $\a\leftharpoonup \b$.
\end{proof}
It is worth noting that $\leftharpoonup$ is not necessarily the weakest partial order with
respect to which $\cOg$ is highest weight.  For example, $\cOg$ may be
semi-simple, and thus highest weight for the trivial partial order.

\subsection{Supports of simples}\label{sec:simples}
The key to the previous section was the computation of the
characteristic cycles of standard and costandard objects.  It is also interesting to
consider the characteristic cycles of simple objects, though they are
much more difficult to understand.  In this section we will obtain
some partial results about their set-theoretic supports that will be useful in later sections.

We call an $A$-module $N$ {\bf  holonomic} if its derived localization
$\LLoc(N)$ on any resolution has Lagrangian support.
Note that this is independent of the choice of resolution, since the
functors $\LLoc$ for different resolutions are related by convolution
with a Harish-Chandra bimodule, which preserves holonomicity.

For any simple $A$-module $L$, let $\fM_{L} \subset\fM_0$ be
the subscheme defined by the ideal $\gr\Ann(L)\subset \gr A \cong
\C[\fM_0]$.  This subscheme is always the closure of a symplectic leaf
\cite{Giprim}; in particular, it is a subvariety.  A leaf that arises
in this way will be called {\bf special}, in analogy with the existing
terminology for nilpotent orbits.  
We let $\all$ denote the
set of all symplectic leaves of $\fM_0$, and $\spe$ denote the subset  of
leaves which are special for a fixed quantization $\cD$.

\begin{theorem}\label{unique-minimal}
If $L$ is holonomic, the support of the sheaf $\gr L$ on $\fM_0$ is contained in $\fM_L$, and it intersects
the dense leaf of $\fM_L$ nontrivially; equivalently, a symplectic leaf
closure contains $\supp(\gr L)$ if and only it contains $\fM_L$.
\end{theorem}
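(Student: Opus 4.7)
The first containment $\supp(\gr L)\subseteq\fM_L$ holds without any hypothesis on $L$. Fix any good filtration of $L$; if $a\in\Ann(L)$ has filtration degree $k$, then $a$ kills each $L(m)$, so its principal symbol $\bar a\in(\gr A)_k$ annihilates $\gr L$. Hence $\gr\Ann(L)\subseteq\Ann_{\gr A}(\gr L)$, and passing to vanishing loci in $\fM_0$ reverses the inclusion.

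The text preceding the theorem records two facts I would then invoke: Gabber's involutivity theorem, which implies that $\sqrt{\Ann_{\gr A}(\gr L)}$ is a Poisson ideal and hence that $\supp(\gr L)$ is a union of symplectic leaf closures; and Ginzburg's theorem on primitive ideals, which says that $\fM_L$ is the closure of a single leaf $U$ and in particular is irreducible. The ``equivalently'' clause then reduces, given the first containment, to showing that $U\subseteq\supp(\gr L)$: the implication that every leaf closure containing $\fM_L$ contains $\supp(\gr L)$ is immediate from the first part, and the reverse implication follows because $\supp(\gr L)$ is a union of leaf closures inside $\overline{U}$, so any leaf closure $\overline{V}$ containing $\supp(\gr L)$ must contain $\overline{U}=\fM_L$ once we know that $U$ is one of the leaves involved.

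The substantive step is therefore to show $\supp(\gr L)\cap U\ne\emptyset$, which is the only place holonomicity enters. Assume the contrary, so $\supp(\gr L)\subseteq\fM_L\setminus U$. My plan is to reduce to a formal neighborhood of a smooth Poisson point $p\in U$ via a Kaledin--Losev slice decomposition: locally near $p$, the Poisson variety $\fM_0$ factors as the product of an open subset of $U$ with a transverse Poisson slice $S$ whose unique $0$-dimensional leaf is the image of $p$, and $A$ correspondingly factors (in a completed sense) as the tensor product of a Weyl algebra on $U$ with a quantization $A_S$ of $S$. A simple $A$-module restricts on this neighborhood to an external tensor product of simples on the two sides, and holonomicity forces the factor on the Weyl algebra side to be a Lagrangian-supported simple D-module on a symplectic open subset of $U$, whose support necessarily meets $U$, producing the desired contradiction. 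The main obstacle will be to make the slice decomposition precise for the filtered algebra $A$ (as opposed to the purely Rees-completed version) and to verify that the ``transverse'' factor of the simple module $L$ is nonzero near $p$; both are technical adaptations of the slice-quantization analyses used by Losev in his appendix.
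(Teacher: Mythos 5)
Your first containment, the reduction via Gabber involutivity and Ginzburg's result to showing that the dense leaf $U$ of $\fM_L$ meets $\supp(\gr L)$, and the basic idea of using a formal Darboux--Weinstein slice decomposition with holonomicity controlling the Weyl-algebra factor, are all on target and match the ingredients of the paper's argument.

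However, the contradiction step as you set it up does not work, and the problem is the choice of point at which you complete. You assume $\supp(\gr L)\subseteq\fM_L\setminus U$ and then propose to pass to a formal neighborhood of a smooth Poisson point $p\in U$. But under your hypothesis $p\notin\supp(\gr L)$, so the completion of $\gr L$ (and of the Rees module $R(L)$) at $p$ is \emph{zero}: there is no module there to factor as an external tensor product, no ``Lagrangian-supported simple D-module on a symplectic open subset of $U$,'' and no contradiction forthcoming. (Vanishing of a simple module's completion at a single closed point is not in itself contradictory; simples are routinely supported on proper closed subsets.) The correct move, and the one the paper makes, is to complete at a generic smooth point $s$ of $\supp(\gr L)$ lying in a symplectic leaf $S$ of \emph{maximal} dimension among those meeting the support. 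There the completion $R(L)^\wedge_s$ is nonzero, the slice decomposition $R(A)^\wedge_s\cong W^\wedge_0\,\hat\otimes\,C$ applies, and holonomicity forces the $W^\wedge_0$-factor of $R(L)^\wedge_s$ to be a vacuum module $\mathcal{V}$ with $R(L)^\wedge_s\cong\mathcal{V}\otimes_{\C[[h]]}(\text{finite rank $C$-module})$. This yields an embedding of $R(A)/\Ann(R(L))$ into an algebra whose reductions mod $h^n$ have GK dimension $\dim S$, hence $\dim V(\gr\Ann(L))\le\dim S$; since $\bar S\subseteq\fM_L$ already and $\fM_L$ is an irreducible leaf closure by Ginzburg, this forces $\fM_L=\bar S$. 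No contradiction argument is needed, and working at a point of $U$ disjoint from the support is replaced by working at a point of the support and identifying its leaf with $U$ after the fact.
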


Before proving Theorem \ref{unique-minimal} we establish a pair of lemmas.
Consider the Rees algebra $R(A) \cong \Gamma(\cD(0))$.  Following Losev, we wish to consider
the completion of this algebra at a maximal ideal in $\C[\fM_0]\cong
R(A)/h\cdot R(A)$.  For $s\in \fM_0$, we let $R(A)^\wedge_s$ be the
completion of $R(A)$ in the topology induced by the maximal ideal
$\mathfrak{m}_s+h\cdot R(A)$.  Let $S$ be the symplectic leaf
containing $s$ and, following Kaledin \cite[2.3]{KalPPV}, let
$\mathcal{Y}_s$ denote the formal slice to $S$ inside of $\fM_0$.

\begin{lemma}\label{algebra-tensor}
  The completion $R(A)^\wedge_s$ is isomorphic to the tensor product
  $W^\wedge_0 \,\hat{\otimes}_{\C[[h]]}\, C$ where $W$ is the Weyl algebra on the
  symplectic vector space $T_s^*S$ and
  $C$ is a quantization of $\mathcal{Y}_s$.
\end{lemma}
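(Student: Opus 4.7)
The plan is to quantize the classical Darboux-Weinstein decomposition. At the classical level, the formal slice $\mathcal{Y}_s$ of Kaledin \cite[2.3]{KalPPV} comes with a Poisson isomorphism
\[\C[\fM_0]^\wedge_s \;\cong\; \C[[T_s^*S]]\,\hat\otimes\,\C[\mathcal{Y}_s],\]
where the first factor carries the standard symplectic Poisson bracket on the tangent space to the leaf $S$ at $s$. Since $R(A)^\wedge_s$ is a flat $\C[[h]]$-algebra whose reduction modulo $h$ is this Poisson algebra, my task is to lift this tensor decomposition to the quantized setting.

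First I would build the Weyl factor. Pick classical Darboux coordinates $x_1,\ldots,x_d,y_1,\ldots,y_d$ on $T_s^*S$ and lift them inductively to elements $\tilde x_i,\tilde y_i\in R(A)^\wedge_s$ whose commutators satisfy the canonical relations $[\tilde x_i,\tilde y_j]=h\delta_{ij}$, $[\tilde x_i,\tilde x_j]=[\tilde y_i,\tilde y_j]=0$ modulo increasingly high powers of $h$. The obstructions live in the Hochschild cohomology of the formal polydisk, which vanishes in the relevant degrees; equivalently, one may invoke the uniqueness of quantizations of a formal polydisk \cite[1.5]{BK04a}, used already in the proof of Lemma \ref{Aa-weyl}. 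The outcome is an embedding $\iota\colon W^\wedge_0\hookrightarrow R(A)^\wedge_s$ of $\C[[h]]$-algebras.

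Next, define $C$ to be the centralizer of $\iota(W^\wedge_0)$ in $R(A)^\wedge_s$. At the classical level the Poisson centralizer of $\C[[T_s^*S]]$ inside $\C[\fM_0]^\wedge_s$ is precisely $\C[\mathcal{Y}_s]$, so $C$ is a flat $\C[[h]]$-algebra with $C/hC\cong \C[\mathcal{Y}_s]$; in other words, $C$ is a quantization of $\mathcal{Y}_s$. The multiplication map
\[W^\wedge_0\,\hat\otimes_{\C[[h]]}\,C \;\longrightarrow\; R(A)^\wedge_s\]
is a homomorphism of $h$-adically complete $\C[[h]]$-algebras whose reduction modulo $h$ is the Darboux-Weinstein isomorphism. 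Since both sides are $h$-torsion free and $h$-adically complete, a standard Nakayama argument shows that it is itself an isomorphism, giving the desired decomposition.

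The main obstacle is the quantum Darboux step: although the obstruction calculus is formally standard, one must arrange the inductive lifts of $x_i,y_j$ to lie in the correct powers of the ideal $\mathfrak{m}_s+h\cdot R(A)$ defining the completion, so that $\iota$ extends continuously from the algebraic Weyl algebra to its completion $W^\wedge_0$. This should follow from the fact that the classical Darboux coordinates vanish at $s$, so each correction at order $h^k$ can be chosen inside $\mathfrak{m}_s^{N(k)}$ for suitable $N(k)\to\infty$, ensuring $\mathfrak{m}_s$-adic convergence of the resulting series.
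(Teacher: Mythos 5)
Your route is genuinely different from the paper's. The paper argues at the level of the resolution: it notes that $R(A)^\wedge_s=\Gamma(\cD(0))^\wedge_s$ is the global section algebra of a quantization of the formal neighborhood of the fiber $\pi^{-1}(s)\subset\fM$, that this formal scheme is Poisson-isomorphic to (formal disk)$\times$(resolution of $\mathcal{Y}_s$), and then appeals wholesale to the Bezrukavnikov--Kaledin classification of quantizations to conclude that \emph{any} quantization of such a product must split off a Weyl factor. You instead work at the level of completed section algebras and run the quantum Darboux--Weinstein argument by hand: inductively lift classical Darboux coordinates on $T_s^*S$ to a copy of $W^\wedge_0$, then take $C$ to be the commutant. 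Your approach is the one Losev actually uses in the W-algebra literature (cf.\ the reference to \cite[3.3.4]{LosW} for the companion Lemma on modules), so it is very much in the spirit of this section; what the paper's version buys is brevity and the ability to push the whole problem into the already-cited classification theorem, at the price of reasoning through the resolution rather than directly with $R(A)^\wedge_s$.

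One step of yours is stated too quickly. You assert that because the Poisson centralizer of $\C[[T_s^*S]]$ in $\C[\fM_0]^\wedge_s$ is $\C[\mathcal{Y}_s]$, the commutant $C=Z_{R(A)^\wedge_s}(\iota(W^\wedge_0))$ is flat over $\C[[h]]$ with $C/hC\cong\C[\mathcal{Y}_s]$. Flatness (saturation of $C$ in $R(A)^\wedge_s$) is immediate from $h$-torsion-freeness of $R(A)^\wedge_s$, and the injection $C/hC\hookrightarrow\C[\fM_0]^\wedge_s$ clearly lands in the Poisson centralizer. But \emph{surjectivity} onto $\C[\mathcal{Y}_s]^\wedge$ is a separate lifting problem: given a Poisson-central $f$, you must correct an arbitrary lift $\tilde f_0$ order by order in $h$ so that it commutes with all of $\iota(W^\wedge_0)$, and the solvability of each step is exactly the quantum half of the Darboux--Weinstein theorem (the obstructions vanish because the Weyl algebra is formal/acyclic). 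This is true and standard, but it is the crux of your argument and deserves at least a sentence, parallel to the care you take with the $h$-adic and $\mathfrak{m}_s$-adic convergence of the $\tilde x_i,\tilde y_j$. Once that is noted, the multiplication map $W^\wedge_0\hat\otimes_{\C[[h]]}C\to R(A)^\wedge_s$ is a morphism of complete flat $\C[[h]]$-algebras which is an isomorphism mod $h$, so it is an isomorphism by the Nakayama argument you give, and the lemma follows.
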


\begin{proof}
The algebra $R(A)^\wedge_s$ is a quantization of a formal neighborhood of the
 fiber over $s$ in $\fM$; this formal scheme is isomorphic as a
 Poisson scheme to the product
 of the completion of $S$ at $s$ with a symplectic resolution
 of $\mathcal{Y}_s$.  By the classification of quantizations in \cite{BK04a}, any
 quantization of the latter will have sections of
 the form $W^\wedge_0 \,\hat{\otimes}\, C$, so we are done.
\end{proof}

Consider a holonomic $A$-module $N$ with a fixed good filtration, and
choose a point $s$ which is a smooth point of $\supp (\gr L)$ and which
is in a symplectic leaf $S$ of maximal dimension amongst those
intersecting $\supp (\gr L)$.  Now, we may form the
completion $R(N)^\wedge_s$, which is a module over $R(A)^\wedge_s$.
\begin{lemma}
  The tangent space $T_s\supp (\gr N)$ is Lagrangian in the symplectic
  space $T_sS$.
\end{lemma}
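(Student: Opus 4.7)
The plan is to exploit the formal slice decomposition of Lemma \ref{algebra-tensor} to reduce the claim to Bernstein's inequality for the Weyl algebra factor. First, I would lift the algebra isomorphism $R(A)^\wedge_s \cong W^\wedge_0 \,\hat{\otimes}_{\C[[h]]}\, C$ to a tensor decomposition $R(N)^\wedge_s \cong M_W \,\hat{\otimes}_{\C[[h]]}\, M_C$ of the completed module, with $M_W$ a $W^\wedge_0$-module and $M_C$ a $C$-module. This is a version of Kashiwara's equivalence for formal Weyl algebras: every $W^\wedge_0 \,\hat{\otimes}\, C$-module arises by tensoring a $C$-module with the unique simple $W^\wedge_0$-module, realized as vacuum vectors for a Lagrangian subalgebra. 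Under this decomposition, and after passing to associated gradeds, the support of $\gr(R(N)^\wedge_s)$ inside $(T_sS)^\wedge \times \mathcal{Y}_s$ factors as $\supp(\gr M_W) \times \supp(\gr M_C)$.

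Next, I would invoke the maximality hypothesis on $S$ to force $\supp(\gr M_C) = \{0\}$. Symplectic leaves of $\fM_0$ passing through a formal neighbourhood of $s$ correspond under the slice decomposition to products of $S$ with symplectic leaves of $\mathcal{Y}_s$; since $S$ is maximal amongst leaves meeting $\supp(\gr N)$, no positive-dimensional leaf of $\mathcal{Y}_s$ can meet $\supp(\gr M_C)$, so this support collapses to the unique closed point $0 \in \mathcal{Y}_s$. Consequently $T_s\supp(\gr N) = T_s\supp(\gr M_W) \subset T_sS$, establishing the containment half of the claim.

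It remains to show that $T_s\supp(\gr M_W)$ is Lagrangian in $T_sS$. I would prove that $M_W$ is holonomic as a $W^\wedge_0$-module, whereupon Gabber's involutivity theorem together with Bernstein's inequality yields that $\supp(\gr M_W)$ is a half-dimensional coisotropic, hence Lagrangian, subvariety of $T_sS$; the tangent space at the smooth point $s$ is then Lagrangian. Holonomicity of $M_W$ should follow from that of $N$: the resolution pulls back formally to $\fM^\wedge_s \cong (T_sS)^\wedge \times \tilde{\mathcal{Y}}_s$ (where $\tilde{\mathcal{Y}}_s$ is the induced resolution of $\mathcal{Y}_s$), the derived localization splits accordingly as an exterior product of $\LLoc(M_W)$ with a module on $\tilde{\mathcal{Y}}_s$, and Lagrangianness of the total support forces Lagrangianness in each factor (the $\tilde{\mathcal{Y}}_s$-factor being contained in the core because $\supp(\gr M_C) = \{0\}$).

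The main technical obstacle will be making the tensor decomposition $R(N)^\wedge_s \cong M_W \,\hat{\otimes}\, M_C$ rigorous in the $h$-adically completed setting, and checking compatibility with the Rees filtration so that passing to associated gradeds commutes with the splitting. Once this factorization is in hand, the remaining steps are essentially local analysis at the Weyl-algebra factor, via standard $\cD$-module dimension theory.
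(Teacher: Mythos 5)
Your plan is circular. You propose to deduce the Lagrangianness of $T_s\supp(\gr N)$ from a tensor decomposition $R(N)^\wedge_s \cong M_W\,\hat\otimes\, M_C$, but in the paper that module decomposition is the content of Lemma \ref{module-tensor}, which is stated and proved \emph{after} the lemma at hand and \emph{uses} it: the vacuum representation $\mathcal{V}$ in Lemma \ref{module-tensor} is defined as the quotient of $W_0^\wedge$ by the left ideal generated by the conormal directions to $\supp(\gr N)$ inside $T_s^*S$, and for this to produce a Weyl-algebra module with the required coherence properties one already needs to know that those conormals span a Lagrangian subspace. Your appeal to ``Kashiwara's equivalence'' to manufacture the splitting independently is not correct as stated: the formal Weyl algebra $W^\wedge_0$ does not have a unique simple module, Kashiwara's equivalence applies to modules supported on a fixed closed (co)isotropic subvariety, and $R(N)^\wedge_s$ is not supported at the origin of the $W$-factor (its support is the formal germ of $\supp(\gr N)$ at $s$, which is what we are trying to identify). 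In general a module over $W_0^\wedge\,\hat\otimes\,C$ does not factor as an exterior tensor product without some a priori structure on its support, so the first sentence of your argument already begs the question.

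The paper's own argument sidesteps all of this: it works upstairs in the resolution $\fM$, where holonomicity of $N$ means $\supp\LLoc(N)$ is Lagrangian. Intersecting the relevant Lagrangian component $Y'$ with the coisotropic preimage $S'$ of $S$, one observes that a Lagrangian sitting inside a coisotropic must be a union of leaves of the null foliation (here, fibers of $S'\to S$), and hence projects to a Lagrangian of $S$. This is a one-paragraph symplectic-geometry argument with no completions or splittings, and crucially it does not presuppose any knowledge of the shape of $\supp(\gr N)$ at $s$. Your final paragraph about pulling the resolution back to $(T_sS)^\wedge\times\tilde{\mathcal{Y}}_s$ and splitting $\LLoc(N)$ is essentially reinventing this geometric argument inside the completed setting, but you still rely on the unjustified tensor factorization, and the Bernstein/Gabber machinery you bring in is redundant once you already grant that the support upstairs is Lagrangian. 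To repair the proof you would need to drop the slice decomposition entirely and argue directly with $Y'$, $S'$, and the null foliation as the paper does.
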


\begin{proof}
  The component $Y$ of $\supp (\gr N)$ which contains $s$ must be the
  image of a component $Y'$ of $\supp\LLoc(N)$, which is Lagrangian by
  the assumption of holonomicity.  Let $S'$ be the preimage of $S$ in $\fM$;
  this is a coisotropic subvariety of $\fM$ whose closure contains $Y'$.
  Since $Y'\cap S'$ is Lagrangian, it must be a union of the leaves of the
  null-foliation of $S'$, which is given by the fibers of the projection to $S$.
  Thus, $Y\cap S$, which is the image of $Y'\cap S'$, is Lagrangian in $S$.  The result follows.
\end{proof}

\begin{lemma}\label{module-tensor}
The quotient of $R(A)^\wedge_s$ by the annihilator $I^\wedge_s$ of the module $R(N)^\wedge_s$ is a finite extension of $W^\wedge_0$, i.e. finitely generated as a $W^\wedge_0$-module.  
\end{lemma}
\begin{proof}
Note that completing the coherent sheaf $R(N)/h\cdot R(N)$ at $s$ produces
a finite rank locally free sheaf on the completion of $\supp(\gr N)$, since $s$ is a generic point of this support.  
This implies that $R(N)^\wedge_s$ is finitely generated
over $W_0^\wedge$.  The result follows.
\end{proof}
This shows that the GK dimension over $\C((h))$ of $R(A)^\wedge_s/I^\wedge_s[h^{-1}]$ is the same as that of $W^\wedge_0[h^{-1}]$, that is, $\dim S$.

{\bf \noindent Proof of Theorem~\ref{unique-minimal}:}
The theorem can be reformulated as saying that $\fM_L$ is the union of the closures of those
symplectic leaves that intersect the support of $\gr(\Ann L)$.
Since $\gr(\Ann L)$ kills $\gr L$, we have $\bar S\subset\fM_{L}$ for any leaf $S$ intersecting the support of $L$;
this makes one of the two containments clear.  For the reverse inclusion, it suffices to show that, for $s$ and $S$ as above,
we have $\fM_{L} = \bar{S}$.

Let
$I:=\Ann(R(L))\subset R(A)$; then $\fM_{L}$ is defined by the ideal $I/hI = \gr\Ann(L)$.
We have an injective map $R(L)\to R(L)_s^\wedge$ (by the
simplicity of $L$), and thus an injective map 
$$R(A)/I\to R(A)_s^{\wedge}/I^\wedge_s.$$  
After adjoining $h^{-1}$, the latter algebra has GK dimension over $\C((h))$ given by $\dim S$.  Thus, the GK dimension of
$R(A)/I$ is at most $\dim S$.  The same is thus true of
the coordinate ring of the associated variety $V(I/hI)$.  Thus, the
variety must have dimension at most that of $S$, but it also contains
$S$.  By results of Ginzburg \cite{Giprim}, $V(I/hI)$ must be the
closure of a single leaf of this dimension, and thus, we must have
$V(I/hI)=\bar S$.  This completes the proof.\qed

We now consider some consequences of Theorem \ref{unique-minimal}.
Recall that $\Lambda_\a$ is the simple object of $\cOg$ indexed by $\a$. 
Let $\fM_{\a,0}$ be the union of the closures of the symplectic leaves that
intersect the image in $\fM_0$ of $\supp \Lambda_\a$.  By Theorem \ref{unique-minimal},
$\fM_{\a,0}$ is equal to the closure of a single leaf, which we denote $\becircled\fM_{\a,0}$.
Furthermore, if $\la\in\Ht$ is the period of $\cD$, 
this leaf depends only on the coset of $\la$ in $H^2(\fM; \C)/H^2(\fM; \Z)$.

\begin{corollary}\label{mza}
Choose any $\la'\in H^2(\fM; \C)$
such that $\la-\la'\in H^2(\fM;\Z)$ 
and localization holds at $\la'$.
Let $\cD'$ be the quantization with period $\la'$, and let $A' := \secs(\cD')$.
Then the $A'$-module $L:= \secs(\cbi\otimes \Lambda_\a)$ is simple, and
$\fM_{\a,0} = \fM_L$.
\end{corollary}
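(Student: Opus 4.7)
The plan is to prove the two assertions separately, chaining the relevant equivalences for simplicity and then matching supports to obtain $\fM_L = \fM_{\a,0}$.

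For simplicity, Lemma \ref{geometric twist} provides an equivalence
\[
\cbi \otimes_{\cD} (-) \colon \cOg \longrightarrow \cOg',
\]
where $\cOg'$ denotes the geometric category $\cO$ associated with $\cD'$. This equivalence preserves set-theoretic supports in $\fM$ and so sends the simple $\Lambda_\a$ to the simple object of $\cOg'$ indexed by the fixed point $\a$. Since localization holds at $\la'$, Corollary \ref{alg-geom} yields a further equivalence $\secs \colon \cOg' \to \cOa'$ which preserves simplicity; hence $L$ is the simple object of $\cOa'$ corresponding to $\a$. It is moreover holonomic, because the $\cD'$-module $\cbi \otimes \Lambda_\a$ has Lagrangian support.

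To show $\fM_L = \fM_{\a,0}$, I would first identify $\supp \gr L$ with $\pi(\supp \Lambda_\a)$ as subsets of $\fM_0$, where $\pi \colon \fM \to \fM_0$ is the resolution. Fix a coherent, $\xi$-stable lattice $\cN(0) \subset \cN := \cbi \otimes \Lambda_\a$. Because $\cbi$ quantizes a line bundle, the set-theoretic support of $\cN$ in $\fM$ equals that of $\Lambda_\a$, and Nakayama's lemma then gives the small classical limit $\bar\cN := \cN(0)/\cN(-1)$ the same set-theoretic support. Under the filtration on $L = \secs(\cN)$ induced by $\cN(0)$ (Remark \ref{lattice-filtration}), Lemma \ref{classical limit} identifies $\gr L$ with $\Gamma(\fM; \bar\cN)$ as a $\C[\fM_0]$-module; projectivity of $\pi$ then gives $\supp \gr L = \pi(\supp \bar\cN) = \pi(\supp \Lambda_\a)$. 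Now Theorem \ref{unique-minimal}, applicable since $L$ is simple and holonomic, identifies $\fM_L$ as the closure of the unique minimal symplectic leaf $S_0$ whose closure contains $\supp \gr L$. Since $\fM_L$ is a union of symplectic leaves (it is a leaf closure), any leaf meeting $\pi(\supp \Lambda_\a) \subset \fM_L$ is contained in $\fM_L$, giving $\fM_{\a,0} \subset \fM_L$; conversely, $S_0$ itself meets $\pi(\supp \Lambda_\a)$, so $\overline{S_0} = \fM_L \subset \fM_{\a,0}$.

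The step requiring the most care will be the identification $\supp \gr L = \pi(\supp \Lambda_\a)$: one must check that, as a coherent sheaf on $\fM_0$, $\gr L$ is precisely the pushforward of the small classical limit of $\cN$, and that this small classical limit carries the same set-theoretic support as $\cN$. Both reductions are standard consequences of projectivity of $\pi$ and of Nakayama for coherent $\cD(0)$-lattices, but they are the bridge connecting the purely algebraic definition of $\fM_L$ (via the annihilator of $\gr L$) with the geometric definition of $\fM_{\a,0}$ (via the support of $\Lambda_\a$ in $\fM$).
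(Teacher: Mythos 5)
Your overall strategy matches the paper's: prove simplicity of $L$ by chaining the equivalences $\cbi\otimes(-)$ (Lemma \ref{geometric twist}) and $\secs$ (Corollary \ref{alg-geom}), identify $\supp\gr L$ with $\pi(\supp\Lambda_\a)$, and then let Theorem \ref{unique-minimal} convert this into $\fM_L = \fM_{\a,0}$. The simplicity argument and the final translation via Theorem \ref{unique-minimal} are both fine.

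There is, however, a genuine gap in the middle step. You write that ``projectivity of $\pi$ then gives $\supp\gr L = \pi(\supp\bar\cN)$'' after identifying $\gr L$ with $\Gamma(\fM;\bar\cN)$. But $\Gamma(\fM;\bar\cN)=\Gamma(\fM_0;\pi_*\bar\cN)$, and for a proper map, $\supp(\pi_*\cF)$ can be \emph{strictly smaller} than $\pi(\supp\cF)$: pushforward can kill all global sections on part of the support. For instance, with $\pi\colon T^*\mathbb{P}^1\to\C^2/(\pm1)$ and $\cF=\fS_{\mathbb{P}^1}(-1)$ supported on the zero section, $\pi(\supp\cF)=\{o\}$ while $\pi_*\cF=0$. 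So projectivity alone does not justify the equality, and the argument as written does not go through.

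The fix is the route the paper implicitly takes: since localization holds at $\la'$, one has $\cN\cong\Loc(L)$, so a choice of good filtration on $L$ produces the lattice $\cN(0)=\cD(0)\otimes_{R(A')}R(L)$, whose small classical limit is $\fS_\fM\otimes_{\C[\fM_0]}\gr L$ — a \emph{pullback} of a sheaf on $\fM_0$. Hence $\supp\cN=\supp\bigl(\cN(0)/\cN(-1)\bigr)=\pi^{-1}(\supp\gr L)$ by Nakayama, and then $\pi(\supp\cN)=\supp\gr L$ is immediate from surjectivity of $\pi$ on preimages. Since $\supp\cN$ does not depend on the choice of coherent lattice, this also shows that your $\xi$-stable lattice $\bar\cN$ has $\supp\bar\cN=\pi^{-1}(\supp\gr L)$, from which your desired equality follows — but this comparison (rather than projectivity of $\pi$) is the real content of the step.
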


\begin{proof}
Simplicity of $L$ follows from the fact that $\cbi\otimes \Lambda_\a$ is a simple $\cD'$-module
and localization holds at $\la'$.  The support of $\Lambda_\a$ is equal to that of $\cbi\otimes \Lambda_\a$,
so the image in $\fM_0$ of the support of $\Lambda_\a$ is equal to the support of the associated graded
of $L$.  Applying Theorem \ref{unique-minimal}, we are done.
\end{proof}

\vspace{-\baselineskip}
\begin{corollary}
If localization holds for $\cD$, then for all $\a\in\cI$, $\becircled\fM_{\a,0}$ is a special leaf.
\end{corollary}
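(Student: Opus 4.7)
The plan is to obtain this corollary as an immediate specialization of Corollary~\ref{mza}. Since by hypothesis localization holds for $\cD$, I would set $\la' := \la$ in that corollary; the integrality condition $\la - \la' \in H^2(\fM;\Z)$ is then trivially satisfied, and the hypothesis on localization at $\la'$ is precisely what we have assumed. With $\la' = \la$, the twisting bimodule ${}_{\la}\cT_{\la}$ reduces to $\cD$, so the module $L := \secs(\Lambda_\a)$ is itself simple as an $A$-module, and Corollary~\ref{mza} gives $\fM_{\a,0} = \fM_L$.

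Next, I would appeal directly to the definition of a special leaf: a leaf is special for $\cD$ precisely when it arises as the dense leaf of $\fM_{L}$ for some simple $A$-module $L$. Theorem~\ref{unique-minimal} tells us that $\fM_L$ is the closure of a single symplectic leaf—this is the point at which the work is really done, since it is what guarantees that $\fM_L$ has a well-defined dense leaf and that this leaf is special. Combining this with the identification $\fM_L = \fM_{\a,0}$, the dense leaf of $\fM_{\a,0}$, which is by definition $\becircled\fM_{\a,0}$, is therefore special.

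There is no substantive obstacle here: the entire content is bundled into the preceding Corollary~\ref{mza} and Theorem~\ref{unique-minimal}. One small point worth double-checking in the write-up is the holonomicity of $L = \secs(\Lambda_\a)$, needed to apply Theorem~\ref{unique-minimal}; this holds because the support of $\Lambda_\a$ is contained in the relative core $\fM^+$, which is Lagrangian, so localization transports the simple object $\Lambda_\a \in \cOg$ to a simple holonomic module in $\cOa$. Once this is noted, the argument is essentially a one-line deduction.
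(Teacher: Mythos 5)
Your argument is correct and is exactly the intended (implicit) one: setting $\la' = \la$ in Corollary~\ref{mza} exhibits $\fM_{\a,0}$ as $\fM_L$ for the simple $A$-module $L = \secs(\Lambda_\a)$, and the dense leaf of $\fM_L$ is special by definition; your holonomicity remark is also the right thing to check, since $\supp\Lambda_\a \subset \fM^+$ is Lagrangian. One small misattribution: the fact that $\fM_L$ is the closure of a single symplectic leaf is Ginzburg's result \cite{Giprim}, quoted in the definition of $\fM_L$, not the content of Theorem~\ref{unique-minimal} (which is used \emph{inside} the proof of Corollary~\ref{mza} to identify $\fM_{\a,0}$ with $\fM_L$); this does not affect the validity of the deduction.
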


The support of $\gsi_\a$ always contains the relative core component $X_\a$, but it may contain other
components of $\fM^+$, as well.  For example, if $\fM$ is a hypertoric
variety and the period of $\cD$ is integral in the sense of Section \ref{sec:integrality}, then the support of
$\gsi_\a$ is equal to $X_\a$ \cite[\S 6.3]{BLPWtorico}.  On the other
hand, if $\fM = T^*(G/B)$, then there exists $\a$ for which the
support of $\gsi_\a$ has multiple components unless $G =
\SL_r$ for $r\leq 7$.  Thus $\fM_{\a,0}$ always contains $X_{\a,0}$,
but it is possible that $X_{\a,0}$ is contained in a smaller leaf closure.

\begin{definition}\label{simple supports}
We call the pair $(\fM, \cD)$ {\bf interleaved} if localization holds for $\cD$ and,
for all $\a\in\cI$, $\fM_{\a,0}$ is the smallest special leaf
closure that contains $X_{\a,0}$.  If we have a notion of integrality (Section \ref{sec:integrality})
and the pair $(\fM, \cD)$ is interleaved for some (equivalently any) integral quantization $\cD$ for which localization
holds, then we will simply say that $\fM$ is interleaved.
\end{definition}

\begin{example}
As mentioned above, if $\fM$ is a hypertoric variety and the period of $\cD$
is integral, then the support of $\gsi_\a$ is equal to $X_\a$, thus $\fM$ is interleaved
by Theorem \ref{unique-minimal}.
\end{example}

\begin{example}
Finite and affine type A quiver varieties (which include finite type A Slodowy
slices) are interleaved; this follows from
Theorem \ref{special filtrations} and \cite[\S 5]{Webqui}.
\end{example}

\begin{example}
If $G$ is the adjoint group of type $F_4$, then $T^*(G/B)$ is {\bf not}
interleaved.  We will deduce this from Theorem
\ref{special filtrations}; see Remark \ref{not interleaved} for details.
\end{example}

The property of being interleaved will be used in the form of the following lemma,
which will be one of the main ingredients in the proof of Theorem \ref{special filtrations}.

\begin{lemma}\label{triangularity}
For all $\a\in\cI$,
there exist unique integers $\{\eta_{\a\b}\mid\b\in\cI\}$ such that 
$$\suppc\Lambda_\a = [X_\a] + \sum\eta_{\a\b} [X_\b],$$ 
where $\eta_{\a\b}$ can only be nonzero if 
$\b\leftharpoonup\a$.  In addition, if $(\fM, \cD)$ is interleaved, then
$\eta_{\a\b} \ne 0$ also implies 
$\fM_{\b,0} \subset \fM_{\a,0}$.
\end{lemma}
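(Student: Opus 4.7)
The plan is to combine three ingredients already established: the highest-weight structure of $\cOg$ with respect to $\leftharpoonup$ from Proposition \ref{geometric order}, the identity $\suppc\gst_\b = v_\b$ obtained in the proof of Theorem \ref{support isomorphism}, and the upper-unitriangular change of basis between $\{v_\b\}$ and $\{[X_\b]\}$ noted in the proof of Proposition \ref{geometric order}.

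First I would invert the upper-unitriangular transition matrix between $\{[\gst_\b]\}$ and $\{[\gsi_\b]\}$ guaranteed by the highest-weight structure, yielding in $K(\cOg)$ an expression
\[ [\Lambda_\a] \;=\; [\gst_\a] + \sum_{\b\leftharpoonup \a,\,\b\neq\a} n_{\a\b}\, [\gst_\b], \qquad n_{\a\b}\in\Z. \]
Applying $\suppc$ and using $\suppc\gst_\b = v_\b$ gives $\suppc\Lambda_\a = v_\a + \sum n_{\a\b} v_\b$, and then rewriting via the triangular change of basis from $\{v_\b\}$ to $\{[X_\b]\}$ and using transitivity of $\leftharpoonup$ produces
\[ \suppc\Lambda_\a \;=\; [X_\a] + \sum_{\b\leftharpoonup\a,\,\b\neq\a} \eta_{\a\b}\, [X_\b] \]
for integers $\eta_{\a\b}$. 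Uniqueness of the $\eta_{\a\b}$ is immediate since $\{[X_\b]\mid \b\in\cI\}$ is a $\Z$-basis of $\bm$.

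For the second statement, assume $(\fM,\cD)$ is interleaved and $\eta_{\a\b}\neq 0$. Because the support of $\Lambda_\a$ is Lagrangian, the Kashiwara--Schapira formula (recalled in Section \ref{gco-grothendieck}) expresses the $[X_\b]$-coefficient of $\suppc\Lambda_\a$ as the generic rank of a good lattice of $\Lambda_\a$ along $X_\b$, so a nonzero coefficient forces $X_\b\subset \supp\Lambda_\a$. Taking images in $\fM_0$ under the projective map $\pi\colon \fM\to\fM_0$ gives $X_{\b,0} \subset \pi(\supp\Lambda_\a)$, and the latter equals the support in $\fM_0$ of the associated graded of $L_\a = \secs\Lambda_\a$, which is simple because localization holds by definition of interleaved. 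Applying Corollary \ref{mza} with $\la' = \la$ together with Theorem \ref{unique-minimal} identifies this support as a subvariety of $\fM_{L_\a} = \fM_{\a,0}$, a special leaf closure. The interleaved hypothesis then says $\fM_{\b,0}$ is the \emph{smallest} special leaf closure containing $X_{\b,0}$, so $\fM_{\b,0}\subset\fM_{\a,0}$, as required.

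The main obstacle is the identification of $\pi(\supp\Lambda_\a)$ with the support of $\gr L_\a$ in $\fM_0$, so that Theorem \ref{unique-minimal} can be used to pin down a single special leaf closure; this is precisely what Corollary \ref{mza} provides. The first half of the lemma is essentially bookkeeping given the triangularity results already in place.
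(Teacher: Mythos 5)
Your proof is correct. The two halves differ from the paper's argument in different degrees, so let me compare.

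For the first half, the paper argues directly with supports rather than with triangular matrices in $K$-theory: if $\eta_{\a\b}\neq 0$, then by the Kashiwara--Schapira description of $\suppc$ the component $X_\b$ lies in $\supp\Lambda_\a$; since $\Lambda_\a$ is a quotient of $\gst_\a$ we get $X_\b\subset\supp\gst_\a$, and Proposition \ref{geometric order} then says $\supp\gst_\a\subset\bigsqcup_{\b'\leftharpoonup\a}X_{\b'}$, hence $\b\leftharpoonup\a$. Your route instead inverts the Jordan--H\"older unitriangularity from the highest-weight structure, feeds the result through $\suppc\gst_\b=v_\b$, and then applies the $\leftharpoonup$-unitriangular change of basis from $\{v_\b\}$ to $\{[X_\b]\}$. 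Both routes rest on the same underlying facts (the identity $\suppc\gst_\b=v_\b$ and the triangularity of the $v$-to-$[X]$ change of basis, both from Section \ref{gco-intform}), but the paper's version sidesteps any explicit matrix inversion by using a support-containment argument; your version has the minor benefit of making the unit leading coefficient $\eta_{\a\a}=1$ (equivalently, the coefficient $1$ of $[X_\a]$) completely explicit, something the paper leaves implicit under ``existence and uniqueness.''

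For the second half, your argument is essentially the paper's, just routed through Corollary \ref{mza} and Theorem \ref{unique-minimal}. The paper simply observes that once $X_\b\subset\supp\Lambda_\a$, the containment $X_{\b,0}\subset\fM_{\a,0}$ is immediate from the very definition of $\fM_{\a,0}$ (the union of closures of leaves meeting $\pi(\supp\Lambda_\a)$), and then concludes by interleavedness exactly as you do. Your invocation of Corollary \ref{mza} with $\la'=\la$ (which is the degenerate identity case) and Theorem \ref{unique-minimal} is harmless but not needed for this step.
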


\begin{proof}
The existence and uniqueness of $\{\eta_{\a\b}\mid\b\in\cI\}$ follows from the fact that
the classes $\{[X_\b]\mid\b\in\cI\}$ form a basis for $\bm$.  Suppose that $\eta_{\a\b} \neq 0$.
Since $[X_\b]$ appears in $\suppc(\gsi_\a)$ and $\Lambda_\a$ is a quotient 
of $\gst_\a$, $[X_\b]$ must also appear in $\suppc(\gst_\a)$.
By Proposition \ref{geometric order}, this implies that 
$\b\leftharpoonup\a$.
Furthermore, $X_\b$ is contained
in the support of $\Lambda_\a$, and therefore $X_{\b,0}$ is contained in $\fM_{\a,0}$.
If $(\fM, \cD)$ is interleaved, this implies that
$\fM_{\b,0}\subset\fM_{\a,0}$.
\end{proof}

\section{Categorical filtrations}\label{sec:filtrations}
In this section we define categorical filtrations of $\HCa$, $\HCg$, $\cOa$, $\cOg$, and their derived categories.  These induce decompositions on their Grothendieck groups, and in Theorem \ref{special filtrations} and Corollary \ref{the special case} we relate the decomposition of $K(\cOg)$ to the
Beilinson-Bernstein-Deligne (BBD) decomposition of $\bmc$, using the characteristic cycle map.  We also relate this decomposition to a generalization
of Lusztig's theory of two-sided cells (Remark \ref{cell decomposition}).

\subsection{Filtration on Harish-Chandra bimodules}
Let $S\in\all$ be a symplectic leaf of $\fM_0$, and let $\bar S_\Delta := \big(\bar S\times\bar S\big)\cap \fZ_0\subset\fM_0\times\fM_0$.
Recall that $\fZ_0\cong\fM_0$ is the diagonal, so that $\bar S_\Delta\cong\bar S$.

\begin{definition}\label{HCa-filtration}
Let $\HCaS\subset\HCa$ be the full subcategory of algebraic Harish-Chandra bimodules $H$
such that for some (equivalently any) filtration of $H$, the coherent sheaf $\gr H$ on
$$\fZ_0\subset\fM_0\times\fM_0$$ is set-theoretically supported on $\bar S_\Delta$.
Let $\HCadS\subset\HCaS$ be the full subcategory supported on leaves strictly smaller than $S$.
Let $\dHCaS$ (respectively $\dHCadS$) be the full subcategory of $\dHCa$ consisting of objects with cohomology
in $\HCaS$ (respectively $\HCadS$).
\end{definition}

\begin{proposition}
  Any simple module $H$ in $\HCaS\setminus \HCadS$ has support equal to $\bar S_\Delta$.
\end{proposition}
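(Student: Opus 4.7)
The plan is to deduce the claim from the fact that the associated-graded support on $\fM_0$ of any Harish-Chandra bimodule is a Poisson subvariety, combined with Kaledin's stratification of $\fM_0$ by symplectic leaves.

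Fix a good filtration on $H$; then $\gr H$ becomes a coherent $\C[\fM_0]$-module via the diagonal identification $\fZ_0\cong\fM_0$, and the set-theoretic support $V := \supp(\gr H)\subset\fM_0$ is independent of the filtration. The first step is to verify that $V$ is cut out by a Poisson-stable ideal in $\C[\fM_0]$. The standard argument is that for $a\in A$ and $h\in H$ the commutator $[a,h]$ lies strictly lower in filtration degree than $ah$, so at the associated-graded level the adjoint action of $\bar a$ becomes the Poisson bracket $\{\bar a,-\}$; the fact that this bracket must preserve the annihilator of $\gr H$ forces the radical of that annihilator to be a Poisson ideal. This is the HC analogue of Gabber's theorem for two-sided ideals.

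By Kaledin's theorem on Poisson stratifications (the same input used in the proof of Theorem \ref{unique-minimal}), $V$ is then a union of closures of symplectic leaves of $\fM_0$. Since $H\in\HCaS$, each such leaf lies in $\bar S$, so it is either $S$ itself or some $T\in\all$ with $T<S$. Since $H\notin\HCadS$, the variety $V$ is not contained in $\bigcup_{T<S}\bar T$; equivalently, $V$ contains some point of $S$. The Poisson property then forces the entire leaf $S$ to lie in $V$, and by closedness $\bar S\subset V$. Combined with $V\subset\bar S$, we conclude $V=\bar S_\Delta$.

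The main obstacle is the first step, i.e.\ establishing that $V$ is Poisson; this is classical for two-sided ideals and extends to Harish-Chandra bimodules via the HC adjoint-action argument above, with the relevant references being those already used earlier in the paper (e.g.\ \cite{Giprim}). We note that the chain of implications above does not visibly use simplicity of $H$; the hypothesis is stated presumably because simple bimodules are the main case of interest and because the statement is cleanest there.
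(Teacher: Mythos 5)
Your proof is correct, and it takes a genuinely different route from the one in the paper. The paper's own proof appeals to Theorem~\ref{unique-minimal}, applied to $H$ regarded as a simple holonomic module over $A\otimes A^{\op}$: it invokes Ginzburg's irreducibility theorem for associated varieties of primitive ideals to see that $\supp(\gr H)$ sits inside a single leaf closure $\bar S'\times\bar S''$ of $\fM_0\times\fM_0$, uses the ``intersects the dense leaf'' clause together with the diagonal constraint to force $S'=S''$, and then uses a coisotropy-type dimension bound to pin down $S'=S$. Your argument sidesteps all of that machinery: the only input is that the radical of $\Ann_{\C[\fM_0]}(\gr H)$ is Poisson (a short Leibniz-rule computation from the Harish-Chandra filtration condition $[a,H(m)]\subset H(k+m-n)$), plus the finiteness of leaves of $\fM_0$. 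That makes $V=\supp(\gr H)$ a union of leaf closures inside $\bar S$, and the condition $H\notin\HCadS$ then forces $\bar S\subset V$ immediately. Your approach is more elementary, it works directly on $\fM_0$ rather than on $\fM_0\times\fM_0$, and --- as you correctly observe --- it never uses simplicity of $H$, so it actually proves the stronger statement that every $H\in\HCaS\setminus\HCadS$ has support exactly $\bar S_\Delta$; the paper's route, by contrast, does require simplicity in order to invoke Ginzburg.

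Two small points of hygiene. First, the sentence ``the adjoint action of $\bar a$ becomes the Poisson bracket $\{\bar a,-\}$'' is informal: what you really get is a derivation $d_a$ of the $\gr A$-module $\gr H$ covering the Poisson derivation $\{\bar a,-\}$ of $\gr A$, of degree shifted by $-n$, and then $d_a(f\cdot x)=\{\bar a,f\}\cdot x+f\cdot d_a(x)$ gives that $\Ann_{\gr A}(\gr H)$ (hence its radical) is Poisson-stable. Worth spelling out, since this is the only nontrivial step. Second, the citation you point to for that step, namely \cite{Giprim}, is not the right reference: that paper is Ginzburg's irreducibility theorem for associated varieties, which is precisely the heavier input your argument is designed to avoid. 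The Poisson-stability of the characteristic ideal of a Harish-Chandra bimodule is the easy half of Gabber's theorem, and should be cited as such (or simply proved inline, as it is a two-line Leibniz computation).
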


\begin{proof}
  By Proposition
  \ref{unique-minimal}, there is a unique minimal symplectic leaf of
  $\fM_0\times \fM_0$ whose closure contains $\supp H$, given by the vanishing locus of
  the annihilator of $H$ as an $A\otimes A^{\op}$-module.  This must be
  of the form $S'\times S'$ for some $S'\in\all$, since \[\big(\bar S'\times \bar S''\big)\cap
  \fZ_0=\big((\bar S'\cap \bar S'')\times (\bar S'\cap \bar S'')\big)\cap \fZ_0.\]  Furthermore,
  we must have $\bar S'\supset S$, since part of the support of $H$ must
  intersect $S\times S$ (or we would have $H\in \HCadS$).  On the
  other hand, the dimension of the support of $H$ must be at least half of the dimension of
  $\bar S'\times \bar S'$, which implies $\dim S'=\dim S$, so $S'=S$.  
  Thus, the support of $H$ is contained in the irreducible variety $\bar S_\Delta$.
  Since they have the same dimension, we are done. 
\end{proof}

\vspace{-\baselineskip}
\begin{proposition}\label{ann-HC}
  The left annihilator of a Harish-Chandra bimodule is a primitive ideal.
\end{proposition}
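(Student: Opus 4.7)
The plan is to adapt the classical Duflo argument for enveloping algebras to the setting of quantizations of conical symplectic resolutions. Writing $I := \Ann_L(H)$ for the left annihilator, the goal is to exhibit a simple left $A$-module $L$ with $\Ann_A(L) = I$.

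First, I would verify that $H$ is finitely generated as a \emph{left} $A$-module. Pick a good filtration so that $\gr H$ is a coherent $\C[\fM_0\times\fM_0]$-module set-theoretically supported on the diagonal $\fZ_0\cong \fM_0$; via the left projection, $\gr H$ is then a coherent $\C[\fM_0]$-module, hence finitely generated, and the usual lifting argument through the filtration promotes this to finite generation of $H$ over $A$ on the left. Since $A$ is left Noetherian, $H$ admits a maximal proper left submodule $M$, and $L := H/M$ is a simple left $A$-module whose annihilator $J := \Ann_A(L)$ is primitive. By construction $I \subseteq J$, and the task reduces to establishing equality.

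The decisive observation is that $JH$ is not merely a left submodule but a genuine sub-\emph{bimodule} of $H$: the two-sidedness of $J$ gives left-submodule closure, while commutativity of the left and right $A$-actions gives $(jh)a = j(ha)$ for $j\in J$, $h\in H$, $a\in A$, yielding right-submodule closure. Since $JL = 0$ we have $JH \subseteq M \subsetneq H$, so $JH$ is a proper sub-bimodule. If $H$ happens to be simple as a bimodule, then $JH = 0$, whence $J \subseteq I$ and we are done. In general, I would replace $H$ by a simple subquotient bimodule $\bar H$ chosen so that its left annihilator still equals $I$; such a $\bar H$ exists by Noetherianity together with the fact that the associated variety of $I$ is the closure of a single symplectic leaf (by Ginzburg's classification, applied via Theorem \ref{unique-minimal} together with the diagonal support condition forcing the left and right associated varieties of $H$ to coincide). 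Running the argument of the previous sentence on $\bar H$ then produces a primitive ideal which by a dimension count on associated varieties must coincide with $I$.

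The main obstacle is the last step: identifying a simple sub-bimodule subquotient $\bar H$ whose left annihilator is exactly $I$ rather than some strictly larger primitive. This is where the Harish-Chandra hypothesis is crucial, since the diagonal support of $\gr H$ is what forces the left and right associated varieties to match up, and this matching, combined with Ginzburg's description of primitives with a given associated variety as closures of single leaves, constrains the primitive ideals that arise as left annihilators of bimodule subquotients tightly enough to complete the reduction.
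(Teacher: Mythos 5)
Your argument in the case of a simple Harish--Chandra bimodule $H$ is correct, and in fact slightly more direct than the paper's: once you have the simple left-module quotient $L=H/M$ with $J=\Ann(L)$, the observation that $JH$ is a sub-bimodule contained in $M$ (hence proper, hence zero by simplicity of $H$) gives $J\subseteq I$ immediately, so $I=J$ is primitive with no dimension count required. The gap is in the reduction from a general $H$ to the simple case. You appeal to Ginzburg's classification to assert that $V(\gr I)$ is the closure of a single symplectic leaf, but that classification describes associated varieties of \emph{primitive} ideals --- precisely the property you are trying to establish for $I$ --- so the appeal is circular. Moreover, a simple bimodule subquotient $\bar H$ only gives $\Ann_L(\bar H)\supseteq I$, and there is no reason for this containment to be an equality: if $H$ has bimodule composition factors with distinct left annihilators the naive reduction fails outright (for a direct sum with two distinct primitive left annihilators, $\Ann_L(H)$ is their intersection, which is not primitive, so the statement carries implicit hypotheses on $H$ that your reduction would need to track).

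The paper sidesteps this by a two-step argument that never tries to find a simple \emph{bimodule} subquotient. It first proves directly that $I$ is \emph{prime} when $H$ is a simple bimodule (if $J_1,J_2\not\subseteq I$, then $J_1J_2H=J_1H=H\neq 0$), and then produces a simple \emph{left-module} subquotient $L$ of $H$ supported on a maximal-dimensional leaf of the left support of $H$; this $L$ is much easier to exhibit than a simple bimodule subquotient with controlled annihilator. One then has $J:=\Ann(L)$ primitive, $I\subseteq J$, and $\dim V(\gr I)=\dim V(\gr J)$, and the equality $I=J$ follows from the Gelfand--Kirillov dimension lemma of Borho--Kraft. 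The ingredient missing from your proof is precisely this primeness of $I$, without which neither your appeal to Ginzburg nor any dimension count can close the argument.
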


\begin{proof}
  Note that the left annihilator $I$ of any simple
bimodule $H$ over any ring is prime, since if
$J_1,J_2\not\subset I$, we have that $J_1\cdot J_2\cdot H=J_1\cdot H=H$, so
$J_1\cdot J_2\not\subset I$.  

Now let $H$ be a filtered Harish-Chandra A-bimodule.
For any element $h\in H$, let $\bar h\in \gr H$ be its symbol, which we regard as a section of a sheaf on $\fZ_0\cong\fM_0$.
Choose an $h$ such that the section $\bar h$ is non-zero on a leaf $S\subset\supp H$ of maximal dimension
(here we take the support of $H$ as a left module).
Consider the filtered left submodule $A\cdot h\subset A$.
Using the Noetherian property, we can
find a simple quotient $L$ of $A\cdot h$ which is
supported on $S$.  Thus, there is a simple subquotient $L$ of $H$ whose
support has non-trivial intersection with $S$.  Let $I := \Ann(H)$ and $J := \Ann(L)$.
Then $I\subset J$, so $\bar S = V(\gr J)\subset V(\gr I)$.  However, $I$ is prime and $\dim V(\gr I) = \dim S = \dim V(\gr J)$,
thus $I = J$ \cite[Korollar 3.6]{BoKr}.
\end{proof}

\begin{remark}
By Proposition \ref{ann-HC}, we have that $\HCaS =
\HCadS$ unless $S$ is special.  If $S$ is special, then let $I$ be a
primitive ideal whose associated variety is $\bar S$.  In this case,
$A/I$ is an object in $\HCaS$, but not $\HCadS$.
\end{remark}

\begin{definition}\label{HCg-filtration}
Let $\HCgS\subset\HCg$ be the full subcategory of geometric Harish-Chandra bimodules $\cH$
that are set-theoretically supported on the preimage in $\fZ\subset\fM\times\fM$ of $\bar S_\Delta$.
Let $\HCgdS\subset\HCgS$ be the full subcategory supported on preimages of leaves strictly smaller than $S$.
Let $\dHCgS$ (respectively $\dHCgdS$) be the full subcategory of $\dHCg$ consisting of objects with cohomology
in $\HCgS$ (respectively $\HCgdS$).
\end{definition}

Let $\fJ_S$ be the localization of $\HCgS$ at $\HCgdS$.  
The Grothendieck group $K(\HCg)$ is filtered by the poset $\all$, with $K(\HCgS)/K(\HCgdS)\cong K(\fJ_S)$.
Using the Euler form on $K(\HCg)$, we may split this filtration to obtain a direct sum decomposition 
\begin{equation}\label{HC-decomp}
K(\HCg) \cong \bigoplus_{S\in\all} K(\fJ_S).
\end{equation}
See Remark \ref{DG quotient} for a more categorical interpretation of this decomposition.

If derived localization holds at $\la$, then the functors $\LLoc$ and
$\Rsecs$ induce an equivalence of categories between $\dHCgS$ and
$\dHCaS$ for all $S$.  In particular, as long as derived localization
holds at $\la$, the category $\fJ_S$ will be non-trivial if and only
if the leaf $S$ special, just as in $\HCa$; this is false if
derived localization doesn't hold at $\la$.

\begin{remark}
Losev and Ostrik prove that, if $\fM\to\fM_0$ is the Springer resolution of the nilpotent cone
of a simple Lie algebra, then $\fJ_S$ is an indecomposable 
multi-fusion category \cite[5.4 \& 5.5]{LO}.  It would be interesting to know if this result holds for more
general conical symplectic resolutions.
\end{remark}

\begin{remark}
Let $\nu$ be the map from $\fM$ to $\fM_0$.  For any symplectic leaf $S$,
let $\phi_S$ be the local system obtained
by restricting $R^{\codim S}\nu_!\C_\fM$ to $S$.  In other words, the stalks of $\phi_S$ are the
top cohomology groups of the fibers of $\nu$.
By \cite[8.9.8]{CG97}, there is an isomorphism
\begin{equation}\label{convolution:local systems}
\HZC \cong \bigoplus_{S\in\all}\End \phi_S.\
\end{equation} 
The ring homomorphism $\suppc:K(\HCg)_\C\to\HZC$ of Proposition
\ref{categorification} induces a map from $K(\fJ_S)_\C$ to $\End
\phi_S$.
If $\fM_0$ is an S3-variety of type A, then both $K(\fJ_S)_\C$ and $\End \phi_S$
are matrix algebras of the same rank, thus the
map is an isomorphism.  In other types, this will not be the case; in
particular, 
$\fJ_S$ will be trivial unless $S$ is special.  
\end{remark}

\subsection{Filtration on category \texorpdfstring{$\cO$}{O}}
Definition \ref{HCa-filtration} (respectively \ref{HCg-filtration})
gives us a filtration of the monoidal category $\HCa$ (respectively $\HCg$) by sub-monoidal categories indexed by the poset of symplectic leaves.  
By Propositions \ref{tensor action}, \ref{Oa vs CMpa}, and \ref{Og vs CMpg}, $\cOa$ (respectively $\cOg$) is a module
category over $\HCa$ (respectively $\HCg$).
In this section we will define the analogous filtrations of the module categories.

\begin{definition}\label{Oa-filtration}
Let $\cOaS\subset\cOa$ be the full subcategory consisting of modules $N$ such that for some (equivalently any)
filtration of $N$, the coherent sheaf $\gr N$ on $\fM_0$ is set-theoretically supported on the closure of $S$.
Let $\cOadS\subset\cOaS$ be the full subcategory supported on leaves strictly smaller than $S$.
Let $\dOaS$ (respectively $\dOadS$) be the full subcategory of $D^b(\Amod)$ consisting of objects whose cohomology lies in $\cOaS$ (respectively $\cOadS$).
\end{definition}

\begin{definition}\label{Og-filtration}
Let $\cOgS\subset\cOg$ be the full subcategory of objects $\cN$
that are set-theoretically supported on the preimage in $\fM$ of the closure of $S$.
Let $\cOgdS\subset\cOgS$ be the full subcategory supported on preimages of leaves strictly smaller than $S$.
Let $D^b_{\cOgS}(\Dmod)$ (respectively $D^b_{\cOgdS}(\Dmod)$) be the full subcategory of $D^b(\Dmod)$ consisting of objects whose cohomology
lies in $\cOgS$ (respectively $\cOgdS$).
\end{definition}

The following straightforward proposition asserts that the above filtrations interact well with the structures
that we have already defined.

\begin{proposition}  Consider the tensor product and convolution actions of Proposition \ref{tensor action}.
\begin{enumerate}
\item The functor $\secs$ takes $\HCgS$ to $\HCaS$ and $\cOgS$ to $\cOaS$.
\item If $H\in\HCaS$ and $N\in\cOa$, then $H\otimes N\in\cOaS$.
\item If $\cH\in\HCgS$ and $\cN\in\cOg$, then $\cH\star \cN\in\cOgS$.
\item Each of the above statements holds when $S$ is replaced with $\partial S$.
\item Each of the above statements holds in the derived setting.
\end{enumerate}
\end{proposition}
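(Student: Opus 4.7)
The plan is to exploit the fact that every membership condition in Definitions \ref{HCa-filtration}--\ref{Og-filtration} is ultimately a set-theoretic support condition on a classical limit, and that the functors $\secs$, $\otimes_A$, and $\star$ all interact with supports in the functorial way familiar from commutative algebra and $\cD$-module theory.

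First I would handle (1). Given $\cH \in \HCgS$, choose an $\bS$-equivariant coherent $\cD(0)\otimes\cD(0)^{\op}$-lattice $\cH(0)\subset\cH$ whose small classical limit $\cH(0)/\cH(-1)$ is set-theoretically supported on the preimage of $\bar S_\Delta$ in $\fZ\subset\fM\times\fM$. Taking $\bS$-invariant sections gives a filtration of $\secs(\cH)$ and, just as in the proofs in \cite[\S 6.1]{BLPWquant} (or Proposition \ref{C-to-C} above), the associated graded of this filtration is a subquotient of $\Gamma\bigl(\cH(0)/\cH(-1)\bigr)$, hence is set-theoretically supported on $\bar S_\Delta\subset\fZ_0$. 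The same construction, applied to a $\xi$-stable lattice as in Lemma \ref{classical limit}, handles $\cOgS\to\cOaS$.

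For (2), pick compatible good filtrations on $H$ and $N$; then $H\otimes_A N$ inherits a filtration whose associated graded is a quotient of $\gr H\otimes_{\gr A}\gr N$. Because $\gr H$ is annihilated by the ideal cutting out $\bar S$ (viewed via the left $A$-action, using the identification $\bar S_\Delta\cong\bar S$), the same ideal annihilates the associated graded of $H\otimes_A N$. Combining this with the fact from Proposition \ref{tensor action} that $H\otimes N\in\cOa$ gives $H\otimes N\in\cOaS$. Statement (3) is the corresponding $\cD$-module assertion: convolution is defined via tensor, pullback, and pushforward, each of which at most sends a supported object to an object supported in the scheme-theoretic preimage or image, so $\cH\star\cN$ is supported in the preimage of $\bar S$; combined with the known fact (again from Proposition \ref{tensor action}) that it lies in $\cOg$, this puts it in $\cOgS$.

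Item (4) is immediate from (1)--(3), since the support condition defining the subcategories with $\partial S$ in place of $S$ is again just a union of closed leaf closures and the same lattice/associated-graded arguments go through verbatim. Finally, (5) is standard homological bookkeeping: the hypercohomology spectral sequence for $\R\secs$ and the Tor spectral sequence (or rather its geometric counterpart for derived convolution) reduce the derived statements to the abelian ones, using that $\HCaS$, $\HCgS$, $\cOaS$, and $\cOgS$ are Serre subcategories stable under subquotients. The only real technical point, which I expect to be the main nuisance rather than a true obstacle, is verifying that one can consistently choose lattices on the tensor or convolution product that simultaneously display both the ambient $\HCa$/$\cOa$ condition and the refined support condition; this is exactly the sort of compatibility worked out in \cite[\S 6.1]{BLPWquant}, and once it is invoked the argument proceeds without further subtlety.
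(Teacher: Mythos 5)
The paper gives no proof of this proposition at all; immediately before the statement it declares the result ``straightforward'' and moves on, so your proposal is filling a gap the authors left to the reader rather than reproducing or diverging from an argument in the text. Your argument is correct and uses exactly the machinery the paper sets up for this purpose: lattices and filtrations (Definitions \ref{def:CLa}, \ref{def:CLg}), the compatibility of $\secs$ and $\Loc$ with classical limits (Proposition \ref{C-to-C} and Lemma \ref{classical limit}), and support-tracking under tensor/convolution. One small imprecision worth tightening: in item (2) you say $\gr H$ ``is annihilated by the ideal cutting out $\bar S$,'' but the definition of $\HCaS$ only asks for set-theoretic support on $\bar S_\Delta$, i.e.\ annihilation by a power of that ideal; the conclusion you draw (set-theoretic support of $\gr(H\otimes_A N)$ on $\bar S$) is unaffected, but the wording should match. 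Similarly, in item (1) the associated graded of $\secs(\cH)$ embeds into (rather than is a subquotient of) $\Gamma(\cH(0)/\cH(-1))$, which is actually slightly stronger and still gives the needed support containment.
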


Let $\fP_S$ denote the quotient of $\cOgS$ by $\cOgdS$.  Using the Euler form on $K(\cOg)$,
we obtain an orthogonal decomposition 
\begin{equation}\label{O-decomposition}
\bm\cong K(\cOg) \cong \bigoplus_{S\in\all} K(\fP_S),
\end{equation}
completely analogous to that of Equation \eqref{HC-decomp}.

\begin{remark}\label{DG quotient}
This decomposition can be given a categorical interpretation as follows.  For simplicity, we will consider a single step in the filtration.
Let $(\cOgdS)^\perp$ denote the full subcategory of $\cOgS$ consisting of objects $X$ such that, 
for every object $Y \in \cOgdS$, $\Ext^k(X,Y) = 0$ for all $k$; the
fact that $\cOg$ is Artinian with a projective generator guarantees
that the same is true of $\cOgS,\cOgdS$ and $(\cOgdS)^\perp$.
\begin{proposition}\label{perp category}
$\displaystyle (\cOgdS)^\perp\operatorname{-proj} \cong \fP_S\operatorname{-proj}.$
\end{proposition}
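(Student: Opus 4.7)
The plan is to exhibit the Serre quotient functor $Q \colon \cOgS \to \fP_S$ as inducing the desired equivalence once restricted to projectives. First I would use the highest weight structure on $\cOgS$ coming from Proposition \ref{ghw} to identify both sides concretely. The simples of $\cOgS$ are those $L_\a$ with $\fM_{\a,0} \subseteq \bar S$, the simples of $\cOgdS$ are those with $\fM_{\a,0} \subsetneq \bar S$, so the simples of $\fP_S$ are indexed by $\cI_S := \{\a : \fM_{\a,0} = \bar S\}$. Using the identity $\dim\Hom_{\cOgS}(P_\a, Y) = [Y : L_\a]$ for finite-length $Y$, any $\a \in \cI_S$ gives a projective cover $P_\a$ of $L_\a$ in $\cOgS$ with $\Hom_{\cOgS}(P_\a, Y) = 0$ for every $Y \in \cOgdS$, so $P_\a \in (\cOgdS)^\perp$; projectivity there is inherited from $\cOgS$. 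Conversely, any projective $P \in (\cOgdS)^\perp$ has a head summand $L_\a$ with $\Hom(P, L_\a) \ne 0$, forcing $\a \in \cI_S$, and the surjection $P \twoheadrightarrow L_\a$ lifts via projectivity to a split surjection $P \to P_\a$; induction on length shows $P$ is a finite direct sum of such $P_\a$.

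Second, I would compute the natural map
\[ \Hom_{\cOgS}(P, Y) \longrightarrow \Hom_{\fP_S}(QP, QY) \]
for $P \in (\cOgdS)^\perp\operatorname{-proj}$ and $Y \in \cOgS$ arbitrary. By Gabriel's description of Hom in a Serre quotient, the right-hand side is the colimit $\varinjlim \Hom_{\cOgS}(P', Y/Y'')$ taken over $P' \subseteq P$ with $P/P' \in \cOgdS$ and $Y'' \subseteq Y$ with $Y'' \in \cOgdS$. Since $\Hom_{\cOgS}(P, P/P') = 0$, the surjection $P \twoheadrightarrow P/P'$ is zero, so $P' = P$ in every term. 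Moreover, from the exact sequence $0 \to Y'' \to Y \to Y/Y'' \to 0$ together with $\Hom_{\cOgS}(P, Y'') = \Ext^1_{\cOgS}(P, Y'') = 0$ (the first from the perp condition, the second from projectivity in $\cOgS$), we get $\Hom_{\cOgS}(P, Y) \cong \Hom_{\cOgS}(P, Y/Y'')$. The colimit therefore collapses canonically to $\Hom_{\cOgS}(P, Y)$.

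Applied with $Y$ another object of $(\cOgdS)^\perp\operatorname{-proj}$, this shows that $Q$ is fully faithful on $(\cOgdS)^\perp\operatorname{-proj}$. Essential surjectivity follows by noting that, for each $\a \in \cI_S$, the image $Q(P_\a)$ surjects onto $L_\a$ in $\fP_S$, and the Hom-isomorphism above (with arbitrary $Y$) shows $\Hom_{\fP_S}(Q(P_\a), -) \cong \Hom_{\cOgS}(P_\a, -)$ is exact; hence $Q(P_\a)$ is the projective cover of $L_\a$ in $\fP_S$, exhausting all indecomposable projectives. The main subtle point will be the Hom colimit computation, which pivots on the combined $\Hom$ and $\Ext^1$ vanishing encoded by the perp condition together with projectivity; once that identity is in hand, the rest of the equivalence is bookkeeping with the highest weight structure.
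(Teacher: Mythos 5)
Your proof is correct, and while the overall strategy (analyze the Serre quotient functor $Q$ on projectives) is shared with the paper, the execution differs in a way worth noting. For full faithfulness, you invoke Gabriel's colimit formula for $\Hom$ in a Serre quotient and collapse it directly using the $\Hom$-vanishing (from the perp condition) and $\Ext^1$-vanishing (from projectivity); the paper instead packages this via the left adjoint $Q^L$ of the quotient functor, observing that for $P$ in the perp subcategory the natural map $P \to P/P_S$ vanishes and hence $P \cong P_S$, so $\Hom(\bar P, \bar N) \cong \Hom(P, N)$. These are two phrasings of the same underlying colimit collapse, with yours more explicit and the paper's more compressed. The genuine divergence is in essential surjectivity: the paper argues abstractly that $Q^L$ sends projectives to projectives and lands inside $(\cOgdS)^\perp$, so $Q^L$ provides a section of the fully faithful reduction functor. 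You instead identify the simple objects of $\fP_S$ as those $L_\a$ with $\fM_{\a,0}=\bar S$, check that $Q(P_\a)$ is projective (using exactness of $\Hom_{\fP_S}(Q(P_\a),-)$ via the lifting of short exact sequences along $Q$) with head $L_\a$, and conclude these exhaust the indecomposable projectives. Your version is more elementary and combinatorial, at the price of needing the explicit simple-object classification; the paper's is shorter but leans on the existence and formal properties of the left adjoint of $Q$, which it does not spell out. Both are valid; the paper's would generalize more readily to settings without a convenient indexing of simples, while yours makes the bijection on indecomposable projectives completely transparent.
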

\begin{proof}
A projective module $P$ lives in $(\cOgdS)^\perp$ if and only if it
has $\Hom(P,Y)=0$ for every $Y\in  \cOgdS$.  
On the other hand, quotient to $\fP_S$ followed by its left adjoint
sends each module $N$ to its minimal submodule $N_S$ such that
$N/N_S\in \cOgdS$.  By assumption, the natural map $P\to P/P_S$ is 0,
so $P\cong P_S$.  This shows that the object $\bar P$ in the quotient
category satisfies $\Hom(\bar P,\bar N)\cong\Hom(P,N)$ for all modules
$N$.  In particular, the quotient induces a fully faithful functor
$\displaystyle (\cOgdS)^\perp\operatorname{-proj} \to
\fP_S\operatorname{-proj}.$

The exactness of the quotient functor means that its left
adjoint sends projectives to projectives; since this left adjoint
lands in $(\cOgdS)^\perp$, this provides a splitting to the fully
faithful functor of reduction, and thus induces an equivalence of categories.
\end{proof}

By Proposition \ref{perp category}, the decomposition
$$K(\cOgS) \cong K(\cOgdS) \oplus K(\fP_S)$$ may be identified with the decomposition 
$$K(\cOgS) = K(\cOgdS) \oplus K\big((\cOgdS)^\perp\big),$$ which is
categorified by the semiorthogonal decomposition of $\cOgS$ into $\cOgdS$ and $(\cOgdS)^\perp$.
\end{remark}

\subsection{Relation with the BBD filtration}
Let $\cF$ be the derived pushforward of the constant sheaf from $\fM$ to $\fM_0$, and let $\cF_S := \IC(\phi_S)[-\codim S]$.
By the BBD decomposition theorem, we have a canonical direct sum decomposition \cite[8.9.3]{CG97}
\begin{equation}\label{BBD sum}
\cF \cong \bigoplus_{S\in\all} \cF_S.
\end{equation}
Define a functor $\Sigma\colon D^b_{\mathscr{S}}(\M_0) \to \C\mmod$ by
\[\Sigma(-)=\hcd(\fM_0; -);\] applying it to both sides of \eqref{BBD sum} we obtain isomorphisms
\begin{equation}\label{BBD-decomposition}
\bmc \cong \Sigma(\cF) \cong  \bigoplus_{S\in\all} \Sigma(\cF_S).
\end{equation}

The isomorphism \eqref{convolution:local systems}
is a consequence of an isomorphism  of the convolution algebra $\HZC$ with $\End(\cF)$, so \eqref{BBD-decomposition} is compatible with the action of $\HZC$.

\begin{lemma}\label{perp}
The direct sum decomposition of Equation \eqref{BBD-decomposition} is orthogonal
with respect to the equivariant intersection form introduced in Section \ref{gco-intform}.
\end{lemma}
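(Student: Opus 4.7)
The plan is to exploit the convolution action of $\HZC$ on $\bmc$, together with an adjunction property of the equivariant intersection form under the factor-swap involution of the Steinberg variety.  The decomposition \eqref{BBD-decomposition} will then appear as the isotypic decomposition for this action, and orthogonality will follow from a standard projector argument.

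First, I would invoke the action of $\HZC$ on $\bmc \cong \Sigma(\cF)$: via the identification $\HZC \cong \End(\cF) \cong \bigoplus_S \End \phi_S$ from \eqref{convolution:local systems}, the decomposition \eqref{BBD-decomposition} is precisely the isotypic decomposition for this action, and the central idempotent $e_S \in \End \phi_S$ projects $\bmc$ onto $\Sigma(\cF_S)$.  Second, I would establish that the equivariant intersection form satisfies the adjunction
$$\langle x \star v, w \rangle \;=\; \langle v, x^\sigma \star w \rangle \qquad (x \in \HZC,\ v,w \in \bmc),$$
where $x \mapsto x^\sigma$ is the anti-involution on $\HZC$ induced by the factor-swap $\sigma\colon \fM \times \fM \to \fM \times \fM$.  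Because each summand $\cF_S$ is Verdier self-dual and the $\cF_S$ are pairwise non-isomorphic simple perverse sheaves (up to shift), $\sigma$ preserves each block $\End \phi_S$ and acts on it as a transpose-type anti-involution; in particular $\sigma(e_S) = e_S$ for each $S$.

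Given these two ingredients, for $S \neq S'$, $v \in \Sigma(\cF_S)$, and $w \in \Sigma(\cF_{S'})$ we would simply compute
$$\langle v, w\rangle = \langle e_S \star v, w\rangle = \langle v, e_S^\sigma \star w\rangle = \langle v, e_S \star w\rangle = 0,$$
the final equality holding because $w$ lies in an isotypic component killed by $e_S$.  This yields the claimed orthogonality.

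The hard part will be the $\sigma$-adjunction of the equivariant intersection form.  Intuitively, $x \star v$ is the composition of correspondences on $\fM$ and $\sigma$ is correspondence-transposition, so the adjunction is formal.  The cleanest rigorous route is via equivariant localization onto $\fM^\bT \times \fM^\bT$: in the orthonormal basis $\{v_\a \mid \a \in \cI\}$ of Section \ref{gco-intform}, convolution by a localized class reduces to matrix multiplication, $\sigma$ reduces to matrix transposition, and $\langle -,-\rangle$ reduces to the Kronecker pairing, for which transpose is the standard adjoint.  Alternatively, one can realize $\langle -,-\rangle$ as the Kashiwara--Schapira/Verdier pairing on the self-dual perverse sheaf $\cF[\dimfM]$, for which block-diagonality under the semisimple decomposition into non-isomorphic summands is automatic.
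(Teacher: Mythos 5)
Your overall skeleton matches the paper exactly: establish the adjunction $\langle a\star b, c\rangle = \langle b, a^*\star c\rangle$ by localizing to $\bT$-fixed points, show each central idempotent $e_S$ is fixed by the flip anti-involution, and conclude by the three-line projector computation $(b,c)=(e_Sb,c)=(b,e_Sc)=0$. Where you diverge is in the middle step, and the divergence is real. The paper shows $e_S^*=e_S$ by a support-and-induction argument: $e_S$ and hence $e_S^*$ is represented by cycles on $\fZ$ lying over $\bar S_\Delta\subset\fZ_0$, so $\operatorname{im}(e_S^*)\subset\bigoplus_{S'\le S}\Sigma(\cF_{S'})$, and an induction on the stratification forces $\operatorname{im}(e_S^*)=\Sigma(\cF_S)$. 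You instead argue that the flip preserves each block $\End\phi_S$ because $\cF_S$ is Verdier self-dual and the $\cF_S$ have no simple summands in common. That route can be made to work, but as written it hides a nontrivial input: why should the geometric factor-swap on $\HZC$ correspond, under the isomorphism $\HZC\cong\End(\cF)$ of \cite{CG97}, to the Verdier-transpose anti-involution on $\End(\cF)$? That compatibility is true but requires either a careful argument or a citation, and you neither prove nor cite it; self-duality of $\cF_S$ by itself says nothing about the geometric flip. The paper's argument deliberately sidesteps this by never invoking Verdier duality at all, using only that the flip preserves supports over $\fM_0$. You should also correct the claim that the $\cF_S$ are ``pairwise non-isomorphic simple perverse sheaves'': $\phi_S$ may have rank greater than one or decompose, so $\cF_S$ is in general a direct sum of shifted simples; the relevant fact is that $\cF_S$ and $\cF_{S'}$ share no simple summands for $S\ne S'$, which gives $\Hom(\cF_S,\cF_{S'})=0$ and hence the block decomposition of $\End(\cF)$. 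What your route would buy, once the flip/Verdier compatibility is supplied, is conceptual clarity—the idempotent-fixedness becomes a formal consequence of functoriality rather than an inductive support computation—but the paper's argument is more elementary and more self-contained.
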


\begin{proof}
We will use the fact that the equivariant intersection form is compatible with the action of
the convolution algebra in the sense
that, for all $a\in\HZC$ and $b,c\in\Sigma(\cF)$, we have 
$(a\star b,c)=(b,a^*\star c)$, where $a\mapsto a^*$ is the
anti-automorphism of $\HZC$ given by flipping the two factors of $\fM\times \fM$.  To see this, note that $\HZC$ and $\Sigma(\cF) = \bmc$ are isomorphic to the $T$-equivariant versions of these groups, since the cohomology in lower degrees vanishes.  We can localize the equivariant groups to the $T$-fixed points, and using the projection formula for a proper pushforward, we get
\[(a \star b)|_\a = \sum_{\b\in I} \frac{a|_{(\a, \b)} \cdot b_{\b}}{e(\b)}\]
for all $\a \in I$.  Since $a^*|_{(\a,\b)} = a|_{(\b, \a)}$, the result follows.

Now let $e_S$ be the central idempotent in $\HZC$ which
projects $\Sigma(\cF)$ onto $\Sigma(\cF_S).$  Then $e_S^*$ is again a central
idempotent; we will prove by induction on $\all$ that $e_S^* = e_S$.
Assume that $e_{S'}^* = e_{S'}$ for all $S'<S$.
It is clear that $e_S$ is represented entirely by cycles on $\fZ$ that live over $\bar S\subset \fZ_0\cong\fM_0$,
therefore the image of $e_S^*$ is contained in $\bigoplus_{S'\leq S}\Sigma( \cF_{S'}).$
For all $S'<S$, our inductive hypothesis tells us that $\Sigma(\cF_{S'})$ is equal to the image of 
$e_{S'}^*$, and is therefore disjoint from the image of $e_S^*$.  Since the image of $e_S^*$
is invariant under the convolution algebra and complementary to the sum of the images of $e_{S'}^*$ for $S'<S$,
it must be equal to $\Sigma(\cF_{S'})$.  Thus $e_S^* = e_S$.

Suppose that $b\in \Sigma(\cF_S)$ and $c\in \Sigma(\cF_{S'})$ for some $S\neq S'$. 
Then \[(b,c)=(e_Sb,c)=(b,e_Sc)=(b,0)=0.\]
This establishes the result.
\end{proof} 

It is tempting to guess that two decompositions of $\bmc$ given in Equations \eqref{O-decomposition}
and \eqref{BBD-decomposition} coincide.
This cannot be correct in general, however, because $\fP_S$ is trivial
unless $S$ is special at a parameter where derived localization holds, while 
$\Sigma(\cF_S)$ is always nontrivial.  
The next natural guess is that the appropriately coarsened statement holds for special leaves;
that is, if $S$ is special, then the subspace $$K(\cOgS)_\C\subset K(\cOg)_\C\cong\bmc$$ should coincide
with the sum of the subspaces $$\Sigma(\cF_{S'})\subset\Sigma(\cF)\cong\bmc$$ corresponding to leaves $S'$
that are contained in the closure of $S$.
Even this statement fails in general (see Remark \ref{not interleaved}), but it holds if 
$(\fM, \cD)$ is interleaved (Definition \ref{simple supports}).  
We state this result below, and prove it at the end of this section.

\begin{theorem}\label{special filtrations}
Suppose that localization holds for $\cD$.
Then $(\fM,\cD)$ is interleaved if and only if for every special symplectic
leaf $S\in\spe$, the image of 
$K(\cOgS)_\C\subset K(\cOg)_\C$ under $\suppc$ is equal to 
$$\bigoplus_{S'\leq S} \Sigma(\cF_{S'}).$$
\end{theorem}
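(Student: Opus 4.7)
The plan is to translate the statement, via the characteristic cycle isomorphism $\suppc\colon K(\cOg)_\C \stackrel{\sim}{\to} \bmc$ of Theorem \ref{support isomorphism}, into a comparison of two subspaces of $\bmc$ expressed in the basis $\{[X_\a]\}_{\a\in\cI}$. Set $A_S := \{\a\in\cI : \fM_{\a,0}\subset\bar S\}$ and $B_S := \{\a\in\cI : X_{\a,0}\subset\bar S\}$; since $X_{\a,0}\subset\fM_{\a,0}$, we always have $A_S\subset B_S$. Because $\{\Lambda_\a : \a\in A_S\}$ is the collection of simple objects of $\cOgS$, we have $\dim K(\cOgS)_\C = |A_S|$.

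The first substantive step is a purely geometric identification: for every symplectic leaf $S$,
\[F_S := \bigoplus_{S'\le S}\Sigma(\cF_{S'}) = \Span_\C\{[X_\a] : \a \in B_S\} \subset \bmc.\]
The inclusion $\supset$ comes from the open--closed sequence attached to $\bar S \hookrightarrow \fM_0 \hookleftarrow U_S := \fM_0\setminus\bar S$: the classes $[X_\a]$ with $\a\in B_S$ lift to $H^{\dimfM}_{\fM^+\cap\nu^{-1}(\bar S)}(\fM;\C)$, and the image of this group in $\bmc$ lies in $F_S$ because $\cF_{S'}|_{U_S}=0$ exactly when $S'\le S$. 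For equality, I use the dimension identity $\dim\Sigma(\cF_{S'}) = |\{\a : \text{generic point of }X_{\a,0}\in S'\}|$, which follows by combining the orthogonality of the BBD decomposition under the equivariant intersection form (Lemma \ref{perp}) with the refinement of the $v_\a$-basis along the leaf stratification; summing over $S'\le S$ yields $\dim F_S = |B_S|$.

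For the forward direction, assume $(\fM,\cD)$ is interleaved and fix $S\in\spe$. Because $\fM_{\a,0}$ is the smallest special leaf closure containing $X_{\a,0}$ and $\bar S$ is itself a special leaf closure, we obtain $X_{\a,0}\subset\bar S \Leftrightarrow \fM_{\a,0}\subset\bar S$, i.e.\ $A_S=B_S$. By the interleaved clause of Lemma \ref{triangularity}, for each $\a\in A_S$,
\[\suppc[\Lambda_\a] = [X_\a] + \sum_{\b} \eta_{\a\b}[X_\b],\]
and every $\b$ with $\eta_{\a\b}\neq 0$ satisfies $\fM_{\b,0}\subset\fM_{\a,0}\subset\bar S$, hence $\b\in A_S = B_S$. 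The resulting matrix is unitriangular with respect to $\leftharpoonup$ on $A_S$, so it is invertible, giving $\suppc(K(\cOgS)_\C)=\Span\{[X_\b] : \b\in B_S\}=F_S$.

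For the reverse direction, assume $\suppc(K(\cOgS)_\C)=F_S$ for every $S\in\spe$; comparing dimensions with the first step yields $|A_S|=|B_S|$, and combined with $A_S\subset B_S$ gives $A_S=B_S$ for every special $S$. Fix $\a\in\cI$; by Corollary \ref{mza} together with Theorem \ref{unique-minimal}, $\fM_{\a,0}$ is itself a special leaf closure, and for any other special leaf closure $\bar S$ containing $X_{\a,0}$ we have $\a\in B_S = A_S$, so $\fM_{\a,0}\subset\bar S$. Thus $\fM_{\a,0}$ is the minimal special leaf closure containing $X_{\a,0}$, which is exactly the interleaved condition. The main obstacle is the identification of $F_S$ in the basis $\{[X_\a]\}$: checking that the BBD filtration on $\bmc$ coincides with the geometrically defined support filtration, and in particular establishing the dimension count $\dim\Sigma(\cF_{S'})=|\cI_{S'}|$, which requires combining Lemma \ref{perp} with a careful analysis of how $\cF$ restricts across the leaf stratification of $\fM_0$.
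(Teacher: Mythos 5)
Your overall strategy matches the paper's: reduce everything via Theorem~\ref{support isomorphism} to a comparison of two subspaces of $\bmc$ expressed in the basis $\{[X_\a]\}$, with the key input being the identification of $\bigoplus_{S'\le S}\Sigma(\cF_{S'})$ with $\Span\{[X_\a] : X_{\a,0}\subset\bar S\}$ (the content of the paper's Lemmas~\ref{basis} and~\ref{BBD-filtration-basis}), and with the interleaved clause of Lemma~\ref{triangularity} doing the work in the forward direction. But your Step~1 sketch does not actually supply that identification. The inclusion you assert holds ``because $\cF_{S'}|_{U_S}=0$ exactly when $S'\le S$'' requires $\Sigma_S(\cF_{S'})=0$ for $S'\not\le S$, and the restriction-to-$U_S$ vanishing does not give this: one must work with $j^!\cF_{S'}$ for $j\colon\bar S\hookrightarrow\fM_0$, use that it lies in perverse degrees $\ge\dimfM+1$ because $S'\cap\bar S=\emptyset$, and then apply vanishing of local cohomology along $\fM_0^+\cap\bar S$ in degree $\dimfM$. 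This is exactly what the paper's proof of Lemma~\ref{BBD-filtration-basis} does; your appeal to Lemma~\ref{perp} plus a ``refinement of the $v_\a$-basis along the leaf stratification'' does not produce it, and the dimension identity $\dim\Sigma(\cF_{S'}) = |\{\a : \text{generic point of }X_{\a,0}\in S'\}|$ is a consequence of the two lemmas rather than an independent route to them.

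Granting the $F_S$ identification, your forward direction is the paper's. Your reverse direction takes a genuinely different path: you compare $\dim K(\cOgS)_\C = |A_S|$ with $\dim F_S = |B_S|$, conclude $A_S=B_S$ for every special $S$, and then directly verify that $\fM_{\a,0}$ is minimal among special leaf closures containing $X_{\a,0}$. The paper argues the contrapositive: from a failure of interleavedness it extracts a pair $(\a,S)$ with $X_{\a,0}\subset\bar S\subsetneq\fM_{\a,0}$, notes $[X_\a]\in\bigoplus_{S'\le S}\Sigma(\cF_{S'})$, and uses the unitriangularity in Lemma~\ref{triangularity} to see $[X_\a]\notin\suppc K(\cOgS)_\C$. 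Your version is a little cleaner once the $F_S$ identification is in hand --- it avoids invoking Lemma~\ref{triangularity} in the reverse direction and avoids exhibiting a strictly smaller special leaf closure --- but the two routes are of comparable strength; the substantive content in both is the same Lemmas~\ref{basis} and~\ref{BBD-filtration-basis}.
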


\begin{corollary}\label{the special case}
Suppose that $(\fM, \cD)$ is interleaved and all symplectic leaves are special.
Then the characteristic cycle isomorphism takes the categorical decomposition of $K(\cOg)$
from Equation \eqref{O-decomposition} to the BBD decomposition of $\bmc = \Sigma(\cF)$ from Equation \eqref{BBD-decomposition}.
\end{corollary}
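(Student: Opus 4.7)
The plan is to deduce the corollary directly from Theorem \ref{special filtrations} by comparing two orthogonal splittings of matched filtrations.

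First, I would observe that under the hypothesis that every symplectic leaf is special, Theorem \ref{special filtrations} applies at every $S\in\all$, and so the filtration of $K(\cOg)_\C$ by the subspaces $K(\cOgS)_\C$ is carried by $\suppc$ onto the filtration of $\Sigma(\cF)=\bmc$ by the subspaces $\bigoplus_{S'\leq S}\Sigma(\cF_{S'})$. In particular, $\suppc$ sends $K(\cOgdS)_\C$ into $\bigoplus_{S'<S}\Sigma(\cF_{S'})$, and induces an isomorphism on the associated graded pieces of these two filtrations, identifying $K(\fP_S)_\C$ with $\Sigma(\cF_S)$ as a subquotient.

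Next, I would argue that both splittings of these filtrations used to produce the decompositions \eqref{O-decomposition} and \eqref{BBD-decomposition} are orthogonal splittings with respect to compatible bilinear forms. On the categorical side, the decomposition $K(\cOg)_\C=\bigoplus_S K(\fP_S)_\C$ is obtained by splitting each step of the filtration using the Euler form; concretely, $K(\fP_S)_\C$ is realized as the orthogonal complement of $K(\cOgdS)_\C$ inside $K(\cOgS)_\C$. On the cohomological side, Lemma \ref{perp} shows that the BBD decomposition \eqref{BBD-decomposition} is orthogonal with respect to the equivariant intersection form on $\bmc$, so $\Sigma(\cF_S)$ is the orthogonal complement of $\bigoplus_{S'<S}\Sigma(\cF_{S'})$ inside $\bigoplus_{S'\leq S}\Sigma(\cF_{S'})$.

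Finally, Theorem \ref{support isomorphism} tells us that $\suppc$ intertwines the Euler form on $K(\cOg)_\C$ with the equivariant intersection form on $\bmc$. Orthogonal complements of matched subspaces therefore correspond under $\suppc$, so $\suppc(K(\fP_S)_\C)=\Sigma(\cF_S)$ for every $S\in\all$. Summing over $S$ gives the identification of the two decompositions, which is exactly the statement of the corollary. There is no real obstacle here, as the hard work has already been done in Theorem \ref{special filtrations} and Lemma \ref{perp}; the corollary is essentially a bookkeeping argument combining matched filtrations with compatible orthogonal splittings.
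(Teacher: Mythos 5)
Your proposal is correct and takes essentially the same approach as the paper: you invoke Theorem \ref{special filtrations} to match the two filtrations under $\suppc$, note that both decompositions arise as orthogonal splittings (the categorical one via the Euler form, the BBD one via Lemma \ref{perp}), and conclude by Theorem \ref{support isomorphism}, which intertwines these two forms. The paper's proof is a terser version of exactly this bookkeeping argument.
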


\begin{proof}
By definition, the categorical decomposition is orthogonal with respect to the Euler pairing.
By Lemma \ref{perp}, the BBD decomposition is orthogonal with respect to the intersection
pairing.  The result then follows from Theorems \ref{support isomorphism} and \ref{special filtrations}.
\end{proof}

\vspace{-\baselineskip}
\begin{example}\label{ht and ftA}
If $\fM$ is a hypertoric variety or a finite type A quiver variety and $\cD$ is an integral quantization
for which localization holds, then $(\fM, \cD)$ is interleaved and all symplectic leaves are special, so the corollary applies.
\end{example}

\begin{remark}\label{not interleaved}
  One can use Theorem \ref{special filtrations} to show that $(\fM,\cD)$ is not interleaved
  by finding a special leaf for which the two vector spaces in question have different dimensions.  In the case where $\fM=T^*(G/B)$
  and the quantization is integral, the subspaces $K(\cOgS)_\C$
  and $\bigoplus_{S'\leq S} \Sigma(\cF_{S'})$ are
 sums of isotypic components for the action of the Weyl group on $K(\cOg)_\C\cong
  \Sigma(\cF)$; thus, we need only consider which simple
  representations appear in this subspace.  Using the notation of
  Carter's book \cite{Carter}, when $S$ is the nilpotent orbit $A_2$,
  the subspace $K(\cOgS)_\C$  is the sum of the isotypic components
  for the families
\[\{\phi_{1,24}\}\qquad\{\phi_{2,16}'', \phi_{4,13},  \phi_{2,16}'\}\qquad
\{\phi_{9,10}\}\qquad \{\phi_{8,9}''\};\]
these families are listed on  \cite[pg. 414]{Carter}.  The first index
is the dimension of the representation, so $\dim K(\cOgS)_\C=170$.

However, the corresponding piece of the BBD filtration
$\bigoplus_{S'\leq S} \Sigma(\cF_{S'})$ is larger; by the
chart on \cite[pg. 428]{Carter}, it also includes the isotypic
component for $\phi_{1,12}''$, which corresponds under the Springer
correspondence to the unique non-trivial local system on $A_2$; thus its
dimension is 171.  This shows that $T^*(G/B)$ is not interleaved for $G$
of type $F_4$ (we thank Victor Ostrik for pointing out this example to us). 
Inspection of the charts in Carter's book also shows
that the same is true for types $E_7$ and $E_8$.

In types $B$, $C$, and $D$, it is also easy to find examples where these two
filtrations do not match.  For example, in $C_4$, the representations
associated to the pairs of partitions $((2,1),(1))$ and
$((2,2),\emptyset)$ are both
associated to the nilpotent orbit with Jordan type $(4,2,2)$. However,
only the former is in the 2-sided cell of this special orbit; the
latter is in the 2-sided cell for $(4,4)$.  Outside of a few cases of
small rank, the variety $T^*(G/B)$ will be interleaved only in type A.
\end{remark}

To prove Theorem \ref{special filtrations}, we need a generalization of the functor $\Sigma$.
For all $S\in\all$, define a functor $\Sigma_S:D^b_{\mathscr{S}}(\M_0) \to \C\mmod$ by
$$\Sigma_S := H^{\dimfM}_{\fM_0^+\cap \bar S}(\fM_0; -\big).$$ 
For every $S$, the inclusion of $\fM^+_0\cap \bar S$ into $\fM^+_0$ induces a natural transformation $\Sigma_S\to\Sigma$.

\begin{lemma}\label{basis} For any $S \in \mathscr{S}$, the map
$\Sigma_S(\cF) \to \Sigma(\cF)$ is injective, and the image has basis
$\{[X_\a] \mid X_{\a,0} \subset \bar{S}\}.$
\end{lemma}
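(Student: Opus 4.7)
The plan is to reduce the lemma to a filtration argument on $\fM$ directly analogous to the proof that $H^*_\T(\fM,\fM\setminus\fM^+;\Z)$ is a free module given earlier. Since $\nu\colon\fM\to\fM_0$ is proper and $\cF=\nu_*\C_\fM$, proper base change gives
\[
\Sigma(\cF)\cong H^{\dimfM}_{\fM^+}(\fM;\C)\and \Sigma_S(\cF)\cong H^{\dimfM}_{Y_S}(\fM;\C),\qquad Y_S := \fM^+\cap\nu^{-1}(\bar S),
\]
and the natural transformation $\Sigma_S(\cF)\to\Sigma(\cF)$ becomes the map induced by the closed inclusion $Y_S\hookrightarrow\fM^+$. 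Writing $\cI_S:=\{\a\in\cI\mid X_{\a,0}\subset\bar S\}$, the goal is to show that the first group is freely generated by the classes $[X_\a]$ for $\a\in\cI_S$ and that these inject into $\Sigma(\cF)$ as the corresponding subset of its standard basis.

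To do this, I would choose an ordering $\a_1,\dots,\a_r$ of $\cI$ refining $\leftharpoonup$ so that each $\fM^+_k := \bigcup_{i\leq k}X_{\a_i}^\circ$ is closed. Setting $Y_{S,k} := Y_S\cap \fM^+_k$ and $U'_k := \fM\setminus Y_{S,k}$, I obtain a descending filtration $\fM=U'_0\supset U'_1\supset\cdots\supset U'_r=\fM\setminus Y_S$ which refines the analogous filtration $U_k := \fM\setminus\fM^+_k$ used earlier. Excision identifies the successive quotients as
\[
H^{\dimfM}(U'_{k-1},U'_k)\cong H^{\dimfM}_{Z_k}\!\big(\fM\setminus Y_{S,k-1}\big),\qquad Z_k := Y_S\cap X^\circ_{\a_k}.
\]
The technical heart of the argument is a dimension comparison for $Z_k$. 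If $\a_k\in\cI_S$, then $X_{\a_k}^\circ\subset\nu^{-1}(X_{\a_k,0})\subset\nu^{-1}(\bar S)$, forcing $Z_k=X_{\a_k}^\circ\cong\C^d$; the Thom isomorphism then produces a copy of $\C$ in degree $\dimfM$, lifted by the class $[X_{\a_k}]\in\Sigma_S(\cF)$ (noting that the ordering ensures $X_{\a_k}\subset Y_{S,k}$). If instead $\a_k\notin\cI_S$, then $Z_k$ is a proper closed subvariety of $X_{\a_k}^\circ$, for otherwise taking closures would give $X_{\a_k}\subset\nu^{-1}(\bar S)$ and hence $\a_k\in\cI_S$. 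Consequently $\dim_\C Z_k<d$, and Poincar\'e--Lefschetz duality on the oriented real $4d$-manifold $\fM\setminus Y_{S,k-1}$ identifies $H^j_{Z_k}$ with $H^{BM}_{4d-j}(Z_k;\C)$, which vanishes for $j\in\{\dimfM-1,\dimfM\}$.

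Combining these computations in the long exact sequence of the triple $(\fM,U'_{k-1},U'_k)$, I conclude by induction on $k$ that $H^{\dimfM}(\fM,U'_k)$ is freely generated over $\C$ by the classes $[X_{\a_i}]$ for $i\leq k$ with $\a_i\in\cI_S$. Taking $k=r$ gives the asserted basis for $\Sigma_S(\cF)$. The same filtration argument applied with $U_k$ in place of $U'_k$ recovers the basis $\{[X_\a]\mid\a\in\cI\}$ of $\Sigma(\cF)$, and functoriality shows that the comparison map $\Sigma_S(\cF)\to\Sigma(\cF)$ sends each $[X_\a]$ to $[X_\a]$. Injectivity and the stated description of the image then follow at once. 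The step requiring the most care is the dimension comparison for $Z_k$ when $\a_k\notin\cI_S$; the remainder follows the template of the earlier lemma essentially verbatim.
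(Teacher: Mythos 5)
Your proof is correct, but it takes a more hands-on route than the paper does. After the common reduction (via proper base change) to cohomology of $\fM$ with supports in $Y_S=\fM^+\cap\nu^{-1}(\bar S)$, the paper simply applies Poincar\'e duality to identify $\Sigma_S(\cF)$ with the top-degree Borel--Moore homology $H^{BM}_{\dimfM}(Y_S;\C)$ and invokes the standard structural fact that the top Borel--Moore homology of an at-most-$d$-dimensional variety is freely generated by the fundamental classes of its $d$-dimensional irreducible components, namely the $X_\a$ with $X_{\a,0}\subset\bar S$; the map to $\Sigma(\cF)\cong H^{BM}_{\dimfM}(\fM^+;\C)$ sends $[X_\a]\mapsto[X_\a]$ and injectivity is immediate. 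You instead replay the Bia{\l}ynicki--Birula-style filtration from the preceding lemma, refining it by $Y_S$ and showing cell-by-cell (via the Thom isomorphism when $\a_k\in\cI_S$, and a dimension count plus Poincar\'e--Lefschetz duality when $\a_k\notin\cI_S$) that the degree-$(\dimfM)$ and degree-$(\dimfM-1)$ pieces of each successive quotient behave as needed to run the inductive long-exact-sequence argument. This is in effect a direct re-derivation of the structural fact in the special case at hand. Your argument is longer, but it is self-contained, visibly parallel to the earlier lemma, and makes the basis and the injectivity concretely checkable; the paper's is shorter and cleaner at the cost of citing the general top-Borel--Moore fact. Both are valid.
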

\begin{proof} Let $\M^+_{\bar S} := \M^+ \cap \nu^{-1}(\bar{S})$.  Then
Poincar\'e duality gives an isomorphism $$\Sigma_S(\cF) \cong H^{\dimfM}(\M,\M \setminus
\M^+_{\bar S}; \C) \cong H^{BM}_{\dimfM}(\M^+_{\bar S}; \C).$$ 
Since $\M^+_{\bar S}$ is purely $d$-dimensional,
this last group is the subgroup of $H^{BM}_{\dimfM}(\M^+; \C)$
spanned by the 
classes $[X_\alpha]$ for all $X_\a \subset \M^+_{\bar S}$.
\end{proof}

Let $\cF_{\le S} := \bigoplus_{S' \le S} \cF_{S'}$.

\begin{lemma} \label{BBD-filtration-basis} The image of the natural injection $\Sigma(\cF_{\le S}) \to \Sigma(\cF)$ 
is the same as the image of $\Sigma_S(\cF) \to \Sigma(\cF)$.
\end{lemma}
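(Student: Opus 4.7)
The plan is to establish both inclusions of subspaces of $\Sigma(\cF)$, using the BBD decomposition $\cF = \bigoplus_{S' \in \all} \cF_{S'}$ to split both $\Sigma_S(\cF)$ and $\Sigma(\cF)$ compatibly.

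For the inclusion $\Sigma(\cF_{\le S}) \subseteq \mathrm{image}\bigl(\Sigma_S(\cF) \to \Sigma(\cF)\bigr)$, I would use that each summand $\cF_{S'}$ with $S' \le S$ is supported on $\bar{S'} \subseteq \bar S$, so
\[
\Sigma(\cF_{S'}) = H^{\dimfM}_{\fM_0^+}(\fM_0;\cF_{S'}) = H^{\dimfM}_{\fM_0^+ \cap \bar S}(\fM_0;\cF_{S'}).
\]
Summing over $S' \le S$ yields an injection $\Sigma(\cF_{\le S}) \hookrightarrow \Sigma_S(\cF)$ whose composition with $\Sigma_S(\cF) \to \Sigma(\cF)$ is the canonical injection $\Sigma(\cF_{\le S}) \hookrightarrow \Sigma(\cF)$.

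For the reverse inclusion, I would split $\cF = \cF_{\le S} \oplus \cF_{>S}$ with $\cF_{>S} := \bigoplus_{S'' \not\le S}\cF_{S''}$; since both $\Sigma_S$ and $\Sigma$ respect this decomposition, it suffices to show that for every $S'' \not\le S$, the natural map $\Sigma_S(\cF_{S''}) \to \Sigma(\cF_{S''})$ vanishes. Because the symplectic leaves of $\fM_0$ are pairwise disjoint and $\bar S$ is a union of leaves, $S'' \not\le S$ forces $S'' \cap \bar S = \emptyset$, so $S''$ is contained in the open set $\fM_0 \setminus \bar S$ and $\bar S \cap \bar{S''}$ is a closed subset of $\bar{S''} \setminus S''$ of dimension strictly less than $\dim S''$. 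The long exact sequence for cohomology with supports attached to the nested pair $\fM_0^+ \cap \bar S \subseteq \fM_0^+$ then reduces the desired vanishing to injectivity of the restriction
\[
\Sigma(\cF_{S''}) \longrightarrow H^{\dimfM}_{\fM_0^+ \setminus \bar S}\bigl(\fM_0 \setminus \bar S;\,\cF_{S''}\bigr).
\]

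The main obstacle is this last step, which appeals to the minimal extension property of the IC sheaf underlying $\cF_{S''}$: the top-degree cohomology $\Sigma(\cF_{S''})$ receives no contribution from cohomology classes supported on proper closed subsets of $\bar{S''}$, so restricting to an open set that still contains the generic stratum $S''$ must be injective. Making this rigorous amounts to combining the perverse support conditions $\dim \supp \mathcal{H}^q\bigl(\IC(\bar{S''},\phi_{S''})\bigr) < -q$ for $q > -\dim S''$ with the dual conditions on $i^!\IC$ along $i\colon \bar S \cap \bar{S''} \hookrightarrow \bar{S''}$, and then running a hypercohomology spectral sequence to conclude the required vanishing. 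Once this is in place, the two inclusions force the equality of images asserted in the lemma.
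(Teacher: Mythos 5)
Your forward inclusion is correct; it is the same observation the paper makes, namely that $\Sigma_S(\cF_{\le S})\to\Sigma(\cF_{\le S})$ is an isomorphism because each summand $\cF_{S'}$ with $S'\le S$ is already supported on $\bar S$. For the reverse direction, the paper takes a shorter route than yours: it shows outright that $\Sigma_S(\cF_{S''})=0$ whenever $S''\not\le S$, from which the vanishing of the map into $\Sigma(\cF_{S''})$ is immediate. Concretely, writing $j\colon\bar S\hookrightarrow\fM_0$, one identifies $\Sigma_S(\cF_{S''})$ with $H^{\dimfM}_{\fM_0^+\cap\bar S}(\bar S; j^!\cF_{S''})$; since $\cF_{S''}$ is an intersection cohomology complex and $S''\cap\bar S=\emptyset$, the cosupport conditions place $j^!\cF_{S''}$ in ${}^p D_c^{\ge\dimfM+1}(\bar S)$, and the vanishing of $H^k_{\fM_0^+\cap\bar S}(\bar S; j^!\cF_{S''})$ for $k\le\dimfM$ then follows from a standard bound (\cite[8.1.24]{HTT}).

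Your proposal identifies essentially the same perverse-degree input, but wraps it in an unnecessary long exact sequence for nested supports and a reduction to an injectivity statement, and then leaves the actual computation as a sketch in the final paragraph. That unexecuted step --- the support/cosupport estimate on $j^!\cF_{S''}$ and the resulting degree bound --- is where all the content of the lemma lives. Once you carry it out you obtain $\Sigma_S(\cF_{S''})=0$ directly, after which the map you wanted to show injective has zero in the previous spot of the long exact sequence for trivial reasons; so the extra reductions buy you nothing and should be deleted.
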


\begin{proof} It is clear that $\Sigma_S(\cF_{\le S}) \to \Sigma(\cF_{\le S})$ is an isomorphism, so it is enough to show that $\Sigma_S(\cF_{\le S}) \to \Sigma_S(\cF)$ is an isomorphism, or equivalently that $\Sigma_S(\cF_{S'}) = 0$ if $S' \not\le S$.  Let 
$j\colon \bar{S} \hookrightarrow \fM_0$ be the inclusion; then 
$$\Sigma_S(\cF_{S'}) = H^{\dimfM}_{\M_0^+ \cap \bar S}(\bar{S}; j^!\cF_S).$$  Since
$\cF_{S'}$ is an intersection cohomology complex and $S'\cap \bar{S} = \emptyset$, we
have $$j^!\cF_{S'} \in {}^p\!D_c^{\ge \dimfM + 1}(\bar{S});$$ see, for example, \cite[8.2.5]{HTT}.  
This implies that $H^k_{\M_0^+ \cap \bar S}(\bar{S}; j^!\cF_S)$ vanishes for $k \le \dim \M$ \cite[8.1.24]{HTT},  so 
$\Sigma_S(\cF_{S'}) = 0$, as desired.
\end{proof}

\begin{proofsf}
First suppose that $(\fM, \cD)$ is interleaved.
The image of $K(\cOgS)_\C$ is spanned by $\{\suppc \Lambda_\a\mid \fM_{\a,0}\subset \bar{S}\}$. 
By Lemma \ref{triangularity}, it is spanned by $\{[X_\a]\mid\fM_{\a,0}\subset \bar{S}\}$.
On the other hand, Lemmas \ref{basis} and \ref{BBD-filtration-basis} imply that $\bigoplus_{S'\leq S} \Sigma(\cF_{S'})$
is spanned by $\{[X_\a]\mid X_{\a,0}\subset \bar{S}\}$.
Since $S$ is special, $\fM_{\a,0}\subset \bar{S}$ if and only if $X_{\a,0}\subset \bar{S}$,
so the two vector spaces agree.

Now suppose that $(\fM, \cD)$ is not interleaved.  This means that there exists
an element $\a\in\cI$ and a special leaf $S$ such that $X_{\a,0}\subset \bar S\subsetneq \fM_{\a,0}$.
Lemma \ref{triangularity} says that the basis $\{\suppc \Lambda_\b\}_{\b \in \cI}$ is triangular with respect to the order $\leftharpoonup$, so if we write $[X_\a]$
in this basis, $\suppc \Lambda_\a$ must occur with non-zero coefficient.
It follows that $[X_\a]$ lies in $\bigoplus_{S'\leq S} \Sigma(\cF_{S'})$ but not in $K(\cOgS)_\C$.
\end{proofsf}

\subsection{The extreme pieces}\label{sec:extreme}
All of the structures discussed in this section are of particular interest when $S$ is equal to either the point stratum $\{o\}$
or the dense stratum $\becircled\fM_0$.  We begin with the point stratum.

The category $\cO_a^{\{o\}}$ is equal to the category of finite-dimensional $A$-modules.  Therefore, if localization holds,
we have $$K(\fP_{\{o\}}) = K(\cOg^{\{o\}}) \cong K(\cOa^{\{o\}}) = \Z\{[L_\a]\mid \dim L_\a<\infty\}.$$
By Lemmas \ref{basis} and \ref{BBD-filtration-basis}, the map $\suppc$ takes $K(\fP_{\{o\}})_\C$ 
to $\Sigma(\cF_{\{o\}})\cong H^{\dimfM}(\fM; \C)$. This map may or may not be an isomorphism.
The following theorem is an alternate version of Theorem \ref{special filtrations} that we may apply if we only care about the
point stratum; the proof is clear.

\begin{theorem}\label{point}
Suppose that localization holds for $\cD$.  Then the natural map from $K(\fP_{\{o\}})_\C$ to $H^{\dimfM}(\fM; \C)$
is always injective, and it is an isomorphism if and only if $\dim L_\a<\infty$ for all $\a\in\cI$ such that $X_{\a,0} = \{o\}$.
\end{theorem}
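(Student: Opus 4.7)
The plan is to bootstrap the statement directly from the orthogonal decomposition \eqref{O-decomposition}, Theorem \ref{support isomorphism}, and the basis descriptions in Lemmas \ref{basis} and \ref{BBD-filtration-basis}. Assuming localization holds, $\cOa^{\{o\}}$ is the category of finite-dimensional $A$-modules, which is a Serre subcategory of $\cOa$ whose simples are precisely those $L_\a$ with $\dim L_\a <\infty$. Hence $K(\fP_{\{o\}})_\C$ is a $\C$-vector space of dimension $\#\{\a\in\cI : \dim L_\a<\infty\}$, while by Lemma \ref{basis} the target $\Sigma(\cF_{\{o\}})\cong H^{\dimfM}(\fM;\C)$ has dimension $\#\{\a : X_{\a,0}=\{o\}\}$, with basis $\{[X_\a] : X_{\a,0}=\{o\}\}$ inside $\bmc$.

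For injectivity, I would argue as follows. By \eqref{O-decomposition} the map $K(\fP_{\{o\}})\hookrightarrow K(\cOg)$ is a direct summand inclusion (splitting the filtration via the Euler form), and by Theorem \ref{support isomorphism} the characteristic cycle map $\suppc:K(\cOg)_\C\to\bmc$ is an isomorphism. Any object of $\cOg^{\{o\}}$ is set-theoretically supported on the preimage of $\{o\}$, i.e., on the core, so its characteristic cycle is a combination of classes $[X_\b]$ with $X_{\b,0}=\{o\}$; by Lemma \ref{BBD-filtration-basis} this subspace of $\bmc$ is exactly $\Sigma(\cF_{\{o\}})\cong H^{\dimfM}(\fM;\C)$. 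Thus $\suppc$ sends $K(\fP_{\{o\}})_\C$ injectively into $H^{\dimfM}(\fM;\C)$.

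For the isomorphism criterion, the key inclusion to verify is $\{\a : \dim L_\a<\infty\}\subseteq\{\a : X_{\a,0}=\{o\}\}$: the simple $\Lambda_\a\in\cOg$ always has $X_\a$ in its support (noted in the proof of Proposition \ref{geometric order}), so if $\dim L_\a <\infty$ then $\secs(\Lambda_\a)$ is finite-dimensional, forcing the image of $\supp\Lambda_\a$ in $\fM_0$ to equal $\{o\}$, whence $X_{\a,0}=\{o\}$. Combined with injectivity on the $\C$-span, the map is an isomorphism if and only if these two finite sets have the same cardinality, i.e., the reverse inclusion also holds: every $\a$ with $X_{\a,0}=\{o\}$ satisfies $\dim L_\a<\infty$. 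This is exactly the stated criterion.

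There is essentially no serious obstacle here: everything is assembled from results already in hand, and the only verification needed is the containment $X_\a\subseteq\supp\Lambda_\a$, which was already recorded in the proof of Proposition \ref{geometric order}. This is why the original excerpt remarks that ``the proof is clear.''
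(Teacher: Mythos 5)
Your proof is correct and follows the same route the paper intends by ``the proof is clear'': both sides are finite-dimensional with explicit bases, the restriction of the isomorphism $\suppc$ from Theorem~\ref{support isomorphism} gives injectivity, and the containment $X_\a\subseteq\supp\Lambda_\a$ (forcing $\{\a:\dim L_\a<\infty\}\subseteq\{\a:X_{\a,0}=\{o\}\}$) turns the dimension count into exactly the stated criterion. Nothing is missing.
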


\begin{definition}
If $(\fM,\cD)$ satisfies hypotheses of Theorem \ref{point}, we will refer to the pair as {\bf fat-tailed}.
\end{definition}

Next, we turn our attention to the dense stratum.
A simple object of $\cOa$ lies in $\cOa^{\partial\becircled\fM_0}$ if and only if its annihilator is nonzero, so if localization holds,
we have
$$K(\fP_{\becircled\fM_0}) \cong \Z\{[L_\a]\mid \Ann(L_\a) = 0\}.$$
By Lemmas \ref{basis} and \ref{BBD-filtration-basis}, the map $\suppc$ takes $K(\cOa^{\partial\becircled\fM_0})_\C$
to $\bigoplus_{\bar S\neq \fM_0}\Sigma(\cF_S)$, so there is a naturally induced map of quotient spaces
from $K(\fP_{\becircled\fM_0})$ to $\Sigma(\cF_{\becircled\fM_0})$.  Furthermore, we have a nice interpretation
of the vector space $\Sigma(\cF_{\becircled\fM_0})$, as described in the following lemma.

\begin{lemma}\label{IH}
There is a canonical isomorphism $\Sigma(\cF_{\becircled\fM_0}) \cong I\! H^{2d}_{\bT}(\fM_0; \C)$.
\end{lemma}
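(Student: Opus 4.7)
The plan is to identify both sides of the claimed isomorphism with the common cohomology group
\[ H := H^{2d}_{\bT,\fM_0^+}(\fM_0;\cF_{\becircled\fM_0}), \]
through forgetful maps: forgetting equivariance yields $H\to \Sigma(\cF_{\becircled\fM_0})$, while forgetting the support condition yields $H\to IH^{2d}_\bT(\fM_0;\C)$. We will argue both maps are isomorphisms. The preliminary step is to identify $\cF_{\becircled\fM_0}$ with the intersection cohomology complex $\IC_{\fM_0}$, normalized so as to restrict to $\underline\C$ on the smooth locus: this is immediate because $\phi_{\becircled\fM_0}$ is the rank-one trivial local system on the dense leaf, since $\nu\colon \fM\to \fM_0$ is an isomorphism over the smooth open stratum. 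With this identification, $H^\bullet_\bT(\fM_0;\cF_{\becircled\fM_0})$ is tautologically $IH^\bullet_\bT(\fM_0;\C)$.

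For the forget-equivariance map, consider the Leray spectral sequence
\[E_2^{p,q} = H^p(B\bT)\otimes H^{q}_{\fM_0^+}(\fM_0;\IC_{\fM_0}) \Rightarrow H^{p+q}_{\bT,\fM_0^+}(\fM_0;\IC_{\fM_0}).\]
An isomorphism in degree $2d$ follows if $H^q_{\fM_0^+}(\fM_0;\IC_{\fM_0})=0$ for $q<2d$. By Verdier self-duality of $\IC_{\fM_0}$, this group is a shift of the intersection Borel--Moore homology $IH^{BM}_{4d-q}(\fM_0^+;\C)$, which vanishes outside of its top range since $\fM_0^+$ has complex dimension $d$ (the image of the Lagrangian relative core $\fM^+$). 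This gives the required vanishing and the spectral sequence then collapses at $E_2$ in the relevant degree.

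For the forget-supports map, the long exact sequence of the pair $(\fM_0,U)$, with $U:=\fM_0\smallsetminus \fM_0^+$, reads
\[ \cdots \to IH^{q-1}_\bT(U;\C) \to H^{q}_{\bT,\fM_0^+}(\fM_0;\IC_{\fM_0}) \to IH^{q}_\bT(\fM_0;\C) \to IH^{q}_\bT(U;\C) \to \cdots, \]
so it is enough to show that $IH^q_\bT(U;\C)=0$ for $q\leq 2d$. The key input is that the $\bT$-fixed locus $\fM_0^\bT$ coincides with $\{o\}\subset \fM_0^+$: every $\bT$-fixed point must also be $\bS$-fixed by compactness of the circle acting along the contracting $\bS$, and the only joint fixed point is $o$. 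Hence $\bT$ acts with no fixed points on $U$, which makes $IH^\bullet_\bT(U;\C)$ a torsion module over $H^*(B\bT)$ by the equivariant localization theorem. Combined with the $\bS$-contraction retracting $\fM_0$ onto $o$, which identifies $IH^\bullet_\bT(\fM_0;\C) \cong H^*(B\bT)\otimes IH^\bullet(\IC_{\fM_0,o})$ as a free $H^*(B\bT)$-module concentrated in even degrees (using equivariant formality, itself a consequence of Proposition \ref{no-torsion} and the Bia\l{}ynicki-Birula decomposition), one can argue by a weight/degree count that no class of $IH^{\leq 2d}_\bT(U;\C)$ can arise.

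The main obstacle will be the final vanishing $IH^{\leq 2d}_\bT(U;\C)=0$. This is not automatic from the absence of $\bT$-fixed points on $U$, which only gives $H^*(B\bT)$-torsion rather than outright vanishing. The approach is to stratify $U$ by $\bT$-orbit type and estimate intersection cohomological dimensions stratum by stratum, using that every $\bT$-orbit in $U$ is positive-dimensional so the $\bT$-equivariant stratum cohomology sits in a strictly smaller effective dimension range; combined with the long exact sequences one works down through the stratification. This computation is the technical heart of the argument.
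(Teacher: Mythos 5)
Your plan---passing through $H := H^{2d}_{\bT,\fM_0^+}(\fM_0;\IC_{\fM_0})$ and showing that the forget-equivariance and forget-supports maps are isomorphisms---matches the paper's strategy, but both legs of the argument have gaps. For the forget-equivariance step you need $H^q_{\fM_0^+}(\fM_0;\IC_{\fM_0}) = 0$ for $q < 2d$, and your justification misidentifies the group: $H^q_{\fM_0^+}(\fM_0;\IC_{\fM_0}) = H^q(\fM_0^+;\,i^!\IC_{\fM_0})$ is not the intrinsic Borel--Moore intersection homology of $\fM_0^+$, and a dimension count on $\fM_0^+$ alone does not give the vanishing because the stalks of $\IC_{\fM_0}$ at the singular strata live in positive cohomological degrees. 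What actually makes the vanishing true is the decomposition theorem: $\cF_{\becircled\fM_0}=\IC_{\fM_0}$ is a direct summand of $\nu_*\C_\fM$, so $H^q_{\fM_0^+}(\fM_0;\IC_{\fM_0})$ is a direct summand of $H^q_{\fM^+}(\fM;\C)\cong H^{BM}_{4d-q}(\fM^+;\C)$, which vanishes for $q<2d$ since $\fM^+$ is a purely $2d$-real-dimensional closed subvariety of the smooth $4d$-manifold $\fM$. That decomposition-theorem input is what the paper uses, and it cannot be bypassed by a naive bound.

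The more serious gap is in the forget-supports step. You reduce to $IH^q_\bT(U;\C)=0$ for all $q\le 2d$, which is false whenever $U\ne\emptyset$ (already $IH^0_\bT(U;\C)\ne 0$); what is needed is vanishing only in degrees $2d-1$ and $2d$, and you do not establish even that---you defer it to an unfinished orbit-type stratification, which as sketched would at best bound cohomological dimension and not produce vanishing in two specific middle degrees. The paper does not prove any direct vanishing on $U$. It instead combines: (a) $IH^*_\bT(U;\C)$ is finite-dimensional, because $\dim\bT=1$ and $\bT$ has no fixed points on $U$, so it is a torsion $H^*_\bT(pt)$-module; (b) $IH^*_\bT(\fM_0,U;\C)$ is free over $H^*_\bT(pt)$ on generators concentrated in degree $2d$ (by the first step); and (c) $IH^*_\bT(\fM_0;\C)$ is generated over $H^*_\bT(pt)$ in degrees $<2d$, since the $\bS$-contraction identifies $IH^*(\fM_0;\C)$ with the stalk $(\IC_{\fM_0})_o$, which is concentrated in degrees $<2d$ by the strict support condition. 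Fed into the long exact sequence of the pair $(\fM_0,U)$, these show $IH^{2d}_\bT(\fM_0,U;\C)\to IH^{2d}_\bT(\fM_0;\C)$ is an injective map of free $H^*_\bT(pt)$-modules whose torsion cokernel is forced to vanish in degrees $\geq 2d$, hence an isomorphism. This module-theoretic argument, rather than a geometric vanishing on $U$, is the content your proposal is missing.
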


\begin{proof}
We have $I\! H^*(\fM_0, \fM_0 \setminus \fM_0^+;\C) \subset H^*(\fM, \fM \setminus \fM^+;\C)$. Since the second group is zero except in degree $2d$, so is the first.  It follows that
the forgetful map induces an isomorphism 
\[ I\! H^{2d}_{\bT}(\fM_0, \fM_0 \setminus \fM_0^+;\C)\cong I\! H^{2d}(\fM_0, \fM_0 \setminus \fM_0^+;\C) = \Sigma(\cF_{\becircled\fM_0}).\]  
 Since $\dim \bT = 1$ and $\fM \setminus \fM_0^+$ contains no $\bT$-fixed points, the total dimension of 
$I\! H^*_\bT(\fM_0 \setminus \fM_0^+; \C)$ is finite.  The result now follows by applying the long exact sequence in $I\! H_{\bT}$ for the pair $(\fM_0, \fM_0 \setminus \fM_0^+)$ together with the fact that 
$I\! H^*_{\bT}(\fM_0; \C)$ is generated as an $H^*_\bT(pt)$-module by the part in degrees $< 2d$.
\end{proof}

The following theorem is the alternate version of Theorem \ref{special filtrations} that we may apply if we only care about the
dense stratum; again, the proof is clear.

\begin{theorem}\label{dense}
Suppose that localization holds for $\cD$.  Then the natural map from $K(\fP_{\becircled\fM_0})_\C$ to $I\! H^{2d}_{\bT}(\fM_0; \C)$
is always surjective, and it is an isomorphism if and only if $\fM_{\a,0} \neq \fM_0$ for all $\a\in\cI$ such that $X_{\a,0}$ is contained in the closure of a
non-dense leaf.
\end{theorem}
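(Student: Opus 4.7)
The plan is to deduce both assertions from the fact that $\suppc \colon K(\cOg)_\C \to \bmc$ is an isomorphism (Theorem \ref{support isomorphism}) by restricting it to suitable subspaces. Set $V := K(\cOg^{\partial \becircled\fM_0})_\C$ inside $K(\cOg)_\C$ and $W := \bigoplus_{\bar S \neq \fM_0} \Sigma(\cF_S)$ inside $\bmc$, so that $K(\fP_{\becircled\fM_0})_\C \cong K(\cOg)_\C / V$ and, by Lemma \ref{IH}, $\IH^{2d}_\bT(\fM_0;\C) \cong \bmc/W$. The map in the statement is then the quotient map induced by $\suppc$.

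First I would verify that $\suppc(V)\subseteq W$, so that this quotient map is well defined. A simple $\Lambda_\a$ lies in $\cOg^{\partial\becircled\fM_0}$ precisely when $\fM_{\a,0}\subsetneq\fM_0$, and Theorem \ref{unique-minimal} then shows that the support of $\Lambda_\a$ sits inside $\nu^{-1}(\fM_{\a,0})$. Consequently every relative core component $X_\b$ appearing in $\suppc[\Lambda_\a]$ with nonzero multiplicity satisfies $X_{\b,0}\subseteq\fM_{\a,0}$, and Lemmas \ref{basis} and \ref{BBD-filtration-basis} place $[X_\b]$ inside $\Sigma(\cF_{\leq S})\subseteq W$ for the non-dense leaf $S$ with $\bar S=\fM_{\a,0}$. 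Since $\suppc$ is surjective, the induced map on quotients is automatically surjective, which proves the first claim.

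For the isomorphism criterion I would observe that, because $\suppc$ is an isomorphism, injectivity of the induced map is equivalent to $\suppc^{-1}(W)=V$, and in view of the inclusion $V\subseteq \suppc^{-1}(W)$ already established this further reduces to $\dim V=\dim W$. Set $\cI_V:=\{\a\mid \fM_{\a,0}\subsetneq \fM_0\}$ and $\cI_{\text{small}}:=\{\a\mid X_{\a,0}\subseteq \bar S\text{ for some non-dense leaf }S\}$. Lemmas \ref{basis} and \ref{BBD-filtration-basis} give $\dim W=|\cI_{\text{small}}|$, while the discussion preceding the theorem gives $\dim V=|\cI_V|$. The always-valid containment $X_{\a,0}\subseteq \fM_{\a,0}$ yields $\cI_V\subseteq \cI_{\text{small}}$, so equality of dimensions is equivalent to $\cI_{\text{small}}\subseteq\cI_V$; unpacking the definitions, this is exactly the condition in the statement.

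The only non-formal step is the inclusion $\suppc(V)\subseteq W$, which relies on Theorem \ref{unique-minimal} to control the geometric support of simple modules in $\cOg$; the rest is linear algebra built on the isomorphism $\suppc$. In particular, and in contrast to Theorem \ref{special filtrations}, no interleaved hypothesis is needed, because we only require set-theoretic containment of supports rather than the stronger statement that $\fM_{\b,0}\subseteq\fM_{\a,0}$ whenever $\b\leftharpoonup\a$ contributes to $\suppc[\Lambda_\a]$.
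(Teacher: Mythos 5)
Your proposal is correct and fills in precisely the steps the paper declares to be ``clear'': identifying the map as the quotient of $\suppc$ by $V := K(\cOg^{\partial\becircled\fM_0})_\C$ and $W := \bigoplus_{\bar S\neq\fM_0}\Sigma(\cF_S)$, deducing surjectivity from Theorem \ref{support isomorphism}, and reducing the isomorphism criterion to a count of basis elements via Lemmas \ref{basis} and \ref{BBD-filtration-basis} and Theorem \ref{unique-minimal}. This is the same argument the paper intends; your closing observation about why no interleaved hypothesis is needed (set-theoretic containment of supports suffices) correctly explains the contrast with Theorem \ref{special filtrations}.
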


\begin{definition}
If $(\fM,\cD)$ satisfies hypotheses of Theorem \ref{dense}, we will refer to the pair as {\bf light-headed}.
\end{definition}

\begin{example}\label{fat-light}
If $\fM$ is a hypertoric variety or a finite type A quiver variety and
$\cD$ is an integral quantization for which localization holds, then $(\fM,\cD)$ is interleaved
(Example \ref{ht and ftA}), and therefore
both fat-tailed and light-headed.
\end{example}

\begin{example}\label{light-quiver}
If $\fM$ is a quiver variety of finite simply-laced type and $\cD$ is an integral quantization for which
localization holds, then $(\fM,\cD)$ is fat-tailed \cite[1.2]{BLet}.
\end{example}

\begin{conjecture}\label{GrDE}
If $\fM$ is a resolution of a transverse slice in the affine Grassmannian of finite simply-laced type 
and $\cD$ is an integral quantization for which
localization holds, then $(\fM,\cD)$ is light-headed.
\end{conjecture}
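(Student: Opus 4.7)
The plan is to verify the criterion of Theorem \ref{dense}: for every $\alpha \in \cI$ such that $X_{\alpha,0}$ is contained in the closure of a non-dense symplectic leaf of $\fM_0$, we must show $\fM_{\alpha,0} \neq \fM_0$, equivalently that the annihilator $\Ann(L_\alpha)$ of the simple module $L_\alpha$ is nonzero. By Theorem \ref{unique-minimal} and Corollary \ref{mza}, $\fM_{\alpha,0}$ is the closure of a single special leaf $\becircled{\fM}_{\alpha,0}$, so the task is to bound this leaf away from the dense leaf using geometric information about the component $X_\alpha$.

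First, I would fix a concrete model. By \cite{KWWY}, the section algebra $A$ is expected to be (a truncation of) a shifted Yangian in finite simply-laced type, and via geometric Satake the indexing set $\cI$ for both $\bT$-fixed points and relative core components can be identified with a set of Mirkovi\'c-Vilonen cycles lying in the slice. The symplectic leaves of $\fM_0$ arise as intersections of $G(\cO)$-orbits with the slice. With this dictionary, ``$X_{\alpha,0}$ lies in a proper leaf closure'' becomes a combinatorial condition on MV cycles, and the light-headed condition asks that the primitive ideal associated to $L_\alpha$ be proper whenever the MV label has this property.

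The cleanest route to the required bound is via the conjectured symplectic duality with finite simply-laced quiver varieties (Remark \ref{quiver-Gr}): a Koszul duality $\cOa \leftrightarrow \cOa^!$ exchanges standard modules with indecomposable projectives, which under the filtrations of Section \ref{sec:filtrations} should trade the dense stratum of one side for the point stratum of the other. Combined with Example \ref{light-quiver}, which provides fat-tailedness on the quiver-variety side, this would transfer to light-headedness of $\fM$ and settle the conjecture. In type A, where the relevant symplectic duality is established unconditionally in this paper (Theorem \ref{hkr-mkr} and Corollary \ref{the real hkr-mkr}), this strategy goes through directly.

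The hard part is that the symplectic duality of Remark \ref{quiver-Gr} is not yet known outside type A, so one cannot simply transport the quiver-variety result in general. An unconditional proof therefore seems to require a direct attack on primitive ideals in truncated shifted Yangians: for instance, using the KLRW-algebra realization of $\cOa$ from \cite{Webqui} to show that a simple whose associated MV cycle projects into a proper $G(\cO)$-orbit closure must be annihilated by a nonzero element of $A$. Proving this uniformly across all simply-laced types, without input from the still-conjectural symplectic duality, is the principal obstacle.
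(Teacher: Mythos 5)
This is a conjecture that the paper does not prove; the remark immediately following it explains that the type~A case is settled (via the Mirkovi\'c--Vybornov identification of the slices with finite type~A quiver varieties, combined with Example~\ref{fat-light}, which shows such quiver varieties are interleaved and hence both fat-tailed and light-headed), while types~D and~E are explicitly left open. Your assessment that the remaining cases hinge either on establishing the symplectic duality of Remark~\ref{quiver-Gr} or on a direct primitive-ideal analysis of truncated shifted Yangians is reasonable and consistent with what the paper says.

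There are, however, two issues with the type~A portion of your proposal. First, the paper's type~A argument does not use duality at all: it uses directly that finite type~A quiver varieties are interleaved. (You also cite the affine duality, Theorem~\ref{hkr-mkr} and Corollary~\ref{the real hkr-mkr}, when for finite type~A slices the relevant result is Theorem~\ref{S3-duality}.) Second, and more importantly, the ``transfer'' you assert from fat-tailedness of $\fM^!$ to light-headedness of $\fM$ is not automatic even granting a symplectic duality. Proposition~\ref{cohsd} gives $\dim K(\fP_{\becircled\fM_0})_\C = \dim K(\fP^!_{\{o\}})_\C$, and fat-tailedness of $\fM^!$ (Theorem~\ref{point}) identifies the latter with $\dim H^{\dim\fM^!}(\fM^!;\C)$; but light-headedness of $\fM$ (Theorem~\ref{dense}) requires $\dim K(\fP_{\becircled\fM_0})_\C = \dim \IH_\bT^{\dim\fM}(\fM_0;\C)$, and the map to $\IH_\bT^{\dim\fM}(\fM_0;\C)$ is always surjective. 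What is missing is the bound $\dim H^{\dim\fM^!}(\fM^!;\C) \leq \dim \IH_\bT^{\dim\fM}(\fM_0;\C)$, which the definition of symplectic duality does not supply; indeed, the paper derives the resulting \emph{equality} (Corollary~\ref{ih-and-top}) only after assuming both fat-tailedness of $\fM^!$ and light-headedness of $\fM$, so using it as input would be circular. To close your argument one must invoke the separate facts that, via geometric Satake and Nakajima's theorem, both sides realize the same weight space $V(\la)_\mu$ (Example~\ref{awesome example}); without that step the deduction from fat-tailed to light-headed does not go through.
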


\begin{remark}
In type A, transverse slices in the affine Grassmannian coincide with quiver varieties \cite{MV08}, so Conjecture \ref{GrDE}
follows from Example \ref{fat-light}; the open cases are in types D and E.  We will revisit this conjecture
in Example \ref{awesome example}.
\end{remark}

\subsection{Cells}\label{sec:cells}
Throughout this section we will assume that localization holds.
Consider a pair of indices $\a,\b\in\cI$.

\begin{definition}
We say that $\a\Lleq \b$ if $\Ann L_\b\subset \Ann L_\a$.  
We say that $\a\Rleq \b$ if there exists a Harish-Chandra bimodule $H$
such that $L_\a$ is a subquotient of $H\otimes L_\b$.
We define a third pre-order on $\cI$ by putting $\a\Tleq\b$ if $\a\Lleq\b$ or $\a\Rleq\b$,
and then taking the transitive closure.  

If $\a\Tleq\b$ and $\b\Tleq\a$, we say that $\a$
and $\b$ lie in the same {\bf two-sided cell} of $\cI$.
\end{definition}

\begin{proposition}\label{two-sided-prop}
If $\a\Tleq\b$, then $\fM_{\a,0} \subset \fM_{\b,0}$.
\end{proposition}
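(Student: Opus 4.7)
The plan is to decompose the statement along the definition of $\Tleq$. Since this pre-order is the transitive closure of $\Lleq$ and $\Rleq$, and the conclusion $\fM_{\a,0} \subset \fM_{\b,0}$ is itself transitive, it suffices to prove the implication separately in the two cases $\a\Lleq\b$ and $\a\Rleq\b$. Throughout, Corollary \ref{mza} (together with our standing assumption that localization holds) identifies $\fM_{\a,0}$ with the associated variety $\fM_{L_\a} = V(\gr \Ann L_\a) \subset \fM_0$, and Theorem \ref{unique-minimal} will let us translate between containment of supports of graded modules and containment of these associated varieties.

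The case $\a\Lleq\b$ is essentially immediate. Fix a good filtration on $A$; then passing to symbols gives $\gr\Ann L_\b \subset \gr\Ann L_\a$ inside $\gr A \cong \C[\fM_0]$, so $V(\gr\Ann L_\a) \subset V(\gr\Ann L_\b)$, which by Corollary \ref{mza} is the desired containment $\fM_{\a,0}\subset\fM_{\b,0}$.

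The case $\a\Rleq\b$ is the heart of the matter, and requires a small support calculation. Let $H$ be a Harish-Chandra bimodule with $L_\a$ a subquotient of $H\otimes_A L_\b$. Choose good filtrations on $H$ and on $L_\b$; these induce a filtration on $H\otimes_A L_\b$ whose associated graded is a quotient of $\gr H \otimes_{\gr A}\gr L_\b$. Now $\gr H$ is a coherent sheaf on $\fM_0\times\fM_0$ set-theoretically supported on the diagonal $\fZ_0 \cong \Delta\fM_0$ (Definition~\ref{def:HCa}). Therefore a point $x\in\fM_0$ can lie in the support of $\gr H\otimes_{\gr A}\gr L_\b$ (as a module for the left $\gr A$-action) only if there exists $y\in\supp\gr L_\b$ with $(x,y)\in\supp\gr H$, which, by the diagonal support, forces $x=y\in\supp\gr L_\b$. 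Hence $\supp\gr(H\otimes_A L_\b)\subset\supp\gr L_\b$, and since $L_\a$ is a subquotient, $\supp\gr L_\a \subset \supp\gr L_\b \subset \fM_{\b,0}$. Because $\fM_{\b,0}$ is a (special) leaf closure, the last clause of Theorem~\ref{unique-minimal} then yields $\fM_{L_\a}\subset\fM_{\b,0}$, and one more appeal to Corollary~\ref{mza} gives $\fM_{\a,0}\subset\fM_{\b,0}$.

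The only real obstacle is bookkeeping: one has to verify that the associated graded of $H\otimes_A L_\b$ really is a quotient of $\gr H\otimes_{\gr A}\gr L_\b$ (standard once one uses filtered free resolutions, and can be outsourced to the discussion of good filtrations in Section~\ref{sec:localization}), and one must be careful that $\supp\gr L_\a\subset\fM_{\b,0}$ implies the containment of the single-leaf closure $\fM_{L_\a}$ rather than merely the closure of \emph{some} leaf meeting $\supp\gr L_\a$---but this is precisely the content of the "equivalently" clause of Theorem~\ref{unique-minimal}.
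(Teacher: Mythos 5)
Your proposal is correct. The decomposition into the two cases $\a\Lleq\b$ and $\a\Rleq\b$ and the handling of the $\Lleq$ case match the paper exactly (both rely on $\Ann L_\b\subset\Ann L_\a$ together with Corollary~\ref{mza} to identify $\fM_{\a,0}$ with $V(\gr\Ann L_\a)$).

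For the $\Rleq$ case your route is genuinely different. The paper localizes: from the subquotient $L_\a\subset H\otimes L_\b$ it passes to $\Lambda_\a$ as a subquotient of the convolution $\cH\star\Lambda_\b$ on $\fM$, and then reads off the containment of supports from the fact that $\cH$ is supported on the Steinberg $\fZ=\fM\times_{\fM_0}\fM$. You stay entirely on the affine side: you take a good bimodule filtration of $H$ (so that $\gr H$ lives on the diagonal of $\fM_0\times\fM_0$, as in Definition~\ref{def:HCa}), note that $\gr(H\otimes_A L_\b)$ is a quotient of $\gr H\otimes_{\gr A}\gr L_\b$, and use the diagonal support of $\gr H$ to force $\supp\gr(H\otimes_A L_\b)\subset\supp\gr L_\b$; then Theorem~\ref{unique-minimal} upgrades $\supp\gr L_\a\subset\fM_{L_\b}$ to $\fM_{L_\a}\subset\fM_{L_\b}$. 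Both arguments hinge on the same fact --- that Harish-Chandra objects live over the diagonal --- but yours is carried out purely in terms of associated gradeds over $\C[\fM_0]$, avoiding the geometric convolution formalism at the cost of a bit more bookkeeping about filtrations and the support of a tensor product over a commutative ring. Either presentation is fine; the paper's is slightly shorter because the support behavior of convolution with objects of $\HCg$ was already set up in Section~\ref{sec:filtrations}.
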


\begin{proof}
It is enough to show that either $\a\Lleq\b$ or $\a\Rleq\b$ implies that $\fM_{\a,0} \subset \fM_{\b,0}$.
By Corollary \ref{mza} and the fact that localization holds, $\fM_{\a,0} = \fM_{L_\a}$
and $\fM_{\b,0} = \fM_{L_\b}$.  If $\a\Lleq \b$, then $\Ann L_\b\subset \Ann L_\a$,
so by definition of $\fM_L$, we have $\fM_{\a,0} \subset \fM_{\b,0}$.  

If $\a\Rleq \b$, then there exists an algebraic Harish-Chandra bimodule $H$ 
such that $L_\a$ is a subquotient of $H\otimes L_\b$.
Localizing, we obtain a geometric Harish-Chandra bimodule $\cH$ 
such that $\Lambda_\a$ is a subquotient of $\cH\otimes \Lambda_\b$.
This implies that the support of $\Lambda_\a$ is contained in the support of $\Lambda_\b$.
By definition of $\fM_{\a,0}$, this implies that $\fM_{\a,0} \subset \fM_{\b,0}$.
\end{proof}

\vspace{-\baselineskip}
\begin{remark}
By Proposition \ref{two-sided-prop}, we have a surjective map from the set of two-sided cells to the set
of special leaves that takes the cell containing $\a$ to the special
leaf whose closure is $\fM_{\a,0}$.  If $\fM$ is a hypertoric variety, this map is a bijection \cite[7.14]{BLPWtorico};
the same is true if $\fM = T^*(G/B)$ and the period of $\cD$ is integral.  However, it need not be a bijection
when $\fM = T^*(G/B)$ and the period is non-integral.

For example, let $G = G_2$, let $\a_1,\a_2\in\mathfrak{h}^* \cong \Ht$ be the short and long simple roots,
and consider the quantization with period $\a_1 + \a_2/2$.
In this case, the simple modules with highest weights
$\a_1-\a_2/2$ and $-11\a_1-5\a_2/2$ are both associated with the sub-regular nilpotent orbit.
\end{remark}

\begin{conjecture}\label{cells}
Suppose that $\cD$ is an integral quantization in the sense of Section \ref{sec:integrality}.
Then the map from the set of two-sided cells to $\spe$ taking the equivalence class of $\a$
to $\fM_{\a,0}$ is an isomorphism of posets.
\end{conjecture}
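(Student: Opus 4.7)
The map $\Phi\colon \cI/\!\!\sim\, \to \spe$ sending a cell to $\fM_{\a,0}$ for any representative $\a$ is already well-defined and monotone by Proposition~\ref{two-sided-prop}. To upgrade this to an isomorphism of posets, I would prove (i) surjectivity of $\Phi$ and (ii) order reflection: $\fM_{\a,0}\subseteq\fM_{\b,0}$ implies $\a\Tleq\b$. Taking the symmetric case of (ii) gives injectivity of $\Phi$.

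For (i), the plan is to establish an analogue of Duflo's theorem: for integral $\cD$, every primitive ideal $I\subset A$ whose associated variety is a special leaf closure $\bar S$ is the annihilator of some simple $L_\a\in\cOa$. Fix any simple $A$-module $L$ with $\fM_L=\bar S$; then $H := A/\!\Ann(L)$ is an object of $\HCaS\setminus\HCadS$. By the module category structure of $\cOa$ over $\HCa$ (Proposition~\ref{tensor action}) and the filtration-compatibility recorded just before Remark~\ref{DG quotient}, $H\otimes P_\b$ lies in $\cOaS$ for every projective $P_\b\in\cOa$. One would then extract a simple subquotient $L_\a$ whose associated variety is maximal among those appearing in $H\otimes P_\b$ for $\b$ ranging over $\cI$, and check that $\fM_{\a,0}=\bar S$. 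The role of integrality is analogous to its role in the classical Duflo theorem for $U(\mg)$: it ensures that, as $\b$ varies, the tensor products $H\otimes P_\b$ collectively see every primitive ideal whose associated variety is a leaf closure in $\spe$.

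For (ii), the strategy is to construct, for any pair $\a,\b\in\cI$ with $\fM_{\a,0}\subseteq\fM_{\b,0}$, a Harish-Chandra bimodule $H$ such that $L_\a$ occurs as a subquotient of $H\otimes L_\b$, yielding $\a\Rleq\b$. When $\fM_{\a,0}=\fM_{\b,0}=\bar S$, one seeks $H\in\HCaS\setminus\HCadS$ relating the two simples through the monoidal action, exploiting the (conjecturally multi-fusion) structure of the quotient $\HCaS/\HCadS$; the fact that every simple in this quotient has full support $\bar S_\Delta$ should force a transitivity property on simples of $\cOa$ sharing the same associated variety. When the containment is strict, applying the same procedure with $S=\fM_{\b,0}$ produces composition factors of $H\otimes L_\b$ whose supports are contained in $\fM_{\b,0}$; iterating and using that $\cOa$ has finite length (Lemma~\ref{finite length}) lets one reach any $L_\a$ with $\fM_{\a,0}\subseteq\fM_{\b,0}$.

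The main obstacle is the surjectivity statement (i), which is a genuine Duflo theorem for quantizations of conical symplectic resolutions; in the classical case this relies on specific tools (the Harish-Chandra isomorphism, antidominant projectives, Jantzen's translation principle) that have no off-the-shelf analogue in our general setting. A secondary difficulty in (ii) is the transitivity argument, which in the classical case draws on the deep cell-theoretic results of Joseph and Vogan; at present the analogous statement is verified only in special classes (hypertoric, type A quiver, the Springer resolution with integral central character), so a proof in full generality would likely require developing a uniform cell theory using the Harish-Chandra machinery of \cite{BLPWquant} together with the full strength of the integrality hypothesis from Section~\ref{sec:integrality}.
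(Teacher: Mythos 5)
This statement is labeled a \emph{conjecture} in the paper, and the paper offers no proof of it in general; it only notes that the conjecture is known in special cases (hypertoric varieties via \cite{BLPWtorico}, and $T^*(G/B)$ at integral parameters, in the surrounding remarks). There is therefore no ``paper's own proof'' against which your attempt can be measured, and to your credit you do not claim to have supplied one: you identify a Duflo-type surjectivity statement and a Joseph--Vogan-type transitivity argument as the two missing ingredients, and you explicitly flag both as open in this generality.

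A few remarks on the content of your plan. First, note that the remark immediately preceding the conjecture in the paper already asserts that the cell-to-leaf map is surjective; whether the authors regard this as following from the definition of ``special for $\cD$'' (every primitive ideal with Lagrangian associated variety has that variety a leaf closure, and under the standing assumption that localization holds every such simple lands in $\cOa$) or as an unproved assertion is not spelled out, but it is worth checking whether surjectivity is actually the bottleneck you think it is before investing in a general Duflo analogue. Second, for the order-reflection part (ii), your idea of forcing transitivity through the monoidal structure on $\HCaS/\HCadS$ is exactly the right flavor, and the Losev--Ostrik multifusion result cited in Section~\ref{sec:filtrations} would be the natural tool if it generalized beyond the Springer case. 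Third, your appeal to the integrality hypothesis is somewhat generic; in the paper's $G_2$ example showing failure for non-integral periods, what goes wrong is that distinct cells can have the same associated variety, and a proof in the integral case would need to locate precisely which translation-functor or wall-crossing mechanism prevents this collapse. As it stands your proposal is a roadmap consistent with the paper's own framing, but it is not a proof and does not close any of the gaps the paper leaves open.
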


\begin{remark}\label{cell decomposition}
If Conjecture \ref{cells} holds, then the summands of $\bm$ in Equation \eqref{O-decomposition}
are simply the spans of the classes of the simple elements in each two-sided cell.
\end{remark}

\begin{remark}
One may refine the set of two-sided cells in two different ways:  by left cells (using the preorder $\Lleq$)
or by right cells (using the preorder $\Rleq$).  
By considering the left preorder, we can construct a surjective map from the set of left cells to the
set of irreducible components
of preimages in $\fM$ of special leaves in $\fM_0$.  If $\fM$ is a hypertoric variety, then this map is a bijection by
\cite[7.2]{BLPWtorico}.  If $\fM = T^*(G/B)$ and the period of $\cD$ is integral,
the same is true by the circle on the first page of the book of Borho,
Brylinski and MacPherson \cite{BBM89} (also based on work of Joseph,
Kashiwara and Duflo, among others).  As in the case of two-sided cells,
the non-integral case is more subtle.

By considering the right preorder, we can construct a surjective map from the set of right cells
to the set of irreducible components of varieties
of the form $\fM_{\a,0}^+ := \fM_{\a,0}\cap\fM^+$; such components are called {\bf orbital varieties}.
In the hypertoric case, this map is a bijection by \cite[7.11]{BLPWtorico}.
\end{remark}

\section{Twisting and shuffling functors}
\label{sec:twist-shuffl-funct}
The purpose of this section is to introduce two commuting collections of endofunctors of $\cOa$,
called twisting and shuffling functors.
The twisting functors, which operate by varying the period of the quantization, act on the entire category
$\Amod$, taking the subcategory $\cOa$ to itself.  These functors were introduced
in \cite[\S 6.4]{BLPWquant}, and they generalize Arkhipov's twisting functors on BGG category $\cO$ (see Remark \ref{Arkhipov}).
The shuffling functors operate by varying the choice of $\bT$, and therefore can only be defined on the category $\cOa$.
These functors generalize Irving's shuffling functors on BGG category $\cO$ (see Proposition \ref{shuf-twist}).

\subsection{Twisting functors}\label{twisting-gco}
Let $\la, \la'\in H^2(\fM; \C)$ be a pair of classes such that $\la - \la'\in H^2(\fM; \Z)$.
Let $A_\la$ and $A_{\la'}$ denote the algebras of $\bS$-invariant global sections of quantizations with periods $\la$ and $\la'$,
and let $\cOa\subset\Amod$ and $\cOa'\subset A_{\la'}\mmod$ be the associated categories.

In \cite[6.21]{BLPWquant}, we define an $(A_{\la'}, A_\la)$-bimodule $\bi$.  In the most general situation, 
$\bi$ is defined as a specialization of the space of sections of a quantized line bundle on the 
universal deformation $\scrM$ of $\fM$.  If localization holds at $\la'$, 
then $\bi$ can be described more simply as the bimodule of $\bS$-invariant global sections of $\cbi[\hmon]$ \cite[6.26]{BLPWquant}.
If $\fM$ is constructed as the symplectic quotient of a symplectic vector space, then $\bi$ can also be realized as a weight
space in a quotient of the Weyl algebra of the vector space \cite[6.28]{BLPWquant}.

Let $$\Phi^{\la'\!,\la}\colon D(A_\la\mmod)\to D(A_{\la'}\mmod)$$
be the functor obtained by derived tensor product with $\bi$. 

Let $\Pi\subset\Htr$ be the set of $\la$ such that
\begin{itemize}
\item $\la$ does not lie on the complexification of any of the hyperplanes of $\cHt$ (Section \ref{sec:nam}), and
\item there exists some conical symplectic resolution $\fM'$ of $\fM_0$ and some element $w$ of the
Namikawa Weyl group such that localization holds at $w\la$ on $\fM'$.
\end{itemize}
We prove in \cite[6.32]{BLPWquant} that, if $\la,\la'\in\Pi$, then the
functor $\Phi^{\la'\!,\la}$ is an equivalence, and it preserves bounded
derived categories; we will use the symbol $\Phi^{\la'\!,\la}$ for the
induced functor in this case.  By \cite[6.37]{BLPWquant}, in this case,
$\Phi^{\la'\!,\la}$ also takes the subcategory $D^b_{\cOa}(\Amod)$ to $D^b_{\cOa'}(\Amod)$.

\begin{remark}
By Remark \ref{dream}, the chambers of $\cHt$ are equal to $W$-translates of ample cones of conical symplectic
resolutions of $\fM_0$.  Thus $\Pi$ is the set of $W$-translates of classes that are not only ample on some resolution,
but deep enough in the ample cone so that localization holds on that resolution.  By Theorem \ref{localization},
the intersection of $\Pi$ with any chamber of $\cHt$ is nonempty (that is, it is always possible to go deep enough into the ample
cone so that localization holds).
\end{remark}

We define a {\bf pure twisting functor} to be an auto-equivalence of $D(A_{\la}\mmod)$ 
obtained by composing functors of the form $\Phi^{\la''\!,\la'}$ with
$\la',\la''\in \Pi$.  Such compositions
go through module categories for many different quantizations; we require that they pass only through
elements of $\Pi$, and that they begin and end at a single parameter $\la$.  
To define twisting functors in general, we incorporate the action of the Namikawa Weyl group.

For any $w\in W$, the rings $A_\la$ and $A_{w\la}$ are canonically isomorphic \cite[3.10]{BLPWquant}.
Let $$\Phi^{\la}_w\colon A_{w\la}\mmod\to A_{\la}\mmod$$
be the equivalence induced by this isomorphism; we will use the same
symbol to denote the induced functor on the derived category.

\begin{proposition}\label{w-O-commute}
The functor $\Phi_{w}^\la$ takes $\cOa$ to itself.
\end{proposition}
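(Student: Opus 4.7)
The plan is to verify that the canonical ring isomorphism $A_\la \cong A_{w\la}$ from \cite[3.10]{BLPWquant} is $\bT$-equivariant; once this is shown, it immediately identifies the subalgebras $A^+_\la$ and $A^+_{w\la}$, and $\Phi_w^\la$ evidently preserves both finite generation (which is intrinsic to the ring isomorphism) and $A^+$-local finiteness, so it sends $\cOa$ to $\cOa$.

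To check $\bT$-equivariance, I would first recall that the isomorphism comes from the $W$-action on the universal Poisson deformation $\scrM \to \Ht$ of Theorem \ref{universal family}, which covers the linear $W$-action on $\Ht$, and that this classical action lifts canonically to the quantized universal deformation by the classification in Theorem \ref{periods}. The fiberwise isomorphism $A_\la \cong A_{w\la}$ is then obtained by taking $\bS$-invariant sections on the fibers over $\la$ and $w\la$. On the other hand, the Hamiltonian $\bT$-action on $\fM$, being canonical, extends to a $\bT$-action on $\scrM$ which is fiberwise over $\Ht$, with the generator $\xi$ extending to a section of the quantized family; this is the same extension used in the twistor argument in the proof of Proposition \ref{B}. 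Both $W$ and $\bT$ thus act canonically on $\scrM$, and their commutativity at the classical level should propagate to commutativity at the quantum level, whence the fiberwise isomorphism $A_\la \cong A_{w\la}$ respects the $\bT$-weight decomposition, carrying $A^+_\la$ onto $A^+_{w\la}$.

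The main obstacle is propagating classical commutativity through the quantization. My plan is to handle this by exploiting the uniqueness clause in Theorem \ref{periods}: since the classical $W$- and $\bT$-actions on $\scrM$ commute, the pullback by $w$ of the quantized universal deformation is again a $\bT$-equivariant quantization with the same period data, so by uniqueness it must agree with the original quantization up to a canonical isomorphism, and that isomorphism is forced to be $\bT$-equivariant. Restricting to the fiber over $\la$ and taking $\bS$-invariant global sections then yields a $\bT$-equivariant isomorphism $A_\la \cong A_{w\la}$, which is exactly what is needed to complete the proof.
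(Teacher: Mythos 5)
Your proposal has the right target — showing that the canonical isomorphism $A_\la\cong A_{w\la}$ respects the $\bT$-weight decomposition — but the route you take to it contains two genuine gaps that the paper's argument avoids.

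First, you assert that the $W$- and $\bT$-actions on $\scrM$ commute classically, and treat this as the starting point. But this is precisely what needs to be proved: the $W$-action on $\scrM$ covers a nontrivial linear action on $\Ht$ while $\bT$ acts fiberwise, so commutativity means that $w\colon\scrM_\la\to\scrM_{w\la}$ intertwines the $\bT$-actions, which is not an obvious structural fact. Second, even granting commutativity, your appeal to the uniqueness clause in Theorem \ref{periods} does not straightforwardly yield $\bT$-equivariance of the canonical isomorphism: that classification is of $\bS$-equivariant quantizations, and the unique isomorphism it produces is determined only by $\bS$-equivariance and the period. To conclude that this isomorphism is automatically $\bT$-equivariant you would need a separate rigidity argument (e.g.\ that $\bS$-equivariant automorphisms with trivial period are trivial), which you do not supply. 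You flag this step as ``the main obstacle,'' which is the right instinct — but your proposed resolution is an assertion, not an argument.

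The paper's proof bypasses both issues with a short direct computation. The isomorphism $\phi\colon A_\la\to A_{w\la}$ from \cite[3.10]{BLPWquant} is filtered, preserves the $\Z/n\Z$-grading, and induces the \emph{identity} on the associated graded piece $A_\la(n)/A_\la(n-1)\cong A_{w\la}(n)/A_{w\la}(n-1)$. Since the quantized moment map $\xi_\la\in A_\la(n)$ has symbol $\bar\xi$ there and lies in $\Z/n\Z$-degree $\bar 0$, so does $\phi(\xi_\la)$; thus $\phi(\xi_\la)-\xi_{w\la}\in A_{w\la}(n-1)_{\bar 0}=A_{w\la}(0)\cong\C$, i.e.\ $\phi$ sends the quantized moment map to the quantized moment map up to a scalar. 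As the $\Z$-grading is given by $[\xi,-]$ (which is insensitive to adding a constant to $\xi$), $\phi$ matches the $\bT$-gradings, hence carries $A_\la^+$ to $A_{w\la}^+$ and thus preserves $\cOa$. If you want to rescue your version, you could make the abstract uniqueness argument precise, but you will likely find it is equivalent to — and no shorter than — this moment-map computation.
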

\begin{proof}
Recall that the isomorphism in \cite[3.10]{BLPWquant} that we use to define
$\Phi_{w}^\la$ arises from a $W$-action on the universal
 deformation $\scrM$.  This action commutes with that of $\bT$ since
 the isomorphism $A_\la\cong A_{w\la}$ induces the identity map on
 $A_\la(n)/A_{\la}(n-1)\cong A_{w\la}(n)/A_{w\la}(n-1)$ and preserves
 the grading by $\Z/n\Z$.  Thus it sends a non-commutative moment
 map for $\bT$ in $A_\la$ to one in $A_{w\la}$. Thus, the functor $\Phi_{w}^\la$ preserves category $\cOa$.
\end{proof}

We define a {\bf twisting functor} to be a composition of functors of the form $\Phi^{\la''\!,\la'}$ and their inverses
(passing only through elements of $\Pi$),
beginning at $D(A_{\la}\mmod)$ and ending at $D(A_{w\la}\mmod)$, followed by the functor $\Phi^{\la}_w$.  
Note that by \cite[6.25]{BLPWquant} there is a natural isomorphism  
$\Phi^{\la', \la} \circ \Phi^\la_w \cong \Phi^{\la'}_w \circ \Phi^{w\la', w\la}$, so
the set of twisting functors is closed under composition.

Let $E_{\operatorname{tw}}:=\displaystyle\Ht\smallsetminus\bigcup_{H\in\cHt}H_\C$.  The following theorem is proven in 
\cite[6.35 \& 6.37]{BLPWquant}.

\begin{theorem}\label{gco-twisting braid}
There is a natural homomorphism from $\pi_1(E_{\operatorname{tw}}/W, [\la])$ to the group of twisting functors on $D^b(A_{\la}\mmod)$,
preserving the full subcategory $\dOa$.
The subgroup $\pi_1(E_{\operatorname{tw}}, \la)$ maps to the group of pure twisting functors.
\end{theorem}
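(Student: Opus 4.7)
The plan is to realize the group of twisting functors as the image of a homomorphism out of a presented groupoid, and then identify this groupoid with the Deligne--Salvetti model of $\pi_1(E_{\operatorname{tw}}/W,[\la])$. First I would form a category $\mathcal{G}$ whose objects are the points of $\Pi$, whose morphisms $\la\to\la'$ are generated by the ``elementary'' symbols $\tau^{\la',\la}$ (one for each pair lying in adjacent chambers of $\cHt$) together with symbols $w:\la\to w\la$ for $w\in W$, modulo the relations $w\cdot w'=ww'$ and $\tau^{\la',\la}\circ w = w\circ \tau^{w^{-1}\la',w^{-1}\la}$. There is then an evident functor $F$ from $\mathcal{G}$ to the 2-category of triangulated categories, sending $\la\mapsto D(A_\la\mmod)$, $\tau^{\la',\la}\mapsto \Phi^{\la',\la}$, and $w\mapsto \Phi^\la_w$; the composition compatibility in the last sentence before the theorem shows this is well defined, and Proposition \ref{w-O-commute} and \cite[6.37]{BLPWquant} show that the image preserves the subcategory $\dOa$.

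Next, I would identify $\pi_1(E_{\operatorname{tw}}/W,[\la])$ with the automorphism group $\operatorname{Aut}_{\mathcal{G}}(\la)$ using a Deligne-type presentation. The arrangement $\cHt$ is the ``twisting'' arrangement in $\Htr$ and, by Remark \ref{dream}, its chambers are the $W$-translates of ample cones of conical symplectic resolutions of $\fM_0$. A theorem of Salvetti together with van der Lek's refinement gives $\pi_1(E_{\operatorname{tw}}/W,[\la])$ as the fundamental group of the groupoid on chambers with generators exactly the morphisms $\tau^{\la',\la}$ (going from one chamber to an adjacent one along a positively oriented half-loop around the separating hyperplane) and $w$ (going from a chamber to its translate), modulo precisely two relations: the $W$-equivariance of elementary paths, and the braid relation at each codimension-two face, which says that the composition of $\tau$'s around a rank-two sub-arrangement depends only on whether you go ``over'' or ``under'' and the two routes agree in the pure braid group.

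The main obstacle is verifying these codimension-two coherence relations for the functors $\Phi^{\la',\la}$. For this I would return to the construction of the bimodules $\bi$ as twistor specializations of quantized line bundles on the universal Poisson deformation $\scrM$, as in \cite[6.21]{BLPWquant}. Over the complement of the complexified hyperplanes, the universal deformation becomes affine, and any two quantized line bundles over the same base are canonically isomorphic by the uniqueness of quantizations of line bundles \cite[5.2]{BLPWquant}. Thus a loop in $E_{\operatorname{tw}}$ bounding a two-dimensional disk missing the codimension-$\geq 3$ stratum of $\cHt$ gives rise to a canonical self-isomorphism of $A_\la$-modules whose composite across the disk is the identity, which in turn promotes $F$ to a functor on the Deligne groupoid modulo these relations. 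Tracking this through the groupoid presentation produces the desired homomorphism, and by construction pure loops (those not using any $w$) map to pure twisting functors, giving the last assertion.

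Finally, to obtain a group homomorphism from $\operatorname{Aut}_{\mathcal{G}}(\la)$ rather than just a functor, I would note that $\mathcal{G}$ is connected (since $W$ acts transitively on ample cones and $\Pi$ is dense in each), so the endomorphism monoid at $\la$ is canonically identified with the isotropy group of the quotient groupoid; composing $F$ with this identification yields the homomorphism $\pi_1(E_{\operatorname{tw}}/W,[\la])\to\operatorname{End}\bigl(D^b(A_\la\mmod)\bigr)$ asserted by the theorem, and the whole construction is manifestly natural in $\la$.
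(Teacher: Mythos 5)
Your overall strategy matches the one the paper follows (delegating to \cite[6.35 \& 6.37]{BLPWquant}): model $\pi_1(E_{\operatorname{tw}}/W)$ as the semi-direct product of $W$ with the Deligne groupoid of the complexified arrangement, produce an action of the groupoid by the functors $\Phi^{\la',\la}$, and then use the relation $\Phi^{\la',\la}\circ\Phi^\la_w\cong\Phi^{\la'}_w\circ\Phi^{w\la',w\la}$ to extend it to the semi-direct product, exactly as the paper does in the shuffling case (Theorem \ref{gco-shuffling braid}). The groupoid $\mathcal{G}$ you construct, the $W$-equivariance relation, the density of $\Pi$ in each chamber, and the observation that pure loops map to pure twists are all as in the cited source.

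The genuine gap is your verification of the Deligne-groupoid (codimension-two) coherence relations, which is the actual mathematical content. You argue that because the fibers of $\scrM$ over $E_{\operatorname{tw}}$ are affine and quantized line bundles are unique (\cite[5.2]{BLPWquant}), a loop bounding a disk in $E_{\operatorname{tw}}$ ``gives rise to a canonical self-isomorphism\dots whose composite across the disk is the identity.'' This is not a proof but a restatement of the desired conclusion. The functors $\Phi^{\la',\la}$ are derived tensor products with the bimodules $\bi$, and the relation you need at each codimension-two face is that compositions along the two positive walks around a rank-two subarrangement agree, i.e.\ that ${}_{\la''}\!T_{\la'}\otimes^L_{A_{\la'}}{}_{\la'}\!T_\la\cong{}_{\la''}\!T_\la$ when $\la,\la',\la''$ lie on a direct positive path. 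Uniqueness of quantizations of line bundles on $\scrM$ does not give this: the bimodules are obtained by taking $\bS$-invariant sections and specializing, and neither of these operations commutes with derived tensor product in general---indeed this failure is precisely what produces a nontrivial braid group action rather than the trivial one. In \cite{BLPWquant} the composition relation is proved via a cohomology-vanishing argument on the twistor deformation (the analogue of \cite[6.33]{BLPWquant}, ``twisting within a chamber is trivial'', and the surrounding results); that step, or some substitute for it, is missing from your proposal.
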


\begin{remark}\label{Arkhipov}
In the case of hypertoric varieties, twisting functors are studied in detail in \cite[\S 6]{GDKD} and \cite[\S 8.2]{BLPWtorico}.
In the case of the Springer resolution, we show in \cite[6.38]{BLPWquant} that they coincide with the twisting
functors defined by Arkhipov \cite{AS} (thus justifying the name).
\end{remark}

\begin{remark}\label{K-twisting}
On the level of the Grothendieck group, pure twisting functors act trivially \cite[6.39]{BLPWquant}, therefore
we obtain an action of $W$ on $K(\cOa)\cong K(\cOg)\cong\bm$.  This action coincides with the one arising from
the natural map from $\C[W]$ to the convolution algebra $\HZC$ \cite[6.40]{BLPWquant}.
\end{remark}

\subsection{Shuffling functors}\label{shuffling}
In this section we discuss shuffling functors, which are in a certain
sense ``dual'' to twisting functors (this will be explained in Section \ref{duality}).  Unlike twisting functors, these
are unavoidably tied to the subcategory $\cOa\subset \Amod$, since they are constructed by
varying the action of $\bT\cong\cs$.
We will fix a single quantization throughout this section, and we will assume that its period lies in $\Pi\cap\mathfrak{U}$.\footnote{In fact, it is easy to show, using twisting functors, that $\mathfrak{U}\subset\Pi$.}

\begin{lemma}\label{two-generics}
Let $G$ be the full group of Hamiltonian symplectomorphisms of $\fM$ that commute with $\bS$. 
 A maximal torus $T\subset G$ containing the image of $\bT$ is
 unique up to conjugation by the largest unipotent subgroup commuting
 with $\bT$.  
\end{lemma}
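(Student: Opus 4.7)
The plan is to reduce the statement to the structure theory of the connected algebraic group $H := Z_G(\bT)^\circ$. First I would observe that any maximal torus $T \subset G$ containing the image of $\bT$ must lie in $Z_G(\bT)$, and hence is a maximal torus of $H$; conversely every maximal torus of $H$ contains the central subtorus $\bT$. Moreover, the largest unipotent subgroup of $G$ commuting with $\bT$ can be identified with the unipotent radical $U := R_u(H)$: any unipotent subgroup of $G$ commuting with $\bT$ lies in $H$ and hence in its maximal connected normal unipotent subgroup $R_u(H)$, while $R_u(H)$ itself is unipotent in $G$ and centralizes $\bT$ (which is central in $H$).

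Next, I would invoke the Mostow--Levi decomposition $H = L \ltimes U$ with $L$ connected reductive, valid in characteristic zero. A standard argument---projecting a maximal torus of $H$ to $H/U \cong L$, lifting it back to $H$, and using that maximal tori in a connected solvable group (the preimage of a fixed maximal torus of $L$) are conjugate by an element of $U$---shows that every maximal torus of $H$ is $U$-conjugate to a maximal torus contained in $L$. The lemma therefore reduces to showing that $L$ itself contains a unique maximal torus, or equivalently that $H$ is solvable.

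The main obstacle is thus to establish solvability of $H$. The key input is the assumption that $\fM^\bT$ is finite. Since $H$ is connected and preserves this finite set, it fixes each point of $\fM^\bT$ pointwise. At such a fixed point $p$, the action of $H$ on $T_p\fM$ is faithful (any $g \in G$ fixing $p$ and acting trivially on $T_p\fM$ must act trivially on the formal neighborhood $\widehat{\fM}_p$, and hence on all of $\fM$, using the $\bS$-contracting structure together with irreducibility of $\fM$), commutes with $\bT$, and preserves the symplectic form; because $\bT$ has only nonzero weights on $T_p\fM$, the commutant of $\bT$ in $\operatorname{Sp}(T_p\fM)$ is $\prod_{\chi > 0}\operatorname{GL}(T_p\fM_\chi)$, and $H$ embeds into this product. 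Ruling out a nontrivial semisimple quotient of $H$ requires a global argument: any such quotient would generate a nonabelian family of commuting Hamiltonian vector fields on $\fM$ whose moment map is $\bT$-invariant, and integrating these through the formal neighborhoods of the $\bT$-fixed points would deform $\fM^\bT$ in positive-dimensional families, contradicting its finiteness. Once $H$ is known to be solvable, the classical fact that maximal tori in a connected solvable algebraic group are conjugate by the action of the unipotent radical completes the proof.
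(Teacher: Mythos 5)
Your reduction is sound as far as it goes: maximal tori of $G$ containing $\bT$ are exactly maximal tori of $C := Z_G(\bT)$, and once $C$ is known to be an extension of a torus by a unipotent group, the conjugacy of maximal tori in a connected solvable group under its unipotent radical finishes the job. This is essentially the skeleton the paper uses as well.

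The problem is the crucial step: showing that $C$ has no nontrivial semisimple part. Your proposed global argument --- that a semisimple quotient would ``generate a nonabelian family of commuting Hamiltonian vector fields'' whose integration would ``deform $\fM^\bT$ in positive-dimensional families'' --- is not a valid deduction. Any connected subgroup of $C$ preserves $\fM^\bT$ pointwise precisely because it commutes with $\bT$ and $\fM^\bT$ is finite; acting by such symplectomorphisms does not move the fixed-point set at all, so no contradiction to its finiteness arises. Likewise, the embedding of $C$ into $\prod_{\chi>0}\operatorname{GL}(T_p\fM_\chi)$ at a fixed point $p$ is no obstruction to $C$ being reductive, since that group is itself reductive. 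You correctly identify that a global argument is needed but do not supply one.

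The paper's actual argument for this step is genuinely geometric and worth spelling out: one supposes a root space $\mg_\gamma$ lies in the common kernel $\mk$ of all the tangent maps $\mg^r\to T_{p_\a}\fM$ and picks a semisimple $G'\subset G$ with $\mg_\gamma\subset\mg'$. The classification of projective homogeneous spaces shows any projective $G'$-orbit in $\fM$ on which $\mg_\gamma$ acts trivially at the fixed points must be a single point. Then one looks at the action of $G'$ on the (projective, Lagrangian) core: if $G'$ acted nontrivially there, the Borel fixed-point theorem would produce a nontrivial closed orbit, which the first step forbids. So $G'$ fixes the core pointwise, but a reductive Hamiltonian group fixing a Lagrangian subvariety of a connected symplectic variety must act trivially on the whole variety --- a contradiction. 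This chain of ideas, centered on the core and Borel's theorem, is the content your proposal is missing; without it, the proof does not close.
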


\begin{proof}
Let $C=C_G(\bT)$ be the centralizer of this cocharacter.  We wish to
show that this group is an extension of a torus by a unipotent
subgroup.  That is, we wish to show that any reductive subgroup of $C$
is a torus.  Any such subgroup must be contained in a maximal reductive
subgroup $G^r$.  Let $T$ be a maximal torus in $G^r$ containing $\bT$.
For all $\a\in\cI$, consider the map $\mg^r\to T_{p_\a}\fM$, and let $\mk\subset\mg^r$ denote 
the intersection over all $\a$ of the kernels of these maps.
Since the fixed points are isolated, we have $\mt\subset
\mathfrak{c}\cap \mg^r\subset\mk$.
We will show that $\mk = \mt$.  This will imply that $\mt = \mathfrak{c}\cap \mg^r$, and therefore
that $T$ is the identity component of $C\cap G^r$ and a maximal
reductive subgroup of $C$.  Since every torus containing $\bT$
is conjugate to $T$ under $C$ by the uniqueness of maximal
tori in $C$, the proof will be complete.

Since $\mk$ is invariant under the adjoint action of $\mt$, it must be a sum of
$\mt$ and some root spaces of $\mg^r$.  
Assume for the same of contradiction that there exists a root $\gamma$ such that $\mg_\gamma\subset\mk$, 
and let $G'\subset G^r$
be a semisimple subgroup whose Lie algebra $\mg'$ contains $\mg_\gamma$.
Consider a projective orbit $X\subset\fM$ of $G'$.  By assumption, for all $\a\in\cI$,
the map $\mg'\to T_{p_\a}X$ kills $\mg_\gamma$.
By the classification of projective homogeneous spaces,
this is only possible if $X$ is a point, thus all projective $G'$-orbits in $\fM$ are trivial.

Consider the action of $G'$ on the core $\fX\subset\fM$ (Remark \ref{core}).
If $G'$ acted nontrivially on $\fX$, then it would contain a nontrivial closed orbit \cite[Corollary 2]{BB80}, 
which we have seen is not possible.
Thus $G'$ fixes all of $\fX$.  Any Hamiltonian action
of a reductive group on a connected symplectic variety that fixes a Lagrangian
subvariety must be trivial, thus we obtain a contradiction.
\end{proof}

Let $\zeta:\bT\to T$ be the cocharacter of $T$ by which $\bT$ acts.  In this section we will vary $\zeta$,
and thereby vary the action of $\bT$.
We call a cocharacter {\bf generic} if $\fM^\bT$ is finite.
For any generic $\zeta$, we will write $\cOa^\zeta$ for the corresponding algebraic category $\cO$.
The set of non-generic cocharacters is equal to the intersection of the cocharacter lattice of $T$
with the union of a finite set $\cHs$ of hyperplanes in $\mt_\R$.

Let $D^b_{h}(\Dmod)$ be the full subcategory of $D^b(\Dmod)$ consisting of complexes with holonomic cohomology, and 
let $D^b_h(\Amod)$ be the full subcategory of $D^b(\Amod)$ that is taken to $D^b_h(\Dmod)$ by $\LLoc$.
Also let $\iota^{\zeta}:D^b_{\!\cOa^{\zeta}}(\Amod)\to D^b_h(\Amod)$ be the
inclusion functor; it is full and faithful by definition of $\mathfrak{U}$ (Section \ref{objects}).

\begin{proposition}
  The functor $\iota^{\zeta}$ has left and right
  adjoints  \[{}^L\pi^{\zeta}:D^b_h(\Amod)\to
  D^b_{\!\cOa^{\zeta}}(\Amod)\and {}^R\pi^{\zeta}:D^b_h(\Amod_h)\to
  D^b_{\!\cOa^{\zeta}}(\Amod). \]
\end{proposition}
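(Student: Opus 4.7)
The strategy is to construct both adjoints in a Morita-theoretic fashion, using a projective generator (for the right adjoint) and an injective cogenerator (for the left adjoint) of $\cOa^\zeta$.

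By the hypothesis that the period $\la$ lies in $\Pi\cap\mathfrak{U}$, the analogues of Theorem~\ref{highest weight} and Corollary~\ref{FF} applied to $\cOa^\zeta$ imply that $\cOa^\zeta$ is highest weight (hence has enough projectives and injectives and finite global dimension), and that the natural functor $D^b(\cOa^\zeta)\to D^b_{\cOa^\zeta}(\Amod)$ is an equivalence. Moreover, the result of McGerty--Nevins cited in the proof of Corollary~\ref{FF} guarantees that $A$ itself has finite global dimension, so every finitely generated $A$-module is perfect.

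For the right adjoint, I would choose a projective generator $P$ of $\cOa^\zeta$, set $B:=\End_A(P)^{\op}$, and observe that $\Hom_A(P,-)$ induces a derived equivalence $D^b(B\mmod)\simeq D^b_{\cOa^\zeta}(\Amod)$ with quasi-inverse $V\mapsto P\otimes^L_B V$ (finite global dimension of $B$, inherited from that of $\cOa^\zeta$, is used here). I would then define
$${}^R\pi^\zeta(M):=P\otimes^L_B\RHom_A(P,M),$$
and verify the adjunction by combining the standard tensor--Hom identity
$\Hom_{D^b(\Amod)}(P\otimes^L_B V,M)\cong\Hom_{D^b(B\mmod)}(V,\RHom_A(P,M))$
with the equivalence above and the full faithfulness of $\iota^\zeta$. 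For the left adjoint, I would dualize: choose an injective cogenerator $I$ of $\cOa^\zeta$, set $C:=\End_A(I)$ so that $I$ becomes an $(A,C)$-bimodule with $C$ acting on the right, and define
$${}^L\pi^\zeta(M):=\RHom_C\!\big(\RHom_A(M,I),\,I\big),$$
where $\RHom_A(M,I)$ is regarded as a right $C$-module. The adjunction follows from the bimodule identity $\RHom_A(M,\RHom_C(W,I))\cong\RHom_C(W,\RHom_A(M,I))$ combined with the contravariant derived equivalence between $\cOa^\zeta$ and the appropriate category of $C$-modules induced by $\Hom_A(-,I)$. Boundedness of both functors then follows from the finite global dimensions of $A$, $B$, and $C$, and the fact that the outputs land in $D^b_{\cOa^\zeta}(\Amod)$ is built into the Morita equivalences.

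The main technical obstacle is the bookkeeping for the contravariant equivalence underlying the left adjoint: one must verify carefully that the bimodule structure on $I$, the variance of the derived equivalence, and the identification of $\RHom_C(\RHom_A(M,I),I)$ with an object of $D^b_{\cOa^\zeta}(\Amod)$ all fit together to produce the claimed adjunction with the correct variance. A cleaner alternative would be to exploit a Ringel-dual or BGG-style contravariant auto-equivalence of $\cOa^\zeta$, when available, to convert the left-adjoint question into a second instance of the projective-generator construction above.
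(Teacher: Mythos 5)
Your construction is essentially the same as the paper's: the paper also defines ${}^R\pi^{\zeta}(N) := \Ext(P,N)\otimes P$ and ${}^L\pi^{\zeta}(N) := \Ext(\Ext(N,I),I)$ using a projective generator $P$ and injective cogenerator $I$, interpreted via the Morita equivalences $\Hom(P,-)\colon D^b_{\cOa^\zeta}(\Amod)\simeq D^b(\End(P)^{\op}\mmod)$ and $\Hom(-,I)$, and verifies adjunction by the same tensor--Hom and double-dual identities. Your worry about the variance bookkeeping for the left adjoint is the same issue the paper handles silently by working in the dg-enhancement of $\Amod$.

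The one place you deviate, and where a small gap appears, is the justification that the outputs land in $D^b_{\cOa^\zeta}(\Amod)$. You attribute this to ``finite global dimensions of $A$, $B$, and $C$'' plus the Morita equivalences, but finite global dimension only gives boundedness of $\RHom_A(P,M)$; it does not ensure that the individual cohomology groups are finite dimensional (hence actually objects of $D^b(B\mmod)$ for the finite-dimensional algebra $B$). This is precisely where the hypothesis that $M$ lies in $D^b_h(\Amod)$ rather than all of $D^b(\Amod)$ is used: as the paper notes, holonomicity of $\LLoc(M)$ forces $\Ext^\bullet(P,M)$ and $\Ext^\bullet(M,I)$ to be finite dimensional. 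You correctly restrict the domain to $D^b_h(\Amod)$ in the statement, but you never invoke that restriction in the argument; without it the Morita transport would not apply, since $\RHom_A(P,M)$ could fall outside $D^b(B\mmod)$.
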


\begin{proof}
  If $P$ is projective generator and $I$ an injective generator of $\cOa^{\zeta}$, then the functor
  $\Hom(P,-)$ induces an equivalence $D^b_{\!\cOa^{\zeta}}(\Amod)\cong
  D^b(\End(P)^{\op}\mmod)$, and similarly $\Hom(-,I)$ induces an equivalence with $D^b(\End(I)\mmod)^{\op}$. 
In fact, there is a richer structure here:
replacing $P$ with a projective resolution of $P$ as an $A$-module,
for any object $N$ of $D^b_{\!\cOa^{\zeta}}(\Amod)$, we can think of $\Ext(P,N)$ as an
object in $D^b(\End(P)\mmod)^{\op}$.  
Similarly, we can think of $\Ext(N,I)$ as
an object in $D^b(\End(I)\mmod)$. (These are the Hom-spaces in the
usual dg-enhancement of $\Amod$.) Note that the hypothesis
that $\LLoc(N)$ is holonomic guarantees that these complexes are
finite dimensional. 
We can define
\[{}^L\pi^{\zeta}(N) :=\Ext_{\End(I)} (\Ext_{\cOa^{\zeta} }(N,I),I) \and
{}^R\pi^{\zeta}(N) := P \otimes_{\End(P) ^{\op}}\Ext_{\cOa^{\zeta} }(P,N).\]  
This completes the proof.
\end{proof} 

\vspace{-\baselineskip}
\begin{remark}
One can think of these two adjoints as ``projections'' onto $D^b_{\!\cOa^{\zeta}}(\Amod)$.  The
functor ${}^L\pi^{\zeta}$ is the derived functor of
taking the largest quotient of a module that lies in $\cOa^{\zeta}$, 
and ${}^R\pi^{\zeta}$ is the derived functor of taking the largest
such submodule.  It's clear that these functors are left/right exact, respectively. 
\end{remark}

Given  two different generic cocharacters $\zeta$ and $\zeta'$ of $T$, let
  $$\Psi^{\zeta'\!,\zeta} :=
  {}^L\pi^{\zeta'}\circ\iota^{\zeta}:D^b_{\!\cOa^{\zeta}}(\Amod)\to
  D^b_{\!\cOa^{\zeta'}}(\Amod),$$ and let $\Xi^{\zeta,\zeta'} = {}^R\pi^{\zeta}\circ\iota^{\zeta'}$ be its right adjoint. 
The following result, which should be regarded as an analogue of \cite[6.32]{BLPWquant}, 
was conjectured in a previous draft of this paper, and
has recently been proved by Losev \cite[7.3]{Losshuff}.

\begin{proposition}
\label{derived shuffling} 
The functor $\Psi^{\zeta'\!,\zeta}$ is an equivalence.
\end{proposition}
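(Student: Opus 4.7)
The plan is to show that $\Psi^{\zeta',\zeta}$ and its right adjoint $\Xi^{\zeta,\zeta'} = {}^R\pi^\zeta \circ \iota^{\zeta'}$ are mutually quasi-inverse by verifying that the adjunction unit and counit are natural isomorphisms.

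First, I would reduce to adjacent chambers. Any path in $\mt_\R$ from $\zeta$ to $\zeta'$ can be chosen transverse to $\cHs$, and both $\Psi^{\zeta',\zeta}$ and $\Xi^{\zeta,\zeta'}$ decompose compatibly along such a path (right adjoints reverse composition, so the adjointness propagates). It is therefore enough to handle the case where $\zeta$ and $\zeta'$ lie on opposite sides of a single wall of $\cHs$; within a single chamber, the grading condition defining $\cOa^\zeta$ depends only on the sign pattern of $\zeta$, so the two categories coincide and the functor is the identity.

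For the wall-crossing case, the core argument is to check the unit on standard objects. By Theorem \ref{highest weight}, both $\cOa^\zeta$ and $\cOa^{\zeta'}$ are highest weight, and their simples are canonically labeled by the common set $\cI = \fM^T$ (which is independent of the choice of generic cocharacter). Since $D^b(\cOa^\zeta) \cong \dOa$ by Corollary \ref{FF}, and the standards generate this derived category, it suffices to show that the unit map $\Delta_\a^\zeta \to \Xi^{\zeta,\zeta'}\Psi^{\zeta',\zeta}(\Delta_\a^\zeta)$ is an isomorphism for each $\a \in \cI$. One attacks this using the realization of standards via twistor deformations (Proposition \ref{geometric standard} and Lemma \ref{Delta-Theta}) together with the explicit character computation of Proposition \ref{Theta-character}, which allows direct inspection of which weight spaces of $\Delta_\a^\zeta$ persist after projection to $\cOa^{\zeta'}$ and which get killed by ${}^L\pi^{\zeta'}$.

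The main obstacle is controlling the higher derived parts of the projections. When ${}^L\pi^{\zeta'}$ is applied to $\Delta_\a^\zeta$, weight spaces unbounded above for the $\zeta'$-grading can contribute to higher cohomology rather than vanishing cleanly, and proving that $\Xi^{\zeta,\zeta'}$ exactly cancels these contributions requires precise Ext-computations between standards in $\cOa^\zeta$ and costandards $\nabla_\b^{\zeta'}$ regarded inside the common ambient category $D^b_h(\Amod)$. The genericity hypothesis on the period, namely that it lies in $\Pi \cap \mathfrak{U}$, is what makes such Ext-calculations tractable: Theorem \ref{gco-twisting braid} then supplies twisting equivalences that can be used to reduce to technically convenient parameters where standards and projections line up with the wall-crossing combinatorics governed by reflections through the wall in $\cHs$. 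I expect this Ext-vanishing step to be the technical heart of the argument, and indeed it is the content of the work of Losev \cite{Losshuff} cited in the proposition.
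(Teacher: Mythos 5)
The paper does not actually prove this proposition: immediately before the statement, the authors write that it ``was conjectured in a previous draft of this paper, and has recently been proved by Losev \cite[7.3]{Losshuff}.'' So there is no in-paper argument to compare against — the result is simply cited — and your proposal is an attempt to reconstruct a proof that the authors themselves chose to outsource. You do correctly flag this at the end.

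As a sketch, your proposal has one genuine gap: the opening reduction to adjacent chambers. You assert that $\Psi^{\zeta',\zeta}$ ``decomposes compatibly along'' a transverse path, i.e.\ that $\Psi^{\zeta',\zeta} \cong \Psi^{\zeta',\zeta''}\circ \Psi^{\zeta'',\zeta}$ for an intermediate generic $\zeta''$. Unwinding definitions, this would require
\[
{}^L\pi^{\zeta'}\circ \iota^{\zeta} \;\cong\; {}^L\pi^{\zeta'}\circ\bigl(\iota^{\zeta''}\circ {}^L\pi^{\zeta''}\bigr)\circ \iota^{\zeta},
\]
but the inserted factor $\iota^{\zeta''}\circ {}^L\pi^{\zeta''}$ is a projection onto $D^b_{\cOa^{\zeta''}}(\Amod)$, not the identity on $D^b_h(\Amod)$, so this identity is not formal. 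The compatibility of wall-crossing functors along homotopic paths is exactly the content of the Deligne-groupoid statement that the paper derives from \cite[7.3]{Losshuff} in the proof of Theorem \ref{gco-shuffling braid}; it is part of what has to be proven, not a free reduction you can make at the outset. Your invocation of ``right adjoints reverse composition'' establishes that the right adjoint of a composite is the reverse composite of the right adjoints, but says nothing about whether $\Psi^{\zeta',\zeta}$ itself factors. The remainder of your sketch — testing the unit on standards via twistor deformations and the character formula, then controlling higher derived functors via Ext-vanishing at parameters reached by twisting — is in a plausible ballpark and correctly identifies where the technical weight lies, but it would need to be run for the full $\Psi^{\zeta',\zeta}$ directly rather than after a reduction that is itself part of what must be established.
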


We define a {\bf pure shuffling functor} to be an endofunctor of $D^b_{\!\cOa^{\zeta}}(\Amod)$ 
obtained by composing functors of the form $\Psi^{\zeta''\!,\zeta'}$ for various generic cocharacters,
beginning and ending at a single generic cocharacter $\zeta$.
To define shuffling functors in general, we incorporate the action of the Weyl group.

Let $\rWeyl := N_G(T)/T$ be the Weyl group of $G$.  We use the blackboard-bold font to
distinguish $\rWeyl$ from the Namikawa Weyl group $W$,
which is typically different.  For example, if $\fM$ is a crepant resolution of $\C^2/\mck$,
then $W$ is isomorphic to the Weyl group corresponding to $\mck$ under the McKay correspondence, 
but $\rWeyl$ is trivial unless $\mck = \Z/2\Z$.

The action of $G$ on $\C[\fM]$ lifts canonically to an action on $A$.
The Weyl group $\rWeyl$ acts on the cocharacter lattice of $T$, and on the subset of generic cocharacters.
For all $w\in W$ and all generic $\zeta$, let $\zetaw := w\zeta w^{-1}$.
   Define a functor
$\Psi_{\bar w}: D^b(A\mmod)\to D^b(A\mmod)$ taking an $A$-module $N$
to the $A$-module  with the same underlying vector space, but with
action \[a\cdot x=(\bar wa)x \;\;\text{for all $x\in N$ and $a \in
  A$},\] for any  
$\bar w \in N_G(T)$ where the action on the left side is the action
on $\Psi_{\bar w}(N)$, and the action on the right is the original
action on $N$.  If $w$ is in the image of $\bar w$ in $\rWeyl$, we let $\Psi^\zetaw: D^b_{\!\cOa^{\zetaw}}(\Amod)\to
D^b_{\!\cOa^{\zeta}}(\Amod)$ denote the functor obtained by restricting $\Psi_{\bar w}$. Our notation for this functor is justified by the following lemma.

\begin{lemma}
Up to natural isomorphism, the functor $\Psi^\zetaw$ is independent of the choice of $\bar w$.  
\end{lemma}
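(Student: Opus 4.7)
The plan is to construct a natural isomorphism between $\Psi_{\bar w}$ and $\Psi_{\bar w'}$ using the quantized moment map for the $T$-action. Two lifts of the same element of $\rWeyl$ differ by an element of $T$, so write $\bar w' = \bar w t$ with $t \in T$, and set $\sigma := \bar w t \bar w^{-1} \in T$ (which is independent of the chosen lift $\bar w$).

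First I would observe that the Hamiltonian $T$-action on $\fM$ lifts to the quantization (analogously to the construction of $\xi$ in \cite[3.11]{BLPWquant}), giving a moment map $\mu: U(\mathfrak{t}) \to A$. For any $N \in \cOa^\zetaw$, the composition with the $A$-action makes $N$ into a $\mathfrak{t}$-module in which $\mu(\zetaw) = \xi$ acts with finite-dimensional generalized weight spaces by Lemma \ref{fdba}. Since $\mathfrak{t}$ is abelian, $\mu(\mathfrak{t})$ preserves each generalized $\xi$-weight space, so the whole $\mathfrak{t}$-action is locally finite; exponentiating formally on each such weight space produces an action of $T$ on $N$ satisfying the compatibility
\[
s \cdot (ax) = (s \cdot a)(s \cdot x) \qquad \text{for } s \in T,\ a \in A,\ x \in N,
\]
where $s \cdot a$ denotes the original $T$-action on $A$. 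This compatibility is the integrated form of the moment map identity $[\mu(\xi_0), a] = \xi_0 \cdot a$ for $\xi_0 \in \mathfrak{t}$. Next, defining the components of the desired natural transformation by $\tau_N(x) := \sigma \cdot x$, a direct computation
\[
\tau_N(a \cdot_{\bar w} x) = \sigma \cdot ((\bar w \cdot a) x) = (\sigma \cdot \bar w \cdot a)\,\tau_N(x) = (\bar w' \cdot a)\,\tau_N(x) = a \cdot_{\bar w'} \tau_N(x)
\]
(using the identity $\sigma \bar w = \bar w'$ in $N_G(T)$) shows that $\tau_N$ is an $A$-module isomorphism; naturality in $N$ is immediate, and the natural transformation passes to the bounded derived category without issue.

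The main obstacle is integrating the $\mathfrak{t}$-action to an honest $T$-action, since for generic $\zetaw$ the $\mu$-weights on an object of $\cOa^\zetaw$ need not lie in the character lattice of $T$. I would address this by exponentiating formally on each finite-dimensional generalized weight space, verifying that the resulting operator attached to a specific $t \in T$ is unambiguous: the ambiguity in choosing a branch of the logarithm is absorbed by the decomposition into generalized eigenspaces, on which the difference between two branches acts as the identity. Alternatively, one could work purely infinitesimally, integrating the derivation $a \mapsto [\mu(\xi_0), a]$ along a path in the connected group $T$ from the identity to $t$, which avoids any choice of logarithm.
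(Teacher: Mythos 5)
Your overall reduction is the same as the paper's: write $\bar w' = \bar w t$ for some $t\in T$, reduce to showing that twisting the module structure by a single element of $T$ is naturally isomorphic to the identity, and realize that intertwiner via the quantized moment map for $T$.

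However, your resolution of the obstacle you correctly identified is wrong. You claim that when you exponentiate the $\mt$-action on a generalized $\xi$-weight space, "the ambiguity in choosing a branch of the logarithm is absorbed by the decomposition into generalized eigenspaces, on which the difference between two branches acts as the identity." This fails precisely in the non-integral case. If $X, X' \in \mt$ satisfy $e^X = e^{X'} = t$, then $X' - X$ lies in the integral lattice $2\pi i\,\mt_\Z$, and on a generalized $\mu(\mt)$-weight space of weight $\la\in\mt^*$, the two candidate operators differ by the scalar $e^{\la(X'-X)}$. If $\la$ is not in the character lattice of $T$, this scalar is not $1$ — the branch ambiguity does \emph{not} act trivially, and there is no well-defined $T$-action on $N$. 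Your "alternative" of integrating along a path in $T$ has the same monodromy problem, since $\pi_1(T)$ is nontrivial. So the "direct computation" in your main paragraph is not available as stated, because the action $\sigma\cdot x$ it invokes hasn't actually been defined.

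The paper's fix is the missing ingredient: reduce to $N$ indecomposable, so that all $\mu(\mt)$-weights appearing in $N$ lie in a \emph{single} coset of the character lattice (because $A$ itself is graded by integral $T$-weights). Then one replaces $\eta$ by $\eta'(X) := \eta(X) - \la(X)$ for a suitable $\la\in\mt^*$, shifting all weights into the character lattice; the shifted $\mt$-action genuinely integrates to a $T$-action $\rho$. Since $\la(X)$ is a scalar, $[\eta'(X),a]=[\eta(X),a]$, so the shift does not disturb the moment-map commutator identity, and $\rho(t)$ still provides the required intertwiner between the $t$-twisted and untwisted module structures. Without the indecomposability reduction and the character shift (or the equivalent observation that the monodromy acts by a \emph{global} scalar on an indecomposable, hence gives an operator well-defined up to scalar), your argument has a genuine gap.
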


\begin{proof}
  It suffices to prove that, for any $t\in T$, the functor from $\cOa^{\zeta}$ to itself
  given by twisting the module structure by the action of $t$ is 
  isomorphic to the identity functor.  

 By \cite[3.11]{BLPWquant}, the action of $T$ on $\fM$ admits a
 quantized moment map $\eta:U(\mt)\to A$.
The element $\xi \in A$ is the image of a generator of $\bt$ under a
quantized moment map for the $\bT$-action, so we can assume that $\eta$ extends this map.  In particular, $\eta$ induces an action of $\mt$ on any $A$-module $N$ in
$\cOa^{\zeta}$ which commutes with the action of $\xi$; since the $\xi$-weight spaces are finite-dimensional the $\mt$-action is semi-simple.

We can assume that $N$ is indecomposable, and so all the
$\mt$ weights lie in the same coset of the weight lattice of $T$
inside of $\mt^*$.  Thus, there is a character $\la\in \mt^*$
such that the action of $\mt$ on $N$ via $\eta'(x) := \eta(x)-\la(x)$ integrates
to an action $\rho\colon T \to \End(N)$.

Since $\la(x)$ is a scalar, we have
\[[\eta'(X),a]y=[\eta(X),a]y=\left.\frac{d}{ds}(e^{sX}\cdot a)y\right|_{s=0}\]
for any $y\in N$.
Integrating, we get an equality
$\rho(t)a\rho(t^{-1})=t\cdot a$ of operators in $\End(N)$ for any $t\in T$.
Thus for all $a\in A$ and $y\in N$, we have $(t\cdot a)\rho(t)y =
\rho(t)ay$.  In other words, the map $\rho(t)\colon N\to N$ intertwines
the $t$-twisted action with the original action.
\end{proof}

We define a {\bf shuffling functor} to be a composition of of functors of the form $\Psi^{\zeta''\!,\zeta'}$ 
and their inverses, beginning at $\cOa^{\zeta}$ and ending at $\cOa^{\zetaw}$,
followed by the functor $\Psi^\zetaw$.

With twisting functors, we have a result \cite[6.33]{BLPWquant} that says that twisting between parameters that
all lie within a fixed chamber of $\cHt$ is trivial.  We now establish the analogous result for shuffling functors, which we will
need in Section \ref{duality}.  For convenience, we will assume that derived localization holds.

\begin{lemma}\label{trivial shuffling}
Assume that derived localization holds.
Suppose that $\zeta$ and $\zeta'$ lie in the same chamber of $\cHs$.  Then the subcategories 
$D^b_{\!\cOa^\zeta}(\Amod),D^b_{\!\cOa^{\zeta'}}(\Amod)\subset D^b(\Amod)$
are equal, and $\Psi^{\zeta'\!,\zeta}$ is the identity functor.
\end{lemma}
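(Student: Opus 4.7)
The strategy is to show that when $\zeta$ and $\zeta'$ lie in the same chamber of $\cHs$, the abelian categories $\cOa^\zeta$ and $\cOa^{\zeta'}$ coincide as full subcategories of $\Amod$; the two claims of the lemma then follow formally at the derived level.

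To establish this coincidence I would invoke Proposition \ref{Oa vs CMpa}, which identifies $\cOa^\zeta$ with the subcategory $\CMpa$ defined (via Definition \ref{def:CLa}) using the ideal $J\subset\C[\fM]$ generated by $T$-weight functions of non-negative $\zeta$-weight and $\bS$-weight at least $n$. The key point is that $J$ depends only on the chamber of $\cHs$ containing $\zeta$: by construction the chambers are regions on which each relevant linear functional $\chi(\cdot)$ has constant sign, so the set of $T$-weight vectors in $\C[\fM]$ of non-negative $\zeta$-weight is independent of the choice of $\zeta$ within a chamber. Consequently the defining condition in Definition \ref{def:CLa} is literally the same for $\zeta$ and $\zeta'$, and hence $\cOa^\zeta=\cOa^{\zeta'}$ as subcategories of $\Amod$.

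Passing to derived categories we obtain $D^b_{\cOa^\zeta}(\Amod)=D^b_{\cOa^{\zeta'}}(\Amod)$ with $\iota^\zeta=\iota^{\zeta'}$ as functors, so their left adjoints are canonically isomorphic: ${}^L\pi^\zeta\cong{}^L\pi^{\zeta'}$. Therefore $\Psi^{\zeta'\!,\zeta}={}^L\pi^{\zeta'}\circ\iota^\zeta\cong{}^L\pi^\zeta\circ\iota^\zeta$, which is naturally isomorphic to the identity: since the period is assumed to lie in $\mathfrak{U}$, the inclusion $\iota^\zeta$ is fully faithful, so the unit of the adjunction $({}^L\pi^\zeta,\iota^\zeta)$ is an isomorphism.

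The only point needing real care is the invariance of $J$ across a chamber of $\cHs$; this should be essentially tautological from the way the arrangement is defined, but one does need to check that every $T$-weight $\chi$ arising as a weight of a function in $\C[\fM]$ (and hence contributing a relevant generator to $J$) defines a hyperplane $\chi=0$ already contained in $\cHs$. This can be verified by expressing such weights in terms of tangent weights at fixed points via the Bia{\l}ynicki-Birula decomposition applied to the $\bS$-fixed-point components, since these are exactly the weights whose vanishing enlarges the $\zeta$-fixed point set. Once this is in hand, the remainder of the argument is a purely formal manipulation of fully faithful inclusions and their adjoints, and the hypothesis that derived localization holds enters only to keep the ambient derived category $D^b_h(\Amod)$ and the adjoints ${}^L\pi^\zeta$ well behaved.
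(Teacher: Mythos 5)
Your proof has a genuine gap in the central claim, which is not filled by the sketch you offer. You assert that the set of $T$-weight functions in $\C[\fM]$ of non-negative $\zeta$-weight is chamber-invariant, which amounts to saying that every weight $\chi$ appearing in $\C[\fM]$ (with positive $\bS$-weight) defines a hyperplane $\{\chi=0\}$ already lying in $\cHs$. This is false in general. The hyperplanes of $\cHs$ are exactly those cut out by tangent weights of $T$ at the fixed points $p_\a$, but the $T$-weights of $\C[\fM]$ form a much larger set: they include all nonnegative integer combinations of such weights, and a combination like $\chi_1+2\chi_2$ defines a hyperplane that need not be proportional to any tangent weight. (Already for $T^*(\SL_3/B)$ one can take $\chi = \a_1 + 2\a_2$, which separates points within a single chamber of the $A_2$ root arrangement.) Consequently the ideal $J_\zeta$ of Lemma~\ref{vanishing}, and hence the scheme structure on $\fM^+$ and the category $\CMpa$ as written in Definition~\ref{def:CLa}, can genuinely depend on $\zeta$ and not merely on its chamber; the invariance you want cannot be read off from the generators of $J$. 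Your suggested repair via the Bia{\l}ynicki--Birula decomposition establishes only that the relative core $\fM^+$ is chamber-invariant \emph{as a set}, not that $J_\zeta=J_{\zeta'}$ as ideals, which is what the $\CMpa$ route requires.

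The paper sidesteps this precisely by working on the other side of derived localization: it reduces the claim to $\cOg^\zeta=\cOg^{\zeta'}$, where the first condition in Definition~\ref{cOg-definition} is about \emph{set-theoretic} support on $\fM^+$, and it then proves only that the two relative cores coincide as subsets. This set-level statement really is a statement about tangent weights alone, and it follows from the same chamber argument you sketch but applied where it is actually sufficient. If you want to salvage an algebraic argument, you would need to show that the characterization of $\cOa$ in Lemma~\ref{fdba} (weight module for $\xi_\zeta$ with finite-dimensional weight spaces bounded above) is chamber-invariant, which is subtler than the ideal-level claim and not obviously true without some further input. Finally, a small slip: it is the \emph{counit} of the adjunction $({}^L\pi^\zeta,\iota^\zeta)$ that is an isomorphism when $\iota^\zeta$ is fully faithful, not the unit; that part of your formal argument is otherwise fine once the equality of abelian subcategories is in hand.
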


\begin{proof}
Since derived localization holds, it is sufficient to prove that $\cOg^{\zeta} = \cOg^{\zeta'}$.
By the definition of geometric category $\cO$, it is sufficient to prove that $\zeta$ and $\zeta'$
induce the same relative core $\fM^+\subset\fM$.

Suppose not; this means that there exists $\a\in\cI$ such that the relative core components
$X_\a$ and $X_\a'$ defined by $\zeta$ and $\zeta'$ (Section \ref{sec:rel-core}) are different.
This in turn means that there is a character $\chi$ of $T$ such that the $\chi$-weight space of $T_{p_\a}\fM$ 
is nonzero and $\chi$ has opposite signs on $\zeta$ and $\zeta'$.
The vanishing set of $\chi$ is a hyperplane of $\cHs$ that separates $\zeta$ from $\zeta'$; this contradicts the fact
that $\zeta$ and $\zeta'$ lie in the same chamber.
\end{proof}

\begin{lemma}\label{prop:shuff-w}
There is a natural isomorphism  
$\displaystyle\Psi_w^{\zeta'}\circ \Psi^{\zeta'_w\!,\zeta_w}\cong\Psi^{\zeta'\!,\zeta} \circ \Psi_w^{\zeta}$.
\end{lemma}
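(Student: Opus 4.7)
The plan is to interpret both sides as compositions of three functors $\iota$, ${}^L\pi$, and $\Psi_{\bar w}$, and to show that the twisting equivalence $\Psi_{\bar w}$ intertwines the canonical adjunctions for different cocharacters. Since $\Psi_{\bar w}$ is an auto\-equivalence of $D^b(\Amod)$ that restricts to an auto\-equivalence of the holonomic subcategory $D^b_h(\Amod)$, it suffices to check that it sends the appropriate category $\cO$ subcategories to one another and then appeal to uniqueness of adjoints.

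The first step is to verify that $\Psi_{\bar w}$ takes $D^b_{\cOa^{\zeta_w}}(\Amod)$ to $D^b_{\cOa^\zeta}(\Amod)$, and $D^b_{\cOa^{\zeta'_w}}(\Amod)$ to $D^b_{\cOa^{\zeta'}}(\Amod)$. Concretely, by the proof of the preceding lemma the quantized moment map $\eta$ satisfies $\bar w \cdot \eta(X) = \eta(\operatorname{Ad}(\bar w)X)$ for $X\in\mt$, so that the moment map element $\xi_{\zeta_w}$ for $\bT$ acting via $\zeta_w = w\zeta w^{-1}$ equals $\bar w \cdot \xi_\zeta$. Consequently, if $N\in\cOa^{\zeta_w}$ then the action of $\xi_\zeta$ on $\Psi_{\bar w}(N)$ is by construction the action of $\bar w\cdot\xi_\zeta = \xi_{\zeta_w}$ on $N$, which is locally finite with real parts of eigenvalues bounded above; by Lemma \ref{fdba} we conclude $\Psi_{\bar w}(N)\in\cOa^\zeta$. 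The same argument applies to $\zeta'$.

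With this established, $\Psi_{\bar w}$ induces strictly commuting squares
\[
\begin{array}{ccc}
D^b_{\cOa^{\zeta_w}}(\Amod) & \xrightarrow{\iota^{\zeta_w}} & D^b_h(\Amod) \\
{\Psi_{\bar w}}\big\downarrow & & \big\downarrow {\Psi_{\bar w}} \\
D^b_{\cOa^{\zeta}}(\Amod) & \xrightarrow{\iota^{\zeta}} & D^b_h(\Amod)
\end{array}
\qquad\text{and}\qquad
\begin{array}{ccc}
D^b_{\cOa^{\zeta'_w}}(\Amod) & \xrightarrow{\iota^{\zeta'_w}} & D^b_h(\Amod) \\
{\Psi_{\bar w}}\big\downarrow & & \big\downarrow {\Psi_{\bar w}} \\
D^b_{\cOa^{\zeta'}}(\Amod) & \xrightarrow{\iota^{\zeta'}} & D^b_h(\Amod).
\end{array}
\]
Since the vertical arrows are equivalences, passing to left adjoints in the second square yields a natural isomorphism $\Psi_{\bar w}\circ {}^L\pi^{\zeta'_w}\cong {}^L\pi^{\zeta'}\circ \Psi_{\bar w}$. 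Combining this with the commutativity of the first square gives
\[
\Psi_w^{\zeta'}\circ\Psi^{\zeta'_w,\zeta_w}
= \Psi_{\bar w}\circ {}^L\pi^{\zeta'_w}\circ \iota^{\zeta_w}
\cong {}^L\pi^{\zeta'}\circ \Psi_{\bar w}\circ \iota^{\zeta_w}
= {}^L\pi^{\zeta'}\circ \iota^\zeta\circ \Psi_{\bar w}
= \Psi^{\zeta',\zeta}\circ\Psi_w^\zeta,
\]
which is the asserted natural isomorphism.

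The only non-formal step is the first one: verifying that twisting by $\bar w$ genuinely interchanges $\cOa^{\zeta_w}$ and $\cOa^\zeta$, which is where the computation $\bar w\cdot\xi_\zeta = \xi_{\zeta_w}$ enters. After that, the identity is a direct consequence of the naturality of the left adjoint to an inclusion transported along an auto\-equivalence, and no further analytic or geometric input is needed.
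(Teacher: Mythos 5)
Your proof is correct and follows essentially the same route as the paper's: both establish the commutation $\Psi_{\bar w}\circ\iota^{\zeta_w}\cong\iota^{\zeta}\circ\Psi_w^{\zeta}$ and then pass to left adjoints to obtain $\Psi_{\bar w}\circ {}^L\pi^{\zeta'_w}\cong {}^L\pi^{\zeta'}\circ\Psi_{\bar w}$, composing the two to conclude. The only difference is that you spell out why $\Psi_{\bar w}$ restricts correctly between the category-$\cO$ subcategories via the moment-map identity $\bar w\cdot\xi_\zeta = \xi_{\zeta_w}$, whereas the paper treats this as immediate from the definition of $\Psi_w^\zeta$ as the restriction of $\Psi_{\bar w}$.
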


\begin{proof}
By definition, we have $\Psi_{\bar w}\circ\iota^{\zeta_w}\cong \iota^{\zeta}\circ\Psi_w^{\zeta}$ for
any $\bar w\in N_G(T)$.  Then by adjointness, we have 
${}^L\pi^{\zeta'}\circ\Psi_{\bar{w}} \cong \Psi^{\zeta'}_w\circ {}^L\pi^{\zeta'_w}$, and
the result follows.
\end{proof}

We are now ready to state the analogue of Theorem \ref{gco-twisting braid}.  
Let  $E_{\operatorname{sh}}:= \displaystyle\mt\smallsetminus\bigcup_{H\in\cHt}H_\C$.

\begin{theorem}\label{gco-shuffling braid}
There is a natural homomorphism from $\pi_1(E_{\operatorname{sh}}/\rWeyl, [\zeta])$ to the group of shuffling functors on $D^b_{\!\cOa^\zeta}(\Amod)$.
The subgroup $\pi_1(E_{\operatorname{sh}}, \zeta)$ maps to the group of pure shuffling functors.
\end{theorem}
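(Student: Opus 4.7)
The plan is to mirror the proof of the twisting braid action in \cite[6.35 \& 6.37]{BLPWquant}, with the role of the Namikawa Weyl group $W$ replaced by $\rWeyl$ and the period deformation replaced by the cocharacter deformation. The first step is to assemble the functors $\Psi^{\zeta'\!,\zeta}$ into a representation of the fundamental groupoid of the hyperplane complement $E_{\operatorname{sh}}$: by Lemma \ref{trivial shuffling}, $\Psi^{\zeta'\!,\zeta}$ depends only on the chambers of $\cHs$ containing $\zeta$ and $\zeta'$, and by Proposition \ref{derived shuffling} it is an equivalence. Standard adjunction arguments then produce natural composition isomorphisms $\Psi^{\zeta''\!,\zeta'}\circ \Psi^{\zeta'\!,\zeta} \cong \Psi^{\zeta''\!,\zeta}$ whenever $\zeta'$ lies on a minimal gallery from $\zeta$ to $\zeta''$, making the assignment $\zeta \mapsto D^b_{\!\cOa^\zeta}(\Amod)$ into a functor on the Deligne groupoid of $\cHs$. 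Fixing a basepoint $\zeta$ produces the desired homomorphism from $\pi_1(E_{\operatorname{sh}}, \zeta)$ to the group of pure shuffling functors, once the braid-type relations are verified.

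The $\rWeyl$-equivariant extension is then essentially formal: for each $w \in \rWeyl$, the functor $\Psi_w^\zeta$ is an equivalence (it comes from an automorphism of $A$), and Lemma \ref{prop:shuff-w} supplies the compatibility $\Psi_w^{\zeta'}\circ \Psi^{\zeta'_w\!,\zeta_w} \cong \Psi^{\zeta'\!,\zeta} \circ \Psi_w^{\zeta}$ needed to glue these isomorphisms into the pure shuffling action. This produces a homomorphism out of the orbifold fundamental group $\pi_1(E_{\operatorname{sh}}/\rWeyl, [\zeta])$, whose restriction to the kernel of the projection onto $\rWeyl$ --- namely $\pi_1(E_{\operatorname{sh}}, \zeta)$ --- lands in the pure shuffling functors, as required.

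The main obstacle is verifying the defining relations of $\pi_1(E_{\operatorname{sh}})$, which correspond to codimension-two flats of $\cHs$. For each such flat, the local structure of $\cHs$ is a rank-two subarrangement inside a plane $\mt_0 \subset \mt$, so the braid relation to be checked lives entirely inside the cocharacter lattice of a rank-two subtorus. I would reduce this to a calculation on a lower-dimensional conical symplectic resolution by passing to fixed-point components of a subtorus of $T$ with Lie algebra complementary to $\mt_0$ (in the spirit of Remark \ref{nonisolated B}), and then verify the relation by directly comparing the two compositions of wall-crossing shuffling functors encircling the codimension-two flat. This rank-two verification --- especially the task of producing a coherent system of natural isomorphisms, so that the resulting representation is strict rather than merely projective --- is the principal technical obstacle, and should proceed in close analogy with the twisting calculation of \cite[6.35]{BLPWquant}.
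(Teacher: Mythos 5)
Your high-level plan matches the paper's: model $\pi_1(E_{\operatorname{sh}}/\rWeyl)$ as the semidirect product of $\rWeyl$ with the Deligne groupoid of $\cHs$, get the pure action from the groupoid, and use Lemma~\ref{prop:shuff-w} to glue in the $\rWeyl$-action. That second half is fine and is essentially what the paper does.

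The gap is in the first half. You assert that ``standard adjunction arguments'' produce natural isomorphisms $\Psi^{\zeta''\!,\zeta'}\circ \Psi^{\zeta'\!,\zeta} \cong \Psi^{\zeta''\!,\zeta}$ along minimal galleries, and you treat the codimension-two braid relations as a technical verification you would reduce to rank two. Neither of these is a formal consequence of adjunction or of what has been established in the paper up to this point. The functors ${}^L\pi^{\zeta'}\circ\iota^\zeta$ have no reason in general to compose nicely along galleries; a priori, there is no reason that projecting onto $\cOa^{\zeta'}$, then including back, then projecting onto $\cOa^{\zeta''}$ recovers the direct projection. The analogous statement for twisting functors (\cite[6.33]{BLPWquant}) required a genuine argument via $\Z$-algebras, and the shuffling analogue was an open conjecture in an earlier draft of this paper. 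The paper's proof does not attempt to verify the Deligne-groupoid relations at all: both that $\Psi^{\zeta',\zeta}$ is an equivalence (Proposition~\ref{derived shuffling}) and that the $\Psi^{\zeta',\zeta}$ assemble into an action of the Deligne groupoid are imported wholesale from Losev's theorem \cite[7.3]{Losshuff}. Your sketch effectively re-states that theorem as something to be verified, and the proposed reduction---passing to fixed loci of a subtorus of $T$---is not developed far enough to stand in for it: category $\cO$ for a $\bT$-action with non-isolated fixed points is a different and subtler object (see Remark~\ref{nonisolated B}), and you would still need a comparison relating the shuffling functors on $\fM$ with those on the fixed-point components, plus a rank-two argument there, none of which is carried out.

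So the proposal is not wrong in outline, but it leaves the single nontrivial ingredient---the Deligne-groupoid action---as an unproved sketch, whereas the paper obtains it by citation. If you want a self-contained argument you must actually prove (a) the gallery composition isomorphisms and (b) their coherence at codimension-two flats; as written, (a) is claimed to be formal when it is not, and (b) is only gestured at.
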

In fact, there are two natural such actions, intertwined by the
automorphism on $\pi_1(E_{\operatorname{sh}}/\rWeyl, [\zeta])$ induced
by complex conjugation.  One sends a minimal length oriented path in the Deligne
quiver to the functor $\Psi ^{\zeta,\zeta'}$, and the second sends
such a path to $\Xi ^{\zeta,\zeta'}$.  The second is the one that will
appear in the definition of symplectic duality (Definition
\ref{def:duality}).\medskip

\begin{proof}
Here we  follow 
the structure of the proof of
Theorem \ref{gco-twisting braid} in \cite[6.35]{BLPWquant}.    We
model the fundamental group of $E_{\operatorname{sh}}$ using the
Deligne groupoid, which is equivalent to
$\pi_1(E_{\operatorname{sh}})$.  The fundamental group of
$E_{\operatorname{sh}}/W$ is thus equivalent to the semi-direct product of $W$ with the
Deligne groupoid.

The
result \cite[7.3]{Losshuff} establishes that we have an action of the
Deligne groupoid, and Lemma \ref{prop:shuff-w} shows that this
action is compatible with the action of $W$ on the Deligne groupoid.
Thus we have an action of the semi-direct product, and therefore of
$\pi_1(E_{\operatorname{sh}}/\rWeyl, [\zeta])$.
\end{proof}

We have chosen to call the functors defined in this section ``shuffling functors" because they coincide
with Irving's shuffling functors \cite{Irvshuf} in the case of the Springer resolution.  More precisely, let
$\fM = T^*(G/B)$.  The group of Hamiltonian symplectomorphisms that commute with $\bS$ is
isomorphic to $G$ itself, and its Weyl group $\rWeyl$ is the usual Weyl group of $G$.  
(This example is unusual in that the Weyl group and the Namikawa Weyl group are isomorphic.)
Let $T\subset B\subset G$ be the unique maximal torus of $B$,
and let $\zeta:\bT\to T$ be a generic cocharacter with non-negative
weights on $\mb$.
Consider the shuffling functor
$$\Psi_w:=   \Psi_w^\zeta\circ\Psi^{\zetaw,\zeta} : D^b_{\!\cOa^\zeta}(\Amod)\to
D^b_{\!\cOa^\zeta}(\Amod).$$
As noted in \cite[\S 6.4]{BLPWquant}, the category $\cOa^\zeta$ for the period
$\la+\rho$ ($\la$ a dominant integral weight) is equivalent to
a regular infinitesimal block of BGG category $\cO$ via an equivalence defined by
Soergel \cite{Soe86}.

\begin{proposition}\label{shuf-twist}
For each $w\in \rWeyl$, the derived version of Soergel's equivalence
takes $\Psi_w$ to Irving's shuffling functor $C_{w_0ww_0}^{-1}[\ell(w)]$.
\end{proposition}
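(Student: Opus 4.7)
The plan is to reduce the proposition to the case of simple reflections by exploiting the braid group actions on both sides. Theorem \ref{gco-shuffling braid} provides an action of $\pi_1(E_{\operatorname{sh}}/\rWeyl)$ on $D^b(\cOa^\zeta)$; for $\fM = T^*(G/B)$ this group is the Artin braid group of $G$, and our functors $\{\Psi_w\}_{w\in\rWeyl}$ are (up to choice of positive lift) the images of the standard generators. On the BGG side, Irving's functors $C_w$ are known by the work of Irving and Mazorchuk--Ovsienko--Stroppel to satisfy the braid relations, so $\{C_{w_0 w w_0}^{-1}[\ell(w)]\}$ also assembles into a braid group action. Consequently it suffices to establish the isomorphism for a simple reflection $w = s = s_\alpha$, in which case the shift becomes $[1]$ and the Weyl conjugation becomes $s \mapsto w_0 s w_0$, which is again a simple reflection.

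For such an $s$, the cocharacters $\zeta$ and $\zeta_s = s\zeta s^{-1}$ lie in adjacent chambers of the shuffling arrangement $\cHs$, separated by the single hyperplane $\alpha^\vee = 0$. Geometrically, the relative cores $\fM^+$ for $\zeta$ and $\zeta_s$ agree away from the union of $s$-invariant $\mathbb{P}^1$-fibrations coming from the $P_\alpha/B$ directions; along those, the two core components through paired fixed points $p_w$, $p_{sw}$ are interchanged. This description lets us evaluate $\Psi^{\zeta_s,\zeta} = {}^L\pi^{\zeta_s}\circ\iota^\zeta$ on standard modules via Proposition \ref{geometric standard}: on a standard $\Delta_w$ whose core component is unchanged, the derived projection is essentially the identity, while on the ``flipped'' standards ${}^L\pi^{\zeta_s}$ produces a two-term complex whose nontrivial cohomology sits in degree $1$. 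Composing with the conjugation equivalence $\Psi_s^\zeta$ and applying Soergel's equivalence, one reads off a functor whose action on Vermas agrees with the cokernel-of-adjunction description of Irving's $C_s$, shifted by $[1]$.

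The main obstacle will be convention matching. The conjugation $w_0 w w_0$ reflects the discrepancy between the Borel determined by $\zeta$ (chosen so that $\bt$ has non-negative weights on $\mb$) and the Borel implicit in Irving's original definition of $C_w$, together with the fact that Soergel's equivalence identifies our $\cOa^\zeta$ with a block of BGG category $\cO$ via an antiautomorphism naturally involving $w_0$. The shift $[\ell(w)]$, once verified as $[1]$ for a simple reflection, is automatic from multiplicativity over a reduced expression $w = s_1\cdots s_k$, since the braid action guarantees $\Psi_w \cong \Psi_{s_1}\circ\cdots\circ\Psi_{s_k}$. The concrete computation requiring care is the evaluation of ${}^L\pi^{\zeta_s}$ on a Verma module ``on the wrong side of the wall'': this amounts, via Losev's Proposition \ref{derived shuffling} together with Soergel's equivalence, to computing a derived Zuckerman-type functor, and the result must be matched precisely with Irving's cokernel functor $C_{s}$, which is the core computation needed to close the argument.
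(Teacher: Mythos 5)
Your reduction to simple reflections via braid relations is a legitimate alternative route (the paper instead reduces to a \emph{single} Verma $M(e)$ by using the commutation of twisting and shuffling functors, since all Vermas are related by twisting, and then runs a D-module Ext computation against $\gst_e$ which works uniformly for all $w$). But the proposal has a genuine gap that is not a matter of convention or a missing computation: you end the argument at ``one reads off a functor whose action on Vermas agrees with \dots Irving's $C_s$,'' and you declare the evaluation of ${}^L\pi^{\zeta_s}$ on Vermas to be ``the core computation needed to close the argument.'' It isn't. Knowing that $\Psi_w$ and $C^{-1}_{w_0ww_0}[\ell(w)]$ agree on all Verma modules (even as objects, with compatible identifications) does not yield an isomorphism of functors; an autoequivalence of a block of $\cO$ that fixes every Verma and every simple need not be isomorphic to the identity, since the block has nontrivial center. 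This is precisely what the second half of the paper's proof is devoted to: it forms $F = \Psi_w \circ C_{w_0 w w_0}[-\ell(w)]$, observes via Bezrukavnikov's result on $t$-structures from exceptional collections that $F$ restricts to an abelian autoequivalence fixing simples, then pins $F$ down to the identity by a rigidity argument. That argument hinges on the trivial module $\C$ (both functors act as the identity on $\Ext_{\cO}(\C,\C)\cong H^*(G/B)$) together with Soergel's Endomorphismensatz/Struktursatz, which lets one propagate the canonical isomorphism $\C\cong F(\C)$ to all simples and then to a natural isomorphism of functors.

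Two secondary caveats. First, your geometric description of how ${}^L\pi^{\zeta_s}$ interacts with standards on opposite sides of the wall is informal and would need to be made precise; the paper instead computes $\Ext^\bullet(\Psi_w\gst_e,\gst_v^\star)$ directly by adjunction and support considerations, which is cleaner because $\gst_e$ is supported at the point $B/B$. Second, the simple-reflection reduction requires as input that the assignment $w\mapsto\Psi_w$ is multiplicative over reduced expressions, i.e., that the shuffling braid relations (Theorem \ref{gco-shuffling braid} / Losev's Proposition \ref{derived shuffling}) are available; this is fine given the paper's setup, but if you use this route you should state it as an explicit input rather than an observation. The missing rigidity step is the real gap; until it is supplied, the proof is incomplete.
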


\begin{remark}
To prove Proposition \ref{shuf-twist}, we will make use of the fact that our twisting and shuffling functors commute,
which we will prove in the next section (Theorem \ref{twist-shuffle}).
\end{remark}

We first show that Proposition \ref{shuf-twist} is correct when applied to any Verma module.  For any $v \in W$, we denote by $M(v)$
the Verma module with highest weight $v(w_0\la-\rho)-\rho$.  It is an object in $\cOa^\zeta$.

\begin{lemma}
 For any Verma module $M(v)$, there is an isomorphism between the image
 $\Psi_w(M(v))$ and the image under the shuffling functor
 $C_{w_0ww_0}^{-1}(M(v))[\ell(w)]$.  
\end{lemma}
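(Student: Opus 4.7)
The plan is to reduce to the case of a simple reflection $w=s$ and verify the formula directly using the Beilinson--Bernstein correspondence.

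Both assignments $w \mapsto \Psi_w$ and $w \mapsto C_{w_0 w w_0}^{-1}[\ell(w)]$ extend to actions of the positive braid monoid (by Theorem \ref{gco-shuffling braid} and the corresponding result of Irving, respectively). Moreover, one checks that if the desired isomorphism holds for $w'$ and $w''$ with $\ell(w'w'')=\ell(w')+\ell(w'')$, then applying $\Psi_{w'}$ to the isomorphism for $w''$ and composing yields the isomorphism for $w'w''$; here we use that Irving's shuffling functors take the distinguished triangles linking Verma modules to analogous distinguished triangles, and that $\Psi_{w'}$ is a triangulated functor. Thus it suffices to verify the lemma for a single simple reflection $s$.

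For $w = s$, the idea is to compute $\Psi^{\zeta_s,\zeta}(M(v))$ explicitly via its geometric realization. The cocharacters $\zeta$ and $\zeta_s$ sit in adjacent chambers of the shuffling arrangement $\cHs$, separated by the single hyperplane corresponding to the $\mathbb{P}^1$-fiber of the projection $G/B \to G/P_s$. Under Beilinson--Bernstein, $M(v)$ corresponds to the standard extension $j_{v,!}\cO_{X_v^\circ}$ from the Schubert cell $X_v^\circ$, and the categories $\cOg^\zeta$ and $\cOg^{\zeta_s}$ consist of twisted D-modules smooth along the Schubert stratifications defined by $B$ and $sBs^{-1}$ respectively. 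The derived adjoint ${}^L\pi^{\zeta_s}$ applied to $j_{v,!}\cO_{X_v^\circ}$ can be computed by pullback-pushforward along this $\mathbb{P}^1$-bundle: the result is either $M(v)$ itself (when $vs > v$) or a two-term complex with cohomology involving $M(v)$ and $M(vs)$ (when $vs < v$). Composition with the twist $\Psi_s^\zeta$, which on Vermas amounts to conjugation by $s$, then produces the expected complex, which we match with Irving's formula for $C_{w_0 s w_0}^{-1}(M(v))[1]$.

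The principal obstacle is carrying out this geometric computation of ${}^L\pi^{\zeta_s}(M(v))$, identifying the cohomology sheaves with Vermas of the correct highest weights, and tracking the homological shift together with the $w_0$-conjugation appearing in the subscript of Irving's functor; the latter reflects the convention-dependent difference between the positive system used to label Vermas in BGG category $\cO$ and the one implicit in our choice of $\zeta$. Once the simple-reflection case is settled, the general case follows by induction on $\ell(w)$ using the reduction above.
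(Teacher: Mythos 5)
Your approach diverges from the paper's in a structurally important way, and I believe the reduction step has a real gap. You propose reducing to simple reflections and then establishing the isomorphism for \emph{all} Verma modules $M(v)$ at $w=s$. The paper instead reduces to a \emph{single} Verma module $M(e)$, at \emph{general} $w$, by invoking the fact that twisting functors relate all Vermas and commute with shuffling (Theorem \ref{twist-shuffle}); this reduction is clean because a Verma stays a Verma under twisting, so ``agreeing on one Verma'' really does propagate.

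The gap in your inductive step is the following. Suppose the isomorphism holds for $w''$ and you want it for $w'w''$. Then $\Psi_{w''}(M(v)) \cong C_{w_0w''w_0}^{-1}(M(v))[\ell(w'')]$ is in general a genuine complex, not a single Verma. To conclude $\Psi_{w'}(\Psi_{w''}(M(v))) \cong C_{w_0w'w_0}^{-1}(\Psi_{w''}(M(v)))[\ell(w')]$, you need $\Psi_{w'}$ and $C_{w_0w'w_0}^{-1}[\ell(w')]$ to agree \emph{as functors} on the relevant complexes, not merely to agree objectwise on Verma modules. Knowing two triangulated functors agree on a generating class of objects does not imply they agree on cones of morphisms between those objects — cones are not functorial, so there is no canonical way to ``assemble'' the object-level isomorphisms into one on the iterated cone. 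The appeal to ``Irving's shuffling functors take the distinguished triangles \dots to analogous distinguished triangles'' and ``$\Psi_{w'}$ is a triangulated functor'' does not close this gap: one needs compatibility at the level of morphisms, which is precisely what is not established.

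You also explicitly flag the base case (the geometric computation of ${}^L\pi^{\zeta_s}(M(v))$) as the principal obstacle, so it is not actually completed. For comparison, the paper's base case is a different and cleaner computation: it computes $\Ext^\bullet(\Psi_w\gst_e,\gst_v^\star)$ directly (for general $w$, not just simple reflections), using the fact that $\gst_e$ is supported at a point and $\bar w_*\gst_v^\star$ is a pushforward from the cell $wBv^{-1}B/B$, obtaining $\C[-\ell(w)]$ when $v=w$ and $0$ otherwise; that pins down $\Psi_w\gst_e\cong\gst_w[\ell(w)]$. Then the matching computation $C_w M(w)\cong M(e)$ on the Irving side is a straightforward induction on length from Irving's formula for simple reflections. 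I would encourage you to adopt the paper's reduction via twisting-commutation, which sidesteps the functorial-cone issue entirely, and then your geometric instincts can be redirected to a single Ext computation rather than a case analysis over all $v$.
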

\begin{proof}
  We know from \cite[6.38]{BLPWquant} that Soergel's equivalence takes 
  the twisting functors defined in the previous section to Arkhipov's
  twisting functors.  
    Since all Verma modules are related by twisting functors, this
and Theorem \ref{twist-shuffle}  imply that if $\Psi_w$ and the shuffling functor $C_{w_0ww_0}^{-1}[\ell(w)]$ act the same way on
  one Verma module, then they have the same action on all of them.

The category $D^b_{\!\cOa^\zeta}(\Amod)$ can be identified, via localization and \cite[4.5]{BLPWquant}, with the 
derived category of $\la$-twisted D-modules on
$G/B$ which are smooth along the Schubert cells $X_w:=BwB/B$. 
By tensoring with a line bundle, we can further identify it with the derived category
of untwisted D-modules which are smooth along the Schubert cells.
Via this identification, the costandard objects of $\cOa^\zeta$ correspond to the D-modules
$\gst_v := (j_{v^{-1}})_*\fS_{X_{v^{-1}}}$, and
the functor $\Psi^\zeta_w$ corresponds to the pullback along the map $(\bar w\cdot ) \colon G/B\to G/B$.

Soergel's equivalence takes the Verma module $M(w_0vw_0)$ to $\gst_v$.
For any $w,v\in\rWeyl$, we have
\begin{multline*}
  \Ext^\bullet(\Psi_w\gst_{e},\gcst_{v})\cong \Ext^\bullet(\bar
  w^*\Psi^{\zetaw,\zeta}\gst_{e},\gcst_{v}) \cong
  \Ext^\bullet(\Psi^{\zetaw,\zeta}\gst_{e},\bar w_*\gcst_{v})\\ \cong
  \Ext^\bullet(\iota^{\zeta}\gst_{e},\iota^{\zetaw}\bar w_*\gcst_{v}) \cong
  \Ext^\bullet(\gst_{e},\bar w_*\gcst_{v}).
\end{multline*}
The D-module $\gst_{e}$ is supported at the point
${B/B}$  and $\bar w_*\gcst_{v}=j_*\fS_{wBv^{-1}B}$.  Thus, we
get trivial Exts unless $e\in wBv^{-1}B$, which only happens when
$v = w$.  When $v= w$, then $wBv^{-1}B/B$ is an affine space of dimension $\ell(w)$.
Thus we are reduced to a computation over the Weyl
algebra, and we obtain 
\begin{equation*}
  \Ext^\bullet(\Psi_w\gst_{e},\gcst_{v})\cong
  \begin{cases}
    0& v\neq w\\
    \C[-\ell(w)] & v=w.
  \end{cases}
\end{equation*}
This implies that $\Psi_w\gst_{e}\cong \gst_{w}[\ell(w)]$.

Now that we know how $\Psi_w$ acts on $\gst_e$, it remains to compute the
action of $C_{w_0ww_0}^{-1}[\ell(w)]$ on the corresponding Verma module $M(e)$.
We wish to show that $C_{w_0ww_0}^{-1}[\ell(w)]$ takes $M(e)$ to $M(w_0ww_0)[\ell(w)]$,
or equivalently that $C_{w}$ takes $M(w)$ to $M(e)$.
If $s$ is a simple reflection and $ws<w$, then by \cite[3.1]{Irvshuf} (using the notation of that paper), we have
\[C_sM(w)\cong C_sM(w_0,w_0w)\cong M(w_0,w_0ws)\cong M(ws).\] 
We can now prove the desired isomorphism by induction on length.
\end{proof}

\begin{proofIrv}
The usual $t$-structure on the derived BGG category $\cO$ is induced by the
exceptional collection of Verma modules as in \cite[Prop.~1]{Bez03}. Since the composition
$$F:=\Psi_w\circ C_{w_0ww_0}[-\ell(w)]$$ sends Vermas to Vermas,
it induces an auto-equivalence of the abelian category $\cO$ which
sends every simple to itself.  

Consider the trivial module $\C$ over $\mg$.  The functor
$\Psi_w$ sends $\C$ to itself, inducing
the identity on $\Ext_{\cO}(\C,\C)\cong H^*(G/B)$, since $\C$ lies in
category $\cO$ for every Borel. The same is true of
$C_{w_0ww_0}^{-1}[\ell(w)]$ since $\C$ is killed by translation to any
wall.  Thus, we have
an isomorphism $\C\cong F(\C)$ which induces the same isomorphism
$\Ext_{\cO}(\C,\C) \cong \Ext_{\cO}(F(\C),F(\C))$ as the functor $F$.
It follows that for any simple $L$ in $\cOa^\zeta$, we have a canonical isomorphism $f\colon \Ext^\bullet(\C,L)\cong
\Ext^\bullet(\C,F(L))$ of $H^*(G/B)$-modules induced by the functor $F$.

The Koszul dual form of Soergel's Endomorphismensatz \cite{Soe90}
states that for any two simple modules, we have an isomorphism
\[\Ext^\bullet(L,L')\cong
\Hom_{H^*(G/B)}(\Ext^\bullet(\C,L),\Ext^\bullet(\C,L')).\] This 
shows, in particular, that
\[\Ext^\bullet(L,F(L))\cong
\Hom_{H^*(G/B)}(\Ext^\bullet(\C,L),\Ext^\bullet(\C,F(L))),\] so the
isomorphism $f$ induces an isomorphism $L\cong F(L)$.  The
Endomorphismensatz similarly shows that this isomorphism induces the
same isomorphism of Yoneda algebras $\Ext^\bullet(\oplus L,\oplus
L)\cong \Ext^\bullet(\oplus F(L),\oplus F(L))$ as the functor
$F$. Thus, it induces an isomorphism between $F$ and the identity
functor.
\end{proofIrv}

We conclude this section by discussing the action of shuffling functors on the Grothendieck group. 
Just as we saw for twisting functors in Remark \ref{K-twisting}, we will find that the pure shuffling 
functors act trivially, and we are left with an action of the Weyl group $\rWeyl$. 

Recall from Theorem \ref{support isomorphism} that we have
$$\suppc:K(\cOa^\zeta)\cong K(\cOg^\zeta)\overset{\cong}{\longrightarrow} H^{2d}_{\fM^+_\zeta}(\fM; \Z),$$
and that this isomorphism intertwines the Euler form with the equivariant intersection form.
Furthermore, we have
$$H^{2d}_{\fM^+_\zeta}(\fM; \Z) \cong H^{2d}_{\fM^+_\zeta,T}(\fM; \Z)\hookto H^{2d}_T(\fM^T; \Z),$$
with image independent of $\zeta$.
In particular, this gives us a canonical way to identify the lattices
$K(\cOa^\zeta)\cong H^{2d}_{\fM^+_\zeta}(\fM; \Z)$ and $K(\cOa^{\zeta'})\cong H^{2d}_{\fM^+_{\zeta'}}(\fM; \Z)$ for any two generic cocharacters $\zeta$ and $\zeta'$.  
One can check that this identification sends $v_\a$ to $\pm
v_\a'$; the sign is given by the parity of the codimension of the
space of points that flow in to $p_\a$ for both $\zeta$ and $\zeta'$ inside the space of
points that flow in for $\zeta$ (note that this is symmetric under switching $\zeta$
and $\zeta'$).

This identification agrees with the map $K(\cOa^\zeta) \to K(\cOa^{\zeta'})$
induced by $\Psi^{\zeta',\zeta}$.  We omit a full proof
of this fact, since it will not be used later in the paper.  The proof
is similar to Theorem \ref{support isomorphism}; the key
is to show that the Euler pairing between $K(\cOa^\zeta)$ and $K(\cOa^{\zeta'})$
inside of $K(\Amod_h)$ agrees with the equivariant intersection pairing
on $H^{2d}_T(\fM^T; \Z)$.
Deformation arguments
show that it suffices to do this on a generic fiber of a twistor
deformation.  
Thus, we are reduced to calculating the Exts between
modules $M_L,M_L'$ over the
Weyl algebra deforming the structure sheaves of Lagrangian subspaces $L,L'$:
\[\Ext^i(M_L,M_{L'})=
\begin{cases}
  \C & i=\dim(L/(L\cap L'))\\
  0 & i\neq\dim(L/(L\cap L')).
\end{cases}
\] This implies the following proposition, which is an analogue of \cite[6.39]{BLPWquant}
(see Remark \ref{K-twisting}).

\begin{proposition}\label{K-shuffle}
Pure shuffling functors act trivially on the Grothendieck group of $\cOa^\zeta$.
\end{proposition}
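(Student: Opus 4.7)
The plan is to use the characteristic cycle isomorphism to transport the $K$-theoretic action of each $\Psi^{\zeta',\zeta}$ to a canonical lattice that does not depend on the choice of generic cocharacter; compositions of shuffling functors will then become compositions of identity maps on this canonical model.

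First I would package the discussion preceding the proposition as follows. By Theorem \ref{support isomorphism}, the characteristic cycle map identifies $K(\cOa^\zeta)$ with $H^{2d}_{\fM^+_\zeta}(\fM;\Z)$, and composition with the inclusion into $H^{2d}_T(\fM^T;\Z)$ realizes every Grothendieck group $K(\cOa^\zeta)$, as $\zeta$ varies over generic cocharacters of $T$, as the \emph{same} sublattice of $H^{2d}_T(\fM^T;\Z)$ (the image is independent of $\zeta$ because, for each fixed point $p_\a$, the class $v_\a$ and $v_\a'$ differ only by the sign $(-1)^c$ coming from the number of weight-sign switches between $\zeta$ and $\zeta'$). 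This yields a canonical identification $K(\cOa^\zeta)\cong K(\cOa^{\zeta'})$ for any two generic cocharacters.

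Next, and this is the main technical point, I would establish that the functor $\Psi^{\zeta',\zeta}$ induces this canonical identification on Grothendieck groups. Following the sketch given in the text, the argument passes through a comparison of pairings: the Euler pairing between $K(\cOa^\zeta)$ and $K(\cOa^{\zeta'})$ computed inside $K(\Amod_h)$ must be shown to agree with the equivariant intersection pairing on the common lattice. By a twistor-deformation argument modelled on the proof of Theorem \ref{support isomorphism}, one can replace the pair of standard objects $\gst_\a^\zeta, \gst_\b^{\zeta'}$ by their deformed versions $\tilde\gst_{\eta,\a}^\zeta, \tilde\gst_{\eta,\b}^{\zeta'}$ and compute Exts on a generic fiber of the twistor family. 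On that fiber the situation is purely local at each fixed point and reduces, for a pair of Lagrangian subspaces $L, L' \subset T_{p_\a}\fM$, to the Weyl algebra computation
\[
\Ext^i(M_L, M_{L'})=
\begin{cases}
  \C & i=\dim(L/(L\cap L')),\\
  0 & \text{otherwise,}
\end{cases}
\]
which exactly matches the equivariant intersection pairing on $H^{2d}_T(\fM^T;\Z)$ restricted to the classes $v_\a^\zeta, v_\b^{\zeta'}$.

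Finally, given this compatibility the conclusion is immediate. A pure shuffling functor is by construction a composition $\Psi^{\zeta_n,\zeta_{n-1}}\circ\cdots\circ \Psi^{\zeta_1,\zeta_0}$ with $\zeta_0=\zeta_n=\zeta$; on Grothendieck groups, after the canonical identification with the common sublattice of $H^{2d}_T(\fM^T;\Z)$, each factor becomes the identity, so the whole composition is the identity. The hard part will be the second step: executing the deformation/Weyl-algebra computation for Exts between standard objects of \emph{two different} categories $\cOa^\zeta$ and $\cOa^{\zeta'}$ simultaneously, since unlike the setting of Theorem \ref{support isomorphism} the two relevant relative cores $\fM^+_\zeta$ and $\fM^+_{\zeta'}$ differ, and one must control the interaction of their deformed standards along the twistor family.
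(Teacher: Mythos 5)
Your proposal is correct and follows essentially the same strategy as the paper's own discussion: identify each $K(\cOa^\zeta)$ with a common sublattice of $H^{2d}_T(\fM^T;\Z)$ via the characteristic cycle map, show $\Psi^{\zeta',\zeta}$ induces the canonical identification by comparing the Euler form with the equivariant intersection form via a twistor-deformation and Weyl-algebra computation, and conclude. The paper itself explicitly omits the details of the compatibility step you flag as "the hard part," so your honest acknowledgment of that gap matches the paper's treatment.
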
 

Impure shuffling functors, however, act in an interesting way.
Consider the impure shuffling functor $\Psi_w = \Psi_w^\zeta\circ\Psi^{\zetaw,\zeta}$.
We know from the above discussion that $\Psi^{\zetaw,\zeta}$ induces the aforementioned
canonical isomorphism from $H^{\dimfM}_{\fM^+_\zeta}(\fM; \Z)$ to $H^{\dimfM}_{\fM^+_{\zetaw}}\!(\fM; \Z)$.
The map induced by $\Psi_w^\zeta$ is given by choosing a lift $\bar w\in N(T)\subset G$ and considering
the automorphism of $\fM$ induced by $\bar w^{-1}$.  This automorphism intertwines the action of $\bT$ by $\zetaw$
with the action of $\bT$ by $\zeta$, and therefore induces an isomorphism from $H^{\dimfM}_{\fM^+_{\zetaw}}\!(\fM; \Z)$ to
$H^{\dimfM}_{\fM^+_\zeta}(\fM; \Z)$.  This isomorphism is different from the canonical one; in other words,
the automorphism of $K(\cOa^\zeta)\cong H^{\dimfM}_{\fM^+_\zeta}(\fM; \Z)$ induced by $\Psi_w$ is non-trivial.
These automorphisms are compatible with multiplication in the Weyl group, so we obtain a shuffling 
action of $\rWeyl$ on $K(\cOa^\zeta)$.

\begin{remark}\label{regular}
In the case of the Springer resolution, both $W$ and $\rWeyl$ are isomorphic to the ordinary
Weyl group.  Since pure twisting and shuffling functors act trivially on the Grothendieck group
(Remark \ref{K-twisting} and Proposition \ref{K-shuffle})
we obtain both a twisting and a shuffling action of $W$ on $K(\cOa^\zeta)$.  
Furthermore, the two actions commute with each other by Theorem \ref{twisting and shuffling commute},
which we will prove in the next section.
Indeed, what we obtain is isomorphic to the canonical action of $W\times W$ on $\C[W]$,
with one factor acting by left multiplication and the other by inverse right multiplication.
\end{remark}

\subsection{Twisting and shuffling commute}
The purpose of this section is to show that twisting and shuffling functors commute.
We begin with the pure ones.

\begin{lemma} \label{twist-shuffle} Let $\la, \la'\in\Pi$ be parameters with $\la-\la'\in H^2(\fM;\Z)$,
and $\zeta,\zeta'$ two generic cocharacters of $T$.  
Then we have a natural isomorphism of functors\footnote{Here we interpret the first $\Psi^{\zeta'\!,\zeta}$ 
for the quantization with period $\la'$,
and the second $\Psi^{\zeta'\!,\zeta}$ for the quantization with period $\la$.
We do not need to say anything about $\Phi^{\la'\!,\la}$, since this functor
is defined without reference to the choice of cocharacter.  Similar comments apply to the
statements of Lemmas \ref{w-twist-commute}, \ref{another stupid case}, and \ref{last one}.}
$$\Psi^{\zeta'\!,\zeta}\circ
  \Phi^{\la'\!,\la}\cong
  \Phi^{\la'\!,\la}\circ\Psi^{\zeta'\!,\zeta}.$$
\end{lemma}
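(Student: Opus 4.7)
The plan is to show that twisting by $\Phi^{\la'\!,\la}$ commutes with both the inclusion functor $\iota^\zeta$ and its left adjoint $^L\pi^{\zeta'}$, from which the result will follow by composition.

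The first step is to record the compatibility of twisting with the inclusions. By \cite[6.37]{BLPWquant} (as quoted in Section \ref{twisting-gco}), for any generic cocharacter $\zeta$ the twisting equivalence $\Phi^{\la'\!,\la}$ restricts to an equivalence $D^b_{\cOa^\zeta_\la}(A_\la\mmod)\to D^b_{\cOa^\zeta_{\la'}}(A_{\la'}\mmod)$. Moreover, the bimodule $\bi$ used to define $\Phi^{\la'\!,\la}$ is a Harish-Chandra bimodule, so tensoring with it preserves holonomicity of $\LLoc$ and therefore restricts to an equivalence of the ambient subcategories $D^b_h(A_\la\mmod)\to D^b_h(A_{\la'}\mmod)$. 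These two compatibilities give a natural isomorphism
\[
\Phi^{\la'\!,\la}\circ \iota^\zeta_\la \;\cong\; \iota^\zeta_{\la'}\circ \Phi^{\la'\!,\la}.
\]

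Next, I will pass to left adjoints. Since $\la,\la'\in\Pi$, the functor $\Phi^{\la'\!,\la}$ is an equivalence with quasi-inverse $\Phi^{\la,\la'}$ (again by \cite[6.32]{BLPWquant}), and these equivalences preserve both the holonomic and the category $\cO$ subcategories. In particular each of them is its own two-sided adjoint up to the inverse. Taking adjoints of the displayed isomorphism (i.e., composing on both sides with the corresponding inverse equivalences and then with $^L\pi^{\zeta'}$), one obtains the natural isomorphism
\[
^L\pi^{\zeta'}_{\la'}\circ \Phi^{\la'\!,\la} \;\cong\; \Phi^{\la'\!,\la}\circ {}^L\pi^{\zeta'}_\la.
\]
Composing this with the first display yields
\[
\Psi^{\zeta'\!,\zeta}\circ \Phi^{\la'\!,\la}
= {}^L\pi^{\zeta'}_{\la'}\circ \iota^\zeta_{\la'}\circ \Phi^{\la'\!,\la}
\cong {}^L\pi^{\zeta'}_{\la'}\circ \Phi^{\la'\!,\la}\circ \iota^\zeta_\la
\cong \Phi^{\la'\!,\la}\circ {}^L\pi^{\zeta'}_\la\circ \iota^\zeta_\la
= \Phi^{\la'\!,\la}\circ \Psi^{\zeta'\!,\zeta},
\]
which is the desired isomorphism.

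The only real content is the compatibility $\Phi^{\la'\!,\la}\circ \iota^\zeta_\la \cong \iota^\zeta_{\la'}\circ \Phi^{\la'\!,\la}$, which in turn rests on the two preservation statements (holonomicity and category $\cO$) already established in \cite{BLPWquant}. The main point to check carefully is that the adjunction isomorphism is genuinely natural and not just an abstract equivalence of endofunctors: this is where the hypothesis $\la,\la'\in\Pi$ enters, guaranteeing that $\Phi^{\la'\!,\la}$ is an equivalence so that the ``passing to left adjoints'' step is literally precomposition/postcomposition with a quasi-inverse rather than a more delicate unit/counit argument.
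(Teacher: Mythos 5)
Your argument has the same backbone as the paper's: write $\Psi^{\zeta'\!,\zeta} = {}^L\pi^{\zeta'}\circ\iota^\zeta$, show $\Phi^{\la'\!,\la}$ commutes with the inclusion, and then pass to adjoints. The difference is that you stay entirely on the algebraic side, whereas the paper first observes that all four functors depend only on $\fM_0$ (not on the choice of resolution), replaces $\fM$ by a resolution at which localization holds at $\la'$, and then rewrites both $\Phi^{\la'\!,\la}$ and $\Psi^{\zeta'\!,\zeta}$ in the geometric form $\Rsecs\circ(\,\cdot\,)\circ\LLoc$. That geometric detour is what makes the key commutation completely transparent: tensoring with a quantized line bundle manifestly commutes with the support-based inclusion of $\cOg$ into holonomic sheaves. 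By staying algebraic you bury that content in the citation \cite[6.37]{BLPWquant}, which is fine as far as it goes, but it also means your claim that $\Phi^{\la'\!,\la}$ preserves $D^b_h$ is doing real work that isn't fully justified as written: $\bi$ is known to be (the sections of) a quantized line bundle, and hence a Harish-Chandra bimodule with the right support behavior, only after one has passed to a resolution where localization holds at $\la'$. You should include the same opening reduction; it costs nothing and is exactly what grounds that step.

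There is also a small but genuine slip in the middle of the argument. You derive
$^L\pi^{\zeta'}_{\la'}\circ\Phi^{\la'\!,\la}\cong\Phi^{\la'\!,\la}\circ{}^L\pi^{\zeta'}_\la$
by ``taking adjoints of the displayed isomorphism,'' but the displayed isomorphism involves $\iota^\zeta$, whose left adjoint is ${}^L\pi^\zeta$, not ${}^L\pi^{\zeta'}$; as stated this does not follow. The repair is easy and is already implicit in your opening paragraph: the commutation $\Phi^{\la'\!,\la}\circ\iota^\eta_\la\cong\iota^\eta_{\la'}\circ\Phi^{\la'\!,\la}$ holds for \emph{any} generic cocharacter $\eta$, so write it once for $\eta=\zeta$ (to move $\Phi$ past $\iota^\zeta$) and once for $\eta=\zeta'$ (and take left adjoints of \emph{that} instance to move $\Phi$ past ${}^L\pi^{\zeta'}$). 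With those two corrections the proof is sound.
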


\begin{proof}
First, we observe that the four functors above are not affected if we replace $\fM$ with some other
conical symplectic resolution $\fM'$ of $\fM_0$ \cite[3.9 \& 6.24]{BLPWquant}.  
By definition, localization holds for {\em some} resolution at every element of $\Pi$, thus we may assume that
localization holds at $\la'$.  By \cite[6.31]{BLPWquant}, 
the functor  $\Phi^{\la'\!,\la}$ can be written as the composition of the localization functor $\LLoc$ at
$\la$, the ``geometric twist" ${}_{\la'}\cT[\hmon]_{\la}\otimes_{\cD_\la}-$, and the derived sections
functor $\Rsecs$ at $\la'$.
Similarly, the functor $\Psi^{\zeta'\!,\zeta}$ (at either $\la$ or $\la'$) can be written as the composition of $\LLoc$,
the ``geometric shuffle" (defined in a way completely
analogous to that of $\Psi^{\zeta'\!,\zeta}$), and $\Rsecs$.
Thus it suffices to show that geometric twists commute with geometric shuffles.
This follows immediately from the fact that ${}_{\la'}\cT[\hmon]_{\la}\otimes_{\cD_\la}-$ and its adjoint commute
with the inclusion of $\cOg$ into $\cD\mmod_h$.
\end{proof}

We next move on to the various impure cases.

\begin{lemma}\label{another stupid case}
Let $\la\in\Pi$ and $w\in W$ be such that $w\cdot \la - \la\in H^2(\fM;\Z)$,
and let $\zeta,\zeta'$ be generic cocharacters of $T$.
Then we have a natural isomorphism of functors
$$\Psi^{\zeta'\!,\zeta}\circ
  \Phi^{\la}_w\cong
  \Phi^{\la}_w\circ\Psi^{\zeta'\!,\zeta}.$$
\end{lemma}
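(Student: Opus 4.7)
The plan is to exploit the fact that $\Phi^\la_w$ is simply restriction of scalars along a canonical isomorphism of algebras $A_\la \cong A_{w\la}$, and that this isomorphism is $\bT$-equivariant. Concretely, by \cite[3.10]{BLPWquant}, the isomorphism $A_\la \cong A_{w\la}$ comes from the $W$-action on the universal deformation $\scrM$, and (as shown in the proof of Proposition \ref{w-O-commute}) this action commutes with the $\bT$-action on $\scrM$. Consequently, the isomorphism takes a non-commutative moment map $\xi \in A_{w\la}$ for $\bT$ to a non-commutative moment map in $A_\la$, preserving the induced $\Z$-gradings and, in particular, carrying $A_{w\la}^+$ to $A_\la^+$.

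First I would observe that this implies that $\Phi^\la_w$ takes $\cOa^\zeta$ for $A_{w\la}$ to $\cOa^\zeta$ for $A_\la$ (and likewise for $\zeta'$, and for the ambient categories of holonomic modules). Indeed, the condition on a module for lying in $\cOa^\zeta$ is formulated entirely in terms of the action of $A^+$, which is preserved by the algebra isomorphism; similarly, derived localization holds at $\la$ iff it holds at $w\la$, and the corresponding holonomicity conditions are preserved. Therefore $\Phi^\la_w$ intertwines the inclusion functors $\iota^\zeta$ at the two parameters: $\iota^\zeta \circ \Phi^\la_w \cong \Phi^\la_w \circ \iota^\zeta$.

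Next, passing to adjoints, I would conclude that $\Phi^\la_w$ intertwines the left projections ${}^L\pi^{\zeta'}$ at the two parameters. Here we use that $\Phi^\la_w$, being an equivalence of module categories induced by an algebra isomorphism, is its own inverse up to swapping the roles of $\la$ and $w\la$, and in particular is both a left and right adjoint to the corresponding functor in the opposite direction. Hence
\[
\Psi^{\zeta'\!,\zeta} \circ \Phi^\la_w \;=\; {}^L\pi^{\zeta'}\circ\iota^\zeta \circ \Phi^\la_w \;\cong\; {}^L\pi^{\zeta'}\circ\Phi^\la_w\circ\iota^\zeta \;\cong\; \Phi^\la_w \circ {}^L\pi^{\zeta'}\circ\iota^\zeta \;=\; \Phi^\la_w \circ \Psi^{\zeta'\!,\zeta},
\]
which is the desired natural isomorphism.

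The only genuinely delicate point is the first step, namely checking that the isomorphism $A_\la \cong A_{w\la}$ really does intertwine the two choices of moment map (so that the gradings match up) rather than just commuting with $\bT$ up to an inner automorphism. This is precisely what is established in the proof of Proposition \ref{w-O-commute}, where the argument observes that the isomorphism induces the identity map on associated gradeds $A_\la(n)/A_\la(n-1) \cong A_{w\la}(n)/A_{w\la}(n-1) \cong \C[\fM](n)$ and preserves the $\Z/n\Z$-grading; this pins down $\xi$ uniquely up to a scalar in $A(0) = \C$, which is harmless. Once this is in hand the rest of the argument is formal, and no interaction with the deeper structure of either $\Phi$ or $\Psi$ is required.
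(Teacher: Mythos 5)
Your proof is correct and takes essentially the same approach as the paper's, which simply observes that $\Phi^{\la}_w$ commutes with the inclusion functors $\iota^\zeta$ and their adjoints (and hence with $\Psi^{\zeta'\!,\zeta}$). You have merely unpacked the phrase ``clear from the definition'' by spelling out why the algebra isomorphism $A_\la \cong A_{w\la}$ preserves the $\bT$-grading, which is indeed the content of Proposition \ref{w-O-commute}.
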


\begin{proof}
It is clear from the definition of $\Phi^{\la}_w$ that it commutes with inclusion functors and their adjoints,
and therefore with $\Psi^{\zeta'\!,\zeta}$.
\end{proof}

\begin{lemma}\label{last one}
Let $\la\in\Pi$ and $w\in W$ be such that $w\cdot \la - \la\in H^2(\fM;\Z)$.
Let $\zeta$ be generic, and let $v\in\rWeyl$ be arbitrary.
Then we have a natural isomorphism of functors
$$\Psi^\zeta_v\circ
  \Phi^{\la}_w\cong
  \Phi^{\la}_w\circ\Psi^{\zeta}_v.$$
\end{lemma}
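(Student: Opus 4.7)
The plan is to reduce the claim to the assertion that the algebra isomorphism $A_\la \cong A_{w\la}$ underlying $\Phi^\la_w$ intertwines the canonical $G$-actions on these two algebras, from which the commutation with $\Psi^\zeta_v$ will be immediate.

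Unpacking the definitions, for an $A_{w\la}$-module $N$ the module $\Psi_{\bar v}(\Phi^\la_w(N))$ and $\Phi^\la_w(\Psi_{\bar v}(N))$ have the same underlying vector space, with the action of $a\in A_\la$ given respectively by $\phi(\bar v\cdot a)\cdot n$ and $(\bar v\cdot \phi(a))\cdot n$, where $\phi\colon A_\la\to A_{w\la}$ is the isomorphism from \cite[3.10]{BLPWquant}. Hence it suffices to show that $\phi$ is $G$-equivariant for the canonical $G$-action on $A_\la$ and $A_{w\la}$ induced by the Hamiltonian $G$-action on $\fM$.

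To establish this $G$-equivariance, I would follow the template of the proof of Proposition \ref{w-O-commute}. That proof shows that $\phi$ preserves both the filtration and the $\Z/n\Z$-grading, and moreover induces the identity on the associated graded $\C[\fM]$; from this it deduces that $\phi$ carries a quantized moment map for $\bT$ in $A_\la$ to one in $A_{w\la}$. The same reasoning applies verbatim with the cocharacter $\bT$ replaced by the full group $G$: by \cite[3.11]{BLPWquant}, $A_\la$ admits a quantized moment map $U(\mg)\to A_\la$ whose symbol is the classical $G$-moment map on $\fM$, and since $\phi$ is the identity on associated graded it intertwines this moment map with the corresponding one in $A_{w\la}$ (up to a central shift, which is killed by the adjoint action). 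Because $G$ is connected, this intertwining of infinitesimal actions integrates to an intertwining of the integrated $G$-actions on the two algebras, so $\phi$ is $G$-equivariant as required. In particular, $\phi$ commutes with the action of any lift $\bar v\in N_G(T)$, and thus the two compositions of functors on $D^b(\Amod)$ agree.

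Finally, one restricts to the appropriate subcategories: $\Phi^\la_w$ preserves $\cOa$ by Proposition \ref{w-O-commute}, and the categories $\cOa^{\zeta}$ and $\cOa^{\zeta_v}$ are, by definition, related by the twist $\Psi_{\bar v}$; combining these with the equivariance above gives the desired natural isomorphism $\Psi^\zeta_v\circ\Phi^\la_w\cong\Phi^\la_w\circ\Psi^\zeta_v$. The main subtlety is the careful verification that the moment-map argument of Proposition \ref{w-O-commute} extends from $\bT$ to $G$ — in particular that the ambiguity of the quantized moment map by a central character does not obstruct the integrated equivariance — but this is a routine extension once one observes that $\phi$ is the identity on the degree-one subquotients of the filtration used to construct the moment maps.
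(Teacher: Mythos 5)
Your proposal takes the same route as the paper: both reduce the claim to the $G$-equivariance of the canonical isomorphism $A_\la \cong A_{w\la}$ underlying $\Phi^\la_w$, at which point commutation with $\Psi^\zeta_v$ (twisting the module structure by $\bar v\in N_G(T)$) is immediate. The paper simply asserts this equivariance, whereas you go further and sketch a moment-map argument for it following the template of Proposition~\ref{w-O-commute}; this is sound, though when you invoke connectedness of $G$ to integrate the infinitesimal intertwining you should note that what is really needed is equivariance for the lift $\bar v$, and for disconnected $G$ this requires an extra word about how the lift of the $G$-action to the quantization is characterized uniquely.
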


\begin{proof}
This follows immediately from the fact that the canonical isomorphism $A_{\la} \cong A_{w\cdot \la}$ that was used to define the functor $\Phi^{\la}_w$ is $G$-equivariant.
\end{proof}

\begin{lemma}\label{w-twist-commute}
Let $\la, \la'\in\Pi$ be parameters with $\la-\la'\in H^2(\fM;\Z)$,
and $\zeta$ a generic cocharacter of $T$, and $w\in \rWeyl$.  
Then we have a natural isomorphism of functors
$$\Psi^{\zeta}_w\circ
  \Phi^{\la'\!,\la}\cong
  \Phi^{\la'\!,\la}\circ\Psi^{\zeta}_w.$$
\end{lemma}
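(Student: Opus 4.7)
The plan is to exhibit a natural $G$-equivariant structure on the bimodule ${}_{\la'}T_\la$ used to define $\Phi^{\la'\!,\la}$, from which the desired isomorphism follows formally. Concretely, suppose we can construct, for each lift $\bar w \in N_G(T)$ of $w$, an isomorphism of $(A_{\la'}, A_\la)$-bimodules
$$\phi_{\bar w}\colon {}_{\la'}T_\la \xrightarrow{\sim} {}_{\la'}T_\la\quad\text{satisfying}\quad\phi_{\bar w}(a\cdot x\cdot a') = \bar w(a)\cdot \phi_{\bar w}(x)\cdot \bar w(a')$$
for all $a \in A_{\la'}$, $x \in {}_{\la'}T_\la$, $a' \in A_\la$, where $\bar w(-)$ denotes the action of $\bar w$ on $A_\la$ and $A_{\la'}$ induced by the canonical $G$-actions on these algebras (Section \ref{shuffling}). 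Then $b \otimes n \mapsto \phi_{\bar w}(b) \otimes n$ defines a natural isomorphism of functors
$$\Phi^{\la'\!,\la} \circ \Psi^\zeta_w(N) \;=\; {}_{\la'}T_\la \otimes^L_{A_\la} \Psi_{\bar w}(N) \;\xrightarrow{\sim}\; \Psi_{\bar w}({}_{\la'}T_\la \otimes^L_{A_\la} N) \;=\; \Psi^\zeta_w \circ \Phi^{\la'\!,\la}(N),$$
exactly as required.

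To construct $\phi_{\bar w}$, I would trace through the construction of ${}_{\la'}T_\la$ from \cite[6.21]{BLPWquant}. Recall that ${}_{\la'}T_\la$ arises by taking $\bS$-invariant sections of (the localization away from $h$ of) a quantization ${}_{\la'}\cT_\la$ of the line bundle on the universal deformation $\scrM$ with relative Euler class $\la - \la' \in \Htz$. Since $G$ is connected and its action on $\fM$ commutes with $\bS$, the $G$-action extends to the universal deformation by \cite[1.5]{KalPois} and acts trivially on $\Ht$ (the base of the deformation) and on $H^2(\fM;\Z)$. In particular $G$ preserves both the period of ${}_{\la'}\cT_\la$ and the isomorphism class of the underlying line bundle. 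The rigidity/uniqueness result \cite[5.2]{BLPWquant} for quantizations of line bundles then yields a canonical isomorphism $\bar w^*{}_{\la'}\cT_\la \cong {}_{\la'}\cT_\la$ which is bimodule-linear with respect to the $G$-actions on $\cQ_\la$ and $\cQ_{\la'}$. Taking $\bS$-invariant sections and inverting $h$ produces $\phi_{\bar w}$.

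The main technical obstacle is verifying that this uniqueness argument genuinely produces $\phi_{\bar w}$ intertwining the twisted and untwisted bimodule structures, rather than merely an abstract automorphism of the underlying sheaf. This amounts to a diagram chase using the fact that the $G$-action on $\cQ_\la$ (and on $\cQ_{\la'}$) lifts the action on $\fM$ compatibly with the symbol map, together with the uniqueness of such lifts up to inner automorphism from \cite[3.11]{BLPWquant}. Once this is in place, the first paragraph's formal argument closes out the proof.
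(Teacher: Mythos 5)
Your proposal is correct and takes essentially the same route as the paper: both reduce the lemma to exhibiting an isomorphism $\Psi^\zeta_w(\bi)\cong\bi$ where the left and right module structures are simultaneously twisted by $\bar w$, and both obtain this from the uniqueness of quantizations of line bundles (\cite[5.2]{BLPWquant}). The one minor difference is that the paper first invokes the argument at the start of Lemma \ref{twist-shuffle} to replace $\fM$ by a resolution on which localization holds at $\la'$, so that $\bi\cong\secs(\cbi')$ concretely and the pullback $\bar w^*\cbi'$ can be compared to $\cbi'$ directly, whereas you work on the universal deformation $\scrM$ and then specialize; the reduction step sidesteps precisely the ``technical obstacle'' you flag in your last paragraph, which is why it is worth making.
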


\begin{proof}
For any object $N$ of $\cOa^{\zetaw}$ (with period $\la$),
we have 
$$\Psi^{\zeta}_w\circ\Phi^{\la'\!,\la}(N) = \Psi^{\zeta}_w\left(\bi\overset{L}{\otimes} N\right)
\cong\Psi^{\zeta}_w(\bi)\overset{L}{\otimes} \Psi^{\zeta}_w(N).$$
Here, by $\Psi^{\zeta}_w(\bi)$, we mean that we twist both the left {\em and} the right module structures
on $\bi$ by any lift $\bar w$ of $w$ to $N(T)\subset G$.  To prove the lemma,
it suffices to show that $\Psi^\zeta_w(\bi)\cong \bi$. 

By the same argument that we used at the beginning of the proof of Lemma \ref{twist-shuffle},
we may assume that localization holds at $\la'$.  This implies that $\bi\cong \secs(\cbi')$. Thus $\Psi^{\zeta}_w(\bi)$ is the
$\bS$-invariant sections of the pullback sheaf  $\bar{w}^*\cbi'$; however, the pullback of
$\cbi'$ by any group element is again a quantization of the same line
bundle, and thus isomorphic to $\cbi'$.  This completes the proof.
\end{proof}

The four preceding lemmas combine to give us the following theorem.

\begin{theorem}\label{twisting and shuffling commute}
Twisting functors commute with shuffling functors.
\end{theorem}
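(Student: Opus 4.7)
The plan is almost embarrassingly simple, because all the hard work has been done in the four preceding lemmas. By definition, every twisting functor is a finite composition of functors of the form $\Phi^{\la''\!,\la'}$, their inverses, and a single ``Namikawa Weyl group'' factor $\Phi^{\la}_w$; similarly, every shuffling functor is a finite composition of functors of the form $\Psi^{\zeta''\!,\zeta'}$, their inverses, and a single factor $\Psi^{\zeta}_v$. So to show that any twisting functor commutes with any shuffling functor up to natural isomorphism, it suffices to verify the commutation one basic building block at a time.

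First I would observe that there are exactly four mixed commutators to check: $\Phi^{\la'\!,\la}$ versus $\Psi^{\zeta'\!,\zeta}$, $\Phi^{\la'\!,\la}$ versus $\Psi^{\zeta}_v$, $\Phi^{\la}_w$ versus $\Psi^{\zeta'\!,\zeta}$, and $\Phi^{\la}_w$ versus $\Psi^{\zeta}_v$. These four cases are established, respectively, by Lemmas \ref{twist-shuffle}, \ref{w-twist-commute}, \ref{another stupid case}, and \ref{last one}. Once these four natural isomorphisms are in hand, one simply slides each shuffling factor past each twisting factor one at a time through any given composition; since inverses of functors inherit the commutation from the functors themselves, this also handles $\Phi^{\la'\!,\la}$ and $\Psi^{\zeta'\!,\zeta}$ run backwards.

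The only genuinely nontrivial bookkeeping is making sure that, when we slide a shuffling factor $\Psi^{\zeta'\!,\zeta}$ past a twisting factor $\Phi^{\la'\!,\la}$, the cocharacters $\zeta,\zeta'$ on either side of the equation refer to the same categories $\cOa^\zeta\subset A_\la\mmod$ and $\cOa^{\zeta'}\subset A_{\la'}\mmod$ on both sides, so no ``wall-crossing'' issues occur; this is automatic from the way the lemmas are stated, because the definitions of $\Psi^{\zeta'\!,\zeta}$ and its analogues for $\la'$ both use the same inclusion into the holonomic category, and localization/sections intertwine these inclusions.

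I do not anticipate any genuine obstacle: the main point is conceptual organization rather than computation. If there were a subtle issue to worry about, it would be that twisting functors are not strictly associative on the nose but only up to natural isomorphism, so one should verify that the natural transformations produced by the four lemmas splice together coherently along a composition. This is handled by the same argument used in \cite[6.35]{BLPWquant} to promote a collection of pairwise natural isomorphisms into an honest action of the relevant fundamental groupoid, so no new input is needed.
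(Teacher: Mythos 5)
Your proposal matches the paper's own proof exactly: the paper simply states that "the four preceding lemmas combine to give us" the theorem, and you have correctly identified that Lemmas \ref{twist-shuffle}, \ref{w-twist-commute}, \ref{another stupid case}, and \ref{last one} cover the four mixed commutators between the basic building blocks of twisting and shuffling functors. The additional remarks about coherence of the natural isomorphisms along a composition are a sensible precaution, but the paper treats this as implicit, so no further argument is needed.
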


We end the section with a pair of conjectures, motivated by our study of twisting and shuffling
functors on hypertoric varieties.  Suppose that we have a notion of integral periods (Section \ref{sec:integrality}).
Fixing an integral parameter $\la\in\Pi$ and a generic cocharacter $\zeta$, we consider the
{\bf long twist} $\Phi^{\la,-\la}\circ\Phi^{-\la,\la}$
and the
{\bf long shuffle} $\Psi^{\zeta,-\zeta}\circ\Psi^{-\zeta,\zeta}$.
The first is a pure twisting functor, the second a pure shuffling functor;
in particular, they are both endofunctors of a single category $\dOa$.

\begin{conjecture}\label{serre}
Up to a shift, the long twist is isomorphic to to the right
Serre functor on $\dOa$ and the 
long shuffle is isomorphic to the left
Serre functor.
\end{conjecture}

\begin{remark}\label{O-serre}
  This conjecture is known to hold for BGG category $\cO$ by \cite[4.1]{MS} and for
  hypertoric category $\cO$ by \cite[6.11]{GDKD}.
\end{remark}

\begin{remark}
Conjecture \ref{serre} has recently been proven by Losev \cite[7.4 \& 7.7]{Losshuff}.
\end{remark}

\section{Examples}
\label{sec:examples}
The purpose of this section is to summarize the structures that we have defined so far for all known
classes of examples of conical symplectic resolutions.  Specifically, for each class, we will address the following
(the parenthetical section number indicates the point in this paper at which each of these topics was first discussed):

\renewcommand{\theenumi}{\roman{enumi}}
\renewcommand{\labelenumi}{(\theenumi)}
\begin{enumerate}
\item the group $G$ of Hamiltonian symplectomorphisms that commute with $\bS$, along with its Weyl group $\rWeyl$
(Section \ref{shuffling})
\item the vector space $H^2(\fM;\C)$ (or the full
  cohomology ring) along with the action of the Namikawa Weyl group $W$ (Section \ref{sec:nam})
\item the algebra $A$ of $\bS$-invariant global sections of a quantization (Section \ref{quant-quantizations})
\item the periods at which localization is known to hold (Section \ref{sec:localization})
\item Koszulity of $\cOa$ and $\cOg$ (Section \ref{hwksk})
\item the map $H^*(\fM;\C)\to Z(E)$ to the center of the Yoneda algebra of $\cOg$ (Section \ref{sec:hochsch-cohom-cent})
\item the poset $\all$ of symplectic leaves and the subposet $\spe$ of special leaves
(Section \ref{sec:simples})
\item the twisting and shuffling functors (Section \ref{sec:twist-shuffl-funct}).
\end{enumerate}

\subsection{Cotangent bundles of partial flag varieties}
\label{sec:flag-varieties}
Let $G$ be a semi-simple complex Lie group and $P\subset G$ a parabolic subgroup.
Let $\fM := T^*(G/P)$, equipped with the inverse scaling action on the fibers.
Up to modification of the $\bS$-action, these are the only known examples of conical symplectic resolutions
that are cotangent bundles.

The $G$-moment map $\fM\to\mg^*\cong\mg$ has as its image the closure of a nilpotent orbit
$O_P$;
the orbits that arise in this way are called {\bf Richardson}. 
The induced map from $\fM_0$
to the closure of the Richardson orbit is generically finite. 
If it is generically one to one, then
$\fM_0$ is isomorphic to the normalization of the orbit closure.
If $G = \SL_r$, then every nilpotent orbit is Richardson, every nilpotent orbit closure is normal,
and the map from $\fM_0$ to the orbit closure is always an isomorphism.

\begin{enumerate}
\item If $P \ne G$, the group of Hamiltonian symplectomorphisms of $\fM$
  commuting with $\bS$ is the adjoint group $G/Z(G)$, and $\rWeyl$ is its Weyl group.
\item The cohomology ring is \[H^*(\fM; \C)\cong
  \C[\mt^{*}]^{\rWeyl_{\! P}}\Big{/}\;\C[\mt^{*}]^{\rWeyl_{\! P}}\cdot
  \C[\mt^{*}]_+^{\rWeyl},\] where $\rWeyl_{\! P}$ is the Weyl
  group of $P/[P,P]$ and $\mt$ is a Cartan subalgebra of $\mg$.    In particular,
  $H^2(\fM; \C)\cong (\mt^*)^{\rWeyl_{\! P}}$.  

We describe the Namikawa Weyl group only in 
the special case where $G = \SL_r$.  
Let $\mu$ be a composition of $r$.  This means that $\mu$ is a function $i \mapsto \mu_i$ from $\Z$ to $\mathbb{N}$ such that $\sum_{i} \mu_i = r$.  
Consider the parabolic subgroup $P = P_\mu\subset \SL_r$
of block-upper-triangular matrices with blocks of size $(\ldots,\mu_{-2},\mu_{-1},\mu_0,\mu_1,\mu_2,\ldots)$, in that order.  Let $\bar\mu$ denote
the partition with the same parts as the composition $\mu$, sorted into nonincreasing order, and let $\bar{\mu}^t$ be its transpose; in other words, 
$\bar{\mu}^t_j$ is the number of parts of $\bar\mu$ or $\mu$ 
that are greater than or equal to $j$.  Then $\fM_0$ is isomorphic to the closure of the nilpotent orbit in $\mg$ with
Jordan type $\mu$. The Namikawa Weyl group $W$ permutes parts of the composition of the same size; more precisely, we have
$$W\cong S_{\bar\mu^t_1-\bar\mu^t_2}\times \cdots \times S_{\bar\mu^t_{r-1}-\bar\mu^t_{r}} \times S_{\bar\mu^t_r}.$$
In particular, if $\bar\mu^t=(r)$ (in which case $P$ is a Borel subgroup), then $W=S_r$.  At the other extreme, if $\bar\mu^t=(1,\dots, 1)$
(in which case $P = G$), then $W$ is trivial.
\item Let $\la\in\Ht\subset\mt^*$, and let $A_\la$ be the invariant section ring of the quantization with period $\la$.
Then $A_\la$ is isomorphic to a quotient of $U(\mg)$ by a primitive
ideal; if $P=B$, then this ideal is generated by elements of the
center. By \cite[4.4]{BLPWquant}, $A_\la$ is also isomorphic to the ring of global $D$-modules on $G/P$,
twisted by $\la + \rho$.
\item By the work of Beilinson and Bernstein \cite{BB},
localization holds if and only if the inner product $\langle\la,\a\rangle$ is not a non-positive
integer for any positive root $\a\in\mt^*$.
\item If the period $\la$ is regular, then the category $\cOg$ is equivalent by Soergel's functor
to a regular infinitesimal block of parabolic BGG category $\cO$ \cite[3.5.1]{BGS96} (see also \cite[Proposition 2]{WebWO}).
 In particular, $\cOg$ is standard Koszul.
\item If the period $\la$ is regular and integral, then the center of $E$ is isomorphic to the center of the Koszul
dual category $\cOg^!$, which is a singular integral block of ordinary (not parabolic) BGG category $\cO$ \cite[1.1]{Back99}.  
The fact that the center of such a block is isomorphic
to the cohomology ring of $\fM$ is a consequence of Soergel's
Stuktursatz and Endomorphismensatz from \cite{Soe90}.  A slightly stronger statement is that 
Conjecture \ref{HH} holds in this case.
\begin{proposition}\label{prop:hochG/P}
The natural map from $H^*(\fM)$ to the center $Z(E)$ is an isomorphism.
\end{proposition}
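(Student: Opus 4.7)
The strategy is to transfer the question across Koszul duality into one about the center of a singular block of ordinary BGG category $\cO$, and then invoke Soergel's theorems from \cite{Soe90}. Since $\cOg$ is standard Koszul (Example \ref{spkoszul}) and Soergel's equivalence identifies it with the regular integral block of parabolic category $\cO$ of type $P$, its Koszul dual $\cOg^!$ is canonically equivalent to the singular integral block of ordinary BGG category $\cO$ whose singularity is the stabilizer type of $\rWeyl_{\!P}$. By Remark \ref{rem:Yoneda}, the Yoneda algebra $E$ is isomorphic, as a graded algebra, to the endomorphism ring of a projective generator of $\cOg^!$, so $Z(E)$ is the categorical center of $\cOg^!$.

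The next step is to apply Soergel's Endomorphismensatz and Stuktursatz to this singular block. These theorems produce a canonical isomorphism of graded rings between the categorical center of the singular integral block and the invariants $\C[\mt^*]^{\rWeyl_{\!P}}\big/\C[\mt^*]^{\rWeyl_{\!P}}\cdot\C[\mt^*]^{\rWeyl}_{+}$, which as recorded in item (ii) of this section is precisely $H^*(G/P;\C) \cong H^*(\fM;\C)$ (the second isomorphism from the retraction $T^*(G/P)\to G/P$). This already gives $Z(E)\cong H^*(\fM;\C)$ as abstract graded rings of the same dimension.

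It remains to match the resulting isomorphism with the natural map built in Section \ref{sec:hochsch-cohom-cent}. The plan is to reduce to degree two, where $H^*(\fM;\C)$ is generated, and to check that both maps send an integral class $\la\in H^2(\fM;\Z)\cong H^2(G/P;\Z)$ to the endomorphism of $\mathrm{id}_{D^b(\cOg^!)}$ induced by tensoring with the corresponding line bundle $\cL$ (viewed through Koszul duality). On the Hochschild side, this is the very definition of the map from $H^*(\fM;\C)\cong H\!H^*(\cD)$ to $Z(E)$, since the $\cL$-twisting functor on $\cOg$ is the image of $\la$ under the spectral sequence computing $H\!H^*(\cD)$ and acts on $E$ via its center. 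On the Soergel side, the translation principle identifies the same tensoring-by-$\cL$ action, through the Koszul duality exchanging twisting and shuffling, with the natural action of $H^*(G/P;\C)$ on a projective generator of $\cOg^!$.

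The main obstacle is precisely this last compatibility: one must verify that the Chern class action on $\cOg$ via twisting and the Soergel action on $\cOg^!$ via translation really do correspond under Koszul duality. However, both actions are controlled by tensor product with the same Harish-Chandra bimodule $\bi$ of Section \ref{twisting-gco}, which by \cite[6.38]{BLPWquant} induces Arkhipov's twisting functor on the parabolic block; and the action of its Koszul dual functor on the singular block is, by the Koszul duality statement of \cite{MOS}, exactly the translation functor used in Soergel's description of the center. Once this identification is in place, the two ring homomorphisms from $H^*(\fM;\C)$ to $Z(E)$ agree in degree two, hence everywhere, and the known equality of dimensions forces both to be isomorphisms.
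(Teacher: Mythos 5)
Your proposal takes a genuinely different route from the paper's own proof, but it contains a conceptual gap in the compatibility step that I think is fatal as written.

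The paper's proof is more direct and sidesteps the compatibility problem entirely. It first regards a line bundle $L$ (with Euler class the period) as an object of $\Dmod$ and observes that the map $H^*(\fM)\to\Ext^\bullet(L,L)$ defined via Hochschild cohomology is the de~Rham isomorphism. This immediately gives injectivity of $H^*(\fM)\to Z(E)$. Then it invokes \cite[5.11]{Bru08}, which says that $\dim Z(E)$ equals the number of simple objects of $\cOg$, which equals $|\fM^\bT|=\dim H^*(\fM;\C)$. Injectivity together with equality of dimensions forces the map to be an isomorphism. No Koszul duality or Soergel theory is needed to prove this proposition, though the paper does mention (earlier in the same section) that the \emph{abstract} isomorphism of rings follows from parabolic--singular Koszul duality plus Soergel's Endomorphismensatz; the extra content of the Proposition is precisely the identification of the \emph{specific} map, which the paper handles via the $\Ext^\bullet(L,L)$ computation.

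The gap in your argument is the compatibility check in the last two paragraphs. You conflate two things of different cohomological degree: the $\cL$-twisting functor, which is a degree-0 autoequivalence of $D^b(\cOg)$, and the image of $\la\in H^2(\fM)$ in $HH^2(\cD)$, which is a degree-2 natural transformation of the identity functor. The natural map $H^*(\fM)\to Z(E)$ of Section~\ref{sec:hochsch-cohom-cent} is built from the latter, not the former, so the claim that ``the $\cL$-twisting functor on $\cOg$ is the image of $\la$ under the spectral sequence computing $HH^*(\cD)$'' is not correct as stated. Relatedly, \cite[6.38]{BLPWquant} identifies our twisting functors with Arkhipov's, and \cite{MOS} shows twisting is Koszul dual to shuffling; neither of these identifies a Hochschild class with Soergel's description of the center via translation and $\mathbb{V}$. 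Establishing that the Hochschild map agrees with Soergel's isomorphism in degree two would require a separate argument relating the deformation-theoretic definition of $HH^2$ to the center of a block through Koszul duality, and the references cited do not supply it. If you want to complete the proof along your lines, you would still need some independent input equivalent to the injectivity part of the paper's argument; once injectivity is known, the dimension count (which you could also obtain from Soergel's theorems or from \cite[5.11]{Bru08}) finishes the proof without any compatibility check.
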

\begin{proof}
By \cite[4.5]{BLPWquant}, $\Dmod$ is equivalent to the category of D-modules twisted by a line bundle $L$.
In particular, $L$ itself may be regarded as an object of $\Dmod$.  The Ext-algebra
$\Ext^\bullet(L,L)$ is isomorphic to the de Rham cohomology of $H^*(\fM)$,
and the map $H^*(\fM)\to \Ext^\bullet(L,L)$ from Section \ref{sec:hochsch-cohom-cent} realizes this isomorphism.

This implies that the map $H^*(\fM)\to Z(E)$ is
injective. By \cite[5.11]{Bru08}, the dimension of $Z(E)$ is the
same as the number of simple objects in $\cOg$, 
which is equal to $|\fM^\bT| = \dim H^*(\fM^\bT; \C) = \dim H^*(\fM; \C)$.
Thus, this map must be an isomorphism.
\end{proof}

\item 
Let $O_P\subset\mg$ be the Richardson nilpotent orbit with the property that $\fM_0$ is finite over $\overline{O}_P$.
The symplectic leaves of $\fM_0$ are the preimages of the $G$-orbits in $\overline{O}_P$.
For an integral parameter, the special leaves (equivalently, the special orbits) correspond
to those double cells which contain a shortest right coset
representative for the Weyl group of the parabolic.  
These are described in Carter's book \cite[\S 13]{Carter}.
If $\mg = \mathfrak{sl}_r$, then all leaves are special.
For non-integral parameters, the question of which orbits are special is
more complicated; we do not address it here.
\item In the case where $P$ is a Borel subgroup, our twisting functors agree with those
defined by Arkhipov (Remark \ref{Arkhipov}) under the equivalence between $\cOa$ and BGG category $\cO$.  Similarly, our shuffling functors agree with those
defined by Irving (see Proposition \ref{shuf-twist}).  Thus, when $P$ is the Borel we obtain two commuting actions of the generalized braid group $B_W$ on $\cOa$; at the level of the Grothendieck group, these descend to the left and right actions of $W$ on $\C[W]$ (Remark \ref{regular}).

When $P$ is an arbitrary parabolic, Soergel's functor can be used to identify $\cOg$ (and therefore $\cOa$ if localization holds) with an infinitesimal block of parabolic BGG category $\cO$.
Irving's shuffling functors all still make sense in the setting of parabolic BGG category $\cO$,
and they coincide with our shuffling functors.
On the other hand, not all of Arkhipov's twisting functors preserve parabolic BGG category $\cO$; our twisting functors are just those Arkhipov functors that preserve parabolic $\cO$.
Conjecture \ref{serre} holds in this case.
\end{enumerate}

\subsection{S3-varieties}
\label{sec:S3}

  Let $G$ be a simple complex algebraic group and let $e\in \mg$ be a
  nilpotent element; let $h,f$ be elements which satisfy the Chevalley
  relations of $\mathfrak{sl}_2$ together with $e$
and let $\mg=\oplus\mg_k$ be the decomposition of $\mg$ into eigenspaces for $h$.
  Choose a Cartan $\mt\subset
  \mg_0$ and let $T$ be the corresponding connected subgroup.  The space $\mg_{-1}$ has a
  symplectic form defined by  $\langle [-,-],e\rangle$, where $\langle
  -,-\rangle$ denotes the Killing form. We let $\ml\subset \mg_{-1}$ be a
  Lagrangian subspace with respect to this form. 
  Let $$\mm:=\ml\oplus\bigoplus_{k\leq -2}\mg_k$$ and let $M\subset G$ be the
  associated connected algebraic subgroup. 
We have a natural character $\chi:=\langle
  e,-\rangle\colon \mm\to \C$.

Let $P$ be a parabolic subgroup of $G$. Consider the moment map
  $\mu:T^*(G/P)\to \mg^*$, and let
  $\mu_\mm:T^*(G/P)\to \mm^*$ be the moment map obtained by projecting onto $\mm^*$.  
  As explained by Ginzburg and Gan \cite[\S 3.2]{GG}, the group $M$
  acts freely on $\mu_\mm^{-1}(\chi)$, so the
  quotient $\fXeP:=\mu_\mm^{-1}(\chi)/M$ is smooth; it is a symplectic resolution
  of the affine quotient $\fXePo := \Spec\C[T^*(G/P)]^M$.
  
If the Richardson orbit $O_P\subset\mg$ is simply connected or if $G = \operatorname{SL}_r$, then
$\fXePo$ is isomorphic to a transverse slice to the orbit $G\cdot e$ inside of $\overline{O}_P = G\cdot \fp^\perp\subset \mg^*$.  
More generally, it admits a finite map to such a slice.
If $e=0$, then $M$ is trivial, and $\fXeP = T^*(G/P)$, thus these spaces generalize those considered in the previous section. 
There seems to be no fixed name for $\fXeP$ and $\fXePo$ in the
literature; we have adopted the term {\bf S3-varieties}, as they have been studied (independently) by
Slodowy, Spaltenstein, and Springer.

\begin{remark}
A subtlety in the above construction is that $M$ is not reductive, and one usually only considers quotients by reductive groups.  
In this particular case, the fact that everything works as expected must be checked
carefully; this is shown in \cite{GG} using the freeness of the action of $M$ on the preimage of $\chi$.
\end{remark}

The usual inverse scaling action of $\bS$ on the fibers of
$T^*(G/P)$ does not descend to $\fXeP$, since $\mu_\mm$ is $\bS$-equivariant
and $\chi\in\mm^*$ is not $\bS$-invariant.  However, we can choose a
new $\bS$-action ($\rho^{\#}$ in the notation of \cite[\S 4]{GG}) on $T^*(G/P)$ (no longer
conical and not commuting with $G$) that {\em does} descend to a
conical action on $\fXeP$.
The grading induced on $\C[\fXePo]$ by the action of $\bS$ is called
the {\bf Kazhdan grading}; see \cite[\S 4]{GG} for more details.

\begin{enumerate}
\item To avoid confusion with the group $G$, 
let $G_\Ham$ denote the group of Hamiltonian symplectomorphisms of $\fXeP$
that commute with $\bS$.  Then $G_\Ham$ is a quotient of the simultaneous centralizer group
  $C_{G}(e,h,f)$ of the $\mathfrak{sl}_2\subset\mg$ spanned by $e,h$ and $f$.  
  If $P = B$ then $G_\Ham$ is just $C_G(e, h, f)/Z(G)$, and 
  $\rWeyl$ is its Weyl group.  However, it can be smaller in general; for instance if $e \in O_P$, then $\fXeP = \fXePo$ is a point, and $G_\Ham$ is trivial. 

Let us describe the group $G_\Ham$ explicitly when $G=\operatorname{GL}_r$.
(We use $\operatorname{GL}_r$ instead of $\operatorname{SL}_r$ for convenience here, but nothing substantial changes.)
The centralizer $C_{G}(e,h,f)$ is the product 
$\operatorname{GL}_{\gamma_1}\times \cdots \times \operatorname{GL}_{\gamma_r} \subset \operatorname{GL}_r$,
where ${\gamma_i}$ is the number of Jordan blocks of $e$ of size $i$ and the factor $\operatorname{GL}_{\gamma_i}$ is the endomorphism group of the sum of these blocks as $\mathfrak{sl}_2$-representations.  Suppose that the parabolic $P$ is described by 
a composition $\mu$, and the Jordan blocks of $e$ are given by a partition $\nu = (\nu_1, \nu_2, \dots)$ of $r$.  Note that $\fXeP$ is empty unless $\nu \le \bar{\mu}^t$ in the dominance order on partitions, so we will assume from now on that this is the case.

If the partitions $\nu$ and $\bar{\mu}^t$ have a different number of parts, pad the shorter one with zeros so they both have the same length $\ell$. 
Then let $J$ be the set of integers $1 \le j \le \ell$ for which 
\[\sum_{i = 1}^j \nu_i = \sum_{i=1}^j (\bar{\mu}^t)_i.\]
(Note that we always have $\ell \in J$.) 
Each $\gamma_k$ indexes a maximal block $\nu_{j_k} = \nu_{j_k+1} = \dots = \nu_{j_{k+1}-1}$
of equal parts of $\nu$.  Using the condition $\nu \le \bar{\mu}^t$ it is not hard to see
that there are three possibilities for $J \cap [j_k, j_{k+1}-1]$: either (1) it is empty, 
or (2) it consists of $j_{k+1}-1$ only, or (3) it contains all integers $j_k \le j < j_{k+1}$.  Then $G_\Ham$ is the quotient of $C_{G}(e,h,f) = \operatorname{GL}_{\gamma_1}\times \cdots \times \operatorname{GL}_{\gamma_r} \subset \operatorname{GL}_r$ by the group generated by all factors $\operatorname{GL_{\gamma_k}}$ for which (3) holds for $k$, together with all diagonal matrices of the form 
\[\operatorname{diag}(\lambda I_{\gamma_1}, \lambda I_{\gamma_2}, \dots, \lambda I_{\gamma_k}, I_{\gamma_{k+1}}, \dots)\]
where $I_{\gamma_i}$ is the identity matrix, $\lambda \in \C \setminus \{0\}$, and 
$k$ labels a block of type (2) or (3).

Thus, the Weyl group
$\rWeyl$ in the $\operatorname{SL}_r/\operatorname{GL}_r$ case is the product of $S_{\gamma_k}$ for all $k$ not of type (3).

For example, if $\nu = (4, 4, 2, 2)$ then $C_G(e, h, f) \cong \operatorname{GL}_2 \times \operatorname{GL_2}$.  If $\bar{\mu}^t = (5, 4, 3)$ then we have $G_\Ham \cong (\operatorname{GL}_2 \times \operatorname{GL_2})/\C^*$.  If
$\bar{\mu}^t = (5, 3, 3, 1)$ then $G_\Ham \cong \operatorname{PGL}_2 \times \operatorname{PGL_2}$, and if $\bar{\mu}^t = (5, 3, 2, 2)$
then  $G_\Ham \cong \operatorname{PGL}_2$.

A maximal torus in $G_\Ham$ is somewhat easier to describe.  
For all roots $\a\in\mg^*\cong\mg$, let $e_\a$ be the orthogonal projection of $e$ onto the weight space $\mg_\a$.
  Then the Lie algebra of a maximal torus of $C_{G}(e,h,f)$ is \[\mt^e:=\mt\cap \mathfrak{c}_{\mg}(e,h,f)=\{t\in \mt \,|\, \a(t)=0 \text{
  for all $\a$ such that $e_\a\neq 0$} \}.\] 
If $G = \operatorname{GL}_n$, then the Lie algebra of a maximal torus in $G_\Ham$ is 
the quotient of $\mt^e$ by the span of all $\operatorname{diag}(I_m, 0)$ where $m = \sum_{i=1}^j \nu_i$ for some $j \in J$.
\item 
Since $\mm$ is nilpotent, $M$ is contractible, thus the $M$-equivariant cohomology of $T^*(G/P)$ coincides with the ordinary
cohomology.  This allows us to consider 
the Kirwan map 
\begin{equation}\label{kirwan map}
(\mt^*)^{\rWeyl_P} \cong H^2(T^*(G/P); \C) \cong H^2_{\! M}(T^*(G/P); \C) \to H^2(\fXeP; \C),
\end{equation}
which is always injective if $\fXeP$ is non-empty and positive dimensional.
In type A, the Kirwan map is also surjective by \cite[1.1]{BrO} (Brundan and Ostrik also give a presentation of the full cohomology ring of $\fXeP$ in type A).  The kernel of the map \eqref{kirwan map} is invariant under the action of
the Namikawa  Weyl group of $T^*(G/P)$ (see part (ii) of Section \ref{sec:flag-varieties}), so it induces an action on $H^2(\fXeP; \C)$, and 
the Namikawa Weyl group of $\fXeP$ is the quotient by the elements which 
act trivially there.

\item 
Let $A_\la$ be the $\bS$-invariant section algebra for the quantization of $\fXeP$ 
whose period is equal to the image of $\la\in H^2(G/P; \C)$.
This algebra has been studied in \cite{WebWO}, and it is isomorphic to a
 quotient of the usual $W$-algebra for the element $e$ by an explicit ideal. 
When $\la$ is in the image of the map
$H^2(G/P;\C)\to H^2(\fXeP;\C)$, the quantization of $\fXeP$ can be obtained from the quantization of $T^*(G/P)$ by quantum Hamiltonian reduction.
However, except in type A, this map may not be not surjective, and the quantizations which don't arise this way are more difficult to understand.
\item The question of when localization holds has a simple answer for quantizations obtained by Hamiltonian reduction from $T^*(G/P)$.
Choose a Borel subgroup $B$ such that $T\subset B\subset P\subset G$.
Consider an element $\la\in (\mt^*)^{\rWeyl_P}$, which includes into $H^2(\fXeP; \C)$
via Equation \ref{kirwan map}.
Let $\Delta_+(\mathfrak{p})\subset \mt^*$ be the set of positive roots $\a$ such that 
$\mg_{-\a}\not\subset\mathfrak{p}$.
The argument of \cite[5.1.2]{Gin08} is easily generalized to show that localization holds at $\la$ whenever 
$\langle\la, \a\rangle\notin \mathbb{Z}_{\leq 0}$ 
for all $\a\in\Delta_+(\mathfrak{p})$.

\item 
Let $L\subset G$ be a Levi subgroup such that $e$ is regular in $\mathfrak{l} = \operatorname{Lie}(L)$,
and let $\zeta$ be a cocharacter of $T$ commuting with $L$. 
If we choose $\zeta$ generically,
the sum of its nonnegative weight spaces will be the Lie algebra of a parabolic $R$ with Levi $L$. 
For example, if $G=\SL_r$, then the parabolic $R$ is the subgroup of
block diagonal matrices for some composition $\nu$ of $r$ with the same Jordan type as $e$. 
Since the action of $\bT$ by $\zeta$ fixes $e$, it descends to $\fXeP$.  

If $\la\in (\mt^*)^{\rWeyl_P}$ is dominant and integral, then it is shown in \cite{WebWO} that $\cOg\simeq\cOa$ is equivalent to the infinitesimal block of parabolic BGG category $\cO$ with parabolic $\fp$ and central character $\xi$, where $\xi$ is a central character of $U(\mg)$ corresponding to an integral highest weight whose stabilizer for the $\rho$-shifted action of $W$ on $\mt^*$ is $W_L$.
It follows that $\cOg$ is standard Koszul.
  
\item
The center of the Yoneda algebra of $\cOg$ is isomorphic to the center of the Koszul dual of $\cOg$.
Let $\rho_L\in\mt^*$ be the half the sum of the positive roots not in $\mathfrak{l}\subset\mg\cong\mg^*$.
If $\la + \rho_L$ is integral, then the parabolic-singular duality of Beilinson-Ginzburg-Soergel \cite{BGS96,Back99}
tells us that the Koszul dual of $\cOg$
is also a singular block of parabolic BGG category $\cO$.  (In this duality, the roles of the parabolic and the central
character are exchanged.  The larger the parabolic on one side, the more singular the character on the other side.)

In type A, the centers of these blocks were first computed by Brundan \cite{Bru06}; they were
shown to be isomorphic to the cohomology of $\fXeP$ independently in \cite[9.9]{kosdef} and \cite[1.1]{BrO}.
In \cite[3.5]{S3}, it is shown that the specific map $\gamma\colon H^*(\fXeP; \C)\to Z(E)$  of Conjecture \ref{HH} is an isomorphism.

\item The variety $\fXePo$ admits a finite map to the Slodowy slice $\cS$ to $e$ in $\mg$.
The symplectic leaves of $\fXePo$
are the preimages of the symplectic leaves of $\cS\cap\operatorname{nil}(\mg)$,
which are in turn the intersections of $\cS$ with the symplectic leaves of $\operatorname{nil}(\mg)$.
We conjecture that the same statement is true of special symplectic leaves.
The fact that the special symplectic leaves of $\cS\cap\operatorname{nil}(\mg)$ are 
the intersections of $\cS$ with the special symplectic leaves of $\operatorname{nil}(\mg)$ is proven by Losev \cite[1.2.2]{LosW}.

If $\mg = \mathfrak{sl}_r$ and the period is integral, then the above conjecture says that all leaves of $\fXePo$
are special.  This is true; it is known for $\overline{O}_P$ (see part (vii) of the previous section), and we 
may obtain the result for $\fXePo$ by applying Losev's operation $(\cdot)_\dagger$ to the relevant primitive ideals in $U(\mg)$.

\item As in \cite[2.8]{KR}, one can construct a Hamiltonian reduction
  functor sending modules over a quantization of $T^*(G/P)$ to modules
  over the corresponding quantization of $\fXeP$, and we show that all
  twisting functors between quantizations of $T^*(G/P)$ descend to
  twisting functors between the reduced quantizations of $\fXeP$.
  However, if the map $H^2(G/P;\C)\to H^2(\fXeP;\C)$ is not
  surjective, not every quantization of $\fXeP$ is a Hamiltonian
  reduction, so we cannot understand all twisting functors for $\fXeP$
  in terms of those on $T^*(G/P)$.
If this map is surjective, then we have $$E_{\operatorname{tw}}\cong (\mt^*)^{\rWeyl_P}\setminus
\{\la \mid \langle
\a^\vee ,\la\rangle=0\;\text{for some coroot $\a$ not orthogonal to $
  (\mt^*)^{\rWeyl_P}$}\}$$ and every twisting functor is obtained by
reduction from $T^*(G/P)$.  In particular, this holds if $G= \SL_r$.

The parameter space $E_{\operatorname{sh}}$ for shuffling functors is the complement of a hyperplane arrangement 
in the Lie algebra $\mt^{}_\Ham$ of a maximal torus in $G_\Ham$.  As explained in item (i) above, this
is a quotient of $\mt^e$.
If $G = \SL_r$, the dimension of this quotient is $m-|J|$, where $m$ is the number of 
Jordan blocks of $e$.  The root hyperplanes $\langle \a,t\rangle=0$
restrict to $\mt^e$, and the hyperplanes in $\mt_\Ham$ are exactly the 
projections of the ones which contain the kernel of the
projection $\mt^e \to \mt_\Ham$.  These are given by the usual equations
$\{a_i=a_j\}$, and such a hyperplane will appear if and only if
$i$ and $j$ belong to the same ``$J$-block''.

The arrangement is thus a product of type A hyperplane arrangements.  
However, as we have seen, 
the Weyl group $\rWeyl$ which acts on it may not 
the full permutation group associated to this arrangement, 
but instead is the subgroup of 
elements that permute Jordan blocks of the same size, except that a group of blocks of type (3) is not permuted.  If $P=B$, there are no type (3) blocks,
and so we have
 \[\rWeyl \cong S_{\bar\mu^t_1-\bar\mu_2^t}\times S_{\bar\mu^t_2-\bar\mu_3^t}\times \cdots \times
S_{\bar\mu_{\ell-1}^t-\bar \mu_\ell^t} \times S_{\bar\mu^t_\ell}.\]  Note that this coincides with the Namikawa Weyl
group for $T^*(\SL_r/P_\mu)$.

Note that S3-spaces for $SL_r$ are isomorphic to type A quiver
varieties.  We give a description of shuffling functors for all type A quiver varieties in Section \ref{sec:quiver-varieties} below.
\end{enumerate}

\subsection{Hypertoric varieties}
\label{sec:hypertoric-varieties}
Let $V$ be a symplectic vector space equipped with a linear symplectic action of a torus $K$.  The Hamiltonian reduction
$\fM$ of $V$ by $K$ is called a {\bf hypertoric variety}.  If we choose a generic character for $K$ as our GIT
parameter, the reduction is an orbifold; it will be smooth if and only if the matrix determined by the inclusion
of $K$ into a maximal torus of $\operatorname{Sp}(V)$ is unimodular \cite[3.2 \& 3.3]{BD}.  When smooth,
it is a conical symplectic resolution of the affine quotient $\fM_0$, where the action of $\bS$ is induced by the inverse scalar
action on $V$.

Let $$V = \bigoplus_{\chi\in\mk^*_\Z}V_\chi$$ be the decomposition of $V$ into weight spaces for $K$.
For simplicity, we assume that $V_0 = 0$.  (This assumption is harmless; the variety
$\fM$ is isomorphic to $V_0\times\fM'$, where $\fM'$ is built using the $K$ action on $V/V_0$.)
Choose an element $\xi\in\mk$ that is nonzero on every $\chi$ such that $V_\chi \neq 0$, and let
$$\Delta^+ := \{\chi\mid V_\chi\neq 0\;\;\text{and}\;\; (\chi,\xi)>0\}.$$

We make the additional assumption that, if $\dim V_\chi = 1$, then $\chi$ is in the span of $\Delta^+\smallsetminus\{\chi\}$.
This has the effect of ruling out certain redundancies; for example, it rules out the case
where $\dim V = 2$ and $K = \operatorname{Sp}(V)$, in which case $\fM$ would be a point.
In particular, every hypertoric variety can be constructed using a $V$ and a $K$ that satisfy this condition.
It will be a convenient assumption to have for part (ii) below, as well as for our discussion of symplectic duality
in the next section.

\begin{enumerate}
\item  The group $G$ of Hamiltonian symplectomorphisms of $\fM$ commuting with $\bS$ is 
isomorphic to $$\left(\prod_{\chi\in\Delta^+}\operatorname{GL}(V_\chi)\right)\Big{/} K.$$
Its Weyl group is a product of symmetric groups:
$$\rWeyl \cong \prod_{\chi\in\Delta^+}S_{\dim V_{\chi}}.$$
If we refine the decomposition $\oplus_{\chi\in\Delta^+}V_\chi$ to a decomposition into lines,
then the group $\tilde T \cong (\cs)^{\frac 1 2 \dim V}$ of automorphisms of this decomposition descends to a maximal torus
$T := \tilde T / K$ of $G$.  The natural basis for the cocharacter lattice of $\tilde T$ descends to a finite multiset of cocharacters
of $T$, which in turn define a weighted rational central multiarrangement $\mathcal{A}$ of hyperplanes in $\mt^*$. This hyperplane arrangement together with a character of $K$ gives 
the more usual combinatorial input data for constructing $\fM$.

\item The cohomology ring of $\fM$ was computed independently in \cite{Ko} and \cite{HS02}.
In degree 2, the Kirwan map
$$\mk^*\cong H^2_K(V; \C) \to H^2(\fM; \C)$$ is an isomorphism.  (Surjectivity was proven by Konno,
and injectivity is equivalent to our second assumption above.)

The Namikawa Weyl group is also isomorphic to a product of symmetric groups:
$$W \cong \prod_F S_{|F|},$$
where the product ranges over all rank 1 flats of $\mathcal{A}$.
(One may regard the set of rank 1 flats as the set underlying the multiset $\mathcal{A}$;
for an element $F$ of this set, $|F|$ is its multiplicity in $\mathcal{A}$.  Thus, if $\mathcal{A}$ contains $r$ copies of the same
hyperplane, we get a factor of $S_r$ in $W$.)  This group acts naturally on $\tilde\mt^*\cong\prod_F\C^F$ by permuting each summand.
It fixes $\mt^*$, and thus descends to an action on $\mk^*$ via the exact sequence
$$0\to\mt^*\to\tilde\mt^*\to\mk^*\to 0.$$

\item Let $\mathbb{D}$ be the Weyl algebra of the symplectic vector space $V$.
The invariant algebra $\mathbb{D}^K$ is called the {\bf hypertoric enveloping algebra} in \cite{BLPWtorico};
it was originally studied by Musson and Van den Bergh \cite{MVdB}.  Its center is isomorphic
to $\Sym\mk\cong\C[\mk^*]$, and the $\bS$-invariant section ring of the quantization of $\fM$ with period $\la\in H^2(\fM; \C)\cong\mk^*$
is isomorphic to the corresponding central quotient of $\mathbb{D}^K$ \cite[5.9]{BLPWtorico}.

\item A sufficient condition for localization to hold is given by Bellamy and Kuwabara \cite[5.8]{BeKu}.
Using their results, we give a different combinatorial condition in \cite[6.1]{BLPWtorico} that amounts
to checking that certain rational polyhedra contain lattice points.

\item  It is shown in \cite{BLPWtorico} that the arrangement $\cHt$ is the discriminantal arrangement of $\mathcal{A}$.
For any period that is sufficiently far away from the walls of $\cHt$
(the word for this in \cite{BLPWtorico} is {\bf regular}),
the category $\cOa$ is equivalent to the module categories over a finite dimensional
algebra introduced in \cite{GDKD}.  In particular, it is standard Koszul \cite[5.24]{GDKD}.

There is a unique notion of integrality for periods which satisfies the conditions of Section \ref{sec:integrality}; it is the same as the definition of integrality in \cite{GDKD}.
The precise recipe given there to associate an algebra to a particular quantization is straightforward
if the period $\la$ is integral, but in general it is somewhat tricky. Details are given in \cite[4.9]{BLPWtorico}.
The category $\cOg$ is always standard Koszul, because we can always twist by a line bundle to get to a regular period where localization holds.

\item When $\la$ is regular and integral, the fact that
$H^*(\fM; \C)$ is isomorphic to $Z(E)$ is proven in \cite[5.3]{BLPWtorico}, but we still need to show that the
homomorphism of Conjecture \ref{HH} is an isomorphism.

For every $\a\in\cI$, the simple module $\Lambda_\a$ in $\cOg$
is a quantization of the structure
sheaf of the relative core component $X_\a\subset\fM^+$.  Computing the Ext-algebra of $\Lambda_\a$
using a \v{C}ech spectral sequence, we see
that there is an isomorphism $H^*(X_\a; \C)\cong \Ext^\bullet(\Lambda_\a, \Lambda_\a)$ making the diagram
\[\tikz[->,very thick]{
\matrix[row sep=10mm,column sep=10mm,ampersand replacement=\&]{
\node (a) {$H^*(\fM)$}; \& \node (c) {$H^*(X_\a; \C)$};\\
\node (b) {$H\! H^*(\cD)$}; \& \node (d) {$\Ext^\bullet(\Lambda_\a, \Lambda_\a)$};\\
};
\draw (a) -- (b);
\draw (a) -- (c);
\draw (b) --(d);
\draw (c)--(d);
}\]
commute.  
Thus, the kernel of the map $H^*(\fM;\C)\to Z(E)$ is contained in the
intersection of the kernels of the maps $H^*(\fM;\C)\to H^*(X_\a; \C)$
for all $\a$.  It is shown in \cite[(34)]{HS02} that this intersection is
trivial, so the kernel of the $H^*(\fM;\C)\to Z(E)$ is trivial.  Since
the target and source have the same dimension, it must be an isomorphism. 
\item The poset $\all$ of symplectic leaves of $\fM_0$ is isomorphic to the poset of coloop-free flats of $\mathcal{A}$ \cite[2.3]{PW07}.
If the period of the quantization is integral, then all leaves are special.  This follows
from the reformulation \cite[7.4]{BLPWtorico} of work of Musson and
Van den Bergh \cite{MVdB}.  For non-integral weights, only some
leaves remain special; which ones remain can be deduced the
description of primitive ideals in $A$ given in \cite{MVdB}.

\item Twisting and shuffling functors for hypertoric varieties were studied in detail in \cite[\S 8]{BLPWtorico}.
In particular, Conjecture \ref{serre} is true \cite[8.19]{BLPWtorico}.
\end{enumerate}

\subsection{Hilbert schemes on ALE spaces}
\label{sec:hilbert-schemes-ale}

For any finite subgroup $\mck\subset \SL_2$, consider
the associated DuVal singularity $\C^2/\mck$, along with its unique
crepant resolution $\tCg$.  This is a conical symplectic resolution with respect to
the $\bS$-action induced by inverse scalar
multiplication on $\C^2$.  More generally, for any $r\in\mathbb{N}$,
the Hilbert scheme $\Hilb_r(\tCg)$ is a conical symplectic resolution of 
$\Sym^r(\C^2/\mck)\cong \C^{2r}/(\mck\wr S_r)$ \cite[Cor. 4]{Wangwreath}.

\begin{enumerate}
\item The group $G$ of Hamiltonian symplectomorphisms of $\Hilb_r(\tCg)$ commuting with
  $\bS$ is simply the group of linear symplectomorphisms of $\C^{2r}$ commuting with the
  action of $\mck\wr S_r$.  If $\mck=\Z/\ell\Z$, then
  $G\cong \C^*$, and if $\mck$ is of type $D$ or $E$, then $G$ is trivial.  In particular,
  we can find a Hamiltonian $\bT$-action with isolated fixed points if and only if $\mck=\Z/\ell\Z$.
  The Weyl group $\rWeyl$ is always trivial.
\item The Namikawa Weyl group $W$ is $W_{\mathfrak{G}}\times
\Z/2\Z$, where  $W_{\mathfrak{G}}$ is the Weyl group of $\mathfrak{G}$
if $r>1$.  The codimension two stratum corresponding to the factor $W_{\mathfrak{G}}$ is the set of points where $0$ lies in the support of the ideal, and the stratum corresponding to the factor $\Z/2\Z$ is given by points where the ideal has a point of multiplicity two. It follows from work of Nakajima \cite{Nak-book} that for $r>1$,
  we have
  $H^2(\Hilb_r(\tCg);\Z)\cong H^2(\tCg;\Z)\oplus \Z\delta$. The action
  of the Namikawa Weyl group on this space is via the action of
  $W_{\mathfrak{G}} $ on $H^2(\tCg;\Z)$ and $\Z/2\Z$ on $\Z\delta$ by
  negation.  Thus, $H^2(\tCg;\Z)$ is isomorphic as a $\Z W$-module to the
  root lattice of the finite dimensional simple Lie algebra
  $\mathfrak{G}$ associated to $\mck$ via the McKay correspondence,
  and $H^2(\Hilb_r(\tCg);\Z)$ to the root lattice of its affinization.
  
\item The algebra $A$ is isomorphic to a 
spherical symplectic reflection algebra for the representation of
$\mck\wr S_r$ on $\C^{2r}$ \cite{EGGO,Gorremark}.  
\item Which periods localization holds for is still not completely understood.
For the case $\mck=\{1\}$, the
  answer is quite simple: localization holds at all parameters not of
  the form $\nicefrac {-1}2- \nicefrac mk$ for $m\leq 0$, $1<k\leq r$
  and $(m,k)=1$.
  For general $\mck$, this is a much more complex question, though
  some progress has been made in work of McGerty-Nevins \cite{MN2} and Jenkins
  \cite[\S 6-7]{Jenkins}. 
\end{enumerate}
We only have a category $\cO$ when $\mck=\Z/\ell\Z$, since
  in the other cases, there is only the trivial $\bT$-action.  From
  now on, we will only consider this case.  

\begin{enumerate}
\setcounter{enumi}{4}
\item The
  category $\cOa$ is closely related to the category $\cO$ defined by
  \cite{GGOR} for the Cherednik algebra of the complex reflection
  group $\Z/\ell\Z\wr S_r$.  The category $\cOa$ is the image of the
  GGOR category $\cO$ under the functor $M\mapsto eM$, where $e$ is the spherical
  idempotent in the full symplectic reflection algebra.
If the period lies in the set $\mathfrak{U}$ (these periods are called {\bf spherical}),
then this functor is an equivalence.

  The Koszulity of $\cOa$ at spherical integral parameters and
  thus of $\cOg$ for arbitrary integral parameters is proven by Chuang and Miyachi \cite{CM}.  This was extended to all other
  choices of spherical parameters by Rouquier, Shan, Vasserot, and
  Varagnolo \cite{RSVV,SVV}.
\item It is shown in \cite[3.5]{S3} that the map $H^*(\fM;\C)\to Z(E)$ is an isomorphism.
\item The special leaves for $\mck=\{1\}$ are described by Losev
  \cite[5.8.1]{LosSRA}. 
For $\ell>1$, the affinization of $\Hilb^r(\tCg)$ is
$\Sym^r(\C^2/\mck)$. The leaves of this variety are in bijection with
partitions $\nu$ of integers $r'\leq r$, where the parts of the partition are
the multiplicities of the points in $(\C^2\setminus \{0\})/\mck$ that
occur (and thus $\{0\}$ necessarily has multiplicity $r-r'$). It
follows from work of Shan and Vasserot that 
the special leaves in the integral case are those where the
partition is
$\nu=(1^{r'})$; more generally, when $k=\nicefrac{m}{e}$ with
$(m,e)=1$, it follows that the special leaves are those where all
parts of $\nu$ are $e$ or $1$ and $e\mid (r-r')$.  Note that this is
neither a subset nor a superset of the special leaves in the integral case.
\item Since $G = \cs$, we have $E_{\operatorname{sh}}\cong \cs$.  Thus shuffling functors
are expected to give an action of $\Z\cong \pi_1(E_{\operatorname{sh}})$, 
which we expect agrees with the powers of the Serre functor. Twisting functors are more interesting; 
even though the space $H^2(\Hilb^r(\tCg); \R)$ is independent of $r$, the hyperplane arrangement $\cHt$ is not.  
The hyperplanes are described by Gordon \cite[\S 4.3]{Gorqui}; the extra hyperplanes for $r>1$ reflect the fact
that $\Hilb^r(\tCg)$ is not the only conical symplectic resolution of $\Sym^r(\C^2/\mck)$.
If $\ell = 1$, then the twisting functors simply consist of the action of $\Z$ by powers of the Serre functor.
If $\ell > 1$, we defer to the next section on quiver varieties.
\end{enumerate}

\subsection{Quiver varieties}
\label{sec:quiver-varieties}
Let $\quiv$ be a finite quiver without oriented cycles.  Let $V$ be the set of vertices of $\quiv$,
and let $\Bw,\Bv\in\mathbb{N}^V$ be dimension vectors.  Whenever we
have two weights $\mu\leq \nu$ for the Kac-Moody algebra $\mg_\quiv$ associated to
$\quiv$ with $\nu$ dominant and $\mu\leq \nu$ in the usual root order, there are associated dimension vectors $\Bw$ and $\Bv$ given by
$\Bw_i:=\a_i^\vee(\nu)$ and $\sum \Bv_i\a_i:= \nu-\mu$.  
Our assumptions assure that these numbers are in $\mathbb{N}$.

The quiver variety $\tilde \fQ^\nu_\mu$ is a smooth open subvariety of the cotangent bundle to the moduli stack of framed
representations of $\quiv$, where $\Bw$ is the dimension of the framing and $\Bv$ is the dimension of the representation
\cite{Nak94, Nak98}; its affinization is denoted $\fQ^\nu_\mu$.  Like hypertoric varieties, a quiver variety may be described
as a Hamiltonian reduction of a symplectic vector space by the group
$$\operatorname{GL}_{\Bv} \;\; :=\;\; \prod_{i\in V}\operatorname{GL}_{\Bv_i}.$$ 
The smooth variety $\tilde \fQ^\nu_\mu$ is obtained by using a nontrivial
GIT parameter specified by Nakajima, while the affine variety $\fQ^\nu_\mu$ is obtained as the affine quotient.

There are various actions of $\bS$ that we could choose with respect to which
$\tilde \fQ^\nu_\mu$ is a conical projective resolution of $\fQ^\nu_\mu$.  For example, we could mimic the choice
that we made for hypertoric varieties and take the $\bS$-action induced by
the inverse scaling action on the vector space; this has $n=2$.  Alternatively, the orientation of $\quiv$ determines
a Lagrangian subspace of our symplectic vector space, and the $\bS$-action such that 
this subspace has weight $-1$ and its complement has weight $0$ induces an $\bS$-action on $\tilde \fQ^\nu_\mu$ with $n=1$.  The fact
that $\tilde \fQ^\nu_\mu$ is conical with respect to this action follows from the assumption that $\quiv$ has no oriented
cycles.  This is the $\bS$-action that we use below.

\begin{remark}
The class of type A S3-varieties coincides with the class of quiver varieties for which $\quiv$ is 
a type A Dynkin diagram with some choice of orientation \cite{Maf}.  However, the $\bS$-action that we used in Section \ref{sec:S3}
had $n=2$, whereas here we are using an action with $n=1$.
\end{remark}

\begin{remark}
If $\quiv$ is an affine Dynkin graph with some choice of orientation and $\nu$
is the highest weight of the basic representation of $\mg_\quiv$,
then affine quiver variety $\fQ^\nu_\mu$ is isomorphic to $\Sym^r(\C^2/\mck)$,
where $\mck\subset \SL_2$ is the finite
subgroup corresponding to $\quiv$ under the McKay correspondence.
Thus, the class of varieties discussed in Section \ref{sec:hilbert-schemes-ale} is a subset
of the class of affine type quiver varieties.
Once again, we used an $\bS$-action with $n=2$ in that section, whereas here we are using one with $n=1$.
\end{remark}

\begin{enumerate}
\item
Consider the group
$$G_\Bw\;\;\;\; := \;\;\prod_{\text{$i\in V$}}\operatorname{GL}_{\Bw_i}
\;\;\times\prod_{\text{$(i, j)\in V\times V$}}\operatorname{GL}_{n_{ij}},$$
where $n_{ij}$ is the number of edges from $i$ to $j$.  In the Hamiltonian reduction construction described above,
this is precisely the group of those automorphisms of the orientation-determined
Lagrangian subspace that commute with the action of $\operatorname{GL}_{\Bv}$.
The group $G$ of Hamiltonian symplectomorphisms of $\tilde \fQ^\nu_\mu$ commuting with $\bS$
is isomorphic to the quotient of $G_\Bw$ by its center:
$$G\;\cong\; G_\Bw / Z(G_\Bw) \;\cong\; G_\Bw / (\cs)^{V}.$$
The group $(\cs)^{V}$ embeds into $G_\Bw$ using the
coboundary formula, with $(z_i)_{i\in V}$ landing on $z_i$ times the identity matrix in the factor $\operatorname{GL}_{\Bw_i}$ and
  $z_i z_j^{-1}$ times the identity matrix in the factor $\operatorname{GL}_{n_{ij}}$.

Note that in the special case where $\quiv$ is a tree with some choice of orientation, 
we have $$G \;\cong\; \operatorname{\operatorname{PGL}}_{\Bw} := \left(\prod \operatorname{GL}_{\Bw_i}\right) \Big{/} \cs.$$
Another special case that will be of interest to us is where $\quiv$ is an $r$-cycle with some choice of orientation,
in which case
$$G \;\cong\; \big(\operatorname{\operatorname{PGL}}_{\Bw} \times \;\cs\big) \;\big{/}\; (\Z/r\Z).$$
In both of these two special cases, the Weyl group is isomorphic to the same product of symmetric groups:
$$\rWeyl \;\;\cong\;\; \prod_{i\in V} S_{\Bw_i}.$$
\item The cohomology ring of a quiver variety is poorly
understood; in particular, surjectivity of the Kirwan map
$$H_{\operatorname{GL}_{\Bv}}^*\!(pt; \C) \to H^*(\tilde\fQ^\nu_\mu; \C)$$ is an important and long-standing conjecture.
The rank of the kernel of the Kirwan map in degree 2 is equal to the codimension in $\R^{V}$
of the affine span of the face of the weight polytope
of the representation $V_\nu$ which contains $\mu$.
In particular, for $\mu$ in the interior of the weight polytope, the Kirwan map is injective in degree 2.

For a fixed $\nu,\mu$,
consider the set $S$ of simple roots $\a_i$ with $\mu+\a_i\leq \nu$
and $\a_i^\vee(\mu)=0$.  In finite type, the Namikawa Weyl
group $W$ for $\fQ^\nu_\mu$ is the subgroup of the Weyl group of the Kac-Moody algebra
$\mg_{\quiv}$ generated by $\a_i$ in $S$.  We can show this by noting
that in finite type, \cite[3.27]{Nak98} shows that
codimension 2 strata of $\fQ^\nu_\mu$ are in bijection with the
connected components of the Dynkin subdiagram with vertices given by
$S$: the strata correspond to the weights $\mu+\a_D$ for
$\a_D$ the highest root of a connected component.  Each such stratum
contributes a copy of the Weyl group of the subdiagram. 

In infinite type, this group sits inside the Namikawa Weyl group, but
it may be a proper subgroup, as the case of $\Sym^r(\C^2/\mck)$ shows.

\item Except in special cases in which quiver varieties coincide with other known classes of varieties,
such as S3-varieties, hypertoric varieties, or Hilbert schemes on ALE spaces,
 the $\bS$-invariant section
algebra $A$ has not been studied.

\item While the question of when localization holds is
  interesting, we know of no progress outside the cases of
  finite and affine type A quivers 
  (discussed elsewhere in this paper), other than
  the general results of \cite{BLPWquant, MN, MN2}.

\item The categories $\cOa$ and $\cOg$ are studied by the fourth author in
\cite{Webqui}, with relatively explicit descriptions in the finite
and affine cases using steadied quotients of weighted KLR algebras.
In finite or affine type A, the resulting category $\cOg$ is Koszul at
integral parameters.  In
the finite case, this follows from coincidence with blocks of
parabolic category $\cO$ for $\mathfrak{sl}_m$; in the affine case,
this is shown in \cite{Webqui} based on the Koszul duality
results in \cite{SVV}.    

\item It is not clear whether the map $H^*(\tilde\fQ^\nu_\mu; \C) \to Z(E)$ 
is injective or surjective even for integral periods;
resolving this question is closely tied to the question of Kirwan
surjectivity for quiver varieties.

\item 
If $\quiv$ is a finite type ADE Dynkin diagram with some choice of orientation, 
then the symplectic leaves of $\fQ^\nu_\mu$ are in
bijection with dominant weights $\nu'$ such that $\nu'\leq \nu$ and
$\nu'\geq w\cdot \mu$ for all $w$ in the Weyl group of
$\mg_\quiv$. In the integral case, all leaves are special \cite[5.4]{Webqui}. 
In affine type, the poset of leaves becomes more complicated and there
are non-special leaves; the poset of special
leaves for an integral period in affine type A is described in
\cite[5.10]{Webqui}.  
  
\item Assume that Kirwan surjectivity holds in degree 2.  The twisting
  functors for a quiver variety give an action of a subgroup of the Artin braid group of the
  corresponding root system; we can obtain an action of the whole
  braid group if we allow functors between different quantizations.
  These functors can also be constructed from the categorified quantum
group which acts on these categories by the main theorem of
\cite{Webcatq}: the twisting action is given by the
Chuang-Rouquier braid complexes, as shown by Bezrukavnikov and Losev \cite{BLet}.

In the finite and affine cases, shuffling functors also have an
algebraic description, given in \cite{Webqui}.  
In cases other than affine type A, they
correspond to braiding functors from \cite{Webmerged}, while in affine type A they correspond to change-of-charge functors from \cite{WebRou}.
\end{enumerate}

\subsection{Affine Grassmannian slices}
\label{sec:affine-grassm-slic}

Let $G$ be a semi-simple algebraic group $G$ over $\C$, 
and let $G\pptpp$ be the group of $\C\pptpp$-points of $G$.
This has a ``complementary'' pair of subgroups,
$G[[t]]$  and $G_1[t^{-1}]$, where $G_1[t^{-1}]$ is equal to
the kernel of the evaluation map $G[t^{-1}]\to G$. There is a
natural Poisson structure on the affine Grassmannian $\Gr:=
G\pptpp/G[[t]]$ whose Poisson leaves are the intersections of the
orbits of these two subgroups.

For any cocharacter $\la\colon \mathbb{G}_m\to G$, we obtain a point
$t^\la\in G\pptpp$, which descends to an element $[t^\la]\in\Gr$.  
For any pair of dominant coweights $\la$ and $\mu$, we can consider
the intersection\footnote{Following the notational convention in \cite{KWWY}, the $\bar \la$ on the left-hand side reflects the fact
that we have taken the closure of $G[[t]]\cdot[t^\la]$ on the right-hand side.}
 \[\Gr^{\bar \la}_\mu\;\;:=\;\;\overline{G[[t]]\cdot[t^\la]}\cap
G_1[t^{-1}]\cdot [t^\mu].\] 
This is a transverse slice to the orbit
$G[[t]]\cdot[t^\mu]$ in the closure $\overline{G[[t]]\cdot[t^\la]}$.
It is a conical symplectic singularity with respect to the $\bS$-action by loop rotation \cite[2.7]{KWWY}.
It may or may not admit a conical symplectic resolution; a necessary and sufficient criterion is given in \cite[2.9]{KWWY}.
In type A, such a resolution always exists.
 
\begin{enumerate}
\item The group of Hamiltonian symplectomorphisms commuting with $\bS$ is
the simultaneous centralizer of $t^\la$ and $t^\mu$ in $G$, which is typically a torus.
\addtocounter{enumi}{1}  
\item The question of how to quantize this variety has been considered
  by the fourth author jointly with Kamnitzer, Weekes and Yacobi; there
  is a conjectural identification of the quantizations of this
  symplectic variety with a quotient of a shifted Yangian
  \cite[4.8]{KWWY}.
\item[(iv-viii)] At the moment, these questions have
  not been addressed.  No serious study of the categories
  $\cOa$ and $\cOg$ has been done, aside from Brundan and Kleshchev's
  work in type A \cite{BKyang}.  In the type A case, the varieties and
  their resolutions coincide with type A quiver varieties or type A S3-varieties, by work of Maffei \cite{Maf} and \Mirkovic-Vybornov
  \cite{MV08}, so the results of previous sections can be applied.
\end{enumerate}

\section{Symplectic duality}\label{duality}
In this section we describe a close relationship between the categories associated to certain
pairs of symplectic varieties.  In a number of special cases, we expect this relationship to provide connections between
previously studied geometric and categorical constructions, including two superficially different sets of link 
invariants \cite{MS,SS} (see Section \ref{knot}).

Our relationship is defined at the categorical level, but it has two very concrete cohomological consequences.  
The first (Section \ref{filtrations}) arises by passing to Grothendieck groups; we obtain a duality of vector spaces 
that explains previously known numerical identities in the combinatorics of matroids and illuminates the phenomena
of Schur-Weyl duality and level-rank duality in representation theory.
The second (Section \ref{gm}) arises by considering the centers of the universal deformations of the Yoneda
algebras of our categories.  The relationship that we see was originally observed
in certain special cases by Goresky and MacPherson \cite{GM};
by regarding this relationship as a shadow of symplectic duality, 
we generate new classes of examples and provide an explanation for the examples observed in \cite{GM}.

Throughout this section,
we will assume that every conical symplectic resolution comes equipped with a set of ``integral periods" in $\Ht$,
consistent with the three conditions in Section \ref{sec:integrality}.  We will always work with a quantization
for which localization holds, so that we need not distinguish between $\cOa$ and $\cOg$.
We will assume that Conjecture \ref{geom-Koszul} holds, so that our category $\cO$ is standard Koszul
(and therefore Koszul by Theorem \ref{standard Koszul implies Koszul}).  In particular, this means that
$\cO$ comes equipped with a graded lift $\tilde\cO$, that is, a mixed
category whose degrading is $\cO$ (see Section \ref{hwksk}).
Finally, we will assume that the twisting and shuffling actions lift naturally to 
$D^b(\tilde\cO)$; this is the case in all of the examples from Section \ref{sec:examples} where a Koszul grading is known to exist.

\subsection{The definition}\label{sd-def}
Consider a conical symplectic resolution $\fM$, equipped with a Hamiltonian action of $\bT$, commuting with $\bS$, 
such that $\fM^\bT$ is finite.  
We denote by $\cO$ the category $\cOa\simeq\cOg$ for an integral period at which localization holds.
The fact that we do not need to specify the period of the quantization follows from Lemma \ref{geometric twist}, which says
that the categories $\cOg$ associated to any two integral parameters are canonically equivalent.
We will write $\pi_1(E_{\operatorname{tw}}/W)$ to denote $\pi_1(E_{\operatorname{tw}}/W, [\la])$ for any integral $\la$
sufficiently deep in the ample cone of $\fM$.

Let $G$ be the group of Hamiltonian symplectomorphisms of $\fM$ that commute with $\bS$, and let $T\subset G$
be a maximal torus containing the image of $\bT$. By Lemma
\ref{two-generics}, this is unique if $G$ is reductive, and more
generally unique up to conjugation by the unipotent radical of $C_G(\bT)$.
Let $\zeta\in \mt_{\R}$ be the cocharacter of $T$ by which $\bT$ acts, and
let $C$ be the chamber of $\cHt$ containing $\zeta$.
By Lemma \ref{trivial shuffling}, we could replace $\zeta$ (and with it the action of $\bT$)
by any other element of $C$ without changing $\cO$.
We will write $\pi_1(E_{\operatorname{sh}}/\rWeyl)$ to denote $\pi_1(E_{\operatorname{sh}}/\rWeyl, [\zeta])$ for any $\zeta\in C$.

Let $\fM^!$ be another conical symplectic resolution on which $\bT$ acts with isolated fixed points, commuting with $\bS$.  
We denote all of the corresponding
structures related to $\fM^!$ with an upper shriek; for example, the fixed points of $\fM^!$ will be indexed by the set $\cI^!$,
the group $\pi_1(E^!_{\operatorname{sh}}/\rWeyl^!)$ will act on $D^b(\tilde\cO^!)$, and so on.

\begin{definition}\label{def:duality}
A {\bf symplectic duality} from $\fM$ to $\fM^!$ consists of 
\begin{itemize}
\item a bijection $\a\mapsto\a^!$ from $\cI$ to $\cI^!$ which is order-reversing for the geometric order $\leftharpoonup$ defined in Section \ref{gco-intform},
\item a bijection $S\mapsto S^!$ from $\spe_\fM$ to $\spe_{\fM^!}$ which is order-reversing for the closure order,
\item group isomorphisms $W\cong \rWeyl^!$ and $\rWeyl\cong W^!$,
\item a pair of linear isomorphisms $\mt_\R\cong H^2(\fM^!; \R)$ and $H^2(\fM; \R)\cong\mt_\R^!$, which identify the lattice
of cocharacters with the lattice of integer homology classes, and
\item a Koszul duality from $\tilde \cO$ to $\tilde\cO^!$ (Definition \ref{def:dual}).
\end{itemize}
These structures are required to satisfy the following conditions:
\begin{itemize}
\item The bijection of fixed points is compatible with the bijection of special leaves
via the operation that associates a special leaf $\becircled\fM_{\a,0}\in\spe$ to an element $\a\in\cI$
(see Corollary \ref{mza} and the preceding discussion).
That is, for any $\a\in\cI$, we require that $$(\becircled\fM_{\a,0})^! =\, \becircled\fM^!_{\!\a^!\! ,0}\;.$$

\item The isomorphism $\mt_\R\cong H^2(\fM^!; \R)$, intertwines the action of $\rWeyl$ with that of $W^!$,
takes the arrangement $\cHs$ to $\cHt^!$, 
and takes the chamber
$C\subset\mt_\R$ to the ample cone in $H^2(\fM^!; \R)$.
Furthermore, all of the analogous statements hold for the isomorphism $H^2(\fM; \R)\cong\mt_\R^!$.
In particular, this means that we have canonical isomorphisms
$$\pi_1(E_{\operatorname{sh}}/\rWeyl)\cong \pi_1(E^!_{\operatorname{tw}}/W^!)
\and
\pi_1(E_{\operatorname{tw}}/W) \cong \pi_1(E^!_{\operatorname{sh}}/\rWeyl^!).$$

\item The Koszul duality from $\tilde\cO$ to $\tilde\cO^!$ exchanges
  twisting functor $\Phi^{*,*}$  (as defined in Section
\ref{twisting-gco})  and shuffling functor  $\Xi^{*,*}$   (as defined
in Section \ref{shuffling}) and similarly
with $\tilde\cO$ and $\tilde\cO^!$ reversed.
That is, the equivalence $D^b(\tilde\cO)\to D^b(\tilde\cO^!)$ takes the shuffling action of 
$\pi_1(E_{\operatorname{sh}}/\rWeyl)$ on $D^b(\tilde\cO)$ to the twisting action of
$\pi_1(E^!_{\operatorname{tw}}/W^!)$ on $D^b(\tilde\cO^!)$,
and vice versa.
\end{itemize}

\end{definition}

\begin{remark}
Symplectic duality is symmetric;
that is, if there is a symplectic duality from $\fM$ to $\fM^!$, then
there is a symplectic duality from $\fM^!$ to $\fM$.
To see this, we invoke Proposition \ref{Koszul-symmetric} and Remark \ref{Koszul-symmetric-derived}, which say that
if $\Psi:D^b(\tilde\cO)\to D^b(\tilde\cO^!)$ is a Koszul duality from $\tilde\cO$ to $\tilde\cO^!$, then 
the composition of $\Psi^{-1}$ with the derived Nakayama functor $\R\EuScript{N}$ on
$D^b(\tilde \cO)$ is a Koszul duality from $\cO^!$ to $\cO$.  

We still need to check that $\Psi^{-1}\circ \R\EuScript{N}$
exchanges twisting and shuffling functors.
Since $\tilde \cO$ has finite global dimension,
$\R\EuScript{N}$ is a right Serre functor \cite{MS}; by the uniqueness of Serre 
functors, it commutes with any equivalence of derived categories,
in particular with any twisting or shuffling functor.  Thus, since $\Psi$ exchanges twisting and shuffling functors, so does $\Psi^{-1}\circ \R\EuScript{N}$.
\end{remark}

\subsection{Examples of symplectic dualities}\label{examples-known}
In this section we describe all of the examples of pairs of conical symplectic resolutions that we know to be dual,
along with some conjectural generalizations of these examples.

\subsubsection{Cotangent bundles of flag varieties}

\begin{theorem}\label{Springer-duality}
Let $G$ be a reductive algebraic group with Langlands dual ${}^LG$,
and let $B\subset G$ and ${}^LB\subset {}^LG$ be Borel subgroups.  Then
$T^*(G/B)$ is symplectic dual to $T^*({}^LG/{}^LB)$.
\end{theorem}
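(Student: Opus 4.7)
The plan is to verify the five conditions of Definition \ref{def:duality} by identifying $\cO$ for $T^*(G/B)$ with a regular integral block of BGG category $\cO$ and then invoking the Beilinson--Ginzburg--Soergel Koszul duality together with Mazorchuk--Ovsienko--Stroppel's result that this duality exchanges twisting and shuffling. Throughout, fix integral regular periods $\la$ and $\la^!$ at which localization holds on both sides; by Remark \ref{BGG-Oa} and Corollary \ref{OQ}, the category $\tilde\cO$ for $T^*(G/B)$ is then equivalent via Soergel's functor to the graded lift of a regular integral block of BGG category $\cO$ for $\mathfrak{g}$, and similarly on the dual side for ${}^L\mathfrak{g}$.

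First I would assemble the numerical data. For $\fM = T^*(G/B)$ the Namikawa Weyl group $W$ and the Weyl group $\rWeyl$ of the $\bS$-centralizer in $\Ham(\fM)$ are both the ordinary Weyl group $W_G$; the arrangements $\cHt \subset H^2(\fM;\R) \cong \mathfrak{h}^*_\R$ and $\cHs \subset \mathfrak{t}_\R \cong \mathfrak{h}_\R$ are in both cases the Coxeter arrangement. Langlands duality supplies canonical isomorphisms $W_G \cong W_{{}^LG}$ and a linear isomorphism $\mathfrak{t}_{G,\R} \cong H^2(T^*({}^LG/{}^LB);\R) = \mathfrak{h}^*_{{}^LG,\R}$ that preserves integer lattices, identifies the two Coxeter arrangements, and sends the chamber of $\cHs$ containing $\zeta$ to the dominant (ample) chamber for ${}^LG$; the analogous statements hold with roles reversed. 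This gives the third and fourth bullets of Definition \ref{def:duality}, and in particular identifies $\pi_1(E_{\operatorname{tw}}/W) \cong \pi_1(E^!_{\operatorname{sh}}/\rWeyl^!)$ and $\pi_1(E_{\operatorname{sh}}/\rWeyl) \cong \pi_1(E^!_{\operatorname{tw}}/W^!)$ with the braid group $B_W$.

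For the combinatorial bijections, the $\bT$-fixed points of $\fM$ are indexed by $W_G$ via the Bruhat decomposition, and the geometric order $\leftharpoonup$ coincides with the Bruhat order since $X_w$ is the conormal variety to the Schubert cell labeled by $w$. The bijection $w \mapsto w_0 w$ (via $W_G \cong W_{{}^LG}$) is order-reversing for the Bruhat order and provides the first bullet of Definition \ref{def:duality}. For the second bullet, I would use the Lusztig--Spaltenstein/Barbasch--Vogan order-reversing duality between special nilpotent orbits in $\mathfrak{g}$ and in ${}^L\mathfrak{g}$. The compatibility $(\becircled\fM_{w,0})^! = \becircled\fM^!_{w_0 w,0}$ follows from Joseph's theory of primitive ideals and the Borho--Brylinski--MacPherson identification of two-sided cells with special orbits cited after Conjecture \ref{cells}, together with Lusztig's theorem that cell duality corresponds to orbit duality.

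The central step is the Koszul duality $\tilde\cO \to \tilde\cO^!$ exchanging twisting and shuffling. Beilinson--Ginzburg--Soergel \cite{BGS96} prove that the two regular integral blocks are Koszul dual, and Mazorchuk--Ovsienko--Stroppel \cite[6.5]{MOS} show that this duality interchanges Arkhipov's twisting functors with Irving's shuffling functors (see item 7 of the introduction). Under Soergel's equivalence, our twisting functors coincide with Arkhipov's by Remark \ref{Arkhipov}, and our shuffling functors coincide with Irving's by Proposition \ref{shuf-twist}, so under the Langlands-duality identification of fundamental groups above, the BGS--MOS equivalence exchanges twisting and shuffling in exactly the sense required by the final bullet of Definition \ref{def:duality}. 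The main technical obstacle will be the rigorous verification that the fixed-point bijection $w \mapsto w_0 w$ is compatible with the orbit duality in the precise sense of the first compatibility in the definition; this reduces to a careful accounting of how Lusztig's cell duality on $W_G$ matches Lusztig--Spaltenstein duality on special orbits on both sides, essentially packaged in the theory cited above but requiring attention to Langlands-duality conventions in the non-simply-laced case.
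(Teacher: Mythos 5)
Your proof takes essentially the same route as the paper's: assemble the Langlands-duality identifications for $W$ and the linear data, use the Bruhat order on $W$ for the fixed-point bijection, cite BGS for the Koszul duality, and MOS for the exchange of twisting and shuffling under that duality. Three small discrepancies are worth flagging.

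First, the fixed-point bijection. You use $w \mapsto w_0 w$; the paper uses $w \mapsto w^{-1} w_0$. Both are order-reversing for Bruhat order, and both induce the same map on two-sided cells (hence on special orbits) because they differ by the inverse map, which preserves cells. However, the bijection $\a \mapsto \a^!$ is not free to choose: it must be the one realized by the Koszul duality functor, which sends the simple $\tilde L_w$ to a graded lift of the indecomposable projective $\tilde P_{w^!}$. Only one of $w_0 w$ and $w^{-1} w_0$ can match the BGS convention once the identification $\cO(G) \cong \cO({}^LG)$ is fixed, so you should check which it is rather than choose the more symmetric-looking map. Second, for the compatibility between the fixed-point bijection and the orbit duality, the paper cites a single result, Kazhdan--Lusztig \cite[3.3]{KL79}, which directly gives that $w \mapsto w^{-1} w_0$ induces the order-reversing duality on special nilpotent orbits; your route through Joseph's primitive-ideal theory, the Borho--Brylinski--MacPherson circle, and Lusztig's cell duality is certainly the right circle of ideas, but it is much longer than necessary and leaves more conventions to be tracked. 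Third, as you implicitly rely on: BGS \cite[1.1.3]{BGS96} proves a Koszul \emph{self-}duality of a regular integral block of category $\cO$ for a single group, not a duality between the categories for $G$ and ${}^LG$. The paper handles this point explicitly in a footnote by invoking Soergel's theorem \cite{Soe90} that the two blocks are equivalent because both are determined by the Weyl group; your proof should make that step explicit rather than treating the duality as already going between the two sides.
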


\begin{proof}
For a generic cocharacter $\zeta$ of $G$, the fixed points of $T^*(G/B)$ are indexed by the Weyl group $W \cong W^!$,
and the order-reversing bijection of $W$ is given by sending $w$ to $w^{-1}w_0$.  The fact that this bijection
induces an order-reversing bijection of special nilpotent orbits is proven in \cite[3.3]{KL79}.  The $W$-equivariant
linear isomorphisms are part of the package of Langlands duality.
The Koszul duality is proven in \cite[1.1.3]{BGS96}\footnote{This paper actually proves 
that a regular integral block of BGG category $\cO$ is self-dual, 
but those categories are isomorphic for Langlands dual groups, 
since they can be computed in terms of the Weyl group \cite{Soe90}.},
and the fact that twisting and shuffling are exchanged is proven 
in \cite[Theorem 39]{MOS}.
\end{proof}

\subsubsection{S3-varieties} 
Next, we consider S3-varieties associated to $SL_r$, as described in Section \ref{sec:S3}.
For a composition $\mu$ of $r$, we define  
a new composition $\mu^o$ by $\mu^o_i := \mu_{-i}$.
Also, recall that $\bar\mu$  denotes the partition of $r$ 
obtained by sorting the positive entries of $\mu$, and
 $\bar\mu^t$ denotes the transposed partition.  Note that $\bar\mu = \overline{\mu^o}$. 

Fix a pair of compositions $\mu$ and $\nu$ of $r$.
Let $e\colon \C^r\to \C^r$ be a nilpotent element in Jordan normal
form with block sizes given by $\nu$ in order.  
Let $\fX_\mu^\nu$ be the S3-variety $\fX^e_{P_\mu}$ that was
introduced in Section \ref{sec:S3}; it is nonempty if and only if $\bar\nu \le \bar\mu^t$ in the dominance order.

Let $T_\nu^\mu$ be a maximal torus 
of the group $G_\Ham$ for the variety $\fX_\mu^\nu$, as 
described in item (i) of Section \ref{sec:S3}.  The description of
the cohomology of these varieties in \cite{BrO} gives a natural 
isomorphism $\operatorname{Lie} T_\nu^\mu \cong H^2(\fX_{\mu^o}^\nu; \C)$.
Let $C^\mu_\nu$ be the unique chamber of the arrangement $\cHs$ for $\fX^\mu_\nu$ which lies on the positive side of every root hyperplane which appears. 
The following theorem appears in \cite[5.32]{Webqui}.

\begin{theorem}\label{S3-duality}
The variety $\fX_\nu^\mu$ is symplectic dual to $\fX_{\mu}^\nu$, where the action of $\bT$ on $\fX_\nu^\mu$
is given by a cocharacter in $C^\mu_\nu$ and the action of $\bT$ on $\fX_{\mu}^\nu$ is given by a cocharacter in $-C^\nu_{\mu}$.
\end{theorem}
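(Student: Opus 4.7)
The plan is to verify the five pieces of data in Definition \ref{def:duality} separately, leveraging the identification of $\cOg$ for an S3-variety with a parabolic-singular block of BGG category $\cO$ via Soergel's functor, as recorded in item (v) of Section \ref{sec:S3}. Under that identification, $\cOg$ for $\fX^\nu_\mu$ corresponds to the block with parabolic type determined by $\mu$ and singular central character determined by $\bar\nu$, and the asserted duality amounts to swapping these two pieces of data. The Koszul equivalence of graded lifts $\tilde\cO \to \tilde\cO^!$ is then provided by the parabolic--singular Koszul duality of Beilinson--Ginzburg--Soergel and Backelin \cite{BGS96, Back99}, which is the technical heart of the argument.

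First I would set up the combinatorial indexing. The $\bT$-fixed points of $\fX^\nu_\mu$, for $\bT$ a cocharacter in $C^\mu_\nu$, are naturally labeled by certain semistandard-type fillings of a skew shape built from $\bar\mu^t$ and $\bar\nu$; transposition of these fillings provides the bijection $\a\mapsto\a^!$ between $\cI$ and $\cI^!$, and one checks by inspection of the Bia{\l}ynicki-Birula cells that this is order-reversing for the geometric order $\leftharpoonup$. The bijection of special leaves (which are all special in type A by item (vii) of Section \ref{sec:S3}) is induced by transpose-duality of Jordan types, and its compatibility with the fixed-point bijection in the sense of Corollary \ref{mza} follows from the explicit description of the associated variety of a primitive ideal under Koszul duality, combined with Losev's description of the $(\cdot)_\dagger$ operation on S3-varieties.

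Next I would identify the Weyl groups and linear data. From item (i) of Section \ref{sec:S3}, both $\rWeyl$ for $\fX^\nu_\mu$ and $W$ for $\fX^\mu_\nu$ are visibly the same product of symmetric groups, and likewise with the roles of $\mu$ and $\nu$ reversed; this supplies the isomorphisms $W \cong \rWeyl^!$ and $\rWeyl \cong W^!$. The Brundan--Ostrik presentation of $H^*(\fX^\nu_\mu;\C)$ gives the natural isomorphism $\operatorname{Lie} T_\nu^\mu \cong H^2(\fX^\nu_{\mu^o};\C)$ already cited before the theorem, and matching it with the analogous statement for $\fX^\mu_\nu$ provides the required linear isomorphisms $\mt_\R \cong H^2(\fM^!;\R)$ and $H^2(\fM;\R)\cong \mt^!_\R$. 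One checks that the discriminantal arrangements on the two sides are exchanged, and that the positive chamber $C^\mu_\nu$ on one side is carried to the ample cone on the other; the sign convention in the theorem statement (using $-C^\nu_\mu$) is forced precisely by the requirement that both chambers map to ample cones under the duality rather than to antiample cones.

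Finally I would verify that twisting and shuffling functors are exchanged. The algebraic description of twisting and shuffling functors for type A quiver varieties (equivalently, S3-varieties of type A) in \cite{Webqui} realizes both families as functors on the corresponding block of parabolic category $\cO$, generalizing Proposition \ref{shuf-twist}: twisting functors correspond to Arkhipov-type functors indexed by the singular-block parameter, while shuffling functors correspond to Irving-type functors indexed by the parabolic. The main theorem of \cite{MOS} shows that parabolic--singular Koszul duality exchanges these two families of functors, which upgrades the abstract Koszul duality to an equivalence compatible with the twisting/shuffling structure. The hard part of the argument will be this last step: checking that \emph{all} twisting functors on one side (including those arising from non-Hamiltonian-reduction quantizations) match with \emph{all} shuffling functors on the other side after passing through the chain of equivalences with parabolic category $\cO$; this is where the detailed combinatorial analysis of \cite[\S5]{Webqui} via steadied quotients of weighted KLR algebras is needed to close the argument.
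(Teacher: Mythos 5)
Your proposal follows essentially the same road map as the paper intends: the published proof simply cites \cite[5.32]{Webqui}, and a commented-out earlier draft of the proof in the source follows exactly your template (identify $\cOg$ with a parabolic-singular block of BGG $\cO$ via \cite{WebWO}, invoke Backelin's parabolic--singular Koszul duality \cite{Back99}, match the combinatorial data, and handle twisting/shuffling last).

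However, there is one genuine error in the middle of your last step. You assert that ``the main theorem of \cite{MOS} shows that parabolic--singular Koszul duality exchanges these two families of functors.'' This is not what \cite{MOS} proves: their Theorem 39 establishes the twisting/shuffling exchange for the \emph{self}-duality of a \emph{regular integral} block of BGG category $\cO$ (which is what the paper invokes for Theorem \ref{Springer-duality}). It does not treat the parabolic--singular case needed here, and indeed the earlier draft of the paper's proof contains the explicit warning that ``we cannot rely on results from representation theory to prove the Koszul duality of shuffling and twisting functors'' in this setting, and so passes instead to the quiver-variety framework and the machinery of steadied quotients in \cite{Webqui}. You partially save yourself by acknowledging in your final sentence that \cite[\S5]{Webqui} is what actually closes the argument; but the intermediate claim about \cite{MOS} should be struck, since presenting the twisting/shuffling exchange as an already-established consequence of \cite{MOS} misrepresents the state of the literature and papers over exactly the step that requires the most work.

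Two smaller remarks: the fixed-point bijection is not really via ``semistandard-type fillings'' --- the relevant combinatorics (from \cite[9.2 \& 9.5]{kosdef}, or equivalently the abacus/multipartition combinatorics in \cite{Webqui}) is cleaner to invoke directly; and the claim that $\rWeyl$ for $\fX^\nu_\mu$ and $W$ for $\fX^\mu_\nu$ ``are visibly the same'' deserves a pointer to item (viii) of Section \ref{sec:S3}, where this coincidence is noted explicitly.
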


\begin{remark}
Theorem \ref{S3-duality} does not appear to be fully symmetric; of
course, by negating the isomorphisms $\mt_\R\cong H^2(\fM^!; \R)$ and
{\it vice versa}, we can switch the sign of the chambers appearing,
and thus the role of $\mu$ and $\nu$. 

Alternatively, we could take
$\fX^\nu_{\mu^o}$ with the chamber $C^\nu_{\mu^o}$.  In this case, the
symmetry depends on the $G$-equivariant isomorphism
$\fX_{\nu^o}^{\mu^o}\cong\fX_\nu^\mu$, using the automorphism of $\mathfrak{sl}_r$ given 
by the adjoint action of any representative of $w_0$.  See
\cite[9.3]{kosdef} for the analogous statement about algebras.
\end{remark}

\begin{remark}\label{GMS3}
If we take $\nu$ to be a composition with $r$ parts each of size 1, Theorem \ref{S3-duality}
specializes to the statement that $T^*(G/P_\mu)$ is symplectic dual to the Slodowy slice to the nilpotent
orbit of Jordan type $\bar\mu^t$ inside of the full nilpotent cone.  If we further specialize to the case where $\mu=\nu$,
we obtain Theorem \ref{Springer-duality} for $\mathfrak{sl}_r$.
\end{remark}

\begin{remark}\label{quiver-Gr}
More generally, we expect that a quiver variety whose quiver is a finite ADE Dynkin diagram with some choice of
orientation (Section \ref{sec:quiver-varieties}) will be dual to a
slice in the affine Grassmannian for the Langlands dual group
(Section \ref{sec:affine-grassm-slic}).  Since quiver varieties exist
for every integral highest weight, we should consider them as
associated to the simply connected group for that Dynkin diagram, and
thus consider the affine Grassmannian of the adjoint form.  In type A, both of these varieties are type A S3-varieties, and the precise
statement that we want is given in Theorem \ref{S3-duality}.

We note that this conjectural duality provides a connection between two well-known constructions
of weight spaces of irreducible representations of simply laced simple Lie algebras.  One, due to Nakajima,
realizes these weight spaces as top homology groups of quiver varieties \cite[10.2]{Nak98}.  The other,
using the geometric Satake correspondence of
Ginzburg \cite[3.11 \& 5.2]{G-GS} and \Mirkovic-Vilonen \cite{MV-GS}, 
realizes them as top-degree $\bT$-equivariant intersection cohomology groups
of slices in the affine Grassmannian.  See Example \ref{awesome example} for an explanation of how symplectic
duality (conjecturally) allows us to identify these two vector spaces.
\end{remark}

\subsubsection{Hypertoric varieties} Next, we consider symplectic duality for hypertoric varieties.  Let $X$ and $X^!$ be a pair of unimodular,
Gale dual polarized arrangements \cite[2.17]{BLPWtorico}.
These data can be used to construct hypertoric varieties $\fM$ and $\fM^!$ with specified actions of $\bT$ \cite[\S 5.1]{BLPWtorico}.

\begin{theorem}\label{hypertoric-duality}
The hypertoric varieties $\fM$ and $\fM^!$ are symplectic dual.
\end{theorem}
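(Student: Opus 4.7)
The plan is to assemble the data required by Definition \ref{def:duality} from Gale duality of polarized unimodular arrangements together with results already in the literature. None of the individual ingredients is new; the content of the theorem is that they all fit together coherently.

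First I would fix the combinatorial dictionary. A polarized unimodular arrangement $X$ determines a hypertoric variety $\fM$ together with a $\bT$-action, and Gale duality exchanges $X$ with $X^!$ while exchanging the cocharacter lattice of $T$ in $\fM$ with $H^2(\fM^!;\R)$ (and vice versa), as reviewed in Section \ref{sec:hypertoric-varieties}(i)--(ii) and developed in \cite[\S 2--5]{BLPWtorico}. Under Gale duality, the arrangement $\cHt$ is identified with $\cHs^!$ and the ample chamber of $\fM$ with the $\bT$-chamber $C^!$. This gives the linear isomorphisms $\mt_\R \cong H^2(\fM^!;\R)$ and $H^2(\fM;\R) \cong \mt^!_\R$, and since $W$ and $\rWeyl$ are both products of symmetric groups attached to the combinatorics of the arrangement (products over rank one flats on one side and over the multiset $\mathcal{A}$ on the other), the group isomorphisms $W\cong\rWeyl^!$ and $\rWeyl\cong W^!$ fall out immediately.

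Next I would handle the bijection on fixed points and leaves. The set $\cI$ is naturally identified with the set of bounded feasible sign vectors of $X$, and Gale duality interchanges boundedness with feasibility, producing the bijection $\a\mapsto\a^!$. I would check that this bijection reverses the geometric order $\leftharpoonup$ by comparing it to the partial order on sign vectors coming from the combinatorics of the two arrangements. For the leaf bijection, item (vii) of Section \ref{sec:hypertoric-varieties} identifies $\spe$ with the poset of coloop-free flats, and Gale duality exchanges coloop-free flats of $X$ with coloop-free flats of $X^!$ in a rank-reversing manner (equivalently, via complementation), giving an order-reversing bijection. The compatibility condition $(\becircled\fM_{\a,0})^! = \becircled\fM^!_{\a^!,0}$ is then a check that the flat attached to $\a$ via the description of primitive ideals in \cite{MVdB} (cf.\ Section \ref{sec:hypertoric-varieties}(vii)) corresponds under Gale duality to the flat attached to $\a^!$.

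The Koszul duality itself is then the statement already established in \cite[4.10]{BLPWtorico} and \cite[\S 5]{GDKD}: the finite-dimensional algebras $A(X)$ and $A(X^!)$ whose module categories model $\tilde\cO$ and $\tilde\cO^!$ are Koszul dual by construction, and this equivalence $\LCP(\tilde\cO)\simeq\tilde\cO^!$ is mixed with the required Tate twist behavior. What remains is the compatibility of this Koszul duality with twisting and shuffling, which is precisely the content of \cite[\S 8]{BLPWtorico}: the pure twisting functors on $\fM$ are matched, under Gale duality and the Koszul equivalence, with the pure shuffling functors on $\fM^!$, and the Namikawa Weyl and Hamiltonian Weyl group actions are exchanged.

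The main technical obstacle is not any single one of these compatibilities but the bookkeeping needed to see that the combinatorial Gale duality intertwines with all the algebraic and geometric data simultaneously. In particular, the most delicate point is the compatibility of the fixed-point bijection with the special-leaf bijection through the map $\a\mapsto\becircled\fM_{\a,0}$; this requires matching the description of the annihilator of the simple module $L_\a$ given by \cite{MVdB} with the combinatorial flat attached to the sign vector $\a$, and then verifying that Gale duality of flats intertwines with Gale duality of sign vectors in the way demanded by Definition \ref{def:duality}.
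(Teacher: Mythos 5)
Your proposal is correct and follows essentially the same route as the paper's proof: both assemble the five ingredients of Definition \ref{def:duality} by citing the established Gale-duality dictionary for polarized arrangements and the prior results of \cite{GDKD}, \cite{PW07}, and \cite{BLPWtorico} (the fixed-point bijection from \cite[2.10]{GDKD}, coloop-free flats from \cite[2.3]{PW07}, the compatibility via \cite{MVdB} as packaged in \cite[7.16]{BLPWtorico}, Koszul duality from \cite[4.7, 4.10]{BLPWtorico}, and the twisting/shuffling exchange from \cite[8.24, 8.26]{BLPWtorico}). The only difference is one of emphasis---you flag the fixed-point/leaf compatibility as the delicate step and trace it back to \cite{MVdB}, whereas the paper simply invokes \cite[7.16]{BLPWtorico} where that bookkeeping was already carried out.
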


\begin{proof}
The order-reversing bijection on fixed points is given in \cite[2.10]{GDKD}.
Symplectic leaves of $\fM_0$ and $\fM^!_0$ (all of which are special) are indexed by coloop-free flats of the hyperplane
arrangements $\mathcal{A}$ and $\mathcal{A}^!$ associated to $X$ and $X^!$ \cite[2.3]{PW07}, and it is well-known that such flats
are in order-reversing bijection for Gale dual arrangements.  The compatibility of the bijections follows from \cite[7.16]{BLPWtorico}.

The group isomorphisms $W\cong \rWeyl^!$ and $\rWeyl\cong W^!$ are described in \cite[\S 8.1]{BLPWtorico},
and the equivariant isomorphisms of vector spaces with hyperplane arrangements are straightforward
from the combinatorics of Gale duality.  The Koszul duality between $\cO$ and $\cO^!$ is proven in \cite[4.7 \& 4.10]{BLPWtorico},
and the fact that twisting and shuffling are exchanged is \cite[8.24 \& 8.26]{BLPWtorico}.
\end{proof}

\subsubsection{Affine type A quiver varieties} A fourth example of symplectic duality is given by quiver varieties for affine type A
quivers.  We leave most of the combinatorics to the papers
\cite{Webqui,WebRou} which treat this case in more detail, and only give a rough outline below.  

Fix a positive integer $e$, and consider quiver varieties for $\slehat$.  That is, we take a quiver
whose underlying graph is an $e$-cycle, which we will identify with the Cayley graph of
$\Z/e\Z$ for the generators $\{\pm 1\}$.  
Fix a highest weight $\nu=\sum \nu_i\omega_i$ for $\slehat$ as in Section
\ref{sec:quiver-varieties}; let $\ell := \sum\nu_i$ be the {\bf level} of
this highest weight.  Pick a basis of the framing vector spaces, which
have total dimension $\ell$.  The Lie algebra of the torus $\mt$ is spanned by
the cocharacters
\begin{itemize}
\item $\varepsilon_j$ which acts with weight 1 on the $j^\text{th}$ basis
  vector in the framing space, and
\item $\gamma$ which acts with weight 1 on every clockwise oriented
  edge of the cycle (and thus weight -1 on counterclockwise oriented
  edges).  
\end{itemize}
There are certain distinguished choices of $\zeta$ which contain a
representative of each chamber of $\cHs$.  We call these {\bf Uglov} actions,
since they naturally correspond to the choice of charges for a higher
level Fock space {\it \`a la} Uglov \cite{Uglov}.  Let
$\mathbf{s}=(s_1,\dots, s_\ell)$ be a collection of integers such that
there are precisely $\nu_j$ of the entries of this sequence such that
$s_i\equiv j\pmod e$.  We let $\zeta_{\mathbf{s}}^+$ be the cocharacter whose
derivative is $\ell\gamma+(s_i\ell+ie) \varepsilon_j$, and let $C^\pm_{\mathbf{s}}$ be
the chamber of $\cHs$ containing it. The action of $\zeta_{\mathbf{s}}^+$ always has
isolated fixed points, and every chamber contains one of these
cocharacters, as shown in \cite[5.16]{Webqui}.
The fixed points of an Uglov action are in canonical bijection with
$\ell$-multipartitions.  We wish to visualize these partitions as abaci as in, for
example, \cite[\S 2.1]{Tingcomb}.  

We have 
$\ell$ runners on our abacus, numbered from bottom to top, 
each of which we visualize as a copy
of the real line with slots at each integer which can hold a bead.
To each multipartition $$\xi=(\xi^{(1)}_1\geq
\xi^{(1)}_2\geq \cdots; \xi^{(2)}_1\geq
\xi^{(2)}_2\geq \cdots; \dots; \xi^{(\ell)}_1\geq
\xi^{(\ell)}_2\geq \cdots),$$ we associate the abacus where on the
$k^\text{th}$ runner, we fill the beads at $$\xi^{(k)}_1+s_{k},
\xi^{(k)}_2+s_{k}-1, \dots, \xi^{(k)}_j+s_{k}-j+1,\dots,$$ and no
others.  Note this means that every position is filled at sufficiently
negative integers, and open at sufficiently positive.  

The combinatorics of the duality is encapsulated in the map between
fixed points.  This is given by cutting the abacus into $e\times \ell$
rectangles; that is rectangles consisting of the $me,me+1,\dots,
me+e-1$ positions of each runner as $m$ ranges over $\Z$.  Then we
flip the rectangle, so that the first runner becomes the beads at the
points $m\ell$ for $m\in \Z$, the second runner becomes the beads the
points $m\ell+1$, etc. as in the picture below. 
\[\begin{tikzpicture}
\node (a) at (-3,0){
    \begin{tikzpicture}[very thick,scale=.5]
      \draw[thick, densely dashed] (-.5,1.6) -- (-.5,-.6);
      \draw[thick, densely dashed] (-3.5,1.6) -- (-3.5,-.6);
      \draw[thick, densely dashed] (2.5,1.6) -- (2.5,-.6);
      \node[circle, draw, inner sep=3pt] at (0,0){}; \node[circle,
      draw, inner sep=3pt,fill=black] at (0,1) {}; \node[circle, draw, inner
      sep=3pt,fill=black] at (1,0) {}; \node[circle, draw, inner sep=3pt] at
      (1,1) {}; \node[circle, draw, inner sep=3pt] at (2,0) {};
      \node[circle, draw, inner sep=3pt,fill=black] at (2,1) {}; \node[circle,
      draw, inner sep=3pt] at (3,0) {}; \node[circle, draw, inner
      sep=3pt] at (3,1) {}; \node[circle, draw, inner sep=3pt,fill=black] at
      (-1,0) {}; \node[circle, draw, inner sep=3pt,fill=black] at (-1,1) {};
      \node[circle, draw, inner sep=3pt] at (-2,0) {}; \node[circle,
      draw, inner sep=3pt,fill=black] at (-2,1) {}; \node[circle, draw, inner
      sep=3pt,fill=black] at (-3,0) {}; \node[circle, draw, inner sep=3pt] at
      (-3,1) {}; \node[circle, draw, inner sep=3pt,fill=black] at (-4,0) {};
      \node[circle, draw, inner sep=3pt,fill=black] at (-4,1) {}; \node at
      (-5.2,.5) {$\cdots$}; \node at (4.2,.5) {$\cdots$};
    \end{tikzpicture}};
\node (b) at (3,0){    \begin{tikzpicture}[very thick,scale=.5]
      \draw[thick, densely dashed] (-.5,1.6) -- (-.5,-1.6);
      \draw[thick, densely dashed] (-2.5,1.6) -- (-2.5,-1.6);
      \draw[thick, densely dashed] (1.5,1.6) -- (1.5,-1.6);
      \node[circle, draw, inner sep=3pt,fill=black] at (0,0){}; 
      \node[circle, draw, inner sep=3pt] at (0,1) {}; 
      \node[circle, draw, inner sep=3pt] at (1,0) {}; 
      \node[circle, draw, inner sep=3pt,fill=black] at (1,1) {}; 
      \node[circle, draw, inner sep=3pt] at (2,0) {};
      \node[circle, draw, inner sep=3pt] at (2,1) {}; 
      \node[circle, draw, inner sep=3pt] at (-1,-1) {}; 
      \node[circle, draw, inner sep=3pt,fill=black] at (-2,-1) {}; 
      \node[circle, draw, inner sep=3pt,fill=black] at (-1,0) {}; 
      \node[circle, draw, inner sep=3pt,fill=black] at (-1,1) {};
      \node[circle, draw, inner sep=3pt] at (-2,0) {}; 
      \node[circle, draw, inner sep=3pt,fill=black] at (-2,1) {}; 
      \node[circle, draw, inner sep=3pt,fill=black] at (-3,0) {}; 
      \node[circle, draw, inner sep=3pt,fill=black] at (-3,1) {}; 
      \node[circle, draw, inner sep=3pt] at (0,-1) {};
      \node[circle, draw, inner sep=3pt,fill=black] at (1,-1) {}; 
      \node[circle, draw, inner sep=3pt,fill=black] at (-3,-1) {}; 
      \node[circle, draw, inner sep=3pt] at (2,-1) {}; 
      \node at (-4.2,0) {$\cdots$}; 
      \node at (3.2,0) {$\cdots$};
    \end{tikzpicture}};
\draw[<->,very thick] (a)--(b);
\end{tikzpicture}\] The lefthand picture above corresponds
to \[e=3,\quad\ell=2,\quad\mathbf{s}=(0,1),\quad\xi^{(1)}=(2,1),\,\xi^{(2)}=(2,1,1,1),\]
while the dual righthand picture corresponds to
\[e=2,\quad\ell=3,\quad\mathbf{t}=(0,0,1),\quad\xi^{(1)}=(2),\,\xi^{(2)}=(1,1),\,\xi^{(3)}=(1).\]

If we fix the triple $(\nu,\mu,\mathbf{s})$, and perform the duality
above on the abacus for a multipartitions with this weight and charge,
the resulting weights and charge $(\mu^!,\nu^!,\mathbf{t})$ are
combinatorially determined, as discussed in \cite[\S 5.3]{Webqui}.  We
can also associate to this
the triple $(\tilde{\fQ}^\nu_\mu, T, C_{\mathbf{s}}^+)$.  When the
combinatorial data is switched by rank-level duality, we obtain a
symplectic duality \cite[5.25]{Webqui}.

\begin{theorem}\label{hkr-mkr}
The variety $\tilde{\fQ}^\nu_\mu$ is symplectic dual to $\tilde{\fQ}^{\mu^!}_{\nu^!}$, where the action of $\bT$ on $\tilde{\fQ}^\nu_\mu$
is given by a cocharacter in $C_{\mathbf{s}}^+$ and the action of $\bT$ on $\tilde{\fQ}^{\mu^!}_{\nu^!}$ is given by a cocharacter in $-C_{\mathbf{t}}^+$.
\end{theorem}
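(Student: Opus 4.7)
The plan is to verify the five ingredients of Definition \ref{def:duality} by reducing to an algebraic description of category $\cO$. By item (v) of Section \ref{sec:quiver-varieties}, for an integral period at which localization holds, the categories $\cO$ for $\tilde{\fQ}^\nu_\mu$ and $\tilde{\fQ}^{\mu^!}_{\nu^!}$ admit explicit descriptions as modules over steadied quotients of weighted KLR algebras in the sense of \cite{Webqui}, each realizing a highest-weight subcategory of an Uglov higher-level Fock space for $\slehat$ or $\mathfrak{\widehat{sl}}_\ell$ respectively. The strategy is to promote the classical rank-level duality between these Fock spaces to a Koszul duality at the categorical level, and then match all remaining data combinatorially.

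First I would handle the combinatorial matchings. The fixed points of $\tilde{\fQ}^\nu_\mu$ under the Uglov cocharacter $\zeta^+_{\mathbf{s}}$ are in bijection with certain $\ell$-multipartitions, and the abacus-flip construction pictured above provides the candidate bijection $\a\mapsto \a^!$. Its compatibility with the geometric order $\leftharpoonup$ reduces to the combinatorial fact that swapping rows and columns of the $e\times\ell$ rectangles reverses the partial order on multipartitions determined by addable/removable boxes. The bijection on special symplectic leaves uses the description of integral special leaves in affine type A from \cite[5.10]{Webqui}, and the required condition $(\becircled\fM_{\a,0})^! = \becircled\fM^!_{\!\a^!\!,0}$ is again a combinatorial check on abaci. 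The group isomorphisms $W\cong \rWeyl^!$ and $\rWeyl\cong W^!$ match products of symmetric groups indexed by the two dimension vectors, and the linear isomorphism $\mt_\R\cong H^2(\fM^!;\R)$ identifies charges with the Cartan of the dual affine Lie algebra; that this isomorphism exchanges $\cHs$ with $\cHt^!$ and carries the chamber $C^+_{\mathbf{s}}$ to the appropriate ample cone is the standard bijection between alcoves determined by Uglov data.

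The core of the proof is the Koszul duality $\LCP(\tilde\cO) \cong \tilde\cO^!$ together with the exchange of twisting and shuffling functors. For the duality itself I would appeal to the categorified rank-level duality established for Cherednik category $\cO$ in \cite{RSVV,SVV} and extended to the weighted KLR setting in \cite{Webqui,WebRou}; combined with the algebraic description of $\cO$ for our quiver varieties, this provides the required equivalence of mixed categories. For the exchange of functors I would invoke two separate algebraic identifications: twisting functors for affine type A quiver varieties coincide with Chuang--Rouquier braid complexes for the categorical $\slehat$-action (\cite{BLet,Webcatq}), while shuffling functors coincide with the change-of-charge functors of \cite{WebRou}. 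Rank-level duality is designed, by construction, to exchange the categorical $\slehat$-action at level $\ell$ with the change-of-charge action at level $e$, so the exchange of twisting and shuffling is built into the duality.

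The hard part is reconciling these several algebraic identifications across \cite{BLet,Webqui,WebRou,SVV}, whose conventions on chargings, quiver orientations, and the distinction between $C^+_{\mathbf{s}}$ and $C^-_{\mathbf{s}}$ are not always compatible as stated. Tracking signs carefully — in particular verifying that the chamber $C^+_{\mathbf{s}}$ on one side truly corresponds to $-C^+_{\mathbf{t}}$ on the other under the abacus flip, and that this matches the choice of chamber implicit in the categorified rank-level duality — is the most delicate bookkeeping, and I expect it to be where essentially all the real work lies; the complete verification is carried out in \cite[5.25]{Webqui}.
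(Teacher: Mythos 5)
Your proposal matches the paper's approach: the paper also defers the substance of the argument to \cite[5.25]{Webqui}, whose proof rests on the categorified rank-level/Koszul duality results of \cite{RSVV,SVV}, with the abacus-flip combinatorics providing the fixed-point bijection and the identifications of twisting with Chuang–Rouquier braid complexes and shuffling with change-of-charge functors providing the exchange of functors (all consistent with items (v), (vii), (viii) of Section \ref{sec:quiver-varieties}). You have correctly reconstructed the outline; the remaining sign and chamber bookkeeping you flag is indeed where the work in \cite[5.25]{Webqui} lies.
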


\begin{remark}
The proof of Theorem \ref{hkr-mkr}
ultimately relies on Koszul duality results for
certain categories of affine representations and category $\cO$ for
Cherednik algebras based on work of Rouquier, Shan, Varagnolo, and Vasserot 
 \cite{RSVV,SVV}.
\end{remark}

Theorem \ref{hkr-mkr} has the following special case.
Let $\hkr$ be the Hilbert scheme of
$r$ points on a crepant resolution of $\C^2/\mck$, where $\mck :=
\Z/k\Z$ acts effectively and symplectically on $\C^2$.  Let $\mkr$ be
the moduli space of torsion-free sheaves $E$ on $\mathbb{P}^2$ with
$\rk E = k$ and $c_2(E) = r$, along with a framing $\Phi: E|_{\mathbb
  P^1}\overset{\sim}\to \mathcal O_{\mathbb P^1}^{\oplus k}$.  
On $\hkr$, the torus $T$ is 1-dimensional, induced by the
symplectic action on $\C^2$ commuting with $\mck$. On $\mkr$, the torus
$T^!$ is naturally identified with $\cs$ times the projective diagonal matrices in
$\operatorname{PGL}_k$; let $\vartheta_i$ denote the weights of $\zeta$ in
$\operatorname{PGL}_k$ (thus only well-defined up to simultaneous translation) and $h$
the weight in $\cs$.  The hyperplanes in $\cHs^!$ are the points of $\mt^!$ for which
$\vartheta_i-\vartheta_j=mh$ for $m\in [-k+1,k-1]$, along with the single additional hyperplane $h=0$.  Note
that these are precisely the GIT walls for $\hkr$ as described by
Gordon \cite[\S 4.3]{Gorqui} (Gordon's $H_i$ is our
$\vartheta_i-\vartheta_{i+1}$).  Let $C_+$ be the positive chamber in
$\mt_\R\cong \R$ and let $C_-$ be the chamber in $\mt^!_\R$
where $\vartheta_i\ll \vartheta_{i+1}$ and $h<0$.

\begin{corollary}\label{the real hkr-mkr}
The variety $\hkr$ is symplectic dual to $\mkr$.
\end{corollary}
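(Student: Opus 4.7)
The plan is to derive Corollary \ref{the real hkr-mkr} from Theorem \ref{hkr-mkr} by identifying both $\hkr$ and $\mkr$ as instances of the affine type A quiver varieties $\tilde{\fQ}^\nu_\mu$, and then matching the torus and chamber data on each side with the Uglov chamber combinatorics used in the statement of the theorem.

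First I would recognize $\hkr$ as the quiver variety $\tilde{\fQ}^{\omega_0}_{\omega_0-r\delta}$ for $\slehat$ with $e=k$, where $\delta$ denotes the null root, by appealing to Nakajima's identification of the Hilbert scheme of points on the minimal resolution of $\C^2/(\Z/k\Z)$ with this quiver variety (dimension vector $(r,\dots,r)$, framing of dimension one at the extending vertex). This corresponds to level $\ell=1$ and charge $\mathbf{s}=(0)$, so the torus $T$ in Theorem \ref{hkr-mkr} is one-dimensional and agrees with the one described in the corollary; its unique chamber (up to sign) is $C_+$, so $C_{\mathbf s}^+=C_+$ trivially.

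Next I would identify $\mkr$ as $\tilde{\fQ}^{k\omega_0}_{k\omega_0-r\delta}$ for the Jordan quiver (i.e.\ $e=1$), framing of dimension $k$ at the single vertex and internal dimension $r$, via the classical ADHM construction. This corresponds to level $\ell=k$; applying the abacus-flipping recipe from Section \ref{sec:quiver-varieties} to the charge $\mathbf{s}=(0)$ for the $\hkr$-side produces a charge $\mathbf{t}$ on the $\mkr$-side whose associated Uglov chamber is determined by the derivative $\ell\gamma+(s_i\ell+ie)\varepsilon_j$. Invoking Theorem \ref{hkr-mkr} then yields the symplectic duality between $\hkr$ and $\mkr$, with the action of $\bT$ on $\hkr$ given by a cocharacter in $C_+$ and the action on $\mkr$ given by a cocharacter in $-C^+_{\mathbf{t}}$.

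What remains is a bookkeeping step: verifying that the chamber $-C^+_{\mathbf t}$, described abstractly via Uglov charges, coincides with the chamber $C_-\subset\mt^!_\R$ singled out in the corollary by the inequalities $\vartheta_i\ll\vartheta_{i+1}$ and $h<0$. This is the main obstacle, since it is the one place where the combinatorics of abaci must be compared directly with the concrete equivariant weights on the ADHM side. I expect this to be settled by plugging the charge $\mathbf{t}$ dual to $\mathbf{s}=(0)$ into the explicit formula $\ell\gamma+(s_i\ell+ie)\varepsilon_j$ and observing that the resulting cocharacter has $\gamma$-component negative (hence $h<0$) and strictly increasing $\varepsilon_j$-weights (hence $\vartheta_i\ll\vartheta_{i+1}$) after the sign flip, matching the defining inequalities of $C_-$ precisely; the GIT-wall description of $\cHs^!$ recalled by Gordon guarantees that no additional walls are crossed.
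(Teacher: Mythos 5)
Your approach is correct and is essentially the same as the paper's: both derive the corollary by recognizing $\hkr$ as the level-one affine $\widehat{\mathfrak{sl}}_k$ quiver variety $\tilde\fQ^{\omega_0}_{\omega_0-r\delta}$ with one-dimensional torus $T$, recognizing $\mkr$ via ADHM as the level-$k$ Jordan-quiver (i.e.\ $\widehat{\mathfrak{gl}}_1$) quiver variety on the other side of rank-level duality, and then invoking Theorem \ref{hkr-mkr} together with the identification of chambers $C_+$ on the $\hkr$ side and $C_-$ on the $\mkr$ side. The paper likewise presents the corollary as a special case of Theorem \ref{hkr-mkr} whose chamber combinatorics is explained in the preceding paragraph (reading off $\cHs^!$ from Gordon's description of the GIT walls) without carrying out the abacus computation explicitly, so the bookkeeping step you flag as remaining is treated at the same level of detail in the paper itself.
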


Further specializing to the case where $k=1$, we have 
$\mathcal{H}(1,r)\cong\operatorname{Hilb}^r\C^2\cong \mathcal{M}(1,r)$.
The Hilbert scheme $\operatorname{Hilb}^r\C^2$ does not satisfy our assumption that the minimal leaf is a point,
but we may replace it with the reduced Hilbert scheme $\operatorname{Hilb}_0^r\C^2$ (in which the center of mass
is required to lie at the origin) without affecting category $\cO$ or any of its structure.

\begin{corollary}
The reduced Hilbert scheme $\operatorname{Hilb}_0^r\C^2$ is self-dual.
\end{corollary}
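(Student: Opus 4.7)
The plan is to deduce self-duality by specializing Corollary \ref{the real hkr-mkr} to $k=1$ and then descending along the splitting that removes the center-of-mass factor. The first step is to observe that, for $k=1$, the finite group $\mck = \Z/1\Z$ is trivial, so $\mathcal{H}(1,r)$ is literally $\operatorname{Hilb}^r\C^2$, while the moduli space $\mathcal{M}(1,r)$ of framed torsion-free sheaves on $\mathbb{P}^2$ of rank $1$ with $c_2 = r$ is canonically isomorphic to $\operatorname{Hilb}^r\C^2$ via the identification of a framed rank-one torsion-free sheaf with its ideal sheaf. Thus Corollary \ref{the real hkr-mkr} at $k=1$ produces a self-duality of $\operatorname{Hilb}^r\C^2$, except for the caveat that the minimal symplectic leaf of its affinization $\operatorname{Sym}^r\C^2$ is a copy of $\C^2$ rather than a point, so that $\operatorname{Hilb}^r\C^2$ fails the standing assumption of Section \ref{gco-sec:resolutions} that $\{o\}$ be a symplectic leaf.

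The second step is to descend along translation by $\C^2$. The translation action of $\C^2$ on $\operatorname{Hilb}^r\C^2$ commutes with $\bS$ and $\bT$ and produces an $\bS\times\bT$-equivariant product decomposition $\operatorname{Hilb}^r\C^2 \cong \C^2 \times \operatorname{Hilb}_0^r\C^2$, compatible with both realizations $\mathcal{H}(1,r)$ and $\mathcal{M}(1,r)$. The factor $\C^2$ contributes only the Weyl algebra to any quantization, and, as $\bT$ acts on it with nonzero weights and no fixed points, its algebraic and geometric categories $\cO$ are equivalent to $\C\mmod$. Consequently the product decomposition induces an equivalence of categories $\cO$ between $\operatorname{Hilb}^r\C^2$ and $\operatorname{Hilb}_0^r\C^2$ that preserves the Koszul grading, the fixed-point indexing set $\cI$, the poset of special leaves, the Namikawa Weyl group, and the shuffling torus with its hyperplane arrangement; in short, all of the ingredients of Definition \ref{def:duality} descend unambiguously to the reduced Hilbert scheme.

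The third and final step is to transport the data of the symplectic duality from Corollary \ref{the real hkr-mkr} through the two identifications $\mathcal{H}(1,r) \cong \operatorname{Hilb}^r\C^2 \cong \mathcal{M}(1,r)$ and then quotient out the $\C^2$ factor, checking that the isomorphisms of Weyl groups, the equivariant identifications $\mt_\R \cong H^2(\operatorname{Hilb}_0^r\C^2;\R)$ and $H^2(\operatorname{Hilb}_0^r\C^2;\R)\cong \mt^!_\R$, and the chamber correspondences all match up. The main point to verify is the matching of the two tori: on the $\mathcal{H}(1,r)$ side the torus $T$ is the one-dimensional factor coming from the symplectic $\cs$-action on $\C^2$ commuting with $\mck$, while on the $\mathcal{M}(1,r)$ side the torus $T^!$ is $\operatorname{PGL}_1 \times \cs$, whose first factor collapses, leaving only the loop-rotation $\cs$. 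Both act as the same $\cs$ on the reduced Hilbert scheme, and the comparison of their weights interchanges the chambers $C_+$ and $C_-$ of the preceding paragraph, which is precisely the sign reversal demanded by the definition of symplectic self-duality. The expected obstacle is the bookkeeping of this chamber flip together with the induced involution on the fixed-point set (given in abacus terms by the transposition of $e \times \ell$ rectangles for $e=\ell=1$), but at these values the combinatorics reduces to transposition of partitions of $r$, which is manifestly an order-reversing involution and matches the known self-duality of the GGOR category $\cO$ for the symmetric group.
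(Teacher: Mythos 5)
Your argument is correct and follows the same route as the paper: specialize Corollary \ref{the real hkr-mkr} to $k=1$, use the identifications $\mathcal{H}(1,r)\cong\operatorname{Hilb}^r\C^2\cong\mathcal{M}(1,r)$, and descend along the translation splitting $\operatorname{Hilb}^r\C^2\cong\C^2\times\operatorname{Hilb}_0^r\C^2$, which the paper leaves implicit in its remark that the reduced Hilbert scheme carries the same category $\cO$ and duality data. One small slip: you say that $\bT$ acts on the center-of-mass factor $\C^2$ ``with nonzero weights and no fixed points'' --- a linear $\bT$-action on $\C^2$ with nonzero weights fixes the origin; the correct statement is simply that the only fixed point is the origin, the relative core is a Lagrangian line, and the resulting category $\cO$ for the Weyl algebra is $\C\mmod$, which is what you use.
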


\begin{remark}\label{instanton}
More generally, we expect the moduli space of $G$-instantons on a
crepant resolution of $\C^2/\mck$ to be dual to the moduli space of
$G'$-instantons on a crepant resolution of $\C^2/\mck'$, where
$G$ is matched to $\mck'$ and $G'$ is matched to $\mck$ via
the McKay correspondence.  Corollary \ref{the real hkr-mkr} constitutes the special case
where both $G$ and $G'$ are of type A.

Braverman and Finkelberg have suggested that resolutions of slices
in the ``double affine Grassmannian'' should be isomorphic to certain spaces of instantons.
Via this philosophy, our conjecture may be regarded as an affine version of the conjecture in Remark \ref{quiver-Gr}.
\end{remark}

\subsection{Duality of cones}\label{sec:cones}
The notion of symplectic duality is in fact more naturally defined at the level of cones, as we explain below.

\begin{proposition}
Let $\fM$ and $\fM'$ be resolutions of the same cone $\fM_0$.
\begin{enumerate}
\item $G\cong G'$, and therefore $\rWeyl\cong\rWeyl'$.
Given a maximal torus $T\subset G$, the hyperplane arrangements
$\cHs$ and $\cHs'$ in $\mt_\R$ coincide.
\item $W\cong W'$, and we have a $W$-equivariant isomorphism 
$H^2(\fM; \R)\cong H^2(\fM'; \R)$ taking $\cHt$ to $\cHt'$.
\item For any $\la\in\Ht\cong H^2(\fM'; \C)$, we have $\spe_{\fM} = \spe_{\fM'}$.
\item Given a period $\la\in\Ht$ and a generic cocharacter $\zeta$ of $T$,
the categories $\cOa$ and $\cOa'$ associated to $\fM$ and $\fM'$ are canonically equivalent.
\end{enumerate}
\end{proposition}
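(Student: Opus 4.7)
The plan is to show that each of the structures mentioned --- the groups $G$, $\rWeyl$, $W$, the arrangements $\cHs$ and $\cHt$, the special leaves, and eventually the category $\cOa$ --- is intrinsic to the cone $\fM_0$ (with its Poisson and $\bS$-equivariant structure), so that neither resolution plays a privileged role. Once this is established, all four statements fall out formally. A unifying tool is that any conical symplectic resolution factors through the affinization map, so Poisson/equivariant data on $\fM_0$ can be transported to and from any particular resolution.

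For part (1), I would argue that the identity component $G^\circ$ agrees with the identity component of the group of $\bS$-equivariant Poisson automorphisms of $\fM_0$. One direction is automatic: any symplectomorphism of $\fM$ commuting with $\bS$ descends to $\fM_0 = \Spec\C[\fM]$. Conversely, a connected family of such automorphisms acts on the finite set of conical symplectic resolutions of $\fM_0$ through its action on the tiling of the movable cone by ample cones (Remark \ref{dream}), and must therefore fix each resolution and lift uniquely to it. Hence $G^\circ \cong G'^\circ$, so $\rWeyl \cong \rWeyl'$. For a maximal torus $T$, the arrangement $\cHs$ is the set of cocharacters $\zeta:\bT\to T$ with $\fM^{\zeta(\bT)}$ positive-dimensional; by properness of $\fM\to\fM_0$ and the fact that both fixed loci retract to preimages of $\fM_0^{\zeta(\bT)}$, this condition is equivalent to $\fM_0^{\zeta(\bT)}$ being positive-dimensional, which is intrinsic to $\fM_0$. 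Thus $\cHs = \cHs'$ in $\mt_\R$.

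Parts (2) and (3) are built on previously-cited Namikawa machinery. The Namikawa Weyl group $W$ is defined from the universal Poisson deformation of $\fM_0$ over $H\!P^2(\fM_0)$, which uses only $\fM_0$, and by Remark \ref{dream} the second cohomology groups $H^2(\fM;\R)$ and $H^2(\fM';\R)$ are canonically identified (both coincide with the common base of the Poisson deformation after quotienting by $W$); the arrangements $\cHt$ and $\cHt'$ on this common space both consist of the $W$-translates of the collection of walls of ample cones of \emph{all} conical symplectic resolutions of $\fM_0$, so $\cHt = \cHt'$. For (3), the uniqueness of quantizations combined with the construction of $A$ as $\bS$-invariant global sections (Theorem \ref{periods} and the discussion in Section \ref{quant-quantizations}) shows that $A$ depends only on the period $\la$ and on $\fM_0$, not on the resolution used to construct it; more precisely, the quantizations with matching periods on $\fM$ and $\fM'$ have canonically isomorphic section rings. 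Since the set $\spe$ of special leaves is defined via the associated varieties in $\fM_0$ of primitive ideals of $A$, it depends only on $(\fM_0,\la)$, so $\spe_\fM = \spe_{\fM'}$.

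For part (4), I would note that once $A \cong A'$ canonically, it suffices to check that the element $\xi\in A$ generating the $\bT$-action is also intrinsic. But the quantized comoment map for $\bT$ is determined (up to an element of $A(0)\cong\C$) by the action of $\bT$ on the quantization, which in turn comes from the action of $\bT$ on $\fM_0$ lifted to the quantization of $\fM_0$; neither step references $\fM$. Since $\cOa$ is defined purely in terms of the pair $(A,\xi)$ via local $A^+$-finiteness, the identification $A \cong A'$ sends $\cOa$ to $\cOa'$. The main obstacle in the whole proof is the rigorous verification in part (1) that the automorphism group lifts canonically between resolutions --- in particular, handling components other than the identity, which {\it a priori} could permute distinct resolutions rather than fixing a single one --- and carefully matching the canonical isomorphisms of section rings across resolutions in part (3), where the prequel \cite{BLPWquant} must be invoked with some care.
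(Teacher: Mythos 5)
Your approach to parts (2)--(4) lines up with the paper's. Part (2) uses Namikawa's intrinsic definition of $W$ over $H\! P^2(\fM_0)$ and the canonical identification of $H^2$ across resolutions (Remark \ref{dream}, or \cite[2.18]{BLPWquant}), and parts (3) and (4) both reduce to the fact --- which the paper cites from \cite[3.9]{BLPWquant} --- that the section algebra $A$ and the moment element $\xi$ depend only on $(\fM_0,\la)$ and not on the resolution used to produce them.

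Part (1) is where you diverge, and where you yourself flag a gap. The paper's argument is different and avoids the problem you raise. It first observes that $\mg$ is intrinsic to the cone --- it is the Lie algebra of Hamiltonian vector fields on $\fM_0$ of $\bS$-weight $1$ --- so $\mg\cong\mg'$. It then takes a simultaneous cover $\tilde G$ of $G$ and $G'$; since $\tilde G$ acts on $\fM_0$ and hence on both resolutions, and since an automorphism of $\fM$ or $\fM'$ is trivial exactly when its descent to $\fM_0$ is, the two projections $\tilde G\to G$ and $\tilde G\to G'$ have the same kernel, giving $G\cong G'$ at once, with no separate analysis of the component group. Your movable-cone argument, by contrast, only shows that a \emph{connected} family of $\bS$-equivariant Hamiltonian automorphisms of $\fM_0$ lifts to $\fM$, yielding $G^\circ\cong G'^\circ$. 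Concluding $\rWeyl\cong\rWeyl'$ from this alone is not valid: $\rWeyl = N_G(T)/T$ genuinely depends on the full component group of $G$, which is precisely what your argument fails to control. To close the gap you would need either the paper's cover argument or a separate argument that a Hamiltonian $\bS$-equivariant automorphism of $\fM_0$ lifting to $\fM$ also lifts to $\fM'$ and conversely. Your characterization of $\cHs$ as the locus of cocharacters $\zeta$ with $\fM_0^{\zeta(\bT)}$ infinite is the same observation the paper uses, and is fine.
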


\begin{proofnoqed}
\begin{enumerate}
\item The Lie algebra
$\mg$ of $G$ is isomorphic to the Lie algebra of Hamiltonian
vector fields on $\fM_0$ with $\bS$-weight 1; this tells us that
$\mg\cong\mg'$.  Let $\tilde G$ be a simultaneous cover of both $G$ and $G'$; then
    $\tilde G$ acts on both $\fM$ and $\fM'$.  Since an automorphism
    of $\fM$ or $\fM'$ is trivial if and only if it induces the
    trivial automorphism of $\fM_0$, the maps $\tilde G\to G$ and
    $\tilde G\to G'$ have the same kernel.

    Let $T\subset G$ be a maximal torus.  Any cocharacter
    $\zeta$ of $T$ induces an action of $\bT$ on $\fM$ with isolated fixed points if and
    only if the induced action on $\fM_0$ has isolated fixed points, thus the
    hyperplane arrangement $\cHs$ in $\mt$ is independent of the
    choice of resolution.

  \item The fact that the Namikawa Weyl group is determined completely
    by $\fM_0$ is immediate from Namikawa's definition.  The fact that
    the groups $H^2(\fM; \R)$ and $H^2(\fM'; \R)$ are canonically
    isomorphic for two different resolutions $\fM$ and $\fM'$ is
    explained in \cite[2.18]{BLPWquant}.  This isomorphism is clearly
    $W$-equivariant, and takes the arrangement $\cHt$ to $\cHt'$.

\item
The fact that the poset $\spe$ of special leaves does not depend on the choice of resolution
follows from the fact that the algebra $A$ does not depend on the choice of resolution; that is, the algebra of $\bS$-invariant
global sections of the quantization of $\fM$ with period $\la$ is canonically isomorphic to the algebra of $\bS$-invariant
global sections of the quantization of $\fM'$ with period $\la$ \cite[3.9]{BLPWquant}.

\item The category $\cOa$ is defined in terms of the algebra $A$ and the subalgebra $A^+\subset A$,
and we have established that these structures do not depend on the choice of resolution.\qed
\end{enumerate}
\end{proofnoqed}


We are now prepared to define symplectic duality of cones.  
Let $\fM_0$ be a Poisson cone that admits a conical symplectic resolution, which we do not fix.  
Let $G$ be the group of Hamiltonian symplectomorphisms that commute with $\bS$.  Fix a
Borel subgroup $B\subset G$, and assume that one  (and
thus any) maximal torus $T\subset B$ fixes only the cone point of $\fM_0$.

For any movable chamber $\Ctw$ of $\cHt$ and $B$-dominant chamber $\Csh$ of $\cHs$,
we obtain a category $\cO(\Ctw,\Csh)$ by choosing the unique conical symplectic resolution $\fM$
with ample cone $\Ctw$, taking a quantization with period sufficiently deep in the ample cone,
and allowing $\bT$ to act by a cocharacter in $\Csh$.

\begin{definition}\label{dual cones}
A symplectic duality between $(\fM_0, B)$ and $(\fM_0^!, B^!)$ consists of 
\begin{itemize}
\item an order-reversing bijection
$S\mapsto S^!$ from $\spe_{\fM}$ to $\spe_{\fM^!}$ (for any choice of $\fM$ and $\fM^!$);
\item group isomorphisms $W\cong \rWeyl^!$ and $\rWeyl\cong W^!$;
\item a pair of linear isomorphisms $\mt_\R\cong H^2(\fM^!; \R)$ and $H^2(\fM; \R)\cong\mt_\R^!$, 
which are equivariant with respect to the isomorphisms of the previous item, compatible with the lattices, take $\cHt$ to $\cHs^!$ and $\cHs$ to $\cHt^!$,
and take movable twisting chambers to dominant shuffling chambers;
\item for any $\Ctw,\Csh$ and $\Ctw^!,\Csh^!$ related by the linear isomorphisms above,
a Koszul duality from $\cO(\Ctw,\Csh)$ to $\tilde\cO^!(\Ctw^!,\Csh^!)$.
\end{itemize}
These structures are required to satisfy the following conditions:
\begin{itemize}
\item Let $L$ be a simple object of $\cO(\Ctw,\Csh)$, and suppose that $\tilde L$ is sent
by the Koszul duality functor to a graded lift of the projective
cover of the simple object $L^!$ of $\cO^!(\Ctw^!,\Csh^!)$.  Then we require that our bijection of special leaves
takes the dense leaf in $\fM_{0,L}$ to the dense leaf in $\fM^!_{0,L^!}$.
\item The equivalences $D^b(\tilde\cO(\Ctw,\Csh))\simeq D^b(\tilde\cO^!(\Ctw^!,\Csh^!))$ interchange twisting and shuffling functors.
\end{itemize}
\end{definition}

\begin{remark}
Essentially, Definition \ref{dual cones} says that for any choice of $\fM$ resolving $\fM_0$
and $\fM^!$ resolving $\fM^!_0$, there are corresponding actions of $\bT$ on both sides
such that $\fM$ is dual to $\fM^!$.  (It also says that the linear
isomorphisms involved in all of these dualities can be chosen consistently.)
It is straightforward to promote Theorems \ref{Springer-duality},
\ref{S3-duality}, \ref{hypertoric-duality}, and \ref{hkr-mkr} to dualities between cones.
\end{remark}

\subsection{Duality of leaf closures and slices}
\label{sec:funct-strata-slic}
Let $\fM\to\fM_0$ be a conical symplectic resolution, and $S\subset\fM_0$ a symplectic leaf.  The closure $\bar{S}\subset\fM_0$
is again a Poisson cone, which may or may not admit a symplectic resolution.  For example, if $\fM_0$ is the nilpotent
cone in $\mathfrak{sl}_r$ and $S$ is a nilpotent orbit, then $\bar{S}$ admits a symplectic resolution of the form $T^*(\SL_r/P)$ for some $P$.
For other simple Lie algebras, however, $\bar{S}$ may admit no symplectic resolution.  If $\fM_0$ admits a Hamiltonian action of $\bT$,
commuting with $\bS$ and fixing only the cone point, then this restricts to an action on $\bar{S}$ with the same properties.

Let $p\in S$ be any point.  We say that another Poisson cone $\fN_0$ is a {\bf slice} to $S$ at $p$
if a formal neighborhood of $p$ in $\fM_0$ is isomorphic to a formal neighborhood of $p$ in $S$ times a formal
neighborhood of the cone point in $\fN_0$.  Assuming that such an $\fN_0$ exists\footnote{Kaledin \cite[1.6]{Kal09}
shows that we can always find a symplectic singularity $\fN_0$ with this property, but he does not prove
that the Poisson structure on $\fN_0$ is always homogeneous with respect to a conical $\bS$ action.  He does,
however, conjecture that this is the case \cite[1.8]{Kal09}.}, it will always admit a conical symplectic
resolution; in an infinitesimal neighborhood of the cone point, this resolution is obtained by base change along the inclusion
of $\fN_0$ into $\fM_0$.  However, even if $\fM_0$ admits a Hamiltonian $\bT$-action that commutes with $\bS$ 
and fixes only the cone point, $\fN_0$ might not admit such an action.

Let $\fM_0$ and $\fM_0^!$ be dual in the sense of Definition \ref{dual cones}.
Let $S$ be a special leaf of $\fM_0$, and let $S^!$ be the corresponding special leaf of $\fM_0^!$.
Let $\fN_0$ be a slice to $S$ at a point $p\in S$.

\begin{conjecture}\label{leaf-slice}
Suppose that $\bar{S}^!$ admits a conical symplectic resolution and $\fN_0$ admits a Hamiltonian $\bT$ action commuting 
with $\bS$ and fixing only a point.  Then $\bar{S}^!$ is dual to $\fN_0$.
\end{conjecture}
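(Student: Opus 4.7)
The plan is to realize $\cO(\bar{S}^!)$ as a Serre subcategory of $\cO^! = \cO(\fM_0^!)$ and $\cO(\fN_0)$ as a corresponding Serre quotient of $\cO = \cO(\fM_0)$, and then invoke the Koszul duality between $\cO$ and $\cO^!$ together with the fact that Koszul duality exchanges Serre subcategories with orthogonal Serre quotients. More precisely, using the filtration of Definition \ref{Og-filtration}, the subcategory $\cOgS^{!\,\bar{S}^!}\subset\cO^!$ is a natural candidate for $\cO(\bar{S}^!)$: one should show that the restriction of any conical symplectic resolution of $\fM_0^!$ to $\bar{S}^!$ (combined with its natural quantization) induces an equivalence between $\cO(\bar{S}^!)$ and $\cOgS^{!\,\bar{S}^!}$. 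Simultaneously, the slice $\fN_0$ should arise from the formal completion of $A = A(\fM_0)$ along a transversal to $S$ at $p$; in the spirit of Lemma \ref{algebra-tensor}, this completion ought to factor as a Weyl algebra on $T_p^*S$ tensored with a quantization of $\fN_0$, producing a Serre quotient functor $\cO(\fM_0) \to \cO(\fN_0)$.

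Given these two identifications, the correspondence is dictated by the bijection of Definition \ref{def:duality}: the simples $L^!_{\alpha^!}\in\cOgS^{!\,\bar{S}^!}$ are those with $\becircled{\fM^!}_{\alpha^!,0}\subset\bar{S}^!$, and via $(\becircled{\fM}_{\alpha,0})^!=\becircled{\fM^!}_{\alpha^!,0}$ together with the order-reversing bijection of special leaves, these are precisely the $\alpha$ for which $\becircled{\fM}_{\alpha,0}\supset S$, i.e.\ the simples whose image is nonzero in $\cO(\fN_0)$. Since Koszul duality exchanges $\tilde L^!_{\alpha^!}$ with $\tilde P_\alpha$ (Theorem \ref{prop:koszulformula}), the induced equivalence sends the graded lift of $\cOgS^{!\,\bar{S}^!}$ to the subcategory of $D^b(\tilde{\cO})$ generated by precisely those projectives of $\cO$ that descend to projective generators of $\cO(\fN_0)$ (cf.\ Proposition \ref{perp category}); this is exactly the Koszul duality statement required. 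The induced bijection of special leaves and linear isomorphisms of $H^2$ are obtained by restriction: $\spe_{\bar{S}^!}=\{S'^!\in\spe^!\mid S'^!\le S^!\}$ and $\spe_{\fN_0}=\{S'\in\spe\mid S'\supset S\}$, which match via the original duality's bijection, and the hyperplane arrangement $\cHt$ for $\bar{S}^!$ should arise as the restriction of $\cHt^!$ to cocharacter data that fix $S^!$ setwise, with the dual statement for $\cHs$ of $\fN_0$.

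Finally, one verifies that twisting and shuffling functors are exchanged. Twisting functors for $\bar{S}^!$ come from varying the period along classes in $H^2(\bar{S}^!;\C)$; these classes arise from the subspace of $H^2(\fM^!;\C)\cong\mt_\R^*$ that acts nontrivially on $\cOgS^{!\,\bar{S}^!}$, and under the Koszul duality, pure twisting on $\cOgS^{!\,\bar{S}^!}$ corresponds to pure shuffling on its image in $\cO(\fN_0)$ by shuffles supported on the cocharacters of $T$ fixing $S$ pointwise; these are precisely the shuffling functors of $\fN_0$. The reverse direction is analogous.

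The main obstacle, and by far the most delicate step, is the construction of the slice quantization and the proof that the associated Serre quotient $\cO(\fM_0)\twoheadrightarrow\cO(\fN_0)$ is well-behaved: one needs a version of Losev's slice theorem for general conical symplectic resolutions beyond the $T^*(G/P)$ and W-algebra setting, strong enough to realize this quotient functor and to identify it with category $\cO$ for a genuine quantization of $\fN_0$. All the other ingredients (the identification of $\cOgS^{!\,\bar{S}^!}$ with $\cO(\bar{S}^!)$, the combinatorial matching of special leaves and hyperplane arrangements, and the Koszul-duality exchange of twisting and shuffling) should then follow by restricting the corresponding data from the given duality between $\fM_0$ and $\fM_0^!$.
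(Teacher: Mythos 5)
The statement you are addressing is Conjecture~\ref{leaf-slice} in the paper; the paper does not prove it in any generality, but only verifies it in three families of examples (type~A S3-varieties, hypertoric varieties, affine type~A quiver varieties) by directly inspecting the combinatorics of the already-established dualities. There is therefore no argument in the paper against which your proposal can be measured.

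Your outline sketches what is surely the natural general strategy, and you are right to flag the slice-quantization step as a serious obstacle. But there is an equally fundamental gap at the other end of the argument: the claim that the full subcategory of $\cO^!$ consisting of objects supported on the preimage of $\bar{S}^!$ should be equivalent to category $\cO$ for a conical symplectic resolution of $\bar{S}^!$ is unjustified, and as stated it conflates two genuinely different geometries. The former category lives over a quantization of $\fM^!$, which is a resolution of $\fM^!_0$ and not of $\bar{S}^!$; the latter lives over a quantization of some resolution of $\bar{S}^!$ itself. The scheme-theoretic preimage of $\bar{S}^!$ in $\fM^!$ is typically singular and not symplectic --- already for $T^*(G/B)$ it is a union of conormal varieties --- and in particular is not a conical symplectic resolution of $\bar{S}^!$. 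No comparison functor is constructed in the paper, and producing one (much less showing it is an equivalence compatible with highest-weight and Koszul structures) would require significant new input. Beyond this, your identification of the two sets of simples implicitly invokes the interleaved condition of Definition~\ref{simple supports}, which the paper shows can fail even for $T^*(G/B)$ outside type~A and small rank; and the asserted twisting/shuffling exchange, which you dispatch as ``analogous,'' is not a formal consequence of restriction, since for $\bar{S}^!$ and $\fN_0$ these functors are defined intrinsically rather than as restrictions of functors on $\fM^!$ and $\fM$. The shape of the argument is the one the authors presumably have in mind, but filling in the pieces requires substantial unestablished theory, which is why the statement is labeled a conjecture and not a theorem.
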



\begin{example}
Conjecture \ref{leaf-slice} is true for S3-varieties in type A.
We showed in Theorem \ref{S3-duality} that $\fX^\mu_\nu$ is dual to $\fX^\nu_{\mu^o}$.
The leaf closures in the affinization of $\fX^\mu_\nu$ have resolutions of the form $\fX^\mu_\rho$
and the slices in the affinization of $\fX^\nu_{\mu^o}$ have resolutions of the form $\fX^\rho_{\mu^o}$,
where $\bar\rho$ lies between $\bar\nu$ and $\bar\mu^t$ in the dominance order.

More generally, Conjecture \ref{leaf-slice} should hold for finite-type quiver varieties and slices in the affine
Grassmannian (Remark \ref{quiver-Gr}), as both of these families of varieties are closed under the operations
of leaf-closure and slice.
\end{example}

\begin{example}
Conjecture \ref{leaf-slice} is true for hypertoric varieties.  Special leaves correspond to coloop-free flats
\cite[2.1]{PW07}, leaf closures correspond to restrictions at flats \cite[2.1]{PW07}, and slices correspond
to localization at flats \cite[2.4]{PW07}.  Thus Conjecture \ref{leaf-slice} for hypertoric varieties follows from
Theorem \ref{hypertoric-duality} and the interchange of localization and restriction under
Gale duality \cite[2.6]{GDKD}.
\end{example}

\begin{example}
Conjecture \ref{leaf-slice} is true for affine type A quiver varieties.
A special leaf of $\fQ^\nu_{\mu}$ is indexed by a
highest weight $\xi$, a weight $\varpi$ and an integer $r$.
From the description of leaves and slices in \cite[\S 6]{Nak94}, the closure of this leaf
is isomorphic to $\fQ^\nu_{\varpi}\times \Sym^r\big(\C^2/(\Z/e\Z)\big)$, and its
slice is isomorphic to $\fQ^{\varpi-r\delta}_{\mu}\times \Sym^r\big(\C^2/(\Z/e\Z)\big)$.  These
switch roles under rank-level duality by \cite[5.18]{Webqui}, and the result follows.
\end{example}

\begin{remark}
In all of the examples that we know, $\bar{S}^!$ admits a conical symplectic resolution
if and only if $\fN_0$ admits a Hamiltonian $\bT$-action commuting with $\bS$ and fixing only a point.  
This suggests that there might be a more general notion of
duality than Definition \ref{dual cones} in which both of these conditions are relaxed, and one
holds on one side if and only if the other holds on the other side.
\end{remark}

\subsection{Duality of leaf filtrations}\label{filtrations}
Suppose that $\fM$ is symplectic dual to $\fM^!$.
For every special leaf $S\subset\fM$, let
$$D_S := K(\cOgS)_\C = \C\big\{[\Lambda_\a] \mid \fM_{\a,0}\subset \bar S \big\}$$
and
$$E_S := K(\cOgdS)_\C = \C\big\{[\Lambda_\a] \mid \fM_{\a,0}\subsetneq \bar S \big\}.$$
This is precisely the filtration of $K(\cOg)_\C$ whose associated graded gives us the direct sum decomposition
of Equation \eqref{O-decomposition}; that is, $D_S/E_S \cong K(\fP_S)_\C$.
If $\fM$ and $\fM^!$ are both interleaved
(Definition \ref{simple supports}), then this filtration agrees, via the characteristic cycle isomorphism,
with the BBD filtration of $H^{\dimfM}_{\fM^+}(\fM; \C)$ (Theorem \ref{special filtrations}).

Consider the perfect pairing between $K(\cO)_\C$ and $K(\cO^!)_\C$ for which the simples form dual bases, under the bijection between simples provided by Koszul duality.

\begin{proposition}\label{cohsd}
For each special leaf $S$, the above pairing induces a perfect pairing between $D_S/E_S$
and $D^!_{S^!}/E^!_{S^!}$.
\end{proposition}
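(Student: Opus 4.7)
The plan is to check this directly from the compatibility axioms built into symplectic duality, since once those are unpacked the statement reduces to a calculation with dual bases of simples.

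First, I would pick out natural bases for the quotient spaces. By definition, $D_S$ is spanned by the simples $[\Lambda_\alpha]$ with $\fM_{\alpha,0} \subset \bar S$, and $E_S$ by those with $\fM_{\alpha,0} \subsetneq \bar S$; hence $D_S/E_S$ has a basis indexed by $\{\alpha \in \cI \mid \becircled\fM_{\alpha,0} = S\}$. The analogous description applies to $D^!_{S^!}/E^!_{S^!}$, giving it a basis indexed by $\{\alpha^! \in \cI^! \mid \becircled\fM^!_{\alpha^!,0} = S^!\}$. The compatibility condition $(\becircled\fM_{\alpha,0})^! = \becircled\fM^!_{\alpha^!,0}$ from Definition \ref{def:duality}, together with the leaf bijection $S \mapsto S^!$, establishes a bijection between these two indexing sets via $\alpha \leftrightarrow \alpha^!$.

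Next I would verify that the pairing descends. The pairing is characterized by $\langle [L_\alpha], [L^!_{\beta^!}]\rangle = \delta_{\alpha,\beta}$. To show it descends to a pairing on the quotients, I need to show that it vanishes on $E_S \otimes D^!_{S^!}$ and on $D_S \otimes E^!_{S^!}$. For the first, suppose $[L_\alpha]$ has $\fM_{\alpha,0} \subsetneq \bar S$ and $[L^!_{\beta^!}]$ has $\fM^!_{\beta^!,0} \subset \bar{S^!}$; the pairing is $\delta_{\alpha,\beta}$, so it suffices to rule out $\alpha = \beta$. If $\alpha = \beta$, then by the order-reversing leaf bijection the strict containment $\becircled\fM_{\alpha,0} \subsetneq S$ forces $\becircled\fM^!_{\alpha^!,0}$ to strictly contain $S^!$, contradicting $\fM^!_{\beta^!,0} \subset \bar{S^!}$. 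The argument for $D_S \otimes E^!_{S^!}$ is symmetric.

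Finally, I would compute the induced pairing on the quotients in the bases described in the first step. A basis vector $[L_\alpha]$ of $D_S/E_S$ pairs as $\delta_{\alpha,\beta}$ with any lift $[L^!_{\beta^!}]$ of a basis vector of $D^!_{S^!}/E^!_{S^!}$, so under the bijection $\alpha \leftrightarrow \alpha^!$ the Gram matrix is the identity, giving a perfect pairing.

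There is essentially no obstacle: once the bookkeeping in step one is in place, the proof is a formal consequence of the two axioms of Definition \ref{def:duality} that the bijections on fixed points and on special leaves be compatible and order-reversing. No Koszulity, localization, or cohomological input enters the argument beyond what is already encoded in the definition.
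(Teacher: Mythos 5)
Your proof is correct and takes essentially the same approach as the paper: both exploit that the simples form dual bases and that the bijection $\alpha\mapsto\alpha^!$ restricts to a bijection between $\{\alpha : \becircled\fM_{\alpha,0}=S\}$ and $\{\alpha^! : \becircled\fM^!_{\alpha^!,0}=S^!\}$. The paper packages your steps 2 and 3 into a single computation of $D_S^\perp\cap D^!_{S^!}=E^!_{S^!}$ (and its symmetric partner), which simultaneously gives both descent and nondegeneracy, but the underlying argument is identical.
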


\begin{proof}
For every special leaf $S$, we have
\begin{eqnarray*}
D_S^\perp \cap D^!_{S^!}&=& \C\big\{[\Lambda_\a] \mid \fM_{\a,0}\subset \bar S \big\}^\perp
\;\cap\; \C\big\{[\Lambda^!_{\a^!}] \mid \fM^!_{\a^!,0}\subset \bar S^! \big\}\\
&=& \C\big\{[\Lambda^!_{\a^!}] \mid \fM_{\a,0}\not\subset \bar S \big\}
\;\cap\; \C\big\{[\Lambda^!_{\a^!}] \mid \fM^!_{\a^!,0}\subset \bar S^! \big\}\\
&=& \C\big\{[\Lambda^!_{\a^!}] \mid \fM^!_{\a^!,0}\not\supset \bar S^! \big\}
\;\cap\; \C\big\{[\Lambda^!_{\a^!}] \mid \fM^!_{\a^!,0}\subset \bar S^! \big\}\\
&=&  \C\big\{[\Lambda^!_{\a^!}] \mid \fM^!_{\a^!,0}\subsetneq \bar S^! \big\}\\
&=& E^!_{S^!}.
\end{eqnarray*}
By symmetry, we also have $(D^!_{S^!})^\perp \cap D_S = E_S$.
This completes the proof.
\end{proof}

\begin{example}
If $\fM=\fM^! =T^*(\SL_r/B)$ and the period is integral, then the space $K(\cO)_\C$ can be
  identified with $\C[S_n]$, and the space $D_S/E_S$ is the unique
  subquotient which is isomorphic to the isotypic component of the
  Springer representation for $S$.
\end{example}

\begin{example}\label{dualities}
If $\fM$ and $\fM^!$ are hypertoric and the period is regular and integral, the dimension of the space
  $D_S/E_S$ is $T_{\mathcal{A}^F}(1,0)T_{\mathcal{A}_F}(0,1)$ where $T_{\mathcal{A}^F}$ and $T_{\mathcal{A}_F}$ are
  the Tutte polynomials of the restriction and localization of the
  hyperplane arrangement at the coloop-free flat $F$ corresponding to the
  leaf $S$.  The dual variety $\fM^!$ is the variety associated to the
  Gale dual hyperplane arrangement, and Gale duality takes restrictions to localizations and vice versa, along
  with reversing the variables in the Tutte polynomial.
\end{example}

\begin{corollary}\label{ih-and-top}
If $(\fM,\cD)$ is fat-tailed and $(\fM^!,\cD^!)$ is light-headed, then
the vector space $H^{\dim\fM}(\fM; \C)$ is dual to $I\! H_{\bT}^{\dim\fM^!}(\fM_0^!; \C)$.
\end{corollary}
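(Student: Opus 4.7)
The plan is to deduce this corollary directly from Proposition \ref{cohsd}, applied at the two extreme leaves, combined with the hypotheses of fat-tailedness and light-headedness.

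First I would observe that under the order-reversing bijection $S \mapsto S^!$ of special leaves provided by the symplectic duality, the minimum special leaf must go to the maximum, and vice versa. The point stratum $\{o\} \subset \fM_0$ is always the unique minimal special leaf (since $\{o\}$ is the closed leaf in $\fM_0$, and by assumption, it is a leaf), and the dense leaf $\becircled\fM_0 \subset \fM_0$ is always the unique maximal special leaf. Thus under the bijection, $\{o\} \mapsto \becircled\fM^!_0$ and $\becircled\fM_0 \mapsto \{o^!\}$.

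Next I would apply Proposition \ref{cohsd} with $S = \{o\}$. This gives a perfect pairing
\[
D_{\{o\}}/E_{\{o\}} \;\otimes\; D^!_{\becircled\fM^!_0}/E^!_{\becircled\fM^!_0} \;\longrightarrow\; \C.
\]
By the definitions of the filtration, $D_{\{o\}}/E_{\{o\}} = K(\fP_{\{o\}})_\C$ and $D^!_{\becircled\fM^!_0}/E^!_{\becircled\fM^!_0} = K(\fP^!_{\becircled\fM^!_0})_\C$.

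Finally I would invoke the two hypotheses. Since $(\fM,\cD)$ is fat-tailed, Theorem \ref{point} supplies a canonical isomorphism $K(\fP_{\{o\}})_\C \cong H^{\dim\fM}(\fM;\C)$. Since $(\fM^!,\cD^!)$ is light-headed, Theorem \ref{dense} and Lemma \ref{IH} supply a canonical isomorphism $K(\fP^!_{\becircled\fM^!_0})_\C \cong I\! H^{\dim\fM^!}_{\bT}(\fM_0^!;\C)$. Substituting these identifications into the perfect pairing above produces the desired duality. The argument is essentially formal once Proposition \ref{cohsd} is in hand; the only point that requires any checking is the identification of the extreme leaves under the duality bijection, which is forced by the order-reversing property.
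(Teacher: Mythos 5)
Your proof is correct and follows essentially the same route as the paper's: apply Proposition \ref{cohsd} at $S=\{o\}$ (noting $E_{\{o\}}=0$), use that the order-reversing bijection sends $\{o\}$ to the dense leaf $\becircled\fM_0^!$, and then identify the two extremal pieces via Theorem \ref{point} and Theorem \ref{dense}/Lemma \ref{IH}. The only cosmetic difference is that you invoke the identifications as black boxes while the paper spells out the intermediate isomorphisms through $\Sigma(\cF_{\{o\}})$ and $\Sigma(\cF^!_{\becircled\fM^!_0})$.
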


\begin{proof}
Taking $S$ to be the minimal leaf $\{o\}$,
Proposition \ref{cohsd} says that $D_{\{o\}}$ is dual to 
$D^!_{\becircled\fM^!_0}/E^!_{\becircled\fM^!_0}$.
By fat-tailedness (Theorem \ref{point}), we have
$$D_{\{o\}} \cong K(\fP_{\{o\}})_\C \cong \Sigma(\cF_{\{o\}}) \cong H^{\dimfM}(\fM; \C).$$
By light-headedness (Theorems \ref{dense}), we have
$$D^!_{\becircled\fM^!_0}/E^!_{\becircled\fM^!_0} \cong K(\fP^!_{\becircled\fM^!_0})_\C\cong
\Sigma(\cF^!_{\becircled\fM^!_0})\cong I\! H_{\bT}^{\dim\fM^!}(\fM_0^!; \C).$$
This completes the proof.
\end{proof}

\begin{remark}\label{gr}
The vector space $I\! H_{\bT}^{\dim\fM}(\fM; \C)$ is naturally filtered,
with $k^\text{th}$ filtered piece $H^{\dim\fM - 2k}_\bT(pt; \C)\cdot I\! H_{\bT}^{2k}(\fM; \C)$, and the associated graded
is isomorphic to $I\! H^*(\fM; \C)$.  Thus Corollary \ref{ih-and-top} says that the dimension of the total intersection
cohomology of $\fM_0$ is equal to the dimension of the top homology of $\fM^!$.
If you have a pair of conical symplectic resolutions that you think might be dual, this is the first calculation that you should do.
\end{remark}

\begin{example}\label{awesome example}
Let $G$ be a simple algebraic group, simply laced and simply connected.
Fix a pair of $\la,\mu$ of dominant weights for $G$, and consider the Nakajima quiver variety $\tilde \fQ^\la_\mu$ 
that we discussed in Section \ref{sec:quiver-varieties}.
The top homology group of $\tilde \fQ^\la_\mu$ is isomorphic to the $V(\la)_\mu$, the $\mu$-weight space of the irreducible
representation $V(\la)$ \cite[10.2]{Nak98}.  

Let $G^L$ be the Langlands dual group, and consider the slice $\Gr^{\bar \la}_\mu$ in the affine Grassmannian for $G^L$
that we discussed in Section \ref{sec:affine-grassm-slic}.  This variety always admits a Hamiltonian $\bT$-action
fixing only the cone point, and the intersection cohomology group 
$I\! H_{\bT}^{\dim\Gr^{\bar \la}_\mu}(\Gr^{\bar \la}_\mu; \C)$ is also isomorphic to the $V(\la)_\mu$, the $\mu$-weight space of the irreducible representation $V(\la)$ \cite[3.11 \& 5.2]{G-GS}.

If $\la$ is a sum of minuscule weights, then there exists a symplectic resolution $\tilde \Gr^{\bar \la}_\mu$ 
of $\Gr^{\bar \la}_\mu$ and a Hamiltonian $\bT$-action
on $\tilde \fQ^\la_\mu$ with isolated fixed points, and we conjecture that $\tilde \Gr^{\bar \la}_\mu$
is symplectic dual to $\tilde \fQ^\la_\mu$ (Remark \ref{quiver-Gr}).  We know that $\tilde \fQ^\la_\mu$ is fat-tailed
(Example \ref{light-quiver}) and we conjecture that $\tilde \Gr^{\bar \la}_\mu$ is light-headed (Conjecture \ref{GrDE}).
Assuming these two conjectures, Corollary \ref{ih-and-top} would provide an isomorphism
\begin{equation}\label{quiver-satake}
H_{\dim \tilde \fQ^\la_\mu}(\tilde \fQ^\la_\mu; \C) \cong H^{\dim \tilde \fQ^\la_\mu}(\tilde \fQ^\la_\mu; \C)^*
\cong I\! H_{\bT}^{\dim\Gr^{\bar \la}_\mu}(\Gr^{\bar \la}_\mu; \C)
\end{equation} between the two geometric realizations
of $V(\la)_\mu$.

Note that if $G$ is of type A, then both $\tilde \fQ^\la_\mu$ and $\tilde \Gr^{\bar \la}_\mu$ are S3-varieties \cite{Maf, MV08}.
In this case, Conjecture \ref{GrDE} is covered by Example \ref{fat-light}, 
and the symplectic duality statement is Theorem \ref{S3-duality}. 
\end{example}

\begin{remark}
We defined a pairing between $K(\cO)$ and $K(\cO^!)$ by making the simple bases on each side
be orthonormal.  However, in the proof of Proposition \ref{cohsd}, we only used that they are orthogonal.
There is some evidence to suggest that there is a more natural pairing that is orthogonal but not orthonormal.

To define such a pairing, we suppose that there is a function  $\epsilon:\cI\to\{\pm 1\}$
such that for all $\a,\a'\in\cI$, $$\epsilon(\a) = \epsilon(\a') \implies \Ext^1(\Lambda_\a, \Lambda_{\a'}) = 0.$$
Such a function always exists for regular blocks of BGG category $\cO$ (it is given by the sign function on the Weyl group), 
as well as blocks
of hypertoric category $\cO$ (it is the same function that controls the orientations in \cite[4.3]{GDKD}).
We know of no examples for which such a function does not exist.
As long as $\cO$ is indecomposable, any such function would be unique up to a global sign.

Given such a function, define a new pairing by putting $$\left\langle[\Lambda_\a],[\Lambda^!_{\b^!}]\right\rangle_{\!\!\epsilon} := \epsilon(\a)\delta_{\a\b}.$$
In other words, we take the simples to form twisted orthonormal bases, where the twist is determined by $\epsilon$.
The reason that this pairing might be more natural is that the standards and the projectives would also form
twisted orthonormal bases.\footnote{We leave this statement as an exercise for the reader.
The proof uses the fact that the isomorphism $K(\tilde\cO)\cong K(\tilde\cO^!)$ induced by Koszul duality
takes $q$ to $-q^{-1}$.}
In contrast, the untwisted pairing that we originally defined is not well behaved
with respect to projectives or standards.

We conjecture that the isomorphism in Equation \eqref{quiver-satake} between the two geometric realizations
of $V(\la)_\mu$ will only be compatible with the action of $G$ if we use this twisted pairing.
\end{remark}

\begin{example}\label{Skew-Howe duality}
In type A, the relationship between quiver variety geometry and affine Grassmannian geometry is somewhat special.  In particular, as observed by \Mirkovic-Vybornov, 
Nakajima quiver varieties of type A also arise as transverse slices in the affine Grassmannian of type A \cite{MV08}.  This geometric phenomenon is reflected algebraically in skew-Howe duality for representations of type A simple Lie algebras.  More precisely, this duality asserts that there is a canonical vector space isomorphism between weight spaces in representations of $\mathfrak{gl}_m$ and multiplicity spaces in representations of $\mathfrak{gl}_n$ in the $\mathfrak{gl}_m \times  \mathfrak{gl}_n$-module $\bigwedge{}^{\!\!N}(\C^n\otimes \C^m)$.  These weight spaces and multiplicity spaces can be interpreted geometrically using either the  Ginzburg-Nakajima construction or the geometric Satake construction.
The various geometric realisations of weight and multiplicity spaces are then summarized in the following diagram:\\

\centerline{\begin{tabular}{l|ll}
&  Geometric Satake & Ginzburg-Nakajima\\ \hline
Multiplicity space for $\mathfrak{gl}_m$&$H_{\dim \fM}(\fM;\C)$ & $\IH_{\T^!}^{\dim \fM^!}(\fM_0^!;\C)$\\
Weight space for $\mathfrak{gl}_n$ & $\IH_{\T^!}^{\dim \fM^!}(\fM_0^!;\C)$ & $H_{\dim \fM}(\fM;\C)$
\end{tabular}}\bigskip

\noindent
The observation of \Mirkovic-Vybornov is essentially that the varieties in the diagonal of the above matrix are identical.
Our additional observation, which is special to type A, is that the varieties in a given row (or column) will be symplectic duals.
Moreover, the isomorphism between the vector spaces in a given row (or column) is a cohomological consequence of symplectic duality; see Remark \ref{gr}. Thus, in type A, symplectic duality exchanges the geometric Satake realization of skew-Howe duality with the Ginzburg-Nakajima realization of skew-Howe duality.  
\end{example}

\begin{example}\label{Level-rank duality}
For affine type $\widehat{A}$ quiver varieties, symplectic duality recovers a duality in representation theory due to I.B. Frenkel known as level-rank duality.  The situation is quite reminiscent of that of skew-Howe duality in finite type A, and is discussed in more detail in \cite[\S 5.2]{Webqui}.  In this case, the commuting actions of the affine Lie algebras $\widehat{\mathfrak{sl}}_n$ and $\widehat{\mathfrak{sl}}_m$ on the semi-infinite wedge space give rise to canonical identifications between weight spaces in a level $m$ representation of $\widehat{\mathfrak{sl}}_n$ and multiplicity spaces in a level $n$ representation of $\widehat{\mathfrak{sl}}_m$.  (See, for example, equation A.5 in \cite{NakBranching}.)  The geometric Satake construction of representations in affine type is not completely understood.  However, in affine type $\widehat{A}$, Braverman-Finkelberg \cite{BF} have an explicit proposal for a geometric Satake construction of representations.  We then have the following diagram of geometric realisations of representations:\\

\centerline{\begin{tabular}{l|ll}
&  Geometric Satake & Nakajima\\ \hline
Multiplicity space in a level $n$ rep. of $\widehat{\mathfrak{sl}}_m$ &$H_{\dim \fM}(\fM;\C)$ & $\IH_{\T^!}^{\dim \fM^!}(\fM_0^!;\C)$\\
Weight space in a level $m$ rep. of $\widehat{\mathfrak{sl}}_n$  & $\IH_{\T^!}^{\dim \fM^!}(\fM_0^!;\C)$ & $H_{\dim \fM}(\fM;\C)$
\end{tabular}}\bigskip

\noindent
Here $\fM$ and $\fM^!$ are affine type $\widehat{A}$ quiver varieties.  The Nakajima column realizes the weight and multiplicity spaces as homology groups of Nakajima quiver varieties.  That the diagonal (resp. off-diagonal) entries in Geometric Satake column involve the same variety is part of the content of the Braverman-Finkelberg proposal for geometric Satake in affine type $\widehat{A}$.  
Our additional observation is that the varieties in a given row (or column) will be symplectic duals.
\end{example}

\begin{remark}
Etingof and Schedler \cite[1.3.1(b)]{ES11} conjecture that $H_{\dim\fM^!}(\fM^!; \C)$ has the same dimension
as the zeroth Poisson homology group $H\! P_0(\fM^!_0)$, which is defined as the ring of functions modulo the linear
span of all Poisson brackets.  By Corollary \ref{ih-and-top} and Remark \ref{gr}, 
this is equivalent to the conjecture that $I\! H^*(\fM; \C)$ has the same dimension as $H\! P_0(\fM^!_0)$.
In \cite[3.4]{Pro12}, we strengthen this conjecture by proposing
that they should be isomorphic as graded vector spaces (the grading on Poisson homology is induced by the $\bS$-action).

Furthermore, $I\! H^*(\fM; \C)$ admits the natural deformation $I\! H_{T}^*(\fM; \C)$ over $\mt$,
and $H\! P_0(\fM^!_0)$ admits the natural deformation $H\! P_0(\scrN^!)$ over $H^2(\fM^!)$ 
(recall that $\scrN^! := \Spec \C[\scrM^!]$ is a Poisson deformation of $\fM^!_0$ over $H^2(\fM^!; \C)$).
Finally, recall that we have $\mt\cong H^2(\fM^!; \C)$ as part of the package of symplectic duality, therefore
the two deformations share the same base.
The conjecture in \cite[3.4]{Pro12} asserts that these two deformations should be isomorphic, as well;
it is proven for hypertoric varieties \cite[3.1]{Pro12}.
\end{remark}

\subsection{Duality of localization algebras}\label{gm}
In this section we extend Conjecture \ref{HH} in order to relate
symplectic duality to a cohomological duality
first studied by Goresky and MacPherson \cite{GM}, and later by the authors \cite{kosdef}.  Let $\fM$ and $\fM^!$
be a symplectic dual pair.

Recall that we define $E$ to be the Yoneda algebra of $\cO$, and we conjecture that its center 
is isomorphic to $H^*(\fM; \C)$ (Conjecture \ref{HH}).  Let $\tilde E$ be the universal deformation of $E$,
as defined in \cite[4.2]{kosdef}; this is a flat deformation over the base $Z(E^!)_2^*$, the dual of the degree 2
part of the center of $E^!$.  Conjecture \ref{HH} for $\fM^!$ says that the center of $E^!$ is isomorphic to $H^*(\fM^!; \C)$,
which means that the base of the universal deformation is isomorphic to $H_2(\fM^!; \C)$.  As part of the package
of symplectic duality, this is isomorphic to $\mt$.

\begin{conjecture}\label{HH-eq}
The graded ring $Z(\tilde E)$ is isomorphic to $H^*_T(\fM; \C)$.
\end{conjecture}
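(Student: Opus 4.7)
The plan is to deduce Conjecture \ref{HH-eq} from Conjecture \ref{HH} by lifting both sides to flat deformations over $\C[\mt] \cong H^*_T(pt;\C)$ and comparing them. First, I would construct a $T$-equivariant enhancement $E_T$ of the Yoneda algebra $E$. The Hamiltonian $T$-action on $\fM$ lifts canonically to any of our quantizations $\cD$ (commuting with $\bS$) and preserves $\cOg$, so one obtains a $T$-equivariant refinement $\cOg^T$ whose $\Ext$-groups are graded modules over $\Sym(\mt^*)$. The resulting Yoneda algebra $E_T := \bigoplus_k \Ext^k_{\cOg^T}(L,L)$ is a graded algebra over $\Sym(\mt^*)$; a base-change spectral sequence together with the finiteness of weight spaces of simples should show that $E_T$ is flat with special fiber $E_T \otimes_{\Sym(\mt^*)} \C \cong E$.

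Second, I would identify $E_T$ with the universal deformation $\tilde E$. This is where symplectic duality enters essentially: Koszul duality converts central deformations of a positively graded algebra into equivariant structures on its Koszul dual. The tangent space to deformations of $E$ is naturally $\HH^2(E)$, and deformations by elements of $Z(E^!)_2^*$ correspond on the $\fM^!$ side to twisting the quantization by an element of $H^2(\fM^!;\C)$, which, under the isomorphism $H^2(\fM^!;\C) \cong \mt$ supplied by symplectic duality, is precisely the $T$-equivariant direction on the original side. The Kodaira-Spencer map of the $T$-equivariant deformation $E_T$ therefore factors through $Z(E^!)_2^* \subset \HH^2(E)$ and is, up to this identification, the identity on $\mt$. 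Universality then provides a classifying map $\tilde E \to E_T$ of flat deformations over $\C[\mt]$ which is an isomorphism on the special fiber, hence an isomorphism by Nakayama's lemma.

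Third, I would produce a map $H^*_T(\fM;\C) \to Z(E_T)$ that specializes modulo the maximal ideal $\mathfrak{m}_0 \subset \Sym(\mt^*)$ to the map of Conjecture \ref{HH}. The construction parallels Section \ref{sec:hochsch-cohom-cent} with equivariant Hochschild cohomology in place of the ordinary one: using the $T$-equivariant structure of $\cD$ and the fact that the equivariant analogue of the $\sExt$-vanishing used there still holds (since the Weyl algebra has no higher equivariant Hochschild cohomology for a linear torus action with trivial $h$-weight), the spectral sequence collapses to give $\HH^*_T(\cD) \cong H^*_T(\fM;\C\pphpp)$. Taking $\bS$-invariants and applying evaluation on a projective resolution of the sum of the simples in $\cOg^T$ yields the desired map into $Z(E_T) = Z(\tilde E)$.

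The main obstacle is showing that the map is an isomorphism. Both sides are finitely generated graded modules over $\Sym(\mt^*)$, and by Conjecture \ref{HH} they agree on the special fiber. The most natural route is a generic-fiber comparison: at a generic $t \in \mt$, the category $\cOg^T$ should decompose into blocks indexed by $\fM^T = \cI$, reducing the center computation to $\C^{|\cI|}$, while equivariant localization gives $H^*_T(\fM;\C)_t \cong H^*_T(\fM^T;\C)_t$ of matching rank. Combining this with flatness of both sides over $\Sym(\mt^*)$ would then force the map to be an isomorphism everywhere. Establishing the requisite flatness on the algebra side, and controlling the behavior of the equivariant category at the walls of $\cHs$ (where the naive fixed-point block decomposition breaks down), is the technical heart of the problem and the principal obstacle; the cases of hypertoric and type A S3-varieties were settled by direct computation in \cite{kosdef}, but a uniform proof will require a better conceptual grip on the interplay between the Koszul duality of Definition \ref{def:duality} and universal central deformations.
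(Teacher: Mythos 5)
The statement you are addressing is presented in the paper as a \emph{conjecture}, not a theorem; the authors prove nothing here, and in fact remark immediately afterward that they ``do not have a geometric definition of a map from $H^*_T(\fM;\C)$ to $Z(\tilde E)$,'' so there is no proof in the paper for your proposal to be compared against. The only verifications cited are the hypertoric and type A S3-variety cases, obtained in \cite{kosdef} by explicit computation rather than by a general argument.

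Your write-up is a research program rather than a proof, and to your credit you say so: you flag the flatness of the equivariant Yoneda algebra $E_T$ over $\Sym(\mt^*)$ and the behavior of the block decomposition at the walls of $\cHs$ as ``the technical heart of the problem and the principal obstacle.'' That is a real and unaddressed gap, and the rest of the plan leans on it. Two further soft spots deserve naming. First, the proposed identification $\tilde E \cong E_T$ via universality requires showing that the Kodaira--Spencer class of the equivariant deformation lands in, and spans, the subspace $Z(E^!)_2^* \subseteq \HH^2(E)$ classifying the universal deformation of \cite[4.2]{kosdef}; this does not follow from the symplectic-duality isomorphism $H^2(\fM^!;\C) \cong \mt$ alone, which identifies base spaces but says nothing about a comparison of deformation functors. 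Second, the Nakayama step promoting a special-fiber isomorphism to an isomorphism over $\Sym(\mt^*)$ presupposes exactly the flatness and finite generation you have not secured. Your idea of constructing the comparison map via an equivariant refinement of the Hochschild cohomology argument in Section \ref{sec:hochsch-cohom-cent} is genuinely useful and plausibly answers the map-construction gap the authors themselves acknowledge, but as written the argument does not constitute a proof and leaves the conjecture open, which is consistent with its status in the paper.
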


\begin{remark}
Conjecture \ref{HH-eq} is a natural extension of Conjecture \ref{HH}; if you believe that $Z(E)$ is isomorphic
to $H^*(\fM; \C)$, and they both admit canonical flat deformations over the base $\mt$, it is natural to guess that
these deformations are the same.  Unfortunately, unlike in Conjecture \ref{HH}, we do not have a geometric definition
of a map from $H^*_T(\fM; \C)$ to $Z(\tilde E)$.
\end{remark}

\begin{remark}
Conjecture \ref{HH-eq} holds for hypertoric varieties \cite[8.5]{kosdef} and for S3-varieties in type A \cite[9.9]{kosdef}.
\end{remark}

In their paper \cite{GM}, Goresky and MacPherson observe a somewhat mysterious cohomological
relationship between certain pairs of varieties with torus actions.  Below we will describe this relationship,
and demonstrate that it is a consequence of symplectic duality and Conjecture \ref{HH-eq}.

Consider the ring homomorphisms
$$\Sym\mt^*\,\,\hookto\,\,\Sym H^2_T(\fM; \C)\,\,\to\,\, H^*_T(\fM; \C)\,\,\hookto\,\, 
H^*_T(\fM^T; \C)\,\,\cong\,\,\bigoplus_{\a\in\cI}H_T^*(p_\a; \C).$$
Dualizing, we obtain maps
$$\mt\,\,\twoheadleftarrow\,\, H_2^T(\fM; \C)\,\,\leftarrow\,\,\Spec H^*_T(\fM; \C)\,\,\twoheadleftarrow\,\,\bigsqcup_{\a\in\cI}\mt.$$
Let $H_\a\subset H_2^T(\fM; \C)$ be the image of the copy of $\mt$ indexed by $\a\in\cI$.  This is a linear subspace
that projects isomorphically to $\mt$ via the left-most map, and the union of these linear subspaces is equal
to the image of $\Spec H^*_T(\fM; \C)$ in $H_2^T(\fM; \C)$.

\begin{theorem}\label{gm-thm}
There is a canonical perfect pairing between $H_2^T(\fM; \C)$ and $H_2^{T^!}\!(\fM^!; \C)$.  Assuming 
that Conjecture \ref{HH-eq} holds, then:
\begin{itemize}
\item the kernel of the projection from $H_2^T(\fM; \C)$ to $\mt$ is the perp space to
the kernel of the projection from $H_2^{T^!}\!(\fM^!; \C)$ to $\mt^!$
\item for all $\a\in\cI$, $H_\a\subset H_2^T(\fM; \C)$ is the perp space to
$H^!_{\a^!}\subset H_2^{T^!}\!(\fM^!; \C)$.
\end{itemize}
\end{theorem}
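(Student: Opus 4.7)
The plan is to construct the pairing by lifting the Koszul duality $\Psi$ between $\tcO$ and $\tcO^!$ to a Koszul duality of the universal deformations $\tilde E$ and $\tilde E^!$ (following the framework of \cite[\S 4]{kosdef}), and then to deduce the perpendicularity statements from the fine structure of this deformation-level duality under Conjecture \ref{HH-eq}.

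First I would invoke Proposition \ref{no-torsion}: since $H^*(\fM;\Z)$ is torsion-free and concentrated in even degrees, the Borel spectral sequence computing $H^*_T(\fM;\C)$ degenerates, producing a canonical short exact sequence
\[0 \to \mt^* \to H^2_T(\fM;\C) \to H^2(\fM;\C) \to 0,\]
which dualizes to $0\to H_2(\fM;\C) \to H_2^T(\fM;\C) \to \mt \to 0$. The symplectic-duality isomorphism $H^2(\fM;\R)\cong\mt^!_\R$ identifies $H_2(\fM;\C)\cong(\mt^!)^*$, and the dual statements hold for $\fM^!$. Following \cite[\S 4]{kosdef}, the Koszul duality $\Psi$ lifts to a Koszul duality between the universal deformations $\tilde E$ and $\tilde E^!$, which are defined over bases $Z(E^!)_2^*$ and $Z(E)_2^*$ respectively; a structural feature of this lifted duality is a canonical perfect pairing between $Z(\tilde E)_2^*$ and $Z(\tilde E^!)_2^*$, for which the inclusion of the deformation base into the center on each side is the annihilator of the analogous inclusion on the other side. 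Using Conjecture \ref{HH-eq} to identify $Z(\tilde E)_2^*\cong H_2^T(\fM;\C)$ and $Z(\tilde E^!)_2^* \cong H_2^{T^!}(\fM^!;\C)$, this becomes the asserted canonical perfect pairing.

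The first bullet is essentially formal from this construction: the inclusions $\mt^* \cong Z(E^!)_2 \hookrightarrow Z(\tilde E)_2$ and $(\mt^!)^*\cong Z(E)_2\hookrightarrow Z(\tilde E^!)_2$ recover, via Conjecture \ref{HH-eq}, the equivariant inclusions $\mt^*\hookrightarrow H^2_T(\fM;\C)$ and $(\mt^!)^*\hookrightarrow H^2_{T^!}(\fM^!;\C)$, and by the annihilator property of the pairing, dualizing yields that $H_2(\fM;\C)$ is perpendicular to $H_2(\fM^!;\C)$.

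For the second bullet I would observe that $H_\a^\perp\subset H^2_T(\fM;\C)$ is the ideal $I_\a$ of classes pulling back to zero at $p_\a$, and under Conjecture \ref{HH-eq} this is the degree-two part of the maximal ideal in $Z(\tilde E)\cong H^*_T(\fM;\C)$ corresponding to the character by which $Z(\tilde E)$ acts on the simple $\Lambda_\a$. Under $\Psi$, the simple $\Lambda_\a$ corresponds to the indecomposable projective $P^!_{\a^!}$ of $\tcO^!$, and the deformation-level duality translates the central character defining $I_\a$ into the central character of $Z(\tilde E^!)$ acting on $P^!_{\a^!}$, whose kernel is $I^!_{\a^!}=(H^!_{\a^!})^\perp$. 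Tracing through the pairing above then gives $H_\a^\perp = H^!_{\a^!}$. The main obstacle lies in making this final identification precise: one needs a rigorous statement that the Koszul duality of deformations exchanges the annihilators of simples (equivalently, restriction maps to $\bT$-fixed points on one side) with the central characters on Koszul-dual indecomposable projectives (equivalently, restriction maps on the other side). This is the structural heart of deformation-level Koszul duality and the principal obstruction to a complete proof in the generality envisioned by Conjecture \ref{HH-eq}.
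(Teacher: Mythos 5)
Your reconstruction tracks the paper's argument almost exactly: the paper proves Theorem \ref{gm-thm} with a one-line citation to \cite[1.2]{kosdef}, and the mechanism you describe — lifting $\Psi$ to a Koszul duality of the universal deformations $\tilde E, \tilde E^!$, obtaining a canonical perfect pairing between $Z(\tilde E)_2^*$ and $Z(\tilde E^!)_2^*$ for which the inclusion of the deformation base on each side is the annihilator of the analogous inclusion, and then translating via Conjecture \ref{HH-eq} — is precisely what that cited result packages. Your identification of $H_\a^\perp$ with the degree-two part of the ideal of classes restricting to zero at $p_\a$ is correct, and your derivation of the first bullet from the annihilator property is also correct.

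The ``main obstacle'' you flag in the final paragraph — a rigorous statement that the Koszul duality of deformations exchanges central characters of simples on one side with those of Koszul-dual indecomposable projectives on the other (equivalently, exchanges the two ``localization structures'' given by restriction to fixed points) — is not a gap in the underlying mathematics, because it is exactly the content of \cite[1.2]{kosdef}. You invoke \cite[\S 4]{kosdef} for the general framework of universal deformations, but the specific duality theorem you need lives in \cite[1.2]{kosdef}; replacing your ``principal obstruction'' sentence with that citation closes the argument and makes it coincide with the paper's proof.
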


\begin{proof}
This result follows immediately from the Koszul duality of $\cO$ and $\cO^!$
using \cite[1.2]{kosdef}. 
\end{proof}

\begin{remark}
The phenomenon in Theorem \ref{gm-thm} was observed by Goresky and MacPherson \cite[\S 8]{GM}
for the varieties described in Remark \ref{GMS3} (a special case of type A S3-varieties).
The connection to symplectic duality allowed us to find many new examples, such as hypertoric varieties
and more general type A S3-varieties.
\end{remark}

\subsection{Knot homologies and symplectic duality}\label{knot}
There are close relationships between the representation theory of quantum groups and low-dimensional topology.  Perhaps the best known example of such a relationship is the construction of Reshetikhin-Turaev \cite{RT} invariants of links in $S^3$; these invariants are defined using the braiding on the category of $U_q(\mathfrak{g})$ modules.  Much of the subsequent work categorifying the representation theory of quantum groups has been motivated by the desire to lift the polynomial invariants of Reshetikhin-Turaev to richer homological invariants.
The best known such lift -  at least to representation theorists - is Khovanov's $U_q(\mathfrak{sl}_2)$ link homology \cite{Kho00}, which categorifies the Jones polynomial.  

On the other hand, representations of quantum groups (and their categorifications) can be constructed using the geometry of conical symplectic resolutions. As a result, one expects to obtain constructions of knot homologies using the geometry of conical symplectic resolutions or the representation theory of Koszul algebras.  Indeed, such constructions have been studied by many authors in the last ten years.  The examples most closely related to the geometry of conical symplectic resolutions include:
\begin{itemize}
\item Sussan's algebraic construction of $U_q(\mathfrak{sl}_n)$ link homologies (for fundamental weights) using a graded version of parabolic BGG category $\cO$ for $\mathfrak{gl}_m$ \cite{Sussan};
\item Mazorchuk-Stroppel's algebraic construction of $U_q(\mathfrak{sl}_n)$ link homologies (for fundamental weights) using a graded version of singular BGG category $\cO$ for $\mathfrak{gl}_m$ \cite{MS-link};
\item Cautis-Kamnitzer's geometric constructions of $U_q(\mathfrak{sl}_n)$ link homologies (for fundamental weights) using the affine Grassmannian \cite{CK1,CK2,KamBD};

\item Seidel-Smith and Manolescu's constructions of Khovanov homology using Fukaya categories of type A quiver varieties \cite{Seidel-Smith, Manolescu};
\item Cautis's geometric construction of of $U_q(\mathfrak{sl}_n)$ link homologies (for all weights) using derived categories of coherent sheaves and subvarieties of the affine Grassmannian \cite{Cauclasp};
\item Smith-Thomas and Cautis-Licata's constructions of Khovanov homology using derived categories of coherent sheaves on Hilbert schemes of points on ALE spaces
\cite{Thomas,Cautis-Licata};
\item the fourth author's construction of $U_q(\mathfrak{g})$ link homologies, categorifying the entire family of RT polynomial link invariants \cite{Webqui} using category $\cO$ for quiver varieties.
\end{itemize} 

The geometry underlying these categories is not the same in each
case.  Rather, it seems to come in two different flavors, which are
related to two different ways of geometrizing a representation of
$\mathfrak{g}$: via quiver varieties, or via the affine Grassmannian
for the Langlands dual group ${}^LG$.  The work of Seidel-Smith, Manolescu, and Cautis-Kamnitzer
is on the affine Grassmannian side, while the work of the fourth author is on
the quiver variety side.  Philosophically, all of these approaches
involve defining a braid group action on certain geometrically defined
categories, and adding some special functors which can be used to
define cups and caps.  In all cases, we can interpret these data as
coming from ideas that we have discussed.

On the affine Grassmannian side,  the relevant braid group action comes from
twisting functors, while cups and caps arise from
Lagrangian correspondences; in very loose terms, one should think of
them as versions of pushforward and pullback from leaves.
  \begin{itemize}
  \item Seidel-Smith and Manolescu work in the Fukaya category. Their
    braid group action comes from parallel transport in a space of
    complex structures, which we interpret as a Fukaya version of
    twisting.  Their cup and cap functors are the functors on quilted
    Fukaya categories induced by natural Lagrangian correspondences.  
\item Cautis-Kamnitzer work with coherent sheaves on convolution
  varieties for affine Grassmannians, but nothing is lost by replacing
  these varieties with certain open subsets which are conical symplectic
  resolutions (they are also $\mathfrak{sl}_\infty$
  quiver varieties).  After this modification, their braid group
  action is obtained via tensor produces with associated gradeds of twisting bimodules, and
  their cups and caps via tensor products with the associated gradeds of certain Harish-Chandra bimodules.  
  Cautis has explained how their
  construction is a special case of a general construction from a
  categorical $\mathfrak{sl}_\infty$ action, and the quantum version
  of Cautis-Kamnitzer arises from applying this to
  $\mathfrak{sl}_\infty$ quiver varieties as in the paper \cite{Webcatq}.
  \end{itemize}

On the other hand, from the quiver variety perspective, the braid
group action one uses comes from shuffling functors, as suggested by
Nakajima's work on tensor product quiver varieties.  The cup and cap
functors are harder to describe in this case, but should be thought of
as some sort of restriction to slices.  The two different approaches to categorified knot invariants
can be summarized by the following table.\\

\centerline{\begin{tabular}{l|ll}
&  Affine Grassmannian & Quiver varieties\\ \hline
braid actions & twisting & shuffling\\
cups and caps & push to/pull from leaves & push to/pull from slices\\
examples & \cite{CK1,CK2,Cauclasp,KamBD,Seidel-Smith, Manolescu} & \cite{Webmerged,Webqui}
\end{tabular}}

\vspace{\baselineskip}
Strikingly, while these two contexts look very different,  the
basic geometric concepts involved (twisting/shuffling, leaves/slices)
are interchanged by our conjectural duality.  Thus, the fact that
these two constructions exist and give the same knot invariants in
type A serves as a powerful piece of evidence for our conjecture.
In other types, there is no construction which yet exists on the affine
Grassmannian side.

In type A, these knot homologies also have Koszul dual realizations,
due to Sussan and Mazorchuk-Stroppel; these fit with the left and
right hand columns of the table above, since the former uses twisting
functors for the braid group action and the latter uses shuffling functors.


\appendix
\section{An Ext-vanishing result (appendix by Ivan Losev)}

\newcommand{\Dcal}{\cD}
\newcommand{\param}{P}
\newcommand{\eu}{h\xi}
\renewcommand{\hbar}{h}
\newcommand{\Tor}{\operatorname{Tor}}
\newcommand{\quo}{/\!/}

Let $\param$ be a vector space over $\C$ equipped with a linear map
$\param\to \Ht$ whose image does not lie in any of the discriminant
hyperplanes. Recall that $\scrM$ is the universal deformation of
$\fM$, and $\scrN$ the affinization of this variety.  Let  $\scrN_\param$ be the fiber product
$\mathscr{N}\times_{\Ht}\param$. The fiber product $\scrM_\param=\mathscr{M}\times_{\Ht}\param$ is a fiberwise symplectic resolution of singularities for $\scrN_\param$;
let $\omega_\param$ be the fiberwise symplectic form on $\scrM_\param$.

Let $\Dcal_{\param,\hbar}$ be a $\bT\times \bS$-equivariant formal
quantization of $\scrM_\param$.  This means that $\Dcal_{\param,\hbar}$ is a $\bT\times \bS$-equivariant sheaf of $\C[\param][[\hbar]]$-algebras (that are flat over
$\C[[\hbar]]$ and are complete and separated in the $\hbar$-adic topology) with a fixed
isomorphism
$\theta:\Dcal_{\param,\hbar}/(\hbar)\xrightarrow{\sim}\fS_{\scrM_P}$. 
As before, we choose $\eu\in \Gamma(\scrM_\param,
\Dcal_{\param,\hbar})$ such that $\xi:=h^{-1}(\eu)$ is a
non-commutative moment map for the action of $\bT$ on
$\cD_{P}:=\cD_{P,\hbar}[h^{-1}]$.  


Let $\A_{\param,\hbar}$ denote the subalgebra of all $\bS$-finite elements in $\Gamma(\scrM_\param, \Dcal_{\param,\hbar})$.
This is an algebra over $\C[\param][\hbar]$ equipped with an action of $\bT\times \bS$ by automorphisms
such that $\A_{P,0}:=\A_{P,\hbar}/(\hbar)$ is identified with $\C[\scrM_\param]=\C[\scrN_\param]$.
For $p\in \param$ we set $\A_{p,0}:=\A_{P,0}/ (\ker p)$, where we view
$p$ as a homomorphism $\C[\param]\rightarrow \C$; this is the space of
functions on the fiber of $\scrM_\param$ over the point $p$.  

Set $\A_{\param}:=\A_{\param,\hbar}/(\hbar-1)\cong
\A_{\param,\hbar}[\hbar^{-1}]^{\bS}, \A_{p}:=\A_{\param}/(\ker p)$. We
have  gradings on the algebras $\A_{\param,\hbar},\A_{\param,0}, \A_{\param}, \A_{p,0},\A_{p}$ coming from the $\bT$-action. The $i$th graded components will be denoted
by $\A_{\param}(i), \A_{\param,0}(i)$, etc. We remark that on $\A_{\param},\A_{p}$ this grading is
inner -- it comes from the inner derivation $[\xi,\cdot]$.

As in Section \ref{sec:Oa} (and in previous works such as
\cite{LosO,BLPWtorico,GoLo}), we can consider the full subcategory
$\mathcal{O}_p$ in the category $\A_{p}\mmod$ of finitely generated
modules consisting of all modules where $\A_{p}^{+}$ acts locally
finitely.




Let $\Delta_{\param}$ denote the left $\A_{\param}$-module $\A_{\param}/\A_{\param}\A_{\param}^{>0}$. We use the notation
$\Delta_{?}$ for various specializations of $\Delta_\param$. We remark that $\Delta_{p}$ is an object in $\mathcal{O}_p$.
Now consider the right $\A_\param$-module $\nabla_{\param}^\vee:=\A_{\param}/\A_{\param}^{<0}\A_{\param}$. The $\bT$-grading on $\A_{\param}$
induces a $\bT$-grading on $\nabla_\param^\vee$. We can consider the  specializations $\nabla^\vee_{?}$ of $\nabla^{\vee}_\param$.
We will see below that, for any $(p,h)\in \param\oplus\C$, all graded components $\nabla^\vee_{p,h}(i)$ are finite dimensional. Let $\nabla_{p,h}$ denote the restricted dual of $\nabla^\vee_{p,h}$, i.e., $\nabla_{p,h}=\bigoplus_i \left(\nabla^\vee_{p,h}(i)\right)^*$.
We have $\nabla_{p,h}(i)=0$ for $i>0$. So $\nabla_{p}$ lies in the ind completion of $\mathcal{O}_p$.
In fact, one can show that $\nabla_{p}$ is finitely generated and so lies in $\mathcal{O}_p$.

The purpose of this appendix is to prove that:
\begin{theorem}\label{Thm:ext_coinc1}
There is a non-empty Zariski open subset $\param^0$ in the hyperplane $\hbar=1$ such that $\Ext^i_{\A_{p}}(\Delta_{p},\nabla_{p})=0$
for $p\in \param^0$ and $i>0$.
\end{theorem}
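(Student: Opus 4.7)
The plan is to deform the Ext computation over the base $\param$ and exploit semisimplicity at a generic point together with upper semi-continuity. First I would assemble the modules into a flat family over $\C[\param]$: the modules $\Delta_\param$ and $\nabla_\param$ are already defined, and I would build a resolution $Q_\bullet \to \Delta_\param$ by graded free $\A_\param$-modules whose graded pieces are finitely generated over $\C[\param]$ and which base-changes fiberwise to a projective resolution of each $\Delta_p$. The natural candidate is the bar-type (or Koszul-type) resolution obtained from the complementary subalgebras generated by $\A_\param^{>0}$ and $\A_\param^{<0}$, using the triangular-like decomposition $\A_\param = \A_\param^{<0} \cdot \A_\param^0 \cdot \A_\param^{>0}$ that follows (at the associated graded level) from the affineness of attracting sets.

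Second, I would identify a Zariski open locus $\param^{\mathrm{ss}} \subset \{\hbar = 1\}$ on which the category $\mathcal{O}_p$ is semisimple. Since the image of $\param \to \Ht$ avoids all discriminant hyperplanes, a generic $p \in \param$ maps outside $\bigcup_{H \in \cHt} H_\C$; by Kaledin's theorem (Section \ref{sec:nam}), the fiber $\scrM_p$ is then affine, hence smooth and affine with isolated $\bT$-fixed points. Localizing at each fixed point and invoking Lemma \ref{Aa-weyl}, the algebra $\A_p$ is locally a Weyl algebra whose category $\mathcal{O}$ at a fixed point is the one-dimensional category spanned by its unique simple, so the simple, standard, and costandard objects of $\mathcal{O}_p$ coincide. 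In particular $\Ext^i_{\A_p}(\Delta_p, \nabla_p) = 0$ for all $i > 0$ and all $p \in \param^{\mathrm{ss}}$.

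Third, I would propagate this vanishing to an open set by semi-continuity. Applying $\Hom_{\A_\param}(Q_\bullet, \nabla_\param)$ yields a complex of $\C[\param]$-modules computing $\Ext^\bullet_{\A_\param}(\Delta_\param, \nabla_\param)$. Using that each weight space of $\nabla_\param$ is finitely generated over $\C[\param]$ (which in turn rests on finite generation of $B_\param$ over $\C[\param]$, cf.\ Lemma \ref{fg} deformed over the base), together with the fact that $\Delta_\param$ admits a finite projective resolution by modules of the above form, one gets that each $\Ext^i_{\A_\param}(\Delta_\param, \nabla_\param)$ is finitely generated over $\C[\param]$. A standard base-change spectral sequence then identifies the generic fiber of this Ext with $\Ext^i_{\A_p}(\Delta_p, \nabla_p)$ up to $\Tor$ corrections which vanish on a further open subset by flatness. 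Combining with the vanishing on $\param^{\mathrm{ss}}$ produces the required open set $\param^0$.

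The hard part will be establishing the flatness and finite-generation inputs: constructing the resolution $Q_\bullet \to \Delta_\param$ with well-behaved fiberwise specialization, and verifying that $\Ext^i_{\A_\param}(\Delta_\param, \nabla_\param)$ is finitely generated over $\C[\param]$ so that generic vanishing implies open vanishing. A related subtlety is managing the passage between the formal $\hbar$-adic setting, where quantizations and their modules are most naturally constructed, and the $\hbar = 1$ setting where $\Ext^i_{\A_p}(\Delta_p,\nabla_p)$ is defined; I would handle this by first proving the analogous Ext-vanishing for $\A_{\param,\hbar}$-modules using Rees-module arguments and only then specializing to $\hbar = 1$. Controlling the interaction between the $\bT$-grading and the associated graded filtration that compares $\A_\param$ to $\C[\scrM_\param]$ is what makes the finite-generation step work, and this is where I expect most of the real bookkeeping to lie.
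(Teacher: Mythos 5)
Your outline has the right shape --- establish vanishing somewhere and then exploit openness --- and your step 3 is essentially what is needed: the paper notes before the theorem that the vanishing locus is Zariski open wherever derived localization holds, so the real content is non-emptiness. But your step 2 does not go through. At $\hbar=1$ the algebra $\A_p$ is a filtered noncommutative algebra whose associated graded is $\C[\fM_0]$ (or $\A_{\ell,0}=\C[\scrN_\ell]$ when one filters over a line $\ell$), \emph{not} the coordinate ring of the smooth affine fiber $\scrM_p$; there is no sense in which $\A_p$ is a sheaf of algebras on $\scrM_p$, so ``localizing $\A_p$ at a $\bT$-fixed point of $\scrM_p$'' is not an available operation. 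The geometry controlling the support of $\Tor_\bullet^{\A_p}(\nabla_p^\vee,\Delta_p)$ lives on the singular cone $\fM_0$, where the hypotheses of Lemma~\ref{Lem:generat} (smooth affine, isolated fixed points, complete intersection) fail at the cone point where the attracting and repelling loci meet. Affineness of $\scrM_p$ is precisely an $\hbar=0$ fact; the substance of the theorem is how to lift it.

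The paper performs this lift in two steps your proposal never engages. Proposition~\ref{Lem:Ext_vanish1} proves finite-dimensionality of $\Tor_i^{\A_{\ell,0}}$ over a line $\ell\subset\param$ entirely at $\hbar=0$, applying Lemma~\ref{Lem:generat} to the commutative algebras $\A_{p,0}$ where it is valid, and combining vanishing at $p\ne0$ with finite generation over $\C[\ell]$ (which rests on finiteness of $\scrN^\bT_\ell\to\ell$). Proposition~\ref{Lem:Ext_vanish2} then passes to $\hbar\ne0$ by comparing $\Tor$ over $\A_{q+\ell}$ with $\Tor$ over $\gr\A_{q+\ell}=\A_{\ell,0}$; the obstruction is showing that $\gr\Delta_{q+\ell}$ specializes to $\Delta_{p,0}$ at every $p\ne0$, which reduces to an $\hbar$-saturation statement for a left ideal in a noncommutative completion and is proved by a Koszul-complex argument (Steps 3--5 of Proposition~\ref{Lem:Ext_vanish2}) that has no analogue in your sketch. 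Finally, Corollary~\ref{Cor:vanish} converts finite-dimensionality over $\C[\ell]$ into vanishing at all but finitely many points of the line via a long exact sequence in the line parameter, rather than the base-change-plus-finite-generation-over-$\C[\param]$ route you suggest; the latter is plausible in outline but requires an input (finite generation of $\Ext^\bullet$ over the full base) that the paper neither proves nor needs, and that your proposal does not substantiate.
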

There is a Zariski open subset of $\param$ where derived localization
holds. Since the algebra $\A_p$ has finite global dimension on this
open subset, the vanishing of the coherent
sheaves $\Ext^i(\Delta_P,\nabla_P)$ at $p$ for all $i$ is an open condition. The content of
this theorem is that this set is non-empty.  In fact, it contains all
but finitely many points of any affine line not parallel to the discriminant locus.

One corollary of our choice of $p$ is that the category
$\mathcal{O}_p$ becomes highest weight. Its standard objects are
indecomposable direct summands of $\Delta_{p}$ and the costandard
objects are indecomposable direct summands of $\nabla_{p}$. In
particular,
$\Ext_{\A_{p}}^1(\Delta_{p},\nabla_{p})=\Ext^1_{\mathcal{O}_p}(\Delta_{p},\nabla_{p})=0$.
Another corollary is that the algebra $\A_{p}$ has finite homological
dimension not exceeding $\dim \fM$.

We also would like to mention that our proof of Theorem
\ref{Thm:ext_coinc1} is inspired by the proof of an analogous
statement for Rational Cherednik algebras, see \cite{EtiSRA}.


\subsection{The proof}

We start with an easy lemma that is analogous to \cite[3.1.4]{GoLo}.
The proof is precisely like that of Lemma \ref{fdba}:

\begin{lemma}
The graded components of $\Delta_{\param},\nabla_{\param}^\vee$ are finitely generated $\C[\param]$-modules.\qed
\end{lemma}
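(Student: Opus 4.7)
The plan is to adapt the proof of Lemma \ref{fdba} to the family setting over $\C[\param]$, promoting ``finite-dimensional over $\C$'' to ``finitely generated over $\C[\param]$''.

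First, I would equip $\A_\param$ with the Bernstein filtration coming from $\A_{\param,\hbar}$ by specializing $h=1$; the associated graded algebra is canonically $\C[\scrN_\param]$ as a $\bT$-graded $\C[\param]$-algebra. This induces a Bernstein filtration on $\Delta_\param = \A_\param/\A_\param\cdot\A_\param^{>0}$, whose associated graded is the quotient of $\C[\scrN_\param]$ by the ideal generated by functions of positive $\bT$-weight. A family version of Lemma \ref{vanishing} identifies the set-theoretic support of this quotient with the relative core $\scrN^+_\param \subset \scrN_\param$.

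Next, I would argue that each $\bT$-weight component of $\C[\scrN^+_\param]$ is a finitely generated $\C[\param]$-module. The preimage $\scrM^+_\param \subset \scrM_\param$ of $\scrN^+_\param$ stratifies into deformed relative core components $\tilde X_\a \to \param$, each fiberwise an affine $d$-space on which $\bT$ acts with all weights of the same sign and bounded away from zero. The topological triviality of $\scrM_\param \to \param$ (Theorem \ref{universal family}) combined with a fiberwise application of Lemma \ref{coherent-fd} ensures that $\C[\tilde X_\a]$ is a $\bT$-graded $\C[\param]$-algebra each of whose weight components is a finitely generated $\C[\param]$-module; summing over the finitely many $\a$ and pushing down to $\scrN_\param$ yields the desired statement for $\C[\scrN^+_\param]$.

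Finally, I would transfer the finite generation from the associated graded back to $\Delta_\param$ itself: each $\bT$-weight component $\Delta_\param(i)$ carries an induced Bernstein filtration whose associated graded is a subquotient of the weight-$i$ component of $\C[\scrN^+_\param]$, which is finitely generated over the Noetherian ring $\C[\param]$. Hence $\Delta_\param(i)$ is itself finitely generated over $\C[\param]$. The argument for $\nabla^\vee_\param$ is entirely symmetric, replacing positive with negative $\bT$-weights throughout. The main obstacle I expect is the second step: Theorem \ref{universal family} only guarantees topological triviality of $\scrM_\param \to \param$, so making the algebraic structure of $\tilde X_\a \to \param$ sufficiently explicit may require a fiberwise argument using a flatness or upper semicontinuity statement for coherent sheaves to descend the needed finite generation from the fibers to the total space.
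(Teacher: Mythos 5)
There is a genuine gap in your key step, and you have correctly flagged where it is but misdiagnosed what would close it. Having each fiber of a $\C[\param]$-module be finite-dimensional does \emph{not} imply the module is finitely generated over $\C[\param]$ (consider an infinite direct sum of skyscrapers at distinct points of $\param$: every fiber is finite-dimensional but the module is not finitely generated). Flatness and upper semicontinuity run the wrong way — they control fibers in terms of the total family, not the reverse — so they cannot ``descend finite generation from the fibers to the total space.'' The tool that actually closes this gap, and which the paper (following the pattern of Lemma \ref{fdba}) relies on, is the auxiliary non-negative $\bS$-grading together with graded Nakayama: since $\bS$ acts with weight $-n$ on $\Ht$, the ring $\C[\param]$ sits in strictly positive $\bS$-degree, so it suffices to prove finite-dimensionality of the $\bT$-weight components of the \emph{single} specialization $\Delta_{0,0}$, i.e.\ of the quotient of $\C[\fM_0]$ by the ideal generated by positive $\bT$-weight elements. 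That is then handled by the argument of Lemma \ref{fdba}: choose a Noether normalization of $\C[\fM_0]$ by $\bT$-weight vectors, with $d$ generators of positive weight and $d$ of negative weight (possible since $\fM_0^\bT=\{o\}$), so $\Delta_{0,0}$ is finite over the polynomial subring on the negative-weight generators, whose $\bT$-graded pieces are obviously finite-dimensional.

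A secondary issue is that your reduction to the deformed relative core components $\tilde X_\a\to\param$ is not only unnecessary once one reduces to the central fiber, but also assumes more structure than is available: over generic $p$ the attracting cells are closed affine $d$-spaces, but over special parameters their closures in $\scrM_\param$ acquire projective components, so $\C[\tilde X_\a]$ is not a polynomial algebra over $\C[\param]$ in the way your argument needs. Theorem \ref{universal family} only provides smooth (topological) triviality and says nothing about the algebraic structure of the family of cores.
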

Next, we will need a structural result related to symplectic $\C^\times$-actions. Let $\bT$ act on a smooth affine symplectic variety $X$ with finitely many fixed points. Let $x$ be one of the fixed points.
Then the linear action of $\bT$ on $T_xX$ is symplectic.


\begin{lemma}\label{Lem:generat}
 There are homogeneous elements $x_1,\ldots,x_m\in \C[X]^{>0},
 m=\frac{1}{2}\dim X$ such that the differentials $d x_1,\ldots,d x_m$
 are linearly independent at the point $x$. Moreover, the ideal
 $\C[X]\C[X]^{>0}$ is a locally complete intersection  generated in a neighborhood
 of $x$ by the elements $x_1,\ldots,x_m$.
\end{lemma}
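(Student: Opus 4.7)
My plan is to reduce the statement to linear algebra at the fixed point. The starting observation is that, since $x$ is an isolated $\bT$-fixed point, the action on $T_xX$ has no zero weights, and because the symplectic form $\omega|_x$ is $\bT$-invariant and non-degenerate, the positive and negative weight spaces are paired, giving $T_xX = T_x^+ \oplus T_x^-$ with each summand of dimension $m$. A direct weight computation shows that for any homogeneous $f \in \C[X]^{>0}$ the differential $df|_x$ lies in the positive-weight part $(T_x^*X)^{>0}$, which has dimension $m$. Since $\bT$ is reductive, the $\bT$-equivariant surjection $\mathfrak m_x \twoheadrightarrow \mathfrak m_x/\mathfrak m_x^2 \cong T_x^*X$ is surjective on each weight space, so I can choose homogeneous $x_1,\dots,x_m \in \C[X]^{>0}$ whose differentials at $x$ form a basis of $(T_x^*X)^{>0}$.

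Next I would pass to the completed local ring $\widehat{\mathcal O}_{X,x}$. By the standard equivariant linearization for a reductive group fixing a smooth point in characteristic zero, there is a $\bT$-equivariant isomorphism $\widehat{\mathcal O}_{X,x} \cong \C[[u_1,\dots,u_{2m}]]$ with $u_1,\dots,u_m$ of positive weight and $u_{m+1},\dots,u_{2m}$ of negative weight. Any monomial purely in the negative-weight coordinates has non-positive weight, so every positive-weight formal series must involve some $u_i$ with $i \le m$; hence the ideal of positive-weight series in $\widehat{\mathcal O}_{X,x}$ is exactly $(u_1,\dots,u_m)$, i.e.\ $\widehat I = (u_1,\dots,u_m)$, where $I := \C[X]\C[X]^{>0}$. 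Since each $\widehat x_i$ lies in this ideal and the linear parts $a_{ij}$ appearing in $\widehat x_i = \sum_j a_{ij} u_j + (\text{higher order})$ form an invertible matrix (by our choice of differentials), Nakayama's lemma applied to the finitely generated $\widehat{\mathcal O}_{X,x}$-module $(u_1,\dots,u_m)/(\widehat x_1,\dots,\widehat x_m)$ yields $(\widehat x_1,\dots,\widehat x_m) = (u_1,\dots,u_m) = \widehat I$.

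By faithful flatness of the completion of the Noetherian local ring $\mathcal O_{X,x}$, this equality descends to $(x_1,\dots,x_m)_x = I_x$ in the local ring, and since the quotient $I/(x_1,\dots,x_m)$ is a finitely generated $\C[X]$-module whose stalk at $x$ vanishes, it vanishes on some Zariski-open neighborhood of $x$. The $x_i$ form a regular sequence there (their differentials are independent at $x$, so by the Jacobian criterion they cut out a smooth subscheme of codimension $m$), exhibiting $I$ as a locally complete intersection near $x$ generated by $x_1,\dots,x_m$. The main subtle step I expect is the $\bT$-equivariant formal linearization $\widehat{\mathcal O}_{X,x} \cong \C[[T_x^*X]]$: although this is standard for reductive group actions at smooth fixed points in characteristic zero, some care is needed to make the isomorphism respect the weight grading. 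An alternative that avoids this step is via the Bia\l ynicki-Birula decomposition: the attracting cell through $x$ is smooth of dimension $m$ with tangent space $T_x^+$, agrees set-theoretically near $x$ with both $V(\C[X]^{>0})$ and the smooth complete intersection $V(x_1,\dots,x_m)$, and since the latter is defined by a radical ideal (as it is smooth) one concludes $(x_1,\dots,x_m) = I$ locally.
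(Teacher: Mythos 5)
Your argument is correct and fills in, at the level of completed local rings, the content of the Luna slice theorem that the paper invokes in a single sentence: the $\bT$-equivariant isomorphism $\widehat{\mathcal O}_{X,x}\cong \C[[T_x^*X]]$ followed by the Nakayama comparison $(\widehat{x}_1,\dots,\widehat{x}_m)=(u_1,\dots,u_m)$ is exactly what Luna's theorem delivers at a torus fixed point, so this is the same route with the details supplied. Your alternative via the Bia{\l}ynicki-Birula cell is also viable, though it tacitly relies on the set-theoretic identity between $V(\C[X]^{>0})$ and the attracting set near $x$, which itself needs a short argument; the formal-linearization route you gave first is cleaner and is what the paper has in mind.
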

\begin{proof}
This is a standard fact that can be deduced, for example, from the Luna slice theorem.
\end{proof}

Now we are going to reinterpret the $\Ext$'s between $\Delta_{p}$ and $\nabla_{p}$ in terms of $\Tor$'s between
$\Delta_{p}$ and $\nabla^\vee_{p}$.

\begin{lemma}
We have $\Ext^i(\Delta_{p,h}, \nabla_{p,h})= (\Tor_{i}(\nabla_{p,h}^\vee, \Delta_{p,h}))^*$. Here both the $\Ext$'s
and the $\Tor$'s are taken over $\A_{p,h}$.
\end{lemma}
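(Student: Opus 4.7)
The plan is to exhibit the desired identity as an instance of tensor--Hom adjunction, pushed through a projective resolution, combined with the fact that restricted (graded) duality is exact on complexes with finite-dimensional graded pieces. The core input is a natural isomorphism
\[
\Hom_{\A_{p,h}}(M, N^\star)\;\cong\;\bigl(N\otimes_{\A_{p,h}}M\bigr)^{\!\star}
\]
for a graded left module $M$ generated in bounded weights and a graded right module $N$ with finite-dimensional graded pieces, where $(-)^{\star}$ denotes the restricted dual $\bigoplus_i(-)(i)^*$. This isomorphism is the standard adjunction $\Hom_\C(N\otimes M,\C)=\Hom_{\A_{p,h}}(M,\Hom_\C(N,\C))$, restricted to the part landing in the restricted dual, which is automatic under the finite-dimensionality hypothesis and the fact that the image of any generator of $M$ lives in a single weight space of $\Hom_\C(N,\C)$.

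First I would set up the resolution. The previous lemma shows that every graded component of $\Delta_{p,h}$ and of $\nabla^\vee_{p,h}$ is finitely generated over $\C[\param][\hbar]$, hence in particular finite dimensional after specialization. Because the associated graded $\A_{p,0}=\C[\scrN_{\param}]/(\ker p)$ is a finitely generated commutative algebra, $\A_{p,h}$ is Noetherian, so $\Delta_{p,h}$ admits a graded free resolution $P_\bullet\to \Delta_{p,h}$ in which each $P_i$ is a finite direct sum of graded shifts of $\A_{p,h}$. For such a $P_i$, the adjunction above is immediate:
\[
\Hom_{\A_{p,h}}(P_i,\nabla_{p,h})\;=\;\bigl(\nabla^\vee_{p,h}\otimes_{\A_{p,h}}P_i\bigr)^{\!\star},
\]
and both sides have finite-dimensional graded pieces in each weight.

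Next I would apply the two functors $\Hom_{\A_{p,h}}(-,\nabla_{p,h})$ and $\nabla^\vee_{p,h}\otimes_{\A_{p,h}}(-)$ to $P_\bullet$ and take (co)homology. Since restricted duality $(-)^\star$ is an exact contravariant functor on the category of graded vector spaces with finite-dimensional graded components, it commutes with taking (co)homology, so
\[
\Ext^i_{\A_{p,h}}(\Delta_{p,h},\nabla_{p,h})
= H^i\Bigl(\Hom_{\A_{p,h}}(P_\bullet,\nabla_{p,h})\Bigr)
= H^i\Bigl(\bigl(\nabla^\vee_{p,h}\otimes_{\A_{p,h}}P_\bullet\bigr)^{\!\star}\Bigr)
= \Tor_i^{\A_{p,h}}(\nabla^\vee_{p,h},\Delta_{p,h})^{\star}.
\]
Finally I would observe that the $\Tor$-groups themselves have finite-dimensional graded pieces (being subquotients of $\nabla^\vee_{p,h}\otimes P_\bullet$), so the restricted dual agrees with the ordinary vector-space dual, and the stated identity follows.

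The main obstacle is bookkeeping around the gradings: one must verify that the weights of $\Delta_{p,h}$ are bounded above and those of $\nabla^\vee_{p,h}$ bounded below (both follow by inspection from the definitions as quotients by $\A_{p,h}\A_{p,h}^{>0}$ and $\A_{p,h}^{<0}\A_{p,h}$ respectively), so that tensor products and Hom spaces decompose into finite-dimensional graded pieces and the restricted-dual manipulation is legitimate. Once that point is secured, the rest of the argument is purely formal adjunction plus derived-functor technology.
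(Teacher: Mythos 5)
Your proposal is correct and follows essentially the same route as the paper: take a graded free resolution $P_\bullet$ of $\Delta_{p,h}$, observe that $\Hom_{\A_{p,h}}(P_\bullet,\nabla_{p,h})$ is the restricted dual of $\nabla^\vee_{p,h}\otimes_{\A_{p,h}}P_\bullet$ via tensor--Hom adjunction, and use exactness of restricted duality to pass the identification through (co)homology. The paper phrases the adjunction slightly differently (going through $P_\bullet^*\otimes\nabla_{p,h}$) and is terser about the boundedness and finite-dimensionality of graded pieces, which you check more explicitly, but the argument is the same.
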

The spaces $\Tor_{i}(\nabla_{p,h}^\vee, \Delta_{p,h})$ are graded (via the $\bT$-action) and the graded
components are finite dimensional. This is because both modules are finitely generated and their graded
components are finite dimensional. The superscript $^*$ means the restricted dual.
\smallskip

\begin{proof}
Let $(P_\bullet,d)$ be a free $\bT$-equivariant resolution  of $\Delta_{p,h}$. Then the $\Ext$'s in interest are
the cohomology of the complex $\Hom(P_\bullet, \nabla_{p,h})=P_\bullet^*\otimes_{\A_{p,h}}\nabla_{p,h}$ and the
differential is $d^*$. But $P_\bullet^*\otimes_{\A_{p,h}}\nabla_{p,h}= (P_\bullet\otimes_{\A_{p,h}}\nabla_{p,h}^\vee)^*$
and the differential $d^*$ is the dual of the differential on the complex $P_\bullet\otimes_{\A_{p,h}}\nabla_{p,h}^\vee$.
The cohomology of the latter complex are the $\Tor$'s. Since the restricted duality $^*$ is an exact functor, we are done.
\end{proof}

So we only need to prove the analog of Theorem \ref{Thm:ext_coinc1} for $\Tor^{\A_{p}}_{i}(\nabla^\vee_{p},\Delta_{p})$.
First we are going to understand the behavior of the $\Tor$'s for $h=0$.

Let $\ell$ be a line in $\param$ not lying in the discriminant hyperplanes.
Let $M=\bigoplus_{i\in \Z} M(i),N=\bigoplus_{i\in \Z} N(i)$ be $\bT$-equivariant $\A_{\ell,0}$-modules with the following properties.

\begin{itemize}
\item[(i)] $M$ and $N$ are finitely generated.
\item[(ii)] $M(i)=N(-i)=0$ for $i\gg 0$ and $ M(i), N(i)$ are finitely generated $\C[\ell]$-modules for all $i$.
\item[(iii)] The specializations of $M,N$ at $p\in \ell\setminus\{0\}$ are $\Delta_{p,0},\nabla^\vee_{p,0}$, respectively.
\end{itemize}

\begin{proposition}\label{Lem:Ext_vanish1}
$\dim \Tor_i^{\A_{\ell,0}}(M,N)<\infty$ for all $i>0$.
\end{proposition}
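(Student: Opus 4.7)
The plan is to show that for $i > 0$, $\Tor_i := \Tor_i^{\A_{\ell,0}}(M,N)$ is finite-dimensional by establishing three properties: each $\bT$-graded component is (a) finitely generated over $\C[\ell]$, (b) $\C[\ell]$-torsion, and (c) vanishes outside a bounded range of $\bT$-degrees. Property (a) follows from Noetherianness of the finitely generated commutative $\C$-algebra $\A_{\ell,0}$: one constructs a $\bT$-equivariant graded free resolution $F_\bullet \to M$ of finite rank by modules $\bigoplus_\beta \A_{\ell,0}(d_{i,\beta})$ whose generator degrees are bounded above (using hypotheses (i) and (ii) together with the upper bound on the $\bT$-weights of $M$). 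Tensoring with $N$, each graded component of $F_\bullet \otimes_{\A_{\ell,0}} N$ is a finite direct sum of finitely generated $\C[\ell]$-modules, the finiteness using that $N$ is bounded below in $\bT$-weight; cohomology preserves this.

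The heart of the argument is the fiberwise Koszul vanishing that yields (b). For $p \in \ell \setminus \{0\}$, Lemma \ref{Lem:generat} supplies local coordinates $x_1, \ldots, x_d$ of positive $\bT$-weight and $y_1, \ldots, y_d$ of negative weight near each $\bT$-fixed point of $\scrM_p$, cutting out the positive and negative attracting sets as complementary regular sequences. The Koszul complex of $(x_\bullet)$ then locally resolves $\Delta_{p,0}$, and its tensor product with $\nabla_{p,0}^\vee \cong \C[\![x]\!]$ gives the Koszul complex of a regular sequence on $\C[\![x]\!]$, acyclic in positive degree. Since fixed points are isolated, these local computations glue to yield $\Tor_i^{\A_{p,0}}(\Delta_{p,0}, \nabla_{p,0}^\vee) = 0$ for $i>0$. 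The free resolution $0\to \A_{\ell,0}\xrightarrow{t-p}\A_{\ell,0}\to \A_{p,0}\to 0$ (where $t$ is a coordinate on $\ell$) then yields the base-change short exact sequence
\[0 \to \Tor_i^{\A_{\ell,0}}(M,N)/(t-p) \to \Tor_i^{\A_{p,0}}(M_p,N_p) \to \Tor_{i-1}^{\A_{\ell,0}}(M,N)_{[t-p]} \to 0,\]
with subscript denoting $(t-p)$-torsion, valid at $p$ where $M,N$ are flat. Combined with hypothesis (iii) and Step~2, this forces each graded piece of $\Tor_i$ to be both $(t-p)$-divisible and $(t-p)$-torsion-free for all but finitely many $p\in \ell$, so that its $\C[\ell]$-support is contained in $\{0\}$; finite generation then gives $\C[\ell]$-torsion.

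The main obstacle is property (c), bounding the range of nonzero $\bT$-degrees. The key observation is that $\Tor_i$ is set-theoretically supported in the zero fiber $\scrN_0$ by (b), and its sheafy support lies in $\supp M \cap \supp N$, which over $p \neq 0$ reduces to the finite fixed-point locus $\scrN_p^\bT$. One hopes to conclude that $\Tor_i$ is annihilated by some power of the $\bT$-graded ideal $I(\scrN_\ell^\bT)$, which is generated by nonzero-weight elements of $\A_{\ell,0}$; the filtration by powers $I^k \cdot \Tor_i$ would then present $\Tor_i$ as an iterated extension of finitely generated graded modules over $\C[\scrN_\ell^\bT]$, a ring concentrated in $\bT$-degree zero, and finite generation over a degree-zero graded ring forces the grading to be bounded. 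The subtle point is that hypothesis (iii) does not specify $M_0, N_0$, so the support in the zero fiber requires separate handling. This can be addressed either by comparing $M$ and $N$ to the universal modules $\Delta_{\ell,0}, \nabla_{\ell,0}^\vee$ (with which they agree over $\ell\setminus\{0\}$, so the difference is $t$-torsion and contributes only a bounded range of degrees) or by performing a relative Koszul resolution on the smooth family $\scrM_\ell \to \ell$, whose length is bounded by $d = \tfrac{1}{2}\dim \fM$, to directly bound the Tor.
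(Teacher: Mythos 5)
Your outline contains the right ingredients---support on the $\bT$-fixed locus, fiberwise Koszul vanishing, base change---but mis-diagnoses where the difficulty lies. You flag property~(c), bounding the range of $\bT$-degrees, as the ``main obstacle,'' worrying that because hypothesis~(iii) does not specify $M_0$ and $N_0$, ``the support in the zero fiber requires separate handling.'' That worry is unfounded, and the paper's Step~1 is precisely the observation that dissolves it: the support argument uses \emph{only} hypothesis~(ii), never~(iii). Because $M(i)=0$ for $i\gg 0$ and $M$ is finitely generated, any homogeneous $f\in\A_{\ell,0}$ of positive $\bT$-weight kills $M$ after a finite power (it strictly raises degree on generators); hence the completion of $M$ at any point off the repelling locus vanishes, since $f$ becomes a unit there. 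Dually for $N$ and negative-weight elements on the contracting locus. By flatness of completion, $\Tor_i^{\A_{\ell,0}}(M,N)$ is therefore supported on the $\bT$-fixed locus $\scrN_\ell^{\bT}$ uniformly over all of $\ell$, including $p=0$, with no knowledge of what $M_0,N_0$ are.

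Since the paper then shows $\scrN_\ell^{\bT}\to\ell$ is a finite morphism (from $\scrM_\ell^{\bT}\to\ell$ being proper with finite fibers and $\scrM_\ell^{\bT}\to\scrN_\ell^{\bT}$ being proper and surjective), a finitely generated $\A_{\ell,0}$-module supported there is automatically finitely generated over $\C[\ell]$. That one statement subsumes your (a) and (c) simultaneously, so the graded-piece-by-graded-piece analysis is unnecessary, and neither of your proposed repairs for (c) is needed---nor does either obviously close the gap as stated: comparison with $\Delta_{\ell,0},\nabla^\vee_{\ell,0}$ does not by itself control the $\bT$-degree range of the kernel/cokernel terms, and a relative Koszul resolution bounds homological degree rather than $\bT$-degree. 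Granted finite generation over $\C[\ell]$, the remainder of your argument (Koszul vanishing at $p\neq 0$ plus the base-change long exact sequence) correctly localizes the $\C[\ell]$-support to finitely many points and yields finite dimension; that part matches the paper's Steps~2 and~3.
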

\begin{proof}
The proof is in several steps.

{\it Step 1.} Let us show that $\Tor_i^{\A_{\ell,0}}(M,N)$ is a finitely generated $\C[\ell]$-module for any $i$.

First of all, we claim that the $\A_{\ell,0}$-module $\Tor_i^{\A_{\ell,0}}(M,N)$ is supported on the $\bT$-fixed
point set $\scrM_\ell^{\bT}$. Indeed, the condition $M(i)=0$ for $i\gg 0$ implies that $M$ is supported
on the repelling locus for the $\bT$-action. Similarly, the condition $N(i)=0$ for $i\ll 0$ implies that $N$ is supported
on the contracting locus. The intersection of the two loci is precisely the set of $\bT$-fixed points.
Let $x\in \scrN_{\ell}$ be a point that is not $\bT$-fixed. Form the completion $\A_{\ell,0}^{\wedge_x}$
of $\A_{\ell,0}$ at $x$, this algebra is flat over $\A_{\ell,0}$. From the above remarks,
$\A_{\ell,0}^{\wedge_x}\otimes_{\A_{\ell,0}}M=0$ or $\A_{\ell,0}^{\wedge_x}\otimes_{\A_{\ell,0}}N=0$.
Then we have $\A_{\ell,0}^{\wedge_x}\otimes_{\A_{\ell,0}}\Tor_i^{\A_{\ell,0}}(M,N)=
\Tor_i^{\A_{\ell,0}^{\wedge_x}}(\A_{\ell,0}^{\wedge_x}\otimes_{\A_{\ell,0}}M,
\A_{\ell,0}^{\wedge_x}\otimes_{\A_{\ell,0}}N)=0$. Our claim in the beginning of this paragraph follows.

Now the original claim of this step will follow if we check that the morphism $\scrN_{\ell}^{\bT}\rightarrow\ell$ is finite.
The latter is a consequence  of the following two observations. First, the morphism $\scrM_{\ell}^{\bT}\rightarrow
\scrN_{\ell}^{\bT}$ is proper, surjective and locally finite. The last property follows from the assumption that
$\bT$ has finitely many fixed points on each $\scrM_p$. Second, the
morphism $\scrM_{\ell}^{\bT}\to \scrN_{\ell}^{\bT}$
is also surjective, proper and locally finite.

{\it Step 2.} Let us show that $\Tor_{i}^{\A_{p,0}}(\Delta_{p,0}, \nabla^\vee_{p,0})=0$ for $i>0$.
Similarly to Step 1, the $\A_{p,0}$-module  $\Tor_{i}^{\A_{p,0}}(\Delta_{p,0}, \nabla^\vee_{p,0})$
is supported at the $\bT$-fixed points. Pick such a point $x$. It is enough to show that the localization
$\Tor_{i}^{\A_{p,0}}(\Delta_{p,0}, \nabla^\vee_{p,0})_x$ is zero. Let $x_1,\ldots,x_m$ be as in
Lemma \ref{Lem:generat} and let $y_1,\ldots,y_m$ be similar elements but with negative eigen-characters. Then
$d_xx_1,\ldots,d_x x_m, d_x y_1,\ldots,d_x y_m$ form a basis in $T_xX$. Locally, $\Delta_{p,0}$
is $\A_{p,0}/(x_1,\ldots,x_m)$, while $\nabla_{p,0}^\vee$ is $\A_{p,0}/(y_1,\ldots,y_m)$.
Then the $\Tor$ vanishing is a standard fact.

{\it Step 3.} Let $\mathfrak{m}_p$ denote the maximal ideal of $p\in \ell$ in $\C[\ell]$. The previous
step implies $\Tor_{i}^{\A_{p,0}}(M/\mathfrak{m}_p M, N/\mathfrak{m}_p N)=0$ for $i>0$. But $M,N$
are flat over $\C[\ell\setminus \{0\}]$ by our assumptions. It follows that
the completion of $\Tor_i^{\A_{\ell,0}}(M,N)$ at $p$ is zero. Together with Step 1, this implies that
$\Tor_i^{\A_{\ell,0}}(M,N)$ is finite dimensional for  $i>0$.
\end{proof}

\begin{proposition}\label{Lem:Ext_vanish2}
Let $q\in \param$. Then 
$\dim \Tor_i^{\A_{q+\ell}}(\nabla_{q+\ell}^\vee,\Delta_{q+\ell})<\infty$ for all $i>0$.
\end{proposition}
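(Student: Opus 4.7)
The plan is to deduce Proposition \ref{Lem:Ext_vanish2} from Proposition \ref{Lem:Ext_vanish1} by a Rees/deformation argument that interpolates between the quantum situation $(h=1)$ and the classical one $(h=0)$. The line $q+\ell\subset\param$ plays the role of $\ell$ in Proposition \ref{Lem:Ext_vanish1}, and the Rees construction will provide an auxiliary parameter $h$ along which to degenerate.

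First I would introduce the Rees algebra $\A_{q+\ell,\hbar}$ associated to the natural $\bS$-filtration on $\A_{q+\ell}$ (so that $\A_{q+\ell,\hbar}/(\hbar-1)=\A_{q+\ell}$ and $\A_{q+\ell,\hbar}/(\hbar)=\A_{q+\ell,0}$), together with Rees versions $\widetilde{\Delta}_{q+\ell,\hbar}$ and $\widetilde{\nabla}^\vee_{q+\ell,\hbar}$ of the left/right modules $\Delta_{q+\ell}=\A_{q+\ell}/\A_{q+\ell}\A_{q+\ell}^{>0}$ and $\nabla^\vee_{q+\ell}=\A_{q+\ell}/\A_{q+\ell}^{<0}\A_{q+\ell}$. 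Both are $\bT\times\bS$-equivariant and flat over $\C[\hbar]$; their specializations at $\hbar=1$ recover $\Delta_{q+\ell},\nabla^\vee_{q+\ell}$, while the specializations at $\hbar=0$ are $\A_{q+\ell,0}$-modules $M_0,N_0$ whose $\bT$-graded components are finitely generated over $\C[q+\ell]$, with $M_0(-i)=N_0(i)=0$ for $i\gg0$ and with fibers at $p\in (q+\ell)\setminus\{q\}$ equal to $\Delta_{p,0}$ and $\nabla^\vee_{p,0}$ respectively. Thus the pair $(M_0,N_0)$ falls under the hypotheses of Proposition \ref{Lem:Ext_vanish1} (applied to the line $q+\ell$).

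Next I would form the $\bT$-graded $\C[\hbar]\otimes\C[q+\ell]$-module $T_i^{(j)} := \Tor_i^{\A_{q+\ell,\hbar}}(\widetilde{\nabla}^\vee_{q+\ell,\hbar},\widetilde{\Delta}_{q+\ell,\hbar})(j)$ for each weight $j\in\Z$. The key geometric input, already used in the proof of Proposition \ref{Lem:Ext_vanish1}, is that these Tor's are set-theoretically supported on the $\bT$-fixed locus of $\Spec$ of the center of the Rees algebra; because $\bT$ still has isolated fixed points on each quantum fiber, this fixed locus projects finitely to $q+\ell$. A flatness-over-$\C[\hbar]$ argument and the Rees version of the argument in Step 1 of Proposition \ref{Lem:Ext_vanish1} then show that each $T_i^{(j)}$ is a finitely generated $\C[\hbar,q+\ell]$-module. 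The specialization at $\hbar=0$ fits into a base-change spectral sequence converging to $\Tor_*^{\A_{q+\ell,0}}(M_0,N_0)$, which is finite-dimensional by Proposition \ref{Lem:Ext_vanish1}; hence so is $T_i^{(j)}\otimes_{\C[\hbar]}\C[\hbar]/(\hbar)$ for every $j$, and it vanishes for all but finitely many $j$.

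Finally, by upper semicontinuity of fiber dimensions for finitely generated modules flat over $\C[\hbar]$ (applied after localization at the generic point of the $\hbar$-axis, combined with the uniform bound on $\bT$-weights coming from the fixed-locus support), the specialization $T_i^{(j)}\otimes_{\C[\hbar]}\C[\hbar]/(\hbar-1)$ is also finite-dimensional and vanishes outside finitely many $j$. Summing over $j$ yields the desired finiteness of $\Tor_i^{\A_{q+\ell}}(\nabla^\vee_{q+\ell},\Delta_{q+\ell})$. The main obstacle is establishing the finite-generation of $T_i^{(j)}$ uniformly in $\hbar$: one must transfer the support argument (``supported on the attracting$\,\cap\,$repelling locus'') from the commutative setting of Proposition \ref{Lem:Ext_vanish1} to the non-commutative Rees algebra, which requires controlling commutators modulo $\hbar$ to show that localization away from fixed points kills one of the two Rees modules in a neighborhood of each non-fixed point.
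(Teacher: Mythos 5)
Your high-level strategy — degenerate along a Rees parameter $\hbar$ and feed the $\hbar=0$ fiber into Proposition~\ref{Lem:Ext_vanish1} — is exactly the strategy of the paper, which passes to the associated graded of the $\bS$-filtration on $\A_{q+\ell}$ (the same thing in a different language) and observes that $\gr\Tor_i^{\A_{q+\ell}}$ is a subquotient of $\Tor_i^{\A_{\ell,0}}(\gr\nabla^\vee_{q+\ell},\gr\Delta_{q+\ell})$. So in that respect you have the right idea.

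The genuine gap is that you \emph{assert} condition (iii) of Proposition~\ref{Lem:Ext_vanish1} for the $\hbar=0$ fibers $M_0,N_0$ of your Rees modules — namely that their specializations at a nonzero point of the line are $\Delta_{p,0}$ and $\nabla^\vee_{p,0}$ — without any justification, treating it as a bookkeeping fact about Rees constructions. It is not. What the Rees/associated-graded degeneration gives you for free is $M_0 = \gr\Delta_{q+\ell} = \A_{\ell,0}/\gr\bigl(\A_{q+\ell}\A_{q+\ell}^{>0}\bigr)$, and the issue is that the associated graded of the left ideal $\A_{q+\ell}\A_{q+\ell}^{>0}$ can a priori be strictly larger than $\A_{\ell,0}\A_{\ell,0}^{>0}$: quantum relations can have lower-order symbols that enlarge the classical ideal. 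Equivalently, the Rees ideal need not be $\hbar$-saturated. Showing that this pathology does not occur is the entire content of Steps 2–5 of the paper's proof; it requires localizing at $\bT$-fixed points, using Lemma~\ref{Lem:generat} to see the classical ideal is a local complete intersection with $\bT$-semiinvariant generators, checking the left ideal in the completed Rees algebra is closed in the $\hbar$-adic topology, and then running a Koszul-complex argument (via the anticommutativity of the $\tilde b_{ij}$ and the fact that $\frac{1}{\hbar}[\tilde x_i,\tilde x_j]$ has positive $\bT$-weight) to establish $\hbar$-saturation. Without that argument, you have not verified the hypothesis of Proposition~\ref{Lem:Ext_vanish1} and the deduction does not go through.

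Two smaller points. First, at $\hbar=0$ the base of your degeneration is the line $\ell$ through the origin, not the affine line $q+\ell$, so the fibers of $M_0,N_0$ should be taken at $p\in\ell\setminus\{0\}$; the drift in the base under the degeneration is part of why condition (iii) is subtle. Second, the final ``upper semicontinuity'' step is misdirected: once one knows the associated graded of $\Tor_i^{\A_{q+\ell}}$ is a subquotient of something finite-dimensional, finite-dimensionality of the filtered object follows directly, and invoking flatness or semicontinuity of fiber dimension (which the Tor modules need not satisfy) adds noise rather than rigor.
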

\begin{proof}
The proof is again in several steps.

{\it Step 1.} Our goal is to deduce Proposition \ref{Lem:Ext_vanish2} from Proposition \ref{Lem:Ext_vanish1}.

Let us notice that $\gr \A_{q+\ell}=\A_{\ell,0}$. The space $\Tor_i^{\A_{q+\ell}}(\Delta_{q+\ell},\nabla_{q+\ell}^\vee)$
inherits a filtration from $\A_{q+\ell}$. We can lift a $\bT\times \bS$-graded free resolution for $\gr \Delta_{q+\ell}$
to a free $\bT$-graded resolution of $\Delta_{q+\ell}$. This easily implies that $\gr \Tor_i^{\A_{q+\ell}}(\Delta_{q+\ell},\nabla_{q+\ell}^\vee)$ is a subquotient of $\Tor_i^{\A_{\ell,0}}(\gr \Delta_{q+\ell}, \gr\nabla_{q+\ell}^\vee)$, where the filtrations on $\Delta_{q+\ell},\nabla_{q+\ell}^\vee$ are induced
from $\A_{q+\ell}$. So it is enough to show that $\Tor_i^{\A_{\ell,0}}(\gr \Delta_{q+\ell}, \gr\nabla_{q+\ell}^\vee)$
is finite dimensional for all $i>0$.

We are going to check the last claim using Proposition \ref{Lem:Ext_vanish1}. For this we only need to check that
$M:=\gr \Delta_{q+\ell}, N:=\gr\nabla_{q+\ell}$ satisfy the conditions (i)-(iii) above. (i) and (ii) are clear
(both $\Delta_{q+\ell},\nabla_{q+\ell}$ are cyclic modules and so are $\gr\Delta_{q+\ell},\gr\nabla_{q+\ell}^\vee$).
We only need to check (iii), i.e., that the fibers of  $\gr \Delta_{q+\ell},\gr\nabla_{q+\ell}^\vee$  at $p\neq 0$
are $\Delta_{p,0},\nabla_{p,0}^\vee$, respectively. We will give a proof for $\Delta$, for $\nabla$ it is analogous.

{\it Step 2.} In this step  we will reduce the proof of the equality between 
the fiber of $\gr\Delta_{q+\ell}$ at $p$ and $\Delta_{p,0}$ to the claim that a
certain completion vanishes. 

Consider the quotient $\A_{\ell+\hbar q,\hbar}$ of $\A_{\param,\hbar}$ by the ideal of the plane $(\ell+ h q,h)$ in $\C[\param,\hbar]$.
Consider the left ideal $I_{\ell+\hbar q,\hbar}:=\A_{\ell+\hbar q,\hbar}\A_{\ell+\hbar q,\hbar}^{>0}$. Of course,
$\Delta_{\ell+\hbar q,\hbar}=\A_{\ell+\hbar q,\hbar}/I_{\ell+\hbar q,\hbar}$. Let $\tilde{I}_{\ell+\hbar q,\hbar}$ denote the $\hbar$-saturation
of $I_{\hbar,\ell}$, i.e., $\tilde{I}_{\ell+\hbar q,\hbar}$ consists of all elements $a\in \A_{\ell+\hbar q,\hbar}$ such that
$\hbar^k a\in I_{\ell+\hbar q,\hbar}$ for some $k>0$. Let $\tilde{I}_{\ell,0}$ be the specialization of $\tilde{I}_{\ell+\hbar q,\hbar}$
at $\hbar=0$. Then, more or less by definition, $\gr \Delta_{q+\ell}=\A_{\ell,0}/\tilde{I}_{\ell,0}$.
So we only need to show that the $\A_{\ell,0}$-module $M_{\ell}:=\tilde{I}_{\ell,0}/I_{\ell,0}$ (that is the kernel
of a natural epimorphism $\Delta_{\ell,0}\rightarrow \gr \Delta_{q+\ell}$) is supported on $\Spec(\A_{0,0})$.

Since $M_{\ell}\subset \Delta_{\ell,0}$,  it follows that $M_{\ell}(i)=0$ for $i>0$. So $M_{\ell}$ is supported on the repelling
locus of the $\bT$-action. Therefore it is enough to show that any $\bT$-fixed point $x\in \scrN_p$ with $p\in \ell\setminus \{0\}$
does not lie in the support of $M_{p}$, the specialization of $M_\ell$ at $p$.
Let $y\in \scrN_\ell\quo \bT$ be the image of $x$. It will suffice to check that $M_p^{\wedge_y}:=\C[\scrN_p\quo \bT]^{\wedge_y}\otimes_{\C[\scrN_p\quo \bT]}M_p$ is zero.

{\it Step 3.} Here we will reduce the proof of $M_p^{\wedge_y}=0$ to the claim that a certain ideal in a non-commutative
completion of $\A_{p+\hbar q,\hbar}$ is $\hbar$-saturated.

By definition, $M_p$ is the specialization of $\tilde{I}_{\ell+\hbar q,\hbar}/I_{\ell+\hbar q,\hbar}$ at $(p,0)$. Let $\tilde{I}_{p+\hbar q,\hbar}$
be the $\hbar$-saturation of $I_{p+\hbar q,\hbar}$. Clearly, the specialization of $\tilde{I}_{\ell+\hbar q,\hbar}|_p$ of $\tilde{I}_{\ell+\hbar q,\hbar}$ at $p$
is contained in $\tilde{I}_{p+\hbar q,\hbar}$. The module $M_p^{\wedge_y}$ is the specialization of the quotient
$$\left((\A_{\hbar,p}^{\bT})^{\wedge_y}\otimes_{\A_{\hbar,p}^{\bT}}\tilde{I}_{\ell+\hbar q,\hbar}|_p\right)/
\left((\A_{\hbar,p}^{\bT})^{\wedge_y}\otimes_{\A_{\hbar,p}^{\bT}}I_{p+\hbar q,\hbar}\right)$$
at $\hbar=0$. Here $(\A_{\hbar,p}^{\bT})^{\wedge_y}$ is the completion of $\A_{\hbar,p}^{\bT}$ at the maximal ideal that is
the preimage under the epimorphism $\A_{\hbar,p}^{\bT}\twoheadrightarrow \C[\scrN_p\quo \bT]$ of the maximal ideal of $y$.
Similarly, to \cite{BLPWquant}, Step 3 of the proof of Proposition 5.4.4,
we see that  $(\A_{\hbar,p}^{\bT})^{\wedge_y}$ is a flat right module over $\A_{\hbar,p}^{\bT}$.
So $$(\A_{\hbar,p}^{\bT})^{\wedge_y}\otimes_{\A_{\hbar,p}^{\bT}}\tilde{I}_{\ell+\hbar q,\hbar}|_p\hookrightarrow
(\A_{\hbar,p}^{\bT})^{\wedge_y}\otimes_{\A_{\hbar,p}^{\bT}}\tilde{I}_{p+\hbar q,\hbar}.$$
So to check that $M_p^{\wedge_y}$ is zero it is enough to show that the quotient
$$\left((\A_{\hbar,p}^{\bT})^{\wedge_y}\otimes_{\A_{\hbar,p}^{\bT}}\tilde{I}_{p+\hbar q,\hbar}\right)/
\left((\A_{\hbar,p}^{\bT})^{\wedge_y}\otimes_{\A_{\hbar,p}^{\bT}}I_{p+\hbar q,\hbar}\right)$$
has no $\hbar$-torsion. 

In fact, we will need a few more reductions. First, let
$\widehat{\otimes}$ denote the tensor product followed by the $\hbar$-adic completion. The claim on the absence
of the $\hbar$-torsion is the same as $$(\A_{\hbar,p}^{\bT})^{\wedge_y}\widehat{\otimes}_{\A_{\hbar,p}^{\bT}}\tilde{I}_{p+\hbar q,\hbar}=
(\A_{\hbar,p}^{\bT})^{\wedge_y}\widehat{\otimes}_{\A_{\hbar,p}^{\bT}}I_{p+\hbar q,\hbar}.$$
Also let us notice that the left hand side lies in the $\hbar$-saturation of the right hand side. So it is enough
to show that the left ideal  $$(\A_{\hbar,p}^{\bT})^{\wedge_y}\widehat{\otimes}_{\A_{\hbar,p}^{\bT}}I_{p+\hbar q,\hbar}\subset \A_{\hbar,p}^{\wedge_{y,\hbar}}:=(\A_{\hbar,p}^{\bT})^{\wedge_y}\widehat{\otimes}_{\A_{\hbar,p}^{\bT}}\A_{p+\hbar q,\hbar}$$
is $\hbar$-saturated.

{\it Step 4.} Here we will investigate some properties of $(\A_{\hbar,p}^{\bT})^{\wedge_y}\widehat{\otimes}_{\A_{\hbar,p}^{\bT}}I_{p+\hbar q,\hbar}$.

Let us show that the left ideal of interest is closed in the $\hbar$-adic topology.
The algebra $\A_{p,0}^{\wedge_y}$ is the algebra of $\bT$-finite vectors in
the completion $\A_{p,0}^{\wedge_x}$. The latter is Noetherian. From here it is easy to deduce that $\A_{p,0}^{\wedge_y}$
is Noetherian, compare with \cite{Gin08}, the proof of Lemma 2.4.2. The usual Hilbert argument (for power series) can be used
now to show that any left ideal in $\A^{\wedge_{y,\hbar}}_{p+\hbar q,\hbar}$ is closed in the $\hbar$-adic topology.

Modulo $\hbar$, the left ideal $(\A_{p+\hbar q,\hbar}^{\bT})^{\wedge_y}\widehat{\otimes}_{\A_{p+\hbar q,\hbar}^{\bT}}I_{p+\hbar q,\hbar}$
is a complete intersection generated by some $\bT$-equivariant elements $x_1,\ldots,x_m\in \A_{p,0}^{>0}$, see Lemma \ref{Lem:generat}.
Let us lift $x_1,\ldots,x_m$ to $\bT$-semiinvariant elements $\tilde{x}_1,\ldots,\tilde{x}_m\in \A_{p+\hbar q,\hbar}^{>0}$. 
We claim that $\tilde{x}_1,\ldots,\tilde{x}_m$ generate the ideal $(\A_{p+\hbar q,\hbar}^{\bT})^{\wedge_y}\widehat{\otimes}_{\A_{p+\hbar q,\hbar}^{\bT}}I_{p+\hbar q,\hbar}$.
To establish this it is enough to check that any element of $\A^{\wedge_{y,\hbar},>0}_{p+\hbar q,\hbar}$ lies in the left ideal  generated by $\tilde{x}_1,\ldots,\tilde{x}_m$. This easily follow from the observations that 
\begin{itemize}
\item $\A^{\wedge_{y,\hbar}}_{p+\hbar q,\hbar}(i)$
is a closed and $\hbar$-saturated subspace of $\A^{\wedge_{y,\hbar}}_{p+\hbar q,\hbar}$ for every $i$ 
\item and that $\A^{\wedge_{y,\hbar},>0}_{p+\hbar q,\hbar}$
modulo $\hbar$ lies in the ideal generated by $x_1,\ldots,x_m$.
\end{itemize}

{\it Step 5.} This step will complete the proof of the claim that the left ideal $$(\A_{p+\hbar q,\hbar}^{\bT})^{\wedge_y}\widehat{\otimes}_{\A_{p+\hbar q,\hbar}^{\bT}}I_{p+\hbar q,\hbar}\subset \A_{p+\hbar q,\hbar}^{\wedge_{y,\hbar}}$$
is $\hbar$-saturated and hence the proof of the proposition, as well. 

Thanks to the previous step,  it is enough to show that the left ideal generated by $\tilde{x}_1,\ldots,\tilde{x}_m$ is $\hbar$-saturated.
This a corollary of a more general statement:
that the coisotropic complete intersection always admits a quantization, but we are going to provide a proof here
since we do not know a reference for that fact.

Assume the converse, let $a\in \A_{p+\hbar q,\hbar}^{\wedge_{y,\hbar}}$ be such that $\hbar a\in \A^{\wedge_{y,\hbar}}_{p+\hbar q,\hbar}(\tilde{x}_1,\ldots,\tilde{x}_m)$ but $a\not\in \A^{\wedge_{y,\hbar}}_{p+\hbar q,\hbar}(\tilde{x}_1,\ldots,\tilde{x}_m)$. 
Let $\hbar a=\sum_{i=1}^m \tilde{b}_i \tilde{x}_i$ for some elements $\tilde{b}_1,\ldots,\tilde{b}_m\in \A^{\wedge_{y,\hbar}}_{p+\hbar q,\hbar}$.
Let $b_1,\ldots,b_m$ be the classes of $\tilde{b}_1,\ldots,\tilde{b}_m$ modulo $\hbar$. Then not all of $b_1,\ldots,b_m$ are 0
and we have $\sum_{i=1}^m b_i x_i=0$. From the exactness of the Koszul complex, we deduce that there are elements
$b_{ij}\in \A^{\wedge_y}_{p,0}$ with $b_{ij}=-b_{ji}$ and $b_i=\sum_{j} b_{ij}x_j$. Let us choose liftings
$\tilde{b}_{ij}$ of $b_{ij}$ to $\A^{\wedge_{\hbar,y}}_{p+\hbar q,\hbar}$ so that $\tilde{b}_{ij}=-\tilde{b}_{ji}$. Set $c_{i}=\hbar^{-1}(\tilde{b}_i-\sum_{j=1}^m
\tilde{b}_{ij}\tilde{x}_j)$. We get
\begin{equation*}
\hbar a=\sum_{i=1}^m \tilde{b}_i\tilde{x}_i=\sum_{i,j=1}^m \tilde{b}_{ij}\tilde{x}_j\tilde{x_i}+ \hbar c_{ij} \tilde{x}_i=\hbar(\sum_{i<j}\tilde{b}_{ij}\frac{1}{\hbar}[\tilde{x}_j,\tilde{x}_i]+\sum_{i,j}c_{ij}\tilde{x}_i).
\end{equation*}
To complete the proof it remains to check that $\frac{1}{\hbar}[\tilde{x}_i,\tilde{x}_j]$ lies in the left ideal
generated by $\tilde{x}_i$. This follows from the observation that $\frac{1}{\hbar}[\tilde{x}_i,\tilde{x}_j]\in \A_{p+\hbar q,\hbar}^{>0}$.
\end{proof}

\begin{corollary}\label{Cor:vanish}
Let $q\in\param^0$.
For each $n>0$  there is an open subset $\ell^0\subset\ell$ such that
$\Tor^{\A_{q+p}}_i(\nabla^\vee_{q+p},\Delta_{q+p})=0$ for $0<i<n$ and all $p\in \ell^0$.
\end{corollary}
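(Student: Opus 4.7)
The plan is to deduce the corollary from Proposition \ref{Lem:Ext_vanish2} by a standard base-change / universal coefficients argument for Tor along the one-dimensional base $\ell$. Set $R := \A_{q+\ell}$, $S := \C[\ell]$, $M := \Delta_{q+\ell}$, and $N := \nabla^\vee_{q+\ell}$; Proposition \ref{Lem:Ext_vanish2} gives that $\Tor^R_i(N,M)$ is a finite-dimensional $\C$-vector space for every $i>0$. Because $S$ is a polynomial ring in one variable, any finite-dimensional $S$-module is a finite direct sum of torsion modules supported at finitely many closed points.

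First I would verify that $\Tor^R_0(N,M) = N\otimes_R M$ is also finite dimensional. Unwinding the definitions,
\[
N\otimes_R M \;=\; \A_{q+\ell}\big/(\A_{q+\ell}^{<0}\A_{q+\ell} + \A_{q+\ell}\A_{q+\ell}^{>0}),
\]
and since every graded component of positive or negative $\bT$-weight is absorbed into this two-sided ideal, this quotient is concentrated in $\bT$-degree $0$, where it coincides with the B-algebra $B_{q+\ell}$. By the same argument as in the proof of Proposition \ref{B} (applied to the one-parameter family), $B_{q+\ell}$ is a finitely generated $S$-module whose fibers are all finite-dimensional, hence it is finite-dimensional over $\C$. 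In particular $\Tor^R_0(N,M)$ is supported at finitely many closed points of $\ell$.

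Next, pick a free resolution $P_\bullet \to M$ of $M$ over $R$. By generic flatness, the algebra $R$ and the modules $M$, $N$ are $S$-flat over a Zariski-open set $U\subset \ell$; shrinking if necessary, we may also assume the terms of $P_\bullet$ are $S$-flat on $U$. For $p\in U$, $P_\bullet\otimes_S \kappa(p)$ is then a free resolution of $M_p = \Delta_{q+p}$ over $R_p = \A_{q+p}$, and the complex $C_\bullet := N\otimes_R P_\bullet$ of $S$-modules has homology $H_i(C_\bullet) = \Tor^R_i(N,M)$ while $H_i(C_\bullet\otimes_S\kappa(p)) = \Tor^{R_p}_i(N_p, M_p)$. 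Since $S = \C[\ell]$ is a PID, the universal coefficient theorem yields a short exact sequence
\[
0 \to \Tor^R_i(N,M)\otimes_S\kappa(p) \to \Tor^{\A_{q+p}}_i(\nabla^\vee_{q+p},\Delta_{q+p}) \to \Tor^S_1(\Tor^R_{i-1}(N,M),\kappa(p)) \to 0.
\]
Let $T_j \subset \ell$ denote the finite support of $\Tor^R_j(N,M)$ for $0\le j \le n-1$, and set
\[
\ell^0 \;:=\; U \setminus \bigcup_{j=0}^{n-1} T_j.
\]
For $p\in\ell^0$ and $1\le i\le n-1$, both outer terms in the universal coefficient sequence vanish: the left term because $p\notin T_i$, and the right term because $p\notin T_{i-1}$ implies $\Tor^R_{i-1}(N,M)$ localizes to zero near $p$ and so has no $\Tor^S_1$ against $\kappa(p)$. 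Hence $\Tor^{\A_{q+p}}_i(\nabla^\vee_{q+p},\Delta_{q+p}) = 0$ for $0 < i < n$, as required.

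The only subtle point is the handling of $\Tor^R_0$: one must know that the $\Tor_0$ is also finite-dimensional in order to rule out its contribution to the right-hand term when $i=1$, which is why the identification with the B-algebra is crucial. Once this is in hand, the rest is formal, and the open subset $\ell^0$ is simply the complement in a generic-flatness locus of the union of the (finite) supports of $\Tor^R_j(N,M)$ for $0 \le j \le n-1$.
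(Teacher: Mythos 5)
Your approach is essentially the same as the paper's: Proposition \ref{Lem:Ext_vanish2} provides finite-dimensionality of $\Tor^{\A_{q+\ell}}_i$ for $i>0$, hence finite support over $\ell$; removing those points and exploiting that $\C[\ell]$ is a PID, one passes from the family to the fiber. The universal coefficient sequence you invoke is just a repackaging of the long exact sequence in $\Tor$ coming from $0\to\Delta_{q+\ell}\xrightarrow{z}\Delta_{q+\ell}\to\Delta_{q+p}\to 0$, which is exactly the tool the paper uses, so the core of the argument is identical.

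However, your treatment of $i=1$ contains a genuine error. You assert that $\Tor^R_0(N,M)=B(\A_{q+\ell})$ is finite-dimensional over $\C$ because it is a finitely generated $\C[\ell]$-module with finite-dimensional fibers. That implication is false --- $\C[\ell]$ itself is a finitely generated $\C[\ell]$-module with finite-dimensional fibers and is not finite-dimensional. In fact, the $B$-algebra commutes with specialization, so by Proposition \ref{B} the generic fiber of $B(\A_{q+\ell})$ has dimension $|\cI|\geq 1$; thus $B(\A_{q+\ell})$ has positive rank over $\C[\ell]$ and cannot be torsion. Consequently your prescription $\ell^0:=U\setminus\bigcup_{j=0}^{n-1}T_j$ does not define a non-empty set, since $T_0=\supp\Tor^R_0(N,M)$ is (essentially) all of $\ell$.

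The repair is small: what the $i=1$ step actually requires is not that $\Tor^R_0$ be torsion, only that $\Tor^S_1(\Tor^R_0(N,M),\kappa(p))=0$, i.e.\ that $\Tor^R_0$ be $z_p$-torsion-free near $p$. Since $\Tor^R_0$ is finitely generated over $\C[\ell]$ (the Step 1 argument of Proposition \ref{Lem:Ext_vanish1} and the filtration argument of Proposition \ref{Lem:Ext_vanish2} apply verbatim when $i=0$; the restriction to $i>0$ there is used only for finite-dimensionality, not for finite generation), its torsion submodule is supported at finitely many points, and you may add those to the excluded set. The paper instead handles $i=1$ separately via the identity $\Tor_1=(\Ext^1)^*$ together with a vanishing of $\Ext^1$. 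With the indicated correction your argument goes through; as written it does not.
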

\begin{proof}
The $\C[\ell]$-module $\Tor^{\A_{q+\ell}}_i(\nabla^\vee_{q+\ell},\Delta_{q+\ell})$ is finite dimensional
by Proposition \ref{Lem:Ext_vanish2} and so is supported in finitely many points of $\ell$. Let $\ell^0$
be the complement of the supports of $\Tor^{\A_{q+\ell}}_i(\nabla^\vee_{q+\ell},\Delta_{q+\ell})$
for $0<i<n$ in the intersection $\ell\cap \param^0$. We claim that $\Tor^{\A_{q+p}}_i(\nabla^\vee_{q+p},\Delta_{q+p})=0$ for
every $p\in \ell^0$. Indeed, let $z$ be a coordinate on $\ell$ near $p$ so that $\C[\ell]=\C[z]$ and the maximal
ideal of $p$ is generated by $z$. Then $\Tor_i^{\A_{q+p}}(\nabla_{q+p}^\vee,\Delta_{q+p})=\Tor_i^{\A_{q+\ell}}(\nabla_{q+\ell}^\vee, \Delta_{q+p})$. Also we have the short exact sequence
$$0\rightarrow \Delta_{q+\ell}\xrightarrow{z}\Delta_{q+\ell}\rightarrow \Delta_{q+p}\rightarrow 0$$
which yields the long exact sequence
\begin{align*}&\Tor_i^{\A_{q+\ell}}(\nabla_{q+\ell}^\vee, \Delta_{q+\ell})\xrightarrow{z}\Tor_i^{\A_{q+\ell}}(\nabla_{q+\ell},\Delta_{q+\ell})\rightarrow \Tor_i^{\A_{q+\ell}}(\nabla_{q+\ell}^\vee, \Delta_{q+p})\rightarrow \\&\rightarrow\Tor_{i-1}^{\A_{q+\ell}}(\nabla_{q+\ell}^\vee, \Delta_{q+\ell})\xrightarrow{z} \Tor_{i-1}^{\A_{q+\ell}}(\nabla_{q+\ell}^\vee, \Delta_{q+\ell}).\end{align*}
The first arrow is bijective, thanks to  our choice of $p$, for any $i$ with $0<i<n$, while the last arrow is bijective
for any $i$ with $1<i\leqslant n$. So we see that $\Tor_i^{\A_{q+p}}(\nabla_{q+p}^\vee,\Delta_{q+p})=0$ for all
$i$ with $1<i<n$. But $\Tor_1^{\A_{q+p}}(\nabla_{q+p}^\vee,\Delta_{q+p})=\Ext^1_{\A_{q+p}}(\Delta_{q+p},\nabla_{q+p})^*$.
The former is zero by our initial assumptions on $p$.
\end{proof}

\noindent {\bf Proof of Theorem \ref{Thm:ext_coinc1}:}
Let $\ell$ that passes through $p$ which is not parallel to any
discriminant hyperplane. Apply Corollary \ref{Cor:vanish} to $n=\dim X+1$ and the line
$\ell$. We will get $\Ext_{\A_{p'}}^i(\Delta_{p'},\nabla_{p'})=(\Tor_i^{\A_{p'}}(\nabla_{p'}^\vee, \Delta_{p'}))^*$
for any $i\in \{1,\ldots,\dim \fM\}$ provided $p'$ avoids the finitely
number of bad points. Hence we can choose $p'$
in the form $p+n\eta$ for $n\gg 0$; for $n$ sufficiently large, the
algebra $A_{p'}$ has finite global dimension equal to the dimension of
$\fM$ since 
localization holds by \cite[Corollary 5.17]{BLPWquant}.  The desired conclusion follows.\qed

\bibliography{./symplectic}
\bibliographystyle{amsalpha}
\end{document}